\numberwithin{equation}{section}
\newtheorem{theorem}[equation]{Theorem}
\newtheorem{corollary}[equation]{Corollary}
\newtheorem{prop}[equation]{Proposition}
\newtheorem{propdef}[equation]{Proposition/Definition}
\newtheorem{lemma}[equation]{Lemma}
\theoremstyle{definition}
\newtheorem{remark}[equation]{Remark}
\newtheorem{defn}[equation]{Definition}
\newtheorem{algo}[equation]{Algorithm}
\newtheorem{example}[equation]{Example}
\newcommand{\padma}[1]{{\color{Magenta} \sf $\clubsuit\clubsuit\clubsuit$ Padma: [#1]}}
\newcommand{\andrew}[1]{{\color{Green} \sf $\diamondsuit\diamondsuit\diamondsuit$ Andrew: [#1]}}
\newcommand{\ol}[1]{\overline{#1}}
\newcommand{\mc}[1]{\mathcal{#1}}
\newcommand{\mf}[1]{\mathfrak{#1}}
\newcommand{\Q}{\mathbb Q}
\newcommand{\rats}{\mathbb Q}
\newcommand{\Z}{\mathbb Z}
\newcommand{\ints}{\mathbb Z}
\newcommand{\nats}{\mathbb N}
\renewcommand{\O}{\mathcal{O}}
\newcommand{\aff}{\mathbb A}
\renewcommand{\P}{\mathbb P}
\newcommand{\proj}{\mathbb P}
\renewcommand{\phi}{\varphi}
\newcommand{\Spec}{\mathrm{Spec} \ }
\DeclareMathOperator{\Gal}{Gal}
\DeclareMathOperator{\GL}{GL}
\DeclareMathOperator{\divi}{div}
\DeclareMathOperator{\reg}{reg}
\DeclareMathOperator{\cha}{char}
\DeclareMathOperator{\Aut}{Aut}
\DeclareMathOperator{\lcm}{lcm}
\DeclareMathOperator{\Proj}{Proj}
\DeclareMathOperator{\red}{red}
\DeclareMathOperator{\rad}{rad}
\DeclareMathOperator{\Div}{Div}
\DeclareMathOperator{\Frac}{Frac}
\DeclareMathOperator{\depth}{depth}
\DeclareMathOperator{\height}{ht}
\DeclareMathOperator{\chara}{char}
\title{Minimal regular normal crossings models of superelliptic curves}
\author{Andrew Obus}
\address{Baruch College / CUNY Graduate Center}
\curraddr{1 Bernard Baruch Way. New York, NY 10010, USA}
\email{andrewobus@gmail.com}
\author{Padmavathi Srinivasan}
\address{Boston University}
\curraddr{665 Commonwealth Avenue, Boston, MA , USA}
\email{padmask@bu.edu}
\subjclass[2010]{Primary: 14B05, 14J17; Secondary: 13F30, 14H25}
\keywords{Mac Lane valuation, minimal regular normal crossings model, superelliptic curve}
\date{\today}
\begin{document}

\maketitle

\begin{abstract}
Let $K$ be a complete discretely valued field with perfect residue
field $k$. If $X \to \proj^1_K$ is a $\ints/d$-cover with $\chara k
\nmid d$, we compute the minimal regular normal crossings model
$\mc{X}$ of $X$ as the normalization of an explicit normal model $\mc{Y}$ of
$\mathbb{P}^1_K$ in $K(X)$.  The model $\mc{Y}$ is given using Mac Lane's description of discrete valuations on the rational function field $K(\mathbb{P}^1)$.
\end{abstract}

%\tableofcontents

\section{Introduction}\label{Sintro}
Let $K$ be a complete discretely valued field with perfect residue
field $k$ and valuation ring $\mc{O}_K$. Let $X$ be a smooth
projective geometrically integral curve over $K$. A model for $X$ is a proper flat $\mc{O}_K$-scheme with generic fiber $X$. For many arithmetic applications, one needs explicit descriptions of models of $X$ that are ``as close to smooth'' as possible. For example, bounds on the number of rational points for a curve $X$ over a number field via the effective Chabauty--Kim method require good bounds on the number of components in the special fiber of a minimal regular model at every place of the number field, see for example \cite[Theorem~B]{Betts}. Our main theorem is the following:
%In this paper, we explicitly describe the minimal regular normal crossings model for $\Z/d$ covers $X$ of $\mathbb{P}^1_K$ when $d$ is coprime to $\chara k$. }
%(that is, a regular model for which the reduced induced subscheme of the special fiber has only ordinary double points for singularities) 

\begin{theorem}\label{Tmain}(See Theorem~\ref{Tminnormalcrossingsbase})
Let $X \to \proj^1_K$ be a $\ints/d$-cover, with $\cha k \nmid d$.  There is an explicit normal model $\mc{Y}$ of $\proj^1_K$
whose normalization in $K(X)$ is the minimal normal crossings regular
model of $X$.  
%The algorithm for constructing $\mc{Y}$ is given in Theorem~\ref{Tminnormalcrossingsbase}. 
\end{theorem}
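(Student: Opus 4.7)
The plan is to construct $\mc{Y}$ by specifying a finite set $\mc{V}$ of Mac Lane valuations on $K(x)$, each of which represents a prime divisor on some iterated blow-up of $\proj^1_{\mc{O}_K}$; together they cut out the components of the special fiber of $\mc{Y}$. Because the cover $X \to \proj^1_K$ is tame, we may write it in Kummer form $y^d = f(x)$ for some $f \in K(x)$. The key technical input is then the interplay between a valuation $v \in \mc{V}$ and the integer $v(f) \bmod d$: by standard Kummer theory this data determines the ramification index and residue field extension of every extension of $v$ to $K(X)$, and hence controls the local structure of the normalization $\mc{X}$ of $\mc{Y}$ in $K(X)$.

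The first step is to establish a purely local criterion. For a single component of $\mc{Y}_k$ corresponding to $v \in \mc{V}$, regularity of $\mc{X}$ along the preimage should reduce to a statement involving $\gcd(v(f), d)$ together with how the horizontal part of $\divi(f)$ meets the component. Normal crossings at an intersection of two components corresponding to $v, v' \in \mc{V}$ should impose a compatibility condition between $v(f)$, $v'(f)$, and the ramification indices $d/\gcd(v(f),d)$, $d/\gcd(v'(f),d)$; Abhyankar's lemma together with the Mac Lane description of $v, v'$ lets one read off when this holds. Once the criterion is in place, I would algorithmically build $\mc{V}$ by starting from the Gauss valuation, inserting the Mac Lane valuations forced by the branch locus of $f$, and then iteratively applying Mac Lane augmentations to repair each local failure of regularity or normal crossings. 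The Mac Lane formalism is tailor-made for this, since an augmentation of a valuation corresponds geometrically to blowing up a closed point on the associated component.

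The main obstacle will be termination and minimality. Termination should follow from a bounded-denominator argument: each augmentation strictly decreases an invariant built from $v(f) \bmod d$, so only finitely many steps are needed before $\gcd$-compatibilities stabilize. Minimality is more delicate. For each $v \in \mc{V}$ I would identify the components of $\mc{X}_k$ above it and, using the explicit numerical data carried by the Mac Lane chain, compute their self-intersection numbers and geometric genera. One then has to verify that no such component is a $(-1)$-curve of genus zero meeting the rest of $\mc{X}_k$ in at most one point, and more generally that blowing down any component would destroy either regularity of $\mc{X}$ or the normal crossings property. The hard part is that the ``right'' choice of augmentations needed to certify minimality is what dictates the precise recipe for $\mc{V}$, and I expect most of the work of the proof to be in showing that the criteria above combine cleanly into a single set of inequalities on the Mac Lane data that is simultaneously sufficient for regular normal crossings upstairs and necessary to prevent any downstairs contraction.
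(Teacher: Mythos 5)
Your overall framework—encode the model as a finite set of Mac Lane valuations, put the cover in Kummer form $z^d=f$, derive a local regularity/normal-crossings criterion from $\gcd(v(f),d)$ together with the ramification indices, and then compare against Castelnuovo to certify minimality—is the right framework and is what the paper does. But there are two genuine gaps in the plan.

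First, you propose to \emph{iteratively} augment valuations to repair local failures, with termination following from ``a bounded-denominator argument.'' The paper deliberately avoids this recursion: once the regularity criterion is phrased as an $\widetilde N$-path condition (Proposition~\ref{Pstandardcrossingregular}, Proposition~\ref{Pstandardendpointregular}), the set of augmentations needed to resolve a given standard crossing or finite cusp is determined in closed form as a \emph{shortest $\widetilde N$-path} (Definition~\ref{DNpath}), inserted all at once in Algorithm~\ref{AY0}. Without such a closed form, your termination sketch is not a proof; the relevant invariant is not simply $v(f)\bmod d$ but a two-parameter lattice condition (see \S\ref{SLattices}), and you would need to exhibit a monovariant that decreases under your augmentation rule. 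More importantly, iterating to a regular NC model and then ``verifying'' nothing contracts is logically incomplete: Lipman-style iteration may overshoot, and when your verification fails you have no step that tells you \emph{which} component to remove and why removal preserves regular normal crossings.

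Second, the plan says nothing about contracting the \emph{minimal} valuation $v_0$, which is precisely where the paper's difficulty concentrates. Removing $v_0$ changes the standard $\infty$-specialization into an $\infty$-crossing (or a new $\infty$-specialization on a deeper component), and the regularity criterion there is genuinely different; it requires a change of coordinate (Proposition~\ref{Pchangeofvariable}) and separate $\widetilde N$-path analyses (\S\ref{Sinfty}, \S\ref{Sinftycrossing}, Propositions~\ref{PinftySNC} and \ref{Pinftycrossingregular}). Similarly, your phrase ``$(-1)$-curve of genus zero meeting the rest in at most one point'' is too restrictive—contraction of a component meeting two others is allowed when multiplicities balance (Lemma~\ref{LCastelnuovo})—and ruling out contractibility for all the non-maximal, non-minimal valuations requires the geometric ramification counts of Lemma~\ref{Lramificationnotremovable}, not just self-intersection numbers. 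Both of these ingredients are essential to the ``necessary to prevent any downstairs contraction'' half of your plan, and their absence is where the argument would stall.
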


Let us elaborate on what ``explicit'' means.
There are two natural approaches to computing regular models of
curves. One approach is to first construct a semistable model of $X
\times_K K'$ over a Galois extension $K'/K$ (the construction
  of which is well-known in the case of $\ints/d$-covers of $\proj^1_K$, see e.g., \cite{BW}), take the quotient by $\Gal(K'/K)$ to create a normal model $\mc{X}'$ of $X$ over $K$, and 
%attempt to 
explicitly resolve singularities on this normal model. This can be
difficult, since wild quotient singularities may appear even though
$\chara k \nmid d$. Another approach is to start with a singular
$\mc{O}_K$-model of $X$ and run Lipman's algorithm for resolving singularities. This involves recursively computing invariants of singularities in coordinate charts of blow-ups and computing normalizations, which can be hard in practice. We give a third non-recursive approach that uses the defining equation of the cover to describe the normal model as an explicit set of extensions of discrete valuations on the function field $K(\mathbb{P}^1)$ corresponding to the irreducible components in the special fiber. 
%The description is also particularly well-suited to computing specializations of points on the generic fiber in the model, which might be useful for arithmetic applications (may omit this sentence).  
Such descriptions of discrete valuations are already available in Sage
\cite{RuthSage}, where we hope to include an implementation of our
algorithm in the future. Along the way, Algorithm~\ref{AY0} provides a similar
description of a \emph{strict}\footnote{that is, all the
irreducible components of the reduced special fiber are smooth.}
regular normal crossings model in Theorem~\ref{TX0regular}, which is not
necessarily (but often) minimal. About a third of the paper
(\S\ref{Sinfty}, \S\ref{Sinftycrossing}, and \S\ref{Ssecondmodel}) is devoted to identifying contractible components in this model, and can be skipped on a first reading.

\subsection{Explicit models of $\proj^1_K$ via Mac Lane valuations}\label{Sexplicit}
%One knows (see \S\ref{Smaclanemodels}) that 
Normal models of
$\proj^1_K$ are in one-to-one correspondence with finite sets of
so-called ``geometric valuations'' on $K(t)$, with each irreducible component of
the special fiber of the normal model corresponding to one valuation
in the set (see \S\ref{Smaclanemodels}).  Geometric valuations correspond to Type 2 points
of the Berkovich space $(\proj^1_K)^{\rm Berk}$ over $K$; namely, they are
valuations on $K(t)$ extending the valuation on $K$ whose residue
field has transcendence degree $1$ over $k$.  In \cite{MacLane}, Mac
Lane introduced an explicit notation to write down geometric valuations,
which involves writing down only finitely many polynomials and rational
numbers.  Geometric valuations are also called ``Mac Lane valuations'' in
his honor.  In Theorem~\ref{Tminnormalcrossingsbase}, we present the
model $\mc{Y}$ from Theorem~\ref{Tmain} as a finite set of Mac Lane
valuations.  This has advantages beyond ease of presentation and not
having to work with charts and blow ups.  Mac Lane
valuations are very well suited to computing multiplicities of
components in models of covers of $\proj^1_K$, and they are also well
suited to computing divisors of rational functions on such models.  In
particular, an important test for regularity for us will be to check
whether certain vertical divisors on models of $X$ are locally
principal, and these computations are naturally facilitated using Mac Lane valuations.

\begin{remark}
The model $\mc{Y}$ in Theorem~\ref{Tmain} always has normal crossings, and it is immediate to
read off the dual graph of the special fiber $\ol{Y}$ of $\mc{Y}$ as well as the
multiplicities of the irreducible components of $\ol{Y}$ from the
description of $\mc{Y}$ in terms of
Mac Lane valuations.  We content ourselves in this paper with a
description of $\mc{Y}$, rather than explicitly writing down the dual
graph and multiplicities of the resulting normal crossings regular
model $\mc{X}$ of $X$.  In any individual case, it is not hard to
write down this description of $\mc{X}$ given $\mc{Y}$, see e.g., Example~\ref{Ebasic}.  
\end{remark}

\subsection{A high-level summary of Algorithm~\ref{AY0} and the proof
  of Theorem~\ref{Tminnormalcrossingsbase}}\label{Sidea}

Recall that Algorithm~\ref{AY0} produces a normal model $\mc{Y}$ of $\proj^1_K$
whose normalization $\mc{X}$ in $K(X)$ is regular with strict normal crossings.  Let
$\pi_K$ be a uniformizer of $K$, and suppose $f = \pi_K^a f_1^{a_1}\cdots
f_r^{a_r}$ is an irreducible factorization of $f$ where the $f_i$ are monic
polynomials in $\mc{O}_K[t]$ and $a$ and the $a_i$ are nonnegative integers. To build $\mc{Y}$, analogously to the
semistable case in \cite{BW}, we
start by creating a normal crossings normal model $\mc{Y}'$ of $\proj^1_K$ on which the
horizontal divisors of the zeros of the $f_i$ are regular and do not
meet.
On the valuation side, this requires including a certain valuation
$v_{f_i}$ for each $f_i$ (this is the unique valuation over which
$f_i$ is a so-called ``key polynomial''), then including all
``predecessors'' of the $v_{f_i}$, and then throwing in enough
other valuations so that the set is closed under taking infima under a
certain partial order on Mac Lane valuations (see
\S\ref{Smaclane} for definitions of these terms).  The singularities
of the normalization of $\mc{Y}'$ in $K(X)$ are relatively
manageable, and we modify $\mc{Y}'$ by
adding in ``tails'' and ``links'' to resolve them.  This process is
parallel to the process of resolving the singularities of $\mc{Y}'$ itself,
as described in \cite[Corollaries 7.5, 7.6]{ObusWewers}, but the
formulas are more complicated when working
with a cyclic cover.   This completes Algorithm~\ref{AY0}, giving
models $\mc{Y}$ of $\proj^1_K$ and $\mc{X}$ of $X$ as above.

Let $\ol{Y}$ and $\ol{X}$ be the special fibers of $\mc{Y}$ and
$\mc{X}$, respectively.  To finish the proof of Theorem~\ref{Tminnormalcrossingsbase}, we use
the fact that our explicit Mac Lane descriptions of the components of $\ol{Y}$
allow us to compute the neighboring components
of a given component of $\ol{X}$, in terms of the neighboring
components of its image in $\ol{Y}$ (via the partial order on Mac Lane
valuations) as well as their multiplicities and the ramification locus
(from the value groups of the Mac Lane valuations). This in turn
allows us to use Castelnuovo's criterion to immediately rule out contractibility of
most components while preserving regularity and the normal crossings property
(Lemmas~\ref{Lramificationnotremovable}, \ref{LCastelnuovo}, and
\ref{Lremovable}).  We identify which of the
remaining components are indeed contractible in the remainder of the
section.
%We then identify contractible components of
%$\ol{X}$ by translating Castelnuovo's criterion
%into various criteria in terms of the value groups and the partial
%ordering of the Mac Lane valuations coresponding to $\ol{Y}$. 

\begin{comment}
\padma{Moved to earlier.} We remark that Algorithm~\ref{AY0} is \emph{not} recursive.  That is,
the idea of the algorithm is not ``come up with an invariant of a
singularity, find blowups that decrease this invariant, and repeat''.
Rather, each step described in the above paragraph is performed exactly once,
and then the algorithm terminates.
\end{comment}

\subsection{Relationship with recent related work}\label{Srelated}
There has been a flurry of recent work on explicit regular models of
curves, stemming from work of Dokchitser (\cite{Dokchitser}) as well
as Dokchitser--Dokchitser--Maistret--Morgan (\cite{DDMM}).  The paper
\cite{DDMM} gives an explicit regular model of the hyperelliptic curve
with affine equation $y^2 = f(x)$ with semistable
reduction over $K$ when $\cha k \neq 2$.  This is done in terms of the \emph{cluster
  picture} of $f$, which encodes the distances between the roots of
$f$ in terms of the absolute value on $K$.  This work was later
combined with resolution of tame quotient singularities in \cite{FN} to exhibit the minimal normal crossings regular model
of any hyperelliptic curve with semistable reduction over a
\emph{tame} extension of $K$ (again, assuming $\cha k \neq 2$).

On the other hand, in
\cite{Dokchitser}, an explicit description of the minimal regular
model of (the projective smooth model of a) plane curve $f(x,y) = 0$
over $K$ is given, provided that $f$ satisfies a property called
$\Delta_v$-\emph{regularity}.  This result is quite general, although
it does not work on all curves in Theorem~\ref{Tmain}.  In fact, in
\cite{Muselli1}, Muselli combined Dokchitser's work with the technique
of cluster
pictures to compute the
minimal normal crossings regular model of more general hyperelliptic curves
with $\cha k \neq 2$,
including many that require a wild extension of $K$ to attain
semistable reduction.   Muselli's method even works sometimes when
$\cha k = 2$.  But it does not work on all hyperelliptic curves with
$\cha k \neq 2$.\footnote{Roughly, if a hyperelliptic curve is given
  by $y^2 = f(x)$, \cite{Muselli1} requires that for all irreducible
  factors $f_i$ of $f$, the Mac Lane valuation $v_{f_i}$ (see Proposition/Definition~\ref{Pbestlowerapprox})  has
  inductive length $1$.}

In subsequent work (\cite{Muselli2}), Muselli computed the minimal
normal crossings regular model for \emph{all} hyperelliptic curves over $K$ with $\cha
k \neq 2$.  For this computation, he introduced the technique of
\emph{Mac Lane clusters}, a combination of cluster pictures and Mac
Lane valuations.  Several ideas in \cite{Muselli2} are similar to
those we use in this paper (the first two steps in Algorithm~\ref{AY0}
are similar to computing the Mac Lane cluster picture for a
hyperelliptic curve), but our result is independent of \cite{Muselli2}.  In
fact, we do not use any results from any of the papers mentioned in
this subsection.

In \cite{KW}, the authors build a
normal model for any superelliptic curve as in Theorem~\ref{Tmain}
having only \emph{rational} singularities as a cyclic cover of a model
of $\proj^1_K$ where the branch locus has been resolved.  As in our paper, this
model is built by explicitly presenting the Mac Lane valuations
corresponding to a model of $\proj^1_K$.  The model we
construct in Algorithm~\ref{AY0} is related to the model in \cite{KW},
although neither dominates the other; our model simultaneously
resolves the singularities in \cite{KW} while removing extraneous components.
Similarly, in earlier work (\cite{OShoriz}), we described how to use the machinery of Mac Lane valuations to describe the minimal embedded resolution of a divisor on $\mathbb{P}^1_{\mc{O}_K}$. When $\chara k \neq 2$, in \cite{OS1}, we used regular models of the cover obtained from an embedded resolution of its branch divisor (without any additional semistability hypothesis) to prove an inequality between the conductor and the minimal discriminant for hyperelliptic curves.

\subsection{Outline of the paper}\label{Soutline}
In \S\ref{Sregularprelims}, we collect various preliminary results on
arithmetic surfaces.  Of possible independent interest
(although its proof is essentially the same as the argument in
\cite[\S6.1]{LL}) is Lemma~\ref{L:LiuLorAdapt}, which gives
a formula relating $\mathbb{Q}$-valued intersection numbers of $\mathbb{Q}$-Cartier divisors on a \emph{normal}
arithmetic surface to those on a branched cover. In \S\ref{Smaclane} we
introduce Mac Lane valuations and prove some results in ``pure''
valuation theory.  In \S\ref{Smaclanemodels}, we relate Mac Lane
valuations to normal models of $\proj^1_K$, and we show how certain
valuation-theoretic properties translate to properties of the
corresponding models.  In \S\ref{Ssmoothvertical}, we give some sufficient
criteria for a point on the reduced special fiber of a model of a
cyclic cover of $\proj^1_K$ to be smooth on an irreducible component on which
it lies.
After a short interlude on lattice theory in
\S\ref{SLattices}, the heart of the paper begins in
\S\ref{Sdetecting}, where we give criteria for detecting whether a
the normalization of a normal model $\mc{Y}$ of $\proj^1_K$ in $K(X)$
is, in fact, regular with normal crossings at a given point.  Here
$\mc{Y}$ is given as a set of Mac Lane valuations, and the criteria
are given directly in terms of these valuations.  In
\S\ref{Sconstruction}, we present and prove the correctness of
Algorithm~\ref{AY0}, constructing a model $\mc{Y}^{\reg}$ of
$\proj^1_K$ (corresponding to a set $V^{\reg}$ of Mac Lane valuations)
whose normalization in $K(X)$ is regular with normal crossings.  In
\S\ref{Ssecondmodel}, we prove
Theorem~\ref{Tminnormalcrossingsbase}, which summarizes which
valuations must be removed from $V^{\reg}$ in order to get the
\emph{minimal} regular normal crossings model.  Lastly, we illustrate our algorithm 
with some examples in \S\ref{Sexamples}.

\section*{Notation and conventions}
Throughout, $K$ is a complete field with respect to a
discrete valuation $v_K$. Let $\mc{O}_K$ denote the ring of integers of $K$. 
%\padma{Henselization and completion are the same, $S$ is excellent -- cite Liu -- complete Noetherian complete local rings are excellent. Propn. 3.5 and 3.6 in Andrew and Stefan's paper.11.24. Delete hats on maximal ideals.}  
We further assume
that the residue field $k$ of $K$ is \emph{algebraically
  closed}.  The case where $k$ is perfect immediately reduces to this case
  since regular models satisfy \'{e}tale descent.  More specifically,
  if $k$ is perfect, then to find the minimal regular normal crossings model
  of $X/K$, first find the minimal regular normal
  crossings model after base changing to the completion
  $\widehat{K^{ur}}$ of the maximal
unramified extension of $K$, which has algebraically closed residue
field.  Then take the quotient
by $\Gal(\widehat{K^{ur}}/K)$.

We denote an algebraic
closure of $K$ by $\ol{K}$.
We fix a uniformizer $\pi_K$ of $v_K$ and normalize $v_K$ so that $v_K(\pi_K) = 1$. Note that the valuation $v_K$ uniquely extends to a valuation on $\ol{K}$, which we also call $v_K$.

For a reduced $K$-scheme or $\mc{O}_K$-scheme $S$, we denote the
corresponding total ring of fractions by by $K(S)$. If $S$ is
integral, then $K(S)$ is the function field of $S$.
If $\mc{Y} \rightarrow \Spec \mc{O}_K$ is an arithmetic
surface, an irreducible Weil divisor of $\mc{Y}$ is called
\emph{vertical} if it lies in a fiber of $\mc{Y} \to \Spec \mc{O}_K$, and
\emph{horizontal} otherwise. Let $f \in K(\mathcal{Y})$. We denote the
divisor of zeroes of $f$ on $\mc{Y}$ by $\divi_0(f)$. 
For any discrete valuation $v$, we denote the corresponding value group by $\Gamma_v$.
If $P$ is a closed point on $\mc{Y}$, we denote the corresponding local ring by $\mc{O}_{\mc{Y},P}$ and maximal ideal by $\mathfrak{m}_{\mc{Y},P}$. 

Throughout this paper, we fix a system of homogeneous coordinates $\P^1_{K} =
\Proj K[t_0,t_1]$, and a smooth model $\P^1_{\O_K}
\colonequals \Proj \mc{O}_K[t_0,t_1]$. We also set $t \colonequals t_1/t_0$.  

All minimal polynomials are assumed to be monic.  
When we refer to the \emph{denominator} of a rational
number, we mean the positive denominator when the rational number is
expressed as a reduced fraction.

\section*{Funding}
The first author was supported by the National Science
  Foundation under CAREER Grant DMS-2047638.  He was also
  supported by a grant from the Simons Foundation (\#706038: AO). The second author was supported by the Simons Collaboration in Arithmetic Geometry, Number Theory, and Computation via the Simons Foundation grant 546235 and by the National Science
  Foundation under Grant DMS-2401547.

\section*{Acknowledgements}
We thank Jordan Ellenberg for coming up with the term ``aligned with
the coordinate axes'' for the concept in Definition~\ref{Daxes}.

\section{Preliminaries on normal and regular models}\label{Sregularprelims}
\subsection{Definitions}
An \emph{arithmetic surface} is a normal, integral, projective, flat $\mc{O}_K$-scheme of relative
dimension $1$. A \emph{local arithmetic surface} is an affine
$\mc{O}_K$-scheme whose coordinate ring is isomorphic to
$\hat{\mc{O}}_{\mc{X},x}$, where $\hat{\mc{O}}_{\mc{X},x}$ is the
completed local ring at a closed point $x$ of an arithmetic surface
$\mc{X}$. An arithmetic surface is said to have \emph{normal
  crossings} if for every closed point $x$ of $\mc{X}$, there is a finite \'{e}tale morphism $\mc{Z} \rightarrow \mc{X}$ such that for every closed point $z$ lying about $x$ in $\mc{Z}$ the completed
local ring $\hat{\mc{O}}_{\mc{Z},z}$ is isomorphic to
$\mc{O}_K[[t_1,t_2]]/(t_1^at_2^b-u \pi_K)$ for some unit $u$ in
$\hat{\mc{O}}_{\mc{X},x}$ and integers $a,b \geq 0$ with $a+b >
0$. (See for e.g. \cite[\S9.1, Definition~1.6, Remark~1.7]{LiuBook} and \cite[\S9.2.4, Proposition~2.34]{LiuBook}) .
%\padma{Which definition of normal crossings are we allowing -- the   nonstrict version (definition above) is naturally \'{e}tale local in   nature, and what is needed in the proof of   Proposition ~\ref{P:RegandNormalize}} \andrew{Yes, I think we want the non-strict version.}
%\padma{Check if we are allowing self-intersection of components
%e.g. like $\mc{O}_K[[t_1,t_2]]/(t_2^2-(t_1^2-\pi_K))$, and if we want
%strict normal crossings instead.} 

Let $\mc{X}$ be a normal model of an algebraic curve $X$.  A morphism
$\pi \colon \widetilde{\mc{X}} \to \mc{X}$ is called a \emph{minimal
  regular resolution of $\mc{X}$} if $\widetilde{\mc{X}}$ is a
(proper) regular model of $X$ such
that the special fiber of $\widetilde{\mc{X}}$ contains no $-1$-components
(\cite[Definition 2.2.1]{CES}).  Such minimal regular
resolutions exist and are unique, e.g., by \cite[Theorem 2.2.2]{CES}.
A morphism
$\pi \colon \widetilde{\mc{X}} \to \mc{X}$ is called a \emph{minimal
  normal crossings resolution of $\mc{X}$} if $\widetilde{\mc{X}}$ is a
(proper) regular model of $X$ such that the special fiber of
$\widetilde{\mc{X}}$ has normal crossings, and if $\pi' \colon
\widetilde{\mc{X}}' \to \mc{X}$ is any other morphism with
$\widetilde{\mc{X}}'$ a proper regular normal crossings model, there
is a unique morphism $f' \colon  \widetilde{\mc{X}}' \to
\widetilde{\mc{X}}$ such that $\pi' = \pi \circ f'$.  By the universal
property, the minimal normal crossings resolution is unique.
%\andrew{Also define minimal regular resolutions wiht normal crossings.} 

\begin{remark}\label{R:SNCRealm}
The construction of the minimal crossings model in \cite[\S9.3.4, Definition~9.3.31,
Proposition~3.36]{LiuBook} shows that one can start with an arbitrary
regular model with normal crossings, and successively contract a
subset of the $-1$ curves that preserve the property of being normal
crossings (see \cite[\S9.3.4, Lemma~3.35]{LiuBook} for how to
identify such $-1$ curves) until we obtain the minimal normal
crossings model.
%\andrew{Add citation of Liu Def. 9.3.31 to this remark}
% \padma{We can make this a citeable remark if we
                 %want.} \andrew{Basic generalities on regular NC
                 %models (i.e., can contract to minimal regular NC
                 %model without leaving the realm of NC-ness).} 
\end{remark}

\subsection{Intersection theory of $
\mathbb{Q}$-Cartier divisors on normal arithmetic surfaces}\label{Sintersectiontheory} 
%A \emph{locally $\mathbb{Q}$-factorial} arithmetic surface $\mc{X}$ is a normal arithmetic surface such that every Weil divisor is $\mathbb{Q}$-Cartier. Equivalently, for every Weil divisor $D$, there is an integer $m_D \geq 1$ (not unique) such that $m_D D$ is a Cartier divisor. 

Let $\mc{X}$ be a normal arithmetic surface. Let $\Div(\mc{X})$ denote the subgroup of Weil divisors such that some multiple is a Cartier divisor on such a
surface. Recall that there is a well-defined bilinear intersection
pairing of Cartier divisors on any normal arithmetic surface $\mc{X}$
-- if $f$ and $g$ are functions defining two relatively prime Weil divisors $D_f$ and
$D_g$ on the local arithmetic surface $\hat{\mc{O}}_{\mc{X},x}$, then
the local intersection number $(D_f, D_g)$ is the length of the scheme $\hat{\mc{O}}_{\mc{X},x}/(f,g)$, and the global intersection number on $\mc{X}$ is the sum of local intersection numbers over all closed points of $\mc{X}$. This extends to a well-defined bilinear $\mathbb{Q}$-valued intersection pairing
\begin{align*}
 \Div(\mc{X}) \times \Div(\mc{X}) &\rightarrow \mathbb{Q} \\
 (D,D') &\mapsto \frac{1}{m_D m_{D'}} (m_D D,m_{D'}D'), \textup{ where}
\end{align*}
$m_D,m_{D'}$ are integers chosen such that $m_D D$ and $m_{D'}D'$ are Cartier divisors. 

We have the following lemma about the behaviour of intersection numbers under finite morphisms of $\mathbb{Q}$-Cartier divisors on normal arithmetic surfaces, adapting \cite[\S6.1]{LL} to the $\mathbb{Q}$-factorial setting.
\begin{lemma}\label{L:LiuLorAdapt}
 Let $\mc{W}$ and $\mc{Z}$ be two local normal
 %$\mathbb{Q}$-factorial 
 schemes
 of dimension $2$, with closed points $w$ and $z$ respectively. Let
 $\phi \colon \mc{W} \rightarrow \mc{Z}$ be a dominant finite
 morphism. Let $\Gamma_1,\Gamma_2$ be two irreducible $\mathbb{Q}$-Cartier Weil divisors on
 $\mc{W}$, and let $\Delta_i \colonequals \phi(\Gamma_i)$. Assume that
 $\phi^{-1}(\Delta_1) = \Gamma_1$, and for $i \in \{1,2\}$, let
 $e_{\Gamma_i/\Delta_i}$ be the ramification index of $\Gamma_i$ over
 $\Delta_i$.
 %\andrew{$\phi^*$ and
 %  $\phi_*$ should not be defined in the statement of the theorem,
  % since they are not used in the statement of the theorem.}
  % $\phi^*(\Delta_1) \colonequals e_{\Gamma_1/\Delta_1} \Gamma_1$,
  % Let $\phi_*(\Delta_2) = [k(\Gamma_2) \colon k(\Delta_2)]
  % \Gamma_2$.
 Then
 \[ \deg(\phi) (\Delta_1, \Delta_2) = e_{\Gamma_1/\Delta_1} e_{\Gamma_2/\Delta_2} [k(w):k(z)] (\Gamma_1, \Gamma_2) .\]
\end{lemma}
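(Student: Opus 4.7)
The plan is to mirror the Cartier-divisor argument in \cite[\S6.1]{LL} and clear denominators to handle the $\mathbb{Q}$-Cartier setting.

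First, I would choose an integer $N \geq 1$ so that $N\Gamma_1$, $N\Gamma_2$, $N\Delta_1$, and $N\Delta_2$ are all Cartier. Both sides of the asserted identity are bilinear in the four divisors and scale by $N^2$ when each divisor is replaced by its $N$-fold (the ramification indices and $\deg(\phi)$ are unaffected), so it suffices to prove the identity in the resulting Cartier setting. Setting $R = \mc{O}_{\mc{Z},z}$ and $S = \mc{O}_{\mc{W},w}$, let $f_i \in R$ be a local equation for the Cartier divisor $N\Delta_i$ on $\mc{Z}$. Since $\Delta_1 \neq \Delta_2$, the pair $(f_1, f_2)$ is an $\mathfrak{m}_R$-primary regular sequence, so $(N\Delta_1, N\Delta_2)_{\mc{Z}} = \mathrm{length}_R(R/(f_1,f_2))$.

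Second, I would use the finite morphism $\phi$ to compare the $R$-length and $S$-length of $S/(f_1,f_2)S$. Since $R$ is a normal local ring of dimension $2$ (hence Cohen--Macaulay) and $S$ is a finite torsion-free $R$-module of generic rank $\deg(\phi)$, additivity of Hilbert--Samuel multiplicities for the parameter ideal $(f_1,f_2)$ acting on $S$ yields
\[
\mathrm{length}_R\bigl(S/(f_1,f_2)S\bigr) \;=\; \deg(\phi)\cdot \mathrm{length}_R\bigl(R/(f_1,f_2)\bigr) \;=\; N^2\deg(\phi)\,(\Delta_1,\Delta_2).
\]
Since $S$ is local and $S/(f_1,f_2)S$ has finite $S$-length, filtering through a composition series gives $\mathrm{length}_R(M) = [k(w):k(z)]\cdot \mathrm{length}_S(M)$ for any such $M$, so the left-hand side equals $[k(w):k(z)] \cdot \mathrm{length}_S(S/(f_1,f_2)S)$.

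Third, I would interpret $\mathrm{length}_S(S/(f_1,f_2)S)$ as the Cartier intersection $(\divi_S f_1, \divi_S f_2)_{\mc{W}}$. The hypothesis $\phi^{-1}(\Delta_1) = \Gamma_1$ forces $\Gamma_1$ to be the unique height-one prime of $S$ above the prime of $R$ defining $\Delta_1$, so $\divi_S f_1 = N e_{\Gamma_1/\Delta_1}\Gamma_1$. Writing $\divi_S f_2 = N e_{\Gamma_2/\Delta_2}\Gamma_2 + E$, the residual effective divisor $E$ is supported on components of $\phi^{-1}(\Delta_2)$ distinct from $\Gamma_2$; the local argument of \cite[\S6.1]{LL} then shows that the one-sided hypothesis $\phi^{-1}(\Delta_1) = \Gamma_1$ is exactly what forces $E$ to contribute nothing to the intersection with $\Gamma_1$ at $w$. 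Consequently $(\divi_S f_1, \divi_S f_2)_{\mc{W}} = N^2 e_{\Gamma_1/\Delta_1} e_{\Gamma_2/\Delta_2}(\Gamma_1,\Gamma_2)$, and combining with the length identity and cancelling $N^2$ yields the asserted formula. The main obstacle is precisely this last vanishing statement for $E$; it is the key local geometric input imported from \cite[\S6.1]{LL}, and everything else is essentially bookkeeping for the scaling and the change-of-rings length formula.
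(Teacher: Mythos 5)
Your route is genuinely different from the paper's: the paper scales to make everything Cartier and then invokes the projection formula on normal schemes (\cite[\S 9.2, Remark 2.13]{LiuBook}) following \cite[\S6.1]{LL}, whereas you run a direct length/multiplicity computation. Your first two steps are sound, modulo a small point worth making explicit: ``multiplicity $=$ length'' for $S/(f_1,f_2)S$ requires $S$ to be a maximal Cohen--Macaulay $R$-module, which holds because $S$ is a $2$-dimensional normal local domain (hence $S_2$, hence $\depth_S S = 2 = \depth_R S$). That observation, together with additivity of multiplicity and the change-of-rings formula, gives exactly the clean identity $\deg(\phi)\,(N\Delta_1, N\Delta_2)_z = [k(w):k(z)]\,(\phi^*(N\Delta_1), \phi^*(N\Delta_2))_w$, which is really what the projection-formula argument also produces, so this is a nice, more self-contained packaging of the same content.

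The gap is in your final step, and it cannot be patched by appeal to \cite[\S6.1]{LL}. You assert that $\phi^{-1}(\Delta_1) = \Gamma_1$ ``forces $E$ to contribute nothing to the intersection with $\Gamma_1$ at $w$.'' That is false: $\mc{W}$ is local of dimension $2$, so every height-one prime of $\mc{O}_{\mc{W},w}$ contains the closed point, hence every component of a nonzero effective $E$ passes through $w$, and $(\Gamma_1, E)_w > 0$. Concretely the formula in the lemma fails under the stated hypotheses: take $R = \mc{O}_K[[x]]$, $S = \mc{O}_K[[y]]$, $\phi$ given by $x \mapsto y(y - \pi_K)$, $\Delta_1 = \divi(\pi_K)$, $\Delta_2 = \divi(x)$, $\Gamma_1 = \divi(\pi_K)$, $\Gamma_2 = \divi(y)$. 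Then $\phi^{-1}(\Delta_1) = \Gamma_1$ and all four of $e_{\Gamma_1/\Delta_1}$, $e_{\Gamma_2/\Delta_2}$, $[k(w):k(z)]$, $(\Gamma_1,\Gamma_2)$, $(\Delta_1,\Delta_2)$ equal $1$, yet $\deg(\phi) = 2$, so the two sides are $2$ and $1$. Your decomposition of $\divi_S f_2$ has made the missing hypothesis visible: one also needs $\phi^{-1}(\Delta_2) = \Gamma_2$ (equivalently $E = 0$). With that added hypothesis the identity from your second step specializes directly to the claimed formula, and nothing further is needed. This extra hypothesis does hold in the paper's only application of the lemma (Lemma~\ref{Ldivisormultiplicities}, where Corollary~\ref{Csmoothoncomponent} guarantees a single component of $\mc{X}$ above each of $D$ and $D'$ through $x$), but it should be stated rather than derived, since as the example shows it is not a consequence of $\phi^{-1}(\Delta_1)=\Gamma_1$.
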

\begin{proof}
 Note that if we define $e_{m\Gamma_i/m\Delta_i}$ by the equation
 $\phi^*(m\Delta_i) \colonequals e_{m\Gamma_i/m\Delta_i} (m\Gamma_i)$,
 we have  $e_{m\Gamma_i/m\Delta_i} = e_{\Gamma_i/\Delta_i}$ since
 $\phi^*$ is a group homomorphism. The projection formula holds for
 intersections of Cartier divisors on normal schemes by \cite[\S9.2,
 Remark~2.13]{LiuBook}. Now, repeat the argument in \cite[\S6.1]{LL} that uses the projection formula after replacing $\Delta_i$ and $\Gamma_i$ with a suitably large integer multiple to make them all Cartier and combine with the first sentence to conclude that
 \[ \deg(\phi)( m\Delta_1, m\Delta_2) = e_{\Gamma_1/\Delta_1} e_{\Gamma_2/\Delta_2} [k(w):k(z)] (m\Gamma_1 , m\Gamma_2) ,\]
 for an integer $m$. Finally divide both sides by $m^2$ and use the bilinearity of the extended intersection pairing. 
\end{proof}

\subsection{Normalizations and regularity}
%\padma{Made $\mc{Z}$ in both Corollary~\ref{Clinear} and Proposition~\ref{P:RegandNormalize} a local arithmetic surface -- from a quick search, we only seem to apply them in the local situation anyway. The only mild annoyance is the normalization $\mc{W}$ is no longer irreducible, but we can still talk about its total ring of fractions instead of fraction field -- it is an \'{e}tale $K(Z)$ algebra. Added the references we talked about to Remark~\ref{R:AllLocal} below}
%\begin{remark}\label{R:AllLocal} 
In the rest of this section, we are interested in understanding local properties (such as regularity, normal crossings etc.) at a closed point of an arithmetic surface $\mc{W}$ obtained as the normalization of an arithmetic surface $\mc{Z}$ in a finite cyclic extension of $K(Z)$. For these purposes, we claim that we may assume that $\mc{Z}$ is a {\textit{local}} arithmetic surface without any loss of generality. Indeed, by \cite[Proposition~11.24]{AM} a Noetherian local ring is regular if an only if its completion is. Furthermore, since $\mc{O}_K$ is a complete discrete valuation ring, and $\mc{W}$ and $\mc{Z}$ are finite type $\mc{O}_K$ schemes, \cite[Chapter~8.2, Theorem~2.39]{LiuBook} guarantees that $\mc{W}$ and $\mc{Z}$ are excellent, and hence taking normalizations and completions commute by \cite[Chapter~8.2, Proposition~2.41]{LiuBook} -- more precisely, if $\phi$ is the finite map $\mc{W} \rightarrow \mc{Z}$, then $\mc{W} \otimes_{\mc{O}_{\mc{Z},z}} \hat{\mc{O}}_{\mc{Z},z} \cong \prod_{w \in \phi^{-1}(z)} \hat{\mc{O}}_{\mc{W},w}$. 
%\end{remark}

\begin{lemma}\label{L:ComputingNormalization}\label{L:WhenRegular}
Let $(R,\mathfrak{m})$ be a regular complete local $2$-dimensional integral
 domain with fraction field $K$. Fix an integer $d \geq 2$ coprime to
 $\chara (R/\mathfrak{m})$. Let $f$ be a
 nonzero element of $R$ with irreducible factorization $f_1^{a_1}
 \ldots f_r^{a_r}$ for some integers $a_i$, such that $\gcd(d,a_1,\ldots,a_r)=1$. Let $L =
 K[v]/(v^d-f)$. Let $e_i \colonequals d/\gcd(d,a_i)$. Let $S$ be the normalization of $R$ in $L$. Then,
 \begin{enumerate}[\upshape (i)]
 \item\label{L:eiisramindex} The integer $e_i$ is the ramification index of every prime divisor lying above $(f_i)$. 
 \item\label{L:NotReg} If the $e_i$ are not pairwise relatively prime, then $S$ is not regular.
 \end{enumerate}
 For the remainder of the lemma, assume that the $e_i$ are pairwise relatively prime.
 \begin{enumerate}[\upshape (i)]
 \setcounter{enumi}{2}
 \item\label{L:viExist} For $1 \leq i \leq r$, there are elements $v_i$ in $S$ satisfying $v_i^{e_i} = f_i$ and $L=K(v_1,v_2,\ldots,v_r)$.
  \item\label{L:Bpresent} Suppose $v_i$ are as the previous part. Then $S \cong R[v_1,\ldots,v_r]/(v_1^{e_1}-f_1, \ldots, v_r^{e_r}-f_r)$. 
  \item\label{L:RegOnlyIf} $S$ is regular if and only if one of the following $3$ conditions hold: (a) $r=0$, (b) $r=1, f_1 \in \mathfrak{m} \setminus \mathfrak{m}^2$, and (c) $r=2, \mathfrak{m} = (f_1,f_2)+\mathfrak{m}^2$.
 \end{enumerate}
\end{lemma}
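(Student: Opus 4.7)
For (i), I would localize $R$ at $(f_i)$, reducing to a standard Kummer calculation: for any extended valuation $w$ at a prime above $(f_i)$, the relation $d\cdot w(v)=e\cdot a_i$ (with $e$ the ramification) together with integrality of $w(v)$ forces $e_i=d/\gcd(d,a_i)$ to divide $e$, and equality is the familiar fact for Kummer extensions in tame residue characteristic; Galoisness of $L/K$ propagates this to every prime above. For (ii), I would proceed by contradiction via Abhyankar's lemma in dimension two: if $S$ were regular then $\Spec S\to\Spec R$ would be a tame cover of regular $2$-dimensional schemes, and purity of branch locus together with the structure theorem for tame covers along an SNC divisor would locally present $S$ as $R[\sqrt[n_1]{f_1},\sqrt[n_2]{f_2}]$ with $f_1,f_2$ part of a regular system of parameters of $R$. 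Comparing Galois groups---cyclic $\ints/d$ on one side versus $\mu_{n_1}\times\mu_{n_2}$ on the other---forces $\gcd(n_1,n_2)=1$, and combined with the identification $n_i=e_i$ from (i) this contradicts the failure of pairwise coprimality.

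The first step for (iii) is a small number-theoretic observation: under pairwise coprimality of the $e_i$ together with $\gcd(d,a_1,\ldots,a_r)=1$, one has $\gcd(a_i,e_i)=1$ for each $i$. Indeed, any common prime $p$ of $a_i$ and $e_i$ would satisfy $v_p(a_j)\ge v_p(d)$ for all $j\neq i$ (pairwise coprimality forcing $p\nmid e_j$), which together with $p\mid a_i$ would yield $p\mid\gcd(d,a_1,\ldots,a_r)$, a contradiction. Pairwise coprimality also gives $e_i\mid d/e_j=\gcd(d,a_j)\mid a_j$ for $j\neq i$, so the element $w_i:=v^{d/e_i}\big/\prod_{j\neq i}f_j^{a_j/e_i}\in L$ is well-defined and satisfies $w_i^{e_i}=f_i^{a_i}$. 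Choosing $\alpha,\beta\in\ints$ with $\alpha a_i+\beta e_i=1$ by Bezout, the element $v_i:=w_i^\alpha f_i^\beta$ satisfies $v_i^{e_i}=f_i$ on the nose, and lies in $S$ because it is integral over $R$. Finally $[K(v_1,\ldots,v_r):K]=\prod_i e_i$ (pairwise coprime field degrees multiply), and the gcd hypothesis forces $\prod_i e_i=\lcm_i e_i=d$ (each $v_p$-valuation of $d$ is attained by some $e_i$), so $L=K(v_1,\ldots,v_r)$.

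For (iv), let $A:=R[T_1,\ldots,T_r]/(T_i^{e_i}-f_i:1\le i\le r)$; it is free of rank $\prod_i e_i=d$ over $R$, and the evaluation map $A\to S$ sending $T_i\mapsto v_i$ is injective by $K$-dimension count. I would then verify $A$ is normal via Serre's $R_1+S_2$ criterion: $A$ is Cohen--Macaulay because it is finite free over the regular ring $R$, and regularity in codimension one is checked prime-by-prime---away from the $(f_i)$ the extension is étale (using that the residue characteristic is coprime to $d$), while over $(f_i)$ the other $T_j$'s become units (since $f_j\notin(f_i)$ for $j\neq i$), so $A$ localizes to the Eisenstein extension $R_{(f_i)}[T_i]/(T_i^{e_i}-f_i)$ of a DVR, which is itself a DVR. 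Normality of $A\subseteq S\subseteq L=\Frac(A)$ then forces $A=S$. For (v), now $S=A$ is local with $\mathfrak{n}=(v_1,\ldots,v_r)+\mathfrak{m}S$; after dropping any $f_i$ with $e_i=1$ (which contribute nothing to the extension) so that each $e_i\ge 2$, the relations $v_i^{e_i}=f_i$ place each $f_i$ in $\mathfrak{n}^2$, so a direct cotangent-space computation gives
\[
\mathfrak{n}/\mathfrak{n}^2\;\cong\;k^r\;\oplus\;\mathfrak{m}\big/\bigl(\mathfrak{m}^2+(f_1,\ldots,f_r)\bigr),
\]
with $k=R/\mathfrak m$, of $k$-dimension $r+2-\dim_k\langle\bar f_1,\ldots,\bar f_r\rangle$ (the bar denoting reduction modulo $\mathfrak m^2$). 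Regularity of $S$ is equivalent to this equaling $2$, i.e.\ $r=\dim_k\langle\bar f_1,\ldots,\bar f_r\rangle\le 2$, which isolates exactly the three listed cases (a), (b), (c). The main obstacle throughout is (ii): without pairwise coprimality the clean presentation from (iv) is unavailable, so the argument must rest on Abhyankar's lemma for tame covers of regular two-dimensional schemes combined with the Galois-theoretic rigidity of cyclic covers.
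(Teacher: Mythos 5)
Your parts (iii), (iv), and (v) track the paper's argument closely and are correct. Two small things worth flagging. In (iii), you carefully derive $\gcd(a_i,e_i)=1$ from pairwise coprimality of the $e_j$ together with $\gcd(d,a_1,\ldots,a_r)=1$; the paper's proof asserts this ``by definition of $e_i$,'' which is false as stated (take $d=4$, $a_i=2$), so your extra argument is actually a needed repair rather than pedantry. In (iv), your appeal to ``finite free over regular $\Rightarrow$ Cohen--Macaulay'' plus an $R_1$ check is a slightly cleaner packaging of the paper's $S_2$/$R_1$ verification. In (v), your observation that one may drop the factors with $e_i=1$ is the right normalization (the paper's proof silently assumes $e_i\geq 2$), and your unified cotangent formula
\[
\mathfrak{n}/\mathfrak{n}^2\;\cong\;k^r\oplus\mathfrak{m}/\bigl(\mathfrak{m}^2+(f_1,\ldots,f_r)\bigr)
\]
neatly replaces the paper's case-by-case discussion of $r=1$, $r=2$, $r\geq 3$, and immediately yields the three alternatives.

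For part (ii) you take a genuinely different route, and it is here that there is a gap. You invoke ``purity of branch locus together with the structure theorem for tame covers along an SNC divisor'' to conclude that, if $S$ were regular, the cover would locally be $R[\sqrt[n_1]{f_1},\sqrt[n_2]{f_2}]$ with $f_1,f_2$ a regular system of parameters. But purity of the branch locus only says that the branch locus is purely of codimension one; it does not say the branch divisor is a simple normal crossings divisor, and Abhyankar's lemma only applies \emph{given} that SNC hypothesis. The fact that a tame Galois cover $\Spec S\to\Spec R$ of two-dimensional regular local rings necessarily has SNC branch divisor (and at most two branch components through the closed point, which is also needed to rule out $r\geq 3$) is a genuine theorem---essentially a local Chevalley--Shephard--Todd statement or the classification of tame cyclic quotient singularities---and you would have to cite and verify it rather than treat it as an automatic consequence of purity. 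Granting that input, the Galois-group comparison does work, but the overall machine is much heavier than needed. The paper's argument is more elementary and self-contained: since a regular local ring is a UFD, a height-one prime $\mathfrak{q}_i$ over $(f_i)$ is principal, say $\mathfrak{q}_i=(v_i)$, and after adjusting by units (all of which are $e_i$-th powers by Lemma~\ref{Lallnthroots}) one may take $v_i^{e_i}=f_i$; then the unique degree-$e:=\gcd(e_i,e_j)$ subextension of the Kummer extension $L/K$ would be generated both by $v_i^{e_i/e}$ and by $v_j^{e_j/e}$, forcing $f_i/f_j$ to be an $e$-th power in $K$, contradicting unique factorization in $R$. I would encourage you to internalize this UFD-plus-Kummer argument: it avoids the SNC issue entirely and handles all $r$ uniformly.
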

\begin{proof}\hfill
\begin{enumerate}[\upshape (i)]
\item This is immediate.
\item We will assume that $\gcd(e_i,e_j) > 1$ for some $i \neq j$ and $S$ is regular and arrive at a contradiction. Since $S$ is an integral extension of $R$, by the going-up theorem, for every $i$, there is a height $1$ prime ideal $\mathfrak{q}_i$ lying above $(f_i)$.  Since $S$ is regular, it is a unique factorization domain, and every height $1$ prime ideal is principal \cite[\href{https://stacks.math.columbia.edu/tag/0FJH}{Lemma~15.121.2}, \href{https://stacks.math.columbia.edu/tag/034O}{Lemma~10.120.6}]{StacksProject}. Let $v_i$ be a generator for $\mathfrak{q}_i$. Since the ramification index of $\mathfrak{q}/(f_i)$ is $e_i$, and since $e_i$ divides $d$ and all units are $e_i^{\mathrm{th}}$ powers by Lemma~\ref{Lallnthroots}, we may assume that $v_i^{e_i} = f_i$ without any loss of generality. Since $L/K$ is a Kummer extension, there is a unique subextension of degree $e \colonequals \gcd(e_i,e_j)$, contained in both the unique subextension $K(v_i)$ of degree $e_i$ and the unique subextension $K(v_j)$ of degree $e_j$. This extension can therefore be generated by both $v_i' \colonequals v_i^{e_i/e}$ and $v_j' \colonequals v_j^{e_j/e}$. Since $K$ has all $e^{\mathrm{th}}$ roots of unity, since $v_i'^{e} = f_i, v_j'^{e_j} = f_j$, Kummer theory says the two extensions $K(v_i')$ and $K(v_j')$ are equal if and only if $f_i/f_j$ is an $e^{\mathrm{th}}$ power in $K$. Since $f_i,f_j$ are distinct irreducible elements in the unique factorization domain $R$, this is a contradiction. 
 \item 
 Since $\gcd(e_i,a_i) = 1$ by definition of $e_i$, it follows that there are integers $k_i,c_i$ such $a_i k_i = 1 + c_i e_i$. Suppose $i \neq j$. Since $\gcd(e_i,e_j) = 1$ and $d= \gcd(d,a_j) e_j = \gcd(d,a_i) e_i$, it follows that $e_i$ divides $\gcd(d,a_j)$, and hence $e_i$ divides $a_j$. Define the integer $c_j \colonequals a_j/e_i$ for $j \neq i$. Consider the element $v_i \colonequals v^{k_i \gcd(d,a_i)}/f_i^{c_i} \prod_{\substack{j=1, j \neq i}}^r{f_j^{c_j k_i}}$. Then combining $v^d=f$ with the definitions of $c_i, k_i, e_i, v_i$ we get that
 \[ v_i^{e_i} = \frac{v^{k_i \gcd(d,a_i) e_i}}{f_i^{c_i e_i} \prod_{\substack{j=1, j \neq i}}^r{f_j^{c_j k_i e_i}}} = \frac{(v^d)^{k_i}}{f_i^{a_i k_i-1} \prod_{\substack{j=1 \\ j \neq i}}^r{f_j^{a_j k_i}}} = \frac{\prod_{j=1}^r f_j^{a_j k_i}}{f_i^{a_ik_i-1} \prod_{\substack{j=1 \\ j \neq i}}^r{f_j^{a_j k_i}}} = f_i. \]
 %where the last equality follows from $a_i k_i = 1 + c_i e_i$ and $a_j k_i = c_j e_i k_i$ for $j \neq i$.
Since $x^{e_i}-f_i$ is Eisenstein at $f_i$, it follows that $K(v_i)/K$ is a degree $e_i$ extension of $K$ that is totally ramified above the prime ideal $(f_i)$. Since $\gcd(d,a_1,\ldots,a_r) = 1$ implies that $\gcd(d/e_1,d/e_2,\ldots,d/e_r) = 1$, it follows that $v$ is in the compositum of the extensions $K(v^{d/e_i})$ of $K$. It remains to show $K(v^{d/e_i}) = K(v_i)$. This follows since both $K(v_i)$ and $K(v^{d/e_i})$ are both subextensions of $K(v)$ of degree $e_i$ over $K$, and the fact that $K(v)/K$ has a unique subextension of degree $e_i$ by virtue of being a Kummer extension of degree $d$ (since $K$ has all $d$-th roots of unity by Lemma~\ref{Lallnthroots} and our assumption that $d$ is coprime to $\chara (R/\mathfrak{m})$).

\item 
%Since $K(v_i)/K$ is totally ramified above $(f_i)$, the integral closure of $R$ in $K(v_i)$ is $R[v_i]/(v_i^{e_i}-f_i)$. \padma{justify?} Furthermore, since the degrees of the extensions $K(v_i)/K$ are relatively prime, and the discriminants $e_i f_i^{e_i-1}$ of $K(v_i)/K$ are relatively prime in $R$, it follows that the integral closure of $R$ in the compositum $K(v_1,\ldots,v_r)/K$ is the tensor product of $R[v_i]/(v_i^{e_i}-f_i)$ over $R$. \padma{add ref, maybe do we need the rest?} 
%\padma{Replace by $K(v_i)/K$ a totally ramified extensions of coprime degrees with relatively prime discriminant ideals, so the ring of integers is the tensor product of the ring of integers $R[v_i]/(v_i^{e_i}-f_i)$ for the individual extensions.}

  %Since the $e_i$ are pairwise relatively prime, $\deg(K(v_i)/K) = e_i$ and $L=K(v_1,v_2,\ldots,v_r)$, it follows that $d = \deg(L/K) = e_1e_2 \ldots e_r$. If the degree of $v_i$ over $K(v_1,\ldots,v_{i-1})$ is strictly smaller than $e_i$ for any $i$, then considering the product of degrees of successive extensions in the tower $K \subset K(v_1) \subset \cdots K(v_1,v_2,\ldots,v_r) = L$  would imply that $d = \deg(L/K) < e_1 e_2 \ldots e_r$, a contradiction. This shows that $C$ is an integral domain.

We check that the ring $C \colonequals R[v_1,\ldots,v_r]/(v_1^{e_1}-f_1, \ldots, v_r^{e_r}-f_r)$ is a subring of $S$ that satisfies Serre's R1+S2 criterion for normality  \cite[\href{https://stacks.math.columbia.edu/tag/031S}{Lemma  031S}]{StacksProject}. Since $\Frac(C) = L$ by the previous part, this would tell us that $C=S$.   Since $C$ is visibly integral over $R$, it follows that $C$ is also a local ring of dimension $2$. Therefore, to verify that $C$ satisfies $S2$ it suffices to check that $\depth C_{\mathfrak{m}'} \geq \min (2,\height(\mathfrak{m}')) = 2$ for the unique height $2$ prime ideal $\mathfrak{m}'$. This follows since $C_{\mathfrak{m}'}/\mathfrak{m}' \cong C/\mathfrak{m}'$ is reduced.

To check the R1 condition, we have to check that the localization of $C$ at every height $1$ prime ideal $\mathfrak{q}$ is a discrete valuation ring. Let $\mathfrak{p} \colonequals \mathfrak{q} \cap R$. The normality of $R$ implies that $R$ is R1 and hence $R_{\mathfrak{p}}$ is regular. If $\mathfrak{p}$ is not supported on any of the $f_i$, then $R_{\mathfrak{p}} \hookrightarrow C_{\mathfrak{q}}$ is an \'{e}tale extension, which in turn implies that $C_{\mathfrak{q}}$ is also regular by \cite[p.49, Proposition 9]{BLR}. If $\mathfrak{p} = (f_i)$, let $v_i$ be as in the previous part. We will show that $(v_i) = \mathfrak{q}$ by arguing that $v_i$ is an element of minimal positive $\mathfrak{q}$-adic valuation.  Let $v_{\mathfrak{q}}$ be the valuation on $K$ extending the valuation $v_{\mathfrak{p}}$ on $K$. Since $v_i^{e_i} = f_i$, it follows that $v_i \in \mathfrak{q}$ and $v_{\mathfrak{q}}(v_i) = v_{\mathfrak{q}}(f_i)/e_i = v_{\mathfrak{p}}(f_i)/e_i = 1/e_i$. For any $j \neq i$, since $f_j \notin \mathfrak{p} = \mathfrak{q} \cap K$, it follows that $v_{\mathfrak{q}}(f_j) = 0$, and hence \begin{equation}\label{E:ValGrp} a_i = \sum_{j=1}^r a_j v_{\mathfrak{q}}(f_j) = v_{\mathfrak{q}}(f_1^{a_1} \ldots f_r^{a_r}) = v_{\mathfrak{q}}(v^d) = d v_{\mathfrak{q}}(v).   \end{equation} This shows $v_{\mathfrak{q}}(v) = a_i/d$, and since $v$ generates the Kummer extension $K$ of $L$, the value group of $v_{\mathfrak{q}}$ is $(a_i/d) \mathbb{Z} = (1/e_i) \mathbb{Z}$ by definition of $e_i$.  Since the value group of $v_{\mathfrak{q}}$ is $(1/e_i) \mathbb{Z}$ by \eqref{E:ValGrp}, and $v_{\mathfrak{q}}(v_i) = 1/e_i$, it follows that $\mathfrak{q} = (v_i)$ and $C_{\mathfrak{q}}$ is a discrete valuation ring as claimed.

\item Let $\mathfrak{m}'$ be the unique maximal ideal of $S$. Then $S$ is regular if and only if $\mathfrak{m}'/\mathfrak{m'}^2$ is generated by $2$ elements. We first show that $S$ is regular in the three cases listed. If $r = 0$, then $S \cong R$ and is regular. If $r = 1$ and  $f_1 \in \mathfrak{m} \setminus \mathfrak{m}^2$, we may complete $f_1$ to a system of parameters $f_1,g_2$ for the regular local ring $R$. Then the maximal ideal of $S = R[v_1]/(v_1^{e_1}-f_1)$ is generated by $v_1,g_2$ modulo $\mathfrak{m'}^2$ and is therefore also regular. If $r=2, \mathfrak{m} = (f_1,f_2)+\mathfrak{m}^2$, and $v_1,v_2$ satisfy $v_1^{e_1} = f_1$ and $v_2^{e_2} = f_2$, then the unique maximal ideal $\mathfrak{m}'$ of $S = R[v_1,v_2]/(v_1^{e_1}-f_1,v_2^{e_2}-f_2)$ is $(v_1,v_2,f_1,f_2) = (v_1,v_2) + \mathfrak{m'}^2$. Therefore $R$ is regular.

If we are not in one of the three cases above, then either (i) $r=1$ and $f_1 \in \mathfrak{m}^2$, or (ii) $r=2$ and $(f_1,f_2) + \mathfrak{m}^2$ is a proper subideal of $\mathfrak{m}$, or (iii) $r \geq 3$. We now need to show that $\dim(\mathfrak{m}'/\mathfrak{m'}^2) \geq 3$ in each of these $3$ cases. Since $\mathfrak{m'} = \mathfrak{m}+(v_1,\ldots,v_r)$ and $e_i \geq 2$ for every $i$, we have that $f_i = v_i^{e_i} \in \mathfrak{m'}^2$ for every $i$ and therefore
\begin{equation}\label{E:Spresent} S/\mathfrak{m'}^2 \cong R[v_1,\ldots,v_r]/\mathfrak{m'}^2 \cong \left( R/ \mathfrak{m'}^2 \cap R \right) [v_1,\ldots,v_r]/(v_1^2,\ldots,v_r^2). \end{equation}

First assume that $r=1$ and $f_1 \in \mathfrak{m}'^2$. We will first show that $\mathfrak{m}'^2 \cap R = \mathfrak{m}^2$. Since $\mathfrak{m}' = \mathfrak{m}+(v_1)$, it follows that $\mathfrak{m'}^2 = \mathfrak{m}^2 + (v_1)\mathfrak{m}+(v_1^2)$.  If $e_1 \geq 3$, then $1,v_1,v_1^2$ are linearly independent over $K$, and it follows that $\mathfrak{m'}^2 \cap R = \left( \mathfrak{m}^2 + (v_1)\mathfrak{m}+(v_1^2) \right) \cap R = \mathfrak{m}^2$. If $e_1 = 2$, then $1,v_1$ are linearly independent over $K$ and $(v_1^2) = (f_1) \subset \mathfrak{m}^2$, and once again $\mathfrak{m'}^2 \cap R = \left( \mathfrak{m}^2 + (v_1)\mathfrak{m}+(v_1^2) \right) \cap R = \mathfrak{m}^2$. It follows that 
\[ S/\mathfrak{m'}^2 \cong \left( R/ \mathfrak{m'}^2 \cap R \right) [v_1]/(v_1^2) \cong \left( R/\mathfrak{m}^2 \right) [v_1]/(v_1^2).\] This presentation shows that if $g_1,g_2$ are a basis for $\mathfrak{m}/\mathfrak{m}^2$, then $g_1,g_2,v_1$ are a basis for $\mathfrak{m}'/\mathfrak{m'}^2$.

Now assume that $r=2$ and $(f_1,f_2) + \mathfrak{m}^2$ is a proper subideal of $\mathfrak{m}$. 
%\[ S/\mathfrak{m'}^2 \cong R[v_1,v_2]/\mathfrak{m'}^2 \cong \left( R/ \mathfrak{m'}^2 \cap R \right) [v_1,v_2]/(v_1^2,v_2^2). \] 
Let $g \in \mathfrak{m} \setminus \left( (f_1,f_2) + \mathfrak{m}^2 \right)$. To show that that $g,v_1,v_2$ are linearly independent elements of $\mathfrak{m}'/\mathfrak{m'}^2$, by \eqref{E:Spresent} and the third isomorphism theorem, it suffices to show that $g \notin \mathfrak{m'}^2 \cap R$.  Since $\mathfrak{m}' = \mathfrak{m}+(v_1,v_2)$, it follows that $\mathfrak{m'}^2 = \mathfrak{m}^2 + (v_1)\mathfrak{m}+ (v_2) \mathfrak{m}+(v_1v_2) + (v_1^2,v_2^2)$. Since $1,v_1,v_2,v_1v_2$ are linearly independent over $K$ and since $(v_1^2,v_2^2) \cap R \subset(f_1,f_2)$, it follows that $\mathfrak{m}'^2 \cap R \subset \mathfrak{m}^2 + (f_1,f_2)$. Since $g \notin \mathfrak{m}^2 + (f_1,f_2)$ by assumption, $g$ is also not in $\mathfrak{m}'^2 \cap R$. 

If $r \geq 3$, then $v_1,v_2,v_3$ are linearly independent elements of $S/\mathfrak{m'}^2$ and hence $S$ is not regular. \qedhere
\end{enumerate}
\end{proof}

\begin{remark}\label{R:WildNormalizeNP} Normalizations of a ring $A$ in an extension $L$ of its fraction field $K$ are harder to compute when the residue characteristic of $A$ divides the degree of the extension $L/K$ already when $\dim A = 1$. For example, let $A = \mathbb{Z}_2, K = \mathbb{Q}_2, L = \mathbb{Q}_2(\sqrt{-3})$. Then since $3$ is a unit in $A$, the analogue of the ring $B$ in the lemma above would be the ring $B \colonequals \mathbb{Z}_2[x]/(x^2+3)$ -- this ring is wildly ramified at $2$ above $\mathbb{Z}_2$, and is not regular at the unique prime $\mathfrak{m} = (2,x-1)$ above $2$ since the defining equation $x^2+3 \in \mathfrak{m}^2$. The normalization is obtained adjoining the element $y \colonequals (x-1)/2$ satisfying $y^2+y+1=0$ to $B$.
\end{remark}

\begin{lemma}\label{Lallnthroots}
If $\mc{X}$ is a local arithmetic surface, $x \in \mc{X}$ is a closed
point, and $d$ is prime to the residue characteristic, then all units in
$\mc{O}_{\mc{X}, x}$ are $d$th powers.
\end{lemma}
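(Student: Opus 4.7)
The plan is to invoke Hensel's lemma on the polynomial $T^d - u$ for an arbitrary unit $u \in \mc{O}_{\mc{X},x}$.

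First, I would note that by the definition of a local arithmetic surface given earlier in \S\ref{Sregularprelims}, the coordinate ring $\mc{O}_{\mc{X},x}$ is a complete Noetherian local ring, and since $\mc{X}$ is a finite type $\mc{O}_K$-scheme with $k$ algebraically closed (by our blanket assumption in the Notation and Conventions section), the residue field of $\mc{O}_{\mc{X},x}$ at its maximal ideal $\mathfrak{m}_{\mc{X},x}$ is a finite extension of $k$, hence equals $k$ itself.

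Next, given a unit $u$, consider $f(T) \colonequals T^d - u \in \mc{O}_{\mc{X},x}[T]$ and its reduction $\bar{f}(T) = T^d - \bar{u} \in k[T]$, where $\bar{u} \in k^\times$ is the image of $u$. Since $k$ is algebraically closed, there exists $\alpha \in k^\times$ with $\alpha^d = \bar{u}$. Because $d$ is prime to $\chara k$, the element $d\alpha^{d-1}$ is a nonzero element of $k$, so $\alpha$ is a simple root of $\bar{f}$.

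Finally, since $\mc{O}_{\mc{X},x}$ is a complete local Noetherian ring, Hensel's lemma lifts $\alpha$ to a root $\beta \in \mc{O}_{\mc{X},x}$ of $f$, giving $\beta^d = u$. There is no serious obstacle here; the only content is verifying that the hypotheses of Hensel's lemma hold, which reduces to the observation that the residue field is algebraically closed and $d$ is a unit in it.
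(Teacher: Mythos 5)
Your proof is correct and takes essentially the same approach as the paper: both exploit that $k$ is algebraically closed to find a $d$th root of the residue of $u$, and both use completeness together with the invertibility of $d$ to lift. The paper packages the lifting step as an explicit binomial-series construction after first normalizing $u$ to the form $1+m$, whereas you invoke Hensel's lemma directly on $T^d - u$; these are the same underlying argument.
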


\begin{proof}
Let $u \in \mc{O}_{\mc{X},x}^{\times}$.  Since the residue
field $k$ is
algebraically closed, we may assume, after multiplying $u$ by a $d$th
power, that $u = 1 + m$, with $m \in \mf{m}_{\mc{X}, x}$.  Then, since
$d$ is a unit in $\mc{O}_{\mc{X}, x}$,
one can explicitly construct an $d$th root of $u$ using the binomial
expansion and the fact that $\mc{O}_{\mc{X}, x}$ is
$\mf{m}_{\mc{X}, x}$-adically complete. 
\end{proof}

Lemma~\ref{Lallnthroots} shows in particular that if $y \in \mc{Y}$ is a closed
point on an arithmetic surface, then the normalization of $\Spec \hat{\mc{O}}_{\mc{Y},y}$ in a
Kummer extension
$\hat{\mc{O}}_{\mc{Y},y}[z]/(z^n - g)$ is completely determined by the \emph{divisor} of $g$ in $\Spec \hat{\mc{O}}_{\mc{Y},y}$.

\begin{comment}
\andrew{I'm worried about the second part here: If we take something
      like the $v_0$-model of $y^3 = x^2 - \pi_K$ and take $z$ to be
      the closed point at $x = 0$, then all hypotheses are satisfied
      and $\mc{W}$ is regular at $w$, but $\mc{W}$ is \emph{not}
      normal crossings at $w$.  However, we only need the
      normal crossings assertion when $z$ lies on a \emph{vertical}
      component of $B$, which shouldn't be a problem.} 
\end{comment}
\begin{prop}\label{P:RegandNormalize}
 Let $\phi \colon \mc{W} \rightarrow \mc{Z}$ be a finite morphism
  of local arithmetic surfaces over $\mc{O}_K$ with branch divisor
  $B$.  Let $w$, $z$ be the closed
 points of $\mc{W}$ and $\mc{Z}$ respectively, such that $\phi(w) =
 z$. Assume that $\phi$ is
 cyclic of degree $\delta$ with $\chara k \nmid \delta$, and
 % of degree $d$.
 %\andrew{it will gel better with the applications
%   to the rest of the paper if we actually let $d$ be the ramification
%   index at $z$, and possibly call it something else.}
 that $\mc{Z}$ is regular with normal crossings.  Let $\phi_K \colon W
 \to Z$ be the generic fiber of $\phi$.

 \begin{enumerate}[\upshape (i)]
   \item If $B$ is irreducible and either empty or vertical,
 %Let $B_{\mathrm{bad}}$ be the set of nonregular points of the branch locus $B$. Then
% \begin{enumerate}[\upshape (i)] 
%\item\label{P:wheninBbad} The closed point $z$ is in $B \setminus
%  B_{\mathrm{bad}}$ if and only if the total ring of fractions $K(W)$ is
%  generated as a $K(Z)$-algebra by an element $v$ that satisfies
%  $v^d=h^r$ for some $h \in \mathfrak{m}_{\mc{Z},z} \setminus
%  \mathfrak{m}_{\mc{Z},z}^2$ and some $1 \leq r < d$.
  %\andrew{Make
  %  sure the slightly weaker statement here doesn't break anything
  %  further on.}
%and $1 \leq r < d$ with $\gcd(r,d)=1$. 
%\item\label{P:WhenIsUpstairsReg} The scheme $\mc{W}$ is regular at $w$ if and only if $z$ is not contained in $B_{\mathrm{bad}}$.
%\item If additionally $\mc{Z}$ is normal crossings at $z$ and $z$
%lies on a unique vertical component of $B$ and $z$ is not in
%$B_{\mathrm{bad}}$,
 then $\mc{W}$ is regular with normal crossings.  Furthermore, 
 if $z$ is non-nodal, then $w$ is non-nodal.
\item Assume further that
$\mc{Z}$ is \emph{smooth} over $\Spec
\mc{O}_K$.  If $q \in Z$ is a branch point of $\phi_K$ specializing
to $z$, let $s$ be the degree of $q$ over $K$.  Then $\mc{W}$ is regular with normal crossings if and
only if one of the following two cases holds:
\begin{enumerate}[\upshape (a)]
  \item $B$ is irreducible and regular, with either $s = 1$ or
    $\delta = s = 2$.
  \item $B$ consists of unique horizontal and
    vertical irreducible components $B_1$ and $B_2$, the
    ramification indices of $\phi$ over $B_1$ and $B_2$ are relatively
    prime, and $s = 1$.
  \end{enumerate}

\end{enumerate}
\end{prop}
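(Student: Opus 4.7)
The plan is to reduce to the Kummer setup of Lemma~\ref{L:WhenRegular}, enumerate the configurations in which $\mc{W}$ is regular, and then decide in each surviving case whether $\hat{\mc{O}}_{\mc{W},w}$ has the normal crossings form $\mc{O}_K[[s_1,s_2]]/(s_1^a s_2^b - u\pi_K)$. Since normalization commutes with completion for excellent schemes and $k$ is algebraically closed, we work on the completed local ring $R := \hat{\mc{O}}_{\mc{Z},z}$. Because $\cha k \nmid \delta$ and $K$ contains all $\delta$-th roots of unity (as $k$ is algebraically closed and $K$ is Henselian), we have $K(\mc{W}) = K(\mc{Z})[v]/(v^\delta - g)$ for some $g \in \Frac(R)^\times$. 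By Lemma~\ref{Lallnthroots} the normalization depends only on the divisor of $g$, so we may take $g = f_1^{a_1}\cdots f_r^{a_r}$ where the $f_i$ are the distinct irreducibles in $R$ cutting out the components of $B$ at $z$, with $0 < a_i < \delta$ and $\gcd(\delta,a_1,\ldots,a_r) = 1$; the ramification index above $B_i$ is then $e_i = \delta/\gcd(\delta,a_i)$.

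For part~(i), if $B = \emptyset$ the cover is \'etale and both claims are immediate. Otherwise, write $R \cong \mc{O}_K[[t_1,t_2]]/(t_1^a t_2^b - u\pi_K)$ in normal crossings form and take $f_1 = t_1$. Lemma~\ref{L:WhenRegular}(iv) gives $\hat{\mc{O}}_{\mc{W},w} \cong R[v_1]/(v_1^{e_1} - t_1)$; substituting $t_1 = v_1^{e_1}$ produces $\mc{O}_K[[v_1,t_2]]/(v_1^{a e_1} t_2^b - u\pi_K)$, visibly in normal crossings form. The exponent of the second variable ($b$) at $w$ matches that at $z$, so $w$ is non-nodal whenever $z$ is.

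For part~(ii), $R = \mc{O}_K[[t]]$ with $\mf{m} = (\pi_K, t)$; after Weierstrass preparation, each horizontal $f_i$ equals a monic polynomial $p_i(t)$ of degree $m_i$ equal to the degree $s_i$ of its generic fiber over $K$, and a vertical $f_i$ may be taken equal to $\pi_K$. Lemma~\ref{L:WhenRegular} imposes $r \leq 2$, the $e_i$ pairwise coprime, and---for $r \in \{1,2\}$---the spanning condition in $\mf{m}/\mf{m}^2 \cong k\pi_K \oplus kt$ from parts (b), (c) of the lemma. The tangent vector of horizontal $f_i = p_i(t)$ is $(\bar{u}_{i,0}, 1)$ when $m_i = 1$ and $(\bar{u}_{i,0}, 0)$ when $m_i \geq 2$ is Eisenstein (where $\bar{u}_{i,0} = p_i(0)/\pi_K \bmod \pi_K$); the tangent of $\pi_K$ is $(1,0)$. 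Thus regularity rules out $r \geq 3$, the $r = 2$ horizontal-plus-vertical case with $m_1 \geq 2$, and the $r = 2$ both-horizontal case with both $m_i \geq 2$. In each surviving configuration, eliminate $t$ via $v_i^{e_i} = f_i$: for $r = 2$ horizontal-plus-vertical with $m_1 = 1$ (hence $s = 1$), elimination yields $\hat{\mc{O}}_{\mc{W},w} \cong \mc{O}_K[[v_1,v_2]]/(v_2^{e_2} - \pi_K)$, manifestly in normal crossings form, confirming case~(b); for $r = 1$ horizontal with $m = 1$, elimination yields $\mc{O}_K[[v_1]]$, smooth and confirming the $s = 1$ branch of case~(a). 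The remaining configurations---$r = 1$ horizontal with $m \geq 2$ Eisenstein, and $r = 2$ both-horizontal with at most one $m_i = 1$---produce a $\hat{\mc{O}}_{\mc{W},w}$ whose reduced special fiber has the form $k[[x,y]]/(y^\beta - c x^\alpha)$ for a unit $c$ and some $\alpha, \beta \geq 2$.

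The main technical obstacle is the normal crossings analysis of this curve. Because $k$ is algebraically closed and $\cha k \nmid \beta$, the polynomial factors as $y^\beta - c x^\alpha = \prod_{\zeta^d = 1}(y^{\beta/d} - \zeta c^{1/d} x^{\alpha/d})$ where $d = \gcd(\alpha, \beta)$; this defines a normal crossings divisor at the origin if and only if $d \leq 2$, each factor is smooth (forcing $\alpha/d = 1$ or $\beta/d = 1$), and, when $d = 2$, the two branches meet transversely. For the $r = 1$ horizontal case with $m \geq 2$ (so $\alpha = m$, $\beta = e_1$), these conditions force $e_1 = m = 2$, which combined with $\gcd(\delta, a_1) = 1$ yields $\delta = 2$, i.e., case~(a) with $\delta = s = 2$. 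For the $r = 2$ both-horizontal cases, the coprimality $\gcd(e_1, e_2) = 1$ together with $e_1, e_2 \geq 2$ is incompatible with the normal crossings conditions, so these are ruled out. Combined with the elimination computations above, this completes the characterization as case~(a) or (b).
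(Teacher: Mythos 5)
Your proof is correct and follows essentially the same strategy as the paper: reduce to the completed local ring and a Kummer presentation $v^\delta = g$ via Lemma~\ref{Lallnthroots}, apply Lemma~\ref{L:WhenRegular} to constrain regularity, then compute special fibers in the surviving configurations and test for normal crossings. The paper handles part~(i) with an identical unit-absorption and change-of-variables argument, and handles part~(ii) by directly computing $\hat{\mc{O}}_{\mc{W},w}/\pi_K \cong k[[t]][v]/(v^\delta - t^s)$ (resp.\ $k[[v_1]][v_2]/(v_2^{e_2} - v_1^{me_1})$) and checking crossings by hand; your version reorganizes this into an explicit tangent-vector enumeration in $\mf{m}/\mf{m}^2$ plus a systematic factorization criterion for when $k[[x,y]]/(y^\beta - cx^\alpha)$ is a normal crossings curve, which is a clean repackaging of the same computations rather than a different argument. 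One small point worth being explicit about: for the surviving regular configurations you must also verify that the special fiber you write down is actually reduced (so that the plane-curve analysis tests the reduced special fiber, which is what normal crossings for a regular surface requires); this holds because $\gcd(e_i, \cdot)$ constraints make the relevant binomials squarefree, but it is worth a sentence.
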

\begin{proof}

We first prove part (i).  If $B$ is empty, then the cover of local rings
$\hat{\mc{O}}_{\mc{W},w} \rightarrow \hat{\mc{O}}_{\mc{Z},z}$ is
\'{e}tale above $z$, and therefore by \cite[p.49, Proposition~9]{BLR}
$\mc{W}$ is regular (and additionally normal crossings, resp.\ normal
crossings and non-nodal) at $w$ if and
only if $\mc{Z}$ is regular (and additionally normal crossings, resp.\
normal crossings and non-nodal) at
$z$. Now assume $B$ is vertical. Since $\mc{Z}$ is complete, regular and normal crossings at $z$, the
   local ring $\mc{O}_{\mc{Z},z}$ is isomorphic to
   $\mc{O}_K[[x,y]](x^ay^b-u \pi_K)$ for some unit $u$ in
   $\hat{\mc{O}}_{\mc{Z},z}$ and integers $a > 0$ and $b \geq 0$, and we may
   assume that $B = \divi(x)$.  Let $f$ be such that
   $\Frac(\hat{\mc{O}}_{\mc{W},w}) \cong \Frac
   (\hat{\mc{O}}_{\mc{Z},z})[v]/(v^{\delta} - f)$.  We may assume $f = wx^r$ where $w$ is a unit in
   $\hat{\mc{O}}_{\mc{Z},z}$ and $\gcd(\delta,r) = 1$ because $\mc{W}$ is connected.  Noting that $w$ is a $\delta$th power by
   Lemma~\ref{Lallnthroots} and raising $f$ to a prime-to-$\delta$th power
   (which does not change the extension), we may assume $f = x$.
   So the local ring $\hat{\mc{O}}_{\mc{W},w}$ is isomorphic to (the
   normalization of)
   $\mc{O}_K[[x,y]][v]/(x^ay^b-u \pi_K,v^{\delta}-x) \cong
   \mc{O}_K[[y,v]]/(v^{\delta a}y^b-u \pi_K)$.  But this ring is already
   regular (thus normal), and normal crossings.  In this situation,
   $z$ being non-nodal corresponds to $b = 0$, which shows that $w$ is
   non-nodal as well. This completes the
   proof of (i).

   For part (ii), first assume that $B$ is irreducible,
   so that $q$ is the only branch
point of $W \to Z$ specializing to $z$ and no vertical part of the
branch locus passes through $z$.  By the smoothness assumption, we
have $\mc{Z} \cong \Spec \mc{O}_K[[t]]$.  By the
Weierstrass preparation theorem, there is a monic irreducible
polynomial $g$ in $t$ of degree $s$ such that $q$ is given by $g(t) = 0$.
Furthermore, the reduction $\ol{g}(t)$ of $g(t)$ to $k[t]$ is $t^s$.

Since all units in
$\mc{O}_K[[t]]$ are $\delta$th powers by Lemma~\ref{Lallnthroots}, 
Kummer theory (or more specifically,
Lemma~\ref{L:ComputingNormalization}\ref{L:Bpresent} with $r = 1$ and $f_1 = g(t)$) gives us that
$\hat{\mc{O}}_{\mc{W},w} \cong \mc{O}_K[[t]][v]/(v^\delta - g(t))$.
%\padma{by Lemma~\ref{L:ComputingNormalization}~\ref{L:PrimePreimage}. (In the notation of Lemma~\ref{L:ComputingNormalization}, $\mc{W}$ is the local ring at the unique maximal ideal lying above the maximal ideal $(g_i)$ in $\mc{Z}$ -- up to units (which are $\delta$-th powers), this maximal ideal is generated by a $\delta$-th root $v$ of $g_i$.)}
%\andrew{Justify using Lemma 3.3 with $r = 1$ and $a_1 = 1$}.
The special fiber
of $\Spec \hat{\mc{O}}_{\mc{W},w}$ is thus isomorphic to $\Spec k[[t]][v]/(v^\delta - t^s)$.
Since $\delta > 1$ (because $q$ is a branch point),
this has normal crossings if and only if $s = 1$ or $\delta = s =
2$.  Furthermore, by
Lemma~\ref{L:WhenRegular}\ref{L:RegOnlyIf}, $B$ must be regular for
$\mc{W}$ to be regular.  This completes the case when $B$ is irreducible.

Now, assume that $B$ is reducible.  By Lemma~\ref{L:WhenRegular}\ref{L:RegOnlyIf},
$\mc{W}$ is regular if and only if $B$ has normal crossings, consists
of two irreducible components $B_1$, $B_2$, and has coprime
ramification indices $e_1$, $e_2$ above $B_1$, $B_2$ respectively.  So
assume $e_1$ and $e_2$ are relatively prime, and
let $B_1$ be the closure of $q$.  As above, we may assume that $B_1 = \divi(g)$, where $g$ is a
monic polynomial whose reduction $\ol{g} \pmod{\pi_K}$ is
$t^s$. 

First, assume $B_2$ is vertical, so 
$B_2 = \divi(\pi_K)$.  Then $B$ has normal crossings if and only if
$s = 1$ (because the
ideal $(g,\pi_K) = (t^s, \pi_K)$ in $\hat{\mc{O}}_{\mc{Z},z}$).  Thus
$\mc{W}$ is regular if and only if $s = 1$.
So $g \equiv t
\pmod{\pi_K}$ and \eqref{Egh} shows that
$\hat{\mc{O}}_{\mc{W},w}/\pi_K \cong k[[t]][v_1,v_2]/(v_1^{e_1} - t,
v_2^{e_2}) \cong k[[[v_1]][v_2]/v_2^{e_2}$,
which has normal crossings. 

It remains to show that if $B_2$ is horizontal, then $\mc{W}$ is not
regular with normal crossings.
Write $B_2 = \divi(h)$.
By Lemma~\ref{L:ComputingNormalization}\ref{L:Bpresent}, we have
\begin{equation}\label{Egh}
\hat{\mc{O}}_{\mc{W},w} \cong \mc{O}_K[[t]][v_1, v_2]/(v_1^{e_1} - g,
v_2^{e_2} - h).
\end{equation}
Again by the Weierstrass preparation theorem we can
take $h$ to be a polynomial with reduction $\ol{h}(t) \cong t^m \pmod{\pi_K}$ for some $m \in
\nats$.   For $B$ to have normal crossings, we must have either $s =
1$ or $m = 1$ (otherwise $t$ is not in the ideal $(g,h)$ in $\hat{\mc{O}}_{\mc{Z},z}$).  Without loss of generality,
asssume $s = 1$.  Then \eqref{Egh} shows that
$$\hat{\mc{O}}_{\mc{W},w}/\pi_K \cong k[[t]][v_1, v_2]/(v_1^{e_1} - t,
v_2^{e_2} - t^m) \cong k[[v_1]][v_2]/(v_2^{e_2} - v_1^{me_1}).$$  Since
$e_1, e_2 \geq 2$ and are relatively prime, one of $e_2$ or $me_1$ is
at least $3$ and both are at least $2$, which shows that the
special fiber of $\mc{W}$ does not have normal crossings.  We are done.
 %\padma{Very confused this morning -- the line above gives the
 %presentation of a regular arithmetic surface with normal crossings
 %special fiber -- how about dealing with the cases where the local
 %structure of $\hat{\mc{O}}_{\mc{Z},z}$ is something like
 %$\mc{O}_K[[x,y]]/(y^2-x^3-\pi_K)$ which is regular but not normal
 %crossings -- do
 %  we have to worry about covers of bases like these in our cases, or can we always assume our base is nc?} 
 %(In other words, the regularity assumption on $\mc{O}_{B,z}$ allows us to choose a defining equation for $B$ as part of the system of parameters in $\mc{O}_{\mc{Z},z}$.) If $B$ is not regular at $z$, then we can choose a presentation of $\hat{\mc{O}}_{\mc{Z},z}$ as above such that $B$ is cut out by 
 %\padma{I have gotten a little confused about nonregular points of branch locus, versus nonregular of multiplicity not divisble by$d$ -- if multiplicity is divisible by $d$, then it is not part of the branch locus, I guess.}
%or equivalently, when $r=0$,  both of the rings above are regular
%local rings in dimension $2$, since the cotangent space in the first
%case is generated by $v$ and $\pi_K$, and by $y$ and $v$ in the
%second case. The explicit presentation of these rings also show
% that 
%  \end{enumerate}
\end{proof}

 \begin{defn}\label{Dramified}
A morphism $S \to T$ of curves over $k$ is \emph{geometrically
  ramified} above a point $t \in T$ if the induced morphism $S^{\red}
\to T^{\red}$ on reduced induced subschemes is ramified above the
preimage of $t$ under $T^{\red} \hookrightarrow T$.  The
\emph{geometric ramification index} at a point of $S$ or $T$ is the
analogous ramification index on $S^{\red}$ or $T^{\red}$. 
\end{defn}

\begin{example}
The cover $y^d = \pi_K$ over $\proj^1_{\mc{O}_K}$ has geometric
ramification index $1$ at all points of the special fiber, whereas the
actual ramification index at any of these points is $d$.
\end{example}

\begin{corollary}\label{Cregularbranchindex}
  In the situation of Proposition~\ref{P:RegandNormalize} above,
  let $q \in Z$ be a point of $W \to Z$ of ramification index
  $e \geq 1$ specializing to $z$, assume that $z$ lies on a unique irreducible component $\ol{Z}$
  of the special fiber of $\mc{Z}$, assume that $w$ is regular in
  $\mc{W}$, and assume that no branch point of $W \to Z$ (other than
  possibly $q$), specializes to $z$.  Let $\ol{W}$ be the preimage of $\ol{Z}$ in $\mc{W}$.

  Then there exists $\mu \in \nats$ relatively prime to $e$ such that the
  multiplicity of each irreducible component of $\ol{W}$ in the
  special fiber of $\mc{W}$ equals  $\mu m_{\ol{Z}}$, where
  $m_{\ol{Z}}$ is the multiplicity of the component $\ol{Z}$ of $\mc{Z}$.  Furthermore, the geometric
  ramification index of $z$ in $\ol{W} \to \ol{Z}$ is $e$.
\end{corollary}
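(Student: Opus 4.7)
The plan is to compute $\hat{\mc O}_{\mc W, w}$ explicitly using Lemma~\ref{L:ComputingNormalization} and read the desired information directly off of the presentation. Since $\mc Z$ is regular with normal crossings at $z$ and $z$ lies on a unique component $\ol Z$, one can write $R \colonequals \hat{\mc O}_{\mc Z, z} \cong \mc O_K[[x,y]]/(ux^a - \pi_K)$ with $\ol Z = V(x)$ and $a = m_{\ol Z}$.  The hypothesis that $q$ is the only branch point specializing to $z$ forces the local branch divisor of $\phi$ at $z$ to be contained in $B_1 \cup \ol Z$, where $B_1 = V(g)$ is the horizontal closure of $q$ (present iff $e > 1$) and $\ol Z$ may itself be vertically ramified, with some ramification index $e'$.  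Combining Lemmas~\ref{L:ComputingNormalization} and \ref{Lallnthroots} yields
\[
\hat{\mc O}_{\mc W, w} \cong R[v_1, v_2]/(v_1^{e} - g,\ v_2^{e'} - x),
\]
omitting the $v_1$-relation if $e = 1$ and the $v_2$-relation if $e' = 1$.  The hypothesis that $w$ is regular in $\mc W$, via Lemma~\ref{L:WhenRegular}\ref{L:RegOnlyIf}, forces $g \in \mathfrak{m} \setminus \mathfrak{m}^2$ whenever $B_1$ is present, and if $B_1$ and $\ol Z$ are both branches then additionally $\gcd(e,e') = 1$ together with $g$ having a nonzero $y$-linear term.

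To compute multiplicities, I use $\divi_{\mc W}(\pi_K) = \phi^{*}\divi_{\mc Z}(\pi_K)$.  From $\pi_K = ux^a$, one has $m_{C'} = a \cdot v_{C'}(x)$ for each component $C'$ of $\ol W$ through $w$, where $v_{C'}$ is the DVR valuation at $C'$.  Reading off the presentation: if $\ol Z$ is a branch, then $x = v_2^{e'}$ and $v_2$ is a uniformizer at each $C'$, giving $v_{C'}(x) = e'$; otherwise $(x) \subset \hat{\mc O}_{\mc W, w}$ is radical (its reduction modulo $(x)$ is a squarefree Kummer-type factorization, which I will verify) and equals the intersection of the height-one primes corresponding to the components through $w$, so $v_{C'}(x) = 1$.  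The common ratio $\mu \colonequals m_{C'}/m_{\ol Z}$ therefore equals either $e'$ or $1$, and $\gcd(\mu, e) = 1$ is either trivial (when $\mu = 1$) or the regularity-forced condition $\gcd(e,e') = 1$.

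For the geometric ramification, I compute the completed local ring of $\ol W_{\red}$ at $w$ by quotienting $\hat{\mc O}_{\mc W, w}$ by its nilradical.  Setting $x = v_2^{e'}$ (if $\ol Z$ is a branch) and then reducing modulo $v_2$ (which lies in the nilradical, since $\pi_K = u v_2^{a e'}$ forces $v_2^{a e'} = 0$) yields either $k[[y]]$ (if $B_1$ is absent) or $k[[y, v_1]]/(v_1^{e} - \ol g(y))$, where $\ol g(y) \colonequals g(0, y) = y^{s} u''$ with $u'' \in k[[y]]^\times$.  When $\ol Z$ is a branch, the condition on $g$ from the first paragraph forces $s = 1$, giving a single branch at $w$ of ramification index $e$.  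In general, Kummer-theoretic factorization splits $v_1^{e} - y^{s} u''$ into $d \colonequals \gcd(e,s)$ pairwise coprime irreducible factors in $k[[y, v_1]]$ (using Lemma~\ref{Lallnthroots} to take a $d$th root of $u''$), each giving a branch of $\ol W_{\red}$ at $w$ of local ramification index $e/d$ over $\ol Z_{\red} = k[[y]]$, and these ramification indices sum to $e$.  The main bookkeeping obstacle is exactly this last case — $B_1$ alone present with $\gcd(e,s) > 1$ — where multiple branches of $\ol W_{\red}$ meet at $w$; one must verify both that every such branch has the same divisor-theoretic multiplicity (so that a single $\mu$ works for all of them) and that the ramification indices add up correctly across the branches to give $e$.
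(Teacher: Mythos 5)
Your approach is genuinely different from the paper's. Where you compute $\hat{\mc O}_{\mc W, w}$ explicitly via Lemma~\ref{L:ComputingNormalization}\ref{L:Bpresent} and read off both claims from the Kummer presentation, the paper instead passes to the quotient $\mc V \colonequals \mc W/(\ints/\mu)$ (which is unramified along the special fiber), identifies $\ol W^{\red}$ with $\ol V^{\red}$ via Lemma~\ref{Ltotallyramified}, and then pins down the geometric ramification index at $z$ by combining a specialization argument with purity of the branch locus. Your route is more hands-on and avoids invoking purity, at the cost of some case-splitting; the paper's is shorter and uniform across cases. Both are correct. As for the ``bookkeeping obstacle'' you flag at the end: you have in fact already dispatched it. Your Kummer factorization of $v_1^e - y^s u''$ into $\gcd(e,s)$ distinct irreducible factors in $k[[y,v_1]]$ simultaneously shows that $\hat{\mc O}_{\mc W,w}/(x)$ is reduced (so $(x)$ is radical, hence $v_{C'}(x)=1$ and $\mu=1$ for every component $C'$) and that each branch has ramification index $e/\gcd(e,s)$ over $\ol Z^{\red}$. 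The only thing left to say is interpretive: since $\mc W$ is local, $\ol W^{\red}$ has a unique point $w$ over $z$, and the ``geometric ramification index of $z$'' in Definition~\ref{Dramified} must then be read as the local degree $\dim_k\bigl(\hat{\mc O}_{\ol W^{\red}, w}/\mathfrak m_z\hat{\mc O}_{\ol W^{\red}, w}\bigr)$, i.e.\ the sum of the ramification indices over the branches through $w$ --- which is the same reading that makes the paper's own ``cardinality of the fiber can only go down under specialization'' step meaningful in the reducible case (which arises, e.g., when $\delta=s=2$ in Proposition~\ref{P:RegandNormalize}(ii)(a)). Under that reading your ``sum to $e$'' is precisely the claim, and your proof is complete.
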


\begin{proof}
Since $\mc{W}$ is regular and there is a horizontal component of the
branch divisor of $\mc{W} \rightarrow \mc{Z}$ with ramification index
$e$ by assumption, by
Lemma~\ref{L:ComputingNormalization}\ref{L:eiisramindex}, \ref{L:NotReg}, we conclude that the ramification index $\mu$ above the unique vertical component $\ol{Z}$ in
$\mc{W} \to \mc{Z}$ is prime to $e$.  This gives the statement on
multiplicities.  The geometric ramification index at $z$ is unchanged
when replacing $\mc{W}$ with $\mc{V} := \mc{W}/(\ints/\mu)$. Now,
$\mc{V} \to \mc{Z}$ is
unramified along the special fiber $\ol{V} \to \ol{Z}$, and the ramification
index of $q$ in $\mc{V} \to \mc{Z}$ is still $e$.  The geometric ramification index
of $z$ in $\ol{V} \to \ol{Z}$ is thus the actual ramification index, which we call
$e_z$.  Now, $e
\leq e_z$ because the cardinality of the fiber can only go down under
specialization.  To show $e \geq e_z$, note that the cover
$\mc{V}/(\ints/e) \to \mc{Z}$ is unramified at $q$, and since
it is also unramified along $\ol{Z}$, purity of the branch
locus (\cite[X, Th\'eor\`eme~3.1]{SGA1}) shows that it is unramified
at $z$.  This means $e_z \mid e$.  Thus $e = e_z$ as desired. 
\end{proof}
  
\begin{prop}\label{P:ExtendingCoversToCanonicalModels}
 Suppose $X \rightarrow Y$ is a Galois cover of curves over $K$ with Galois group $G$. Then the action of $G$ extends to both the minimal proper regular model $\mc{X}^{\mathrm{min}}$ and the minimal normal crossings model $\mc{X}_{\mathrm{nc}}^{\mathrm{min}}$. The corresponding scheme-theoretic quotients $\mc{Y}^{\mathrm{min}} \colonequals \mc{X}^{\mathrm{min}}/G$ and $\mc{Y}_{\mathrm{nc}}^{\mathrm{min}} \colonequals \mc{X}_{\mathrm{nc}}^{\mathrm{min}}/G$ are normal models of $Y$. Equivalently, $\mc{X}^{\mathrm{min}}$ and $\mc{X}_{\mathrm{nc}}^{\mathrm{min}}$ are normalizations of the normal models $\mc{Y}^{\mathrm{min}}$ and $\mc{Y}_{\mathrm{nc}}^{\mathrm{min}}$ in the function field of $X$.
\end{prop}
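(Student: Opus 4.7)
The plan is to invoke uniqueness (and the universal property, in the normal crossings case) of the distinguished models to lift the $G$-action, and then to construct the quotients directly as projective $\mc{O}_K$-schemes.

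First, I would extend the $G$-action. For any $\sigma \in G$, the automorphism $\sigma \colon X \to X$ over $K$ produces a new proper regular model $\mc{X}^{\min}_\sigma$ of $X$, obtained from $\mc{X}^{\min}$ by precomposing the identification of the generic fiber with $\sigma$; its special fiber still contains no $-1$-curves, so $\mc{X}^{\min}_\sigma$ is again a minimal proper regular model of $X$. By uniqueness of the minimal regular model (\cite[Theorem 2.2.2]{CES}), there is a unique $\mc{O}_K$-isomorphism $\widetilde{\sigma} \colon \mc{X}^{\min} \to \mc{X}^{\min}$ extending $\sigma$. Uniqueness forces $\widetilde{\sigma\tau} = \widetilde{\sigma}\circ\widetilde{\tau}$, so we obtain a homomorphism $G \to \Aut_{\mc{O}_K}(\mc{X}^{\min})$. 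For $\mc{X}_{\mathrm{nc}}^{\min}$, the same argument works using the universal property of the minimal normal crossings resolution stated before Remark~\ref{R:SNCRealm}.

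Next, I would construct the quotients. Since $\mc{X}^{\min}$ and $\mc{X}_{\mathrm{nc}}^{\min}$ are proper regular $\mc{O}_K$-schemes of dimension $2$, they are projective over $\mc{O}_K$ by a theorem of Lichtenbaum; in particular every finite set of points is contained in an affine open. The scheme-theoretic quotients $\mc{Y}^{\min} = \mc{X}^{\min}/G$ and $\mc{Y}_{\mathrm{nc}}^{\min} = \mc{X}_{\mathrm{nc}}^{\min}/G$ therefore exist as projective $\mc{O}_K$-schemes (e.g.\ \cite[Expos\'e V]{SGA1}). They are proper over $\mc{O}_K$, and since the formation of quotients commutes with flat base change (in particular with restriction to the generic fiber), their generic fibers are $X/G = Y$, so they are flat over $\mc{O}_K$. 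The quotient of a normal scheme by a finite group action is normal, so $\mc{Y}^{\min}$ and $\mc{Y}_{\mathrm{nc}}^{\min}$ are normal models of $Y$.

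Finally, the ``equivalently'' assertion follows formally: the quotient map $\mc{X}^{\min} \to \mc{Y}^{\min}$ is finite and dominant, the source is normal, and the induced extension of function fields is $K(Y) \hookrightarrow K(X)$. Thus $\mc{X}^{\min}$ is the (unique) normalization of $\mc{Y}^{\min}$ in $K(X)$, and similarly for the normal crossings model. The main obstacle in the argument is really the verification that the categorical quotient exists as a scheme rather than only as an algebraic space; this is where the projectivity of minimal arithmetic surfaces (and hence the existence of $G$-stable affine opens covering $\mc{X}^{\min}$) is essential.
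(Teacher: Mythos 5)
Your argument is essentially the same as the paper's: lift the $G$-action via uniqueness (resp.\ the universal property) of the minimal regular (resp.\ minimal normal crossings) model, and then take the quotient using the existence of $G$-stable affine open covers. The paper states the existence of such covers without comment, whereas you correctly identify that this rests on projectivity of regular arithmetic surfaces (Lichtenbaum) and the quotient construction of SGA~1; that is a useful expansion rather than a different route. One small nit: your flatness claim ``so they are flat over $\mc{O}_K$'' does not follow from the generic fiber computation alone; the cleaner reason is that the quotient is integral and dominates $\Spec\mc{O}_K$, so it is automatically flat over the DVR $\mc{O}_K$.
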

\begin{proof}
 By uniqueness of the minimal regular and minimal normal crossings models, the action of the Galois group extend to both models. Cover $\mc{X}^{\mathrm{min}}$ (and respectively
 $\mc{X}_{\mathrm{nc}}^{\mathrm{min}}$) by open affine subschemes
 $\Spec A$ that are invariant under the action of the finite group
 $G$. Then the quotients $\mc{Y}^{\mathrm{min}}$ (and respectively $\mc{Y}_{\mathrm{nc}}^{\mathrm{min}}$) are covered by the schemes $\Spec A^G$. If $A$ is a normal domain, then the ring of invariants $A^G$ for the action of a finite group $G$ is also normal. \qedhere
%  \padma{True for quasiprojective schemes, and a short argument is
%    given here --
%    \href{https://dept.math.lsa.umich.edu/~mmustata/appendix.pdf}{https://dept.math.lsa.umich.edu/~mmustata/appendix.pdf}
%    -- will still look for something citeable} \andrew{Or could just
%    argue directly based on local rings --- each local ring downstairs
%    is the quotient of a local ring upstairs by the action of the
%    inertia group.  You can probably get away without giving any
%    justification for this.}\padma{The issue is the quotient may not exist as a scheme (only as an algebraic space) without some additional conditions -- every orbit being contained in an affine open is enough -- we are probably okay because we are in the relative quasiprojective situation. Even Liu and Lorenzini don't get into this much detail, so not worth justifying.}    
\end{proof}

%Include here:
%\begin{itemize}
%\item Definition of arithmetic surface and local arithmetic surface (i.e., formal neighborhood of point on arithmetic surface)
%\item Definition of intersection numbers on locally $\rats$-factorial schemes, incl. normal arithmetic surfaces.
%\item Generalization of Liu-Lorenzini formula to context above.
%\item Lemma relating local regularity to local UFD on arithmetic surface where reduced subschemes of irreducible components of special fiber are smooth. \andrew{Done.}
%\item In particular, corollary that, in the case above, being regular where two components intersect is equivalent to either one of the components being a principal divisor. \andrew{Done.}
%\item Basic generalities on regular NC models (i.e., can contract to minimal regular NC model without leaving the realm of NC-ness).
%\item Lemma that cover of normal models is regular above points that
%  are regular downstairs and do not lie on non-regular parts of the
%  branch locus. Furthermore, in this case, NC downstairs implies NC upstairs.
%\item Partial converse: A cover of a regular model is \emph{not} regular
%  above a non-regular point of the branch locus.
%\item Both minimal regular model and minimal regular model with normal
%  crossings upstairs come from normalizations of a normal model downstairs.
%\end{itemize}

\begin{lemma}[cf.\ {\cite[Lemma 7.2(ii)]{ObusWewers}}]\label{LregularUFD}
  Let $\mc{X}$ be a local arithmetic surface with a smooth vertical prime divisor $D$.  Then the following are equivalent:
  \begin{enumerate}[\upshape (i)]
  \item $\mc{X}$ is regular.
  \item $D$ is principal.
  \item Every Weil divisor on $\mc{X}$ is principal.
  \item $\mc{X}$ is factorial.
  \end{enumerate}
  Furthermore, even if $D$ is not smooth, we have that statement (i) implies the
  other statements.
\end{lemma}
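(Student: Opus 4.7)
The plan is to prove the four conditions equivalent via the cycle (i) $\Rightarrow$ (iv) $\Rightarrow$ (iii) $\Rightarrow$ (ii) $\Rightarrow$ (i), and to observe along the way that the first three implications go through without any hypothesis on $D$, giving the ``furthermore'' statement for free. The only substantive implication is the last one, which is where the smoothness of $D$ is used.

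For the three easy implications, (i) $\Rightarrow$ (iv) is the Auslander--Buchsbaum theorem that a regular Noetherian local ring is a UFD; (iv) $\Rightarrow$ (iii) follows since in a UFD every height-one prime ideal is principal, and every Weil divisor is a $\mathbb{Z}$-linear combination of such; and (iii) $\Rightarrow$ (ii) is a tautology. None of these three steps looks at $D$, so they establish the ``furthermore'' assertion.

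The interesting direction is (ii) $\Rightarrow$ (i), and this is where I would spend essentially all the work, though it is still short. Write $D = \divi_0(f)$ for some $f$ in the maximal ideal $\mf{m}$ of $R := \mc{O}_{\mc{X},x}$. Since $D$ is a vertical prime divisor, the closed subscheme $D = \Spec R/(f)$ is a curve over the residue field $k$; since $D$ is smooth at $x$ over $k$, the local ring $R/(f)$ is regular of Krull dimension~$1$, so its maximal ideal $\mf{m}/(f)$ is principal. Choose any lift $g \in \mf{m}$ of a generator; then $\mf{m} = (f,g)$. Since $R$ is a two-dimensional local ring whose maximal ideal is generated by two elements, $R$ is regular, which is~(i).

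The main (and essentially only) obstacle is setting up (ii) $\Rightarrow$ (i) correctly: one needs to recognize that what smoothness of the vertical divisor $D$ buys is precisely that cutting by $f$ drops the embedding dimension by exactly one, so that a regular system of parameters on $D$ lifts to a system of parameters on $\mc{X}$. Once this is noted, no further computation is required.
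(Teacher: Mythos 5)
Your proof is correct and follows essentially the same route as the paper's: the cycle (i) $\Rightarrow$ (iv) $\Rightarrow$ (iii) $\Rightarrow$ (ii) via Auslander--Buchsbaum and UFD facts, with the ``furthermore'' falling out because these steps don't use smoothness, and (ii) $\Rightarrow$ (i) by observing that cutting by a principal generator of $D$ yields a regular one-dimensional local ring, whence $\mf{m}_{\mc{X},x}$ is two-generated.
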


\begin{proof}
  By the Auslander--Buchsbaum theorem, (i) implies (iv).  Also,
  (iv) implies (iii) because every height 1 prime ideal in a UFD is
  principal.  That (iii) implies (ii) is trivial.  If (ii) holds, then
  the closed point $\ol{x}$ of $\mc{X}$ is a smooth point of the
  smooth divisor $D$, so it is a principal divisor of $D$, which means the ideal $m_{\mc{X}, \ol{x}}$ is generated by two elements.  So $\mc{X}$ is regular, proving (i).
\end{proof}

\begin{lemma}\label{Lbothprincipal}
Let $\mc{X}$ be a local arithmetic surface over $\mc{O}_K$ with two
relatively prime vertical reduced
divisors $D$ and $E$.  If $D$ and $E$ are
principal and $(D, E) = 1$, then $\mc{X}$ is regular.  A posteriori,
one can conclude that $D$ and $E$ are themselves prime divisors.
%Furthermore, if
%$\mc{X}$ has no other vertical prime divisors, then $\mc{X}$ has
%normal crossings.
\end{lemma}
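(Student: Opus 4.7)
The plan is to unpack the local intersection number as the length of an Artinian quotient, use this to show the maximal ideal of $\mc{X}$ has two generators, and then invoke positivity of intersections to deduce primality of $D$ and $E$.

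Since $\mc{X}$ is a local arithmetic surface, it has a unique closed point $x$ and its coordinate ring is the $2$-dimensional complete local ring $\hat{\mc{O}}_{\mc{X}, x}$ with maximal ideal $\mf{m} := \mf{m}_{\mc{X}, x}$. By the principality hypothesis, write $D = \divi(f)$ and $E = \divi(g)$ for some $f, g \in \hat{\mc{O}}_{\mc{X}, x}$; these are Cartier, and by the definition recalled in \S\ref{Sintersectiontheory}, the local intersection number $(D, E)$ equals the length of $\hat{\mc{O}}_{\mc{X}, x}/(f, g)$. The assumption $(D, E) = 1$ forces this quotient to be the residue field, i.e., $(f, g) = \mf{m}$. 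Since $\mf{m}$ is then generated by $2 = \dim \mc{X}$ elements, $\hat{\mc{O}}_{\mc{X}, x}$ is regular, so $\mc{X}$ is regular.

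For the a posteriori statement, decompose $D = \sum_{i = 1}^m D_i$ and $E = \sum_{j = 1}^n E_j$ as sums of distinct prime components, possible because both divisors are reduced. Each $D_i$ and each $E_j$ is a vertical prime divisor of $\mc{X}$, corresponding to a height-one prime ideal of $\hat{\mc{O}}_{\mc{X}, x}$ containing $\pi_K$; every such prime is contained in the unique maximal ideal $\mf{m}$, so every $D_i$ and every $E_j$ passes through $x$. Relative primeness of $D$ and $E$ gives $D_i \neq E_j$ for all $i, j$. Since $\mc{X}$ is now known to be regular (so that every Weil divisor is Cartier by Lemma~\ref{LregularUFD}), each local intersection number $(D_i, E_j)$ is the length of a quotient of $\hat{\mc{O}}_{\mc{X}, x}$ by an ideal contained in $\mf{m}$ cutting out the point $x$ set-theoretically, hence a strictly positive integer. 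Bilinearity of the intersection pairing yields $(D, E) = \sum_{i, j}(D_i, E_j) \geq mn$; combined with $(D, E) = 1$, this forces $m = n = 1$, and thus $D$ and $E$ are themselves prime.

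There is no serious obstacle here — the main observation is that the hypothesis $(D, E) = 1$ translates directly into $(f, g) = \mf{m}$, after which regularity is immediate from the dimension count. The a posteriori step requires only the positivity of intersection numbers of effective Cartier divisors with no common components on a regular surface, which relies on the regularity just established.
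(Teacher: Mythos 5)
Your proof is correct and matches the paper's argument essentially step for step: both parts deduce $(f,g)=\mf{m}$ from the length-one condition to get regularity, then use positivity and integrality of local intersection numbers of distinct prime divisors on the now-regular surface to force $D$ and $E$ to be irreducible. Your explicit decomposition into $\sum_{i,j}(D_i,E_j)\geq mn$ is just a slightly more itemized packaging of the paper's inequality $0 < (D',E') \leq 1$.
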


\begin{proof}
  Let $D = \divi(\alpha)$ and $E = \divi(\beta)$.  By definition,
  $\mc{O}_{\mc{X}, x}/(\alpha, \beta)$ has length $1$ as an
  $\mc{O}_K$-module, which means $(\alpha, \beta)$ is the
  maximal ideal.  So $\mc{X}$ is regular.  If $D'$ and $E'$ are
  irreducible components of $D$ and $E$, then since all divisors meet
  at the closed point, $0 < (D', E') \leq 1$ with
  equality only if $D = D'$ and $E = E'$.  But since $\mc{X}$ is
  regular, $(D', E')$ is an integer, which shows that $D' = D$ and $E'
  = E$, so $D$ and $E$ are prime as desired.
\end{proof}

\subsection{Totally ramified morphisms}\label{Sramified}

\begin{lemma}\label{Ltotallyramified}
Let $\phi \colon \mc{W} \to \mc{Z}$ be a finite morphism of
arithmetic surfaces that is totally ramified above a prime Weil divisor $D$ of
$\mc{Z}$.  If $D$ is normal and $\deg(\phi)$ is prime to all residue characteristics of $D$, then the morphism $\phi^{-1}(D) \to D$ induces an
isomorphism $(\phi^{-1}(D))_{\red} \to D$.
\end{lemma}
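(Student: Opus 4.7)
The plan is to identify $D' \colonequals (\phi^{-1}(D))_{\red}$ with the unique prime divisor of $\mc{W}$ above $D$ (which exists by the total-ramification hypothesis), and then to prove that the induced finite surjection $D' \to D$ is birational. Once birationality is established, the standard fact that a finite birational morphism from an integral scheme to a normal integral scheme is automatically an isomorphism will finish the proof.

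First I would unpack ``totally ramified'': by assumption the underlying set of $\phi^{-1}(D)$ is a single prime divisor $D'$ of $\mc{W}$ with $e_{D'/D} = \deg(\phi)$, so $(\phi^{-1}(D))_{\red}$ is $D'$ with its reduced structure, an integral closed subscheme of $\mc{W}$. The induced map $\phi|_{D'} \colon D' \to D$ is finite (restriction of $\phi$) and surjective (finite morphisms are closed). The only nontrivial point is therefore to show that $[k(D'):k(D)] = 1$.

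The heart of the matter is the residue-degree computation. Since $\mc{W}$ is normal and $D'$ has codimension one, $\mc{O}_{\mc{W},D'}$ is a DVR. Because $\phi$ is finite and $\mc{Z}$ is of finite type over $\mc{O}_K$ (hence excellent), $\mc{O}_{\mc{W},D'}$ is finite over $\mc{O}_{\mc{Z},D}$; combined with the uniqueness of the prime over $D$, this identifies $\mc{O}_{\mc{W},D'}$ with the integral closure of $\mc{O}_{\mc{Z},D}$ in $k(\mc{W})$. The fundamental identity $\sum_i e_i f_i = [k(\mc{W}):k(\mc{Z})] = \deg(\phi)$ then collapses to $e_{D'/D} \cdot [k(D'):k(D)] = \deg(\phi)$, so total ramification forces $[k(D'):k(D)] = 1$, giving birationality.

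To conclude, I would invoke the standard lemma: locally on $D$, the morphism corresponds to a finite extension $A \hookrightarrow B$ of integral domains sharing a fraction field with $A$ integrally closed, whence $B \subseteq A$ and so $A = B$. The hypothesis that $\deg(\phi)$ is prime to the residue characteristics of $D$ does not play a direct role in this outline; I suspect it is included either to avoid inseparability pathologies in the fundamental identity or for compatibility with downstream applications in the paper. The only mild technical point is justifying that $\mc{O}_{\mc{W},D'}$ is the full integral closure of $\mc{O}_{\mc{Z},D}$ in $k(\mc{W})$, but this follows immediately from normality of $\mc{W}$, finiteness of $\phi$, and uniqueness of the prime over $D$.
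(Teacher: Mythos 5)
Your approach — identify $D' := (\phi^{-1}(D))_{\red}$ as the unique prime divisor above $D$, compute the residue degree via the fundamental identity $ef = \deg(\phi)$, and then use that a finite birational morphism onto a normal integral scheme is an isomorphism — is genuinely different from the paper's (which localizes at the generic point of $D$ and runs a minimal-polynomial argument over the residue field). But your dismissal of the tameness hypothesis in the final paragraph is the tell that something is off. The paper's own proof opens by asserting that total ramification yields a \emph{purely inseparable} extension of residue fields, and \emph{then} invokes the hypothesis $\deg(\phi)$ prime to the residue characteristics to make it trivial. This reveals that the paper's notion of ``totally ramified'' is the weaker one: a single prime $D'$ above $D$ with $k(D')/k(D)$ purely inseparable. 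That notion does \emph{not} force $e_{D'/D} = \deg(\phi)$. For instance, $\Spec \FF_p[[t]][y] \to \Spec \FF_p[[t]][x]$, $x = y^p$, $D = V(t)$: one prime above, $k(D')/k(D)$ purely inseparable of degree $p$, $e = 1$, and $(\phi^{-1}(D))_{\red} \to D$ is \emph{not} an isomorphism. This is exactly what the tameness hypothesis exists to exclude.

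So as written, your proof operates on a strictly stronger hypothesis ($e = \deg(\phi)$) than the one the lemma actually carries, and the remark that the tameness hypothesis ``does not play a direct role'' is incorrect under the intended reading. The fix is cheap and keeps your route available: from the fundamental identity, $e_{D'/D}\,[k(D'):k(D)] = \deg(\phi)$; from the (weak) total-ramification hypothesis, $[k(D'):k(D)]$ is a purely inseparable degree, hence a power of the residue characteristic of $D$; since it divides $\deg(\phi)$, which is prime to that characteristic, $[k(D'):k(D)] = 1$. Now your concluding step (finite birational onto normal $\Rightarrow$ isomorphism, using normality of $D$) carries through. With that repair, your argument is complete, and arguably tidier than the paper's — it isolates where normality of $D$ and tameness are each used, whereas the paper's case analysis on the reduction of the minimal polynomial quietly also needs uniqueness of the prime above $D$ to rule out the case where $\overline{f(T)}$ has two distinct irreducible factors.
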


%\andrew{I can't tell if the proof below is overkill --- if the result
%  is somehow a tautology.}

\begin{proof}
Localizing, we may assume $\mc{Z}$ is affine, so let $\mc{Z} = \Spec
A$ and $\mc{W} = \Spec B$.  Then $A \hookrightarrow B$ via $\phi^\#$ with
$B$ finite over $A$, and we identify $A$ with its image in $B$.  Let
$I$ be the ideal of $D$ in $\Spec A$, so that $IB$
is the ideal of $\phi^{-1}(D)$ in $\Spec B$.  By the totally ramified assumption,
the induced extension $\Frac(A/I) \subseteq \Frac(B/\sqrt{IB})$ of
fraction fields is purely inseparable, and by the assumption on $\deg(\phi)$ it is an
isomorphism.  We wish to show that the ring extension $A/I \subseteq B/\sqrt{IB}$
is an equality.

Let $b \in B$.  Since $B$
is integral over $A$, the minimal polynomial $f(T)$ of $b$ over $A$ is monic.
If $f(T)$ is purely inseparable modulo $I$, say,  
$f(T) \equiv (T - a)^e \pmod{I}$ for some $a \in A$, then $b - a \in \sqrt{IB}$,
so the residue of $b$ in $B/\sqrt{IB}$ is in $A/I$.  If not, then
letting $\ol{f(T)}$ be the residue of $f(T)$ modulo $I$, we have
$\deg(\rad(\ol{f(T)})) \geq 2$, which means that the image of $b$ in
$B/\sqrt{IB}$ has degree $\geq 2$ over $A/I$.  Since $D$ is normal, $A/I$ is
integrally closed, and the existence of $b$ thus contradicts the
equality $\Frac(A/I) = \Frac(B/\sqrt{IB})$.
\end{proof}

\section{Preliminaries on Mac Lane Valuations}\label{Smaclane}

\subsection{Definitions and facts}\label{Sbasicmaclane}

We recall the theory of inductive valuations,
which was first developed by Mac Lane in \cite{MacLane}.  We also use
the more recent \cite{Ruth} as a reference.  Inductive valuations give us an
explicit way to talk about normal models of $\proj^1$.

Define a \emph{geometric valuation} of
$K(x)$ to be a discrete valuation that restricts to $v_K$ on $K$ and
whose residue field is a finitely generated extension of $k$ with
transcendence degree $1$.   We place a partial order $\preceq$ on valuations by defining $v
\preceq w$ if $v(f) \leq w(f)$ for all $f \in K[x]$.  Let $v_0$ be the
\emph{Gauss valuation} on $K(x)$.  This is defined on $K[x]$ by
$v_0(a_0 + a_1x + \cdots a_nx^n) = \min_{0 \leq i \leq n}v_K(a_i)$,
and then extended to $K(x)$.  If $v$ is a geometric valuation, write
$\Gamma_v \subseteq \rats$ for its value group.

We consider geometric valuations $v$ such that $v \succeq v_0$. By the
non-archimedean triangle inequality, these are precisely those geometric valuations for which
$v(x) \geq 0$.  This entails no loss of generality, since $x$ can
always be replaced by $x^{-1}$. We would like an explicit formula for describing geometric valuations, similar to the formula above for the Gauss valuation, and this is achieved by the so-called
\emph{inductive valuations} or \emph{Mac Lane valuations}. Observe that the Gauss valuation is described using the $x$-adic expansion of a polynomial. The idea of a Mac Lane valuation is to ``declare'' certain polynomials $\phi_i$ to have higher valuation than expected, and then to compute the valuation recursively using $\phi_i$-adic expansions.

More specifically, if $v$ is a geometric valuation such that $v
\succeq v_0$, the concept of a \emph{key polynomial} over $v$ is
defined in \cite[Definition 4.1]{MacLane} (or \cite[Definition
4.7]{Ruth}).  Key polynomials are certain monic irreducible polynomials
in $\mc{O}_K[x]$ --- we do not give a definition, which would require more
terminology than we need to develop, but see Lemma \ref{Lfdegree} below for the most useful properties.  If $\phi \in \mc{O}_K[x]$ is a
key polynomial over $v$, then for $\lambda \geq v(\phi)$, 
we define an \emph{augmented valuation} $v' = [v, v'(\phi) = \lambda]$ on $K[x]$ by 
\begin{equation}\label{E:augval} v'(a_0 + a_1\phi + \cdots + a_r\phi^r) = \min_{0 \leq i \leq r}
v(a_i) + i\lambda \end{equation} whenever the $a_i \in K[x]$ are polynomials with
degree less than $\deg(\phi)$.  We should think of this as a ``base
$\phi$ expansion'', and of $v'(f)$ as being the minimum valuation of a
term in the base $\phi$ expansion of $f$ when the valuation of $\phi$ is
declared to be $\lambda$.  By \cite[Theorems 4.2, 5.1]{MacLane} (see also
 \cite[Lemmas 4.11, 4.17]{Ruth}), $v'$ is in fact a discrete
 valuation.  In fact, the key polynomials are more or less the polynomials
 $\phi$ for which the construction above yields a discrete valuation
 for $\lambda \geq v(\phi)$.  Note that if $\lambda = v(\phi)$, then
 the augmented valuation $v'$ is equal to $v$.  
 The valuation $v'$ extends to $K(x)$. 

We extend this notation to write Mac Lane valuations in the following
form: $$[v_0, v_1(\phi_1(x)) = \lambda_1, \ldots, v_n(\phi_n(x)) = \lambda_n].$$
Here each $\phi_i(x) \in \mc{O}_K[x]$ is a key polynomial over $v_{i-1}$, we
have that $\deg(\phi_{i-1}(x)) \mid \deg(\phi_i(x))$, and each $\lambda_i$
satisfies $\lambda_i \geq v_{i-1}(\phi_i(x))$. By abuse of notation,
we refer to such a valuation as $v_n$ (if we have not given it another
name), and we identify $v_{i}$ with $[v_0, v_1(\phi_1(x)) = \lambda_1, \ldots,
v_{i}(\phi_{i}(x)) = \lambda_{i}]$ for each $i \leq n$.  The valuations
$v_i$ are called \emph{predecessors} of $v_n$ and are uniquely determined, following \cite[Definition~2.12]{KW} (in
our earlier work we have called them \emph{truncations}).  

It turns out that the set of Mac Lane valuations on $K(x)$ exactly
coincides with the set of geometric valuations $v$ with $v \succeq
v_0$ (\cite[Corollary 7.4]{FGMN} and \cite[Theorem 8.1]{MacLane}, or \cite[Theorem
4.31]{Ruth}). Furthermore, 
every Mac Lane valuation is equal to one where the degrees of the
$\phi_i$ are strictly increasing (\cite[Lemma 15.1]{MacLane} or
\cite[Remark 4.16]{Ruth}), and where $v_i \neq v_{i+1}$ for all $i <
n$.  Such a presentation for a Mac Lane valuation is called
\emph{minimal}, and unless otherwise noted, we assume that all
presentations are minimal for the rest of the paper.
This has the consequence
that the number $n$ is well-defined.  We call $n$ the
\emph{inductive length} of $v$.  In fact, by \cite[Lemma
15.3]{MacLane} (or \cite[Lemma
4.33]{Ruth}), the degrees of the $\phi_i$ and the values of the $\lambda_i$ are invariants of $v$, once
we require that they be strictly increasing.  If $f$ is a key polynomial
over $v = [v_0,\, v_1(\phi_1) = \lambda_1, \ldots,\, v_n(\phi_n) =
\lambda_n]$ and either $\deg(f) > \deg(\phi_n)$ or $v = v_0$, we call $f$ a \emph{proper
  key polynomial over $v$}.  By our convention, each $\phi_i$ is a proper key
polynomial over $v_{i-1}$.  This has the immediate consequence that
$v_n(\phi_i) = \lambda_i$ for all $i$ between $1$ and $n$.

In general, if $v$ and $w$ are two
Mac Lane valuations such that the value group $\Gamma_w$
contains the value group $\Gamma_{v}$, we write $e(w/v)$ for the
ramification index $[\Gamma_w : \Gamma_v]$.  If $v$ is a Mac Lane
valuation, we simply write $e_v$ for $e(v/v_0)$, i.e.,
$\Gamma_v = (1/e_v)\ints$.

%\begin{remark}\label{Rrelram}
%Observe that if $[v_0,\, v_1(\phi_1) = \lambda_1, \ldots,\, v_n(\phi_n) =
%\lambda_n]$ is a Mac Lane valuation, where each $\lambda_i = b_i/c_i$
%in lowest terms, then $e_{v_n}  = \lcm(c_1, \ldots, c_n)$.
%Consequently, $e(v_i/v_j) = \lcm(c_1, \ldots, c_i)/\lcm(c_1,\ldots,
%c_j)$ for $i \geq j$. 
%\end{remark}

We can enlarge the set of Mac Lane valuations by allowing $\lambda_n =
\infty$ (this enlarged set is called the set of \emph{Mac Lane
  pseudovaluations}, see \cite[\S2.1, \S2.3]{KW}).  More specifically,
this means that if $g \in K[x]$ and $g = a_e\phi_n^e +
a_{e-1}\phi_n^{e-1} + \ldots + a_0$ is the $\phi_n$-adic expansion of
$g$, then $v(g) = v_{n-1}(a_0)$, with $v(g) = \infty$ when $a_0 = 0$.
A Mac Lane pseudovaluation with $\lambda_n = \infty$ is called an
\emph{infinite Mac Lane pseudovaluation}.  Mac Lane pseudovaluations
have predecessors defined identically to the case of Mac Lane valuations.

It is easy to see that if $v = [v_0,\, \ldots,\, v_n(\phi_n) = \infty]$ is a Mac Lane pseudovaluation, then the
set of $g \in K[x]$ such that $v(g) = \infty$ is a prime ideal,
generated by $\phi_n$.  Furthermore, since there is a unique way to
extend $v_K$ from $K$ to $K[x]/\phi_n$, an infinite Mac Lane
pseudovaluation can be specified by the ideal it sends to $\infty$.

We collect some basic results on Mac Lane valuations and key
polynomials that will be used repeatedly.  
\begin{lemma}\label{Lfdegree}
Suppose $f$ is a proper key polynomial over $v = [v_0,\, v_1(\phi_1) = \lambda_1,
\ldots,\, v_n(\phi_n) = \lambda_n]$.
\begin{enumerate}[\upshape (i)]
\item If $n = 0$, then $f$ is linear.
  % If $n \geq 1$, then $\phi_1$ is linear.
  Every monic linear polynomial in $\mc{O}_K[x]$ is a key polynomial over $v_0$.
\item If $n \geq 1$, and $f = \phi_n^e + a_{e-1}\phi_n^{e-1} + \cdots
  + a_0$ is the $\phi_n$-adic expansion of $f$, then $v_n(a_0) =
  v_n(\phi_n^e) = e\lambda_n$, and $v_n(a_i\phi_n^i) \geq e\lambda_n$
  for all $i \in \{1, \ldots, e-1\}$. In particular, $v_n(f) = e\lambda_n$.
\item If $n \geq 1$, then $\deg(f)/\deg(\phi_n) = e(v_n/v_{n-1})$.
\end{enumerate}
\end{lemma}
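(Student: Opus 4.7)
The plan is to deduce each of the three assertions directly from the formal definition of a key polynomial together with Mac Lane's basic structural results, citing \cite{MacLane} and \cite{Ruth} as appropriate. All three are essentially translations of well-known statements into the current notational setup, and none should require any substantive new argument.

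For part (i), I would use the standard fact that a key polynomial over $v_0$ is a monic lift of a power of an irreducible polynomial in the residue field $k[x]$. Since we are assuming throughout that $k$ is algebraically closed, every monic irreducible in $k[x]$ is linear, which forces $f$ to be linear as well. Conversely, any monic linear polynomial $x-a$ with $a\in\mc{O}_K$ reduces to a linear (hence irreducible) polynomial in $k[x]$ and therefore meets the definition of a key polynomial over $v_0$.

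For part (ii), I would invoke the characterization of key polynomials in terms of their $\phi_n$-adic Newton polygon: a proper key polynomial $f$ over $v_n$ must be ``equivalent to $\phi_n^e$'' in Mac Lane's sense, meaning that the minimum in the formula \eqref{E:augval} applied to $f$ is attained simultaneously at the constant term $a_0$ and at the leading term $\phi_n^e$. This immediately yields $v_n(a_0)=v_n(\phi_n^e)=e\lambda_n$ together with $v_n(a_i\phi_n^i)\geq e\lambda_n$ for the intermediate indices, and hence $v_n(f)=e\lambda_n$.

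For part (iii), I would combine part (ii) with the observation that each $a_i$ has degree strictly less than $\deg(\phi_n)$, so $v_n(a_i)=v_{n-1}(a_i)\in \Gamma_{v_{n-1}}$. Applying this to $a_0$ shows $e\lambda_n\in \Gamma_{v_{n-1}}$, so $e$ is a multiple of the order $m$ of $\lambda_n$ in $\Gamma_{v_n}/\Gamma_{v_{n-1}}$, i.e.\ $m\mid e$, where $m=e(v_n/v_{n-1})$. The main step, and the only one requiring a delicate appeal to Mac Lane's theory, is to rule out $e>m$: this follows from the minimality built into the notion of a key polynomial, since a hypothetical $f$ with $e$ a proper multiple of $m$ would be reducible in the appropriate sense (equivalent to a product of smaller-degree key polynomials of the same augmented valuation), contradicting its key-polynomial status. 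With $e=m$ established, the degree formula $\deg(f)/\deg(\phi_n)=e=e(v_n/v_{n-1})$ is immediate.
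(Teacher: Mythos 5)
The paper's proof of this lemma is just a pointer: parts~(i) and~(iii) are cited from \cite[Lemma~2.10]{OShoriz} and part~(ii) from \cite[Lemma~2.2]{OShoriz}. You instead sketch a from-scratch argument using Mac Lane's characterization of key polynomials, which is a genuinely different (and self-contained) route. That is a reasonable thing to do, and your handling of (ii) (the minimum in the augmentation formula is attained at both $a_0$ and $\phi_n^e$, i.e.\ the residual polynomial has a nonzero constant term) and of (iii) (first $m \mid e$ from $e\lambda_n \in \Gamma_{v_{n-1}}$, then ruling out $e > m$ via $v_n$-minimality / irreducibility of the residual polynomial, using that the residue field is algebraically closed so the residue degree contribution is trivial) captures the essential content, though both would need the residual polynomial machinery spelled out in detail to be airtight.

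However, part (i) as you have written it contains a genuine gap. You state that a key polynomial over $v_0$ is a monic lift of a \emph{power} of an irreducible in $k[x]$, and then conclude linearity because irreducibles in $k[x]$ are linear. But if $\bar f = (\bar x - \bar a)^m$, then $f$ has degree $m$, and nothing in your argument forces $m = 1$. The correct characterization is that key polynomials over $v_0$ are monic lifts of \emph{irreducible} polynomials in $k[x]$ (not powers thereof): $v_0$-irreducibility of $\phi$ is equivalent to irreducibility of $\bar\phi$ in $k[x]$, and $v_0$-minimality for monic $\phi \in \mc{O}_K[x]$ is automatic. With that fix, irreducibility of $\bar f$ over algebraically closed $k$ forces $\deg \bar f = 1$, hence $\deg f = 1$, and the converse direction (every monic linear polynomial over $\mc{O}_K$ is a $v_0$-key polynomial) goes through as you describe.
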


\begin{proof}
For (i) and (iii), see \cite[Lemma 2.10]{OShoriz}. For (ii), see \cite[Lemma 2.2]{OShoriz}. 
\end{proof}

\begin{corollary}\label{CvalueofN}
Let $v = [v_0,\, v_1(\phi_1) = \lambda_1,
\ldots,\, v_n(\phi_n) = \lambda_n]$ be a Mac Lane valuation of
inductive length $n \geq 1$.
% Write $\lambda_i = b_i/c_i$ in lowest terms for all $i$.
Then, for all $1 \leq j \leq n$, we have $\deg(\phi_j) = e_{v_{j-1}}$.  In
particular, $\deg(\phi_n) = e_{v_{n-1}}$.
\end{corollary}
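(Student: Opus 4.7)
The plan is to argue by induction on $j$, using Lemma~\ref{Lfdegree} as the main input and the multiplicativity of ramification indices to pass from one predecessor to the next.

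For the base case $j = 1$, I would observe that $\phi_1$ is by definition a proper key polynomial over $v_0$, so part (i) of Lemma~\ref{Lfdegree} forces $\phi_1$ to be linear. Since $v_0$ is the Gauss valuation, its value group is $\ints$, so $e_{v_0} = 1 = \deg(\phi_1)$, as required.

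For the inductive step, suppose $j \geq 2$ and the statement is known for $j - 1$, so $\deg(\phi_{j-1}) = e_{v_{j-2}}$. By the paper's convention, $\phi_j$ is a proper key polynomial over $v_{j-1}$, and $v_{j-1}$ has inductive length $j - 1 \geq 1$, so I can apply Lemma~\ref{Lfdegree}(iii) with $f = \phi_j$ and $v = v_{j-1}$ to obtain
\[
\frac{\deg(\phi_j)}{\deg(\phi_{j-1})} \;=\; e(v_{j-1}/v_{j-2}).
\]
Combining this with the inductive hypothesis gives
\[
\deg(\phi_j) \;=\; e_{v_{j-2}} \cdot e(v_{j-1}/v_{j-2}),
\]
and by the multiplicativity of ramification indices (i.e., $[\Gamma_{v_{j-1}} : \ints] = [\Gamma_{v_{j-1}} : \Gamma_{v_{j-2}}] \cdot [\Gamma_{v_{j-2}} : \ints]$) the right-hand side equals $e_{v_{j-1}}$. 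This completes the induction and proves $\deg(\phi_j) = e_{v_{j-1}}$ for all $1 \leq j \leq n$; the specialization $j = n$ gives the final sentence.

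There is no real obstacle here — all the content is already in Lemma~\ref{Lfdegree}. The only thing to be careful about is confirming that $\phi_j$ really is a \emph{proper} key polynomial over $v_{j-1}$ (so that part (iii) of the lemma applies with the correct denominator $\deg(\phi_{j-1})$), which is guaranteed by the minimality convention on presentations adopted just before the statement of the corollary.
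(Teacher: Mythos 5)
Your proof is correct. The paper itself gives no inline argument for this corollary — it simply cites \cite[Corollary 2.12]{OShoriz} — so you have in effect reconstructed the missing proof, and you have done so along exactly the natural lines: base case via Lemma~\ref{Lfdegree}(i) (linear key polynomials over $v_0$, $e_{v_0}=1$), inductive step via Lemma~\ref{Lfdegree}(iii) combined with multiplicativity of ramification indices $e_{v_{j-1}} = e(v_{j-1}/v_{j-2})\,e_{v_{j-2}}$. Your observation that the minimality convention is what guarantees each $\phi_j$ is a \emph{proper} key polynomial over $v_{j-1}$ (so that Lemma~\ref{Lfdegree}(iii) applies with denominator $\deg(\phi_{j-1})$) is precisely the point one needs to check, and you checked it.
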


\begin{proof}
  See \cite[Corollary 2.12]{OShoriz}.
\end{proof}

\begin{example}\label{Ebasickey}
If $K = \text{Frac}(W(\ol{\mathbb{F}}_3))$, then the polynomial $f(x) = x^3 - 9$ is a
proper key polynomial over $[v_0,\, v_1(x) = 2/3]$.  In
accordance with Lemma~\ref{Lfdegree}(ii), we have $v_1(f) = v_1(9) = v_1(x^3) =
3 \cdot 2/3 = 2$.  If we extend $v_1$ to a valuation $[v_0,\, v_1(x) =
2/3,\, v_2(f(x)) = \lambda_2]$ with $\lambda_2 > 2$, then the
valuation $v_2$ notices ``cancellation'' in $x^3 - 9$ that $v_1$ does not.
\end{example}

\begin{lemma}\label{Lvnvprime}
Let $[v_0,\, v_1(\phi_1) = \lambda_1,\ldots,\, v_n(\phi_n) =
\lambda_n]$ be a valuation over which there exists a proper key
polynomial. If $n \geq 1$, then $e(v_n/v_{n-1}) > 1$.
\end{lemma}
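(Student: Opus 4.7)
My plan is to reduce this immediately to Lemma~\ref{Lfdegree}(iii) together with the definition of a proper key polynomial. Suppose $f$ is a proper key polynomial over $v_n$ where $n \geq 1$. By definition, since $v_n \neq v_0$, being proper means $\deg(f) > \deg(\phi_n)$.

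On the other hand, Lemma~\ref{Lfdegree}(iii) (applied with $v = v_n$, $n \geq 1$, and the proper key polynomial $f$) gives us the identity
\[
\frac{\deg(f)}{\deg(\phi_n)} = e(v_n/v_{n-1}).
\]
In particular, $\deg(\phi_n) \mid \deg(f)$, and the quotient is a positive integer.

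Combining these two facts, $e(v_n/v_{n-1}) = \deg(f)/\deg(\phi_n)$ is a positive integer strictly greater than $1$, which is exactly the claim. No further ingredients are needed; the main content has already been packaged into Lemma~\ref{Lfdegree}(iii).
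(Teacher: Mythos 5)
Your argument is correct and is the natural one given the ingredients the paper makes available: Lemma~\ref{Lfdegree}(iii) identifies $e(v_n/v_{n-1})$ with $\deg(f)/\deg(\phi_n)$ for a proper key polynomial $f$, and properness over a valuation $v_n \neq v_0$ forces $\deg(f) > \deg(\phi_n)$, so the ratio exceeds $1$. The paper itself delegates the proof entirely to a citation of \cite{OShoriz}, so there is no inline argument to compare against, but your derivation is exactly what one would extract from the lemmas as stated.
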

\begin{proof}
  See \cite[Lemma 2.13]{OShoriz}.
\end{proof}

\begin{propdef}\label{Pbestlowerapprox}
  % Let $\alpha \in \mc{O}_{\ol{K}}$, and
  Let $f \in K[x]$ be monic and irreducible.
  % the minimal polynomial for $\alpha$.
  Then there exists a unique Mac Lane valuation $v_f$ over which $f$ is a proper key polynomial. 
\end{propdef}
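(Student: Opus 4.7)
The plan is to leverage the unique extension of $v_K$ to finite extensions of $K$. Since $K$ is complete and $f$ is monic irreducible, $v_K$ extends uniquely to a discrete valuation $\tilde{w}$ on $K[x]/(f)$; pulling $\tilde{w}$ back along the quotient map produces a pseudovaluation $\tilde{v}$ on $K[x]$ whose ``kernel'' $\{g \in K[x] : \tilde{v}(g) = \infty\}$ is the ideal $(f)$. The crucial input I need is that $\tilde{v}$ arises as a Mac Lane pseudovaluation in the sense of the excerpt, which I would justify either by invoking the Mac Lane classification cited in \cite{KW} or by constructing $\tilde{v}$ inductively via a sequence of key-polynomial approximations to $f$.

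Once $\tilde{v}$ is represented as a Mac Lane pseudovaluation, I write it in minimal form as $[v_0, v_1(\phi_1) = \lambda_1, \ldots, v_n(\phi_n) = \infty]$ with $n \geq 1$. The excerpt tells us that the kernel of such a pseudovaluation is $(\phi_n)$, and the equality $(\phi_n) = (f)$ of principal ideals in $K[x]$ forces $\phi_n = f$ since both generators are monic and irreducible. Setting $v_f \colonequals v_{n-1}$ (interpreted as $v_0$ when $n = 1$), the polynomial $f$ is a key polynomial over $v_f$, and the minimality of the presentation --- together with Lemma~\ref{Lfdegree}(i) in the $n = 1$ case --- ensures that $f$ is in fact a \emph{proper} key polynomial over $v_f$.

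For uniqueness, suppose $v$ is any Mac Lane valuation over which $f$ is a proper key polynomial. Then the augmentation $[v, v'(f) = \infty]$ is an infinite pseudovaluation that is already in minimal form, because ``proper'' forces either the degenerate $v_0$ case or $\deg(f)$ strictly greater than the degree of the last key polynomial of $v$. Its kernel is $(f)$, so restricting to $K[x]/(f)$ produces an extension of $v_K$ and hence, by completeness of $K$, equals $\tilde{v}$. The invariance of the data $(\deg(\phi_i), \lambda_i)$ across minimal presentations --- stated in the excerpt for Mac Lane valuations and inherited by pseudovaluations --- then forces $v = v_{n-1} = v_f$.

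The main obstacle I expect is the ``crucial input'' above: ensuring that every pseudovaluation on $K[x]$ extending $v_K$ with principal prime kernel actually lies in the Mac Lane framework. This is the pseudovaluation analogue of the classification of geometric valuations referenced in the excerpt, and I would handle it by citing \cite{KW} directly or by building $\tilde{v}$ as a limit of approximating Mac Lane valuations whose terminal key polynomials converge to $f$. Once this is in place, everything else is a mechanical consequence of the ``kernel-specifies-pseudovaluation'' principle together with the completeness of $K$.
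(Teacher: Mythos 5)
The paper proves this statement by citing \cite[Proposition~2.5]{OShoriz}, so there is no in-paper argument to compare against. Your self-contained route --- pull the unique extension of $v_K$ to $K[x]/(f)$ back to a pseudovaluation $\tilde{v}$ on $K[x]$ with kernel $(f)$, invoke the classification to write $\tilde{v} = [v_0, \ldots, v_n(\phi_n) = \infty]$ in minimal Mac Lane form, identify $\phi_n = f$ by comparing kernels (both are monic, irreducible generators of the same ideal), and set $v_f := v_{n-1}$ --- is the standard one in the Mac Lane literature and is sound given the input you flag.

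Two remarks on what needs closing. First, the ``crucial input'' is indeed a genuine ingredient and cannot be absorbed from the paper itself: Definition~\ref{Dinfspeudo} of $v_g^\infty$ as $[v_g, v(g)=\infty]$ already presupposes the existence of $v_g$, so you cannot lean on it, and the classification quoted in \S\ref{Sbasicmaclane} (\cite[Theorem~8.1]{MacLane}, \cite[Theorem~4.31]{Ruth}) is stated there only for valuations, not pseudovaluations. Citing the pseudovaluation classification directly from \cite{MacLane} or \cite[\S2]{KW} closes this. Second, a small cleanup on the uniqueness argument: invariance of the numerical data $(\deg\phi_i, \lambda_i)$ alone does not directly force $v = v_{n-1}$, because it only constrains degrees and slopes, not the predecessor valuations themselves; what you actually need is the paper's stronger assertion (attributed to \cite[Definition~2.12]{KW}) that the \emph{predecessors} of a Mac Lane (pseudo)valuation are intrinsic invariants. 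Once $[v, v'(f)=\infty]$ and the minimal presentation of $\tilde v$ are recognized as minimal presentations of the same pseudovaluation, uniqueness of predecessors gives $v = v_{n-1}$. Finally, the appeal to Lemma~\ref{Lfdegree}(i) in the $n=1$ case is superfluous: the paper's definition makes every key polynomial over $v_0$ automatically proper, with no degree condition to verify.
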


\begin{proof}
  See \cite[Proposition~2.5]{OShoriz}. 
\end{proof}

\begin{defn}\label{Dinfspeudo}
 If $g \in K[x]$ is monic and irreducible, we write $v_g^{\infty}$ for $[v_g,\, v(g) = \infty]$, the unique infinite Mac Lane pseudovaluation sending $g$ to $\infty$. 
\end{defn}

%\begin{remark}\label{Rinf}
%By Proposition~\ref{Pbestlowerapprox}, there is always a unique Mac
%Lane valuation $v_g$ over which such an irreducible $g$ is a key polynomial.  So
%$v_g^{\infty}$ exists and equals $[v_g,\, v(g) = \infty]$.
%\end{remark}

\begin{prop}\label{Pmaximalforlength}
If $v = [v_0,\ \ldots,\, v_n(\phi_n) = \lambda_n]$ is a Mac Lane
pseudovaluation, and if $w$ is a Mac Lane valuation with $v_i \prec w
\preceq v$ for some $1 \leq i \leq n$, then
the inductive length of $w$ is greater than that of $v_i$.
\end{prop}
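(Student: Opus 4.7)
The plan is to induct on the inductive length $n$ of $v$. The base case $n=1$ is vacuous: the hypothesis $1 \le i \le n$ forces $i = n$, so $v_i = v$ and the condition $v_i \prec w \preceq v$ is unsatisfiable. For the inductive step with $n \geq 2$, the case $i = n$ is vacuous for the same reason, so we may assume $1 \le i < n$. The main structural fact I would use is that the poset $\{w' : w' \preceq v\}$ is totally ordered under $\preceq$, since it corresponds to the ``path'' from the Gauss valuation $v_0$ up to $v$ in the Berkovich tree (two valuations $\preceq v$ have their associated disks both containing the disk of $v$, forcing them to be nested). Consequently, $w$ is comparable to the predecessor $v_{n-1}$, yielding two cases.

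In Case A, where $w \preceq v_{n-1}$, the configuration $v_i \prec w \preceq v_{n-1}$ satisfies all the hypotheses of the proposition with $v_{n-1}$ (of length $n-1$) playing the role of $v$ and $v_i$ (its $i$-th predecessor, since $i \le n-1$) playing the role of $v_i$. The inductive hypothesis then gives that the length of $w$ exceeds $i$. In Case B, where $v_{n-1} \preceq w \preceq v$, we split further: if $w = v_{n-1}$, then $v_i \prec v_{n-1}$ forces $i < n-1$ (otherwise $v_i = v_{n-1} = w$ would contradict $v_i \prec w$), so the length $n-1$ of $w$ already exceeds $i$. Otherwise $v_{n-1} \prec w \preceq v$, and I claim that $w$ is an augmentation of the form $[v_{n-1},\, w(\phi_n) = \mu]$ with $\mu \in (v_{n-1}(\phi_n), \lambda_n]$, so that $w$ has inductive length $n > i$.

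The main obstacle is justifying this last claim, namely that every Mac Lane valuation $w$ with $v_{n-1} \prec w \preceq v$ is an augmentation of $v_{n-1}$ using the specific key polynomial $\phi_n$. The argument proceeds as follows. Since $w \succeq v_{n-1}$ and $w(\phi_n) \le v(\phi_n) = \lambda_n$, the augmentation $v' := [v_{n-1},\, v'(\phi_n) = w(\phi_n)]$ is a well-defined Mac Lane valuation with $v' \preceq v$. The augmentation criterion (a standard consequence of the definition of augmented valuations via $\phi_n$-adic expansions) yields $w \succeq v'$. For the reverse inequality $w \preceq v'$, one expands an arbitrary $g \in K[x]$ in its $\phi_n$-adic expansion $g = \sum_j a_j \phi_n^j$ with $\deg a_j < \deg \phi_n = e_{v_{n-1}}$, and uses the fact that any valuation extending $v_{n-1}$ agrees with $v_{n-1}$ on polynomials of degree less than $\deg\phi_n$, combined with $w(\phi_n) = v'(\phi_n)$, to conclude $w(g) \le v'(g)$. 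Hence $w = v'$, completing the proof.
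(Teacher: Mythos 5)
Your overall strategy — induct on $n$, use total ordering of $\{w' : w' \preceq v\}$ (this is \cite[Proposition~2.25]{KW}, used elsewhere in the paper), and split into the cases $w \preceq v_{n-1}$ and $v_{n-1} \prec w \preceq v$ — is sound, and is a genuinely different route from the paper, which simply derives $w(\phi_i) = v_i(\phi_i)$ by sandwiching and then cites \cite[Lemma~4.35]{Ruth} as a black box. However, the crucial step in Case B, namely the claim that $v_{n-1}\prec w \preceq v$ forces $w = [v_{n-1},\, w(\phi_n)=\mu]$, is not correctly established, and that claim is precisely the content of the cited lemma from Ruth, so the gap is at the heart of the matter.

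Two problems with your argument for that claim. First, the auxiliary fact you invoke — ``any valuation extending $v_{n-1}$ agrees with $v_{n-1}$ on polynomials of degree $< \deg\phi_n$'' — is false as stated. For example, take $v_1 = [v_0,\, v_1(t)=1/2]$, so $e_{v_1}=2$ and $\deg\phi_2=2$; then $w = [v_0,\, w(t)=3/4]$ satisfies $w \succ v_1$, yet $w(t)=3/4\neq 1/2=v_1(t)$ with $\deg t = 1 < 2$. The fact you want is true once you also assume $w\preceq v$, by the sandwich $v_{n-1}(g)\le w(g)\le v(g) = v_{n-1}(g)$; this is indeed what the paper does for $\phi_i$. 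That is a fixable imprecision.

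The more serious problem is the direction $w\preceq v'$ where $v'=[v_{n-1},\, v'(\phi_n)=w(\phi_n)]$. Writing $g = \sum_j a_j\phi_n^j$ and using $w(a_j)=v_{n-1}(a_j)$ and $w(\phi_n)=v'(\phi_n)$, the ultrametric inequality gives $w(g) \geq \min_j w(a_j\phi_n^j) = v'(g)$ — this recovers your step that $w\succeq v'$, but it goes the \emph{wrong} way for what you now need. To get $w(g)\leq v'(g)$ you must show the minimum is actually attained, i.e.\ that there is no further cancellation among the terms $a_j\phi_n^j$ under $w$. Sandwiching with $v$ does not rescue this: if $w(\phi_n)<\lambda_n$ and the minimum defining $v'(g)$ is achieved only at some $j_0>0$, then $v(g)$ is strictly larger than $v'(g)$, so $w(g)\leq v(g)$ gives nothing. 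The non-cancellation is exactly the structural fact about Mac Lane valuations that \cite[Lemma~4.35]{Ruth} (or the diskoid parametrization of the segment $[v_{n-1},v]$) encodes. If you cite that result to close the gap, then the induction on $n$ becomes unnecessary, since the cited lemma already yields the conclusion directly for every $i$, which is what the paper's one-line proof does.
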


\begin{proof}
Since $v_i(\phi_i) = v(\phi_i)$, we have $w(\phi_i) = v_i(\phi_i)$.
The result now follows from \cite[Lemma~4.35]{Ruth}.
\end{proof}

The following lemma is the only place in the paper where we will need
the concept of \emph{diskoid}.  Recall (e.g., \cite[\S2.2]{OShoriz})
that if $\phi \in \mc{O}_K[x]$ is monic and $\lambda \in \rats_{\geq
  0}$, then the diskoid $D(\phi, \lambda)$ is the set $\{\alpha \in
\ol{K} \mid v_K(\phi(\alpha)) \geq \lambda\}$.  It can be thought of as
being ``centered'' at the roots of $\phi$.  By \cite[Theorem
4.56]{Ruth} (see also \cite[Proposition 2.4]{OShoriz}), there is a one-to-one
correspondence between Mac Lane valuations and diskoids inside $\mc{O}_{\ol{K}}$, sending the
valuation $v = [v_0,\, \ldots,\, v_n(\phi_n) = \lambda_n]$ to the
diskoid $D_v := D(\phi_n, \lambda_n)$.  Furthermore, for two Mac Lane
valuations $v$ and $w$, we have $v \preceq w$ if and only if $D_v
\supseteq D_w$.

\begin{lemma}\label{Lgfollows}
  Let $v = [v_0,\, \ldots,\, v_n(\phi_n) = \lambda_n]$ be a Mac Lane
  valuation, and let $g \in K[x]$ be a monic irreducible polynomial
  with a root $\theta \in \ol{K}$.
  Then $v \prec v_g^{\infty}$ if and only if $v_K(\phi_n(\theta)) \geq \lambda_n$.
 In this situation, $\deg(\phi_n) \mid \deg(g)$.
\end{lemma}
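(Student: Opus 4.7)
My plan is to derive the equivalence from the diskoid correspondence recalled just before the statement, and to derive the divisibility from Krasner's lemma.

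For the equivalence, I would first observe that since $v$ is a (finite) Mac Lane valuation and $v_g^{\infty}$ is an infinite pseudovaluation, $v \neq v_g^{\infty}$, so $v \prec v_g^{\infty}$ is equivalent to $v \preceq v_g^{\infty}$. Under the order-reversing bijection between Mac Lane valuations and diskoids from \cite[Theorem~4.56]{Ruth}, $v$ corresponds to $D_v = D(\phi_n, \lambda_n) = \{\alpha \in \ol{K} : v_K(\phi_n(\alpha)) \geq \lambda_n\}$. Extending this correspondence to pseudovaluations, the infinite pseudovaluation $v_g^{\infty}$ is sent to the ``limit diskoid'' $\bigcap_{\mu \geq 0} D(g, \mu) = \{\alpha \in \ol{K} : g(\alpha) = 0\}$, i.e., the Galois orbit of roots of $g$. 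Hence $v \preceq v_g^{\infty}$ iff $D_v$ contains every root of $g$. Since $\phi_n \in K[x]$, the function $\alpha \mapsto v_K(\phi_n(\alpha))$ is $\Gal(\ol{K}/K)$-invariant, and since $g$ is irreducible over $K$, its roots form a single Galois orbit. Thus containment of all roots in $D_v$ reduces to containment of the single root $\theta$, i.e., $v_K(\phi_n(\theta)) \geq \lambda_n$.

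For the divisibility $\deg(\phi_n) \mid \deg(g)$, the strategy is to invoke Krasner's lemma. Factoring $\phi_n(x) = \prod_{i=1}^{s} (x - \beta_i)$ over $\ol{K}$ with $s = \deg(\phi_n)$, the hypothesis rewrites as $\sum_i v_K(\theta - \beta_i) = v_K(\phi_n(\theta)) \geq \lambda_n$. Combined with the strict inequality $\lambda_n > v_{n-1}(\phi_n)$ (which holds because the minimal presentation we assume throughout forces $v_n \neq v_{n-1}$), this will force $\theta$ to lie in a small sub-diskoid centered at a unique root $\beta$ of $\phi_n$, with $v_K(\theta - \beta) > v_K(\beta - \beta')$ for every other root $\beta'$. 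The main obstacle is this separation step; it follows from the standard fact that once $\lambda_n$ exceeds the ``critical radius'' $v_{n-1}(\phi_n)$, the diskoid $D(\phi_n, \lambda_n)$ decomposes as a disjoint union of $s$ smaller diskoids, one around each root $\beta_i$, so that $\theta$ is much closer to a single $\beta$ than the $\beta_i$'s are to one another.

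Once the separation is established, Krasner's lemma yields $K(\beta) \subseteq K(\theta)$. Since $\phi_n$ is the minimal polynomial of $\beta$ over $K$ (as an irreducible key polynomial) and $g$ is the minimal polynomial of $\theta$ over $K$, the tower law gives $\deg(\phi_n) = [K(\beta) : K] \mid [K(\theta) : K] = \deg(g)$, completing the proof.
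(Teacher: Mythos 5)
Your treatment of the equivalence is essentially the paper's: both use the order-reversing diskoid correspondence, with your ``limit diskoid'' argument standing in for the paper's observation that $D(g,\lambda)\subseteq D(\phi_n,\lambda_n)$ for all large $\lambda$ iff $v\preceq v_{g,\lambda}$ for all large $\lambda$ iff $v\preceq v_g^{\infty}$. Fine.

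For the divisibility, you take a genuinely different route. The paper's argument is purely valuation-theoretic: $v\prec v_{g,\lambda}$ forces $v_{g,\lambda}$ to be an augmentation of $v$ by \cite[Remark~4.36]{Ruth}, so $g$ is a key polynomial over $v$, and then Lemma~\ref{Lfdegree}(iii) (together with the degree constraints on key polynomials) gives $\deg(\phi_n)\mid\deg(g)$ in one line, with no geometry of roots at all. Your Krasner route is attractive and more geometric, but it has two genuine gaps.

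First, Krasner's lemma requires separability of the root $\beta$ you are trying to push into $K(\theta)$. The paper assumes $k$ perfect (and reduces to $k$ algebraically closed), but nowhere assumes $K$ itself perfect; over, say, $K=\mathbb{F}_p((t))$, key polynomials such as $x^p-t$ are inseparable, with a single root of multiplicity $p$. In that situation the ``separation'' you need literally vanishes, and Krasner gives no conclusion, whereas the divisibility still holds and still follows from the algebraic argument. Second, even in the separable case, the separation step is asserted rather than proved. The fact you invoke---that $D(\phi_n,\lambda_n)$ decomposes into $\deg(\phi_n)$ pairwise-disjoint sub-diskoids, one per root, as soon as $\lambda_n > v_{n-1}(\phi_n)$---is not the standard normal form of that statement; the usual threshold is controlled by the pairwise root distances (equivalently the Newton polygon of $\phi_n(x+\beta)$), and you would need to verify that the key-polynomial condition forces those distances to be bounded by $v_{n-1}(\phi_n)/\deg(\phi_n)$, or else handle the case where $\theta$ is equidistant from two or more roots (where the ultrametric inequality $v_K(\beta-\beta')\geq v_K(\theta-\beta)$ points the wrong way for Krasner). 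Neither of these is fatal to the idea---with the separability hypothesis added and the clustering lemma proved, the argument goes through---but as written the proof does not establish what the lemma claims in the generality the paper needs, and the paper's route via augmented valuations is both shorter and characteristic-free.
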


\begin{proof}
We have that $v_K(\phi_n(\theta)) \geq \lambda_n$ is equivalent to
$\theta \in D(\phi_n, \lambda_n)$, which is equivalent to $D(g, \lambda) \subseteq
D(\phi_n, \lambda_n)$ for all large enough $\lambda$.  
If we set $v_{g, \lambda} := [v_g, v(g) = \lambda]$, then $D(g,
\lambda) \subseteq D(\phi_n, \lambda_n)$ is equivalent to $v \preceq
v_{g, \lambda}$. Since $v_g^{\infty} = [v_g, v(g) = \infty]$, the statement $v \preceq
v_{g, \lambda}$ for all large enough $\lambda$ is
equivalent to $v \preceq v_g^{\infty}$, and is simultaneously
equivalent to $\theta \in D(\phi_n, \lambda_n)$, proving the equivalence.

By \cite[Remark~4.36]{Ruth}, $v \prec v_{g, \lambda}$ is equivalent to $v_{g, \lambda}$
augmenting $v$.  If this is true for some $\lambda$ (which it is if
the statements in the proposition hold), then
Lemma~\ref{Lfdegree}(iii) shows that $\deg(\phi_n) \mid \deg(g)$.
\end{proof}

The following lemma is extracted from \cite{FGMN}.
\begin{lemma}\label{Ldominantterm}
  Let $v = [v_0,\, \ldots,\, v_n(\phi_n) = \lambda_n]$ be a Mac Lane
  valuation, let $g \in K[x]$ be a monic irreducible polynomial, and let $g
  = \sum_{i=0}^r a_i \phi_n^i$ be the $\phi_n$-adic expansion of $g$.
  \begin{enumerate}[\upshape (i)]
    \item If $v_K(\phi_n(\theta)) \geq \lambda_n$ for one
      (equivalently all) roots $\theta$ of $g$, then $\deg(\phi_n) \mid
      \deg(g)$ and $v(g) = (\deg(g)/\deg(\phi_n))\lambda_n$.
    \item If $v_K(\phi_n(\theta)) \leq \lambda_n$ for one
      (equivalently all) roots $\theta$ of $g$, then $v(g) = v(a_0) =
      v_{n-1}(a_0)$.
  \end{enumerate}
\end{lemma}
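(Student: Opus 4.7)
My plan is to establish both parts via the uniform formula
\[ v(g) = \frac{\deg g}{\deg \phi_n}\min(\mu, \lambda_n), \]
where $\mu := v_K(\phi_n(\theta))$ is independent of the choice of root $\theta$ by Galois symmetry (all roots of the irreducible $g$ are Galois conjugate, and $\phi_n \in K[x]$ is Galois-invariant). Part (i) corresponds to $\mu \geq \lambda_n$, in which case Lemma~\ref{Lgfollows} additionally yields $\deg \phi_n \mid \deg g$, so that $r := \deg g/\deg \phi_n$ is an integer and the formula reads $v(g) = r\lambda_n$. Part (ii) corresponds to $\mu \leq \lambda_n$, where the formula reads $v(g) = (\deg g/\deg \phi_n)\mu$; this will be identified with $v_{n-1}(a_0)$ via a descent identity.

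The main computational tool is the descent identity $v_{n-1}(a_0) = (\deg g/\deg \phi_n)\mu$. Since the infinite pseudovaluation $[v_{n-1}, v(\phi_n) = \infty]$ descends to the unique extension of $v_K$ on $K[x]/(\phi_n) \cong K(\beta)$ for $\beta$ any root of $\phi_n$ in $\overline{K}$, we have $v_{n-1}(h) = v_K(h(\beta))$ for every $h \in K[x]$ of degree less than $\deg \phi_n$. Applied to $h = a_0$, and using $a_0(\beta) = g(\beta) = \prod_j(\beta - \theta_j)$ (the higher terms in the $\phi_n$-adic expansion vanish at $\beta$), the Galois double-count $\deg \phi_n \cdot v_K(g(\beta_1)) = \sum_{\beta,\theta}v_K(\beta - \theta) = \deg g \cdot \mu$ (each inner sum being constant over Galois orbits) yields the identity.

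For part (i), with $r = \deg g/\deg \phi_n$ integral by Lemma~\ref{Lgfollows}: if $\deg g > \deg \phi_n$, then the proof of that lemma exhibits $g$ as a proper key polynomial over $v$ (since $[v, v(g) = \lambda]$ is a valid augmentation for every $\lambda$ sufficiently large), and Lemma~\ref{Lfdegree}(ii) directly yields $v(g) = r\lambda_n$. If $\deg g = \deg \phi_n$, then $r = 1$ and the expansion is $g = \phi_n + a_0$, so $v(g) = \min(v_{n-1}(a_0), \lambda_n) = \lambda_n$ via the descent identity $v_{n-1}(a_0) = \mu \geq \lambda_n$.

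For part (ii), I would apply part (i) to the auxiliary Mac Lane valuation $v^* := [v_{n-1}, v(\phi_n) = \mu]$, which is well-defined whenever $\mu \geq v_{n-1}(\phi_n)$ and has the hypothesis of part (i) satisfied with equality. This yields $v^*(g) = (\deg g/\deg \phi_n)\mu = v_{n-1}(a_0)$, and expanding via $v^*(g) = \min_i(v_{n-1}(a_i) + i\mu)$ gives $v_{n-1}(a_i) + i\mu \geq v_{n-1}(a_0)$ for all $i$. Combining with $\lambda_n \geq \mu$ upgrades this to $v_{n-1}(a_i) + i\lambda_n \geq v_{n-1}(a_0)$, so the minimum defining $v(g)$ is attained at $i = 0$ and $v(g) = v_{n-1}(a_0)$ as desired. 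The main obstacle is the edge case $\mu < v_{n-1}(\phi_n)$, where $v^*$ degenerates and is not a valid Mac Lane valuation of length $n$; this case I would handle by induction on the inductive length $n$, with base case $n = 1$ by Taylor expansion of $g$ around the unique root of the linear $\phi_1$ and Vieta-style bounds on the resulting coefficients via the common root valuation $\mu$.
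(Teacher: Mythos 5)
Your descent identity $v_{n-1}(a_0) = (\deg g/\deg\phi_n)\,v_K(\phi_n(\theta))$ is a correct and appealing observation (the paper instead invokes \cite[Theorem~6.2, Lemma~1.3]{FGMN}, which handles the key valuation-theoretic estimates wholesale). However, your proof has two genuine gaps.

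First, in part (i) you assert that if $\deg g > \deg\phi_n$ then $g$ is a proper key polynomial over $v$, because ``$[v, v(g)=\lambda]$ is a valid augmentation for large $\lambda$.'' This is false. Lemma~\ref{Lgfollows} only yields that $v \prec v_{g,\lambda}$, i.e.\ $v$ is a (possibly non-immediate) predecessor of $v_{g,\lambda}$; it does not give $v_{g,\lambda}=[v,\,v(g)=\lambda]$, nor does it make $g$ a key polynomial over $v$. Indeed, by Lemma~\ref{Lfdegree}(iii) every proper key polynomial over $v=[v_0,\ldots,v_n(\phi_n)=\lambda_n]$ has degree exactly $\deg(\phi_n)\cdot e(v_n/v_{n-1})$, which need not equal $\deg g$. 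Concretely: take $v=[v_0,\,v_1(x)=1]$ and $g = x^2 - \pi_K^3$. Here $v_K(\phi_1(\theta)) = 3/2 > 1 = \lambda_1$, but $e(v_1/v_0)=1$, so by Lemma~\ref{Lvnvprime} there are no proper key polynomials over $v$ at all. Even with equality $v_K(\phi_n(\theta))=\lambda_n$, the claim can fail: $v=[v_0,\,v_1(x)=1/2]$ and $g = x^4+\pi_K^2$ has $v_K(\phi_1(\theta))=1/2=\lambda_1$, yet proper key polynomials over $v$ have degree $2\neq 4$. In all these cases the \emph{conclusion} $v(g)=r\lambda_n$ remains true, but the argument via Lemma~\ref{Lfdegree}(ii) applied to $v$ does not. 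This gap also propagates: your proof of part (ii) applies part (i) to the auxiliary valuation $v^*$, which is precisely the $\deg g > \deg\phi_n$ equality case where the gap lives. (A fix is available: after constructing $v^* = [v_{n-1},\,v(\phi_n)=\mu]$, one can check that $g$ is a proper key polynomial over $v^*$ \emph{if and only if} $e(v^*/v_{n-1}) = \deg g/\deg\phi_n$, which does not always hold; what one can salvage uniformly from the descent identity is only $v^*(a_0) = r\mu$, and you would still need the harder inequality $v_{n-1}(a_i) \ge (r-i)\mu$ for $0<i<r$, which your note does not establish.)

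Second, in part (ii) you explicitly flag the edge case $\mu < v_{n-1}(\phi_n)$, where $v^*$ is not a valid augmentation, and propose an induction on inductive length whose base case you sketch but whose inductive step you omit entirely. This case is not vacuous --- for example $v = [v_0,\,v_1(x)=1/2,\,v_2(x^2-\pi_K)=\lambda_2]$ and $g = x^2+1$ (with $p\ne 2$ and $-1$ non-square) gives $\mu=0 < 1 = v_1(\phi_2)$. As written the inductive step is missing, and it is not obvious: the $\phi_n$-adic coefficients $a_i$ are neither monic nor irreducible, so your Galois/Vieta argument does not directly transfer. The paper sidesteps both issues by using \cite{FGMN}'s $\phi\nmid_v g$ machinery, which converts the hypothesis on $v_K(\phi_n(\theta))$ directly into the required inequalities on $v_{n-1}(a_i)+i\lambda_n$ without any case split on whether $g$ is key over $v$ or whether an auxiliary augmentation exists.
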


\begin{proof}
  Let $\ell = \deg(g)/\deg(\phi_n)$. 
  For part (i), first note that $\ell \in \ints$ by
  Lemma~\ref{Lgfollows}.  Now, noting that $\phi_n$ is a key
  polynomial over $v$, we apply \cite[Theorem~6.2(2)]{FGMN}, taking $F$, $\mu$,
$\phi$ in the notation of that theorem to be $g$, $v$, and $\phi_n$.
This implies that if $v_K(\phi_n(\theta)) > \lambda_n$, then $v(g -
\phi_n^{\ell}) > \max(v(g),
v(\phi_n^{\ell}))$.  By continuity, $v_K(\phi_n(\theta)) \geq
\lambda_n$ implies $v(g - \phi_n^{\ell}) \geq \max(v(g),
v(\phi_n^{\ell}))$.  This implies that $v(g) =
v(\phi_n^{\ell})$.

For part (ii), \cite[Theorem~6.2]{FGMN} implies in this situation that
$\phi_n \nmid_v g$\footnote{For a definition of $\mid_v$, see
  \cite[Definition~1.2]{FGMN}, but we don't need the actual definition
    to proceed.}, which implies by
  \cite[Lemma~1.3(4)]{FGMN}\footnote{The criterion (3) of
    \cite[Lemma~1.3]{FGMN} is satisfied by definition.} that $v(g) =
  v(a_0)$.  By the definition of inductive valuation, it follows that $v(a_0) =
  v_{n-1}(a_0)$.   
\end{proof}

\begin{corollary}\label{Cdominantterm}
  Let $v = [v_0,\, \ldots,\, v_n(\phi_n) = \lambda_n]$ be a Mac Lane
  valuation and let $g \in K[x]$ be a monic irreducible polynomial.
%  \begin{enumerate}[\upshape (i)]
     Suppose $v \prec v_g^{\infty}$.  Then $v(g)
      = v(\phi_n^{\deg(g)/\deg(\phi_n)})$.
%    \end{enumerate}
    
\end{corollary}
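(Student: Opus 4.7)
The plan is to deduce this as an immediate consequence of Lemma~\ref{Lgfollows} together with Lemma~\ref{Ldominantterm}(i). First, from the hypothesis $v \prec v_g^{\infty}$, Lemma~\ref{Lgfollows} gives two pieces of information at once: the divisibility $\deg(\phi_n) \mid \deg(g)$, so that the exponent $\ell := \deg(g)/\deg(\phi_n)$ is a positive integer, and the inequality $v_K(\phi_n(\theta)) \geq \lambda_n$ for any root $\theta$ of $g$.

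Next, with the inequality $v_K(\phi_n(\theta)) \geq \lambda_n$ in hand, I would invoke Lemma~\ref{Ldominantterm}(i), which directly yields
\[
v(g) = \ell \, \lambda_n.
\]
Since $\phi_n$ is a key polynomial over $v_{n-1}$ and $v = [v_0, \ldots, v_n(\phi_n) = \lambda_n]$, we have $v(\phi_n) = \lambda_n$ by the discussion following Lemma~\ref{Lfdegree} (or directly from the definition of augmented valuation). Therefore
\[
v(\phi_n^{\ell}) = \ell \, v(\phi_n) = \ell \, \lambda_n = v(g),
\]
which is exactly the claimed equality $v(g) = v(\phi_n^{\deg(g)/\deg(\phi_n)})$.

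Because the two lemmas cited do essentially all the work, there is no real obstacle here; the only thing to be careful about is to observe that $\ell$ really is an integer (so that $\phi_n^{\ell}$ makes sense) before computing $v(\phi_n^{\ell})$, but this is precisely the content of the second conclusion of Lemma~\ref{Lgfollows}.
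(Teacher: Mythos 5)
Your argument is correct and matches the paper's proof exactly: the paper simply says the statement "follows immediately from Lemmas~\ref{Lgfollows} and \ref{Ldominantterm}(i)," and you have filled in precisely those steps, including the observation that $\deg(\phi_n) \mid \deg(g)$ so that the exponent is an integer.
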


\begin{proof}
This follows immediately from Lemmas~\ref{Lgfollows} and
\ref{Ldominantterm}(i). 
\end{proof}

\subsection{Partial order structure: the inf-closed property and neighbors}\label{Sinfclosed}
If $v$ and $w$ are Mac Lane pseudovaluations, we define $\inf(v, w)$
to be the maximal Mac Lane pseudovaluation $x$ such that $x \preceq v$
and $x \preceq w$.  This exists by \cite[Proposition~2.26]{KW}.  Following \cite{KW}, we say that a set $V$ of Mac Lane
pseudovaluations is \emph{inf-closed} if for all $v, w \in V$, we have
$\inf(v, w) \in V$.

\begin{lemma}\label{Linfclosed}
  Suppose $V$ is a set of Mac Lane pseudovaluations, and
  let $u \in V$.  Let $W$ be an inf-closed set of Mac Lane
  pseudovaluations such that $u \preceq w$ for all $w \in W$, and such
  that if $v \in V$ with $u \preceq v$, then $\inf(v, w) = u$ for all
  $w \in W$.
  Under these assumptions, if $V$ is
  inf-closed, then so is $V \cup W$.
\end{lemma}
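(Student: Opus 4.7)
The plan is to prove inf-closedness of $V \cup W$ by a case analysis on where two elements $a, b \in V \cup W$ lie. If both are in $V$, or both are in $W$, then the inf-closedness of the relevant set gives $\inf(a, b) \in V \cup W$. The only case that requires work is the mixed case with $v \in V$ and $w \in W$.

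I would split this mixed case according to whether $u \preceq v$. If $u \preceq v$, then the third hypothesis on $V$ and $W$ yields $\inf(v, w) = u \in V$ immediately. If $u \not\preceq v$, I claim that $\inf(v, w) = \inf(u, v)$, which lies in $V$ by inf-closedness of $V$. One direction, $\inf(u, v) \preceq \inf(v, w)$, is immediate from $u \preceq w$ together with the monotonicity of $\inf$ in each argument. For the reverse direction $\inf(v, w) \preceq \inf(u, v)$, I would use the tree-like structure on Mac Lane pseudovaluations (reflecting the tree structure of the Berkovich projective line $(\proj^1_K)^{\mathrm{Berk}}$): since $u$ and $\inf(v, w)$ both lie $\preceq w$, they must be comparable. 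The possibility $u \preceq \inf(v, w)$ would force $u \preceq v$, contradicting the subcase assumption; so $\inf(v, w) \preceq u$, and combining with $\inf(v, w) \preceq v$ yields $\inf(v, w) \preceq \inf(u, v)$ as desired.

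The main obstacle is confirming the tree property invoked above, namely that for each pseudovaluation $w$, the set $\{y : y \preceq w\}$ is totally ordered. This is standard, given the correspondence of Mac Lane pseudovaluations with the appropriate points of the Berkovich projective line, and could alternatively be derived directly from the inductive presentation, since any $y \preceq w$ must agree with $w$ on a common chain of predecessors. Once that tree fact is available, the rest of the argument is purely order-theoretic bookkeeping.
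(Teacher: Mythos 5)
Your proof is correct and takes essentially the same approach as the paper: reduce to the mixed case $v \in V$, $w \in W$; dispatch the subcase $u \preceq v$ via the third hypothesis; and in the subcase $u \not\preceq v$ show $\inf(v,w) = \inf(u,v)$, with one inequality from monotonicity (using $u \preceq w$) and the other from the total ordering of $\{y : y \preceq w\}$, which the paper cites as \cite[Proposition 2.25]{KW}. The paper argues the reverse inequality by taking an arbitrary common lower bound $v'$ of $v$ and $w$ while you work directly with $\inf(v,w)$, but the contradiction argument ($u \prec v' \preceq v$ versus $u \not\preceq v$) is identical.
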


\begin{proof}
Since $V$ and $W$ are inf-closed, one need only check that $\inf(v, w) \in
V \cup W$ for $v \in V$ and $w \in W$.  If $u \preceq v$, then
$\inf(v, w) = u \in V \subseteq V \cup W$ by assumption. So assume $u \not \preceq v$.  It suffices to show that $\inf(v, w) = \inf(v, u)$, since $\inf(v, u)\in V \subseteq V \cup W$ by our assumption that $V$ is inf-closed.  

Since $u \preceq w$, it follows that $\inf(v, u)\preceq \inf(v, w)$. We now show $\inf(v, w) \preceq \inf(v, u)$. Let $v'$ be a Mac Lane pseudovaluation with $v' \preceq v$ and $v' \preceq w$. We need to show $v' \preceq u$. If $v' \not \preceq u$, then since the set of Mac Lane pseudovaluations bounded above by $w$ is
totally ordered (\cite[Proposition 2.25]{KW}) and $u \preceq w$ and $v' \preceq w$ by assumption, we have $u \prec v'$. Combined with our assumption that $v' \preceq v$, we get $u \prec v' \preceq v$, which contradicts $u \not \preceq v$.  
\end{proof}

Let $V^*$ be a finite set of Mac Lane pseudovaluations, let $V \subset V^*$ be
the subset of all valuations, and let $\mc{Y}$ be
the $V$-model of $\proj^1_K$.  Two pseudovaluations $v, w \in V^*$ are called
\emph{adjacent} in $V$ if $v \prec w$ and there exists no $y \in V^*$ with $v
\prec y \prec w$, or if the same holds with the roles of $v$ and $w$
reversed.  We will often omit mentioning $V^*$ when it is clear.  The
pseudovaluations $w$ adjacent to $v$ in $V^*$ are called $v$'s \emph{neighbors}.
  
\section{Mac Lane valuations and normal models}\label{Smaclanemodels}

%\begin{itemize}
%\item \andrew{This section should contain all of the terminology about
%  forward, backward directions, etc.  Should also give a convenient
%  way to refer to points corresponding to these directions on
%  corresponding $v$-models.  That is, we can talk about a point on a
%  $v$-component ``in the direction'' of another valuation $w$.}
%\end{itemize}

A \emph{normal model} of $\proj^1_K$ is a flat, normal, proper
$\mc{O}_K$-curve with generic fiber isomorphic to $\proj^1_K$.
By \cite[Corollary 3.18]{Ruth},\footnote{See also \cite[Theorems 1.1,
  2.1]{GMP} for a stronger result in more general context, but from
  which it takes a small amount of work to extract the exact statement
  that we want.} normal models $\mc{Y}$ of $\proj^1_K$ are in one-to-one correspondence with
non-empty finite collections of geometric valuations on $K(\proj^1)$, by sending
$\mc{Y}$ to the collection of geometric valuations corresponding to
the local rings at the generic points of the irreducible components of
the special fiber of $\mc{Y}$. We fix a coordinate on
$\proj^1_K$ so that each Mac Lane valuation gives a geometric
valuation (all geometric valuations $v$ we deal with in this paper will
have $v \succeq v_0$, so in fact all geometric valuations we care
about will be Mac Lane valuations, see \S\ref{Smaclane}).  Then, via the correspondence in \cite[Corollary
3.18]{Ruth}, the multiplicity of an irreducible
component of the special fiber of a normal model $\mc{Y}$ of $\proj^1_K$
corresponding to a Mac Lane valuation $v$ equals $e_v$.

We say that a normal model $\mc{Y}$ of $\proj^1_K$ \emph{includes} a Mac Lane
valuation $v$ if a component of the special fiber corresponds to
$v$. If $\mc{Y}$ includes $v$, we call the corresponding irreducible component of its
special fiber the \emph{$v$-component} of the special fiber of
$\mc{Y}$ (or by abuse of language, 
the $v$-component of $\mc{Y}$, even though it is not an irreducible
component of $\mc{Y}$).
If $V$ is a finite set of Mac Lane valuations, then the
\emph{$V$-model of $\proj^1_K$} is the normal model including exactly
the valuations in $V$.  If $V = \{v\}$, we simply say the $v$-model
instead of the $\{v\}$-model.  
Recall that we fixed a coordinate $t$ on $\proj^1_K$, that is, a rational function $t$ on $\proj^1_K$ such that
$K(\proj^1_K) = K(t)$.

\subsection{Specialization of horizontal divisors}\label{Shorizontaldivs}
Each $\alpha \in \ol{K}$ has minimal polynomial $g \in
K[x]$ over $K$, corresponding to a closed point of $\proj^1_K$. If
$\mc{Y}$ is a normal model of $\proj^1_K$, the closure of this point in
$\mc{Y}$ is a subscheme that we call $D_{\alpha}$ or $D_g$, depending
on context; note that
$D_{\alpha}$ is a horizontal divisor (the model will be clear from
context, so we omit it to lighten the notation). We also write
$D_{\infty}$ for the closure of the point at $\infty$ in $\mc{Y}$.  

If $v$ is a Mac Lane valuation, then the reduced special fiber of the
$v$-model of $\proj^1_K$ is isomorphic to $\proj^1_k$ (see, e.g.,
\cite[Lemma~7.1]{ObusWewers}). Roughly, the propositions below means we can ``parameterize'' the
special fiber of the $v$-model of $\proj^1_K$ by the reduction of the
values of $\phi_n/c$, where $c \in \ol{K}$ has valuation $\lambda_n$.

\begin{prop}\label{Pparameterize}Let $v = [v_0,\, v_1(\phi_1) = \lambda_1, \ldots,\, v_n(\phi_n) =
\lambda_n]$ and $v' = [v_0,\, v_1(\phi_1) = \lambda_1, \ldots,\, v_n'(\phi_n) =
\lambda_n']$ be Mac Lane valuations with $\lambda_n < \lambda_n'$. 
\begin{enumerate}[\upshape (i)]
\item\label{Psmallerdiskoid}
Let $\mc{Y}$ be the 
$v$-model of $\proj^1_K$.  As $\alpha$ ranges over $\ol{K}$, all
$D_{\alpha}$ with $v_K(\phi_n(\alpha)) > \lambda_n$ meet on the special
fiber, all $D_{\alpha}$ with $v_K(\phi_n(\alpha)) < \lambda_n$ meet
at a different point
on the special fiber, and no $D_{\alpha}$ with $v_K(\phi_n(\alpha))
\neq \lambda_n$ meets any $D_{\beta}$ with $v_K(\phi_n(\beta)) = \lambda_n$.
\item\label{Pannulus}
Let $\mc{Y}$ be 
a model of $\proj^1_K$ including $v$ and $v'$ on which the $v$- and
$v'$-components intersect, say at a point $z$.  Then $D_{\alpha}$
meets $z$ if and only if $\lambda_n < v_K(\phi_n(\alpha)) < \lambda_n'$.
\end{enumerate}
\end{prop}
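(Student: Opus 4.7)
The plan is to identify an explicit rational function on $\mc{Y}$ whose reduction serves as a coordinate on the $v$-component $\ol{Y}_v \cong \proj^1_k$, and then read off the specializations of horizontal divisors from this coordinate.

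First I would (after a harmless finite base change so that $\lambda_n$ lies in the value group of the base field) choose $c$ with $v_K(c) = \lambda_n$ and consider the rational function $u := \phi_n/c$. By construction $v(u) = 0$, so $u$ defines a nonzero element of the residue field of $v$, which is the function field of $\ol{Y}_v$. Using the theory of key polynomials (e.g. Lemma~\ref{Lfdegree} and the graded-ring picture of Mac Lane valuations), the reduction $\ol{u}$ is a coordinate on $\ol{Y}_v \cong \proj^1_k$. Two distinguished points are then easy to identify: $\ol{u} = 0$ is the point on $\ol{Y}_v$ where any component corresponding to a valuation $w = [v, w(\phi_n) = \mu]$ with $\mu > \lambda_n$ would attach (the ``upward'' direction from $v$ in the tree of Mac Lane valuations), while $\ol{u} = \infty$ is the point where the $v_{n-1}$-component attaches (the ``downward'' direction). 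This identification follows because these directions in the valuation tree correspond respectively to $\phi_n/c$ acquiring strictly positive valuation or strictly negative valuation relative to $v$.

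For part \ref{Psmallerdiskoid}, given $\alpha \in \ol{K}$ with minimal polynomial $g$, I would argue that the specialization of $D_\alpha = D_g$ on $\ol{Y}_v$ is the closed point obtained by evaluating the coordinate $\ol{u}$ at $\alpha$. Concretely, the value of $u$ at $\alpha$ is $\phi_n(\alpha)/c$, whose $v_K$-valuation is $v_K(\phi_n(\alpha)) - \lambda_n$. Splitting into the three cases $v_K(\phi_n(\alpha)) > \lambda_n$, $= \lambda_n$, $< \lambda_n$, one gets reductions $\ol{u} = 0$, a nonzero finite value in $\ol{k}$, and $\ol{u} = \infty$ respectively. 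This immediately yields the three assertions of part \ref{Psmallerdiskoid}: the first and third cases each collapse to a single point of $\ol{Y}_v$, and no point in one case coincides with any point in another.

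Part \ref{Pannulus} would then follow from part \ref{Psmallerdiskoid} applied to both $v$ and $v'$. Since $v \prec v'$, the intersection point $z$ is simultaneously the ``upward'' point of $\ol{Y}_v$ and the ``downward'' point of $\ol{Y}_{v'}$. Writing the analog of $u$ for $v'$ as $u' = \phi_n/c'$ with $v_K(c') = \lambda_n'$, part \ref{Psmallerdiskoid} applied to $v$ tells us $D_\alpha$ passes through $z = \{\ol{u} = 0\}$ iff $v_K(\phi_n(\alpha)) > \lambda_n$, while part \ref{Psmallerdiskoid} applied to $v'$ tells us $D_\alpha$ passes through $z = \{\ol{u}' = \infty\}$ iff $v_K(\phi_n(\alpha)) < \lambda_n'$. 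Combining the two inequalities gives the stated two-sided inequality.

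The main obstacle is the rigorous identification of $\ol{u}$ as a coordinate on $\ol{Y}_v$ together with the interpretation of $\ol{u} = 0$ and $\ol{u} = \infty$ as the predecessor/successor directions in the valuation tree, since $\phi_n$ may have large degree and $u$ is then a rational function of large degree on the generic fiber. One can circumvent this either by invoking the associated graded ring of $v$ (which is graded-birational to a polynomial ring in one variable over the graded algebra of $v_{n-1}$, with $u$ as a homogeneous generator) or by working concretely in a chain of blowup charts coming from the predecessor tower $v_0 \prec v_1 \prec \cdots \prec v_n$; either way, the Mac Lane structure guarantees the geometric picture described above.
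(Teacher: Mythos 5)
The paper does not give an internal proof of this proposition: it is cited verbatim from an earlier paper (\cite[Proposition 3.2 and Corollary 3.4]{OShoriz}). The informal description immediately preceding the proposition in the text --- ``we can parameterize the special fiber of the $v$-model of $\proj^1_K$ by the reduction of the values of $\phi_n/c$'' --- is exactly the idea your plan develops, so you have identified the intended mechanism. Your reduction of part~\ref{Pannulus} to two applications of part~\ref{Psmallerdiskoid} via the contraction to the $v$- and $v'$-models also matches how Corollary~3.4 follows from Proposition~3.2 in that reference.

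That said, two of the steps you flag as ``the main obstacle'' really are the content of the proof, and one of them is slightly more delicate than your sketch suggests. First, when $e(v_n/v_{n-1}) > 1$, the constant $c$ with $v_K(c) = \lambda_n$ does not live in $K$, and after the ``harmless base change'' to $K' = K(c)$ the model $\mc{Y}$ must be replaced by the normalization of $\mc{Y} \otimes_{\mc{O}_K} \mc{O}_{K'}$; you then need to verify that this new model is again the $w$-model for the unique extension $w$ of $v$, and that the induced map on reduced special fibers is an isomorphism (so specializations can be compared across the two models). This is true but needs a sentence, and the cited reference sidesteps it entirely by working with the monomial $\psi\phi_n^{e}$ from \cite[Lemma~3.1]{OShoriz} (with $e = e(v_n/v_{n-1})$ and $\psi$ a monomial in $\phi_1,\ldots,\phi_{n-1}$ having $v(\psi) = -e\lambda_n$), which stays inside $K(t)$ at the cost of a slightly less transparent valuation computation. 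Second, your step ``the specialization of $D_\alpha$ is obtained by evaluating $\ol{u}$ at $\alpha$'' uses implicitly that the rational map $u\colon \mc{Y} \dashrightarrow \P^1_{\mc{O}_K}$ is an actual morphism near the specialization point; this requires knowing in advance that $D_{\phi_n}$ and $D_\infty$ (the zero and pole divisors of $u$, which have no vertical part) do not meet on $\mc{Y}$, and that fact is essentially Proposition~\ref{PAllspecializations}\ref{Cphiizeroes}, whose proof in the paper ultimately rests on the very proposition you are proving via the cited reference. So one must be careful not to argue in a circle; the safest path is to establish the coordinate $\ol{u}$ and the non-meeting of $D_{\phi_n}$ and $D_\infty$ directly from the Mac Lane/graded-ring structure (as you propose in your final paragraph) rather than importing downstream consequences. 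With that caveat, your plan is sound.
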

\begin{proof}
These are \cite[Proposition 3.2]{OShoriz} and \cite[Corollary 3.4]{OShoriz}.
\end{proof}

We reproduce a result from \cite{KW} that will be used repeatedly in
this paper.

\begin{prop}[{\cite[Proposition~3.5]{KW}}]\label{P35}
  Let $V^*$ be a finite set of Mac Lane pseudo-valuations, let $V
  \subseteq V^*$ be the subset consisting of all valuations, and let $\mc{Y}$ be the
  $V$-model of $\proj^1_K$.  If $v$ and $w$ are neighbors in $V^*$, then
  the $v$- and $w$-components intersect on $\mc{Y}$ (where for a pseudovaluation
  $v = v_g^{\infty}$, we consider the $v$-component to be $D_g$).  The converse is true if $V^*$
  is inf-closed.\footnote{As stated, \cite[Proposition 3.5]{KW} requires that
$V^*$ be inf-closed for both directions, but that assumption is not
used in the proof of
the ``if'' direction.} 
\end{prop}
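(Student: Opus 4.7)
The plan is to separate the forward direction (which does not require inf-closedness) from the converse, and in each case reduce the problem to tree-combinatorics on the poset of Mac Lane pseudovaluations by invoking the bijective correspondence between finite sets of Mac Lane valuations and normal models of $\proj^1_K$.

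For the forward direction, suppose $v, w \in V^*$ are neighbors with $v \prec w$, and let the ``$w$-component'' mean the $w$-component of $\mc{Y}$ when $w \in V$ and $D_g$ when $w = v_g^\infty$. I would first verify the claim on the smaller $(V \cap \{v, w\})$-model $\mc{Y}_0$. When $w \in V$, the special fiber of $\mc{Y}_0$ is the union of just the $v$- and $w$-components, and connectedness (from the geometric connectedness of $\proj^1_K$) forces them to intersect at some closed point $z_0$. When $w = v_g^\infty$, the inequality $v \preceq v_g^\infty$ together with Lemma~\ref{Lgfollows} gives $v_K(\phi_n(\theta)) \geq \lambda_n$ for each root $\theta$ of $g$, and Proposition~\ref{Pparameterize}\ref{Psmallerdiskoid} pins down the unique closed point $z_0$ of the $v$-component of $\mc{Y}_0$ to which $D_g$ specializes. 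The remaining task is to show that passing from $\mc{Y}_0$ to $\mc{Y}$ by inserting the further valuations $u \in V \setminus \{v, w\}$ preserves this incidence. Since $V \subseteq V^*$ and $v, w$ are neighbors in $V^*$, no such $u$ satisfies $v \prec u \prec w$, and I would case-split on the position of $u$ in the Mac Lane poset relative to $\{v, w\}$. In the harmless cases ($u \preceq v$, or $u$ incomparable with $v$, or $u \succeq w$, or $v \prec u$ with $\inf(u, w) = v$), the $u$-component is inserted at a point of $\mc{Y}_0$ disjoint from any neighborhood of $z_0$, so the intersection is untouched. In the delicate case $v \prec u$ with $u$ incomparable with $w$ and $v' := \inf(u, w) \succ v$, the intermediate valuation $v'$ must lie outside $V^*$ (else it would sit strictly between $v$ and $w$), the $V$-model acquires a non-normal-crossings singularity where the $v$-, $u$-, and $w$-components simultaneously specialize, and the $v$- and $w$-components nevertheless share this singular closed point.

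For the converse, assume $V^*$ is inf-closed and the $v$- and $w$-components meet on $\mc{Y}$. I would first show $v$ and $w$ are comparable: if not, set $v' := \inf(v, w)$; since distinct pseudovaluations are $\preceq$-maximal and hence pairwise incomparable, $v'$ must be a genuine Mac Lane valuation, and inf-closedness forces $v' \in V^* \cap V = V$. Then $v$ and $w$ lie in distinct branches of the Mac Lane tree above $v'$, so the $v'$-component separates them on $\mc{Y}$, contradicting the intersection hypothesis. Thus WLOG $v \prec w$. If some $u \in V^*$ satisfied $v \prec u \prec w$, then $u$ could not be a pseudovaluation (by $\preceq$-maximality of pseudovaluations of the form $v_g^\infty$), so $u \in V$, and the $u$-component would lie strictly between the $v$- and $w$-components in the dual graph of $\mc{Y}$, again contradicting the intersection. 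Hence $v$ and $w$ are neighbors in $V^*$.

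The main obstacle is the delicate subcase of the forward direction, where $u \in V$ satisfies $v \prec u$ with $u$ incomparable with $w$ and $\inf(u, w) \succ v$. Precisely because $v, w$ are neighbors in $V^*$, the intermediate valuation $\inf(u, w)$ cannot lie in $V^*$, and the $V$-model therefore fails to be normal crossings at the common specialization point of the three components. Verifying that the $v$- and $w$-components share a \emph{single} closed point of $\mc{Y}$ through this singularity (rather than merely clustering at distinct nearby points) requires either an explicit local computation in the Mac Lane coordinates describing $\mc{Y}_0$ and its blowup chain, or an argument comparing $\mc{Y}$ with a dominating inf-closed model that contains the missing intermediate valuations and then tracking the contraction producing $\mc{Y}$. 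Everything else, including the entire converse direction, reduces to straightforward combinatorics on the tree of Mac Lane pseudovaluations combined with the uniqueness half of the correspondence with normal models.
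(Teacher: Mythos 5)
The paper provides no written-out proof of this proposition: it cites \cite{KW}*{Proposition~3.5} and merely remarks in a footnote that the proof there for the forward direction does not actually use inf-closedness. So the comparison is against a citation, not an argument. Against that background, your sketch of the converse direction is acceptable modulo its implicit appeal to the rooted-tree correspondence for inf-closed sets (essentially \cite{KW}*{Corollary~2.28} together with the statement being proved), but the forward direction has a genuine gap, which you acknowledge but do not close.

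The difficulty is the incremental framework. You establish the incidence on the $(V\cap\{v,w\})$-model $\mc{Y}_0$ and then want to re-adjoin the remaining $u \in V \setminus \{v,w\}$ one at a time, asserting in the ``harmless'' cases that the $u$-component is inserted at a point disjoint from a neighborhood of $z_0$. That assertion is itself a specialization claim requiring proof (it is exactly the sort of thing Proposition~\ref{Pparameterize} and Lemma~\ref{Lnonspecialize} are needed to control), and moreover the order in which the $u$'s are adjoined matters and you never track it; in the delicate case you explicitly stop short. Your final sentence points at the correct repair, and it is in fact cleaner than the case analysis: pass to the inf-closure $\hat V$ of $V$, where the dual graph of $\mc{Y}_{\hat V}$ is the Hasse diagram of $\hat V$; observe that every $y \in \hat V$ with $v \prec y \prec w$ lies outside $V^*$ (by neighborliness of $v,w$ in $V^*$) and hence outside $V$; therefore the entire chain of $y$-components joining the $v$- and $w$-components in $\mc{Y}_{\hat V}$ is contracted by $\mc{Y}_{\hat V} \to \mc{Y}_V$, forcing the $v$- and $w$-components to meet on $\mc{Y}_V$ (and, in the pseudovaluation case $w = v_g^\infty$, one instead tracks the specialization of $D_g$ through the same contraction). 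This single contraction argument replaces your case split on individual $u$'s and closes the gap.
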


\begin{prop}\label{PAllspecializations}
 Let $v = [v_0,\, v_1(\phi_1) = \lambda_1, \ldots,\, v_n(\phi_n) =
\lambda_n]$ be a Mac Lane valuation and let $\mc{Y}$ be the 
$v$-model of $\proj^1_K$. 
\begin{enumerate}[\upshape (i)]
 \item\label{Cphiizeroes} Then $D_{\phi_i}$ and $D_{\infty}$ for $i <
n$ meet at 
the same point on the special fiber of $\mc{Y}$.  Furthermore,
$D_{\phi_n}$ does not meet this point.
 \item\label{Clowvalspecialization}  If $g \in \mc{O}_K[t]$ is a monic
irreducible polynomial, then $D_g$ meets $D_{\infty}$ if and only if $v \not \prec v_g^{\infty}$.
\end{enumerate}
\end{prop}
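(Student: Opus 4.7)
The plan is to reduce both parts to Proposition~\ref{Pparameterize}(\ref{Psmallerdiskoid}), which singles out two distinguished points $P_+, P_-$ on the reduced special fiber $\ol Y$ of the $v$-model: namely, all $D_\alpha$ with $v_K(\phi_n(\alpha)) > \lambda_n$ meet at $P_+$, and all $D_\alpha$ with $v_K(\phi_n(\alpha)) < \lambda_n$ meet at $P_-$. The main preliminary step, which I expect to be the key technical point since the cited proposition only parameterizes $\alpha \in \ol K$, is to locate the specialization of $D_\infty$ itself on $\ol Y$.

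For this, I would analyze the rational function $h := \phi_n^{e_v}/\pi_K^{e_v \lambda_n}$, chosen so that $v(h) = 0$. Since $v$ is the unique valuation corresponding to $\ol Y$, the divisor of $h$ on $\mc{Y}$ has no vertical contribution, and a direct calculation yields $\divi(h) = e_v D_{\phi_n} - e_v \deg(\phi_n) D_\infty$. Choose $c \in \ol K$ with $v_K(c) = \lambda_n$; the reduction $\bar u$ of $u := \phi_n/c$ then serves as the natural coordinate identifying $\ol Y$ with $\proj^1_k$, under which $P_+$ corresponds to $\bar u = 0$ and $P_-$ to $\bar u = \infty$. The identity $\phi_n = cu$ gives $h = (c^{e_v}/\pi_K^{e_v\lambda_n}) u^{e_v}$ with the scalar factor a unit in $\mc{O}_{\ol K}$, so $\bar h$ equals $\bar u^{e_v}$ up to a unit of $k^\times$ and in particular has its unique pole on $\ol Y$ at $P_-$. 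Matching the pole loci in the two expressions for $\divi(h)$ then forces $D_\infty$ to specialize to $P_-$ (and $D_{\phi_n}$ to $P_+$).

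Part (i) then follows from Lemma~\ref{Lgfollows}. For $i < n$ and $\theta$ a root of $\phi_i$, if $v \prec v_{\phi_i}^\infty$ held, the lemma would force $\deg(\phi_n) \mid \deg(\phi_i)$, contradicting the fact that $\deg(\phi_i) < \deg(\phi_n)$ by the minimality of the presentation of $v$. Hence $v_K(\phi_n(\theta)) < \lambda_n$, so $D_{\phi_i}$ specializes to $P_-$, the same point as $D_\infty$. For the remaining assertion, any root $\theta$ of $\phi_n$ satisfies $v_K(\phi_n(\theta)) = +\infty > \lambda_n$, so $D_{\phi_n}$ specializes to $P_+$, which is distinct from $P_-$.

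For part (ii), since $D_\infty$ specializes to $P_-$ and no $D_\beta$ with $v_K(\phi_n(\beta)) = \lambda_n$ meets $P_-$, $D_g$ meets $D_\infty$ if and only if $D_g$ specializes to $P_-$; by Proposition~\ref{Pparameterize}(\ref{Psmallerdiskoid}) this is equivalent to $v_K(\phi_n(\theta)) < \lambda_n$ for a root $\theta$ of $g$, and by Lemma~\ref{Lgfollows} this is in turn equivalent to $v \not\prec v_g^\infty$.
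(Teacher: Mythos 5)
Your approach to part (i) is genuinely different from the paper's, which cites \cite[Lemma~3.6(iii), Lemma~3.7(ii)]{KW} applied to the contraction from the $\{v_{n-1},v\}$-model; your argument is self-contained and uses only results already proved or cited in this paper. Unfortunately the key step contains a real error, not merely a gap. You claim that $\bar{u} = \overline{\phi_n/c}$ is a coordinate on $\ol{Y}^{\red} \cong \proj^1_k$, under which $P_+, P_-$ correspond to $0, \infty$; but $\bar{u}$ is not a rational function on $\ol{Y}^{\red}$ in general, because $\phi_n/c \in K'(t)$ for a nontrivial totally ramified extension $K'/K$ and its reduction lies in a proper extension of the residue field of $v$. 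Concretely, a degree count detects the problem: your (correct) divisor computation $\divi(h) = e_v D_{\phi_n} - e_v\deg(\phi_n)D_{\infty}$ together with the intersection numbers $(D_{\phi_n}, \ol{Y}^{\red}) = \deg(\phi_n)/e_v$ and $(D_\infty, \ol{Y}^{\red}) = 1/e_v$ shows that $\bar{h}$ has degree $\deg(\phi_n)$ as a map $\ol{Y}^{\red} \to \proj^1_k$; if $\bar{h}$ were a $k^\times$-unit times $\bar{u}^{e_v}$ for a coordinate $\bar{u}$, its degree would be $e_v = e(v_n/v_{n-1})\deg(\phi_n)$ (using Corollary~\ref{CvalueofN}), which strictly exceeds $\deg(\phi_n)$ whenever $e(v_n/v_{n-1}) > 1$. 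The relevant generator of the residue field is not the reduction of $\phi_n/c$, but rather the reduction of $\psi\,\phi_n^{e(v_n/v_{n-1})}$ for a suitable monomial $\psi$ in $\phi_1,\ldots,\phi_{n-1}$ (this is the content of the result cited as \cite[Lemma~3.1]{OShoriz} in Proposition~\ref{Pstandardcrossingmultiplicity}).

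That said, your overall strategy can be repaired without appeal to a coordinate. From $\divi(h) = e_v D_{\phi_n} - e_v\deg(\phi_n)D_\infty$, the function $h$ is a unit in $\mc{O}_{\mc{Y},y}$ at every closed point $y$ of the special fiber except $P_+ := D_{\phi_n}\cap\ol{Y}$ and $P_\infty := D_\infty\cap\ol{Y}$ (since $\mc{Y}$ is normal, an element whose divisor has no component through $y$ is a unit there). Now take any $\alpha\in\ol{K}$ with $v_K(\phi_n(\alpha)) < \lambda_n$, so $D_\alpha$ specializes to $P_-$ by Proposition~\ref{Pparameterize}\ref{Psmallerdiskoid}. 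Restricting $h$ to $D_\alpha \cong \Spec\mc{O}_{K(\alpha)}$ gives $h(\alpha) = \phi_n(\alpha)^{e_v}/\pi_K^{e_v\lambda_n}$, which has $v_K$-valuation $e_v(v_K(\phi_n(\alpha)) - \lambda_n) < 0$ and hence is not integral; so $h\notin\mc{O}_{\mc{Y},P_-}$, forcing $P_\infty = P_-$. (Likewise $P_+ \neq P_-$, since $h$ has a zero along $D_{\phi_n}$ but a pole along $D_\alpha$.) With this replacement, the rest of your proof of both parts goes through.
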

\begin{proof}\hfill
 \begin{enumerate}[\upshape (i)]
  \item Let $\mc{Y}'$ be the model corresponding to $\{ v_i,v \}$. Since $v_i \prec v$, the first result follows from  \cite[Lemma~3.7(ii)]{KW} applied to $\mc{Y}' \rightarrow \mc{Y}$. Since $v < v_{\phi_n}^\infty$, the second result from \cite[Lemma~3.6(iii)]{KW}.
  \item This follows from Lemma~\ref{Lgfollows},
    Proposition~\ref{Pparameterize}\ref{Psmallerdiskoid} and the
    previous part. \qedhere
 \end{enumerate}
\end{proof}

\begin{prop}\label{Pcontractionisomorphism}
Let $S \subseteq W$ be non-empty finite sets of Mac Lane valuations, and let $V
= \{ w \in W \mid \exists s \in S \text{ such that } s\preceq w\}$, so
that $S \subseteq V \subseteq W$.  Let
$\nu \colon \mc{Y}_W \to \mc{Y}_V$ be the birational morphism from the $W$-model
to the $V$-model of $\proj^1_K$ which contracts all $w$-components for
$w \notin V$. Let $z$ be the point where
$D_{\infty}$ meets the special fiber of $\mc{Y}_V$. Then,

\begin{enumerate}[\upshape (i)]
  \item The point $z$ lies on the
    $v$-component of $\mc{Y}_V$ if and only if $v$ is minimal in $V$
    (equivalently $v$ is minimal in $S$).
  \item The morphism $\nu \colon \mc{Y}_W \to \mc{Y}_V$ is an
    isomorphism outside of $\nu^{-1}(z)$.
  \end{enumerate}  
\end{prop}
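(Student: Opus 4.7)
The set-theoretic claim ``$v$ is minimal in $V$'' iff ``$v$ is minimal in $S$'' is a direct check from the definition $V = \{w \in W : \exists s \in S,\ s \preceq w\}$. Any minimal $v \in V$ with witness $s \preceq v$ forces $s = v$ (as $s \in V$), so $v \in S$, and $v$ is then minimal in $S$. Conversely, if $s \in S$ is minimal and some $v' \in V$ satisfied $v' \prec s$, then an $S$-witness $s' \preceq v'$ would satisfy $s' \prec s$, contradicting minimality in $S$.

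For part (i), I would use the contraction $c_v \colon \mc{Y}_V \to \mc{Y}_v$ together with the fact from Proposition~\ref{PAllspecializations}\ref{Cphiizeroes} that on the single-valuation model $\mc{Y}_v$, the divisor $D_\infty$ specializes to a distinguished ``infinity point'' $p_v$ of the $v$-component --- namely, the common specialization of $D_\infty$ and of $D_{\phi_i}$ for $i < n$. Then $D_\infty$ specializes to a point on the $v$-component of $\mc{Y}_V$ if and only if $p_v$ is not in the exceptional locus of $c_v$, equivalently if and only if no other component of $\mc{Y}_V$ passes through $p_v$. By Proposition~\ref{P35} together with the characterization in Proposition~\ref{PAllspecializations}\ref{Cphiizeroes}, such a component would correspond to a predecessor $w \prec v$ with $w \in V$; this exists if and only if $v$ is not minimal in $V$.

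For part (ii), I must show that every $w$-component of $\mc{Y}_W$ with $w \in W \setminus V$ contracts under $\nu$ to $z$. Fix such $w$; by definition no $s \in S$ satisfies $s \preceq w$. Using that the set $\{x : x \preceq v\}$ of pseudovaluations is totally ordered for every $v$ (\cite[Proposition~2.25]{KW}), for any $v \in V$ comparable with $w$ we deduce $w \prec v$, and for any $s \in S$ with $s \preceq v$ the totally-ordered chain below $v$ plus the exclusion $s \preceq w$ force $w \prec s \preceq v$. Hence the nearest $V$-element above $w$ along each poset branch is some minimal $s \in S$. Applying the argument of part (i) inside the intermediate model $\mc{Y}_{V \cup \{w\}}$ shows that the $w$-component meets the $s$-component at the infinity point $p_s$; tracing through the remaining contractions of elements of $W \setminus V$ and invoking part (i) gives $\nu(C_w) = p_s = z$.

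The principal technical obstacle I anticipate is the case when $w$ has multiple $V$-neighbors above it through distinct branches of the poset. By the argument above, each such neighbor must itself be a minimal element of $V$; one then checks that the various infinity points $p_s$ at which branches of $C_w$ point all coincide at the single point $z$. This is consistent with part (i) because at each step of the contraction process the image remains the infinity point of the receiving minimal component, which globally is $z$.
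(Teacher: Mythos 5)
Your handling of the set-theoretic claim is fine, but the central step of part (i) --- that $D_\infty$ specializes to a point on the $v$-component of $\mc{Y}_V$ if and only if $p_v$ lies outside the exceptional locus of $c_v$ --- is not a general fact about contractions, and is in fact false whenever $V$ has more than one minimal element. Take $V = \{v_1, v_2\}$ with $v_1, v_2$ incomparable and having disjoint diskoids (e.g.\ the situation of \S\ref{Sinftycrossing}); the statement of the proposition forces $z$ to lie on both components. Yet any $\alpha$ in the diskoid of $v_2$ lies outside the diskoid of $v_1$, so $D_\alpha$ specializes on $\mc{Y}_{v_1}$ to the ``low-valuation'' point of Proposition~\ref{Pparameterize}\ref{Psmallerdiskoid}, which by Proposition~\ref{PAllspecializations}\ref{Cphiizeroes} equals $p_{v_1}$. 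Thus $c_{v_1}$ contracts the entire $v_2$-component to $p_{v_1}$, so $p_{v_1}$ \emph{is} in the exceptional locus even though $D_\infty$ \emph{does} specialize to the $v_1$-component. Your related assertion that a component contracting to $p_v$ ``would correspond to a predecessor $w \prec v$ with $w \in V$'' fails for the same reason: $v_2 \not\prec v_1$, but the $v_2$-component contracts to $p_{v_1}$.

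The real obstruction, which your argument never engages, is that $z$ could a priori be the point where the $v$-component meets a fiber of $c_v$ over a contracted image point; ruling this out (or, in the multi-minimal case, accommodating it) needs a specific input about $D_\infty$, not a general statement about exceptional loci. The paper supplies this by adjoining $v_0$ and invoking \cite[Lemma~3.7(ii)]{KW} for the Gauss valuation: this shows that $D_\infty$ on $\mc{Y}_{V\cup\{v_0\}}$ avoids the image of the exceptional locus of $\mc{Y}_{V\cup\{v_0\}} \to \mc{Y}_{v_0}$, hence meets \emph{only} the $v_0$-component, and Proposition~\ref{P35} then identifies the neighbors of the $v_0$-component as precisely the minimal elements of $V$. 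Contracting towards $v_0$ (the unique global minimum) instead of towards the $v$ being tested sidesteps the multi-minimal subtlety entirely. Your sketch of part (ii), which invokes ``the argument of part (i)'' and hand-waves the multi-branch case, inherits the same gap; the paper's proof of (ii) instead factors $\nu$ through the $v$-model for a single minimal $v$ and deploys both parts of \cite[Lemma~3.7]{KW} to pin down where the contracted components land.
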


\begin{proof}
  Let $V' := V \cup \{v_0\}$, let $\mc{Y}_{V'}$ be the $V'$-model of
  $\proj^1_K$, and let $\mc{Y}_0$ be the $v_0$-model of $\proj^1_K$.
Since $v_0$ is minimal in $V'$, \cite[Lemma~3.7(ii)]{KW} with $v_0 =
v$ in that lemma\footnote{There is a typo in \cite[Lemma 3.7(ii)]{KW}
  --- it should read ``$\phi_v$ contracts the vertical component $E_{v'}$ to a
  closed point \ldots''} shows
    that $D_{\infty}$ on $\mc{Y}_{V'}$ does not meet the image of the
    exceptional locus of the contraction $\mc{Y}_{V'} \to
    \mc{Y}_0$.  So $D_{\infty}$ meets only the $v_0$-component of
    $\mc{Y}_{V'}$.  If $v_0 \in S$, so that $v_0 \in V$ and $V = V'$, this proves part
    (i).  If not, then 
    Proposition~\ref{P35} shows that the $v_0$-component and the
    $v$-component of
    $\mc{Y}_{V'}$ meet if and only
      if $v$ is minimal in $V$.  Since contracting the $v_0$-component
      of $\mc{Y}_{V'}$ yields $\mc{Y}_V$,
    we see that the $v$-component of $\mc{Y}_V$ meets $D_{\infty}$ if
    and only if $v$ is minimal in $V$, and this meeting is at $z$.  Observing that, by construction, the minimal valuations in $V$ are exactly
    the minimal valuations in $S$, the proof of part (i) is complete.

    Let $w \in W \setminus V$. By construction, for all $v \in V$,
    $w \not \preceq W$.  Take $v$ minimal in $V$, and let $\mc{Y}_{v}$
    be the $v$-model of $\proj^1_K$.  Consider the composition of
    morphisms $\mc{Y}_W \stackrel{\nu}{\to} \mc{Y}_V \stackrel{g}{\to} \mc{Y}_v$,
    where $g$ contracts all components except the $v$-component.  By
    \cite[Lemma~3.7(i)]{KW}, $g$ is a homeomorphism on the
    $v$-component, and by \cite[Lemma~3.7(ii)]{KW}, $g \circ \nu$
    contracts the $w$-component to the speicalization of $D_{\infty}$
    on $\mc{Y}_v$.  Combining these two assertions shows that $\nu$
    contracts the $w$-component to $z$, which proves part (ii). 
\end{proof}

\begin{corollary}\label{CAllspecializations}\hfill
  \begin{enumerate}[\upshape (i)]
    \item\label{Cinftyspecialization}
Suppose $V \subseteq W$ are finite sets of Mac Lane valuations such that $V$
has a unique minimal valuation $v$, and let $\nu \colon \mc{Y}_W \to
\mc{Y}_V$ be the birational morphism from the $W$-model to the
$V$-model of $\proj^1_K$ which contracts all $w$-components for $w
\notin V$.  The specialization $z$ of
$D_{\infty}$ lies only on the $v$-component of $\mc{Y}_V$, and $\nu$
is an isomorphism outside of $\nu^{-1}(z)$.   
\item\label{Cinftyspectwominimal}
Suppose $V \subseteq W$ are finite sets of Mac Lane valuations such that $V$
has two minimal valuations $v$ and $v'$, and let $\nu \colon \mc{Y}_W \to
\mc{Y}_V$ be the birational morphism from the $W$-model to the
$V$-model of $\proj^1_K$ which contracts all $w$-components for $w
\notin V$.  The specialization $z$ of
$D_{\infty}$ lies at the intersection of the $v$- and $v'$-components of $\mc{Y}_V$, and $\nu$
is an isomorphism outside of $\nu^{-1}(z)$.   
  \end{enumerate}
\end{corollary}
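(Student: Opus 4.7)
I would deduce both parts as immediate consequences of Proposition~\ref{Pcontractionisomorphism}, applied with $S = V$ in each case. With this choice the proposition's set of valuations coincides with the given $V$ (tacitly assuming $V$ is upward closed in $W$, as in the intended applications: otherwise one can factor $\nu$ through the upward closure $V^* := \{w \in W : \exists v'' \in V,\, v'' \preceq w\}$ and apply the proposition to $V^*$), and the minimal elements of the proposition's $V$ are exactly those of the given $V$.

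For part \emph{(i)}, since $v$ is the unique minimal element of $V$, Proposition~\ref{Pcontractionisomorphism}(i) immediately yields that $z$ lies only on the $v$-component of $\mc{Y}_V$, and Proposition~\ref{Pcontractionisomorphism}(ii) gives the isomorphism statement.

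For part \emph{(ii)}, the same application of Proposition~\ref{Pcontractionisomorphism}(i) shows that $z$ lies on both the $v$-component and the $v'$-component of $\mc{Y}_V$. To conclude that $z$ is the intersection point of these two components, I would use that $D_{\infty}$ is the closure of a $K$-rational point of $\proj^1_K$ and hence isomorphic to $\Spec \mc{O}_K$, which has a unique closed point; its specialization $z$ is therefore a single closed point of the special fiber of $\mc{Y}_V$. Lying on both components, $z$ must then be their intersection point (which is in particular nonempty). The isomorphism statement is again Proposition~\ref{Pcontractionisomorphism}(ii).

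The main (rather mild) obstacle is the upward-closedness requirement on $V$ in $W$ needed to present the corollary as a direct instance of the proposition; once this is either verified or arranged by passing through $\mc{Y}_{V^*}$, everything else is a straightforward reading off of the proposition's conclusions.
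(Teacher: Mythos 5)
Your approach is essentially the same as the paper's. The paper applies Proposition~\ref{Pcontractionisomorphism} with $S = \{v\}$ (resp.\ $S = \{v, v'\}$) rather than $S = V$, but since the upward closure of $V$ in $W$ coincides with the upward closure of its minimal element(s), both choices produce the same derived set in the proposition, so the proofs are equivalent; and your observation that $D_\infty \cong \Spec \mc{O}_K$ has a unique closed point is a correct (and usefully explicit) way to pass from ``$z$ lies on both the $v$- and $v'$-components'' to ``$z$ is the intersection point.'' One small caution: the proposed fallback of factoring $\nu$ through $\mc{Y}_{V^*}$ does not actually salvage the statement when $V$ is not upward-closed in $W$, because the second map $\mc{Y}_{V^*} \to \mc{Y}_V$ contracts the components indexed by $V^* \setminus V$ to points of $\mc{Y}_V$ that need not be $z$ (they land at intersection points of other components), so ``isomorphism outside $\nu^{-1}(z)$'' can fail. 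This is moot in practice because the corollary is only invoked with $V$ equal to the upward closure in $W$ of $\{v\}$ or $\{v, v'\}$, but it is worth noting that the fallback does not genuinely extend the corollary's scope.
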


\begin{proof}
  Part (i) (resp.\ part (ii)) follows from Proposition~\ref{Pcontractionisomorphism},
  taking $S = \{v\}$ (resp.\ $S = \{v, v'\}$).
\end{proof}

\begin{prop}\label{Pbranchspecialization}
Suppose $V$ is a finite set of Mac Lane valuations, and $\mc{Y}$
is the $V$-model of $\proj^1_K$.  Let $g$ be a monic irreducible
polynomial in $\mc{O}_K[x]$, and suppose that there exists $w \in V$
such that $w \prec v_g^{\infty}$.  
Then among those $w \in V$ such that $w \prec
v_g^{\infty}$, there is a unique maximal one $v$, and the divisor $D_{g}$
meets the special fiber of $\mc{Y}$ (only) on the $v$-component.
\end{prop}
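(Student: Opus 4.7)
My plan is to proceed in three stages. First, the uniqueness of $v$ follows from the fact that the set of Mac Lane pseudovaluations $\preceq v_g^{\infty}$ is totally ordered (by \cite[Proposition~2.25]{KW}, the same fact invoked in Lemma~\ref{Linfclosed}): the nonempty finite subset $\{w \in V : w \prec v_g^{\infty}\}$ thus has a unique maximum $v$. Second, to see that $D_g$ actually meets the $v$-component of $\mc{Y}$, I will set $V^{*} := V \cup \{v_g^{\infty}\}$ and verify that $v$ and $v_g^{\infty}$ are neighbors in $V^{*}$: any $y \in V^{*}$ with $v \prec y \prec v_g^{\infty}$ would lie in $V$ (since $y \neq v_g^{\infty}$) and contradict the maximality of $v$. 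Proposition~\ref{P35} (forward direction) then yields the intersection.

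For the third step, the claim that $D_g$ meets the special fiber only on the $v$-component, I will pass to the inf-closure: enlarge $V^{*}$ to a finite inf-closed set $V^{**}$ by iteratively adjoining infima of the form $\inf(w, v_g^{\infty})$ for $w \in V$, with finiteness following from \cite[Proposition~2.26]{KW} together with the tree-like structure of Mac Lane valuations. By the converse direction of Proposition~\ref{P35} applied to the inf-closed $V^{**}$, on the model $\mc{Y}_{V^{**}_{\mathrm{val}}}$ the divisor $D_g$ meets exactly the components whose valuations are $V^{**}$-neighbors of $v_g^{\infty}$. Each such neighbor $y$ is a valuation with $y \prec v_g^{\infty}$, hence comparable to $v$ in the total order; adjacency then rules out $y \prec v$ (which would force $v$ strictly between $y$ and $v_g^{\infty}$), so $y \succeq v$. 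Now I descend through the birational contraction $\mc{Y}_{V^{**}_{\mathrm{val}}} \to \mc{Y}$: if $y = v$ the $v$-component maps isomorphically, while if $y \succ v$ then $y \in V^{**}_{\mathrm{val}} \setminus V$, and the chain $v \prec y_1 \prec \cdots \prec y$ in $V^{**}$ consists (besides $v$) entirely of new valuations not lying in $V$ (by maximality of $v$). This chain of new components collapses under the contraction to a single closed point on the $v$-component of $\mc{Y}$. Hence every closed point of $D_g \cap \ol{\mc{Y}}$ lies on the $v$-component.

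The main obstacle is this last point: carefully verifying that the chain of infimum-introduced components collapses to a point on the $v$-component, rather than to some point disjoint from it. I expect to handle it by applying the \cite[Lemma~3.7]{KW}-style geometric description of how individual components contract, iterating through the chain while observing that the attachment to the $v$-component is preserved at each contraction step (so the resulting point is the image of the $v$-end of the chain, which is necessarily on the $v$-component, possibly at its intersection with some other $V$-component).
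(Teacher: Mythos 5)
Your first two steps match the paper: uniqueness of $v$ from the total ordering of pseudovaluations bounded by $v_g^{\infty}$ (\cite[Proposition~2.25]{KW}), and the existence of the intersection from the forward direction of Proposition~\ref{P35} with $V^* = V \cup \{v_g^{\infty}\}$. That is essentially the whole of the paper's (very terse) proof, so up to this point your route is the same.

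Your third step diverges, and it contains a gap. Passing to the inf-closure $V^{**}$ is a sensible idea, but the closing contraction argument does not deliver the ``only'' claim. The chain $v \prec y_1 \prec \cdots \prec y$ of new valuations is not merely hanging off the $v$-component: when $y$ is not maximal in $V^{**}_{\mathrm{val}}$, the $y$-component is also adjacent in the $V^{**}_{\mathrm{val}}$-model to the smallest $V$-component strictly above $y$, so collapsing the chain produces a point that sits at the new intersection of the $v$-component with that other $V$-component. You essentially concede this in your final caveat (``possibly at its intersection with some other $V$-component''), and that caveat is incompatible with the advertised conclusion that $D_g$ meets the special fiber \emph{only} on the $v$-component. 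Concretely, take $V = \{v_0,\ [v_0,\, v_1(t) = 2]\}$ and $g = t - \pi_K$: by Lemma~\ref{Lgfollows} we have $v = v_0$, yet Proposition~\ref{Pparameterize}(ii) (since $0 < v_K(\pi_K) = 1 < 2$) shows $D_g$ specializes to the intersection point of the two components, hence meets both. So the strong ``only'' requires an extra hypothesis, namely that $V \cup \{v_g^{\infty}\}$ be inf-closed; this holds in every application in the paper (e.g.\ $V = V_{\reg}$). Under that hypothesis $V^{**} = V^{*}$, your contraction step is vacuous, and the converse direction of Proposition~\ref{P35} gives the ``only'' claim directly with no chain analysis needed.
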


\begin{proof}
The existence and uniqueness of $v$ follow from \cite[Proposition
2.25]{KW}.  The rest of the proposition is immediate from Proposition~\ref{P35}
applied to the valuations $v_g^{\infty}$
and $v$, with $V^*$ in Proposition~\ref{P35} equal to $V \cup
V_g^{\infty}$.
\end{proof}

\subsection{Standard crossings and finite cusps}\label{Sadjacency}

In this subsection, we define two special types of closed points on $\mc{Y}$, which
figure prominently in the rest of the paper:
\begin{defn}\label{Dstandardcrossing} \hfill
  \begin{enumerate}[\upshape (i)]
\item  A \emph{standard crossing} is a point $y \in \mc{Y}$ lying on exactly two
irreducible components of the special fiber, whose corresponding Mac
Lane valuations are $v = [v_0,\, v_1(\phi_1) = \lambda_1,\, \ldots,\,
v_{n-1}(\phi_{n-1}) = \lambda_{n-1},\, v_n(\phi_n) = \lambda_n]$ and $v' =
[v_0,\, v_1(\phi_1) = \lambda_1,\, \ldots,\,
v_{n-1}(\phi_{n-1}) = \lambda_{n-1},\, v_n'(\phi_n) = \lambda_n']$, with
$\lambda_n < \lambda_n'$.  We allow the possibility that $v =
v_{n-1}$, so that $v$ is not necessarily minimally presented (but
$v_{n-1}$ is, as is $v'$).
\item A \emph{finite cusp} is a non-regular point $y \in \mc{Y}$ lying on exactly one
  irreducible component of the special fiber, such that $y$ does not
  lie on $D_{\infty}$.
  %whose corresponding Mac
 % Lane valuation (minimally presented) is $[v_0,\, v_1(\phi_1) = \lambda_1,\, \ldots,\,
%v_n(\phi_n) = \lambda_n]$, such that $y$ is where $D_{\phi_n}$ meets
%the special fiber $\mc{Y}$.
%\item If the set of Mac Lane valuations corresponding to the
 % irreducible components of the special fiber of $\mc{Y}$ has a unique
%  minimal element $v$ for $\preceq$, then the \emph{standard
 %   $\infty$-specialization} is the point $y \in \mc{Y}$ that is the
%  specialization of $\infty$ to the irreducible component corrsponding to $v$.
\end{enumerate}
\end{defn}

We show that what will be called a ``standard
$\infty$-crossing'' (see \S\ref{Sinftycrossing}) is just a standard
crossing under a change of variables.

\begin{prop}\label{Pchangeofvariable}
Let $c, c' \in \mc{O}_K$ with $v_K(c - c') = 0$, let $\mu,
\mu' \in \rats_{>0}$, and let $\alpha
\in \nats$ such that $\alpha > \mu$.  Under the
change of variable $u = \pi^{\alpha}_K(t-c')/(t-c)$, we have
$$[v_0,\, v_1(t - c') = \mu'] = [v_0,\, v_1(u) = \alpha + \mu']$$
and
$$[v_0,\, v_1(t-c) = \mu] = [v_0,\, v_1(u) = \alpha - \mu].$$
\end{prop}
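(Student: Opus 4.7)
Each equality asserts that a Mac Lane valuation on $K(t) = K(u)$ described in the $t$-coordinate coincides with one described in the $u$-coordinate. Since both sides are discrete valuations on the common field, it suffices to check agreement on every polynomial $g(u) = \sum_{k=0}^r a_k u^k \in K[u]$, which I will do by direct valuation computation in each coordinate.

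For the first equation, write $v_1 = [v_0, v_1(t-c')=\mu']$ and $v_2 = [v_0, v_1(u) = \alpha+\mu']$. Substituting $u = \pi_K^\alpha(t-c')/(t-c)$ and clearing denominators, one obtains $N(t) := g(u)\cdot(t-c)^r = \sum_k a_k\pi_K^{k\alpha}(t-c')^k(t-c)^{r-k} \in K[t]$. The hypothesis $v_K(c-c')=0$ combined with $\mu' > 0$ gives $v_1(t-c) = 0$ via the ultrametric inequality on $t-c = (t-c') + (c'-c)$, so $v_1(g(u)) = v_1(N)$. Expanding each $(t-c)^{r-k}$ in powers of $(t-c')$ via the binomial theorem and collecting terms, the coefficient of $(t-c')^m$ in $N$ equals $(c'-c)^{r-m}\sum_{k=0}^m a_k\pi_K^{k\alpha}\binom{r-k}{m-k}$. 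Since $c'-c$ is a unit, the Mac Lane formula gives
\[ v_1(N) = \min_{0 \leq m \leq r}\Bigl(v_K\Bigl(\sum_{k=0}^m a_k\pi_K^{k\alpha}\binom{r-k}{m-k}\Bigr) + m\mu'\Bigr). \]
To match this with $v_2(g) = \min_k(v_K(a_k)+k(\alpha+\mu'))$, choose $k_0$ minimizing the latter. At $m = k_0$, the $k = k_0$ summand in the inner sum has valuation $v_K(a_{k_0})+k_0\alpha$; for $k < k_0$, the minimality of $k_0$ together with $\mu' > 0$ forces the strictly larger bound $v_K(a_k)+k\alpha > v_K(a_{k_0})+k_0\alpha$, so the diagonal term dominates and $v_1(N) \leq v_2(g)$. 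The reverse inequality follows from the ultrametric bound on the inner sum together with $k\alpha + m\mu' \geq k(\alpha + \mu')$ for $k \leq m$ (again using $\mu' > 0$).

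The second equation is proved by the same template with $w_1 = [v_0, v_1(t-c)=\mu]$, $w_2 = [v_0, v_1(u)=\alpha-\mu]$, and $N(t)$ expanded instead in $(t-c)$-adic form via $(t-c') = (t-c) + (c-c')$. Now $w_1(t-c) = \mu$, so $w_1(g(u)) = w_1(N) - r\mu$; the dominant-term analysis takes $k_0$ minimizing $w_2(g) = \min_k(v_K(a_k) + k(\alpha-\mu))$ and the outer index $m = r - k_0$, and invokes the hypothesis $\alpha > \mu$ (equivalently $\alpha - \mu > 0$) in place of $\mu' > 0$ to obtain both strict domination of the diagonal term and the reverse bound. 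The main obstacle throughout is the bookkeeping of the binomial expansions and verifying that the relevant diagonal term is the unique $v_K$-minimizer; this is precisely where the hypotheses $v_K(c-c')=0$ and $\alpha > \mu$ enter.
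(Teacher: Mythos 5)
Your overall strategy is the same as the paper's: clear denominators to get $N(t) = (t-c)^r g(u) \in K[t]$, expand $N$ in the appropriate key-polynomial-adic form, and run a dominant-term/ultrametric analysis. (The paper actually proves only the second equality, calling the first ``similar and easier''; it also avoids full binomial bookkeeping by writing each $(t-c')^i$ as $(c-c')^i + O(t-c)$ and noting that the main terms land on distinct powers of $(t-c)$, hence cannot cancel, while the $O$-terms have strictly larger $w$-valuation since $\mu > 0$.) Your treatment of the first equality is correct.

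There is, however, a genuine error in your treatment of the second equality. You claim that $\alpha - \mu > 0$ ``plays the role of $\mu' > 0$'' in both the strict domination of the diagonal term and the reverse bound. This is not so. When you expand $N$ in $(t-c)$-adic form, the coefficient of $(t-c)^m$ equals (up to the unit $(c-c')^{r-m}$) $\sum_{i \geq r-m} a_i\pi_K^{\alpha i}\binom{i}{m-r+i}$: the inner index runs over $i \geq r-m$, the \emph{opposite} direction from the first equality, where it ran over $k \leq m$. Taking $m = r - k_0$ with $k_0$ minimizing $v_K(a_k) + k(\alpha-\mu)$, the $i = k_0$ term has valuation $v_K(a_{k_0}) + \alpha k_0$, and for $i > k_0$ the bound $v_K(a_i) + \alpha i > v_K(a_{k_0}) + \alpha k_0$ follows from $v_K(a_i) + i(\alpha-\mu) \geq v_K(a_{k_0}) + k_0(\alpha-\mu)$ by adding $i\mu$ to both sides and using $\mu > 0$ together with $i > k_0$; the quantity $\alpha - \mu$ never appears as a strict lower bound. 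Likewise the reverse bound $m\mu \geq (r-i)\mu$ for $i \geq r - m$ needs $\mu > 0$, not $\alpha - \mu > 0$. The hypothesis $\alpha > \mu$ serves only to make $[v_0,\, v_1(u) = \alpha - \mu]$ a legitimate Mac Lane augmentation (one needs $\lambda_1 = \alpha - \mu \geq v_0(u) = 0$). Because of the sign flip in the inner index, the second equality is \emph{not} a direct templated copy of the first with $\mu'$ replaced by $\alpha - \mu$; someone following your hints would stall, so this needs to be corrected.
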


\begin{proof}
We prove the second equality --- the proof of the first one is
similar and easier.  Suppose $f = \sum_{i=0}^r a_i u^i$ is a polynomial in
$K[u]$.  Letting $v = [v_0,\, v_1(u) = \alpha - \mu]$, we have that
$v(f) = \min_i (v_K(a_i) + (\alpha - \mu)i)$.  Writing $f$ in terms of
$t$ and multiplying by $(t-c)^r$, we obtain
\begin{align*}
  (t - c)^rf &= \sum_{i=0}^r
                 a_i\pi_K^{\alpha i} (t-c')^i(t - c)^{r - i} \\
  &= \sum_{i=0}^r a_i\pi_K^{\alpha i} (t-c + c - c')^i(t -
    c)^{r - i} \\
  &= \sum_{i=0}^r a_i \pi_K^{\alpha i} \left((c - c')^i(t-c)^{r - i} +
    O((t - c)^{r-i+1})\right)
\end{align*}
So letting $w =
[v_0, v_1(t-c) = \mu]$, we have
$$w(f) = -\mu r + \min_i (v_K(a_i) + \alpha i + \mu(r-i))
= \min_i (v_K(a_i) + (\alpha - \mu)i). $$
So $v(f) = w(f)$.  Since $v = w$ on $K[u]$, they are equal on $K(u)$.
\end{proof}

\subsubsection{Location of standard crossings and finite cusps}

Note that by
Proposition~\ref{P35}, the two Mac Lane
valuations making a standard crossing are adjacent in $V$. The
converse is not true in general. For example
the valuations $v_0$ and $v \colonequals [v_0,v_1(x) = 2/3,v_2(x^3-2)
= 2]$ are adjacent in the $\{v_0,v\}$-model, but do not form a
standard crossing. However, under the following assumption, the
converse is true.

\begin{lemma}\label{Ladjacentstandard}
Suppose that for each valuation in $V$, all its predecessors are in
$V$ as well.  Then every adjacent pair of valuations $v \prec w
\in V$ forms a standard crossing in the $V$-model of $\proj^1_K$.
%(although this may require presenting
%$v$ as an inductive valuation non-minimally).
\end{lemma}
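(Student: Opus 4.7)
The lemma has two assertions to verify: that the Mac Lane valuations $v$ and $w$ have the algebraic form required by Definition~\ref{Dstandardcrossing} (sharing all key polynomials and values except possibly the final one), and that their intersection point on the special fiber lies on exactly two irreducible components. I plan to obtain the form directly from the structure of the totally ordered chain $\{u : u \preceq w\}$ combined with the adjacency hypothesis, and to handle the two-component assertion by first showing that the predecessor-closure hypothesis actually forces $V$ to be inf-closed, so that both directions of Proposition~\ref{P35} apply.

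For the form, write $w$ in its minimal presentation $[v_0, v_1(\phi_1) = \lambda_1, \ldots, v_n(\phi_n) = \lambda_n]$, so that the predecessors $v_0, \ldots, v_{n-1}$ lie in $V$ by hypothesis. Since the chain $\{u : u \preceq w\}$ is totally ordered (\cite[Proposition~2.25]{KW}), $v$ is comparable with $v_{n-1}$; strict domination $v \prec v_{n-1}$ would place $v_{n-1} \in V$ strictly between $v$ and $w$ and violate adjacency, so $v_{n-1} \preceq v$. In the case $v = v_{n-1}$, I rewrite $v$ in the non-minimal form $[v_0, \ldots, v_{n-1}(\phi_{n-1}) = \lambda_{n-1}, v(\phi_n) = v_{n-1}(\phi_n)]$, matching Definition~\ref{Dstandardcrossing}(i) with final value $v_{n-1}(\phi_n) < \lambda_n$. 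In the case $v_{n-1} \prec v \prec w$, Proposition~\ref{Pmaximalforlength} forces the inductive length of $v$ to be at least $n$, and the explicit description of the chain $\{u : u \preceq w\}$ (consisting of the predecessors together with intermediate augmentations $[v_{i-1}, u(\phi_i) = \eta]$ for $v_{i-1}(\phi_i) < \eta < \lambda_i$) forces $v = [v_0, \ldots, v_{n-1}(\phi_{n-1}) = \lambda_{n-1}, v_n'(\phi_n) = \mu]$ with $v_{n-1}(\phi_n) < \mu < \lambda_n$, again the standard crossing form. A supporting observation ruling out competitors is that augmenting $v_{n-1}$ by any key polynomial $\psi$ non-equivalent to $\phi_n$ leaves $w(\psi) = v_{n-1}(\psi)$ while pushing $v(\psi) > v_{n-1}(\psi)$, contradicting $v \preceq w$.

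To establish inf-closure, given $v, w \in V$ let $i_0$ be the largest index at which their predecessor chains coincide. If the step-$(i_0+1)$ predecessors augment their common predecessor $v_{i_0}$ by non-equivalent key polynomials, then $\inf(v, w) = v_{i_0}$. If they use equivalent key polynomials $\phi$ but with different values $\lambda < \mu$, then $\inf(v, w)$ equals the smaller predecessor $v_{i_0+1}^v$; the nontrivial step here is showing no proper augmentation of $v_{i_0+1}^v$ along $v$'s chain is $\preceq w$. This follows from Lemma~\ref{Lfdegree}(ii) applied to the $\phi$-adic expansion of $v$'s next key polynomial $\tau$ (of degree $e \cdot \deg(\phi)$): the constant-term analysis yields $v_{i_0+1}^v(\tau) = e\lambda$ and equally $w(\tau) = e\lambda$ (the $i = 0$ term dominates because $\lambda < \mu$), so any proper augmentation of $v_{i_0+1}^v$ strictly exceeds $w$ on $\tau$. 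In either subcase $\inf(v, w)$ is a predecessor of $v$ or $w$, hence in $V$. With inf-closure secured, Proposition~\ref{P35} gives that a third $u \in V$ whose component passes through the $v$-$w$ intersection would be adjacent in $V$ to both $v$ and $w$; but any position of $u$ in the partial order---below $v$, strictly between $v$ and $w$, or above $w$---contradicts one of the three pairwise adjacencies, completing the verification that the intersection lies on exactly two components. The main obstacle is the inf-closure verification, whose essential content is the Lemma~\ref{Lfdegree}(ii) computation above.
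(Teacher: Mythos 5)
Your plan splits into two assertions: (1) that $v$ and $w$ have the algebraic form of a standard crossing, and (2) that the intersection point lies on exactly two components. For (1), your approach is essentially the paper's: both proofs use predecessor-closure and adjacency to pin $v$ between $w_{n-1}$ and $w$, then conclude $v$ is an augmentation of $w_{n-1}$ by $\phi_n$. The paper does this cleanly by citing \cite[Proposition~4.35, Remark~4.36]{Ruth}; your appeal to an ``explicit description of the chain $\{u : u \preceq w\}$'' is stating the same fact informally, and your supporting observation via Lemma~\ref{Lfdegree}(ii) is correct in spirit (though it only rules out augmentations by \emph{non-equivalent} key polynomials; the Ruth citation handles the general case).

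For (2), there is a genuine error: predecessor-closure does \emph{not} imply inf-closure, so your route to the ``exactly two components'' assertion collapses. A counterexample: take $V = \{v_0,\, v,\, w\}$ with $v = [v_0,\, v_1(t) = 2]$ and $w = [v_0,\, v_1(t - \pi_K) = 2]$. Both have inductive length $1$, so the only predecessor of each is $v_0$, and $V$ is predecessor-closed. But $\inf(v, w) = [v_0,\, v_1(t) = 1]$ (the smallest diskoid containing both $D(t,2)$ and $D(t-\pi_K, 2)$ is $D(t,1)$, since the centers lie at distance $v_K(\pi_K) = 1$), and this valuation is not a predecessor of either $v$ or $w$ — predecessors consist only of the finitely many truncations in the minimal presentation, not every smaller valuation. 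So $\inf(v, w) \notin V$. Your two-branch case analysis misses exactly this configuration: the step-$1$ key polynomials $t$ and $t-\pi_K$ are \emph{equivalent} over $v_0$ and augmented to the \emph{same} value $2$, yet define distinct valuations whose infimum lies strictly between $v_0$ and both, outside the predecessor set. You handle ``non-equivalent'' key polynomials and ``equivalent with different values,'' but not ``equivalent, same value, different polynomials.'' Moreover the conclusion would fail even if you added that case, since the inf can be a genuinely new valuation as above. Note that the paper's own proof makes no attempt to establish the ``exactly two'' clause — it treats the presentation form as sufficient — and in every application either only the form is used (Corollary~\ref{Cminlength1}, Corollary~\ref{Cstandardendpointguaranteed}) or the ambient set is already inf-closed (Algorithm~\ref{AY0} step (3), Lemma~\ref{LY0structure}(ii)), which is the additional structure needed to make the two-component assertion go through.
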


\begin{proof}
Since $v$ and $w$ are adjacent, the corresponding components
intersect. It suffices to show that $v$ and $w$ have a presentation as
in Definition~\ref{Dstandardcrossing}.  Write $w = [w_0 := v_0,\, \ldots,\, w_n(\phi_n) = \lambda_n]$.  Then $w_{n-1}$ is a predecessor of $w$, so by
assumption we have $w_{n-1} \in V$, which means $w_{n-1} \preceq v \prec w$.  If $w_{n-1} = v$,
then we can write $v = [w_{n-1}, v_n(\phi_n) = w_{n-1}(\phi_n)]$ and
$w_{n-1}(\phi_n) < \lambda_n$, proving the lemma (here $v$ is
presented non-minimally as an inductive valuation).  If not, we know
in any case that $v(\phi_{n-1}) = \lambda_{n-1}$.  So by
\cite[Proposition 4.35 and Remark 4.36]{Ruth} applied to $w_{n-1}$ and $v$, the
valuation $v$ is an augmentation of $w_{n-1}$.  By \cite[Proposition
4.35 and Remark 4.36]{Ruth} applied to $v$ and $w$, the
augmentation must be by $\phi_n$, so $v = [w_{n-1}, v_n(\phi_n) =
\lambda']$ with $w_{n-1}(\phi_n) < \lambda' < \lambda$, proving the lemma.
\end{proof}

\begin{corollary}\label{Cminlength1}
Suppose $v_0 \in V$, and for each valuation in $V$, all its
predecessors are in $V$ as well.  If $v$ is adjacent to $v_0$ in $V$, then $v$ has inductive length 1.
\end{corollary}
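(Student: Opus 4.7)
The plan is to apply Lemma~\ref{Ladjacentstandard} directly to the adjacent pair $v_0 \prec v$ (note that $v_0 \preceq v$ automatically, since every Mac Lane valuation we consider satisfies $v \succeq v_0$). The hypothesis of that lemma is precisely the assumption made here, so the pair $v_0, v$ forms a standard crossing in the sense of Definition~\ref{Dstandardcrossing}.

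Write the common presentation as in that definition: there exist $\phi_1,\ldots,\phi_n$ and $\lambda_1,\ldots,\lambda_{n-1}$ and $\lambda_n < \lambda_n'$ such that (in the notation of the definition, with $v_0$ playing the role of the smaller valuation) $v_0 = [v_0, v_1(\phi_1)=\lambda_1, \ldots, v_n(\phi_n)=\lambda_n]$ and $v = [v_0, v_1(\phi_1)=\lambda_1, \ldots, v_n'(\phi_n)=\lambda_n']$, where we allow $v_0$ to be presented non-minimally with $v_0 = v_{n-1}$ in this expansion while $v$ is minimally presented.

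The only point to check is that in this setup we must have $n = 1$. The minimal inductive length of the Gauss valuation is $0$, and minimal presentations have a well-defined inductive length by \cite[Lemma~15.3]{MacLane}. Since a non-minimal presentation of $v_0$ as an inductive valuation can only arise by trivial augmentations starting from $v_0$ itself, the equality $v_0 = v_{n-1}$ forces $n-1 = 0$, i.e., $n = 1$. Consequently $v = [v_0, v_1(\phi_1) = \lambda_1']$ has inductive length exactly $1$.

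The main (and only) obstacle is the small bookkeeping subtlety that the definition of standard crossing allows the smaller valuation to be written non-minimally; I need to make sure this doesn't hide extra inductive length in $v_0$, which it does not because only trivial augmentations of $v_0$ can yield $v_0$ again.
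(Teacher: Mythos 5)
Your argument is correct and follows the same route as the paper's proof: apply Lemma~\ref{Ladjacentstandard} to realize the pair $v_0 \prec v$ as a standard crossing, then extract $n = 1$ from Definition~\ref{Dstandardcrossing} using the fact that $v_{n-1}$ is required to be minimally presented and hence has inductive length $n-1$, which must equal $0$ since $v_{n-1} = v_0$. The paper's proof is terser (it appeals directly to ``the definition of standard crossing''), but your expanded bookkeeping supplies exactly the missing details.
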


\begin{proof}
By Lemma~\ref{Ladjacentstandard}, $v_0 \prec v$ forms a standard
crossing.  By the definition of standard crossing, this happens only
if $v$ has inductive length $1$.
\end{proof}

\begin{lemma}\label{Lallpredincl} Let $V_1$ be the set of all predecessors
  of a finite set of Mac Lane pseudovaluations, and let $V_2$ be the
  inf-closure of $V_1$.
  If $v$ is a predecessor of a valuation in $V_2$, then $v \in V_2$.
\end{lemma}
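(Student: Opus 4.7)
The plan is to proceed by induction on the number of infimum operations required to construct an element of $V_2$ from $V_1$. Concretely, I set $V_{(0)} := V_1$ and $V_{(n+1)} := V_{(n)} \cup \{\inf(x,y) : x, y \in V_{(n)}\}$; since $V_1$ is finite this stabilizes, and $V_2 = \bigcup_n V_{(n)}$. I will prove by induction on $n$ that every predecessor of every element of $V_{(n)}$ lies in $V_2$.

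The base case will be immediate from the predecessor-closure built into $V_1$: if $w \in V_1$ is a predecessor of some $s$ in the original finite set, then every predecessor of $w$ is also a predecessor of $s$ (it corresponds to an initial segment of the minimal presentation of $s$), and hence lies in $V_1 \subseteq V_2$.

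For the inductive step, I will take $w \in V_{(n+1)} \setminus V_{(n)}$ and write $w = \inf(u_1, u_2)$ with $u_1, u_2 \in V_{(n)}$. The predecessor $w$ itself is in $V_2$ by construction, so it suffices to show that each strict predecessor $v \prec w$ is a predecessor of $u_1$; the inductive hypothesis applied to $u_1$ will then force $v \in V_2$.

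The main obstacle will be verifying that strict predecessors of $w = \inf(u_1, u_2)$ are common predecessors of $u_1$ and $u_2$. My plan is to compare the minimal presentations directly. Let $k$ be the largest index for which the $k$-th predecessors $v_k^{(u_1)}$ and $v_k^{(u_2)}$ coincide, and write $v_k$ for the common value. If $k$ equals the inductive length of one of $u_1$ or $u_2$, then that $u_i$ is itself a common predecessor of both and $w = u_i$, so the conclusion is immediate. Otherwise $u_1$ and $u_2$ augment $v_k$ by inequivalent data, and using Proposition~\ref{Pmaximalforlength} together with the uniqueness of the key polynomial at each level of the minimal presentation, I would argue that any Mac Lane valuation bounded above by both $u_1$ and $u_2$ has length at most $k+1$, and, if it has length exactly $k+1$, its level-$k$ predecessor necessarily coincides with $v_k$. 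Consequently $w$ is either $v_k$ itself or a single augmentation $[v_k, v_{k+1}(\phi) = \rho]$ of $v_k$, and in either case the strict predecessors of $w$ lie in $\{v_0, v_1^{(u_1)}, \ldots, v_k^{(u_1)}\}$, each of which is a predecessor of both $u_1$ and $u_2$, completing the induction.
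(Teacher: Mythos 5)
Your inductive scaffolding is more elaborate than needed: since the set of Mac Lane pseudovaluations bounded above by any fixed pseudovaluation is totally ordered (\cite[Proposition~2.25]{KW}), the infimum of any subset $\{w_1, \dots, w_k\}$ of $V_1$ already equals $\inf(w_1, w_j)$ for some $j$ (the valuations $\inf(w_1, w_i)$ for $2 \leq i \leq k$ are pairwise comparable, all lying below $w_1$, and their minimum is a lower bound for the whole set), so $V_2 = V_1 \cup \{\inf(w,w') : w, w' \in V_1\}$ and no iteration is required.

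The more serious issue is a gap in the inductive step. The assertion you plan to establish --- that any valuation bounded above by both $u_1$ and $u_2$ has inductive length at most $k+1$, with level-$k$ predecessor forced to equal $v_k$ --- does not follow from Proposition~\ref{Pmaximalforlength}, which provides only a \emph{lower} bound on the inductive length of a valuation trapped strictly between $v_i$ and $v$. What you actually need is the structural fact that if $x \preceq u$ then every strict predecessor of $x$ is a predecessor of $u$; this is \cite[Proposition~4.35]{Ruth}, which is precisely what the paper cites, and it is not a consequence of Proposition~\ref{Pmaximalforlength} or of ``uniqueness of the key polynomial at each level'' alone --- re-deriving it from first principles is a real job that your outline does not undertake. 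Once you grant that fact, your whole induction collapses to the paper's short argument: for $u = \inf(w,w')$ with $w, w' \in V_1$ we have $u \preceq w$, so every strict predecessor of $u$ is a predecessor of $w$, hence lies in $V_1 \subseteq V_2$ (as $V_1$ is predecessor-closed by construction), while $u$ itself is in $V_2$ by definition.
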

\begin{proof}
Suppose $v = \inf(w, w')$ with $w, w' \in V_1$.  Since $v \preceq w$,
\cite[Proposition 4.35]{Ruth} shows that every predecessor of $v$
(other than possibly $v$ itself, which is in $V_2$) is a predecessor
of $w$.  Since $w \in V_1 \subseteq V_2$, all its predecessors are as
well.  Thus, in either case, $v \in V_2$.
\end{proof}

\begin{lemma}\label{Lstandardendpointunique}\label{Rwhereisthecusp}
  Let $v$ be a valuation of inductive length $n$ with length
  $n-1$ predecessor $v_{n-1}$.  If $e_v > e_{v_{n-1}}$, then the $v$-model of $\proj^1_K$
    has a unique finite cusp at the point where $D_{\phi_n}$ meets the
  special fiber.  If $e_v = e_{v_{n-1}}$, then the $v$-model
    of $\proj^1_K$ does not have a finite cusp.
 \end{lemma}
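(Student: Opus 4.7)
The plan is to work on the auxiliary model $\mc{Y}_{V^*}$ with $V^* = \{v_0, v_1, \ldots, v_{n-1}, v\}$. Since $V^*$ contains all predecessors of its elements, Lemma~\ref{Ladjacentstandard} gives that its special fiber is a chain of $\proj^1_k$'s meeting only in standard crossings, with the $v$-component meeting only the $v_{n-1}$-component. Applying Proposition~\ref{Pcontractionisomorphism} with $S = V = \{v\}$ and $W = V^*$ produces a contraction $\nu\colon \mc{Y}_{V^*} \to \mc{Y}_{\{v\}}$ whose exceptional locus is exactly the union of the $v_i$-components for $i < n$, contracted to the point $z \in \mc{Y}_{\{v\}}$ where $D_\infty$ specializes. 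Because $z$ lies on $D_\infty$, it cannot be a finite cusp, so the finite cusps of $\mc{Y}_{\{v\}}$ correspond via $\nu$ to non-regular points of $\mc{Y}_{V^*}$ that lie on the $v$-component and off the standard crossing with the $v_{n-1}$-component.

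By Proposition~\ref{Pbranchspecialization}, the horizontal divisor $D_{\phi_n}$ meets $\mc{Y}_{V^*}$ only on the $v$-component, at a unique point $y_{\phi_n}$, and by Proposition~\ref{PAllspecializations}\ref{Cphiizeroes} applied on $\mc{Y}_{\{v\}}$, this point is not the crossing with the $v_{n-1}$-component. Thus $y_{\phi_n}$ is the only candidate finite cusp other than possibly ``generic'' non-crossing points of the $v$-component. It remains to show: (a) every non-crossing closed point $y \neq y_{\phi_n}$ of the $v$-component of $\mc{Y}_{V^*}$ is regular, and (b) the point $y_{\phi_n}$ is non-regular on $\mc{Y}_{V^*}$ precisely when $e_n := e_v/e_{v_{n-1}} > 1$.

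Both (a) and (b) are obtained by explicit local computation of the completed local ring $\hat{\mc{O}}_{\mc{Y}_{V^*}, y}$. Using Corollary~\ref{CvalueofN} together with B\'ezout's identity applied to the denominators of $\lambda_1, \ldots, \lambda_n$ (with respect to $\Gamma_{v_{n-1}}$), one builds valuation-zero rational functions $s_1, s_2, \ldots \in K(t)$ which, together with $\pi_K$, produce an affine chart of $\mc{Y}_{V^*}$ around $y$. The defining relation among these coordinates, extracted from the $v$-valuations of the relevant monomials in $\pi_K, \phi_1, \ldots, \phi_n$, controls the cotangent space; one then normalizes. At a generic non-crossing point $y \neq y_{\phi_n}$, the relation is of the form $y^{e_{v_{n-1}}} = \pi_K \cdot u$ with $u$ a unit, which forces $\pi_K \in \mathfrak{m}^2$ and gives a $2$-dimensional cotangent space, so the ring is regular. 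At $y_{\phi_n}$, the normalization introduces an additional generator $x$ related to $\phi_n$ and an additional relation reflecting how $\lambda_n$ interacts with $\Gamma_{v_{n-1}}$; when $e_n = 1$, this extra relation is linear in $x$ (absorbing $x$ into the existing generators) and regularity is preserved, while when $e_n \geq 2$, the extra relation lies entirely inside $\mathfrak{m}^2$, so it imposes no cotangent condition, and a third independent direction appears in $\mathfrak{m}/\mathfrak{m}^2$, yielding non-regularity.

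The main obstacle is to carry out this local normalization at $y_{\phi_n}$ rigorously. One must use the inductive Mac Lane structure (in particular Lemma~\ref{Lfdegree}(ii) and Corollary~\ref{CvalueofN}) to verify that all $\phi_i$ with $i < n$ are units in the completed local ring at $y_{\phi_n}$, and to produce the correct normalizing element whose minimal polynomial over the subring generated by $\pi_K$ and a residue parameter yields exactly the exponent $e_n$. The low-inductive-length examples $v = [v_0, v_1(t) = 1/2, v_2(t^2 - \pi_K) = 3/2]$ (with $e_n = 1$) and $v = [v_0, v_1(t) = 1/2, v_2(t^2 - \pi_K) = 5/4]$ (with $e_n = 2$) guide the general computation: in each, one writes down the non-normal chart $\mc{O}_K[s_1, s_2]/(\pi_K s_1^{a} s_2^{b} - (s_1 - 1)^{c})$ and traces the normalization, verifying directly that the cotangent dimension at the distinguished point is $2$ when $e_n = 1$ and $3$ when $e_n \geq 2$.
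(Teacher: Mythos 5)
The paper disposes of this lemma in one line by citing \cite[Lemma~7.3]{ObusWewers}, which is where the singularity analysis of the $v$-model of $\proj^1_K$ is actually carried out. Your proposal instead tries to prove it from scratch inside this paper. The framing is sound: passing to the auxiliary model $\mc{Y}_{V^*}$ with $V^* = \{v_0, v_1,\ldots, v_{n-1}, v\}$, applying Proposition~\ref{Pcontractionisomorphism} to contract the predecessor components down to $\mc{Y}_{\{v\}}$, and using Proposition~\ref{Pbranchspecialization} and Proposition~\ref{PAllspecializations}\ref{Cphiizeroes} to isolate $y_{\phi_n}$ as the only candidate away from $D_\infty$ is all correct reasoning, and it cleanly reduces the lemma to a local question about the completed local rings at points of the $v$-component that lie away from the crossing with $v_{n-1}$ and away from $D_\infty$.

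However, that local question \emph{is} the lemma. Your paragraph beginning ``Both (a) and (b) are obtained by explicit local computation\ldots'' is a description of an intended strategy, not a proof: the coordinate functions $s_1, s_2, \ldots$ are not constructed, the ``defining relation among these coordinates'' is not derived, the normalization at $y_{\phi_n}$ is not carried out, and the cotangent-space dimensions are asserted rather than computed. You yourself flag this (``The main obstacle is to carry out this local normalization at $y_{\phi_n}$ rigorously''), and the two low-length examples you offer as a guide do not substitute for the general argument. Note also a likely slip in the one concrete claim you do make: at a generic non-crossing point of the $v$-component the uniformizer should satisfy a relation with exponent $e_v$ (the multiplicity of the component in the special fiber), not $e_{v_{n-1}}$; as written, taking $n = 1$ (so $e_{v_{n-1}} = e_{v_0} = 1$) gives $y = \pi_K u$, which does not force $\pi_K \in \mathfrak{m}^2$. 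The content you would need to supply to make this rigorous is exactly what \cite[Lemma~7.3]{ObusWewers} proves; as it stands your argument is a correct reduction to that reference, not an alternative proof.
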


\begin{proof}
This is \cite[Lemma~7.3]{ObusWewers}.
\end{proof}
\begin{comment}
By \cite[Lemma~7.3]{ObusWewers}, $e_v > e_{v_{n-1}}$ is equivalent to the $v$-model having
a singularity outside of where $D_{\infty}$ meets the special
fiber, and this singularity is unique.
\end{comment}
%In case (ii), suppose $v = [v_0,\, \ldots,\, v_n(\phi_n) =
%\lambda_n]$.  Since $\Gamma_v = \Gamma_{v_{n-1}}$, there is a monomial
%$h$ of degree less than or equal to $\deg(\phi_n)$ such that 
%in $\phi_1, \ldots,\, \phi_{n-1}$ over $K$ such that $v(h) =
%\lambda_n$.  Then, replacing $\phi_n$ with $\phi_n + ch$ for any $c
%\in \mc{O}_K$ yields the same valuation $v$, see, e.g.,
%\cite[Theorem~4.33]{Ruth}.  On the other hand, we claim that for any
%point on the special fiber of the $v$-model other than the
%specialization of $\infty$, there exists a $c \in \mc{O}_K$ such that 
%$D_{\phi_n + ch}$ meets the special fiber of the $v$-model at this
%point. 

\begin{comment}
\begin{remark}\label{Rwhereisthecusp}
  If $v = [v_0,\, \ldots,\, v_n(\phi_n) = \lambda_n]$ is minimally
  presented and $e_v > e_{v_{n-1}}$, then the finite cusp on the
  $v$-model of $\proj^1_K$ guaranteed by
  Lemma~\ref{Lstandardendpointunique} is where $D_{\phi_n}$ meets the
  special fiber, see \cite[Lemma~7.3(i)]{ObusWewers}.
\end{remark}
\end{comment}

\begin{corollary}\label{Cmaximalcusp}
Let $v = [v_0,\, \ldots,\, v_n(\phi_n) = \lambda_n] \in V$, and assume $e_v
> e_{v_{n-1}}$.
If all $w \in V$ with $w \succeq v$ satisfy $w(\phi_n) =
\lambda_n$, then the $V$-model of $\proj^1_K$ has a (unique) finite
cusp on the $v$-component, and $D_{\phi_n}$ meets this finite cusp.  In particular, this holds if $v$ is maximal in $V$.
\end{corollary}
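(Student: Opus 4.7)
The plan is to lift the finite cusp of the $v$-model $\mc{Y}_v$ to the $V$-model $\mc{Y}_V$ via the natural birational contraction $\pi \colon \mc{Y}_V \to \mc{Y}_v$ that contracts every $w$-component for $w \in V \setminus \{v\}$. By Lemma~\ref{Lstandardendpointunique}, since $e_v > e_{v_{n-1}}$, the model $\mc{Y}_v$ has a unique finite cusp $p_0$, located at the specialization of $D_{\phi_n}$. If I can show that $\pi$ is a local isomorphism at $p_0$, then $\pi^{-1}(p_0)$ is a single point $p$ on the $v$-component of $\mc{Y}_V$, lying on no other component; since $p_0$ is non-regular and does not lie on $D_\infty$ (whose specialization on $\mc{Y}_v$ differs from $p_0$ by Proposition~\ref{Pparameterize}\ref{Psmallerdiskoid}), $p$ inherits both properties, so $p$ is a finite cusp of $\mc{Y}_V$.

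Verifying that $\pi$ is a local isomorphism at $p_0$ amounts to showing that no $w$-component with $w \in V \setminus \{v\}$ is contracted onto $p_0$. Since any connected tree of contracted components has a single image point, it suffices to treat $w$ adjacent to $v$ in $V$. For $w \prec v$, Proposition~\ref{Pparameterize}\ref{Pannulus} identifies the $v$-$w$ intersection on $\mc{Y}_V$ with the specialization on $\mc{Y}_v$ of divisors $D_\alpha$ with $v_K(\phi_n(\alpha)) < \lambda_n$, and Proposition~\ref{Pparameterize}\ref{Psmallerdiskoid} locates this at the $D_\infty$-point, not $p_0$. For $w \succ v$ adjacent to $v$, either $w = [v_{n-1}, v_n(\phi_n) = \mu]$ with $\mu > \lambda_n$ (standard-crossing form), in which case $w(\phi_n) = \mu > \lambda_n$ directly contradicts the hypothesis, or $w$ augments $v$ by a proper key polynomial $\psi$ with $\deg\psi > \deg\phi_n$. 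In the latter case, combining Corollary~\ref{Cdominantterm} with the equivalence-irreducibility property of key polynomials forces the roots $\beta$ of $\psi$ to satisfy $v_K(\phi_n(\beta)) = \lambda_n$ rather than $> \lambda_n$: the alternative would yield both $v(\psi) = v(\phi_n^{\deg\psi/\deg\phi_n})$ and the strict inequality $v(\psi - \phi_n^{\deg\psi/\deg\phi_n}) > v(\psi)$ (coming from the proof of Lemma~\ref{Ldominantterm}), which together give an equivalence-factorization of $\psi$ into pieces of degree $< \deg\psi$. Therefore, by Proposition~\ref{Pparameterize}\ref{Psmallerdiskoid}, the $w$-component contracts to the generic part of the $v$-component of $\mc{Y}_v$, away from $p_0$.

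Finally, applying Proposition~\ref{Pbranchspecialization} with $g = \phi_n$, the hypothesis identifies $v$ as the unique maximal element of $\{u \in V : u \prec v_{\phi_n}^\infty\}$, so $D_{\phi_n}$ specializes on $\mc{Y}_V$ to the $v$-component; since this specialization maps to $p_0$ under $\pi$, it must equal $p$. For the ``in particular'' clause, if $v$ is maximal in $V$ then no $w \succ v$ exists in $V$ and the hypothesis is vacuously satisfied. The main obstacle is the proper-key-polynomial sub-case in the second paragraph: the hypothesis does not literally rule out such augmentations, and the argument must instead use the equivalence-irreducibility of $\psi$ to show that they contract to generic points rather than to $p_0$.
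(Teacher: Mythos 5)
Your overall plan is sound — lift the finite cusp of the $v$-model to the $V$-model along the contraction $\pi$ — and your conclusion is reached. But the logical order is backwards, which is what forces the laborious and shaky case analysis in your second paragraph. The paper's route is: first use the hypothesis to show $v$ is maximal in $V$ among valuations bounded above by $v_{\phi_n}^{\infty}$ (this uses the simple observation that any $w$ with $v \prec w \prec v_{\phi_n}^{\infty}$ must have $w(\phi_n) > \lambda_n$, which the hypothesis forbids). Then Proposition~\ref{Pbranchspecialization} directly gives that the specialization $y$ of $D_{\phi_n}$ to $\mc{Y}_V$ lies \emph{only} on the $v$-component. From this, the local-isomorphism claim you are trying to prove in paragraph two is immediate: $y$ is not the intersection of the $v$-component with any $w$-component (those points lie on two components), so $y$ is not in the exceptional locus of $\pi$, and since $\pi$ restricted to the $v$-component is an isomorphism onto the reduced special fiber of $\mc{Y}_v$ carrying $y$ to $p_0$, and $\pi$ has connected fibers, $\pi^{-1}(p_0) = \{y\}$. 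Lemma~\ref{Lstandardendpointunique} then finishes.

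As for the specific gaps in your second paragraph: for $w \prec v$ you invoke Proposition~\ref{Pparameterize}\ref{Pannulus}, but that proposition requires $w$ and $v$ to have identical presentations differing only in the last value $\lambda_n$. That need not hold (e.g.\ $w$ could be a lower-inductive-length predecessor, or differ in an earlier $\lambda_i$); the statement requires some conditions that aren't given by $V$ being an arbitrary set (the corollary does not assume $V$ contains all predecessors of its elements). For $w \succ v$ via a proper key polynomial $\psi$, the ``equivalence-irreducibility'' argument is imprecise as stated and would need to be supplied; the concrete fact you need — that any root $\beta$ of a proper key polynomial $\psi$ over $v$ satisfies $v_K(\phi_n(\beta)) = v(\phi_n) = \lambda_n$ — is exactly \cite[Corollary~2.8]{OShoriz}, cited in the proof of Lemma~\ref{Lnonspecialize}(iii). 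You also have not handled multi-step augmentations $w \succ v$, which would require iterating that fact. All of these issues evaporate if you first establish that $y$ lies only on the $v$-component, which is why the paper does that step before touching local geometry.
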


\begin{proof}
  Observe that if $v \prec w \prec v_{\phi_n}^{\infty}$, then $w(\phi_n)
> \lambda_n$.  So $w \not \prec v_{\phi_n}^{\infty}$  if $v \prec w$ by the assumption that $w(\phi_n) =
\lambda_n$.  Thus $v$ is
maximal among those valuations in $V$ bounded above by
$v_{\phi_n}^{\infty}$.  Proposition~\ref{Pbranchspecialization} shows that $D_{\phi_n}$ meets the special
  fiber of the $V$-model of $\proj^1_K$ only on the $v$-component.  By Lemma~\ref{Lstandardendpointunique}, this meeting point
  is the unique finite cusp of the $v$-component.
\end{proof}

\begin{corollary}\label{Cstandardendpointguaranteed}
Suppose that for each valuation in $V$, all its predecessors are in
$V$ as well.  If $v$ has only one neighbor $w \succ v$, and if the inductive length of $w$ is greater than that of
$v$, then $v$ has a (unique) finite cusp on the $V$-model of
$\proj^1_K$.  
\end{corollary}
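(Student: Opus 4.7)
The plan is to reduce to Corollary~\ref{Cmaximalcusp}. Write $v = [v_0,\, v_1(\phi_1) = \lambda_1,\, \ldots,\, v_n(\phi_n) = \lambda_n]$ in minimal form; since $w$ has strictly larger inductive length than $v$, the edge case $n = 0$ (i.e., $v = v_0$) can be set aside, as the $v_0$-component of any model is smooth and so carries no finite cusp. So assume $n \geq 1$.

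By Lemma~\ref{Ladjacentstandard}, the adjacent pair $v \prec w$ in $V$ forms a standard crossing. Comparing with Definition~\ref{Dstandardcrossing} and using that the inductive length of $w$ strictly exceeds that of $v$ forces the case where $w$ has inductive length $n+1$ and takes the form $w = [v,\, v_{n+1}(\phi_{n+1}) = \lambda_{n+1}]$ for some proper key polynomial $\phi_{n+1}$ over $v$ (in particular $\deg \phi_{n+1} > \deg \phi_n$). The existence of this proper key polynomial, combined with Lemma~\ref{Lvnvprime}, yields $e(v_n/v_{n-1}) > 1$, hence $e_v > e_{v_{n-1}}$, which verifies the first hypothesis of Corollary~\ref{Cmaximalcusp}.

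For the second hypothesis I need every $u \in V$ with $u \succeq v$ to satisfy $u(\phi_n) = \lambda_n$, and it suffices to treat $u \neq v$. I would first show that any such $u$ satisfies $u \succeq w$: the set $\{y : y \preceq u\}$ is totally ordered by \cite[Proposition~2.25]{KW}, so $\{y \in V : v \prec y \preceq u\}$ is a non-empty totally ordered subset of the finite set $V$ and has a minimum $y_0$. By minimality $y_0$ is a neighbor of $v$ in $V$ from above, and uniqueness of this neighbor forces $y_0 = w$. Then, for any $u \succeq w$ written minimally, $w$ appears as the length-$(n+1)$ predecessor of $u$, and any further augmentation above $w$ uses a key polynomial of degree strictly greater than $\deg \phi_{n+1} > \deg \phi_n$; hence by iterating the augmented-valuation formula~\eqref{E:augval}, the polynomial $\phi_n$ is its own ``constant term'' in every higher $\phi$-adic expansion, so $u(\phi_n) = w(\phi_n) = v(\phi_n) = \lambda_n$.

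With both hypotheses of Corollary~\ref{Cmaximalcusp} verified, the $V$-model has a unique finite cusp on the $v$-component (at the specialization of $D_{\phi_n}$), as desired. The main obstacle is the bookkeeping in the preceding paragraph: upgrading the adjacency hypothesis into the stronger statement $u \succeq w$ for \emph{every} $u \in V$ above $v$, and then tracking $u(\phi_n)$ through iterated augmentations by higher-degree key polynomials. Both points are consequences of the standard Mac Lane machinery developed in \S\ref{Sbasicmaclane}--\ref{Sinfclosed}, but need to be assembled with care.
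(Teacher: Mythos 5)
Your argument is correct and follows the same route as the paper's: convert the adjacency into a standard crossing via Lemma~\ref{Ladjacentstandard}, deduce that the last key polynomial of $w$ is a proper key polynomial over $v$, apply Lemma~\ref{Lvnvprime} to obtain $e_v > e_{v_{n-1}}$, verify that every $u \succeq v$ in $V$ fixes the value of $\phi_n$, and invoke Corollary~\ref{Cmaximalcusp}. You spell out the second hypothesis of Corollary~\ref{Cmaximalcusp} more carefully than the paper (which only checks $w(\phi_{n-1}) = \lambda_{n-1}$ explicitly), and you note the implicit assumption $v \neq v_0$, which the paper leaves tacit. One small imprecision worth fixing: for $u \succ w$, the length-$(n+1)$ predecessor of $u$ need not be $w$ (it could be $[v,\, v_{n+1}(\phi_{n+1}) = \lambda'_{n+1}]$ with $\lambda'_{n+1} > \lambda_{n+1}$); the cleaner observation is that $v$ itself is the length-$n$ predecessor of every such $u$, which immediately yields $u(\phi_n) = \lambda_n$.
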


\begin{proof}
By Lemma~\ref{Ladjacentstandard}, $v \prec w$ forms a standard
crossing in the $V$-model of $\proj^1_K$.  Since $w$ has inductive
length greater than that of $v$, we can write $v = [v_0,\, \ldots,\,
v_{n-1}(\phi_{n-1}) = \lambda_{n-1} ,\, v_n(\phi_n) = \lambda_n]$ and
$w = [v_0,\, \ldots,\,
v_{n-1}(\phi_{n-1}) = \lambda_{n-1} ,\, w_n(\phi_n) = \lambda_n']$
with $w$ presented minimally and $v = v_{n-1}$.  So $\phi_n$ is a
proper key polynomial over $w_{n-1} = v_{n-1} = v$, which means that $e_v =
e_{v_{n-1}} > e_{v_{n-2}}$ by Lemma~\ref{Lvnvprime}. Furthermore,
$w(\phi_{n-1}) = \lambda_{n-1} = v(\phi_{n-1})$.  We conclude
using Corollary~\ref{Cmaximalcusp} applied to $v = v_{n-1}$ that
the $v$-component has a unique finite cusp on the $V$-model of $\proj^1_K$.
\end{proof}

We also state a lemma here for future use about horizontal divisors that do
\emph{not} intersect special points and/or each other.

\begin{lemma}\label{Lnonspecialize}
\hfill
\begin{enumerate}[\upshape (i)]
  \item Suppose $y \in \mc{Y}$ is a standard crossing, lying on two
    irreducible components with corresponding Mac Lane valuations $v =
    [v_0,\, v_1(\phi_1) = \lambda_1,\, \ldots,\,
v_n(\phi_n) = \lambda_n]$ and $v' = [v_0,\, v_1(\phi_1) = \lambda_1,\, \ldots,\,
v_n(\phi_n) = \lambda_n']$.  Then $D_{\phi_i}$ does not
meet $y$ for any $1 \leq i \leq n$.

   \item Let $v = [v_0,\, v_1(\phi_1) =
    \lambda_1,\, \ldots,\, v_n(\phi_n) = \lambda_n]$ and let $\mc{Y}$
    be a normal model of $\proj^1_K$ including $v$.  Then
    $D_{\phi_i}$ does not meet $D_{\phi_n}$ on $\mc{Y}$ for any $1
    \leq i \leq n-1$.
 \item Suppose $g$ is a proper key polynomial over $v = [v_0,\, v_1(\phi_1) =
   \lambda_1,\, \ldots,\, v_n(\phi_n) = \lambda_n]$, and let $\mc{Y}$
   be a normal model of $\proj^1_K$ including $v$.  Then
   $D_{\phi_i}$ does not meet $D_g$ on $\mc{Y}$ for any $1 \leq i \leq n$.
  \end{enumerate}
\end{lemma}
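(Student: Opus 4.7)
For part (i), the key tool is Proposition~\ref{Pparameterize}\ref{Pannulus}, which characterizes $D_\alpha$ as meeting $y$ exactly when $\lambda_n < v_K(\phi_n(\alpha)) < \lambda_n'$. It therefore suffices to show that for every root $\alpha_i$ of $\phi_i$ with $1 \leq i \leq n$, the valuation $v_K(\phi_n(\alpha_i))$ lies outside this open interval. For $i = n$, one has $\phi_n(\alpha_n) = 0$, so the valuation is $\infty$, which is outside the interval. For $i < n$, the plan is first to verify $v \not\prec v_{\phi_i}^{\infty}$: Lemma~\ref{Lfdegree}(ii) applied to $\phi_{i+1}$ over $v_i$ yields $v_{\phi_i}^{\infty}(\phi_{i+1}) = v_{i-1}(a_0) = e_i\lambda_i$, where $a_0$ is the constant term of the $\phi_i$-adic expansion of $\phi_{i+1}$ and $e_i = \deg(\phi_{i+1})/\deg(\phi_i)$, while $v(\phi_{i+1}) = \lambda_{i+1} > e_i\lambda_i$ by minimality of the presentation of $v$. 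Lemma~\ref{Lgfollows} then yields $v_K(\phi_n(\alpha_i)) < \lambda_n$.

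For parts (ii) and (iii), the strategy is to reduce to the $v$-model $\mc{Y}_v$ via the birational contraction $\mc{Y} \to \mc{Y}_v$: since horizontal divisors push forward to themselves and intersection points map to intersection points, it suffices to prove the non-intersection statements on $\mc{Y}_v$. By Proposition~\ref{PAllspecializations}\ref{Cphiizeroes}, for $i < n$ the divisor $D_{\phi_i}$ specializes on $\mc{Y}_v$ to the same point as $D_\infty$, and $D_{\phi_n}$ does not meet this point; this immediately gives part (ii). For part (iii) with $i < n$, I would additionally invoke Proposition~\ref{PAllspecializations}\ref{Clowvalspecialization}: $D_g$ meets $D_\infty$ on $\mc{Y}_v$ if and only if $v \not\prec v_g^{\infty}$, but $v = v_g \prec v_g^{\infty}$ by hypothesis, so $D_g$ is disjoint from the specialization point of $D_{\phi_i}$.

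The main obstacle is part (iii) with $i = n$, since both $D_g$ and $D_{\phi_n}$ could a priori specialize to the common ``high-valuation'' point on $\mc{Y}_v$ (the unique point where all $D_\alpha$ with $v_K(\phi_n(\alpha)) > \lambda_n$ meet). The key claim is that for any root $\theta$ of $g$, $v_K(\phi_n(\theta)) = \lambda_n$ exactly; granted this, Proposition~\ref{Pparameterize}\ref{Psmallerdiskoid} places $D_g$ and $D_{\phi_n}$ at distinct points of $\mc{Y}_v$, and the reduction to $\mc{Y}_v$ completes the proof. The inequality $v_K(\phi_n(\theta)) \geq \lambda_n$ follows from $v = v_g \prec v_g^{\infty}$ and Lemma~\ref{Lgfollows}. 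For the reverse, I would argue by contradiction: assume $\mu := v_K(\phi_n(\theta)) > \lambda_n$, and consider the Mac Lane valuation $v'_\mu := [v_0,\, \ldots,\, v_n(\phi_n) = \mu]$. Since $v_K(\phi_n(\theta)) = \mu$, both parts of Lemma~\ref{Ldominantterm} apply to $g$ over $v'_\mu$: part (i) gives $v'_\mu(g) = e\mu$ with $e = \deg(g)/\deg(\phi_n)$, while part (ii) gives $v'_\mu(g) = v_{n-1}(a_0)$ where $a_0$ is the constant term of the $\phi_n$-adic expansion of $g$. Applying Lemma~\ref{Lfdegree}(ii) to $g$ over $v$ yields $v(a_0) = v_{n-1}(a_0) = e\lambda_n$, so $e\mu = e\lambda_n$, contradicting $\mu > \lambda_n$.
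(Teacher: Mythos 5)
Your proofs of parts (ii) and (iii) are sound. For part (ii) you use the same tool as the paper, Proposition~\ref{PAllspecializations}(i), after reducing to the $v$-model; for part (iii) with $i = n$, the paper simply cites \cite[Corollary~2.8]{OShoriz} for the equality $v_K(\phi_n(\theta)) = \lambda_n$, whereas you derive it internally by applying both halves of Lemma~\ref{Ldominantterm} to $g$ over the augmented valuation $[v_0, \ldots, v_n(\phi_n) = \mu]$ and comparing with Lemma~\ref{Lfdegree}(ii). That replacement is correct and makes the argument self-contained, at the cost of a slightly longer detour; the rest of your part (iii), via Proposition~\ref{PAllspecializations}(ii) for $i < n$, matches the paper's logic.

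There is, however, a genuine gap in your part (i). Definition~\ref{Dstandardcrossing} explicitly allows $v = v_{n-1}$, i.e.\ the displayed presentation of $v$ need not be minimal. When $v = v_{n-1}$ and $i = n-1$, your key inequality $v(\phi_n) = \lambda_n > e_{n-1}\lambda_{n-1}$ ``by minimality'' fails: there $\lambda_n = v_{n-1}(\phi_n) = e_{n-1}\lambda_{n-1}$, so $v$ and $v_{\phi_{n-1}}^{\infty}$ agree on $\phi_n$. In fact $v = v_{n-1} \prec v_{\phi_{n-1}}^{\infty}$ holds outright (both are augmentations of $v_{n-2}$ by $\phi_{n-1}$ with $\lambda_{n-1} < \infty$), so the conclusion you draw from Lemma~\ref{Lgfollows}, namely $v_K(\phi_n(\alpha_{n-1})) < \lambda_n$, is false; the correct statement in this case is $v_K(\phi_n(\alpha_{n-1})) = \lambda_n$, which still lies outside the open interval $(\lambda_n, \lambda_n')$, so the Lemma is true, but your argument does not reach it. The paper sidesteps this by arguing geometrically (if $D_{\phi_i}$ met $y$, it would meet $D_{\phi_n}$ on the $v$-model, which Proposition~\ref{PAllspecializations}(i) forbids for $i < n$), though that proof also leaves the $v = v_{n-1}$, $i = n-1$ case implicit. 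To repair your argument you should handle $i = n-1$ separately when $\lambda_n = e_{n-1}\lambda_{n-1}$, e.g.\ by establishing $v_K(\phi_n(\alpha_{n-1})) = \lambda_n$ via the same two-sided use of Lemma~\ref{Ldominantterm} that you already deploy in part (iii), or by quoting \cite[Corollary~2.8]{OShoriz}.
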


\begin{proof} 
%\padma{Move parts (ii) and (iii) to before standard crossing, and adjust references.}
In case (i), Proposition~\ref{Pparameterize}(ii) shows that if $\alpha \in
\mc{O}_K$, then $D_{\alpha}$ meets
$y$ if and only if
\begin{equation}\label{Einbetween}
  \lambda_n < v_K(\phi_n(\alpha)) < \lambda_n'.
\end{equation}
In particular, if $D_{\phi_i}$ meets $y$ and $\alpha_i$ is a root of
$\phi_i$, then $v_K(\phi_n(\alpha_i)) > \lambda_n$, so
Proposition~\ref{Pparameterize}(i) shows that $D_{\phi_n}$ and $D_{\phi_i}$
meet on the $v$-model of $\proj^1_K$.  By Proposition~\ref{PAllspecializations}(i),
the only possibility is $i = n$.  But this contradicts
$v_K(\phi_n(\alpha_i)) < \lambda_n'$, proving (i).

%$By \cite[Corollary 2.11]{OShoriz}, if $\alpha_i$ is a root of $\phi_i$,
%then $v_K(\phi_n(\alpha_i)) = \lambda_i$ for $i < n$, and
%$v_K(\phi_n(\alpha_n)) = \infty$.  Since $\lambda_n > \lambda_{n-1}$,
%Equation (\ref{Einbetween})
%is not satisfied for $\alpha = \alpha_i$ for any $i$ from $1$ to $m$,
%which proves part (i).$

Part (ii) follows immediately from Proposition~\ref{PAllspecializations}(i).

% , let $\alpha_i$ be a root of $\phi_i$ for $1 \leq i \leq
%n$.  Then the lemma follows from \cite[Corollary 3.3]{OShoriz}, taking
%$\beta = \alpha_i$ and $\alpha = \alpha_n$ in that lemma.

For part  (iii), if $\beta$ is a root of $g$, then by
\cite[Corollary~2.8]{OShoriz}, $v_K(\phi_n(\beta)) = v(\phi_n) =
\lambda_n$.  On the other hand, if $\alpha_n$ is a root of $\phi_n$,
then $v_K(\phi_n(\alpha_n)) = \infty > \lambda_n$.  Also, by
Proposition~\ref{PAllspecializations}(i), all $D_{\phi_i}$ with $1 \leq i \leq n-1$
meet $D_{\infty}$ on the
$v$-model of $\proj^1_K$, which means by
Proposition~\ref{Pparameterize}(i) that $v_K(\phi_n(\alpha_i)) <
\lambda_n$ for $\alpha_i$ a root of $\phi_i$.  By
Proposition~\ref{Pparameterize}(i) applied to $\alpha_i$ and $\beta$, no
$D_{\phi_i}$ meets $D_g$ on the $v$-model of $\proj^1_K$ for any $1
\leq i \leq n$, and thus the same is true for any model including $v$. 
\end{proof}

\subsubsection{Some explicit $\mathbb{Q}$-Cartier divisors and their intersection multiplicities}

\begin{prop}\label{Pstandardcrossingmultiplicity}
  Suppose $y \in \mc{Y}$ is a standard crossing, lying on two
    irreducible components with corresponding Mac Lane valuations $v =
    [v_0,\, v_1(\phi_1) = \lambda_1,\, \ldots,\,
v_n(\phi_n) = \lambda_n]$ and $v' = [v_0,\, v_1(\phi_1) = \lambda_1,\, \ldots,\,
v_n(\phi_n) = \lambda_n']$, with $\lambda_n < \lambda_n'$. Let $N \colonequals
e_{v_{n-1}}$. Let $D_1$ and $D_{2}$ be the irreducible
divisors of $\mc{Y}$ corresponding to $v$ and $v'$.
\begin{enumerate}[\upshape (i)]
 \item\label{Rextract} There exist $h\in K(Y)$ and an integer $a$ such that $\divi(h) = aD_2$ in $\Spec
  \hat{\mc{O}}_{\mc{Y},y}$ and $(D_1, aD_2)_y = 1$ (in particular,
  $D_2$ is $\rats$-Cartier). Such an $a$ is
  minimal amongst $a' \in \nats$ such that $a'D_2$ is principal at
  $y$.  
\end{enumerate}
 Now, assume $y \in \mc{Y}$ lies on a single irreducible component of
    the special fiber with reduced divisor $D$ and corresponding Mac
    Lane valuation $v = [v_0,\, v_1(\phi_1) = \lambda_1,\, \ldots,\,
    v_n(\phi_n) = \lambda_n]$.
    \begin{enumerate}[\upshape (i)]
      \setcounter{enumi}{1}
   \item   Suppose that $y = D_{\phi_n}
    \cap D$.
Then there exists $h \in K(Y)$ such that
    $h|_{D}$ has a simple
    zero at $y$, and such that
    $\divi(h) = aD_{\phi_n}$ when restricted to $\Spec \hat{\mc{O}}_{\mc{Y}, y}$, where
  %  $D_{\phi_n}$ is as in Lemma~\ref{Lorder1}(ii) and
    $a \in \nats$ is minimal such that $aD_{\phi_n}$ is locally
    principal at $y$.
  \item Suppose that $g$ is a proper key polynomial over
    $v$ such that $y = D_g \cap D$, and $\deg(g) = e\deg(\phi_n)$.
    Letting $h = g/\phi_n^e$, we have that $h|_{D}$ has a simple zero at $y$, and $\divi(h) = D_g$ when restricted to
    $\Spec \hat{\mc{O}}_{\mc{Y},y}$.
  \end{enumerate}

\end{prop}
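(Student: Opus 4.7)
The strategy is to write down an explicit rational function $h \in K(t)$ in each case as a product of powers of $\pi_K$, the key polynomials $\phi_i$, and (in part (iii)) the proper key polynomial $g$, and then verify the divisor and intersection claims by computing valuations on $K(t)$ and the reduction of $h$ on the $v$-component via the parameterization from Proposition~\ref{Pparameterize}.

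I would begin with part (iii), which is the most direct. Setting $h = g/\phi_n^e$, Lemma~\ref{Lfdegree}(ii) applied to the proper key polynomial $g$ yields $v(g) = e\lambda_n = v(\phi_n^e)$, hence $v(h) = 0$, so $h$ is a unit at the generic point of $D$. The potential contributors to $\divi(h)$ on $\Spec\hat{\mc{O}}_{\mc{Y},y}$ are the prime divisors through $y$: these are $D$ (with multiplicity $e_v v(h) = 0$), the horizontal divisor $D_g$ (coefficient $+1$), and \emph{a priori} $D_{\phi_n}$ (coefficient $-e$). But Lemma~\ref{Lnonspecialize}(iii) shows $D_{\phi_n}$ does not meet $D_g$, so $D_{\phi_n}$ does not pass through $y$, and we obtain $\divi(h) = D_g$ locally at $y$. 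The simple-zero assertion follows from the computation that under the identification of $D \cong \proj^1_k$ from Proposition~\ref{Pparameterize}, the reduction $\bar h$ is (up to a unit) a local coordinate at the image of $y$; this uses that $g$ being a proper key polynomial over $v$ pins down the form of the reduction of $g/c^e$ for $c \in \ol{K}$ with $v_K(c) = \lambda_n$.

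For part (ii), I would take $h = \phi_{n-1}^C \phi_n^B / \pi_K^A$, where $B$ is the smallest positive integer such that $B\lambda_n \in \Gamma_{v_{n-1}}$, with $C$ and $A$ chosen so that $A = C\lambda_{n-1} + B\lambda_n \in \nats$ (this is possible by the definition of $B$); we then set $a = B$. Since $D_{\phi_{n-1}}$ does not pass through $y$ by Lemma~\ref{Lnonspecialize}(ii), the factor $\phi_{n-1}^C$ contributes no horizontal component at $y$, and the argument from part (iii) then gives $\divi(h) = aD_{\phi_n}$ locally. Simple vanishing of $h|_D$ comes from the parameterization in Proposition~\ref{Pparameterize}, together with the fact that the reduction of $\phi_n^B/\pi_K^A$ (times a unit coming from $\phi_{n-1}^C$) is a local coordinate at $y$ on $D$. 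Minimality of $a$ is handled by assuming $\divi(h') = a' D_{\phi_n}$ locally at $y$, using $v(h') = 0$ and the $\phi_n$-adic expansion of $h'$ together with Lemma~\ref{Ldominantterm} to force $a' \geq B$.

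Part (i) follows the same template. Taking $h = \phi_{n-1}^C \phi_n^B / \pi_K^A$ with the same choice of $B$, $C$, $A$ (using Lemma~\ref{Lnonspecialize}(i) to rule out horizontal contributions from $D_{\phi_i}$ for $i \leq n$), one computes $v(h) = 0$ and $v'(h) = B(\lambda_n' - \lambda_n)$, giving $\divi(h) = aD_2$ locally with $a = e_{v'} B(\lambda_n' - \lambda_n)$. The intersection multiplicity $(D_1, aD_2)_y$ equals $\mathrm{ord}_y(h|_{D_1})$ (since $D_1$ is regular and $\divi(h)$ has no other components at $y$), and the same reduction analysis as in parts (ii) and (iii) gives this order as $1$. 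Minimality of $a$ is argued as in part (ii), with the observation that including any further factor of $\phi_i$ with $i < n$ (which does not meet $y$) does not enable a smaller choice of $B$. The main obstacle throughout is the bookkeeping of the reduction $\bar h$ on $D \cong \proj^1_k$ under the parameterization of Proposition~\ref{Pparameterize}: in particular, identifying the image of $y$ and verifying that the chosen $\bar h$ is a local uniformizer there, since $\lambda_n$ is typically not integral and the natural coordinate on $D$ must be realized as a $K(t)$-rational function through passing to the $B$th power.
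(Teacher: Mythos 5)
Your outline identifies the right ingredients (no horizontal contributions via Lemma~\ref{Lnonspecialize}, vanishing of $v(h)$, reduction to a local computation on the $v$-component), but there are two genuine gaps.

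First, the explicit $h = \phi_{n-1}^{C}\phi_n^{B}/\pi_K^{A}$ you write for parts (i) and (ii) is too restrictive: the equation $C\lambda_{n-1} + B\lambda_n = A \in \ints$ need not have a solution with $C, A \in \ints$ because $1$ and $\lambda_{n-1}$ do not in general generate $\Gamma_{v_{n-1}}$. For instance, with $\lambda_1 = 1/3$, $\lambda_2 = 6/5$ (so $N = 15$) and $B\lambda_3 = 1/15$, one needs $18C \equiv -1 \pmod{15}$, which is unsolvable; including a factor $\phi_1^{C_1}$ repairs this. The paper's source, \cite[Lemma 3.1]{OShoriz}, constructs $h$ as a monomial in \emph{all} of $\phi_1, \ldots, \phi_{n-1}$ precisely for this reason, and one cannot simply truncate to the top-index key polynomial.

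Second, and more fundamentally, you flag the ``reduction bookkeeping'' as the main obstacle and then do not resolve it. The assertion that $\bar h$ is a local coordinate at the image of $y$ on $D \cong \proj^1_k$ is exactly the content of \cite[Lemma 3.1]{OShoriz}, which the paper invokes and which you are implicitly reproving; without it the simple-zero claim in parts (i) and (ii) has no proof. (Your minimality argument for part (ii) has the same issue: a general $h'$ with locally vertical divisor need not admit a useful $\phi_n$-adic expansion; the paper's minimality argument is the clean observation that $(D_1, a'D_2) = a'/a$ must be an integer.) For part (iii), the paper sidesteps the reduction analysis entirely with a degree count: $(D, D_g) = (\ol{Y}, D_g)/e_v = \deg(g)/e_v = 1$ via Corollary~\ref{CvalueofN}, which, combined with $\divi(h) = D_g$ at $y$, gives the simple zero immediately. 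That route is both shorter and already available to you, and is worth adopting wholesale; for parts (i) and (ii) you should cite or reprove \cite[Lemma 3.1]{OShoriz} rather than gesture at it.
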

\begin{proof}
  We begin with part (i).  By \cite[Lemma 3.1]{OShoriz} applied to $v$, there exists a monomial $t$ in $\phi_1, \ldots,
\phi_{n-1}$ such that if $e \colonequals e(v_n/v_{n-1}) = e_v/N$ and $h \colonequals t \phi_n^e$, then $v(h) = 0$ and $h|_{D_1}$ has a simple zero at the specialization of $D_{\phi_n}$ to the $v$-model of
$\proj^1_K$. Since $h|_{D_1}$ has a simple zero at $y$, by definition
$(D_1, \divi(h))_y = 1$. By Proposition~\ref{Pparameterize}(i), (ii), the specialization of $D_{\phi_n}$ to the $v$-model is the image of
$y$ under the contraction of the $v'$-component of the $\{v,
v'\}$-model of $\proj^1_K$ (it is the point where all $D_{\alpha}$
with $v_K(\phi_n(\alpha)) > \lambda_n$ specialize).  
By Lemma~\ref{Lnonspecialize}(i) and Proposition~\ref{Pparameterize}(ii), $\divi(h)$ has no
horizontal part at $y$.  Since $v(h) = 0$, the divisor $D_v$ is not in
the support of $\divi(h)$. Combining the last two sentences, we get that $\divi(h) = aD_{2}$ in $\Spec
  \hat{\mc{O}}_{\mc{Y},y}$ for some integer $a$.
  
  Since $(D_1,aD_2)_y=1$, if $(D_1,a'D_2)_y \in \ints$, then $a'$ is a multiple of $a$. To prove minimality of $a$, note that if $a'D_2$ is a principal divisor at $y$, then $a'D_2$ gives a $\ints$-divisor when restricted to $D_1$, and
the coefficient of $[y]$ in $a'D_2|_{D_1}$ is the integer
$(D_1,a'D_2)$ by definition.

%\andrew{I don't think we need this part anymore? This proves part (i) when $D_1$ corresponds to $v$.
%If $D_1$ corresponds to $v'$, then the proof above goes through using
%$1/h$, where $h = t\phi_n^e$ as in \cite[Lemma
%3.1]{OShoriz} applied to $v'$. }

\begin{comment}
\andrew{Everything below here in this proof needs to be stitched in
  correctly.}
For part (i), suppose that the two irreducible components $D_1,D_2$ of the
special fiber passing through $y$ correspond to Mac Lane valuations $v
:= [v_0,\, v_1(\phi_1) = \lambda_1,\, \ldots,\,
v_n(\phi_n) = \lambda_n]$ and $v' := [v_0,\, v_1(\phi_1) = \lambda_1,\, \ldots,\,
v_n(\phi_n) = \lambda_n']$ respectively, with $\lambda_n < \lambda_n'$.   First
suppose that $D_1$ corresponds to $v$.  Let $h$ be as in
Proposition~\ref{Pstandardcrossingmultiplicity}(i), so that $\divi(h) = aD_2$ when restricted to $\Spec
\hat{\mc{O}}_{\mc{Y},y}$ for some $a \in \nats$ and $(D_1, aD_2)=1$. This shows that if $(D_1,a'D_2) \in \ints$, then $a'$ is a multiple of $a$. To prove minimality of $a$, note that if $a'D_2$ is a principal divisor at $y$, then $a'D_2$ gives a $\ints$-divisor when restricted to $D_1$, and
the coefficient of $[y]$ in $a'D_2|_{D_1}$ is the integer $(D_1,a'D_2)$ by definition.  This proves part (i) when $D_1$ corresponds to $v$.
If $D_1$ corresponds to $v'$, then the proof above goes through using
$1/h$, where $h = t\phi_n^e$ as in \cite[Lemma
3.1]{OShoriz} applied to $v'$. \padma{Moved part of this into the previous prop}
\end{comment}

For part (ii), take $h$ as in part (i) with $D$ in place of $D_1$.   
By Lemma~\ref{Lnonspecialize}(ii), the horizontal
part of $\divi(h)$ at $y$ is supported on 
$D_{\phi_n}$.  So $\divi(h) = aD_{\phi_n}$ at $y$ for some $a \in
\nats$, and the rest of the proof proceeds exactly as in part (i).

To prove part (iii), note that the intersection number of $D_g$ with
the special fiber $\ol{Y}$ of $\mc{Y}$ is $\deg g$, and the
multiplicity of $D$ in $\ol{Y}$ is $e_v$.  So $$(D, D_g) =
\frac{(\ol{Y}, D_g)}{e_v} = \frac{\deg(g)}{e_v} = 1,$$ with the last
equality following from Corollary~\ref{CvalueofN} applied to $[v,
v_{n+1}(g) = \lambda_{n+1}]$ for any $\lambda_{n+1}$.  By \cite[Lemma
4.19(iii)]{Ruth}, $v(g) = ev(\phi_n)$.  So $\divi(h)$ has no vertical
part, and by Lemma~\ref{Lnonspecialize}(iii),  $\divi(h) = D_g$ on
$\Spec \hat{\mc{O}}_{\mc{Y},y}$.  Since $(D, \divi(h)) = (D, D_g) =
1$, we have that $h|_D$ has a simple zero at $y$. 
\end{proof}

\begin{remark}
Note that Proposition~\ref{Pstandardcrossingmultiplicity}(ii) applies to finite cusps by 
Lemma~\ref{Lstandardendpointunique}.
\end{remark}

As a Corollary to Proposition~\ref{Pstandardcrossingmultiplicity}, we calculate the intersection multiplicity (as in \S\ref{Sintersectiontheory}) of the two prime
vertical divisors in a standard crossing.

\begin{corollary}\label{Lintersectionnumber}
In the situation of Proposition~\ref{Pstandardcrossingmultiplicity}(i), $(D_1, D_2)_y
= \frac{N}{(\lambda_n' - \lambda_n)e_ve_{v'}}$.
\end{corollary}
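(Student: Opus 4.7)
The plan is to leverage Proposition~\ref{Pstandardcrossingmultiplicity}(i) directly, reducing the computation of $(D_1, D_2)_y$ to computing the integer $a$ appearing there. Specifically, since $\divi(h) = aD_2$ in $\Spec\hat{\mc{O}}_{\mc{Y},y}$ and $(D_1, aD_2)_y = 1$, bilinearity of the $\mathbb{Q}$-valued intersection pairing yields $a(D_1, D_2)_y = 1$, so $(D_1, D_2)_y = 1/a$. Thus it suffices to compute $a$.

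To pin down $a$, I would unpack the explicit $h$ used in the proof of Proposition~\ref{Pstandardcrossingmultiplicity}(i) (via \cite[Lemma 3.1]{OShoriz}): namely $h = t\, \phi_n^e$, where $e = e(v_n/v_{n-1}) = e_v/N$ and $t$ is a monomial in $\phi_1, \ldots, \phi_{n-1}$ chosen so that $v(h) = 0$. The key observation is that $a$ equals the order of vanishing of $h$ along $D_2$, and since the multiplicity of $D_2$ in the special fiber is $e_{v'}$, this order of vanishing is $e_{v'}\, v'(h)$. Hence $a = e_{v'}\, v'(h)$.

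Now I compute $v'(h)$ using the fact that $v$ and $v'$ agree on all predecessor key polynomials. Since $t$ is a monomial in $\phi_1, \ldots, \phi_{n-1}$ and $v(\phi_i) = v_{n-1}(\phi_i) = v'(\phi_i)$ for $1 \le i \le n-1$, we get $v'(t) = v(t)$. Combined with $v(h) = 0$, which forces $v(t) = -e\lambda_n$, and $v'(\phi_n) = \lambda_n'$, we obtain
\[
v'(h) = v'(t) + e\, v'(\phi_n) = -e\lambda_n + e\lambda_n' = e(\lambda_n' - \lambda_n).
\]
Substituting $e = e_v/N$ gives $a = e_{v'} \cdot (e_v/N)(\lambda_n' - \lambda_n) = \frac{e_v e_{v'}(\lambda_n' - \lambda_n)}{N}$, and therefore $(D_1, D_2)_y = 1/a = \frac{N}{(\lambda_n' - \lambda_n)e_v e_{v'}}$, as claimed.

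I do not foresee a substantive obstacle: the real work has already been packaged into Proposition~\ref{Pstandardcrossingmultiplicity}(i) and \cite[Lemma 3.1]{OShoriz}. The only point that might require a brief justification is the identification $a = e_{v'} v'(h)$, which rests on the standard fact that the valuation associated to the irreducible component $D_2$ is $e_{v'} v'$ (so that $\pi_K \mapsto e_{v'}$ recovers the multiplicity of $D_2$ in the special fiber); this is exactly the content of the normalization used throughout \S\ref{Smaclanemodels}.
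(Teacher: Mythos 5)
Your argument is correct and mirrors the paper's proof essentially verbatim: both reduce to computing the integer $a$ in Proposition~\ref{Pstandardcrossingmultiplicity}(i), identify $a = e_{v'}v'(h)$, use $v(t) = v'(t)$ together with $v(h) = 0$ and $e = e_v/N$ to evaluate $v'(h) = e(\lambda_n' - \lambda_n)$, and conclude by bilinearity. The only cosmetic difference is that the paper phrases the intermediate step as $v'(h) - v(h)$ rather than computing $v'(h)$ directly, which is the same thing since $v(h) = 0$.
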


\begin{proof}
Taking $h$ as in Proposition~\ref{Pstandardcrossingmultiplicity}(i),
and combining $\divi(h) = aD_{2}, v(h) = 0$ and $v(t) = v'(t)$, we get 
\begin{equation*}\begin{split} a = e_{v'}v'(h) = e_{v'}(v'(h) - v(h)) = e_{v'}(v'(t \phi_n^e) - v(t \phi_n^e)) = e_{v'}e(\lambda_n' -
\lambda_n). \end{split}\end{equation*} Since $\divi(h) = aD_{2}$ in $\Spec \hat{\mc{O}}_{\mc{Y},y}$, since $(D_1, \divi(h))_y = 1$ and $e=e_v/N$, $$(D_1, D_{2})_y = \frac{1}{a}(D_1, \divi(h))_y = \frac{N}{(\lambda_n' - \lambda_n)e_ve_{v'}}.\qedhere$$
\end{proof}

%\begin{remark}
%The key content of Lemma~\ref{Lminimalcartier} above is that $h$ can
%be chosen so that its divisor has no horizontal part passing through
%$y$, except perhaps a multiple of $D_{\phi_n}$ in case (ii).  
%\end{remark}

\begin{lemma}\label{Lorder1}
\hfill
\begin{enumerate}[\upshape (i)]
  \item Suppose $y \in \mc{Y}$ is a standard crossing, lying on two
    irreducible components of the special fiber with reduced divisors
    $D_1$ and $D_2$.  Then there exist $h \in K(Y)$ and an integer
    $c$ such that $\divi(h) = D_1 + cD_2$ when restricted to $\Spec
    \hat{\mc{O}}_{\mc{Y}, y}$.
   \item Suppose $y \in \mc{Y}$ lies on a single
    irreducible component of the special fiber with reduced divisor 
    $D$ and corresponding Mac Lane valuation $v = [v_0,\, v_1(\phi_1) =
    \lambda_1,\, \ldots,\, v_n(\phi_n) = \lambda_n]$.  Furthermore,
    suppose that $y = D_g \cap D$, where either $g = \phi_n$ or $g$ is a proper key polynomial over $v$.  Then there
    exists $h \in K(Y)$ and an integer $c$ such that $\divi(h) = D +
    cD_{g}$ when restricted to $\Spec
    \hat{\mc{O}}_{\mc{Y}, y}$.
    %, where $D_{\phi_n}$ is the closure in $\mc{Y}$ of $\divi_0(\phi_n)$ in $Y$.
  \end{enumerate}
\end{lemma}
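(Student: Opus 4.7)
For part~(i), write $v$ and $v'$ as in Definition~\ref{Dstandardcrossing}, with $D_1$ and $D_2$ corresponding to $v$ and $v'$ respectively.  The plan is to construct $h$ as a Laurent monomial in $\pi_K$ and the key polynomials $\phi_1,\ldots,\phi_n$, with prescribed $v$-valuation.  The value group $\Gamma_v = (1/e_v)\ints$ is generated as an abelian group by $v(\pi_K) = 1$ and $v(\phi_i) = \lambda_i$ for $1 \leq i \leq n$, so one can choose integers $a, c_1, \ldots, c_n$ with $a + \sum_{i=1}^n c_i \lambda_i = 1/e_v$; set $h := \pi_K^a \phi_1^{c_1} \cdots \phi_n^{c_n}$.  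Then $\ord_{D_1}(h) = e_v v(h) = 1$, and $c := \ord_{D_2}(h) = e_{v'} v'(h) \in \ints$, since $\Gamma_{v'} = (1/e_{v'})\ints$.

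To conclude $\divi(h) = D_1 + c D_2$ in $\Spec \hat{\mc{O}}_{\mc{Y},y}$, I would verify that no other divisor contributes to $\divi(h)$ at $y$.  By Definition~\ref{Dstandardcrossing} the only vertical components through $y$ are $D_1$ and $D_2$.  The horizontal support of $\divi(h)$ is contained in $\bigcup_{i=1}^n D_{\phi_i} \cup D_\infty$; Lemma~\ref{Lnonspecialize}(i) rules out each $D_{\phi_i}$ passing through $y$, and $D_\infty$ does not meet $y$ either, since on the $v$-model $D_\infty$ specializes (by Proposition~\ref{PAllspecializations}(i)) to the same point as $D_{\phi_i}$ for $i < n$, which is distinct from the point where the $v'$-component attaches to the $v$-component.

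Part~(ii) follows the same strategy.  When $g = \phi_n$, I would reuse the $h$ from part~(i), apply Lemma~\ref{Lnonspecialize}(ii) to rule out $D_{\phi_i}$ for $i < n$ at $y = D_{\phi_n} \cap D$, and conclude $\divi(h) = D + c_n D_{\phi_n}$ locally with $c = c_n$.  When $g$ is a proper key polynomial over $v$ distinct from $\phi_n$, I would set $h := \pi_K^a \phi_1^{c_1} \cdots \phi_n^{c_n} g^b$; by Lemma~\ref{Ldominantterm}(i), $v(g) = (\deg g/\deg \phi_n)\lambda_n$ lies in $\ints \lambda_n$, so the condition $v(h) = 1/e_v$ is solvable in integer exponents for any choice of $b \in \ints$, and Lemma~\ref{Lnonspecialize}(iii) excludes every $D_{\phi_i}$ from meeting $D_g$ (hence $y$).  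The essential inputs are the fact that $\Gamma_v$ is generated as an abelian group by $\{v(\pi_K), v(\phi_1), \ldots, v(\phi_n)\}$ together with the non-specialization statements in Lemma~\ref{Lnonspecialize}; beyond these, I do not anticipate any substantive obstacle.
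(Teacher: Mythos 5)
Your proof is correct and follows essentially the same strategy as the paper: build $h$ as a Laurent monomial in $\pi_K$ and the key polynomials with $v(h) = 1/e_v$, and invoke Lemma~\ref{Lnonspecialize} to show the horizontal part of $\divi(h)$ avoids $y$. The paper's only stylistic differences are that it leaves the $D_\infty$ non-specialization implicit and, for $g$ a proper key polynomial, it reduces to the $g = \phi_n$ case by rewriting $v$ non-minimally as $[v_0,\,\ldots,\,v_{n+1}(g) = v(g)]$ rather than introducing an extra $g^b$ factor.
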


\begin{proof}
First, suppose $y$ is a standard crossing, and the two irreducible
components of the special fiber it lies on have corresponding Mac Lane
valuations $[v_0,\, v_1(\phi_1) = \lambda_1,\, \ldots,\,
v_n(\phi_n) = \lambda_n]$ and $[v_0,\, v_1(\phi_1) = \lambda_1,\, \ldots,\,
v_n(\phi_n) = \lambda_n']$, with $\lambda_n < \lambda_n'$.  Let $\phi$
be a monomial in $\phi_1, \ldots, \phi_n$ such that 
$v(\phi) = 1/e_v$. Lemma~\ref{Lnonspecialize}(i) shows that no $D_{\phi_i}$ has a horizontal
part passing through $y$, so $\divi(\phi)$ in $\Spec
    \hat{\mc{O}}_{\mc{Y}, y}$ is purely vertical.  Since $\divi(\pi_K)$ contains $D_1$ with multiplicity
$e_v$ by \cite[Lemma 5.3(ii)]{ObusWewers} and $v(\pi_K) = 1$ by definition, $\divi(\phi)$ on $\mc{Y}$ contains $D_1$ with
multiplicity $1$.  Taking $h = \phi$ proves part (i).

Next, suppose we are in case (ii).  If $g = \phi_n$, we
construct a
monomial $\phi$ in $\phi_1, \ldots, \phi_n$ as in the previous case such that $\divi(\phi)$
contains $D$ with multiplicity $1$.  Furthermore,
Lemma~\ref{Lnonspecialize}(ii) shows that no $D_{\phi_i}$
for $1 \leq i \leq n-1$ passes through $y$.
Since the horizontal part of $\divi(g)$ passing through $y$ is
$D_g = D_{\phi_n}$,
taking $h = \phi$ proves part (ii).

If, instead, $g$ is a proper key polynomial over $v$, we write $v =
[v_0,\, \ldots,\, v_n(\phi_n) = \lambda_n, v_{n+1}(g) =
\lambda_{n+1}]$, where $\lambda_{n+1} = v(g) = v_n(g)$.  The argument
in the previous paragraph now carries through exactly, using
Lemma~\ref{Lnonspecialize}(iii) instead of Lemma~\ref{Lnonspecialize}(ii).  
\end{proof}

\section{Smoothness of closed points on vertical prime divisors in cyclic covers}\label{Ssmoothvertical}
Let $\mc{Y}$ be a normal model of $Y :=
\proj^1_K$, and let $d \in \nats$ be prime to $\chara k$. Let $f \in K(Y)$, and let $\nu \colon \mc{X} \to \mc{Y}$ be the normalization of
$\mc{Y}$ in the Kummer extension $K(Y)[z]/(z^d - f)$. The point of
this section is to prove Corollary~\ref{Csmoothoncomponent}, which
shows that, if we choose $\mc{Y}$ carefully, then if one takes the
normalization of $\mc{Y}$ in an $\ints/d$-cover, the points lying
above the standard crossings and finite cusps of $\mc{Y}$ (see Definition~\ref{Dstandardcrossing}) are smooth on the irreducible components of the special fiber where they appear. This will ultimately allow us to apply
Lemma~\ref{LregularUFD} to show that these points are regular. We also collect various preliminary results on generators of divisor class groups/value groups associated at points/components lying above finite cusps/standard crossings.

\begin{prop}\label{Pnormalizationsmooth} \hfill

\begin{enumerate}[\upshape (i)]
  \item Suppose $y \in \mc{Y}$ is a standard crossing, lying on two
    irreducible components of the special fiber with reduced divisors
    $D_1$ and $D_2$.  If the only  
 part of $\divi(f)$ passing through $y$ is a multiple of $D_2$, then
    $\nu^{-1}(D_1)$ is smooth above $y$ when given
    the reduced subscheme structure. 
   \item Suppose $y \in Y$ lies on a single irreducible component of
    the special fiber with reduced divisor $D$ and corresponding Mac
    Lane valuation $[v_0,\, v_1(\phi_1) = \lambda_1,\, \ldots,\,
    v_n(\phi_n) = \lambda_n]$.  Suppose further that $y = D_g
    \cap D$, where $g = \phi_n$ or $g$ is a proper key polynomial over
    $\phi_n$.  If
    the only part of $\divi(f)$ passing through $y$ (if any) is a
    multiple of $D_{g}$, then $\nu^{-1}(D)$
    is smooth above $y$ when given the reduced subscheme structure.
      \end{enumerate}  
\end{prop}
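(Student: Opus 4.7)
The plan is to use the function $h$ from Proposition~\ref{Pstandardcrossingmultiplicity} to exhibit a uniformizer on each local component of the reduced preimage. In case (i), by Proposition~\ref{Pstandardcrossingmultiplicity}(i) there exist $h \in K(Y)$ and a minimal positive integer $a$ with $\divi(h) = aD_2$ on $\Spec \hat{\mc{O}}_{\mc{Y},y}$, such that $h|_{D_1}$ is a uniformizer of the DVR $\hat{\mc{O}}_{D_1, y}$. By hypothesis $\divi(f) = m D_2$ formal-locally at $y$ (other components of $\divi(f)$ do not affect the local normalization, by the remark following Lemma~\ref{Lallnthroots}), and minimality of $a$ forces $a \mid m$. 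Setting $k := m/a$, the equality $\divi(f) = \divi(h^k)$ at $y$, together with that same remark, lets me replace $f$ by $h^k$ when computing the local normalization.

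Next I would restrict the cover to $D_1$. Since $v_{D_1}(f) = 0$ (as $D_1$ is not in the support of $\divi(f)$), the map $\nu$ is unramified above the generic point of $D_1$, so $\mf{p}_{D_1} \hat{S}$ is already radical in the normalization $\hat{S}$ of $\hat{\mc{O}}_{\mc{Y},y}$. Consequently $\nu^{-1}(D_1)^{\red}$ at each point $x$ above $y$ is computed by the normalization of $\hat{\mc{O}}_{D_1, y}$ in the residue-field extensions produced by the restricted cover. Writing $e := \gcd(d, k)$, $d_1 := d/e$, $k_1 := k/e$, the restricted cover $z^d = (h|_{D_1})^k$ (with ambient units absorbed into $d$-th powers via Lemma~\ref{Lallnthroots}) factors as $\prod_{\eta^e = 1}(z^{d_1} - \eta (h|_{D_1})^{k_1})$. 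Since $\gcd(d_1, k_1) = 1$ and $h|_{D_1}$ is a uniformizer of the DVR $\hat{\mc{O}}_{D_1, y}$, each factor is Eisenstein after absorbing $\eta$ as a $d_1$-th power of a unit (Lemma~\ref{Lallnthroots} again), and therefore defines a totally ramified DVR extension of $\hat{\mc{O}}_{D_1, y}$, generated by an element $\tilde{z}$ with $\tilde{z}^{d_1} = h|_{D_1}$.

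Since $\hat{\mc{O}}_{\mc{Y},y}$ is Henselian, $\hat{S}$ decomposes as a product of local rings, one per point $x$ above $y$. Each local factor's reduction modulo $\mf{p}_{D_1}$ is one of the $e$ DVR extensions just described, so $\nu^{-1}(D_1)^{\red}$ is smooth at $x$. Case (ii) proceeds identically: use Proposition~\ref{Pstandardcrossingmultiplicity}(ii) if $g = \phi_n$ or (iii) if $g$ is a proper key polynomial over $v$ (with $a = 1$ in the latter case) to produce $h$ with $\divi(h) = aD_g$ at $y$ and $h|_D$ a uniformizer, then repeat the argument with $D$ and $D_g$ playing the roles of $D_1$ and $D_2$.

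The hardest part will be making rigorous the identification of the reduced preimage of $D_1$ at $x$ with the $x$-local factor of the normalization of $\hat{\mc{O}}_{D_1, y}$ in the restricted Kummer cover. This rests on two points: the unramifiedness of $\nu$ above the generic point of $D_1$ (to ensure $\mf{p}_{D_1} \hat{S}$ is radical, not merely containing the correct prime with multiplicity) and the decomposition of $\hat{S}$ into local factors via the Henselian property (to justify treating the $e$ components one point at a time); the analogous identification for case (ii) is likewise the crux.
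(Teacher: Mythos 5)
Your opening moves match the paper's: produce $h$ via Proposition~\ref{Pstandardcrossingmultiplicity}(i) (resp.\ (ii), (iii) for part (ii)), and use Lemma~\ref{Lallnthroots} to replace $f$ by a power $h^k$ of $h$. After that the approaches diverge. The paper reduces once more, replacing $d$ by a divisor $d'$ and $f = h^k$ by $f = h$ by passing to connected components of the normalization, and then computes the scheme-theoretic (\emph{not} reduced) preimage of $D_1$ in $\Spec A[z]/(z^{d'}-h)$ directly: since $A/\mathfrak{p}_{D_1}\cong k[[h]]$ with $h$ the uniformizer supplied by Proposition~\ref{Pstandardcrossingmultiplicity}, the preimage is $\Spec k[[h]][z]/(z^{d'}-h)\cong\Spec k[[z]]$, a regular local \emph{domain}. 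Because it is a domain it already equals its own reduction, so smoothness drops out in one step without a separate reducedness or normality argument.

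Your route instead tries to establish (a) radicality of $\mathfrak{p}_{D_1}\hat S$, (b) identification of the reduced fiber with the normalization of $\hat{\mc{O}}_{D_1,y}$ in the restricted cover, and then (c) an Eisenstein computation. Step (c) is fine once you switch variables to $\tilde z$, but (a) and (b) are genuine gaps, and (b) is the crux.

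On (a): unramifiedness over the generic point of $D_1$ only gives that $\hat S/\mathfrak{p}_{D_1}\hat S$ is reduced at its minimal primes. Since $\mc{Y}$ is normal but typically \emph{not regular} at a standard crossing, $\mathfrak{p}_{D_1}$ is in general not principal in $A$, so $\mathfrak{p}_{D_1}\hat S$ is not generated by a regular element, and $\hat S/\mathfrak{p}_{D_1}\hat S$ can fail $S_1$; an embedded prime at the closed point then introduces a nilpotent. This already happens in the simplest non-regular model (a quadric cone, $(D_1,D_2)_y = 1/2$), where $\mathfrak{p}_{D_1}\hat S$ has a nilpotent class modulo it even though the cover is \'etale over $D_1$'s generic point. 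So ``unramified at the generic point $\Rightarrow$ radical'' is false.

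On (b): the assertion that $\nu^{-1}(D_1)^{\red}$ at $x$ is the normalization of $\hat{\mc{O}}_{D_1,y}$ in the restricted Kummer extension is exactly what the proposition asks you to prove. A $1$-dimensional local ring over an algebraically closed field is normal iff it is a DVR iff the point is smooth; asserting the reduced fiber equals a normalization presupposes the conclusion. You flag this yourself as ``the hardest part,'' but no argument is given, and Lemma~\ref{Ltotallyramified} does not help: the cover is unramified, not totally ramified, over $D_1$.

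The fix is the paper's: after also replacing $d$ by $d'$, the entire (unreduced) fiber over $D_1$ in $\Spec A[z]/(z^{d'}-h)$ is $\Spec k[[z]]$, a regular domain, which makes (a) and (b) vacuous and renders (c) redundant.
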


\begin{proof}
  Let $h$ be as in Proposition~\ref{Pstandardcrossingmultiplicity}(i).  Since $\divi(f)$ is
locally Cartier at $y$, Proposition~\ref{Pstandardcrossingmultiplicity} implies
that $\divi(f)$ is an integer multiple of $\divi(h)$ when restricted
to $A := \hat{\mc{O}}_{\mc{Y}, y}$, say $\divi(f) = b\divi(h)$.  By
Lemma~\ref{Lallnthroots}, $A[z]/(z^d - f) \cong A[z]/(z^d - h^b)$, so we
may assume $f = h^b$.  Furthermore, the normalization of $A[z]/(z^d - h^b)$ decomposes as a
direct product of rings isomorphic to $A[z]/(z^{d'} - h)$ for some $d'
\mid d$.  Since direct products of rings correspond to disjoint unions
of spectra, we may replace $d$ with $d'$ and assume that $f
= h$.

By the construction of $h$, we have that $D_1 \cap \Spec A = \Spec
k[[h]]$, and the point $y$ corresponds to $h = 0$.  So $\nu^{-1}(D_1) \cap \Spec A[z]/(z^d - h) = \Spec
k[[z]]$.  This is a regular local ring, showing that $\nu^{-1}(D_1)$ is
smooth above $y$.

The proof of part (ii) is the same, using
Proposition~\ref{Pstandardcrossingmultiplicity}(ii) (resp.\ (iii)) in place of
Proposition~\ref{Pstandardcrossingmultiplicity}(i) when $g = \phi_n$ (resp.\ $g$ is a
proper key polynomial over $\phi_n$).
\end{proof}

The following corollary is the main result of this subsection.

\begin{corollary}\label{Csmoothoncomponent}
  Let $\mc{Y}$ be a normal model of $Y := \proj^1_K$.
  %such that all points lying on more than one component of the special fiber
%are standard crossings and all non-regular points lying on exactly one
%component of the special fiber are standard endpoints.
  Let $f \in
K(Y)$, and let $\nu \colon \mc{X}
\to \mc{Y}$ be the normalization of $\mc{Y}$ in the
Kummer extension $K(Y)[z]/(z^d - f)$.
Let $x \in \mc{X}$ be a closed point such that either
\begin{enumerate}[\upshape (a)]
 \item $\nu(x)$ is a standard
crossing and no horizontal part of $\divi_0(f)$ passes through
$\nu(x)$, or,
\item $\nu(x)$ lies on a single irreducible component of the
special fiber of $\mc{Y}$, with reduced divisor $D$ and corresponding
Mac Lane valuation $v = [v_0,\, v_1(\phi_1) = \lambda_1,\, \ldots,\,
v_n(\phi_n) = \lambda_n]$ and that the only horizontal part of $\divi_0(f)$ passing through $\nu(x)$ (if any) is
$D_g$, where either $g = \phi_n$ or $g$ is a proper key polynomial
over $\phi_n$.  
\end{enumerate}

\begin{comment}
Let $x \in \mc{X}$ be a closed point such that $\nu(x)$ is a standard
crossing or $\nu(x)$ lies on a single irreducible component of the
special fiber of $\mc{Y}$, with reduced divisor $D$ and corresponding
Mac Lane valuation $v = [v_0,\, v_1(\phi_1) = \lambda_1,\, \ldots,\,
v_n(\phi_n) = \lambda_n]$.  Assume that if $\nu(x)$ is a standard
crossing, then no horizontal part of $\divi_0(f)$ passes through
$\nu(x)$, and otherwise, that the only horizontal part of $\divi_0(f)$ passing through $\nu(x)$ (if any) is
$D_g$, where either $g = \phi_n$ or $g$ is a proper key polynomial
over $\phi_n$.  
\end{comment}
If $\widetilde{D}$ is the reduced induced
subscheme of an irreducible
component of the special fiber of $\mc{X}$ containing $x$, then
$x$ is smooth on $\widetilde{D}$, and furthermore $\widetilde{D}$ is the only irreducible
  component of $\nu^{-1}(\nu(\widetilde{D}))$ containing $x$.
%  or $x$ is regular on $\mc{X}$. 
\end{corollary}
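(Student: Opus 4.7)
The plan is to reduce each of Case (a) and Case (b) to a direct application of Proposition~\ref{Pnormalizationsmooth}, after modifying $f$ modulo $d$-th powers of elements of $K(Y)^\times$. By Lemma~\ref{Lallnthroots}, such a modification preserves the Kummer extension $K(Y)[z]/(z^d-f)$ and hence the normalization $\nu\colon \mc{X}\to\mc{Y}$, so the local structure of $\nu$ above $y\colonequals \nu(x)$ is determined only by the divisor class of $f$ modulo $d$-th powers near $y$.

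In Case (a), let $\widetilde{D}$ be an irreducible component of the special fiber of $\mc{X}$ through $x$; since $\nu$ is finite, $\widetilde{D}$ maps to $D_1$ or $D_2$, and by the symmetry of the two Mac Lane valuations at the standard crossing we may assume $\widetilde{D}\subseteq\nu^{-1}(D_1)$. The hypothesis rules out any horizontal zero of $f$ through $y$; a $d$-th power modification (using a function with support disjoint from $y$) also clears horizontal poles through $y$, after which $\divi(f)|_{\hat{\mc{O}}_{\mc{Y},y}}$ is purely vertical, of the form $b_1 D_1+b_2 D_2$. We then use the function $h'\in K(Y)$ from Proposition~\ref{Pstandardcrossingmultiplicity}(i) applied symmetrically with the roles of $v$ and $v'$ swapped, so that $\divi(h')=a'D_1$ locally at $y$; multiplying $f$ by an appropriate $d$-th power of $h'$ eliminates the $D_1$-component of $\divi(f)$, leaving only a multiple of $D_2$. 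Proposition~\ref{Pnormalizationsmooth}(i) then applies; its proof identifies $\nu^{-1}(D_1)_{\red}$ near $x$ as a single smooth curve $\Spec k[[z]]$, yielding both the smoothness of $\widetilde{D}$ at $x$ and the uniqueness assertion. Case (b) proceeds analogously, using Proposition~\ref{Pnormalizationsmooth}(ii) together with the principal divisors supplied by Proposition~\ref{Pstandardcrossingmultiplicity}(ii) or (iii) according to whether $g=\phi_n$ or $g$ is a proper key polynomial over $\phi_n$, applied this time to the unique irreducible component $D$ through $\nu(x)$.

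The main obstacle is verifying that the $d$-th power modifications genuinely suffice to eliminate the unwanted $D_1$-component (in Case (a)) or $D$-component (in Case (b)) of $\divi(f)$: when $\mc{Y}$ is singular at $y$, the class of $D_1$ in the local divisor class group of $\hat{\mc{O}}_{\mc{Y},y}$ may be non-trivial and must be controlled using the explicit principal divisors provided by Proposition~\ref{Pstandardcrossingmultiplicity} together with Lemma~\ref{Lorder1}. Once this control is established, the remaining verification is a formal application of the earlier propositions.
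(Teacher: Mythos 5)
Your proposal correctly identifies the overall strategy (use Lemma~\ref{Lallnthroots} to replace $f$ by $f$ times a $d$-th power, adjust $\divi(f)$ locally at $y$ so that only a multiple of $D_2$, resp.\ of $D_g$, remains, and then apply Proposition~\ref{Pnormalizationsmooth}), and you correctly flag the potential obstacle: the $D_1$-coefficient of $\divi(f)$ may not be killable by $d$-th power modifications. However, you then assert that this is resolved by combining Proposition~\ref{Pstandardcrossingmultiplicity} with Lemma~\ref{Lorder1}, and this is where the gap lies. A $d$-th power modification changes the $D_1$-coefficient of $\divi(f)$ only by multiples of $d$ times the $D_1$-coefficient of some local principal divisor; it can never alter the residue class of that coefficient modulo $d$. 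So if $\divi(f) = b_1 D_1 + b_2 D_2$ with $d \nmid b_1$ (which happens precisely when $D_1$ is generically ramified in $\nu$), no amount of multiplying by $d$-th powers of functions from Proposition~\ref{Pstandardcrossingmultiplicity} or Lemma~\ref{Lorder1} will produce $b_1 = 0$.

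The paper's proof handles this via a preliminary step that your argument omits and that cannot be recovered from the lemmas you cite: it invokes Lemma~\ref{Ltotallyramified}. Since the ramification index of $D_1$ in $\nu$ is $d/\gcd(d,b_1)$, the reduced preimage $\widetilde{D}$ of $D_1$ in the degree-$d$ cover is the same as in the degree-$\gcd(d,b_1)$ cover, so one may \emph{replace $d$ by $\gcd(d,b_1)$}. After that replacement $d \mid b_1$, and only then does the $d$-th power modification (the paper uses $h$ from Lemma~\ref{Lorder1}(i) with $\divi(h) = D_1 + cD_2$, replacing $f$ by $f/h^{b_1}$) succeed in clearing the $D_1$-coefficient. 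The same issue and fix apply in Case (b). Without this change-of-$d$ step, your argument does not close.
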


\begin{proof}
  First, suppose that $y := \nu(x)$ is a standard crossing of
  $\mc{Y}$.  Let $D_1$ and $D_2$ be the two reduced vertical divisors
  passing through $y$, and assume without loss of generality that $\widetilde{D}$
  lies above $D_1$.  By assumption, we have that $\divi(f) = aD_1 +
  bD_2$ when restricted to $\Spec \hat{\mc{O}}_{\mc{Y}, y}$, for some
  integers $a$ and $b$.  Since the ramification index of $D_1$ in
  $\nu$ is $d/\gcd(a, d)$, Lemma~\ref{Ltotallyramified} shows that $\widetilde{D}$ is isomorphic to the
  reduced induced subscheme of a component above $D_1$ when $d$ is
  replaced by $\gcd(a, d)$. So we may assume that $d = \gcd(a, d)$;
  that is, $d \mid a$.  By Lemma~\ref{Lorder1}(i), there exists $h \in K(Y)$ whose
  divisor when restricted to $\Spec \hat{\mc{O}}_{\mc{Y}, y}$ is $D_1
  + cD_2$ for some integer $c$.  Replacing $f$ with $f/h^{a}$, which
  doesn't change the cover because $h^a$ is an $d$th power, we may
  assume that $a = 0$.  Now Proposition~\ref{Pnormalizationsmooth}(i)
  applies to prove the corollary.

  Next, suppose that $y$ lies on a single irreducible component as in
  the corollary. By
  assumption we have $\divi(f) = aD + bD_g$ when restricted to $\Spec
  \hat{\mc{O}}_{\mc{Y}, y}$, for some integer $a$ and $g$ as in the corollary. As in the previous
  case, we may assume $d \mid a$. By Lemma~\ref{Lorder1}(ii) applied
  to $v_n$ (or to $v_{n-1}$ if $v_n = v_{n-1}$), there exists $h \in K(Y)$
  whose divisor when restricted to $\Spec \hat{\mc{O}}_{\mc{Y}, y}$ is
  $D + cD_g$ for some integer $c$.  Again as in the previous case, we
  replace $f$ with $f/h^a$ and assume that $a = 0$.  Now
  Proposition~\ref{Pnormalizationsmooth}(ii) applied to $v_n$ proves the corollary.
%
%  The only case that remains is that $y$ is a regular point of
%  $\mc{Y}$ lying on exactly one irreducible component of the
%  special fiber.  If $\nu$ is \'{e}tale above $y$, then $x$ is
%  automatically regular on $\mc{X}$, and if not, the the branch divisor $D$ of $\nu$
%  passing through $y$ is just the (reduced induced subscheme of) the irreducible component itself, which
%  is isomorphic to $\proj^1_k$ (\cite[Lemma 7.1]{ObusWewers}).
%  Since $y$ is a smooth point of $D$, it follows from
%  \cite[Theorem 1.7]{KWsuper} that $x$ is regular on $\mc{X}$.  
\end{proof}

\subsection{Generators for divisor class groups and their value groups}

\begin{corollary}\label{C:Multupstairs}
 In the sitatution of Corollary~\ref{Csmoothoncomponent},
 $\widetilde{D}$ generates the group generated by $\nu^*D$ and the vertical part of $\divi(z)$ in $\Div (\Spec \hat{\mc{O}}_{\mc{X},x})$. 
\end{corollary}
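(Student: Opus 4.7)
The plan is to compute, in $\Div(\Spec \hat{\mc{O}}_{\mc{X},x})$, the integer coefficient of $\widetilde{D}$ in each of the two prescribed generators, and then combine them via Bezout.  Write $y \colonequals \nu(x)$, $D \colonequals \nu(\widetilde{D})$, and let $a$ be the coefficient of $D$ in the vertical part of $\divi(f)|_{\Spec \hat{\mc{O}}_{\mc{Y},y}}$.  By Lemma~\ref{L:ComputingNormalization}\ref{L:eiisramindex}, the ramification index of $\widetilde{D}$ over $D$ equals $e \colonequals d/\gcd(a,d)$; combined with Corollary~\ref{Csmoothoncomponent}'s assertion that $\widetilde{D}$ is the unique irreducible component of $\nu^{-1}(D)$ through $x$, this yields $\nu^*D = e\,\widetilde{D}$ locally at $x$.

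Next, I would apply the normalized discrete valuation $v_{\widetilde{D}}$ to the Kummer relation $z^d = f$.  Since $v_{\widetilde{D}}|_{K(\mc{Y})} = e\cdot v_D$, one gets $d\,v_{\widetilde{D}}(z) = v_{\widetilde{D}}(f) = e\cdot a$, and therefore $v_{\widetilde{D}}(z) = a/\gcd(a,d)$.  Consequently, $\widetilde{D}$ appears with coefficient exactly $a/\gcd(a,d)$ in the vertical part of $\divi(z)$.

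Because the integers $d/\gcd(a,d)$ and $a/\gcd(a,d)$ are coprime by construction, Bezout produces an $\ints$-linear combination of $\nu^*D$ and the vertical part of $\divi(z)$ whose coefficient along $\widetilde{D}$ equals $1$, placing $\widetilde{D}$ inside the subgroup they generate.  It remains to verify the reverse inclusion, namely that both $\nu^*D$ and the vertical part of $\divi(z)$ lie inside $\ints\widetilde{D}$ in $\Div(\Spec \hat{\mc{O}}_{\mc{X},x})$.  For $\nu^*D = e\,\widetilde{D}$ this is immediate, and in case (b) of Corollary~\ref{Csmoothoncomponent} there is no other vertical prime of $\mc{Y}$ through $y$, so neither $\divi(f)$ nor $\divi(z)$ has any other vertical contribution.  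The main obstacle I expect is case (a), where the companion component $D'$ of the standard crossing at $y$ could give a preimage $\widetilde{D}'$ through $x$ contributing to the vertical part of $\divi(z)$.  I would handle this by repeating the Bezout computation symmetrically with $D'$ in place of $D$ and invoking Proposition~\ref{Pstandardcrossingmultiplicity} together with the explicit local structure of $\hat{\mc{O}}_{\mc{X},x}$ as a cyclic cover of the standard crossing, thereby showing that the $\widetilde{D}'$-contribution already lies in the subgroup generated by $\nu^*D$ and the vertical part of $\divi(z)$ and is moreover an integer multiple of $\widetilde{D}$, collapsing the whole subgroup to $\ints\widetilde{D}$.
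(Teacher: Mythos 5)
Your Bezout computation of the $\widetilde{D}$-coefficients is the right arithmetic, and it recovers the paper's argument: writing $g=\gcd(a,d)$, the coefficient of $\widetilde D$ in $\nu^*D$ is $d/g$ and in $\divi(z)$ is $a/g$, and these are coprime, so $\widetilde D$ lies in their span. The paper's proof phrases exactly this observation by localizing at the generic point $\eta_{\widetilde D}$ of $\widetilde D$ and noting that $\hat{\mc O}_{\mc X,\eta_{\widetilde D}}/\hat{\mc O}_{\mc Y,\eta_D}$ is a tame Kummer DVR extension. So far your route and the paper's coincide.

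The gap is in your handling of case (a), and it cannot be repaired by the plan you sketch. If $y$ is a standard crossing lying on $D$ and $D'$ and $\divi(z)$ has a nonzero $\widetilde{D}'$-coefficient, then the vertical part of $\divi(z)$ is \emph{not} a multiple of $\widetilde D$ in $\Div(\Spec\hat{\mc O}_{\mc X,x})$ --- distinct prime divisors cannot be multiples of one another, so your proposed conclusion (``the $\widetilde{D}'$-contribution... is moreover an integer multiple of $\widetilde D$'') is impossible. Running the Bezout computation symmetrically with $D'$ does not save this. What does save it, and is what the paper actually does, is to pass to the stalk at $\eta_{\widetilde D}$ (equivalently, only read off $\widetilde D$-coefficients): there $\widetilde{D}'$ becomes the unit divisor and drops out entirely, your Bezout computation closes the argument, and case (a) looks identical to case (b). This is also the only level at which the corollary is later invoked (in Lemma~\ref{LuValueGrp} it is ``rephrased in terms of valuation theory,'' i.e.\ as a statement about the value group of $v_{\widetilde D}$), so the localized reading is the intended one.
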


\begin{proof}
Let $\eta_{\widetilde{D}}$ (resp.\ $\eta_D$) be the generic point of
$\widetilde{D}$ (resp.\ $D$).  Then, since $\hat{\mc{O}}_{\mc{X},
  \eta_{\widetilde{D}}} / \hat{\mc{O}}_{\mc{Y}, \eta_D} $ is a tame
Kummer extension of discrete valuation rings given by $z^d = f$, the maximal ideal of
$\hat{\mc{O}}_{\mc{X}, \eta_{\widetilde{D}}}$ is generated by $z$ and
the maximal ideal of $\hat{\mc{O}}_{\mc{Y}, \eta_D}$.  In the language
of divisors, this is the corollary.
\end{proof}

Now we compute generators for the value groups of the discrete valuations on $K(X)$ extending the discrete valuations on $K(Y)$ corresponding to the two irreducible components of $\mc{Y}_k$ in a standard crossing. 
For a standard crossing $y \in \mc{Y}$
(Definition~\ref{Dstandardcrossing}) corresponding to two Mac Lane
valuations $v := [v_0,\, v_1(\phi_1) = \lambda_1,\, \ldots,\,
v_{n-1}(\phi_{n-1}) = \lambda_{n-1},\, v_n(\phi_n) = \lambda_n]$ and $v' :=
[v_0,\, v_1(\phi_1) = \lambda_1,\, \ldots,\,
v_{n-1}(\phi_{n-1}) = \lambda_{n-1},\, v_n(\phi_n) = \lambda_n']$, with
$\lambda_n < \lambda_n'$, let $N_y \colonequals e_{v_{n-1}}$ (so
$(1/N_y)\ints$ is the group generated by $1,
\lambda_1, \ldots, \lambda_{n-1}$), let $\psi_y$ be a monomial in
$\phi_1, \ldots, \phi_{n-1}$ over $K$ such that $v(\psi_y) =
v'(\psi_y) = 1/N$, and let $\phi_y \colonequals \phi_n$.  
\begin{lemma}\label{LuValueGrp}
 Let $D_1,D_2$ be reduced divisors on $\mc{X}$ meeting at a point $x$ as in Proposition~\ref{Pnormalizationsmooth}(i), lying above a standard crossing $y \in \mc{Y}$, and let $v,v'$ be the Mac Lane valuations corresponding to $y$ as in Definition~\ref{Dstandardcrossing}. Let $\psi_y,\phi_y \in K(Y)$ be as above. 
 \begin{enumerate}[\upshape (i)]
\item The divisors $D_1$ and $D_2$ are locally irreducible at
  $x$.
\item The value group of the extension of $v$ to
  $\hat{\mc{O}}_{\mc{X},x}$ is generated by $v(\phi_y)$, $v(\psi_y)$,
  and $v(z)$, and similarly for $v'$. 
\end{enumerate}
\end{lemma}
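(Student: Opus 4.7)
My plan is as follows. For part (i), for each $i\in\{1,2\}$ the setup satisfies the hypothesis of Corollary~\ref{Csmoothoncomponent}(a) (swap the roles of the two components when invoking it for $D_1$ versus $D_2$), giving that $x$ is smooth on $D_i$ and that $D_i$ is the unique irreducible component of $\nu^{-1}(\nu(D_i))$ containing $x$. Smoothness forces $\widehat{\mc{O}}_{D_i,x}$ to be a one-dimensional regular local ring, hence a domain, and together with the uniqueness this shows $D_i$ is locally irreducible at $x$.

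For part (ii), by part (i) the divisor $D_1$ defines a single height-one prime of $\widehat{\mc{O}}_{\mc{X},x}$, so there is a unique extension $\tilde{v}$ of $v$ to $\Frac(\widehat{\mc{O}}_{\mc{X},x})$. The relation $z^d=f$ forces $\tilde v(z)=v(f)/d$, giving the containment $\Gamma_v+\ints\cdot\tilde v(z)\subseteq\Gamma_{\tilde v}$. For equality I would compute both sides of the index over $\Gamma_v$: on one hand, $[\Gamma_{\tilde v}:\Gamma_v]$ equals the ramification index $e(\tilde v/v)$, which for a tame Kummer extension of complete DVRs with algebraically closed residue field equals $d/\gcd(d,e_v v(f))$ (this uses Lemma~\ref{Lallnthroots} to absorb a unit of $\widehat{\mc{O}}_{\mc{Y},\eta_{D_v}}$ into a redefinition of $z$, reducing to $z^d=\pi^r$ for a uniformizer $\pi$); on the other hand, $[\Gamma_v+\ints\tilde v(z):\Gamma_v]$ is the order of $v(f)/d$ modulo $\Gamma_v=(1/e_v)\ints$, which is the same quantity $d/\gcd(d,e_v v(f))$. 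Hence the two groups coincide.

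Granted $\Gamma_{\tilde v}=\Gamma_v+\ints\tilde v(z)$, it remains to check $\Gamma_v=\langle v(\phi_y),v(\psi_y)\rangle$. This is immediate from the Mac Lane presentation: $\Gamma_v$ is generated by $1,\lambda_1,\ldots,\lambda_n$; by construction $v(\psi_y)=1/N=1/e_{v_{n-1}}$ generates the subgroup $\Gamma_{v_{n-1}}=\langle 1,\lambda_1,\ldots,\lambda_{n-1}\rangle$, and adjoining $v(\phi_y)=\lambda_n$ yields all of $\Gamma_v$. The argument for $v'$ is identical once one notes that $\psi_y$ is a monomial in $\phi_1,\ldots,\phi_{n-1}$, so $v'(\psi_y)=v(\psi_y)=1/N$, and $v'(\phi_y)=\lambda_n'$ together with $v'(\psi_y)$ generate $\Gamma_{v'}$.

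The main obstacle is the identity $\Gamma_{\tilde v}=\Gamma_v+\ints\tilde v(z)$; the cleanest route is the index comparison above, rather than a direct argument on $K(Y)$-expansions of elements of $K(X)$, which would require verifying non-cancellation of leading valuations among the basis elements $1,z,\ldots,z^{d-1}$.
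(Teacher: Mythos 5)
Your proof is correct, and the overall structure matches the paper's: part (i) is established via Corollary~\ref{Csmoothoncomponent}, and the final identification $\Gamma_v=\langle v(\phi_y),v(\psi_y)\rangle$ from the Mac Lane presentation is also the same as in the paper. Where you diverge is in establishing $\Gamma_{\tilde{v}}=\Gamma_v+\ints\cdot\tilde{v}(z)$: the paper simply cites Corollary~\ref{C:Multupstairs} (the divisor-theoretic statement that $\widetilde{D}$ generates the group generated by $\nu^*D$ and the vertical part of $\divi(z)$, itself proved from the tame Kummer structure of the DVR extension at $\eta_{\widetilde{D}}$), whereas you compare indices over $\Gamma_v$, observing that both $[\Gamma_{\tilde{v}}:\Gamma_v]$ and $[\Gamma_v+\ints\cdot\tilde{v}(z):\Gamma_v]$ equal $d/\gcd(d,e_v v(f))$. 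Both arguments rest on the same underlying tame-Kummer fact; your version is more explicit and self-contained, while the paper's is shorter because it leverages the corollary already established. One minor slip in your justification: Lemma~\ref{Lallnthroots} applies only at closed points with algebraically closed residue field, whereas the residue field at the generic point $\eta_D$ is the function field $k(D)$ of the component, which is not algebraically closed, so you cannot absorb the unit that way. This is harmless, because the ramification-index formula $e(\tilde{v}/v)=d/\gcd(d,e_v v(f))$ for a tame Kummer extension of discrete valuation rings holds without requiring the unit part of $f$ to be a $d$-th power.
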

\begin{proof}
That $D_1$ and
$D_2$ are locally irreducible follows from
Corollary~\ref{Csmoothoncomponent}, proving (i).
The order functions on $D_1$
and $D_2$ give rise respectively to (the extensions of) the valuations $v$ and $v'$,
appropriately scaled.  The value group of $v$ on $K(\proj^1) = K(t)$ is
generated by $v(\psi_y)$ and $v(\phi_y)$, and thus, by rephrasing
Corollary~\ref{C:Multupstairs} in terms of valuation theory, the value group of
 the extension of $v$ to $K(X)$ is generated by $v(\psi_y)$, $v(\phi_y)$, and $v(z)$.  The
 analogous results hold for $v'$, proving (ii).
\end{proof}

%We finish this section with a lemma that will be applied throughout \S\ref{Sdetecting}.

\section{Some lattice theory}\label{SLattices}
In this section, we prove some results on lattices that will
be used in the next section to show that closed points in $\mc{X}$ lying above a standard crossing $y \in \mc{Y}$ are regular.  In Lemma~\ref{LuValueGrp} and Corollary~\ref{Csmoothoncomponent}, we showed that if $x \in \mc{X}$ maps to a standard crossing $y$, then $x$ is the intersection of two vertical prime divisors $D_1,D_2$ of $\mc{X}_k$, and $x$ is a smooth point on each of these components. Lemma~\ref{LregularUFD} and Lemma~\ref{Lbothprincipal} show that for $x$ to be regular on $\mc{X}$, it is necessary and sufficient that both $D_1$ and $D_2$ are principal at $x$ and that they intersect transversally. 

Let $v_x \colonequals (v,v') \colon K(X) \rightarrow \Q^2$ denote the ordered pair of discrete valuations corresponding to $D_1,D_2$, and let $L \subset \mathbb{Q}^2$ be a lattice generated by $v_x(g)$ for rational functions $g$ with divisors supported purely on $D_1,D_2$. Then, if $(1/x_0)\Z$ and $(1/y_0)\Z$ are the value groups for the discrete valuations corresponding to $D_1,D_2$ respectively, it suffices to show $(1/x_0,0)$ and $(0,1/y_0)$ generate the subgroup $L$ to establish local principality of $D_1,D_2$. With this in mind, we define the notion of a lattice $L \subset \mathbb{Q}^2$ being ``aligned with the coordinate axes'' in Definition~\ref{Daxes} when it has generators along the coordinate axes as above.
%, a condition that reflects the local prinicipality of $D_1,D_2$. 
In Lemma~\ref{LuValueGrp}, we computed three generators for the special lattices $L \subset \mathbb{Q}^2$ appearing in our setting (the $(v,v')$ valuations of the functions $\psi_y,\phi_y,z$ in Lemma~\ref{LuValueGrp}) -- these generators will be rewritten more explicitly in the next section (see \eqref{Evv'z} in Lemma~\ref{Ldivisormultiplicities}) and shown to have generators as in lattices considered in Corollary~\ref{CregularNpath2}. The main result of this section is Corollary~\ref{CregularNpath3}, a numerical criterion for the special lattices $L \subset \mathbb{Q}^2$ in Lemma~\ref{CregularNpath2} to be aligned with the coordinate axes, which will then be applied in Proposition~\ref{Pstandardcrossingregular} to establish principality of the divisors $D_1,D_2$ for well-chosen $\mc{Y}$. 

%\padma{OLD: \sout{In particular, Definition~\ref{Daxes} below of a lattice being ``aligned with the coordinate axes''  will correspond to certain vertical divisors in the model $\mc{X}$ of the cyclic cover being locally principal, allowing us to apply Lemma~\ref{LregularUFD}.}} 
%\padma{Figure out if there is a one line summary , using Lemma~\ref{Ldivgroupgenerators} and Lemma~\ref{Ldivisormultiplicities}, of the intuition behind the three numbers, and the lattices in $\mathbb{Q}^2$ considered here. Word dump to revisit: By previous section, every point lying above a standard crossing lies on exactly two components, the three integers are valuation of $\psi$ (an invariant function with equal valuation on the two components at a crossing), $\phi_n$, $z$ }

%\subsection{Shortest $N$-paths}

\subsection{Some special lattices in $\rats^2$ and their generators}
\begin{lemma}\label{Llatticebasis}
  Let $L \subseteq \rats^2$ be a lattice containing $(r, r)$ for some $r \in \rats_{> 0}$ minimal.  Let $(x, y)$ be an element of $L$ minimizing $y - x$ subject to $y > x$.  Then $L$ is generated by
  $(r, r)$ and $(x, y)$.
\end{lemma}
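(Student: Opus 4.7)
The plan is to use a division-with-remainder argument twice: once to quotient out by $(x,y)$ using the minimality of $\delta := y - x$, and once to quotient out by $(r,r)$ using the minimality of $r$. Crucially, the two minimality hypotheses make these ``Euclidean'' steps work even though we are inside $\rats^2$ rather than $\ints^2$.

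First I would set up the projection $\pi \colon \rats^2 \to \rats$, $(a,b) \mapsto b - a$, whose kernel is the diagonal $\Delta := \{(s,s) : s \in \rats\}$. The diagonal subgroup $L \cap \Delta$ is exactly $\{(s,s) : (s,s) \in L\}$; I claim it equals $(r,r)\ints$. Indeed, suppose $(s,s) \in L$ with $s > 0$. By minimality of $r$, we have $s \geq r$, so we may write $s = qr + s'$ with $q \in \ints_{\geq 0}$ and $0 \leq s' < r$. Then $(s',s') = (s,s) - q(r,r) \in L$, and $s' > 0$ would contradict the minimality of $r$; hence $s' = 0$. Negating handles the $s < 0$ case.

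Next I would analyze the image $\pi(L) \subseteq \rats$, which is a subgroup of $\rats$ containing $\delta = y - x > 0$. For any $(a,b) \in L$ with $b \neq a$, I want to show $b - a \in \delta \ints$. If $b - a > 0$, then minimality of $\delta$ gives $b - a \geq \delta$, and writing $b - a = q\delta + \epsilon$ with $q \in \ints_{\geq 0}$ and $0 \leq \epsilon < \delta$, the element $(a,b) - q(x,y) \in L$ has projection $\epsilon$ under $\pi$; if $\epsilon > 0$, this contradicts the minimality of $\delta$. So $\epsilon = 0$. The case $b - a < 0$ is handled by negation.

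Putting the two steps together: given an arbitrary $(a,b) \in L$, the previous paragraph produces $q \in \ints$ with $(a,b) - q(x,y) \in L \cap \Delta$, and the first step then expresses this as $m(r,r)$ for some $m \in \ints$. Therefore $(a,b) = q(x,y) + m(r,r)$, proving that $(r,r)$ and $(x,y)$ generate $L$. There is no real obstacle here; the only point requiring care is to ensure that the ``quotient'' $q$ in each division step is an \emph{integer}, which is exactly what the minimality hypotheses on $r$ and on $y-x$ guarantee (otherwise a fractional remainder would violate minimality).
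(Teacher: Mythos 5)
Your proof is correct and takes essentially the same two-step reduction as the paper: first reduce an arbitrary element of $L$ modulo $(x,y)$ using the minimality of $y-x$, then reduce the resulting diagonal element modulo $(r,r)$ using the minimality of $r$. The paper simply \emph{asserts} that the two quotients are integers, whereas you supply the division-with-remainder argument justifying each assertion.
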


\begin{proof}
By the assumption on $y-x$, if $(a, b) \in L$, then $(b-a) = c(y - x)$ for some $c \in \ints$.  So $(a, b) - c(x, y) = (s, s)$ for some $s \in \rats$.  By minimality of $r$, we have $(s, s) = d(r, r)$ for some $d \in \ints$.
\end{proof}

\begin{corollary}\label{CregularNpath2}
  Let $N, d, e, s \in \nats$ and $\lambda, \lambda' \in \rats$, and
  let $L \subseteq \rats^2$ be the lattice generated by $$(1/N, 1/N),
  \ (\lambda, \lambda'), \text{and }  (\frac{e}{d}\lambda + \frac{s}{Nd},\,
  \frac{e}{d}\lambda' + \frac{s}{Nd}).$$
Then $L$ is generated by $(1/\widetilde{N},
1/\widetilde{N})$ and $(\widetilde{\lambda}, \widetilde{\lambda}')$, where
$$\widetilde{\lambda} = \frac{\gcd(d, e)}{d}\lambda + \frac{rs}{Nd},\ \
\widetilde{\lambda}' = \frac{\gcd(d, e)}{d}\lambda' + \frac{rs}{Nd},\ \
\widetilde{N} = N\frac{\gcd(d, e)}{\gcd(d, e,s)},$$ and $r$ is any
integer such that $re/\gcd(d, e) \equiv 1 \pmod{d/\gcd(d, e)}$.
 %Furthermore, $L$ is aligned with the coordinate axes if and only if $\widetilde{\lambda}' > \widetilde{\lambda}$ is a shortest $\widetilde{N}$-path. 
\end{corollary}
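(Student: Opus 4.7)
My plan is to apply Lemma~\ref{Llatticebasis}, which reduces the claim to identifying two minima of $L$: the smallest $\tilde{r} > 0$ with $(\tilde{r}, \tilde{r}) \in L$ (which should come out to $1/\widetilde{N}$), and an element $(x,y) \in L$ with $y > x$ minimizing $y - x$ (which should come out to $(\widetilde{\lambda}, \widetilde{\lambda}')$). After a coordinate swap I will assume $\lambda \leq \lambda'$. The degenerate case $\lambda = \lambda'$ collapses every generator onto the diagonal and is handled by the first computation alone, which is consistent with the stated formula because in that case $\widetilde{\lambda} = \widetilde{\lambda}'$ and the second generator automatically lies in $\Z \cdot (1/\widetilde{N}, 1/\widetilde{N})$.

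A generic element of $L$ is
\[
a\,(1/N, 1/N) + b\,(\lambda, \lambda') + c\,\Bigl(\tfrac{e}{d}\lambda + \tfrac{s}{Nd},\ \tfrac{e}{d}\lambda' + \tfrac{s}{Nd}\Bigr),\qquad a,b,c \in \Z,
\]
whose coordinate difference is $(\lambda' - \lambda)(b + ce/d)$. For the first minimum I will impose $bd + ce = 0$, parameterize $c = (d/\gcd(d,e))\,c'$ with $c' \in \Z$ (so $b = -c'e/\gcd(d,e)$), and reduce the common coordinate to $(a\gcd(d,e) + c's)/(N\gcd(d,e))$. Since $\{a\gcd(d,e) + c's : a, c' \in \Z\} = \gcd(d,e,s)\,\Z$, the minimum positive value of the common coordinate is $\gcd(d,e,s)/(N\gcd(d,e)) = 1/\widetilde{N}$, as required.

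For the second minimum, I will minimize the positive quantity $(bd + ce)/d$ over $b, c \in \Z$, which is exactly $\gcd(d,e)/d$. The arithmetic hypothesis $re/\gcd(d,e) \equiv 1 \pmod{d/\gcd(d,e)}$ is precisely the statement that $c = r$ pairs with an integer $b$ solving $bd + re = \gcd(d,e)$; taking this choice together with $a = 0$, a direct substitution into the generic element above produces $(x,y) = (\widetilde{\lambda}, \widetilde{\lambda}')$, with coordinate difference $(\lambda' - \lambda)\gcd(d,e)/d$. Applying Lemma~\ref{Llatticebasis} then yields the asserted pair of generators. I do not foresee any substantive obstacle: the whole argument is linear arithmetic once one recognizes that $L \cap \{x = y\}$ and the minimizer of $y - x$ both reduce to Bézout-style computations, with $\widetilde{N}$ and $r$ encoding the relevant gcd data.
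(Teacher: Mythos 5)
Your argument follows the paper's proof essentially step for step: you invoke Lemma~\ref{Llatticebasis}, compute the diagonal sublattice $L_{\Delta}$ via a B\'ezout parameterization of the constraint $bd + ce = 0$ to arrive at $1/\widetilde{N}$, then solve $bd + ce = \gcd(d,e)$ with $c = r$ to exhibit $(\widetilde{\lambda}, \widetilde{\lambda}')$ as the minimizer of the coordinate difference. The computations are correct and match the paper's; the coordinate swap to arrange $\lambda \leq \lambda'$ is harmless since the conclusion is symmetric under simultaneously swapping $(\lambda, \lambda')$ and $(\widetilde{\lambda}, \widetilde{\lambda}')$.

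The one thing that is wrong is your treatment of the degenerate case $\lambda = \lambda'$. You assert that the second generator $(\widetilde{\lambda}, \widetilde{\lambda}')$ ``automatically lies in $\Z \cdot (1/\widetilde{N}, 1/\widetilde{N})$,'' so that the first computation alone suffices. That is false: take $N = 1$, $d = 2$, $e = 1$, $s = 1$, $\lambda = \lambda' = 0$, $r = 1$. Then $\widetilde{N} = 1$ but $\widetilde{\lambda} = 1/2 \notin \Z$. Moreover, in this example the constraint $bd + ce = 0$ produces only the integer diagonal $\Z$, whereas $L$ is the diagonal copy of $(1/2)\Z$; so the ``first computation'' does not recover $L$ when $\lambda = \lambda'$ (the constraint $bd + ce = 0$ characterizes $L_{\Delta}$ only when $\lambda \neq \lambda'$). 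The corollary does in fact remain true in the degenerate case --- one sees it, e.g., by projecting the non-degenerate statement to a single coordinate --- but not for the reason you gave. This slip is inconsequential for the paper, which (like its own proof) only ever invokes the corollary with $\lambda_n < \lambda_n'$, so Lemma~\ref{Llatticebasis} always applies directly.
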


\begin{proof}
Let $(a, a') = ((e/d)\lambda + s/Nd,\, (e/d)\lambda' + s/Nd)$. By Lemma~\ref{Llatticebasis}, $L$ is generated
    by a generator $(1/\widetilde{N},
1/\widetilde{N})$ for the sublattice $L_{\Delta}$ of $L$ with
both coordinates equal, and an element $(a,b) \in L$ that achieves the minimum positive value of $b-a$. 
  Now $L_{\Delta}$ is generated by 
$$(1/N, 1/N), \quad \textup{and } \quad \frac{d}{\gcd(d,e)}(a, a') - \frac{e}{\gcd(d,e)}(\lambda, \lambda') = \left(\frac{s}{N\gcd(d,e)},
  \frac{s}{N\gcd(d,e)}\right),$$ in other words, by
$$\left(\frac{\gcd(d,e,s)}{N\gcd(d, e)}, \frac{\gcd(d,e,s)}{N\gcd(d, e)}\right) =
\left(\frac{1}{\widetilde{N}}, \frac{1}{\widetilde{N}} \right).$$

On the other hand, the minimal positive value of $b-a$ for $(a,b) \in
L$ is $(\gcd(d,e)/d)(\lambda' - \lambda)$.  An element of $L$
realizing this difference can be written by letting $c \in \ints$ be such that $re/\gcd(d,e) = 1 +
c(d/\gcd(d,e))$, and then taking $r(a, a') - c(\lambda,
\lambda')$, which equals
$$\left(\frac{\gcd(d,e)}{d}\lambda + \frac{rs}{Nd},
  \frac{\gcd(d,e)}{d}\lambda' + \frac{rs}{Nd}\right) =
(\widetilde{\lambda}, \widetilde{\lambda}').\qedhere$$
 %By Corollary~\ref{CregularNpath}, the lattice $L$ is aligned with the coordinate axes if and only if $\widetilde{\lambda}' > \widetilde{\lambda}$ is a shortest $\widetilde{N}$-path.
\end{proof}

\begin{defn}\label{Daxes}
  We say that a lattice $L \subseteq \rats^2$ is \emph{aligned with
    the coordinate axes} if there exist elements $(x_0, 0), (0, y_0) \in L$ which
  generate $L$.
\end{defn}

\subsection{Shortest $N$-paths and lattices aligned with the coordinate axes}
We recall the notion of \emph{shortest $N$-path}, introduced in \cite{ObusWewers}. 
\begin{defn}\label{DNpath}
Let $N$ be a natural number, and let $a > a' \geq 0$ be rational
numbers.  An \emph{$N$-path} from $a$ to $a'$  is a
decreasing sequence $a = b_0/c_0 > b_1/c_1 > \cdots > b_r/c_r = a'$ of rational numbers in lowest terms such that
$$\frac{b_i}{c_i} - \frac{b_{i+1}}{c_{i+1}} = \frac{N}{\lcm(N, c_i)\lcm(N, c_{i+1})}$$ for
$0 \leq i \leq r-1$.  If, in addition, no proper subsequence of $b_0/c_0 > \cdots > b_r/c_r$ containing
  $b_0/c_0$ and $b_r/c_r$ is an $N$-path, then the sequence is called
  the \emph{shortest $N$-path} from $a$ to $a'$.  
  \end{defn}

\begin{remark}
By \cite[Proposition A.14]{ObusWewers}, the shortest $N$-path from
$a'$ to $a$ exists and is unique.
\end{remark}

\begin{remark}\label{R1pathconsec}
  Observe that two successive entries $b_i/c_i > b_{i+1}/c_{i+1}$ of a shortest $1$-path satisfy $b_i/c_i - b_{i+1}/c_{i+1} = 1/(c_ic_{i+1})$.
\end{remark}

\begin{example}\label{Efarey}
The sequence $1 > 1/2 > 2/5 > 3/8 > 1/3 > 0$ is a concatenation
of the shortest $1$-path from $1$ to $3/8$ with the shortest $1$-path
from $3/8$ to $0$.  The entire sequence is a $1$-path from $1$ to $0$,
but the \emph{shortest} $1$-path from $1$
to $0$ is simply $1 > 0$.  
\end{example}

%\andrew{I think subsection \S\ref{Sassociated} below can be
%  eliminated.  It is a relic from what we originally cut and pasted
%  into the paper.}
%\subsection{The Mac Lane valuation associated to a polynomial}\label{Sassociated}
%Let $\alpha \in \mc{O}_{\ol{K}}$ such that $v_K(\alpha) > 0$ and the
%minimal polynomial $f(x) \in K[x]$ of $\alpha$ has degree at least
%$2$. In this section, we define a canonical Mac Lane valuation $v_f$ attached to $f$. 
%
%Write
%$$v_f = [v_0,\, v_1(\phi_1) = \lambda_1,\, \ldots,\, v_n(\phi_n) =
%  \lambda_n] $$
%  for the unique Mac Lane valuation on $K(x)$ over which $f$ is a proper
%key polynomial (Proposition \ref{Pbestlowerapprox}(iv)).  As usual, write $v_0, v_1,\ldots, v_n = v_f$ for the intermediate
%valuations.  For $1 \leq i \leq n$,
%write $\lambda_i = b_i/c_i$ in lowest terms.  Let $N_i = \lcm(c_1,
%\ldots, c_{i-1}) = \deg(\phi_i)$ (Corollary \ref{CvalueofN}).
%Furthermore, pick once and for all a root $\alpha$ of $f$.

\begin{lemma}\label{LregularNpath}
  Let $L \subseteq \rats^2$ be a lattice generated by $(r, r)$ and $(x, y)$ as in Lemma~\ref{Llatticebasis} above. 
  Then $L$ is aligned with
    the coordinate axes if and only if $y/r > x/r$ is a
  (necessarily shortest) $1$-path.
\end{lemma}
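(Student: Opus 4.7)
The plan is to translate the alignment condition into an index computation of lattices and then match the resulting numerical condition against the definition of a $1$-path. First I would write $y/r = b_0/c_0$ and $x/r = b_1/c_1$ in lowest terms with $c_0, c_1 \geq 1$. Using the given basis $(r,r), (x,y)$ of $L$ and setting each coordinate of a general element $a(r,r) + b(x,y)$ to zero in turn, the equation $a = -by/r$ (resp.\ $a = -bx/r$) combined with the coprimality $\gcd(b_i, c_i) = 1$ forces $c_0 \mid b$ (resp.\ $c_1 \mid b$). A direct substitution then yields explicit generators $(c_0(y-x), 0)$ and $(0, c_1(y-x))$ for the sublattices $L \cap (\rats \times \{0\})$ and $L \cap (\{0\} \times \rats)$ respectively.

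Next I would observe that $L$ is aligned with the coordinate axes if and only if $L$ coincides with the sublattice $L'$ generated by these two axis generators, because any axis-aligned generating pair of $L$ must (up to sign) be generators of the two axis sublattices. To test equality, I would compare covolumes: $L$ has covolume $|\det(\begin{smallmatrix} r & r \\ x & y \end{smallmatrix})| = r(y-x)$ in its basis $(r,r), (x,y)$, while $L'$ has covolume $c_0 c_1 (y-x)^2$. Thus $[L : L'] = c_0 c_1 (y-x)/r$, and so $L = L'$ precisely when $(y-x)/r = 1/(c_0 c_1)$.

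Finally I would unpack Definition~\ref{DNpath} with $N=1$: a two-term decreasing sequence $b_0/c_0 > b_1/c_1$ of nonnegative rationals in lowest terms is a $1$-path iff $b_0/c_0 - b_1/c_1 = 1/(c_0 c_1)$, and any two-term $1$-path is automatically the shortest $1$-path between its endpoints (it has no proper subsequence containing both endpoints). Thus the numerical condition from the index calculation is exactly the assertion that $y/r > x/r$ forms a (shortest) $1$-path, yielding the stated equivalence. The only care point I anticipate is the degenerate case where $x = 0$ (so $b_1 = 0$, $c_1 = 1$): the computation of the sublattice on the $y$-axis still gives $(0, c_1(y-x)) = (0, y)$ and the $1$-path condition still reduces to the correct equality, so no separate argument is needed; no deeper obstacle should arise, as the whole proof is one linear-algebra computation matched against one line of Definition~\ref{DNpath}.
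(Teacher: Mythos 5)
Your proof is correct, and it follows essentially the same route as the paper's: both arguments reduce to a covolume comparison and then match the resulting numerical condition against the definition of a $1$-path. The organizational difference is minor but worth noting: the paper normalizes $r=1$, shows alignment is equivalent to $L$ containing the two specific vectors $(1/b,0)$ and $(0,1/d)$, and proves that equivalence in two halves (a covolume inequality for one direction, an explicit integral combination $c(1,1)-d(x,y)$ for the other); you instead compute the generators $(c_0(y-x),0)$ and $(0,c_1(y-x))$ of the two axis sublattices directly, reduce alignment to the single index $[L:L']=1$, and evaluate it via determinants, which folds both directions into one calculation. Either way the core identity is the same covolume computation, so this counts as the same proof in slightly tidier packaging.
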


\begin{proof}
  By dividing all elements of $L$ by $r$, we may assume $r = 1$.
  Write $x = a/b$ and $y = c/d$ in lowest terms with positive
  denominators. Then $L$ is aligned with
    the coordinate axes if and only if it
  contains $(1/b, 0)$ and $(0, 1/d)$.  Note that $y > x$ is a $1$-path if and only if
  $bc - ad = 1$.
  
  The covolume of $L$ is $(bc -
  ad)/bd \geq 1/bd$.  Strict inequality holds if $y > x$ is not a $1$-path, which is incompatible with $L$ containing $(1/b, 0)$
  and $(0, 1/d)$.  On the other hand, if $y > x$ is a $1$-path, then
  $c(1, 1) - d(x,  y) = ((bc - ad)/b, 0) = (1/b, 0)$.  So $(1/b, 0)
  \in L$, and since there exists some element of $L$ of the form $(q,
  1/d)$ with $1/b \mid q$, we conclude that $(0, 1/d) \in L$.
\end{proof}

%\begin{corollary}\label{CregularNpath}
%In the situation of Lemma~\ref{LregularNpath}, if $r = 1/\widetilde{N}$ for an
%integer $\widetilde{N}$, then $L$ is aligned with
%    the coordinate axes if and only if $y > x$ is a (necessarily shortest) $\widetilde{N}$-path.
%\end{corollary}

%\begin{proof}
%This follows immediately from Lemma~\ref{LregularNpath} and \cite[Lemma~A.7]{ObusWewers}.
%\end{proof}

\begin{corollary}\label{CregularNpath3}
The lattice $L$ in Corollary~\ref{CregularNpath2} is aligned with the coordinate axes if and only if
$\widetilde{\lambda}' > \widetilde{\lambda}$ is a shortest
$\widetilde{N}$-path.
\end{corollary}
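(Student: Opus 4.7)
The plan is to reduce Corollary~\ref{CregularNpath3} to Lemma~\ref{LregularNpath} applied to the explicit generators of $L$ furnished by Corollary~\ref{CregularNpath2}, and then to translate the resulting statement about $1$-paths into a statement about $\widetilde{N}$-paths via a short computation with lowest-terms denominators.

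First, I would observe that Corollary~\ref{CregularNpath2} exhibits $L$ in exactly the form hypothesized in Lemma~\ref{LregularNpath}: the generators $(1/\widetilde{N}, 1/\widetilde{N})$ and $(\widetilde{\lambda}, \widetilde{\lambda}')$ were produced by invoking Lemma~\ref{Llatticebasis}, so $(1/\widetilde{N}, 1/\widetilde{N})$ is the minimal positive element of $L$ lying on the diagonal and $(\widetilde{\lambda}, \widetilde{\lambda}')$ minimizes $y-x$ over $(x,y) \in L$ with $y > x$. Applying Lemma~\ref{LregularNpath} with $r = 1/\widetilde{N}$, $x = \widetilde{\lambda}$, and $y = \widetilde{\lambda}'$ immediately yields: $L$ is aligned with the coordinate axes if and only if $\widetilde{N}\widetilde{\lambda}' > \widetilde{N}\widetilde{\lambda}$ is a (necessarily shortest) $1$-path.

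Next, I would verify that this $1$-path condition is equivalent to $\widetilde{\lambda}' > \widetilde{\lambda}$ being a shortest $\widetilde{N}$-path. Write $\widetilde{\lambda} = b/c$ and $\widetilde{\lambda}' = b'/c'$ in lowest terms. Since $\gcd(b,c) = 1$, the lowest-terms denominator of $\widetilde{N}\widetilde{\lambda}$ is $c/\gcd(\widetilde{N},c)$; likewise that of $\widetilde{N}\widetilde{\lambda}'$ is $c'/\gcd(\widetilde{N},c')$. Combining this with the identity $\lcm(\widetilde{N}, c) = \widetilde{N}\cdot c/\gcd(\widetilde{N}, c)$ (and its analogue for $c'$), the defining $\widetilde{N}$-path equation
\[
\widetilde{\lambda}' - \widetilde{\lambda} \;=\; \frac{\widetilde{N}}{\lcm(\widetilde{N}, c)\,\lcm(\widetilde{N}, c')}
\]
simplifies to $\widetilde{\lambda}' - \widetilde{\lambda} = 1/\bigl(\widetilde{N}\cdot (c/\gcd(\widetilde{N},c))\cdot(c'/\gcd(\widetilde{N},c'))\bigr)$, which after multiplication by $\widetilde{N}$ is exactly the defining $1$-path equation for $\widetilde{N}\widetilde{\lambda}' > \widetilde{N}\widetilde{\lambda}$ in lowest terms. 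Since any length-one path is vacuously shortest, the two conditions coincide.

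The argument is essentially bookkeeping; the only place requiring care is correctly tracking the lowest-terms denominators after multiplication by $\widetilde{N}$ and reconciling the definitions via the $\lcm$-$\gcd$ identity. No genuine obstacle arises beyond this.
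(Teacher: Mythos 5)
Your argument is correct and follows the same structure as the paper's proof: apply Lemma~\ref{LregularNpath} to the generators furnished by Corollary~\ref{CregularNpath2}, then translate between the $1$-path condition (for $\widetilde{N}\widetilde{\lambda}' > \widetilde{N}\widetilde{\lambda}$) and the $\widetilde{N}$-path condition (for $\widetilde{\lambda}' > \widetilde{\lambda}$). The only difference is that the paper delegates this last translation to a citation of \cite[Lemma~A.7]{ObusWewers}, whereas you carry out the lowest-terms-denominator bookkeeping explicitly, making the proof self-contained.
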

\begin{proof}
By Corollary~\ref{CregularNpath2}, the lattice $L$ is generated by $(1/\widetilde{N}, 1/\widetilde{N})$ and $(\widetilde{\lambda}, \widetilde{\lambda}')$. The corollary now follows from from Lemma~\ref{LregularNpath} and \cite[Lemma~A.7]{ObusWewers}.
%By Corollary~\ref{CregularNpath}, the lattice $L$ is aligned with the coordinate axes if and only if $\widetilde{\lambda}' > \widetilde{\lambda}$ is a shortest $\widetilde{N}$-path.
\end{proof}

\begin{comment}
\padma{OLD: Remove, or suitably incorporate in intro paragraph? \sout{Corollary~\ref{CregularNpath3} will be used in the following situation:
Suppose we have a local arithmetic surface $\mc{X}$ whose special
fiber is supported on two smooth prime divisors $D_1$ and $D_2$, along
with $s$ elements $g_i \in K(\mc{X})$ ($1 \leq i \leq s$) such that
$\divi(g_i) = a_iD_1 + b_i D_2$.  By Lemma~\ref{LregularUFD}, to show
that $\mc{X}$ is regular it suffices to show that $D_1$ is principal.
That is, it suffices to show that $(1, 0)$ is contained in the lattice
generated by $(a_i, b_i)$, $1 \leq i \leq s$, which can be
accomplished via Corollary~\ref{CregularNpath3}.} }
\end{comment}

\section{A numerical criterion for regularity on models of superelliptic curves}\label{Sdetecting}
%\section{Detecting regularity with normal crossings locally on models of superelliptic curves}\label{Sdetecting}
As before, $\mc{Y}$ is a normal model of $Y = \proj^1_K$, and $\nu \colon \mc{X} \to \mc{Y}$ is
the normalization of $\mc{Y}$ in the Kummer extension $K(Y)[z]/(z^d -
f)$ with $f \in K(Y)$ and $\chara k \nmid d$.  We further assume in
this section that $d \mid \deg(f)$ and that all roots of $f$ are
integral over $\mc{O}_K$ (as will be explained in \S\ref{Sreductions}, these
new restrictions do not entail a fundamental loss of generality).  By Lemma~\ref{Lallnthroots}, we may replace $f$ by
its product with a $d$th power and thus assume that
$f$ has irreducible factorization $\pi_K^af_1^{a_i} \cdots
f_q^{a_q}$ where all the $f_i$ are monic. In this section, we lay the
groundwork for understanding when $\mc{X}$ is
regular.

In earlier work, \cite[Corollaries 7.5, 7.6]{ObusWewers} give a criterion for testing regularity at certain closed points in a normal model $\mc{Y}$ of $\P^1_K$ in terms of $N$-paths of rational numbers (see Definition~\ref{DNpath}) arising from the Mac Lane descriptions of the components in $\mc{Y}_k$. In this section, we show how to lift this numerical $N$-path criterion to a certain $\widetilde{N}$-path criterion for testing regularity at certain closed points in the normalization of $\mc{Y}$ in a cyclic cover of $K(Y)$. The new invariant $\widetilde{N}$ additionally incorporates numerical information from the degree of the cover and the polynomial $f$. More precisely, in \S\ref{Sstandardcrossings},
\S\ref{Sstandardendpoints}, \S\ref{Sinfty}, and \S\ref{Sinftycrossing}
below, we will give regularity criteria for $\mc{X}$ above four types
of closed points of $\mc{Y}$: The \emph{standard crossings}
(\S\ref{Sstandardcrossings}) where the main result is
Proposition~\ref{Pstandardcrossingregular}, the \emph{finite cusps} (\S\ref{Sstandardendpoints}), where the main result is Proposition~\ref{Pstandardendpointregular},
the \emph{standard $\infty$-specialization} (\S\ref{Sinfty}), where the
main result is Proposition~\ref{PinftySNC}, and
the \emph{$\infty$-crossing} (\S\ref{Sinftycrossing}), where the main
result is Proposition~\ref{Pinftycrossingregular}.  The results in
\S\ref{Sinfty} and \S\ref{Sinftycrossing} are only used in
\S\ref{Sminimal}, when the components above the $v_0$-component are contractible in the strict normal crossings regular model that we construct in \S\ref{Sfirstmodel}. The reader content with a regular normal crossings model that is not necessarily minimal can safely skip these sections.

%(See Proposition~\ref{Pstandardcrossingregular}, Proposition~\ref{Pstandardendpointregular}, Proposition~\ref{PinftySNC}, Proposition~\ref{Pinftycrossingregular}) 

%This process isparallel to the process of resolving the singularities of $\mc{Y}'$ itself, as described in \cite[Corollaries 7.5, 7.6]{ObusWewers}, but the formulas are more complicated when working with a cyclic cover.
%In \S\ref{Sstandardcrossings},\S\ref{Sstandardendpoints}, \S\ref{Sinfty}, and \S\ref{Sinftycrossing} below, we will give regularity criteria for $\mc{X}$ above four types of closed points of $\mc{Y}$: The \emph{standard crossings} (\S\ref{Sstandardcrossings}) where the main result is Proposition~\ref{Pstandardcrossingregular}, the \emph{finite cusps} (\S\ref{Sstandardendpoints}), where the main result is Proposition~\ref{Pstandardendpointregular}, the \emph{standard $\infty$-specialization} (\S\ref{Sinfty}), where the main result is Proposition~\ref{PinftySNC}, and the \emph{$\infty$-crossing} (\S\ref{Sinftycrossing}), where the main result is Proposition~\ref{Pinftycrossingregular}.  Before doing this, we include one lemma that will be applied throughout \S\ref{Sregularity}.

%\subsection{Regularity criteria at various closed points}\label{Sregularity}
\begin{lemma}\label{Ldivgroupgenerators}
  Let $y \in \mc{Y}$ be a closed point, let $x \in \mc{X}$ lie above
  $y$, and let $\Sigma =
  \Aut(\hat{\mc{O}}_{\mc{X},x}/\hat{\mc{O}}_{\mc{Y},y})$.  The group
  of $\Sigma$-invariant principal divisors on
  $\Spec(\hat{\mc{O}}_{\mc{X},x})$ is generated by $\divi(z)$ and
  $\divi(\nu^*\beta)$, as $\beta$ ranges through $\hat{\mc{O}}_{\mc{Y},y}$. 
\end{lemma}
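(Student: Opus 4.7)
The plan is to interpret $\Sigma$-invariance of principal divisors cohomologically and reduce the statement to a cohomology vanishing. First I would set up the local picture: write $A \colonequals \hat{\mc{O}}_{\mc{Y},y}$, $B \colonequals \hat{\mc{O}}_{\mc{X},x}$, with fraction fields $K_A$, $K_B$, and set $e \colonequals |\Sigma|$. Since $k$ is algebraically closed and $d$ is prime to $\chara k$, the field $K$ (and hence $K_A$) contains $\mu_d \supseteq \mu_e$ by Hensel's lemma. From the decomposition $\hat{\mc{O}}_{\mc{Y},y} \otimes_{\mc{O}_{\mc{Y},y}} \nu_*\mc{O}_{\mc{X}} \cong \prod_{x' \in \nu^{-1}(y)} \hat{\mc{O}}_{\mc{X},x'}$ together with the Galois action of $\ints/d$ on $K(X)/K(Y)$, the extension $K_B/K_A$ is a Galois field extension whose Galois group is identified with $\Sigma$ (the decomposition group of $x$). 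As $z^d = f \in K_A$ and $K_A$ contains $\mu_e$, Kummer theory gives $K_B = K_A(z)$ with $\sigma(z) = \chi(\sigma)z$ for an injective character $\chi \colon \Sigma \hookrightarrow \mu_e \subset B^*$. Since $\chi(\sigma) \in B^*$ and $\nu^*\beta \in A \subset B^\Sigma$, the divisors $\divi(z)$ and $\divi(\nu^*\beta)$ are automatically $\Sigma$-invariant, which handles one direction of the claim.

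For the reverse, suppose $h \in K_B^*$ has $\Sigma$-invariant divisor, so $u_\sigma \colonequals \sigma(h)/h \in B^*$ for each $\sigma \in \Sigma$, defining a 1-cocycle. The short exact sequence of $\Sigma$-modules $1 \to 1 + \mathfrak{m}_B \to B^* \to k^* \to 1$ (with trivial $\Sigma$-action throughout, since $\Sigma$ fixes $k$) yields
\[
H^1(\Sigma, 1 + \mathfrak{m}_B) \to H^1(\Sigma, B^*) \to H^1(\Sigma, k^*) = \Hom(\Sigma, \mu_e) \cong \mu_e,
\]
the last group being generated by $\chi$. The cocycle $\sigma \mapsto \sigma(z)/z$, which takes values in $\mu_e \subset B^*$, provides a canonical lift of $\chi$ to $H^1(\Sigma, B^*)$. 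Granting the vanishing $H^1(\Sigma, 1 + \mathfrak{m}_B) = 0$, the right-hand map is injective, so I can choose $n \in \ints$ with $[u_\sigma] = [\sigma(z)/z]^n$ in $H^1(\Sigma, B^*)$. Then $\sigma(hz^{-n})/(hz^{-n}) = \sigma(b)/b$ for some $b \in B^*$, so $hz^{-n}b^{-1}$ is $\Sigma$-invariant and hence lies in $K_B^\Sigma = K_A^*$; writing it as $\beta$, I conclude $\divi(h) = n\divi(z) + \divi(\nu^*\beta)$, which is the desired form.

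The main obstacle is the vanishing of $H^1(\Sigma, 1 + \mathfrak{m}_B)$, which I would establish by showing that $1 + \mathfrak{m}_B$ is uniquely divisible by $e$. Divisibility is essentially the content of Lemma~\ref{Lallnthroots}: its binomial-series construction yields, for any $1 + m \in 1 + \mathfrak{m}_B$, an $e$-th root still inside $1 + \mathfrak{m}_B$ (using $e \mid d$ and that $e$ is invertible in $B$). Torsion-freeness follows from the observation that, since $\gcd(e, \chara k) = 1$, the $e$ distinct $e$-th roots of unity in $\mc{O}_K$ have pairwise distinct reductions modulo $\mathfrak{m}_B$, so the only root of unity congruent to $1$ modulo $\mathfrak{m}_B$ is $1$ itself. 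Hence $1 + \mathfrak{m}_B$ is a $\ints[1/e]$-module, on which the order-$e$ group $\Sigma$ has vanishing positive-degree cohomology.
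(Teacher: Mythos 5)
Your proof is correct, and it takes a genuinely different route from the paper's. The paper argues directly: for $w$ with $\Sigma$-invariant divisor it forms the norm $w' = N_{B/A}(w) \in A$, notes that $\divi(w') = |\Sigma|\,\divi(w)$, hence $w^{|\Sigma|} = w' u^{-1}$ for a unit $u$, then uses Lemma~\ref{Lallnthroots} to extract an $|\Sigma|$-th root of $u$ and absorb it into $w$; Kummer theory for the cyclic extension $K_B/K_A$ then gives $w = z^m\alpha$. You instead package the whole thing cohomologically: the invariance gives a cocycle $\sigma \mapsto \sigma(h)/h \in B^*$, the sequence $1 \to 1+\mathfrak{m}_B \to B^* \to k^* \to 1$ reduces the problem to $H^1(\Sigma, 1+\mathfrak{m}_B) = 0$, and that vanishing follows from unique $e$-divisibility of $1 + \mathfrak{m}_B$ (divisibility by the binomial-series trick underlying Lemma~\ref{Lallnthroots}, uniqueness because roots of unity congruent to $1$ mod $\mathfrak{m}_B$ are trivial when $e$ is prime to $\chara k$). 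Both proofs ultimately rest on the same unit-extraction lemma; the paper's is shorter and more self-contained, while yours isolates the cohomological content (injectivity of $H^1(\Sigma,B^*) \to H^1(\Sigma,k^*)$) cleanly and would generalize more readily to other Galois-descent questions about divisors.

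One small point to fix: the parenthetical ``with trivial $\Sigma$-action throughout, since $\Sigma$ fixes $k$'' is misstated. The $\Sigma$-action on $B^*$ and on $1+\mathfrak{m}_B$ is the nontrivial Galois action; only the induced action on the residue quotient $k^*$ is trivial (which is what is actually used, to identify $H^1(\Sigma,k^*)$ with $\Hom(\Sigma,\mu_e)$). This does not affect the argument, but the phrasing is wrong as written. Also, in the final sentence $\beta$ lands a priori in $K_A$, not in $\hat{\mc{O}}_{\mc{Y},y}$; this is harmless for the statement since $\divi(\nu^*\beta)$ can be written as $\divi(\nu^*\beta_1) - \divi(\nu^*\beta_2)$ with $\beta_i \in \hat{\mc{O}}_{\mc{Y},y}$, but you should say so explicitly.
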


\begin{proof}
Suppose $w \in \hat{\mc{O}}_{\mc{X},x}$ gives a $\Sigma$-invariant
principal divisor, so that
$\sigma^*(\divi(w)) = \divi(w)$ for all $\sigma \in \Sigma$.  This means that if $w'
\in \hat{\mc{O}}_{\mc{Y}, y}$
is the norm of $w$, then $\divi(w') = \divi(w^{|\Sigma|})$, so
there is a unit $u \in \hat{\mc{O}}_{\mc{X}, x}$ such that
$w^{|\Sigma|}u = w'$, thinking of $\hat{\mc{O}}_{\mc{Y}, y}$ as a
subring of $\hat{\mc{O}}_{\mc{X},x}$.  By Lemma~\ref{Lallnthroots}, we can write $u =
c^{|\Sigma|}$ for some $c \in \mc{O}_{\mc{X},x}^{\times}$, so
replacing $w$ with $wc$, we may assume that $w^{|\Sigma|} \in
\hat{\mc{O}}_{\mc{Y}, y}$.  By Kummer theory, we conclude that $w$ is
a power of $z$ times an element of $\hat{\mc{O}}_{\mc{Y},y}$, proving the lemma.
\end{proof}

\subsection{Standard crossings}\label{Sstandardcrossings} 

%\cite[Corollaries 7.5, 7.6]{ObusWewers} shows that regularity at the intersection of two vertical components $v,v'$ in a normal model $\mc{Y}$ of $\P^1_K$ is equivalent to certain rational numbers appearing in the Mac Lane descriptions of $v,v'$ being neighbors in an ``$N$-path'' of rational numbers. In this section, we show how to lift this numerical $N$-path criterion to a certain ``$\widetilde{N}$-path criterion'' for testing regularity at certain closed points in the normalization of $\mc{Y}$ in a cyclic cover of $K(Y)$.

Let $y \in \mc{Y}$ be a standard crossing
(Definition~\ref{Dstandardcrossing}) corresponding to two Mac Lane
valuations $v := [v_0,\, v_1(\phi_1) = \lambda_1,\, \ldots,\,
v_{n-1}(\phi_{n-1}) = \lambda_{n-1},\, v_n(\phi_n) = \lambda_n]$ and $v' :=
[v_0,\, v_1(\phi_1) = \lambda_1,\, \ldots,\,
v_{n-1}(\phi_{n-1}) = \lambda_{n-1},\, v_n(\phi_n) = \lambda_n']$, with
$\lambda_n < \lambda_n'$.  Write $N = e_{v_{n-1}}$ (so
$(1/N)\ints$ is the group generated by $1,
\lambda_1, \ldots, \lambda_{n-1}$), and write $\psi$ for a monomial in
$\phi_1, \ldots, \phi_{n-1}$ over $K$ such that $v(\psi) =
v'(\psi) = 1/N$.  

\begin{lemma}\label{Lwriteasprincipal}
Suppose $g \in \mc{O}_K[t]$ is
monic and irreducible with a root $\theta$. If $v_K(\phi_n(\theta))
\geq \lambda_n'$, then $\divi(g) = e \divi(\phi_n)$ on
$\Spec \hat{\mc{O}}_{\mc{Y},y}$, where $e = \deg(g)/\deg(\phi_n)$.  If $v_K(\phi_n(\theta)) \leq \lambda_n$, then
$\divi(g)$ is a multiple of $\divi(\psi)$ on $\Spec \hat{\mc{O}}_{\mc{Y},y}$.
\end{lemma}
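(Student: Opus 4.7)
The strategy is to compute the vertical and horizontal parts of $\divi(g)$ on $\Spec \hat{\mc{O}}_{\mc{Y},y}$ separately, using Propositions~\ref{Pparameterize} and \ref{Ldominantterm}, and then compare the result to the analogous decompositions of $\divi(\phi_n)$ (in case 1) or $\divi(\psi)$ (in case 2). Throughout, let $D_v$, $D_{v'}$ be the reduced irreducible components of the special fiber at $y$ corresponding to $v$ and $v'$, so that any vertical divisor on $\Spec \hat{\mc{O}}_{\mc{Y},y}$ is of the form $e_v\, w(\cdot)\, D_v + e_{v'}\, w'(\cdot)\, D_{v'}$, where $w$ and $w'$ are the corresponding valuations.

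First I would handle the horizontal parts. Since $v_K(\phi_n(\theta))$ is either $\geq \lambda_n'$ or $\leq \lambda_n$, Proposition~\ref{Pparameterize}(ii) shows that $D_\theta$, and hence $D_g$, does not meet $y$ in either case. Likewise, Lemma~\ref{Lnonspecialize}(i) implies that $D_{\phi_i}$ does not meet $y$ for $1 \leq i \leq n$; in particular, neither $\divi(\phi_n)$ nor $\divi(\psi)$ has a horizontal part at $y$. So all three divisors in the statement are purely vertical on $\Spec \hat{\mc{O}}_{\mc{Y},y}$, and the problem reduces to a comparison of valuations.

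For case 1, Lemma~\ref{Lgfollows} (applied with $v$, since $\lambda_n < \lambda_n' \leq v_K(\phi_n(\theta))$) shows that $\deg(\phi_n) \mid \deg(g)$, so $e := \deg(g)/\deg(\phi_n) \in \nats$. Lemma~\ref{Ldominantterm}(i), applied once to $v$ and once to $v'$, then yields $v(g) = e\lambda_n = e\, v(\phi_n)$ and $v'(g) = e\lambda_n' = e\, v'(\phi_n)$. Combining with the decomposition above gives $\divi(g) = e\,\divi(\phi_n)$ on $\Spec \hat{\mc{O}}_{\mc{Y},y}$.

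For case 2, Lemma~\ref{Ldominantterm}(ii) applied to $v$ (resp.\ $v'$) gives $v(g) = v_{n-1}(a_0) = v'(g)$, where $a_0$ is the constant term of the $\phi_n$-adic expansion of $g$; call this common value $\mu$. Since $v_{n-1}$ takes values in $(1/N)\ints$, we have $N\mu \in \ints$. By construction $v(\psi) = v'(\psi) = 1/N$, so the vertical decomposition gives $\divi(\psi) = (e_v/N)D_v + (e_{v'}/N)D_{v'}$ at $y$ and $\divi(g) = e_v\mu D_v + e_{v'}\mu D_{v'} = (N\mu)\,\divi(\psi)$ at $y$, exhibiting $\divi(g)$ as an integer multiple of $\divi(\psi)$. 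The only mild subtlety is ensuring both the vanishing of the horizontal parts at the boundary values $v_K(\phi_n(\theta)) \in \{\lambda_n, \lambda_n'\}$ and the applicability of Lemma~\ref{Ldominantterm} at those boundaries; both are handled by the inequalities as stated.
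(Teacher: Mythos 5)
Your proof is correct and follows essentially the same route as the paper's: both kill the horizontal part of $\divi(g)$ via Proposition~\ref{Pparameterize}(ii) and then compare the vertical valuations of $g$, $\phi_n$, and $\psi$ via Lemma~\ref{Ldominantterm}. Your version just makes a few steps the paper leaves implicit more explicit (citing Lemma~\ref{Lnonspecialize}(i) for $\phi_n$ and $\psi$ having no horizontal part at $y$, and Lemma~\ref{Lgfollows} for the integrality of $e$).
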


\begin{proof}
Observe that in both cases, Proposition~\ref{Pparameterize}\ref{Pannulus} shows
there is no horizontal part of $\divi(g)$ passing through $y$.
  In the first case, letting $\ell = \deg(g)/\deg(\phi_n)$,
Lemma~\ref{Ldominantterm}(i) shows
that $v(g) = e v(\phi_n)$ and $v'(g) = e v'(\phi_n)$, which
implies that $\divi(g) = e \divi(\phi_n)$.  In the second case,
Lemma~\ref{Ldominantterm}(ii) shows that if $g = \sum_{i} a_i\phi_n^i$
is the $\phi_n$-adic expansion of $g$, then $v(g) = v(a_0)$ and $v'(g)
= v'(a_0)$.  Since $\deg(a_0) < \deg(\phi_n)$, we have $v(a_0) =
v'(a_0) \in (1/N)\ints$, so $\divi(g)$ is a multiple of $\divi(\psi)$.  We are done.
\end{proof}

\begin{lemma}\label{Lgeneratingdivisorsdownstairs}
Assume that no horizontal part of $\divi(f)$ passes through $y$.  The
group of vertical principal divisors of $\Spec \hat{\mc{O}}_{\mc{X},x}$ is
generated by $\divi(z)$, $\divi(\nu^*\phi_n)$, and $\divi(\nu^*\psi)$.
\end{lemma}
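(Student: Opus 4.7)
The plan is a three-step reduction using Lemma~\ref{Ldivgroupgenerators} (Kummer descent for $\Sigma$-invariant divisors) and the structure of divisors at the standard crossing $y$.

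First, I would verify each of the three divisors $\divi(z)$, $\divi(\nu^*\phi_n)$, and $\divi(\nu^*\psi)$ is vertical at $x$. For $\divi(\nu^*\phi_n)$, Lemma~\ref{Lnonspecialize}(i) shows that no $D_{\phi_i}$ meets $y$ for $1 \leq i \leq n$, so $\divi(\phi_n)$ has no horizontal part at $y$. The same applies to $\divi(\nu^*\psi)$ since $\psi$ is a monomial in $\phi_1,\ldots,\phi_{n-1}$ over $K$. For $\divi(z)$, the identity $d\,\divi(z) = \divi(\nu^*f)$ combined with the hypothesis that no horizontal part of $\divi(f)$ passes through $y$ gives the result.

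Next, I would argue every vertical principal divisor at $x$ is $\Sigma$-invariant, where $\Sigma = \Aut(\hat{\mc{O}}_{\mc{X},x}/\hat{\mc{O}}_{\mc{Y},y})$. Let $D_v, D_{v'}$ be the two vertical irreducible components of $\mc{Y}$ meeting at $y$, and let $D_1, D_2$ be the irreducible components of $\nu^{-1}(D_v)$ and $\nu^{-1}(D_{v'})$, respectively, passing through $x$. By Corollary~\ref{Csmoothoncomponent} these are the unique such components, so $\Sigma$ (which acts over $\mc{Y}$) fixes each of $D_1$ and $D_2$ individually as a divisor. Thus any vertical divisor at $x$ is $\Sigma$-invariant. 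Applying Lemma~\ref{Ldivgroupgenerators}, I would write an arbitrary vertical principal divisor $\Delta$ at $x$ as $\Delta = a\,\divi(z) + \nu^*\divi(\beta)$ for some $a \in \ints$ and $\beta \in \Frac(\hat{\mc{O}}_{\mc{Y},y})^*$. Since both $\Delta$ and $\divi(z)$ are vertical at $x$, the divisor $\divi(\beta)$ must be vertical at $y$.

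The main remaining step is to show $\nu^*\divi(\beta)$ lies in the $\ints$-span of $\divi(\nu^*\phi_n)$ and $\divi(\nu^*\psi)$; for this it suffices to prove the downstairs statement that $\divi(\beta)$ at $y$ lies in the $\ints$-span of $\divi(\phi_n)$ and $\divi(\psi)$. Using that the horizontal primes through $y$ on $\Spec \hat{\mc{O}}_{\mc{Y},y}$ correspond to divisors $D_g$ for monic irreducible $g \in K[t]$, one reduces (modulo multiplication by units in $\hat{\mc{O}}_{\mc{Y},y}^\times$, which do not affect divisors) to the case $\beta \in K(Y)^\times$. Factoring $\beta$ as a product of monic irreducible polynomials $g \in K[t]$, a constant in $K^\times$, and a power of $\pi_K$, verticality of $\divi(\beta)$ at $y$ rules out factors with a root $\theta$ satisfying $\lambda_n < v_K(\phi_n(\theta)) < \lambda_n'$; Lemma~\ref{Lwriteasprincipal} then writes each surviving factor's divisor at $y$ as an integer multiple of either $\divi(\phi_n)$ or $\divi(\psi)$, while constants in $K^\times$ and powers of $\pi_K$ contribute integer multiples of $\divi(\psi)$ via the relation $\divi(\pi_K) = N\,\divi(\psi)$ at $y$.

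The main obstacle will be carefully justifying the reduction from $\beta \in \Frac(\hat{\mc{O}}_{\mc{Y},y})^*$ to an algebraic element of $K(Y)^\times$ with the same vertical divisor at $y$; this will require invoking excellence of $\hat{\mc{O}}_{\mc{Y},y}$ together with a Weierstrass-preparation-type argument to match the ``analytic'' divisor with an algebraic one.
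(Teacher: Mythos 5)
Your proposal follows the paper's argument essentially step for step: both reduce via Lemma~\ref{Ldivgroupgenerators} (with $\Sigma$-invariance of vertical divisors coming from the uniqueness assertion in Corollary~\ref{Csmoothoncomponent}) to the downstairs statement that vertical principal divisors of $\Spec \hat{\mc{O}}_{\mc{Y},y}$ lie in the $\ints$-span of $\divi(\phi_n)$ and $\divi(\psi)$, and both finish with Proposition~\ref{Pparameterize}\ref{Pannulus} together with Lemma~\ref{Lwriteasprincipal}. The step you flag as the ``main obstacle'' -- passing from $\beta \in \Frac(\hat{\mc{O}}_{\mc{Y},y})^{\times}$ to an element of $K(Y)^{\times}$ -- is also left implicit in the paper, which simply reduces to monic irreducible $g \in K[t]$ without comment; the cleanest way to close it is not really Weierstrass preparation but the injectivity of $\Cla(\mc{O}_{\mc{Y},y}) \to \Cla(\hat{\mc{O}}_{\mc{Y},y})$ for an excellent normal local domain (Mori's theorem), from which a vertical divisor that is principal in the completion is already principal in $\mc{O}_{\mc{Y},y}$, hence is $\divi(\beta)$ for some $\beta \in K(Y)^{\times}$ that you can then factor into monic irreducibles, powers of $\pi_K$, and constants as you describe. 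Your intermediate claim that horizontal primes of $\hat{\mc{O}}_{\mc{Y},y}$ through $y$ are exactly the $D_g$ is not quite right as stated (completion may introduce new horizontal primes), but this is exactly what the class-group observation above sidesteps, so the overall argument stands.
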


\begin{proof}
Let $w \in \hat{\mc{O}}_{\mc{X},x}$ such that $\divi(w)$ is vertical.
By Corollary~\ref{Csmoothoncomponent}, there is only one prime
vertical divisor of $\Spec \hat{\mc{O}}_{\mc{X}, x}$ above each prime
vertical divisor of $\Spec \hat{\mc{O}}_{\mc{Y},y}$, so $\divi(w)$ is
$\Sigma$-invariant, for $\Sigma$ as in
Lemma~\ref{Ldivgroupgenerators}.  Applying
Lemma~\ref{Ldivgroupgenerators}, and noting that $\divi(z)$ is a
vertical divisor, it
remains to show that the group of vertical principal divisors of $\Spec
\hat{\mc{O}}_{\mc{Y}, y}$ is generated by $\divi(\phi_n)$ and $\divi(\psi)$.

It suffices to consider a monic irreducible polynomial $g$ such that $\divi(g)$ is
a vertical principal divisor in $\hat{\mc{O}}_{\mc{Y},y}$, and show
that $\divi(g)$ is an integer combination of $\divi(\phi_n)$ and $\divi(\psi)$.  Since
$\divi(g)$ has no horizontal component containing $y$,
Proposition~\ref{Pparameterize}\ref{Pannulus} shows that for any root $\theta$ of $g$,
either $v_K(\phi_n(\theta)) \geq \lambda_n'$ or $v_K(\phi_n(\theta)) \leq
\lambda_n$.  The result now follows from Lemma~\ref{Lwriteasprincipal}.
\end{proof}

Assume no horizontal part of $f$ passes through $y$.  Write $f = gh$, where $g$ is the product of the $f_i^{a_i}$ such that
$v_{f_i}^{\infty} \succ v'$ or equivalently, by Lemma~\ref{Lgfollows},
those $f_i$ with roots $\alpha_i$ such that
$v_K(\phi_n(\alpha_i)) \geq \lambda_n'$.  Let $e = \deg(g)/\deg(\phi_n)$, which
is an integer by Lemma~\ref{Ldominantterm}(i).  By
Proposition~\ref{Pparameterize}(ii), all $f_i$ dividing $h$ have roots
$\alpha_i$ with $v_K(\phi_n(\alpha_i)) \leq \lambda_n$, so let $s$ be the integer
guaranteed by Lemma~\ref{Lwriteasprincipal}
such that $\divi(h) = s\divi(\psi)$ on $\Spec
\hat{\mc{O}}_{\mc{Y},y}$.  Thus $v(h) = s/N$, and we let
$$\widetilde{N} = N\frac{\gcd(d,e)}{\gcd(d, e, s)}.$$  Lastly, note that the residue of $e/\gcd(d,e)$ modulo $d/\gcd(d,e)$ is a
unit, so let $r$ be any integer such that $re/\gcd(d,e) \equiv 1 \pmod{d/\gcd(d,e)}$.

\begin{lemma}\label{Ldivisormultiplicities}
Suppose that no horizontal part of $\divi(f)$ passes through $y$, and
$s,r,\widetilde{N}$ are as above.  Let $D$ and $D'$ be the prime vertical divisors of $\mc{Y}$ corresponding to $v$ and
$v'$ respectively.
Let $\widetilde{D}$ and $\widetilde{D}'$ be the prime divisors
corresponding to the parts of $\nu^{-1}(D)$
and $\nu^{-1}(D')$ respectively passing through $x$. Let
$$\widetilde{\lambda}_n =
\frac{\gcd(d,e)}{d}\lambda_n + \frac{rs}{Nd}, \qquad
\widetilde{\lambda}'_n = \frac{\gcd(d,e)}{d}\lambda_n' +
\frac{rs}{Nd}.$$  Furthermore, let $\widetilde{e}_v$ be such that
$\widetilde{\lambda}_n$ and $1/\widetilde{N}$ generate
$(1/\widetilde{e}_v)\ints$, and similarly define $\widetilde{e}_{v'}$
using $\widetilde{\lambda}'_n$ and $\widetilde{N}$. 
Then
\begin{enumerate}[\upshape (i)]
%\item The divisors $\widetilde{D}$ and $\widetilde{D}'$ are locally irreducible at $x$.
%\item The value group of the extension of $v$ to   $\hat{\mc{O}}_{\mc{X},x}$ is generated by $v(\phi_n)$, $v(\psi)$,   and $v(z)$, and similarly for $v'$. 
\item The multiplicity
of $\widetilde{D}$ (resp.\ $\widetilde{D}'$) in
$\hat{\mc{O}}_{\mc{X},x}$ is $\widetilde{e}_v$ (resp.\
$\widetilde{e}_{v'}$).  
\item We have $$(\widetilde{D}, \widetilde{D}') =
  \frac{\widetilde{N}}{\widetilde{e}_v\widetilde{e}_{v'}(\widetilde{\lambda}_n
    - \widetilde{\lambda}_n')}.$$
\end{enumerate}
\end{lemma}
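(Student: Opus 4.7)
The plan is to prove part (i) by directly computing the value groups of the extensions of $v$ and $v'$ to $K(X)$, and to prove part (ii) by applying the projection formula for intersection numbers of $\rats$-Cartier divisors on normal surfaces (Lemma~\ref{L:LiuLorAdapt}). The main obstacle will be identifying the correct local degree of the morphism $\Spec \hat{\mc{O}}_{\mc{X},x} \to \Spec \hat{\mc{O}}_{\mc{Y},y}$, which need not equal $d$ because the local cover $z^d = f$ may split.

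For part (i), I would invoke Lemma~\ref{LuValueGrp}: the two prime divisors $\widetilde{D}, \widetilde{D}'$ through $x$ are locally irreducible and correspond to unique extensions $\widetilde{v}, \widetilde{v}'$ of $v, v'$ to $K(X)$, whose value groups are generated by the values of $\psi, \phi_n, z$ under $v$ (resp.\ $v'$). Since by construction $\divi(f) = e\,\divi(\phi_n) + s\,\divi(\psi)$ on $\Spec \hat{\mc{O}}_{\mc{Y},y}$, we have $v(z) = v(f)/d = (e/d)\lambda_n + s/(Nd)$ and $v'(z) = (e/d)\lambda_n' + s/(Nd)$. The pair $v_x \colonequals (v, v')$ then sends $\psi, \phi_n, z$ to the three vectors
\begin{equation}\label{Evv'z}
(1/N,\, 1/N),\qquad (\lambda_n,\, \lambda_n'),\qquad \left(\tfrac{e}{d}\lambda_n + \tfrac{s}{Nd},\, \tfrac{e}{d}\lambda_n' + \tfrac{s}{Nd}\right),
\end{equation}
which match exactly the generators in Corollary~\ref{CregularNpath2}. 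Applying that corollary identifies the lattice they span as being generated by $(1/\widetilde{N},\, 1/\widetilde{N})$ and $(\widetilde{\lambda}_n,\, \widetilde{\lambda}_n')$, and projecting onto each coordinate yields $\Gamma_{\widetilde{v}} = \langle 1/\widetilde{N},\, \widetilde{\lambda}_n\rangle = (1/\widetilde{e}_v)\ints$ and analogously $\Gamma_{\widetilde{v}'} = (1/\widetilde{e}_{v'})\ints$ by the definitions of $\widetilde{e}_v, \widetilde{e}_{v'}$. Since $\widetilde{v}(\pi_K) = v_K(\pi_K) = 1$, the multiplicity of $\widetilde{D}$ in the special fiber of $\mc{X}$ equals $\widetilde{e}_v$, and similarly for $\widetilde{D}'$, proving (i).

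For part (ii), I would apply Lemma~\ref{L:LiuLorAdapt} to the finite dominant map $\phi \colon \Spec \hat{\mc{O}}_{\mc{X},x} \to \Spec \hat{\mc{O}}_{\mc{Y},y}$ with $\Delta_i = D, D'$ and $\Gamma_i = \widetilde{D}, \widetilde{D}'$. The set-theoretic hypothesis $\phi^{-1}(\Delta_i) = \Gamma_i$ follows from Corollary~\ref{Csmoothoncomponent}, the $\rats$-Cartier property from Proposition~\ref{Pstandardcrossingmultiplicity}(i), the ramification indices are $\widetilde{e}_v/e_v$ and $\widetilde{e}_{v'}/e_{v'}$ by part (i), and the residue field extension is trivial because $k$ is algebraically closed. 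The hard part is computing $\deg(\phi)$. Writing $f = u\phi_n^e\psi^s$ in $\hat{\mc{O}}_{\mc{Y},y}$ with $u$ a unit (hence a $d$-th power by Lemma~\ref{Lallnthroots}), the local cover becomes $z^d = \phi_n^e\psi^s$; setting $r \colonequals \gcd(d,e,s)$ and $d' \colonequals d/r$, $e' \colonequals e/r$, $s' \colonequals s/r$, I would factor $z^d - \phi_n^e\psi^s = \prod_{\zeta^r = 1}(z^{d'} - \zeta\phi_n^{e'}\psi^{s'})$, verifying via Kummer theory (and noting that $\gcd(d',e',s')=1$ forces $\zeta\phi_n^{e'}\psi^{s'}$ to not be a $p$-th power in $\Frac(\hat{\mc{O}}_{\mc{Y},y})$ for any prime $p \mid d'$) that each factor is irreducible. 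Since $\phi_n, \psi$ lie in the maximal ideal of $\hat{\mc{O}}_{\mc{Y},y}$, the reduction of each factor mod this ideal is $z^{d'}$, so each normalized factor is a local ring giving exactly one point over $y$. Hence $\deg(\phi) = d' = d/\gcd(d,e,s)$. Combining with $(D, D')_y = N/((\lambda_n' - \lambda_n)e_ve_{v'})$ from Corollary~\ref{Lintersectionnumber}, the projection formula gives
\[
(\widetilde{D},\widetilde{D}')_x = \frac{\deg(\phi)\cdot (D, D')_y \cdot e_v e_{v'}}{\widetilde{e}_v\widetilde{e}_{v'}} = \frac{dN}{\gcd(d,e,s)(\lambda_n' - \lambda_n)\widetilde{e}_v\widetilde{e}_{v'}},
\]
and a direct substitution of $\widetilde{N} = N\gcd(d,e)/\gcd(d,e,s)$ and $\widetilde{\lambda}_n' - \widetilde{\lambda}_n = (\gcd(d,e)/d)(\lambda_n' - \lambda_n)$ rewrites this as $\widetilde{N}/(\widetilde{e}_v\widetilde{e}_{v'}(\widetilde{\lambda}_n' - \widetilde{\lambda}_n))$, completing the proof.
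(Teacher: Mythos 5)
Your argument is correct, and for part (i) it is identical to the paper's: extract the three lattice generators from Lemma~\ref{LuValueGrp}, feed them into Corollary~\ref{CregularNpath2}, and project onto the coordinates to read off $\widetilde{e}_v$ and $\widetilde{e}_{v'}$. For part (ii) you also use the same skeleton (Lemma~\ref{L:LiuLorAdapt} together with Corollary~\ref{Lintersectionnumber}), but you organize the computation of the local degree $\deg(\phi)=d/\gcd(d,e,s)$ differently: you explicitly split $z^d-\phi_n^e\psi^s$ into the product $\prod_{\zeta^r=1}(z^{d'}-\zeta\phi_n^{e'}\psi^{s'})$, show each factor is irreducible, and argue each normalized factor is local (using that the normalization of a complete local domain is local). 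The paper instead phrases the fiber count via the maximal $a$ such that $f$ is an $a$-th power, showing $\gcd(d,a)=\gcd(d,e,s)$. Both versions ultimately hinge on the same two facts that you leave somewhat implicit behind the phrase ``$\gcd(d',e',s')=1$ forces \ldots not a $p$-th power'': that the horizontal part $D_{\phi_n}$ of $\divi(\phi_n)$ is prime (forcing $p\mid e'$), and that $\divi(\psi)$ is a primitive element of the group of vertical principal divisors of $\hat{\mc{O}}_{\mc{Y},y}$ (so $\psi^{s'}$ a $p$-th power forces $p\mid s'$). Spelling that out --- as the paper does in one sentence --- would close the only small gap; otherwise the proposal is sound. (Incidentally, your version also fixes what appears to be a sign typo in the statement of part (ii): the positive quantity is $\widetilde{\lambda}_n'-\widetilde{\lambda}_n$.)
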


\begin{proof}

%That $\widetilde{D}$ and $\widetilde{D}'$ are locally irreducible follows from Corollary~\ref{Csmoothoncomponent}, proving (i). The order functions on $\widetilde{D}$ and $\widetilde{D}'$ give rise respectively to (the extensions of) the valuations $v$ and $v'$, appropriately scaled.  The value group of $v$ on $K(\proj^1) = K(t)$ is generated by $v(\psi)$ and $v(\phi_n)$, and thus, by rephrasing Corollary~\ref{C:Multupstairs} in terms of valuation theory, 
By Lemma~\ref{LuValueGrp} the value group of 
 the extension of $v$ to $K(X)$ is generated by $v(\psi)$, $v(\phi_n)$, and $v(z)$.  The
 analogous results hold for $v'$.  Now, $v(\psi) = v'(\psi) = 1/N$, $(v(\phi_n), v'(\phi_n)) =
 (\lambda_n, \lambda'_n)$, and (extending $v$ and $v'$ to $\hat{\mc{O}}_{\mc{X},x}$ so that they are centered at the generic points of
$\widetilde{D}$ and $\widetilde{D}'$ respectively),
 \begin{equation}\label{Evv'z}(v(z), v'(z)) = (\frac{v(f)}{d}, \frac{v'(f)}{d}) =
 \frac{1}{d}(v(\phi_n^e\psi^s), v'(\phi_n^e\psi^s)) =
 (\frac{e}{d}\lambda_n + \frac{s}{Nd}, \frac{e}{d}\lambda_n' +
 \frac{s}{Nd}).\end{equation}  
 By Corollary~\ref{CregularNpath2}, $(\widetilde{\lambda}_n, \widetilde{\lambda}_n')$ and
$(1/\widetilde{N}, 1/\widetilde{N})$ generate the lattice generated by
$(v(\psi), v'(\psi))$, $(v(\phi_n), v'(\phi_n))$, and $(v(z),
v'(z))$.  By
Lemma~\ref{LuValueGrp}(ii), this means that the value groups of the extensions of $v$ and $v'$ on
$\hat{\mc{O}}_{\mc{X},x}$ are generated by $1/\widetilde{e}_v$ and
$1/\widetilde{e}_{v'}$, respectively.  In other words,
$\widetilde{e}_v$ (resp.\ $\widetilde{e}_{v'}$) is the multiplicity of
$\widetilde{D}$ (resp.\ $\widetilde{D}'$) on the special fiber of $\Spec
\hat{\mc{O}}_{\mc{X},x}$, proving (i).

The assumption that no horizontal part of $\divi(f)$ passes through $y$ guarantees that the divisor of $z$ is purely vertical. Since $\psi$ is a monomial in $1,\phi_1,\ldots,\phi_{n-1}$, the divisors of of $\psi$ and $\phi_n$ are also purely vertical by Lemma~\ref{Lnonspecialize}(i).

We turn to part (ii), beginning by calculating $[\hat{\mc{O}}_{\mc{X},x} : \hat{\mc{O}}_{\mc{Y},y}]$. On $\Spec \hat{\mc{O}}_{\mc{Y},y}$, we have $\divi(g) =
\divi(\phi_n^e)$ by Lemma~\ref{Lwriteasprincipal} and $\divi(h) =
s\divi(\psi)$ by the definition of $s$.  So $\divi(f) =
\divi(\phi_n^e\psi^s)$.
We observe for later that, since all units in $\hat{\mc{O}}_{\mc{Y},y}$
are $d$th-powers, $f$ is a $\gcd(d, e, s)$-th power
in $\hat{\mc{O}}_{\mc{Y},y}$.  Furthermore, if $a$ is maximal such
that $f$ is an $a$th power
in $\hat{\mc{O}}_{\mc{Y}, y}$, then $a \mid e$ since 
the horizontal part of $\divi(\phi_n)$ is irreducible, which means
that $\psi^s$ is an $a$th power, which means that $a \mid s$ since $\divi(\psi)$ is vertical and
indivisible as a divisor by the definition of $\psi$.  So $a \mid
\gcd(e, s)$ and thus $\gcd(d, a) \mid \gcd(d, e, s)$, which means that the fiber of $y$ in $\mc{X}$
consists of $\gcd(d, e, s)$ points, and thus
\begin{equation}\label{Edegree}
  [\hat{\mc{O}}_{\mc{X},x} : \hat{\mc{O}}_{\mc{Y},y}] = d/\gcd(d, e, s). 
\end{equation}

Recall that $e_v$ and $e_{v'}$ are the
multiplicities of $D$ and $D'$ in the special fiber of $\Spec
\hat{\mc{O}}_{\mc{Y},y}$.  
By Lemma~\ref{Lintersectionnumber}, we have $(D, D') =
N/((\lambda_n - \lambda_n')e_ve_{v'})$.  The ramification indices of
$\widetilde{D}/D$ and $\widetilde{D} '/D'$ are $\widetilde{e}_v/e_v$ and
$\widetilde{e}_{v'}/e_{v'}$, respectively.  By
Lemma~\ref{L:LiuLorAdapt} (noting that $k(w) = k(z) = k$ in the
language of the lemma), we have
\begin{equation}\label{EE}
\underbrace{\frac{d}{\gcd(d,e,s)}}_{[\hat{\mc{O}}_{\mc{X},x} :
        \hat{\mc{O}}_{\mc{Y},y}], \text{ see }\eqref{Edegree}} \underbrace{\frac{N}{(\lambda'_n - \lambda_n)e_ve_{v'}}}_{(D, D')} 
    = \frac {\widetilde{e}_v\widetilde{e}_{v'}}{e_ve_{v'}} (\widetilde{D}, \widetilde{D}').
\end{equation}
Now, $\lambda'_n - \lambda_n= (d/\gcd(d,e))(\widetilde{\lambda}_n' -
\widetilde{\lambda}_n)$. Plugging this into \eqref{EE} yields part (iii).
\end{proof}

\begin{remark}
Note the similarity between Lemma~\ref{Lintersectionnumber} and Lemma~\ref{Ldivisormultiplicities}(iv).
\end{remark}

\begin{prop}\label{Pstandardcrossingregular}
Suppose that no horizontal part of $\divi(f)$ passes through $y$, and $s,r,\widetilde{N}$ are as above.
If $x \in \mc{X}$ is a point above $y \in \mc{Y}$, then $x$ is
regular if and only if
$\widetilde{\lambda}_n' > \widetilde{\lambda}_n$ from
Lemma~\ref{Ldivisormultiplicities} above
is an $\widetilde{N}$-path.  Furthermore, in this case, the special fiber of
    $\mc{X}$ has normal crossings at $x$.
\end{prop}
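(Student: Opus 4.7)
The plan is to reduce regularity at $x$ to a lattice-theoretic condition on valuations, and then invoke Corollary~\ref{CregularNpath3}.

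First, I would observe that by Corollary~\ref{Csmoothoncomponent}, the reduced divisors $\widetilde{D}$ and $\widetilde{D}'$ are smooth at $x$, and they are the only irreducible components of $\mc{X}_k$ through $x$. Hence by Lemma~\ref{LregularUFD} applied to $\widetilde{D}$, the point $x$ is regular on $\mc{X}$ if and only if $\widetilde{D}$ is locally principal at $x$.

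Next, I would translate this principality condition into a lattice problem. Extend $v$ and $v'$ to discrete valuations on $\hat{\mc{O}}_{\mc{X},x}$ centered along $\widetilde{D}$ and $\widetilde{D}'$ respectively, and send each vertical principal divisor $\divi(w)$ at $x$ to the pair $(v(w), v'(w)) \in \rats^2$. Since $\widetilde{D}$ and $\widetilde{D}'$ are the only vertical prime divisors through $x$, this map is injective, and by Lemma~\ref{Lgeneratingdivisorsdownstairs} its image $L$ is the lattice generated by $(v(\psi), v'(\psi))$, $(v(\phi_n), v'(\phi_n))$, and $(v(z), v'(z))$; by the definitions of $\psi, s$ and the computation \eqref{Evv'z}, these are exactly the three generators of Corollary~\ref{CregularNpath2}. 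By Lemma~\ref{Ldivisormultiplicities}(i) the value groups of $v$ and $v'$ on $\hat{\mc{O}}_{\mc{X},x}$ are $(1/\widetilde{e}_v)\ints$ and $(1/\widetilde{e}_{v'})\ints$, so $\widetilde{D}$ is principal at $x$ if and only if $(1/\widetilde{e}_v, 0) \in L$, i.e., if and only if $L$ is aligned with the coordinate axes. Applying Corollary~\ref{CregularNpath3}, this is equivalent to $\widetilde{\lambda}_n' > \widetilde{\lambda}_n$ being a shortest $\widetilde{N}$-path; for a two-term sequence this is the same as being an $\widetilde{N}$-path, establishing the regularity equivalence.

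For the normal crossings claim, assume the $\widetilde{N}$-path condition holds. Then by the argument above both $\widetilde{D}$ and $\widetilde{D}'$ are principal at $x$, say $\widetilde{D} = \divi(\alpha)$ and $\widetilde{D}' = \divi(\beta)$. Comparing covolumes of $L$ in its two presentations (via $(1/\widetilde{e}_v,0),(0,1/\widetilde{e}_{v'})$ versus via $(1/\widetilde{N},1/\widetilde{N}),(\widetilde{\lambda}_n,\widetilde{\lambda}_n')$) yields $\widetilde{\lambda}_n' - \widetilde{\lambda}_n = \widetilde{N}/(\widetilde{e}_v\widetilde{e}_{v'})$, and substituting into Lemma~\ref{Ldivisormultiplicities}(ii) gives $(\widetilde{D},\widetilde{D}') = 1$. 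By Lemma~\ref{Lbothprincipal}, $(\alpha,\beta) = \mathfrak{m}_{\mc{X},x}$. Since the special fiber of $\mc{X}$ near $x$ equals $\widetilde{e}_v\widetilde{D} + \widetilde{e}_{v'}\widetilde{D}'$ by Lemma~\ref{Ldivisormultiplicities}(i), $\pi_K$ and $\alpha^{\widetilde{e}_v}\beta^{\widetilde{e}_{v'}}$ have the same divisor, hence agree up to a unit, giving the standard normal crossings presentation $\hat{\mc{O}}_{\mc{X},x} \cong \mc{O}_K[[\alpha,\beta]]/(\alpha^{\widetilde{e}_v}\beta^{\widetilde{e}_{v'}} - u\pi_K)$.

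The main work is the translation from principality to the concrete lattice of Corollary~\ref{CregularNpath2}, but this has already been carried out in Lemmas~\ref{Lgeneratingdivisorsdownstairs} and \ref{Ldivisormultiplicities}, so the present argument is really an assembly of existing pieces; the only subtlety is the brief bookkeeping converting the Corollary~\ref{CregularNpath3} criterion (phrased in terms of a shortest $\widetilde{N}$-path) into the proposition's formulation.
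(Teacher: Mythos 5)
Your proof is correct and follows essentially the same path as the paper's: reduce regularity to principality of $\widetilde{D}$ via Corollary~\ref{Csmoothoncomponent} and Lemma~\ref{LregularUFD}/Lemma~\ref{Lbothprincipal}, translate principality into alignment of the lattice $L$ with the coordinate axes via Lemma~\ref{Lgeneratingdivisorsdownstairs} and Lemma~\ref{LuValueGrp}(ii), and invoke Corollary~\ref{CregularNpath3}. The only cosmetic differences are that you derive $\widetilde{\lambda}_n' - \widetilde{\lambda}_n = \widetilde{N}/(\widetilde{e}_v\widetilde{e}_{v'})$ by a covolume comparison rather than directly from the definition of $\widetilde{N}$-path, and you spell out the local normal-crossings presentation $\hat{\mc{O}}_{\mc{X},x} \cong \mc{O}_K[[\alpha,\beta]]/(\alpha^{\widetilde{e}_v}\beta^{\widetilde{e}_{v'}} - u\pi_K)$ where the paper leaves it implicit in Lemma~\ref{Lbothprincipal}.
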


\begin{proof}
%By Corollary~\ref{Csmoothoncomponent}, the valuations $v$ and $v'$ on
%$\hat{\mc{O}}_{\mc{Y}, y}$ extend uniquely to
%$\hat{\mc{O}}_{\mc{X},x}$, since $\Spec \hat{\mc{O}}_{\mc{X},x}$
%contains unique divisors $\widetilde{D}$ and $\widetilde{D}'$ above $D$ and $D'$
%respectively.
Let $\widetilde{D}$ and $\widetilde{D}'$ be reduced divisors on $\Spec
\hat{\mc{O}}_{\mc{X},x}$ as in
Lemma~\ref{Ldivisormultiplicities}.  By
Lemma~\ref{Ldivisormultiplicities}(i), they are
irreducible.
%reduced divisors
%corresponding to the parts of $\nu^{-1}(D)$
%and $\nu^{-1}(D')$ respectively passing through $x$.  By
%Corollary~\ref{Csmoothoncomponent}, both $\widetilde{D}$ and
%$\widetilde{D}'$ are irreducible.
Since $x$ being regular implies that $\widetilde{D}$ and
$\widetilde{D}'$ are principal in $\Spec \hat{\mc{O}}_{\mc{X},x}$, it suffices by
Lemma~\ref{Lbothprincipal} to show that $\widetilde{D}$ and
$\widetilde{D}'$ are principal on $\Spec \hat{\mc{O}}_{\mc{X},x}$ if and only if the $\widetilde{N}$-path criterion in
the proposition holds, and that in this case $(\widetilde{D}, \widetilde{D}') =
1$.

Consider the lattice $L$ in $\rats^2$ generated by
$(v(\psi), v'(\psi)) = (1/N, 1/N)$, $(v(\phi_n), v'(\phi_n)) =
(\lambda_n, \lambda_n')$, and
$$(v(z), v'(z)) = \frac{1}{d}(v(f), v'(f)) =
\frac{1}{d}(v(\phi_n^e\psi^s), v'(\phi_n^e\psi^s)) = ((e/d)\lambda_n + s/Nd,
(e/d)\lambda_n' + s/Nd),$$ where $v$ and $v'$ are extended to
$K(X)$ so that they are centered at the generic points of
$\widetilde{D}$ and $\widetilde{D}'$ respectively.
%\footnote{The extension might not be unique if
%$\widetilde{D}$ and $\widetilde{D}'$ are reducible, but the valuation
%of $z$ is well-determined regardless.  In fact, $\widetilde{D}$ and
%$\widetilde{D}'$ are \emph{a posteriori} irreducible, but we won't use this
%in the proof.}
By Corollary~\ref{CregularNpath3}, the $\widetilde{N}$-path criterion in the
    proposition holds if and only if the lattice $L$ is aligned with
    the coordinate axes.

We claim that $L$ is aligned with the coordinate axes if and only if $\widetilde{D}$ and
$\widetilde{D}'$ are principal on $\Spec \hat{\mc{O}}_{\mc{X},x}$.
To prove the claim, note that by
Lemma~\ref{LuValueGrp}(ii), the projection of $L$ to its
first (resp.\ second) coordinate is the value group of $v$ (resp.\
$v'$) on
$\hat{\mc{O}}_{\mc{X},x}$.  So $L$ being aligned with
    the coordinate axes implies that $\widetilde{D}$ and
    $\widetilde{D}'$ are locally principal at $x$.  On the other hand, if
    $\widetilde{D}$ and $\widetilde{D}'$ are locally principal at $x$, then
Lemma~\ref{Lgeneratingdivisorsdownstairs} shows that there are
monomials in $\phi_n$, $\psi$, and $z$ whose divisors cut out
$\widetilde{D}$ and $\widetilde{D}'$ locally, which means that $L$ is aligned
with the coordinate axes. 
%\padma{Cite new lemma about upstairs  multiplicity.} \andrew{I don't think this lemma is necessary.  $L$   containing $(a,0)$ and $(0, a')$ exactly means that there exist   elements of $\hat{\mc{O}}_{\mc{X},x}$ cutting out $\widetilde{D}$   and $\widetilde{D}'$ themselves, not multiples, exactly because $a$   and $a'$ generate the respective value groups.  In essence, the   lemma is just the statement that the value group is generated by   $v(\psi)$, $v(\phi_n)$, and $v(z)$.}

To complete the proof of the proposition, it remains to show that
$(\widetilde{D}, \widetilde{D}') = 1$ assuming the
$\widetilde{N}$-path criterion holds.  But $\widetilde{\lambda}_n$ and $\widetilde{\lambda}_n'$ being adjacent on
an $\widetilde{N}$-path means by definition that
$\widetilde{\lambda}_n' - \widetilde{\lambda}_n =
\widetilde{N}/\widetilde{e}_v\widetilde{e}_{v'}$.  By
Lemma~\ref{Ldivisormultiplicities}(ii), $(\widetilde{D}, \widetilde{D}') = 1$,
completing the proof.
\end{proof}

\begin{remark}
Observe that if $f$ is monic and $v_{f_i}^{\infty} \succ v'$ for all $i$,
then $h = 1$, $s = 0$ and the criterion reduces to $(\gcd(d,e)/d)\lambda_n' >
(\gcd(d,e)/d)\lambda_n$ being an $N$-path.
\end{remark}

\subsection{Finite cusps}\label{Sstandardendpoints}

Let $v = [v_0,\, v_1(\phi_1) = \lambda_1,\, \ldots,\,
v_{n-1}(\phi_{n-1}) = \lambda_{n-1},\, v_n(\phi_n) = \lambda_n]$ be a Mac
Lane valuation such that $v_{n-1}$ is minimally presented, but we
allow the possibility that $v_{n-1} = v_n$ and $\phi_n$ is a proper
key polynomial over $v_{n-1}$ (this occurs when $\lambda_n = v_{n-1}(\phi_n)$).  Let $y \in \mc{Y}$ be the intersection of $D_{\phi_n}$ with the
special fiber of $\mc{Y}$, and suppose that $y$ lies only on the
$v$-component of $\mc{Y}$.  By Lemma~\ref{Lstandardendpointunique}, $y$ is a finite cusp if $v$
is minimally presented and $e_v > e_{v_{n-1}}$, but the results of this section apply in a
slightly broader context that will be necessary for proving Theorem~\ref{TX0regular}.
Write $N = e_{v_{n-1}}$ (so $(1/N)\ints$ is the group generated by $1,
\lambda_1, \ldots, \lambda_{n-1}$), and write $\psi$ for a monomial in
$\phi_1, \ldots, \phi_{n-1}$ over $K$ such that $v(\psi) = 1/N$.  

\begin{lemma}\label{Lgeneratingdivisors2}
 Suppose that $f = \phi_n^a h$, where no horizontal part of $\divi(h)$
passes through $y$.
  Then, $\divi(h)$ is an integer multiple of $\divi(\psi)$, and the group of principal vertical divisors of $\Spec \hat{\mc{O}}_{\mc{X},x}$ is
 contained in the group generated by $\divi(z)$, $\divi(\nu^*\psi)$,
  and $\divi(\nu^*\phi_n)$. 
\end{lemma}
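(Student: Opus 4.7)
The plan is to prove the two claims separately. For the first, the absence of a horizontal part of $\divi(h)$ through $y$ forces $\divi(h)$ on $\Spec\hat{\mc{O}}_{\mc{Y},y}$ to be purely vertical, hence supported on the unique prime vertical divisor $D$ through $y$ (the $v$-component, by the hypothesis that $y$ lies only on it). Factor $h$ as a unit times a power of $\pi_K$ times monic irreducibles $g_i \in \mc{O}_K[t]$, each coprime to $\phi_n$. Since each $D_{g_i}$ avoids $y$, Proposition~\ref{Pparameterize}\ref{Psmallerdiskoid} forces every root $\theta$ of $g_i$ to satisfy $v_K(\phi_n(\theta)) \leq \lambda_n$, and then Lemma~\ref{Ldominantterm}(ii) gives $v(g_i) = v_{n-1}(a_0^{(i)}) \in (1/N)\ints$, where $a_0^{(i)}$ is the constant term of the $\phi_n$-adic expansion of $g_i$. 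Summing over factors, $v(h) \in (1/N)\ints$; since $\divi(\psi)|_y = (e_v/N) D$ and $\divi(h)|_y = e_v v(h)\,D$, the first claim follows.

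For the second claim, let $\divi(w)$ be a principal vertical divisor on $\Spec\hat{\mc{O}}_{\mc{X},x}$. By Corollary~\ref{Csmoothoncomponent}, $x$ lies on a unique prime vertical divisor $\widetilde{D}$, so $\divi(w)$ is automatically invariant under $\Sigma = \Aut(\hat{\mc{O}}_{\mc{X},x}/\hat{\mc{O}}_{\mc{Y},y})$. Lemma~\ref{Ldivgroupgenerators} then writes $\divi(w) = a\,\divi(z) + \divi(\nu^*\beta)$ for some $a \in \ints$ and $\beta \in K(t)^\times$, and the plan is to factor $\beta = u\pi_K^c \prod_i g_i^{b_i}$ with monic irreducible $g_i$, partitioning the $g_i$ into Case~A (those for which $D_{g_i}$ misses $y$) and Case~B (those for which $D_{g_i}$ meets $y$, equivalently $v \prec v_{g_i}^{\infty}$).

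The technical heart is to show $b_i = 0$ for every Case~B factor with $g_i \neq \phi_n$. Because $h$ has no horizontal divisor through $y$, in particular $g_i \nmid h$, so the identity $d\,\divi(z) = a\,\nu^*\divi(\phi_n) + \nu^*\divi(h)$ on $\mc{X}$ shows that along any irreducible horizontal component $\Xi'$ of $\nu^{-1}(D_{g_i})$ passing through $x$, the coefficient of $\Xi'$ in $\divi(z)$ vanishes: such $\Xi'$ cannot lie above $D_{\phi_n}$ (since $g_i \neq \phi_n$ and the pullbacks of distinct horizontal divisors of $\mc{Y}$ have disjoint supports upstairs), so neither $\nu^*\divi(\phi_n)$ nor $\nu^*\divi(h)$ contributes along $\Xi'$. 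Thus the coefficient of $\Xi'$ in $\divi(w)$ equals $b_i$ times the ramification index of $\Xi'/D_{g_i}$, and verticality of $\divi(w)$ forces $b_i = 0$.

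Once $\beta$ is reduced to involve only $\pi_K$, Case~A factors, and $\phi_n$, the identity $\divi(\nu^*\pi_K)|_x = N\,\divi(\nu^*\psi)|_x$ (pulled back from $\divi(\pi_K) = N\,\divi(\psi)$ at $y$), combined with the Step~1 analysis showing each Case~A factor contributes an integer multiple of $\divi(\nu^*\psi)|_x$, expresses $\divi(\nu^*\beta)|_x$ as a $\ints$-combination of $\divi(\nu^*\psi)|_x$ and $\divi(\nu^*\phi_n)|_x$. Hence $\divi(w)|_x$ lies in the subgroup generated by $\divi(z)$, $\divi(\nu^*\psi)$, and $\divi(\nu^*\phi_n)$, as claimed. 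The main obstacle is the Case~B cancellation argument in the preceding paragraph; it relies crucially on the disjointness of pullbacks of distinct horizontal divisors and on the explicit form $f = \phi_n^a h$ controlling precisely which horizontal components of $\mc{X}$ appear in the support of $\divi(z)$ near $x$.
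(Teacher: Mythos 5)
Your proof is correct and follows essentially the same path as the paper's: reduce via Corollary~\ref{Csmoothoncomponent} and Lemma~\ref{Ldivgroupgenerators} to analyzing $\divi(\nu^*\beta)$, observe that the only horizontal component of $\divi(z)$ through $x$ lies above $D_{\phi_n}$ (so any horizontal part of $\beta$ through $y$ must be a power of $\phi_n$), and then handle the remaining vertical part by the Proposition~\ref{Pparameterize}/Lemma~\ref{Ldominantterm} argument showing $v(\beta) \in (1/N)\ints$. Your "Case B cancellation" paragraph is just an expansion of the paper's single sentence that the horizontal part of $\divi(z)$ is supported above $\divi(\phi_n)$, so only $\beta = \phi_n$ or $\beta$ with locally vertical divisor need be considered; otherwise the arguments coincide.
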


\begin{proof}
By Corollary~\ref{Csmoothoncomponent} applied to $v_n$ (or to
$v_{n-1}$ if $v_{n-1} = v_n$) with $g = \phi_n$, there is only one prime
vertical divisor of $\Spec \hat{\mc{O}}_{\mc{X}, x}$.  So that divisor is $\Sigma$-invariant, for $\Sigma$ as in
Lemma~\ref{Ldivgroupgenerators}.  By Lemma~\ref{Ldivgroupgenerators},
the group of principal vertical divisors of $\Spec
\hat{\mc{O}}_{\mc{X},x}$ is contained in the group generated by
$\divi(z)$ and $\divi(\nu^*\beta)$ for $\beta \in
\hat{\mc{O}}_{\mc{Y},y}$  Furthermore, since the horizontal part of
$\divi(z)$ is supported above $\divi(\phi_n)$, we have that the only
$\beta$ we need to consider are $\phi_n$ and those $\beta$ such that
$\divi(\beta)$ is vertical.  So it suffices to prove the first
assertion of the proposition.

We may assume $h$ is an irreducible polynomial.
Since $\divi(h)$ has no horizontal component containing $y$,
Proposition~\ref{Pparameterize}\ref{Psmallerdiskoid} applied to a root $\theta$ of
$h$ and a root
$\alpha$ of $\phi_n$ would show that $v_K(\phi_n(\theta)) \leq
\lambda_n$. Then Lemma~\ref{Ldominantterm}(ii) shows that if $h = \sum_{i} a_i\phi_n^i$
is the $\phi_n$-adic expansion of $\beta$, we have $v(h)
= v(a_0)$.  Since $\deg(a_0) < \deg(\phi_n)$, we have $v(a_0) 
 \in (1/N)\ints$, so $\divi(h)$ is a multiple of $\divi(\psi)$.  We are done.
\end{proof}

\begin{prop}\label{Pstandardendpointregular}
Suppose that $f = \phi_n^a h$, where no horizontal part of $\divi(h)$
passes through $y$.
\begin{enumerate}[\upshape (i)]
 \item We have $v(h) = s/N$ for some $s \in \ints$.
 \item Let $\widetilde{N} = N\gcd(d,a)/\gcd(d, a, s)$, with $s$ as in
   part (i).  If $x \in \mc{X}$ is a point above 
   $y \in \mc{Y}$, let $\widetilde{e}_v$ be the multiplicity of the special fiber of $\Spec
\hat{\mc{O}}_{\mc{X}, x}$. then $\mc{X}$ is
regular with normal crossings at $x$ if and only if $\widetilde{N} =
\widetilde{e}_v$.
\item The criterion of part (ii) is equivalent to 
$$\lambda_n \in (1/\widetilde{N})\ints \text{
  and } v(f) \in (d/\widetilde{N})\ints.$$ 
\end{enumerate}
\end{prop}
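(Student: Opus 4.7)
The plan is to prove the three parts sequentially, with almost all of the substance concentrated in (ii); parts (i) and (iii) then fall out by routine manipulations.

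Part (i) is immediate from Lemma~\ref{Lgeneratingdivisors2}: that lemma asserts $\divi(h)$ on $\Spec \hat{\mc{O}}_{\mc{Y},y}$ is an integer multiple of $\divi(\psi)$, and applying $v$ to this relation, together with $v(\psi) = 1/N$, yields $v(h) = s/N$ for a (unique) integer $s$.

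For part (ii), Corollary~\ref{Csmoothoncomponent}(b) (with $g=\phi_n$) provides a unique prime vertical divisor $\widetilde{D}$ of $\Spec \hat{\mc{O}}_{\mc{X},x}$ above $D$, smooth at $x$. By Lemma~\ref{LregularUFD}, $\mc{X}$ is regular at $x$ if and only if $\widetilde{D}$ is locally principal, and since $x$ lies on only one vertical component, regularity automatically forces normal crossings at $x$. The next step is to compute the three generators of principal vertical divisors provided by Lemma~\ref{Lgeneratingdivisors2}, namely $\divi(z)$, $\divi(\nu^*\phi_n)$, $\divi(\nu^*\psi)$, in the basis $(\widetilde{D_{\phi_n}}, \widetilde{D})$. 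Using $\divi(\phi_n) = D_{\phi_n} + e_v\lambda_n D$ and the ramification indices $d/\gcd(d,a)$ along $D_{\phi_n}$ and $d/\gcd(d,e_v v(f))$ along $D$ (both from Lemma~\ref{Lallnthroots} and Lemma~\ref{L:ComputingNormalization}\ref{L:eiisramindex}), one writes each generator as an explicit integer vector. Setting the horizontal coordinate to zero cuts down to a rank-one subgroup $m\ints \subseteq \ints$ whose positive generator $m$ a Bezout reduction identifies as $m = dk\gcd(d,a,s)/(GG')$, where $k = e_v/N$, $G = \gcd(d,a)$, and $G' = \gcd(d,e_v v(f))$. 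Since $\widetilde{e}_v = de_v/G'$, one then verifies algebraically that $m = \widetilde{e}_v/\widetilde{N}$, so principality of $\widetilde{D}$ (the condition $m = 1$) is equivalent to $\widetilde{e}_v = \widetilde{N}$. An automatic byproduct is that $\widetilde{N} \mid \widetilde{e}_v$ always, since $m$ is a positive integer.

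For part (iii), I translate the equality $\widetilde{N} = \widetilde{e}_v$ into the two stated conditions via containments of subgroups of $\rats$. The value group of $v$ on $K(X)$ equals $(1/e_v)\ints + (v(f)/d)\ints$ and has denominator $\widetilde{e}_v$. If $\widetilde{N} = \widetilde{e}_v$, then this group equals $(1/\widetilde{N})\ints$, so $\lambda_n \in (1/e_v)\ints \subseteq (1/\widetilde{N})\ints$ and $v(f)/d \in (1/\widetilde{N})\ints$, giving both conditions. Conversely, from $\widetilde{N} = NG/\gcd(d,a,s)$ and $\gcd(d,a,s) \mid G$ one reads off $N \mid \widetilde{N}$, so $\lambda_1,\ldots,\lambda_{n-1} \in (1/N)\ints \subseteq (1/\widetilde{N})\ints$; combining with $\lambda_n \in (1/\widetilde{N})\ints$ gives $(1/e_v)\ints \subseteq (1/\widetilde{N})\ints$, i.e., $e_v \mid \widetilde{N}$. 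Together with $v(f) \in (d/\widetilde{N})\ints$ this forces the entire value group of $v$ on $K(X)$ into $(1/\widetilde{N})\ints$, hence $\widetilde{e}_v \mid \widetilde{N}$. Combined with the unconditional divisibility $\widetilde{N} \mid \widetilde{e}_v$ from (ii), equality follows.

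The main obstacle is the lattice computation in (ii): keeping track of ramification correctly and recognizing the specific generator $m = dk\gcd(d,a,s)/(GG')$ as $\widetilde{e}_v/\widetilde{N}$ is where the bulk of the calculation lives. Once this identification is made, part (ii) reduces to invoking Lemma~\ref{LregularUFD}, and part (iii) is pure subgroup bookkeeping.
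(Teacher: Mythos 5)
Your proposal is correct, and parts (i) and (ii) are essentially the paper's proof. Part (i) is word-for-word the same extraction from Lemma~\ref{Lgeneratingdivisors2}. For part (ii), you compute the same rank-one subgroup the paper calls $G$; the paper writes the vertical parts directly in terms of $\widetilde{e}_v\widetilde{D}$ and does a Bezout reduction via $D_2$ and $(dD_1 - aD_3)/\gcd(d,a)$, while you reach the same generator $m = \widetilde{e}_v/\widetilde{N}$ by tracking the ramification indices $d/\gcd(d,a)$ and $d/\gcd(d, e_v v(f))$ explicitly. These are the same calculation, parametrized differently.

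Part (iii) is where your route genuinely diverges. The paper never invokes the ``automatic'' divisibility $\widetilde{N}\mid\widetilde{e}_v$: instead it observes that $\widetilde{D}$ is principal if and only if all three vertical parts $V_1, V_2, V_3$ lie in $G = (\widetilde{e}_v/\widetilde{N})\ints\widetilde{D}$, checks that $V_2 \in G$ automatically, and translates $V_1 \in G$ and $V_3 \in G$ into the two displayed conditions. Your argument instead works with value groups: you identify the value group of the extension of $v$ to $K(X)$ as $(1/e_v)\ints + (v(f)/d)\ints = (1/\widetilde{e}_v)\ints$, show the two conditions force this group into $(1/\widetilde{N})\ints$ (giving $\widetilde{e}_v \mid \widetilde{N}$), and then lean on the byproduct $\widetilde{N}\mid\widetilde{e}_v$ from part (ii) to conclude equality. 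Both are correct. The paper's mechanism is logically self-contained within the divisor-group framework; yours is shorter once you've done the lattice computation in (ii), but depends on having extracted that extra piece of information (the integrality of $m$) from it. There is no gap --- $m$ is indeed a positive integer because $G$ is a nonzero subgroup of $\ints\widetilde{D}$ (it contains the nonzero $V_2 = \divi(\nu^*\psi)$), so the converse in your (iii) closes cleanly.
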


\begin{proof}
By Lemma~\ref{Lgeneratingdivisors2}, since $\divi(h)$ is vertical on
$\hat{\mc{O}}_{\mc{Y},y}$, we have $\divi(h) = s\divi(\psi)$ for some
$s \in \ints$, so $v(h) = s/N$.  This proves (i).

Now, we prove parts (ii) and (iii). If $D$ is the prime vertical divisor of
$\mc{Y}$ corresponding to $v$, then
by Corollary~\ref{Csmoothoncomponent} applied to $v_n$ (or to
$v_{n-1}$ if $v_{n-1} = v_n$), and with $g = \phi_n$ in that corollary, $\Spec \hat{\mc{O}}_{\mc{X},x}$
contains a unique prime divisor $\widetilde{D}$ above $D$ and $x$ is smooth on $\widetilde{D}$.
By Lemma~\ref{LregularUFD}, $\mc{X}$ is regular at $x$ if and only if
$\widetilde{D}$ is principal, and since $x$ is smooth on
$\widetilde{D}$, normal crossings is automatic.
%Note that the order function on $\widetilde{D}$ gives rise to
%the extension of the vaulation $v$, appropriately scaled.
%The value group of $v$ on $D$ is
%generated by $v(\psi)$ and $v(\phi_n)$, and thus the value group of
%$v$ on $\widetilde{D}$ is generated by $v(z)$, $v(\psi)$, and
%$v(\phi_n)$.
Let
$\widetilde{D}_{\phi_n}$ be the horizontal part of
$\divi(\nu^*\phi_n)$.  Recalling that $\widetilde{e}_v$ is the multiplicity of $\widetilde{D}$ in $\Spec
\hat{\mc{O}}_{\mc{X},x}$, we define
\begin{align*}
  D_1 := \divi(z) = \frac{1}{d}\divi(\nu^*f) &= 
                                        \frac{1}{d}\widetilde{e}_v\left(a\lambda_n + \frac{s}{N}  
  \right) \widetilde{D}  + \frac{a}{d} \widetilde{D}_{\phi_n} \\
  D_2 := \divi(\nu^*\psi) &= \widetilde{e}_v \frac{1}{N} \widetilde{D}  \\
  D_3 := \divi(\nu^*\phi_n) &= \widetilde{e}_v \lambda_n \widetilde{D}  + \widetilde{D}_{\phi_n} 
\end{align*}
By Lemma~\ref{Lgeneratingdivisors2}, the group $G$ of integer combinations of these divisors with support on
$\widetilde{D}$ is exactly the set of principal divisors supported on
$\widetilde{D}$.  So $\widetilde{D}$ is principal if and only if it
generates $G$.  Alternatively, $\widetilde{D}$ is principal if and only if the vertical parts of
$D_1$, $D_2$, and $D_3$ are in $G$ (the ``only if'' part is immediate
because the vertical parts of the $D_i$ are supported on $\widetilde{D}$,
and the ``if'' part follows because $\nu^*D$ lies in the group generated by
the vertical parts of $D_2$ and
$D_3$, so $\widetilde{D}$ lies in the group generated by the vertical
parts of $D_1$,
$D_2$, and $D_3$, see
Corollary~\ref{C:Multupstairs}).

Now, $G$ is generated by $$D_2 = \frac{\widetilde{e}_v}{N}\widetilde{D}
\qquad \text{and} \qquad \frac{dD_1
- aD_3}{\gcd(d,a)} = \frac{\widetilde{e}_vs}{N\gcd(d,a)} \widetilde{D}.$$  Pulling out a
factor of $\widetilde{e}_v/N$, and noting that the denominator of $s/\gcd(d,a)$ is
$\gcd(d,a)/\gcd(d, a, s)$, we have that $G$ is
generated by
$(\widetilde{e}_v/\widetilde{N}) \widetilde{D}$.  So $G$ is generated
by $\widetilde{D}$ if and only if $\widetilde{e}_v = \widetilde{N}$,
proving (ii).

Alternatively, the vertical part of $D_1$ is contained in $G$ if and
only if $v(f)/d \in (1/\widetilde{N}) \ints$, the vertical part of
$D_2$ is automatically in $G$, and the vertical part of $D_3$ is
contained in $G$ if and only if $\lambda_n \in
(1/\widetilde{N})\ints$.  This finishes the proof of part (iii).
\end{proof}

\begin{remark}\label{Rnobranchpoint}
In the situation of Proposition~\ref{Pstandardendpointregular}(iii) above,
if $f = h$ (so that $a = 0$), the condition $\lambda_n \in
(1/\widetilde{N})\ints$ automatically implies $v(f) \in
(d/\widetilde{N})\ints$.  This is because
$$v(f) = v(h) = \frac{s}{N} = \frac{sd}{\gcd(d, s) \widetilde{N}} \in \frac{d}{\widetilde{N}}\ints.$$
\end{remark}

Recall that the notion of \emph{geometric ramification} was defined in
Definition~\ref{Dramified}.
 By abuse of notation, if $\nu \colon \mc{X} \to \mc{Y}$ is a finite flat morphism of
arithmetic surfaces over $\Spec \mc{O}_K$, and if $y \in \mc{Y}$ lies
on a unique irreducible component
$\ol{W}$ of the special fiber of $\mc{Y}$, then we say $y$ is geometrically
ramified in $\mc{X} \to \mc{Y}$ if it is geometrically ramified in
$\nu^{-1}(\ol{W}) \to \ol{W}$.

\begin{prop}\label{Pbranchpointramindex}
Suppose that $f = \phi_n^a h$, where no horizontal part of $\divi(h)$
passes through $y$.  Let $s$ be such that $v(h)
= s/N$ as in Proposition~\ref{Pstandardendpointregular}(i).  Suppose
that each point $x \in \mc{X}$ above $y$ is regular.  Then the geometric
ramification index of $y$ in $\mc{X} \to \mc{Y}$ is $$\frac{de_v}{N\gcd(d,a)}.$$ In particular, $y$ is geometrically ramified whenever $e_v > N$.
\end{prop}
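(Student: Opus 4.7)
The plan is to derive the geometric ramification index by combining Proposition~\ref{Pstandardendpointregular}(ii) with a global degree-counting argument for the finite flat map $\nu \colon \mc{X} \to \mc{Y}$.  Let $D$ be the reduced irreducible component of $\mc{Y}_k$ corresponding to $v$, and let $\widetilde{D}_1, \ldots, \widetilde{D}_r$ be the irreducible components of $\mc{X}_k$ lying above $D$.  Since the cyclic Galois group $\ints/d$ acts transitively on the $\widetilde{D}_i$ (and on the preimages of $y$ in $\mc{X}$), all $\widetilde{D}_i$ share a common multiplicity $\widetilde{e}_v$ in $\mc{X}_k$ and a common degree $[k(\widetilde{D}_i):k(D)]$; by the regularity assumption together with Proposition~\ref{Pstandardendpointregular}(ii), $\widetilde{e}_v = \widetilde{N} = N\gcd(d,a)/\gcd(d,a,s)$.

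First I would count the number $N_{\mathrm{pre}}$ of preimages of $y$ in $\mc{X}$.  Arguing as in the derivation of \eqref{Edegree} in the proof of Lemma~\ref{Ldivisormultiplicities}, the identity $\divi(f) = a\,\divi(\phi_n) + s\,\divi(\psi)$ on $\Spec \hat{\mc{O}}_{\mc{Y},y}$ combined with the facts that the horizontal part of $\divi(\phi_n)$ is locally an irreducible Weil divisor and that the purely vertical principal divisors at $y$ are generated by $\divi(\psi)$ (cf.\ Lemma~\ref{Lgeneratingdivisors2}) shows that the maximal $\alpha \in \nats$ for which $f$ is an $\alpha$-th power in $\hat{\mc{O}}_{\mc{Y},y}$ modulo units is $\gcd(a,s)$.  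By Lemma~\ref{Lallnthroots} all units are $d$-th powers, so $z^d = f$ locally decomposes into $\gcd(d,\gcd(a,s)) = \gcd(d,a,s)$ factors, giving $N_{\mathrm{pre}} = \gcd(d,a,s)$.

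Next I would apply the pullback and projection identities $\nu^*[D] = \sum_{i=1}^r (\widetilde{e}_v/e_v)[\widetilde{D}_i]$ and $\nu_*\nu^*[D] = d\,[D]$.  Combining these, the total degree of the induced map $\bigsqcup_i \widetilde{D}_i \to D$ on reduced curves is $de_v/\widetilde{e}_v$.  Since $k$ is algebraically closed, residue fields at closed points are trivial over $k$, so this degree equals the total number of preimages of $y$ on $\bigsqcup_i \widetilde{D}_i$ times the common geometric ramification index $e^{\mathrm{geom}}$.  Corollary~\ref{Csmoothoncomponent} asserts that each preimage of $y$ in $\mc{X}$ lies on a unique irreducible component above $D$, so the former number equals $N_{\mathrm{pre}} = \gcd(d,a,s)$.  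Substituting $\widetilde{e}_v = \widetilde{N}$ and simplifying yields
\[
e^{\mathrm{geom}} \;=\; \frac{de_v}{\widetilde{N}\,\gcd(d,a,s)} \;=\; \frac{de_v}{N\gcd(d,a)},
\]
the desired formula.  For the ``in particular'' clause, $e_v > N$ forces $e_v/N \geq 2$ (this ratio is a positive integer since $\Gamma_{v_{n-1}} \subseteq \Gamma_v$), hence $de_v \geq 2Nd > N\gcd(d,a)$ and $e^{\mathrm{geom}} > 1$.

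The main obstacle is establishing $N_{\mathrm{pre}} = \gcd(d,a,s)$, which requires local divisibility analysis at $y$ in the (possibly non-regular) local ring $\hat{\mc{O}}_{\mc{Y},y}$; the key point, parallel to the argument in \eqref{Edegree}, is that the generators of the horizontal (resp.\ vertical) principal divisor groups at $y$ are indivisible in the Weil divisor group.
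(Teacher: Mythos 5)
Your proposal is correct and follows essentially the same strategy as the paper's proof: compute the total degree of $\nu^{-1}(\ol{Z})^{\red} \to \ol{Z}^{\red}$ from the ratio of multiplicities $\widetilde{e}_v/e_v = \widetilde{N}/e_v$, separately count the number of preimages of $y$ as $\gcd(d,a,s)$ via the factorization $\divi(f) = a\divi(\phi_n) + s\divi(\psi)$ on $\Spec\hat{\mc{O}}_{\mc{Y},y}$, and divide. The only genuine difference is in how the preimage count is packaged: you characterize $\gcd(a,s)$ directly as the maximal $\alpha$ for which $f$ is an $\alpha$th power modulo units and then intersect with divisors of $d$, while the paper first normalizes $f$ (by raising to a prime-to-$d$th power) to the form $(\phi_n^{\gcd(d,a)/\gcd(d,a,s)}\psi^{s'/\gcd(d,a,s)})^{\gcd(d,a,s)}$ and argues the base is not a nontrivial perfect power. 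Both routes rest on the same local facts — $D_{\phi_n}$ is prime, principal vertical divisors at $y$ lie in $\langle\divi(\psi)\rangle$, and units are $d$th powers — and both would benefit from making fully explicit that the group of principal Weil divisors at $y$ is exactly $\langle\divi(\phi_n), \divi(\psi)\rangle$ (this is implicit in the proof of Lemma~\ref{Lgeneratingdivisors2}).
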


% The result below is for when $a = 0$ and the point above the
% standard endpoint is not necessarily regular.  I don't think we will
% need it.
%
%\item The geometric ramification index of any point above $y$ in the normalization
%  $\nu \colon \mc{X} \to \mc{Y}$ of
%$\mc{Y}$ in $K(X)$ is $\gcd(d, se_v/N)/\gcd(d,s)$.

\begin{proof}
Let $\ol{Z}$ be the $v$-component of $\mc{Y}$, and let $\ol{W}
= \nu^{-1}(\ol{Z})$.  The multiplicity of $\ol{Z}$ in the special
fiber of $\mc{Y}$ is $e_v$ and the multiplicity of $\ol{W}$ in the
special fiber of $\mc{X}$ is $\widetilde{e}_v = \widetilde{N}$ as in Proposition~\ref{Pstandardendpointregular}(ii).  
So the ramification index of $\ol{W}$ over $\ol{Z}$ is $\widetilde{N}/e_v$, which means that the induced morphism $\ol{W}^{\red} \to \ol{Z}^{\red}$
has degree $de_v/\widetilde{N}$.

On the other hand, $v(h) = sv(\psi)$, so $\divi(f) = s\divi(\psi) +
a\divi(\phi_n)$ in a formal neighborhood of $y$ in $\mc{Y}$.  By Lemma~\ref{Lallnthroots}, all units are
perfect $d$th powers in $\mc{O}_{\mc{Y},y}$, so we may assume $$f =
\phi_n^a\psi^s =
(\phi_n^{a/\gcd(d,a,s)}\psi^{s/\gcd(d,a,s)})^{\gcd(d,a,s)}.$$ Raising
$f$ to an appropriate prime-to-$d$th power, which does not affect the
cover, we may even assume
$$f =
(\phi_n^{\gcd(d,a)/\gcd(d,a,s)}\psi^{s'/\gcd(d,a,s)})^{\gcd(d,a,s)},$$
where $\gcd(d, s') = \gcd(d, s)$.
Since $\gcd(d,a)/\gcd(d,a,s)$ and $s'/\gcd(d,a,s)$ are relatively
prime, and neither $\phi_n$ nor $\psi$ is a non-trivial perfect power in
$\hat{\mc{O}}_{\mc{Y},y}$, we have that
$\phi_n^{\gcd(d,a)/\gcd(d,a,s)}\psi^{s'/\gcd(d,a,s)}$ is not a perfect
power either.
So $\mc{X}$ splits into $\gcd(d, a, s)$ connected components above a formal
neighborhood of $y$.
In particular, $\# \nu^{-1}(y) = \gcd(d, a, s)$.
We conclude that the geometric ramification index above $y$ is
$de_v/(\widetilde{N}\gcd(d,a,s))$, which equals $de_v/(N\gcd(d,a))$.
\end{proof}

\subsection{Standard $\infty$-specialization}\label{Sinfty}
If $V$ is a finite set of Mac Lane valuations with a unique minimal
valuation $v$, then Corollary~\ref{CAllspecializations}\ref{Cinftyspecialization} shows that $D_{\infty}$ meets the $V$-model $\mc{Y}$ of $\proj^1_K$
at a point $y \in \mc{Y}$ lying only on the $v$-component.  This meeting point is called the \emph{standard
  $\infty$-specialization} on $\mc{Y}$.  

Since everything in \S\ref{Sinfty} is local at the standard
$\infty$-specialization, we may as well suppose that $\mc{Y}$ is the $v$-model of $\proj^1_K$ for
$v = [v_0,\, \ldots,\, v(\phi_n) = \lambda_n]$.  Throughout
\S\ref{Sinfty}, \emph{we will assume that $n \leq 1$}.  In fact, if $v
= v_0$, we will write $v = [v_0,\, v_1(x) = 0]$, so that any $v$ we
consider can be written as $[v_0,\, v_1(\phi_1) = \lambda_1]$ for some
linear $\phi_1$.  As usual,
$\nu \colon \mc{X} \to \mc{Y}$ is the normalization of $\mc{Y}$ in
$K(\mc{X})$, where we recall that $K(\mc{X}) = K(t)[z]/(z^d - f(t))$
for a polynomial $f \in \mc{O}_K[x]$.  In
\S\ref{Sinfty}, we determine when a point $x$ (equivalently all points
$x$) of $\mc{X}$ above $y$ are
regular in the special case when the inductive length $n$ of $v$ is
$\leq 1$.

\begin{lemma}\label{Ltotallyarithmetic}
If $x$ is regular in
$\mc{X}$, then there exists $h \in \hat{\mc{O}}_{\mc{X}, x}$ such that
$h^{e_v} = \pi_K$.
\end{lemma}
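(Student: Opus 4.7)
The plan is to use the regularity of $x$ to express $\pi_K$ locally as an $e_v$-th power: principalize the vertical divisors through $x$, observe that the multiplicity of each in the divisor of $\pi_K$ is divisible by $e_v$, and then extract an $e_v$-th root of the resulting unit.

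First I would analyze the vertical prime divisors of $\Spec \hat{\mc{O}}_{\mc{X},x}$. Since the standard $\infty$-specialization $y$ lies only on the $v$-component of $\mc{Y}$, every vertical prime divisor of $\mc{X}$ through $x$ maps under $\nu$ to the $v$-component. Let $\widetilde{D}_1, \ldots, \widetilde{D}_r$ denote these divisors, with multiplicities $\widetilde{e}_1, \ldots, \widetilde{e}_r$ in the special fiber of $\mc{X}$. Writing $\bar{v}$ for the $\ints$-valued valuation on $K(t)$ coming from the $v$-component (so $\bar{v}(\pi_K) = e_v$), and extending $\bar{v}$ to the valuation $\widetilde{v}_i$ on $K(\mc{X})$ associated with $\widetilde{D}_i$, the restriction of $\widetilde{v}_i$ to $K(t)$ equals $e_i \bar{v}$ for some positive integer $e_i$. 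Consequently $\widetilde{e}_i = e_i \cdot e_v$, and in particular $e_v \mid \widetilde{e}_i$ for every $i$.

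Next, since $x$ is regular, Lemma~\ref{LregularUFD} shows that $\hat{\mc{O}}_{\mc{X},x}$ is a UFD, so each $\widetilde{D}_i$ is principal, say $\widetilde{D}_i = \divi(u_i)$ for some $u_i \in \hat{\mc{O}}_{\mc{X},x}$. The divisor of $\pi_K$ on $\Spec \hat{\mc{O}}_{\mc{X},x}$ is purely vertical and equals $\sum_i \widetilde{e}_i \widetilde{D}_i$, yielding
\[
  \pi_K \;=\; c \cdot \prod_{i=1}^r u_i^{\widetilde{e}_i} \;=\; c \cdot w^{e_v}, \qquad w \;:=\; \prod_{i=1}^r u_i^{e_i},
\]
for some unit $c \in \hat{\mc{O}}_{\mc{X},x}^{\times}$.

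Finally, I would apply Lemma~\ref{Lallnthroots} to the local arithmetic surface $\Spec \hat{\mc{O}}_{\mc{X},x}$ with $e_v$ in place of $d$ to produce a unit $c_1$ with $c_1^{e_v} = c$; then $h := c_1 \cdot w$ satisfies $h^{e_v} = \pi_K$. The main subtlety is verifying the hypothesis of Lemma~\ref{Lallnthroots}, namely that $e_v$ is coprime to $\chara k$. In the setting of this section, where $v$ has inductive length $n \le 1$, this holds because $e_v$ equals either $1$ (when $v = v_0$) or the denominator of $\lambda_1$, which in the contexts where the lemma is applied is coprime to $\chara k$.
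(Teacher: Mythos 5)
Your overall strategy — principalize the vertical divisors through $x$, express $\divi(\pi_K)$ as $e_v$ times an integral divisor, and extract an $e_v$-th root of the resulting unit via Lemma~\ref{Lallnthroots} — is sound in outline, and the multiplicity computation $\widetilde{e}_i = e_i \cdot e_v$ is correct. However, there is a genuine gap at exactly the point you flag as ``the main subtlety.''

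To apply Lemma~\ref{Lallnthroots} with $e_v$ in place of $d$, you must know that $\chara k \nmid e_v$. You assert that this holds ``in the contexts where the lemma is applied,'' but that is not a proof, and in fact it is not true in general: nothing in the setup prevents $v = [v_0,\, v_1(\phi_1) = 1/p]$ with $p = \chara k$, in which case $e_v = p$. The content of the lemma is precisely that such a situation is \emph{incompatible} with regularity of $x$. The paper's proof establishes this: using Lemma~\ref{Ldivgroupgenerators}, the pullback $\nu^*D$ of the reduced special fiber $D$ of $\Spec \hat{\mc{O}}_{\mc{Y},y}$ is $\Sigma$-invariant (where $\Sigma = \Aut(\hat{\mc{O}}_{\mc{X},x}/\hat{\mc{O}}_{\mc{Y},y})$) and, by regularity, principal; since $z^d \in \hat{\mc{O}}_{\mc{Y},y}$, it follows that $dD$ is principal on $\Spec \hat{\mc{O}}_{\mc{Y},y}$, and minimality of $e_v$ then forces $e_v \mid d$, hence $\chara k \nmid e_v$. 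This divisibility $e_v \mid d$ is derived \emph{from} the regularity hypothesis, not assumed. Without it, your final step has no justification, and the proof does not go through. Once this is supplied, the rest of your argument works, though the paper's route (working directly with the $\Sigma$-invariant divisor $\nu^*D$ rather than decomposing into the individual $\widetilde{D}_i$) is somewhat more economical since it reuses the same principality of $\nu^*D$ both to get $h'$ with $(h')^{e_v} = \pi_K u$ and to prove $e_v \mid d$.
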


\begin{proof}
We first claim that $\chara k \nmid e_v$.  Let $D$ be the prime divisor corresponding to the reduced special
fiber of $\Spec \hat{\mc{O}}_{\mc{Y}, y}$. 
  Since $\hat{\mc{O}}_{\mc{X}, x}$ is regular, Lemma~\ref{LregularUFD} shows
  that all height 1 ideals are principal.  So $\nu^*D$ is a principal,
  $\Sigma$-invariant divisor, where $\Sigma =
  \Aut(\hat{\mc{O}}_{\mc{X},x}/\hat{\mc{O}}_{\mc{Y},y})$.  By
  Lemma~\ref{Ldivgroupgenerators}, $\nu^*D$ is in the group generated
  by $\divi(z)$ and $H$, where $H$ is the group generated by $\nu^*(\beta)$ as $\beta$ ranges through
  $\hat{\mc{O}}_{\mc{Y},y}$.  Since $z^d \in \hat{\mc{O}}_{\mc{Y},y}$,
  we have that $\nu^*(dD) \in H$, and thus that $dD$ is a principal
  divisor of $\Spec \hat{\mc{O}}_{\mc{Y},y}$.  Since $e_v$ is the
  smallest positive integer such that $e_vD$ is a principal divisor of
  $\Spec \hat{\mc{O}}_{\mc{Y},y}$, we have that $e_v \mid d$.  By
  assumption, $\chara k \nmid d$, so $\chara k \nmid e_v$, proving the claim.

Now, let $h' \in
  \hat{\mc{O}}_{\mc{X}, x}$ be such that $\divi(h')$ is the principal
  divisor $\nu^*D$.   Since $\divi(\pi_K) = e_vD$ in $\Spec \hat{\mc{O}}_{\mc{Y}, y}$, we have
  $\divi((h')^{e_v}) = \nu^*(e_vD) = \nu^*\divi(\pi_K)$ in $\Spec \hat{\mc{O}}_{\mc{X}, x}$, which implies 
  $(h')^{e_v} = \pi_Ku$ for some $u \in
  \hat{\mc{O}}_{\mc{X},x}^{\times}$.  Since $\chara k \nmid e_v$,
  Lemma~\ref{Lallnthroots} shows that $u$ is an $e_v$-th power in
  $\hat{\mc{O}}_{\mc{X},x}$.  Letting $h = h' / \sqrt[e_v]{u}$ proves the lemma.
%  By Lemma~\ref{Lallnthroots}, $\pi_K$ is an $e_v$-th power
%  in $\hat{\mc{O}}_{\mc{X},x}$ as desired.
\end{proof}

\begin{lemma}\label{Lrootofpi}
Suppose the inductive length of $v$ is $\leq 1$.  Let $L/K$ be a
totally ramified field
extension of degree $e_v$ with ring of integers $\mc{O}_L$.  Then
$\mc{O}_{\mc{Y}, y} \otimes_{\mc{O}_K} \mc{O}_L$ is smooth as an
$\mc{O}_L$-algebra (and thus regular).
\end{lemma}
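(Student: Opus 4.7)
I plan to prove the lemma by direct calculation in local coordinates. The case $v = v_0$ is trivial: here $e_v = 1$ so $L = K$, and $\mc{Y} = \proj^1_{\mc{O}_K}$ is smooth at the standard $\infty$-specialization $y$. So assume $v = [v_0,\, v_1(t-c) = a/e_v]$ with $\gcd(a, e_v) = 1$ and $a/e_v > 0$, where $\phi_1 = t - c$ is linear.

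The key step is to produce an explicit presentation of $\hat{\mc{O}}_{\mc{Y}, y}$. I would fix non-negative integers $p, q$ (via Bezout) with $pe_v - qa = 1$ and set $u' := \pi_K^p/(t-c)^q$ and $w := \pi_K^a/(t-c)^{e_v}$. Both are regular functions at $y$ (since $y$ lies over $t = \infty$ and $p, q, e_v \geq 0$); a valuation computation shows $v(u') = 1/e_v$, so $u'$ is a uniformizer for $v$, while $v(w) = 0$ and one checks using Proposition~\ref{Pparameterize} that the reduction $\bar w$ is a local parameter on the $v$-component vanishing at $y$. A direct computation yields the defining relation $u'^{e_v} = \pi_K w^q$. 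Using the normality of $\mc{Y}$, the observation that $u', w, \pi_K$ generate the maximal ideal, and a dimension count, I would conclude
\[
\hat{\mc{O}}_{\mc{Y}, y} \cong \mc{O}_K[[u', w]]/(u'^{e_v} - \pi_K w^q).
\]
Note that $\gcd(q, e_v) = 1$ follows from the Bezout relation $pe_v - qa = 1$.

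Finally, base change to $\mc{O}_L$. Since $\chara k \nmid e_v$ (as $e_v \leq d$ and $\chara k \nmid d$), Lemma~\ref{Lallnthroots} lets us absorb units to assume $\pi_L^{e_v} = \pi_K$, giving
\[
\hat{\mc{O}}_{\mc{Y}, y} \otimes_{\mc{O}_K} \mc{O}_L \cong \mc{O}_L[[u', w]]/(u'^{e_v} - \pi_L^{e_v} w^q).
\]
Using $\gcd(q, e_v) = 1$, choose integers $\alpha, \beta$ with $\alpha e_v + \beta q = 1$, and introduce the parameter $s$ via $u' = \pi_L s^q$ and $w = s^{e_v}$. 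The inverse substitution $s = (u')^\beta w^\alpha \pi_L^{-\beta}$ lies in the base-changed ring (the denominator $\pi_L^\beta$ cancels using the relation), identifying the ring with $\mc{O}_L[[s]]$, which is smooth over $\mc{O}_L$.

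The main obstacle will be the last step: verifying that the substitution actually yields a ring-theoretic isomorphism between the tensor product and $\mc{O}_L[[s]]$, i.e., that $s$ genuinely lies in $\hat{\mc{O}}_{\mc{Y}, y} \otimes_{\mc{O}_K} \mc{O}_L$ and not merely its fraction field. The Bezout identity $pe_v - qa = 1$ should be used in tandem with the relation $u'^{e_v} = \pi_L^{e_v} w^q$ to show that the desired monomial expression for $s$ reduces to an element of the ring, completing the argument.
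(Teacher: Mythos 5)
Your computational route is genuinely different from the paper's, which works purely valuation-theoretically: it identifies $\mc{O}_{\mc{Y},y}$ as the ring of $h\in K(t)$ with $v(h)\geq 0$ and poles bounded away from $y$, notes that the extension $w$ of $v$ to $L(t)$ (normalized so $w(\pi_L)=1$) is $[v_0,\,v_1(\phi)=c]$ with $c\in\ints$, and concludes after the linear change of variable $u=\phi/\pi_L^c$ that the relevant model is $\proj^1_{\mc{O}_L}$. Your explicit presentation $\hat{\mc{O}}_{\mc{Y},y}\cong\mc{O}_K[[u',w]]/(u'^{e_v}-\pi_K w^q)$ with $u'=\pi_K^p/\phi^q$, $w=\pi_K^a/\phi^{e_v}$ (after $pe_v-qa=1$) is in fact correct, as one can check by taking $\Gal(L/K)$-invariants in $\mc{O}_L[[s]]$ where $s=\pi_L^c/\phi$: the $j$-th eigenspace of $s^n$ forces $a_n\in\pi_L^{(-cn)\bmod e_v}\mc{O}_K$, and these pieces reassemble to exactly $\mc{O}_K[[w]]\oplus u'\mc{O}_K[[w]]\oplus\cdots\oplus u'^{e_v-1}\mc{O}_K[[w]]$.

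However, the gap you flag at the end is not merely a verification to be filled in --- it is genuinely unbridgeable as posed. Writing $\mc{O}_L[[s]]$ in terms of $u'=\pi_L s^{q}$ and $w=s^{e_v}$, you can check directly that
\[
\hat{\mc{O}}_{\mc{Y},y}\otimes_{\mc{O}_K}\mc{O}_L \cong \mc{O}_L[[u',w]]/(u'^{e_v}-\pi_L^{e_v} w^q) = \bigoplus_{j=0}^{e_v-1} u'^{\,j}\,\mc{O}_L[[w]] \subsetneq \mc{O}_L[[s]],
\]
and $s$ does \emph{not} lie in this subring (for $e_v=2$ one would need $s/\pi_L\in\mc{O}_L[[s]]$). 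The subring has a three-dimensional cotangent space (spanned by $u',w,\pi_L$), so it is not regular, only its integral closure $\mc{O}_L[[s]]$ is. Thus for $e_v>1$ the tensor product $\mc{O}_{\mc{Y},y}\otimes_{\mc{O}_K}\mc{O}_L$ is \emph{not} smooth over $\mc{O}_L$: the correct statement is that its \emph{normalization} (equivalently, the local ring of the normalized base change, which is $\proj^1_{\mc{O}_L}$) is smooth. The paper's own proof slips at the same point: the characterization ``$A$ consists of those $h\in L(t)$ with $w(h)\geq 0$ and poles bounded'' describes the local ring of the \emph{normal} $w$-model, not of the possibly non-normal scheme $\mc{Y}\times_{\mc{O}_K}\mc{O}_L$; the subsequent application in Proposition~\ref{PinftySNC} in fact needs the normalization of $\hat{\mc{O}}_{\mc{Y},y}[\pi_L]$ inside $\hat{\mc{O}}_{\mc{X},x}$. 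So you should restate the lemma for the normalization and rephrase the last step of your argument as identifying $\mc{O}_L[[s]]$ as that normalization (using $s^{e_v}=w$, $s\pi_L^\beta = u'^{\beta}w^{\alpha}$ with $\alpha e_v+\beta q=1$, and the fact that $s$ is integral because $s^{e_v}=w$), rather than claiming the tensor product itself equals $\mc{O}_L[[s]]$.
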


\begin{proof}
By assumption, $v = [v_0,\, v_1(\phi) = c/e_v]$ for some integer $c$
and linear polynomial $\phi$.  The ring $\mc{O}_{\mc{Y},y}$ consists of those elements of $K(t)$
whose pole divisors do not pass through $y$, that is, all rational
functions $h \in K(t)$ with $v(h) \geq 0$ and for which $D_{\alpha}$ does
not meet $y$ for any pole $\alpha$ of $h$.  Since $y$ is the
standard $\infty$-specialization, Proposition~\ref{PAllspecializations}\ref{Cphiizeroes} 
shows that this is equivalent to $v(h) \geq 0$ and $v_K(\phi(\alpha))
\geq c/e_v$ for all poles $\alpha$ of $h$.

Let $w$ be the unique extension of $v$ to
$L(t)$, renormalized so that $w(\pi_L) = 1$ (so $w = e_vv$ when
restricted to $K(t)$).  Now, $w = [v_0,\, v_1(\phi) = c]$ on $L(t)$.
 Just as above, $A := \mc{O}_{\mc{Y},y} \otimes_{\mc{O}_K} \mc{O}_L$ consists of those
rational functions $h$ in $L(t)$ such that $w(h) \geq 0$ and
$v_L(\phi(\alpha)) \geq c$ for all poles $\alpha$ of $h$.  That is, $A$ is the local ring of the
standard $\infty$-specialization on the $w$-model of $\proj^1_L$.
Making the change of variables $u = \phi/\pi_L^c$, we see that $w$ is
equivalent to the Gauss valuation on the variable $u$, which means the $w$-model of
$\proj^1_L$ is isomorphic to $\proj^1_{\mc{O}_L}$.  So all its local
rings are regular and smooth as $\mc{O}_L$-algebras.
\end{proof}

\begin{lemma}\label{Linftyfactorization}
Suppose the inductive length of $v$ is $\leq 1$.  Write the irreducible factorization of $f$ in $\mc{O}_K[t]$ as $\pi_K^af_1^{a_1} \cdots
f_r^{a_r}$ with all $f_i$ having unit leading coefficient.
Order the factors $f_i$ so that there exists $s$ with $1 \leq s \leq r$ such that $v
\not \prec v_{f_i}^{\infty}$ for $i \leq s$ and $v \prec
v_{f_i}^{\infty}$ for $i > s$.  In
$\hat{\mc{O}}_{\mc{Y}, y}$, up to multiplication by $d$th powers, the irreducible factorization of $f$ is
$$\pi_K^af_1^{a_1} \cdots f_s^{a_s} \phi_1^e,$$ where $e = a_{s+1}\deg(f_{s+1}) +
\cdots + a_r\deg(f_r)$ and $1 \leq a_i \leq d$ for all $i$.
\end{lemma}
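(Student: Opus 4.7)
The plan is to exploit the fact that each $f_i$ with $i > s$ satisfies $v \prec v_{f_i}^{\infty}$, which by Corollary~\ref{Cdominantterm} combined with $\deg(\phi_1) = 1$ forces $v(f_i) = \deg(f_i)\,\lambda_1 = v(\phi_1^{\deg(f_i)})$. The strategy is then to compare $f_i$ to $\phi_1^{\deg(f_i)}$ and show that they differ by a $d$-th power in $\hat{\mathcal{O}}_{\mathcal{Y},y}$, which reduces all ``higher'' factors of $f$ (those for $i > s$) to the single power $\phi_1^e$ with $e = \sum_{i > s} a_i \deg(f_i)$.

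Concretely, I would set $h_i := f_i/\phi_1^{\deg(f_i)} \in K(t)^{\times}$ for $i > s$ and argue that $h_i \in \hat{\mathcal{O}}_{\mathcal{Y},y}^{\times}$ as follows. Since $f_i$ and $\phi_1^{\deg(f_i)}$ are monic polynomials of the same degree, their pole orders along $D_\infty$ cancel, while the $v$-valuations match by the displayed equality above, so the divisor of $h_i$ on $\mathcal{Y}$ is purely horizontal, supported on $D_{f_i}$ and $D_{\phi_1}$. But by Proposition~\ref{PAllspecializations}\ref{Clowvalspecialization}, $D_{f_i}$ does not meet $D_\infty$ because $v \prec v_{f_i}^{\infty}$, and likewise $D_{\phi_1}$ does not meet $D_\infty$ since $v \prec v_{\phi_1}^{\infty}$ trivially. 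So neither horizontal component passes through $y$, and $h_i$ is a unit at $y$. By Lemma~\ref{Lallnthroots}, $h_i = u_i^d$ for some unit $u_i \in \hat{\mathcal{O}}_{\mathcal{Y},y}^{\times}$, yielding $f_i = u_i^d\, \phi_1^{\deg(f_i)}$.

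Substituting back and collecting the $d$-th powers gives
\[
f \;=\; \pi_K^a \prod_{i=1}^s f_i^{a_i} \cdot \Bigl(\prod_{i>s} u_i^{a_i}\Bigr)^{\!d} \cdot \phi_1^{e},
\]
which is the advertised identity modulo $d$-th powers. To enforce $1 \leq a_i \leq d$ for $i \leq s$ (and likewise for the exponent of $\pi_K$), I would write $a_i = d q_i + r_i$ with $r_i \in \{1,\dots,d\}$ and absorb $(f_i^{q_i})^d$ into the ambient $d$-th power factor. The only subtle point is the verification that $\divi(h_i)$ has no component through $y$; this is exactly where the ordering of the $f_i$ enters, via Proposition~\ref{PAllspecializations}\ref{Clowvalspecialization}, and it is what prevents spurious horizontal factors from appearing in the rewrite.
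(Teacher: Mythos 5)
Your reduction of the factors $f_i$ with $i > s$ to $d$-th-power multiples of $\phi_1^{\deg(f_i)}$ is correct and follows essentially the same divisor-level route as the paper's proof: Corollary~\ref{Cdominantterm} matches the vertical contributions, the monic poles along $D_\infty$ cancel, and Proposition~\ref{PAllspecializations}\ref{Clowvalspecialization} shows that neither $D_{f_i}$ nor $D_{\phi_1}$ passes through $y$, so $h_i$ is a unit at $y$ and hence a $d$-th power by Lemma~\ref{Lallnthroots}. The only cosmetic difference is that you invoke Proposition~\ref{PAllspecializations}\ref{Clowvalspecialization} directly, while the paper cites the ingredients (Lemma~\ref{Lgfollows} and Proposition~\ref{Pparameterize}\ref{Psmallerdiskoid} for $f_i$, Proposition~\ref{PAllspecializations}\ref{Cphiizeroes} for $\phi_1$) from which that corollary is derived.

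However, your proposal does not justify the word ``irreducible'' in the lemma's conclusion. You establish that $f$ agrees with $\pi_K^a f_1^{a_1}\cdots f_s^{a_s}\phi_1^e$ up to $d$-th powers in $\hat{\mathcal{O}}_{\mathcal{Y},y}$, but the lemma further asserts that this displayed expression is an \emph{irreducible} factorization there; that is, for each $i\leq s$ the positive horizontal part $D_{f_i}$ of $\divi(f_i)$ must be verified to remain a prime divisor in $\Spec\hat{\mathcal{O}}_{\mathcal{Y},y}$. The paper's proof has a second half devoted precisely to this: for $i\leq s$ one has $v\not\prec v_{f_i}^\infty$, so by the same specialization propositions $D_{f_i}$ \emph{does} pass through $y$, and since $D_{f_i}$ is the closure of a single closed point of $\proj^1_K$, its germ at $y$ is a prime divisor. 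This is not a throwaway remark --- the downstream use in Lemma~\ref{Linftyequivalence} analyzes divisibility of the exponents $a$, $a_i$, $e$ and needs them to index genuine pairwise-distinct prime factors --- so you should add a short paragraph handling it.
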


\begin{proof}
 Clearly there is no problem requiring $1 \leq a_i \leq d$ for $i \leq
 s$.  Now, write $v = [v_0,\, v_1(\phi_1) =  \lambda_1]$, with
 $\lambda_1 = 0$ if $v$ has inductive length $0$.  Consider $f_i$ for $i > s$.  By Corollary~\ref{Cdominantterm},
 $v(f_i) = \ell v(\phi_1)$, where $\ell = \deg(f_i)/\deg(\phi_1) =
 \deg(f_i)$.  So $\divi(f_i)$ and $\divi(\phi_1^{\ell})$ have the same
vertical part in $\Spec \hat{\mc{O}}_{\mc{Y},y}$.   Also, the divisors of $f_i$ and $\phi_1$ have the
same negative horizontal part, namely $-\ell D_{\infty}$.  Lastly, the divisors of $f_i$ and
$\phi_1$ have no positive horizontal part in $\Spec \hat{\mc{O}}_{\mc{X},x}$, by
Proposition~\ref{PAllspecializations}\ref{Cphiizeroes} in the case of $\phi_1$ and by combining
Lemma~\ref{Lgfollows} and 
Proposition~\ref{Pparameterize}\ref{Psmallerdiskoid} in the case of $f_i$.  So
$f_i^{a_i}$ is the same as $\phi_1^{\ell a_i}$ up to multiplication by
units.  Since all units are $d$th powers by Lemma~\ref{Lallnthroots}, this shows that
$$f_{s+1}^{a_{s+1}} \cdots f_r^{a_r} \sim \phi_1^e,$$ where $\sim$
means equality up to multiplication by $d$th powers in
$\hat{\mc{O}}_{\mc{Y},y}$.

It remains to show that $f_i$ is irreducible in
$\hat{\mc{O}}_{\mc{Y},y}$ for $i \leq s$.  In this case, combining 
Lemma~\ref{Lgfollows} and Proposition~\ref{Pparameterize}\ref{Psmallerdiskoid} shows that
the positive horizontal part of $\divi(f_i)$ passes through $y$, so it
is a prime divisor in $\Spec \hat{\mc{O}}_{\mc{Y},y}$.  This proves
the irreducibility.
\end{proof}

\begin{lemma}\label{Llinearoreisenstein}
Suppose the inductive length of $v$ is $\leq 1$, so $v = [v_0,\,
v_1(\phi_1) = \lambda_1]$.  Let $\alpha \in
\ol{K}$ such that
$D_{\alpha}$ meets the standard $\infty$-specialization on $\mc{Y}$.
\begin{enumerate}[\upshape (i)]
  \item If $e_v = 1$, then $D_{\alpha}$ is regular on $\mc{Y}$ if and only if $\alpha \in
    K$ or $v_K(\phi_1(\alpha)) = \lambda_1 - 1/\deg(\alpha)$.
  \item If $e_v > 1$, let $L = K[\sqrt[e_v]{\pi_K}]$, with valuation
    ring $\mc{O}_L$.  If the minimal polynomial of
    $\alpha$ over $L$ is in fact defined over $K$, then $D_{\alpha}$
    is regular over $\mc{Y} \otimes_{\mc{O}_K} \mc{O}_L$ if and only
    if $\alpha \in K$.
\end{enumerate}
\end{lemma}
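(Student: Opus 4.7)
The plan is to analyze the local equation of $D_\alpha$ at $y$ in the relevant regular two-dimensional local ring and detect when it is a regular parameter.

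For part (i): Since $e_v = 1$, we have $\lambda_1 \in \ints$, and the coordinate $x = \pi_K^{\lambda_1}/\phi_1(t)$ identifies $\mc{O}_{\mc{Y},y}$ with the regular local ring $\mc{O}_K[x]_{(x,\pi_K)}$. The hypothesis $D_\alpha \cap y \neq \emptyset$ translates (via Proposition~\ref{Pparameterize}\ref{Psmallerdiskoid}) to $\nu := \lambda_1 - v_K(\phi_1(\alpha)) > 0$, and the local equation of $D_\alpha$ at $y$ is the minimal polynomial $h(x) \in \mc{O}_K[x]$ of $x(\alpha) = \pi_K^{\lambda_1}/\phi_1(\alpha)$, monic of degree $n := \deg\alpha$. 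Since $k$ is algebraically closed, $K(\alpha)/K$ is totally ramified, so every Galois conjugate of $x(\alpha)$ has $v_K$-valuation $\nu$; the Newton polygon then gives $v_K(c_0) = n\nu$ and $v_K(c_i) \geq (n-i)\nu$ for the coefficients of $h$.

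Next I would observe that $D_\alpha$ is regular on $\mc{Y}$ iff $h \in \mf{m}_y \setminus \mf{m}_y^2$. For $n=1$ this is automatic, since the $x$-coefficient of $h$ is $1$. For $n \geq 2$, reducing modulo $\mf{m}_y^2 = (x^2, x\pi_K, \pi_K^2)$ and using $v_K(c_i) \geq (n-i)\nu \geq \nu > 0$ together with $i \geq 1$ shows that $c_i x^i \in \mf{m}_y^2$ for $1 \leq i \leq n-1$, and $x^n \in \mf{m}_y^2$; so $h \equiv c_0 \pmod{\mf{m}_y^2}$. Hence $h$ is a regular parameter iff $v_K(c_0) = 1$, i.e.\ $n\nu = 1$, i.e.\ $v_K(\phi_1(\alpha)) = \lambda_1 - 1/n$, which is the desired criterion.

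For part (ii): write $\lambda_1 = c/e_v$ in lowest terms. Lemma~\ref{Lrootofpi} gives that $\mc{Y} \otimes_{\mc{O}_K} \mc{O}_L$ is smooth at $y$, and its proof exhibits the change of variable $u = \phi_1/\pi_L^c$ under which the extended valuation becomes the Gauss valuation, so that a neighborhood of $y$ in $\mc{Y} \otimes \mc{O}_L$ is identified with a neighborhood of the standard $\infty$-specialization on $\proj^1_{\mc{O}_L}$. This places us in the setting of part (i) over $L$ with $\phi_1^L = \phi_1$ and $\lambda_1^L = c \in \ints$. Applying part (i) over $L$ gives: $D_\alpha$ is regular on $\mc{Y} \otimes \mc{O}_L$ iff $\alpha \in L$ or $v_L(\phi_1(\alpha)) = c - 1/\deg_L\alpha$. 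The hypothesis that the minimal polynomial of $\alpha$ over $L$ lies in $K[t]$ forces $\deg_L \alpha = \deg_K \alpha =: n$, and also forces $\alpha \in L \iff \alpha \in K$ (since the minimal polynomial of an element of $L$ is linear, and being in $K[t]$ then means $\alpha \in K$).

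The remaining step, and the one I expect to be the main obstacle, is to show that the auxiliary condition from part (i) over $L$ is incompatible with the hypothesis: translating via $v_L = e_v v_K$ on $K^\times$, it becomes $v_K(\phi_1(\alpha)) = \lambda_1 - 1/(ne_v) = (nc-1)/(ne_v)$, and one must rule this out using the value-group constraint $v_K(\phi_1(\alpha)) \in (1/n)\ints$ together with the arithmetic consequences of the hypothesis (which via $K(\alpha) \cap L = K$ gives $\gcd(n, e_v) = 1$) and $\gcd(c, e_v) = 1$. This last arithmetic step is where the subtle interplay between the ramification of $K(\alpha)/K$ and $L/K$ enters, and will need careful bookkeeping of value groups.
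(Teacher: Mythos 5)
Part (i) of your proposal matches the paper's own approach: both change variables (to $u = \phi_1(t)/\pi_K^{\lambda_1}$, then renamed $t$), reducing $v$ to the Gauss valuation on $\proj^1_{\mc{O}_K}$ and identifying the local equation of $D_\alpha$ at the $\infty$-specialization with the minimal polynomial of the inverted element. Your Newton-polygon calculation is simply an explicit unwinding of the ``linear or Eisenstein'' dichotomy the paper invokes, and it is correct.

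For part (ii), reducing to part (i) over $L$ is also what the paper does, but the final step you flag is a genuine obstruction, and it cannot be closed by the constraints you list. Writing $\lambda_1 = c/e_v$ in lowest terms, the Eisenstein alternative $v_K(\phi_1(\alpha)) = \lambda_1 - 1/(ne_v) = (nc-1)/(ne_v)$ lies in $\frac{1}{n}\ints$ exactly when $e_v \mid nc - 1$, which is perfectly compatible with $\gcd(n, e_v) = \gcd(c, e_v) = 1$: take $e_v = 3$, $c = 1$, $n = 4$. Concretely, for $v = [v_0,\, v_1(t) = 1/3]$ and $\alpha = \sqrt[4]{\pi_K}$, the minimal polynomial of $\alpha$ over $L = K[\sqrt[3]{\pi_K}]$ is $t^4 - \pi_K \in K[t]$, yet the local equation of $D_\alpha$ at the point above $y$ in the coordinate $u = t/\pi_L$ is $(u^{-1})^4 - \pi_L$, which \emph{is} Eisenstein over $L$; so $D_\alpha$ is regular while $\alpha \notin K$. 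The paper's proof dismisses the Eisenstein case by asserting that the relevant minimal polynomial $g$ of ``$\alpha^{-1}$'' is defined over $K$, hence cannot be Eisenstein over $L$. But after the change of variable the element inverted is $\pi_L^c/\phi_1(\alpha)$, whose minimal polynomial over $L$ involves the factor $\pi_L^c \notin K$ and so need not lie in $K[t]$ even under the lemma's hypothesis; this is exactly what the example above exploits. Before committing to this proof strategy you should re-examine the statement of part (ii) and how it feeds into Proposition~\ref{PinftySNC} (the above may propagate to the $e_v > 1$ case there), since as written it appears no arithmetic argument of the type you are attempting can succeed.
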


\begin{proof}
If $e_v = 1$, then $\lambda_1 \in \ints$, so under the change of variables $u = \phi_1(t)/\pi_K^{\lambda_1}$, we see that $D_{\alpha}$ (in
terms of $t$) is $D_{\phi_1(\alpha)/\pi_K^{\lambda_1}}$ (in terms of $u$).
 So renaming $u$ as $t$
again, we may assume $\phi_1(t) = t$ and $\lambda_1 = 0$, that is, $v = v_0$.  Thus we
may assume we are on the $v_0$-model $\proj^1_{\mc{O}_K}$ of
$\proj^1_K$.  Now, the maximal ideal $\mf{m}$ of the local ring of the
$\infty$-specialization on $\proj^1_{\mc{O}_K}$ is generated by $t^{-1}$
and $\pi_K$.  Since $D_{\alpha}$ meets the $\infty$-specialization,
Proposition~\ref{PAllspecializations}\ref{Clowvalspecialization} shows that $v_K(\alpha) < 0$.
If $g(t)$ is the monic minimal polynomial of $\alpha^{-1}$, then
since $v_K(\alpha^{-1}) > 0$, all non-leading coefficients of $g(t)$
have positive valuation.  Thus $\divi(g(t^{-1}))$ has no vertical part
on $\proj^1_{\mc{O}_K}$, and we conclude that $D_{\alpha} =
\divi(g(t^{-1}))$.  So $D_{\alpha}$ is regular if and only if
$g(t^{-1}) \notin \mf{m}^2$, which is equivalent to $g$ being linear
or Eisenstein.  This is in turn equivalent to $\alpha \in K$ or
$v_K(\alpha) = -1/\deg(\alpha)$, proving (i).

If $e_v > 1$, letting $w$ be the extension of $v$ to $\mc{O}_L(t)$, we
have that $w = [v_0,\, v_1(\phi_1) = e_v\lambda_1]$, with $e_v
\lambda_1 \in \ints$.  As in the previous paragraph, we may assume $w$
is the Gauss valuation on $L(t)$ and that $\mc{Y} \otimes_{\mc{O}_K}
\mc{O}_L$ is $\proj^1_{\mc{O}_L}$.  The maximal ideal at the point
above $y$ on $\proj^1_{\mc{O}_L}$ is generated by $t^{-1}$ and a
uniformizer $\pi_L$ of $L$.  As in the previous paragraph,
$D_{\alpha}$ is regular if and only if the minimal polynomial $g$ of
$\alpha^{-1}$ is linear or Eisenstein over $L$.  But since $g$ is
defined over $K$, it is not Eisenstein over $L$.  This proves part (ii).
\end{proof}

\begin{lemma}\label{Linftyequivalence}
Suppose the inductive length of $v$ is $\leq 1$.   Write the irreducible factorization of $f$ in $\hat{\mc{O}}_{\mc{Y}, y}$ as $\pi_K^af_1^{a_1} \cdots
f_s^{a_s} \phi_1^e$ with $v \not \prec v_{f_i}^{\infty}$ for all $i$ as in Lemma~\ref{Linftyfactorization}.  Let $\beta =
\gcd(d, a, a_1, \ldots, a_s, e)$. 
If $x \in \mc{X}$ is a point above the standard
$\infty$-specialization $y$, then the following two conditions are equivalent:
\begin{enumerate}[\upshape (a)]
    \item $e_v \mid \gcd(d, a_1, \ldots, a_s, e) / \beta$.
     \item $\hat{\mc{O}}_{\mc{X},x}$ contains an $e_v$-th root of $\pi_K$.
     \end{enumerate}
\end{lemma}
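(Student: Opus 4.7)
My plan is to proceed in three stages. First, I would reduce to the case $\beta = 1$ by exploiting the decomposition of $\mc{X}$ near $y$ into $\beta$ irreducible pieces. Writing $\beta = \gcd(d, a, a_1, \ldots, a_s, e)$ and $\gamma = \pi_K^{a/\beta} f_1^{a_1/\beta} \cdots f_s^{a_s/\beta} \phi_1^{e/\beta} \in K(Y)^{\times}$, one has $f \equiv \gamma^{\beta}$ modulo $d$-th powers, so the Kummer equation $z^d = f$ splits above $\hat{\mc{O}}_{\mc{Y},y}$ into $\beta$ covers of the form $w^{d/\beta} = \zeta \gamma$ for the $\beta$-th roots of unity $\zeta \in K$ (using Lemma~\ref{Lallnthroots}). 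Passing to the component containing $x$ replaces $d$ by $d/\beta$ and each exponent by its $\beta$-quotient; condition~(a) becomes $e_v \mid \gcd(d,a_1,\ldots,a_s,e)$, and condition~(b) is unchanged. So I may assume $\beta = 1$, and set $m \colonequals \gcd(d, a_1, \ldots, a_s, e)$; note that $\gcd(a, e_v) = 1$ in this setting once $e_v \mid m$.

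For (a) $\Rightarrow$ (b), I would give an explicit construction. Assume $e_v \mid m$, so $e_v \mid d$, $e_v \mid a_i$ for $i \leq s$, and $e_v \mid e$. Writing $a_i = e_v a_i'$, $e = e_v e'$, set $\delta \colonequals \prod_{i \leq s} f_i^{a_i'}\,\phi_1^{e'}$. Then in $\hat{\mc{O}}_{\mc{Y},y}$, $f = \pi_K^a \delta^{e_v}$ up to a $d$-th power unit; since $e_v \mid d$, such units are $e_v$-th powers by Lemma~\ref{Lallnthroots}, so we may absorb the unit into $\delta$. Then $w \colonequals z^{d/e_v}/\delta$ satisfies $w^{e_v} = \pi_K^a$, and $w$ is integral over $\hat{\mc{O}}_{\mc{Y},y}$, hence lies in $\hat{\mc{O}}_{\mc{X},x}$ (there is only one point above $y$ in the $\beta = 1$ case). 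Choosing $p, q \in \ints$ with $ap + e_v q = 1$ via B\'ezout, the element $u \colonequals w^p \pi_K^q$ satisfies $u^{e_v} = \pi_K$, and $u \in \hat{\mc{O}}_{\mc{X},x}$ by the normality of $\hat{\mc{O}}_{\mc{X},x}$.

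For (b) $\Rightarrow$ (a), the central tool is a Kummer-theoretic analysis in $\Frac \hat{\mc{O}}_{\mc{X},x}/\Frac \hat{\mc{O}}_{\mc{Y},y}$. Set $F \colonequals \Frac \hat{\mc{O}}_{\mc{Y},y}$. The key intermediate fact is that $T^{e_v} - \pi_K$ is irreducible over $F$: if $\pi_K$ were a $p$-th power in $F$ for some prime $p \mid e_v$, then by normality $\hat{\mc{O}}_{\mc{Y},y}$ would contain $\mc{O}_{K_p}$ for $K_p = K(\pi_K^{1/p})$, forcing $\hat{\mc{O}}_{\mc{Y},y} \otimes_{\mc{O}_K} \mc{O}_{K_p}$ to decompose as a product of $p$ copies of $\hat{\mc{O}}_{\mc{Y},y}$; but the argument of Lemma~\ref{Lrootofpi} (applied to the intermediate extension $K_p/K$) shows that this tensor product is a local ring, a contradiction. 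The Capelli case $4 \mid e_v$ is handled analogously. Given this irreducibility, $\Frac \hat{\mc{O}}_{\mc{X},x}/F$ is Galois cyclic of degree $d$ (rank $d$ in the $\beta = 1$ case) and is generated by $z$ with $z^d = f$. The subfield $F(u)$ has degree exactly $e_v$, so $e_v \mid d$, and by uniqueness of degree-$e_v$ subextensions in cyclic Kummer extensions, $F(u) = F(f^{1/e_v})$. By Kummer theory, $\pi_K \equiv f^c \pmod{F^{\times e_v}}$ with $\gcd(c, e_v) = 1$; write $\pi_K = f^c g^{e_v}$. Reading off $v_E(\pi_K) = c\,v_E(f) + e_v\,v_E(g)$ at the prime Weil divisors through $y$ (the $v$-component, $D_{\infty}$, and $D_{f_i}$ for $i \leq s$) and using $\gcd(c, e_v) = 1$ and $\gcd(\ell, e_v) = 1$ (where $\lambda_1 = \ell/e_v$), I would deduce $v(f) \in \ints$ (forcing $e_v \mid e$) and $e_v \mid a_i$ for $i \leq s$. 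Combined with $e_v \mid d$, this gives $e_v \mid m$.

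The main obstacle is the Key Lemma — showing $T^{e_v} - \pi_K$ is irreducible over $F$ — because $\hat{\mc{O}}_{\mc{Y},y}$ is a \emph{singular} 2-dimensional local ring (when $e_v > 1$), so the class group is nontrivial and one cannot argue na\"ively via unique factorization. The paper's Lemma~\ref{Lrootofpi} only covers the extension of degree exactly $e_v$, so a small generalization to intermediate prime-degree extensions $K_p/K$ is required; tracking locality of the tensor product $\hat{\mc{O}}_{\mc{Y},y} \otimes_{\mc{O}_K} \mc{O}_{K_p}$ is the technical heart of the proof.
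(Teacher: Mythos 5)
Your proof follows the same Kummer-theoretic route as the paper: reduce to $\beta = 1$ by splitting the Kummer cover into $\beta$ components, handle (a) $\Rightarrow$ (b) by observing that $f$ and $\pi_K$ represent the same class modulo $e_v$-th powers in $\hat{\mc{O}}_{\mc{Y},y}$ once $\beta = 1$ (your explicit $\delta$ and B\'ezout step make this concrete), and handle (b) $\Rightarrow$ (a) by recognizing $F(\pi_L)$ as the unique degree-$e_v$ subextension of the cyclic cover $F(z)/F$ and then comparing $\pi_K$ and $f$ modulo $F^{\times e_v}$ via Kummer theory and reading off exponents at the Weil divisors through $y$. Both your computations and the conclusions ($e_v \mid d$, $e_v \mid a_i$ for $i \le s$, $e_v \mid e$) check out, in particular the derivation of $e_v \mid e$ from $v(f) \in \ints$ and $\gcd(\ell, e_v) = 1$.

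The genuine difference is your Key Lemma: you explicitly establish that $T^{e_v} - \pi_K$ is irreducible over $F = \Frac \hat{\mc{O}}_{\mc{Y},y}$, which is exactly what is needed to assert that $F(\pi_L)$ has degree $e_v$ over $F$. The paper simply states that $A = \hat{\mc{O}}_{\mc{Y},y}[\pi_L]$ is ``the unique $\ints/e_v$-subextension'' without comment. You are right to flag this: since $\hat{\mc{O}}_{\mc{Y},y}$ is singular when $e_v > 1$, this is not a UFD argument. Your suggested route through $\hat{\mc{O}}_{\mc{Y},y} \otimes_{\mc{O}_K} \mc{O}_{K_p}$ works, and in fact you do not need to extend Lemma~\ref{Lrootofpi} to the intermediate extension $K_p/K$: simply take $L$ of degree $e_v$ over $K$ containing $K_p$, and note that $\mc{O}_{K_p} \otimes_{\mc{O}_K} \mc{O}_L$, which decomposes as a product of $p$ copies of $\mc{O}_L$ (using the $p$-th roots of unity) and hence has zero divisors, injects into $\hat{\mc{O}}_{\mc{Y},y} \otimes_{\mc{O}_K} \mc{O}_L$ by flatness of $\mc{O}_L$ over $\mc{O}_K$; but the latter is regular (Lemma~\ref{Lrootofpi}, after completing) and hence a domain. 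An even more direct justification is the fact already invoked in the proof of Lemma~\ref{Ltotallyarithmetic}, that $e_v$ is the \emph{smallest} positive integer $n$ with $nD$ principal on $\Spec \hat{\mc{O}}_{\mc{Y},y}$: if $\pi_K = g^p$ with $p > 1$ dividing $e_v$, then $\divi(g) = (e_v/p)D$ would be a smaller principal multiple, a contradiction. Either way, you are filling in a detail the paper passes over; the Capelli case $4 \mid e_v$ is handled exactly as you say, since $F$ contains a primitive fourth root of unity.

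One small terminological caveat in your (a) $\Rightarrow$ (b) direction: $\delta = \prod_i f_i^{a_i'} \phi_1^{e'}$ has a pole along $D_\infty$, so it is not a unit in $\hat{\mc{O}}_{\mc{Y},y}$; the correct statement is that $w := z^{d/e_v}/\delta$ satisfies $w^{e_v} = (\text{unit}) \cdot \pi_K^a$ in $\Frac \hat{\mc{O}}_{\mc{X},x}$ and is therefore integral over $\hat{\mc{O}}_{\mc{Y},y}$ (and hence lies in $\hat{\mc{O}}_{\mc{X},x}$ by normality), which is what you actually use. The rest of the argument goes through.
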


\begin{proof}
First, observe that $\hat{\mc{O}}_{\mc{X},x}$ is given by normalizing
$\hat{\mc{O}}_{\mc{Y},y}$ in the function field given
by $$\frac{\Frac(\hat{\mc{O}}_{\mc{Y},y})[z]}{(z^{d/\beta} - \pi_K^{a/\beta}
  f_1^{a_1/\beta} \cdots f_s^{a_s/\beta} \phi_1^{e/\beta})}.$$  By replacing $a$,
$d$, the $a_i$, and $e$ by their quotients by $\beta$, we may assume that
$\beta = 1$.

Now, if condition (a) holds, then the field extension of
$\Frac \hat{\mc{O}}_{\mc{Y},y}$ given by taking an $e_v$-th root of
$f$ is the same as that given by taking an $e_v$-th root of $\pi_K^a$,
which, since $\beta = 1$, is the same as that given by extracting an
$e_v$-th root of $\pi_K$.  Also, since $e_v \mid d$, this field
extension is contained in $\Frac \hat{\mc{O}}_{\mc{X},x}$.  Since
$\hat{\mc{O}}_{\mc{X},x}$ is normal, it contains an $e_v$-th root of
$\pi_K$, proving condition (b).

On the other hand, suppose (b) holds, so $\hat{\mc{O}}_{\mc{X},x}$ contains an $e_v$-th
root of $\pi_K$, which
we call $\pi_L$.  Since
$\hat{\mc{O}}_{\mc{X},x} / \hat{\mc{O}}_{\mc{Y},y}$ is a
$\ints/d$-extension, the extension
$A / \hat{\mc{O}}_{\mc{Y},y}$, where $A = \hat{\mc{O}}_{\mc{Y},y}[\pi_L]$,
is the unique $\ints/e_v$-subextension of
$\hat{\mc{O}}_{\mc{X},x}/\hat{\mc{O}}_{\mc{Y},y}$.  So $e_v \mid d$,
and $A$
is isomorphic to the normalization of $\hat{\mc{O}}_{\mc{Y},y}$ in the fraction
field extension given by taking an $e_v$-th root of $f$, which by Kummer
theory, in turn implies that some prime-to-$e_v$-th power of $\pi_K$ equals $f$ up to
multiplication by $e_v$-th powers in $\hat{\mc{O}}_{\mc{Y},y}$.  This
shows that $e_v \mid a_i$ for all $i$, and $e_v \mid e$, and thus
condition (a) holds since $\beta=1$.  This completes the proof.
   \end{proof}
   
The following proposition is the main result of \S\ref{Sinfty}, and
its proof uses the lemmas stated above.

\begin{prop}\label{PinftySNC}
Maintain the notation and assumptions of
Lemma~\ref{Linftyequivalence}. Then $\mc{X}$
is regular with normal crossings at $x$ if and only if condition (i),
as well as one of conditions (ii), (iii), or (iv) below holds:
\begin{enumerate}[\upshape (i)]
 \item $e_v \mid \gcd(d, a_1, \ldots, a_s, e) / \beta$ (this is
   condition (a) of Lemma~\ref{Linftyequivalence}.)
 \item $s = 0$ (i.e., up to $d$-th
  powers, $f = \pi_K^a \phi_1^e$).
\item $s = 1$, $f_1$ is linear, and $d/\gcd(d, a_1)$
  is relatively prime to $d/\gcd(d, e_vv(f))$.
\item $s = 1$ with $e_v = 1$, $d = 2\beta$ and $2\beta \mid v(f)$, $f_1$
  quadratic, and $v_K(\phi_1(\alpha_1)) = \lambda_1 -
  1/2$, where $\alpha_1$ is any root of $f_1$.
\end{enumerate}
If conditions (i) and (ii) hold, then $x$ is furthermore smooth on the
reduced special fiber of $\mc{X}$.
\end{prop}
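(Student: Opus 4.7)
The necessity of (i) follows immediately from Lemma~\ref{Ltotallyarithmetic}: if $x$ is regular, then $\hat{\mc{O}}_{\mc{X},x}$ contains an $e_v$-th root $\pi_L$ of $\pi_K$, which by Lemma~\ref{Linftyequivalence} is equivalent to (i). I henceforth assume (i). Mimicking the splitting argument in the proof of Proposition~\ref{Pbranchpointramindex}, the cover $\mc{X} \to \mc{Y}$ decomposes into $\beta$ isomorphic disjoint components above a formal neighborhood of $y$, so I reduce to $\beta = 1$ by dividing $d,\,a,\,a_i,\,e$ through; condition (i) then reads $e_v \mid \gcd(d, a_1, \ldots, a_s, e)$.

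Set $L = K(\pi_L)$. By Lemma~\ref{Lrootofpi}, $\hat{\mc{O}}_{\mc{Y}_L, y_L}$ is smooth over $\mc{O}_L$, and $\hat{\mc{O}}_{\mc{X}, x}$ is a connected component of the normalization of $\hat{\mc{O}}_{\mc{Y}_L, y_L}$ in a cyclic Kummer extension. Since $e_v$ divides all exponents appearing in (i), up to $d$-th powers of units one has $f = g^{e_v}$ in $\hat{\mc{O}}_{\mc{Y}_L, y_L}$, with $g := \pi_L^{a}\, f_1^{a_1/e_v} \cdots f_s^{a_s/e_v}\, \phi_1^{e/e_v}$. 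After absorbing $e_v$-th roots of unity into $z$, the equation $z^d = f$ collapses to $z^{d/e_v} = g$, and I apply Proposition~\ref{P:RegandNormalize} to the resulting cyclic cover of the smooth local surface $\Spec \hat{\mc{O}}_{\mc{Y}_L, y_L}$. Its branch divisor is supported on the $D_{f_i}$ (those with $a_i/e_v \not\equiv 0 \pmod{d/e_v}$) together with the vertical divisor $V(\pi_L)$ (when $a \not\equiv 0 \pmod{d/e_v}$).

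The case analysis then goes as follows. If $s = 0$ (case (ii)), the branch is empty or purely vertical and irreducible, so Proposition~\ref{P:RegandNormalize}(i) delivers regular normal crossings; the non-nodal assertion in that part, applied at the smooth point $y_L$, also yields the stated smoothness of the reduced special fiber. If $s = 1$ with $f_1$ linear (case (iii)), then $f_1$ gives a regular prime horizontal branch component on $\mc{Y}_L$ and we are in Proposition~\ref{P:RegandNormalize}(ii)(b); a short valuation computation identifies the horizontal and vertical ramification indices as $d/\gcd(d, a_1)$ and $d/\gcd(d, e_v v(f))$, matching the coprimality hypothesis of (iii). If $s = 1$ with $f_1$ quadratic (case (iv)), then $e_v = 1$ forces $L = K$, and Lemma~\ref{Llinearoreisenstein}(i) shows that the condition $v_K(\phi_1(\alpha_1)) = \lambda_1 - 1/2$ is equivalent to $D_{f_1}$ being a regular horizontal divisor on $\mc{Y}$ at $y$; the hypotheses $d = 2\beta$ and $2\beta \mid v(f)$ force the residual cover to have degree $2$ with no vertical branch, placing us in Proposition~\ref{P:RegandNormalize}(ii)(a) with $\delta = s = 2$.

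Conversely, if (i) holds but none of (ii)--(iv) does, the branch divisor of the residual cover fails the combinatorial constraints of Proposition~\ref{P:RegandNormalize}(ii): $s \geq 2$ yields too many horizontal components, $s = 1$ with $\deg f_1 \geq 3$ yields a horizontal branch point of too-high residue degree, $\deg f_1 = 2$ without the Eisenstein-type condition yields a non-regular horizontal divisor (Lemma~\ref{Llinearoreisenstein}), and any failure of the numerical hypotheses in (iii) or (iv) violates the ramification or coprimality requirements of Proposition~\ref{P:RegandNormalize}(ii). The main obstacle is the delicate analysis in case (iv): one must verify that the specific half-integer valuation condition is precisely what forces $f_1$ to remain irreducible as a Weil divisor on $\mc{Y}$ and to be \emph{regular} there, and one must rule out configurations in which $f_1$ might split over a tamely ramified subextension of $L$ and so appear as two (possibly non-regular) factors on $\mc{Y}_L$.
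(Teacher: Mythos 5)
Your proposal follows the paper's proof essentially line for line: reduce to $\beta = 1$ by dividing through, invoke Lemma~\ref{Ltotallyarithmetic} and Lemma~\ref{Linftyequivalence} for necessity of (i), pass to $A = \hat{\mc{O}}_{\mc{Y},y}[\pi_L]$ via Lemma~\ref{Lrootofpi}, rewrite the cover as $z^{d/e_v} = f^{1/e_v}$, and feed the branch-divisor data into Proposition~\ref{P:RegandNormalize}, sorting cases (ii)/(iii)/(iv) by whether $s=0$, $e_{\rm vert}>1$, or $e_{\rm vert}=1$ with $f_1$ linear or quadratic.

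One remark on your closing paragraph: the worry about $f_1$ possibly splitting over $L$ in case (iv) is vacuous, since (iv) explicitly has $e_v = 1$, hence $L = K$ and $A = \hat{\mc{O}}_{\mc{Y},y}$. Once $e_v = 1$, Lemma~\ref{Llinearoreisenstein}(i) identifies regularity of $D_{\alpha_1}$ exactly with $f_1$ linear or $v_K(\phi_1(\alpha_1)) = \lambda_1 - 1/\deg(f_1)$, and Proposition~\ref{P:RegandNormalize}(ii)(a) with residue degree $s = \deg(f_1)$ forces $\deg(f_1) = 2$ and $d/e_v = 2$; there is no residual irreducibility issue to rule out. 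The subextension concern is only relevant when $e_v > 1$, and in that branch Lemma~\ref{Llinearoreisenstein}(ii) (applied because the minimal polynomial is defined over $K$) shows that $D_{\alpha_1}$ is regular over $\mc{Y} \otimes \mc{O}_L$ only if $\alpha_1 \in K$, so the case reduces to (iii) rather than leaving a gap.
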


\begin{proof}
As in Lemma~\ref{Linftyequivalence}, replacing $a$,
$d$, the $a_i$, and $e$ by their quotients by $\beta$ (which replaces
$f$ by $f^{1/\beta}$ and thus does not change the quantities in part (iii)), we may assume that
$\beta = 1$.

Next, note
that if $x$ is regular, then Lemma~\ref{Ltotallyarithmetic} implies condition (b) of
Lemma~\ref{Linftyequivalence}.  By Lemma~\ref{Linftyequivalence}, this implies
condition (i).  To finish the proof, we will show,
assuming condition (i),
that $x$ being regular with normal crossings is equivalent to one of conditions (ii), (iii),
or (iv) (and that under condition (ii), $x$ is non-nodal).

So assume condition (i).  By Lemma~\ref{Linftyequivalence},
$\hat{\mc{O}}_{\mc{X},x}$ contains an $e_v$-th root of $\pi_K$, say $\pi_L$.
By Lemma~\ref{Lrootofpi}, $A  := \hat{\mc{O}}_{\mc{Y},y}[\pi_L]$ is in fact regular and smooth as
  an $\mc{O}_L$-algebra where $\mc{O}_L := \mc{O}_K[\pi_L]$.  Now, $\hat{\mc{O}}_{\mc{X},x}$ is the
normalization of $A$ in the field
$\Frac(A)[z]/(z^{d/e_v} - f^{1/e_v})$, where
\begin{equation}\label{Eupstairs}
  f^{1/e_v} := \pi_L^af_1^{a_1/e_v} \cdots
  f_s^{a_s/e_v}\phi_1^{e/e_v}.
\end{equation}

Let us examine the ramification divisor $B$ of the degree $d/e_v$ morphism
$\Spec \hat{\mc{O}}_{\mc{X},x}
\to \Spec A$, beginning with the horizontal part. Since $d/e_v \mid
\deg(f^{1/e_v})$, the negative part of $\divi(f^{1/e_v})$ does not contribute
to horizontal ramification.  So if condition (ii) holds, there is no horizontal
ramification, and Proposition~\ref{P:RegandNormalize}(i) shows that $x$
is regular and non-nodal in $\mc{X}$.

On the other hand, if condition (ii) fails,
then Proposition~\ref{PAllspecializations}\ref{Clowvalspecialization} shows that $\divi(f_i)$ appears with nonzero multiplicity in $\divi(f^{1/_{e_v}})$ in $\Div(\Spec(A))$. Furthermore, the multiplicity of each $\divi(f_i)$ in $\divi(f^{1/_{e_v}})$ is not divisible by $d/e_v$ in $\Div(\Spec(A))$, and thus $\divi(f_i)$ is in $B$.  In this case, Proposition~\ref{P:RegandNormalize}(ii)
shows that $x$ is regular with normal crossings only if the horizontal
part of $B$ is
irreducible, which implies $s = 1$.  The horizontal part of $B$ has
ramification index
$$e_{\rm horiz} := \frac{d/e_v}{\gcd(d/e_v, a_1/e_v)} =
\frac{d}{\gcd(d, a_1)}$$ in this case.  Assuming $s = 1$, it remains to show that $x$ is
regular with normal crossings if and only if condition (iii) or (iv) holds.

Let $w$ be the extension of $v$ to $A$, thought of as a Mac Lane
valuation on $L(t)$ with $L = \Frac \mc{O}_L$ (i.e., so that $w(\pi_L)
= 1$).  Note that $e_w = 1$ by
construction, since $w$ is unramified over $v$ so $e_w = (1/e_v)e_v$. Now, the ramification
index of $\Spec \hat{\mc{O}}_{\mc{X},x} \to \Spec A$ along the special
fiber is $$e_{\rm vert} := \frac{d/e_v}{\gcd(d/e_v, e_w w(f^{1/e_v}))} =
\frac{d}{\gcd(d, e_w w(f))} = \frac{d}{\gcd(d, w(f))} =
\frac{d}{\gcd(d, e_v v(f))}.$$
If $e_{\rm vert} > 1$, then $B$ has a vertical part, so by
Proposition~\ref{P:RegandNormalize}(ii)(b), $\mc{X}$ is regular with
normal crossings at $x$ if and only if $f_1$ is linear and $e_{\rm
  vert}$ is relatively prime to $e_{\rm horiz}$.  This is true if and
only if condition
(iii) holds.

On the other hand, suppose $e_{\rm vert} = 1$, which means $d \mid e_v
v(f)$ and $B$ has only a
horizontal part.  By Proposition~\ref{P:RegandNormalize}(ii), $B$ must
be irreducible and regular for
$\mc{X}$ to be regular with normal crossings at $x$.  This requires
first that $f_1$ is
irreducible over $\mc{O}_L$, which means that the minimal polynomial
of any root $\alpha_1$ of $f_1$ over $L$ is just $f_1$.  In
particular, $B = D_{\alpha_1}$.

If $e_v > 1$, then Lemma~\ref{Llinearoreisenstein}(ii) shows that $D_{\alpha_1}$ is
regular on $\Spec A$ if and only if $f_1$ is linear, which is
equivalent to condition (iii) holding.  By
Proposition~\ref{P:RegandNormalize}(ii)(a), this is in fact equivalent
to $\mc{X}$ being regular with normal crossings at $x$.

If $e_v = 1$, then Lemma~\ref{Llinearoreisenstein}(i) shows that $D_{\alpha_1}$ is regular on $\Spec A$ if and only if $f_1$
is linear or $v_K(\phi_1(\alpha_1)) = \lambda_1 - 1/\deg(f_1)$.  Now, 
$f_1$ is linear if and only if condition (iii) holds, and this again
is equivalent to $\mc{X}$ being regular with normal crossings at $x$
as in the previous paragraph.  On the other
hand, if $\deg(f_1) > 1$, then
Proposition~\ref{P:RegandNormalize}(ii)(a) shows that $\mc{X}$ is
regular with normal crossings at $x$ if and only if $D_{\alpha_1}$ is
regular, $\deg(f_1) = 2$, and $d/e_v = d = 2$ and $d \mid e_vv(f) = v(f)$.  This is exactly condition
(iv), completing the proof.
%
%To prove the last statement in the proposition, we note that if
%conditions (i) and (ii) hold, then
%Proposition~\ref{P:RegandNormalize}(iii) shows that $\mc{X}$ has normal
%crossings at $x$, covering case (a) of the normal crossings criterion.  If conditions (i) and (iii) hold,
%then Corollary~\ref{Clinear} applied to $\Spec \hat{\mc{O}}_{\mc{X},x}
%\to \Spec A$ as a morphism of local arithmetic surfaces over
%$\mc{O}_L$ shows that $\mc{X}$ has normal crossings
%at $x$ if and only if $f_1$ is linear, or $f_1$ is 
%quadratic over $\mc{O}_L$ (and thus over $\mc{O}_K$) and $d/e_v = 2$.  Since $e_v = 1$ in this
%case according to condition (iii), this
%covers cases (b) and (c) of the normal crossings criterion.
%The proof is complete.
\end{proof}

\begin{corollary}\label{CinftySNC}
If $x$ is regular in $\mc{X}$, then the geometric ramification index
above $y$ in $\mc{X} \to \mc{Y}$ is divisible by $e_v$.  The divisibility is
strict if and only if $s = 1$ in
Proposition~\ref{PinftySNC} above, that is, if there exists $i$ with
$v \not \prec v_{f_i}^{\infty}$.
\end{corollary}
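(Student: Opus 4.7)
The plan is to adapt the geometric ramification index computation carried out in the proof of Proposition~\ref{Pbranchpointramindex} to the $\infty$-specialization setting, using the intermediate cover $\Spec A \to \mc{Y}$ with $A := \hat{\mc{O}}_{\mc{Y},y}[\pi_L]$ and $\pi_L := \sqrt[e_v]{\pi_K}$ from the proof of Proposition~\ref{PinftySNC}. Since $x$ is regular, Proposition~\ref{PinftySNC}(i) and Lemma~\ref{Linftyequivalence} ensure that $\pi_L \in \hat{\mc{O}}_{\mc{X},x}$, so the tower $\hat{\mc{O}}_{\mc{Y},y} \subseteq A \subseteq \hat{\mc{O}}_{\mc{X},x}$ is well-defined. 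I would first reduce to $\beta = 1$ as in the proof of Proposition~\ref{PinftySNC}, at which point condition~(i) forces $e_v \mid d$ and $e_v \mid a_i$ for each $i \le s$.

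The map $\Spec A \to \mc{Y}$ is totally tamely ramified of degree $e_v$ along the special fiber, and the cover $\mc{X} \to \Spec A$ is the degree $d/e_v$ Kummer extension $z^{d/e_v} = f^{1/e_v}$. Its vertical ramification index along the special fiber is $(d/e_v)/\gcd(d/e_v,\,v(f))$, so the multiplicity of the special fiber component of $\mc{X}$ through $x$, as an $\mc{O}_K$-scheme, is $\tilde{N} = d/\gcd(d/e_v,\,v(f))$. For $\#\nu^{-1}(y)$, a standard Kummer-theoretic count in the regular local ring $A$ gives $\gcd(d/e_v,\,M)$, where $M$ is the gcd of the coefficients of $\divi(f^{1/e_v})$ in $\Div(\Spec A)$. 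The prime divisors of $A$ through $y$ are the special fiber, $D_\infty$, and $D_{f_i}$ for $i \le s$, with respective coefficients $v(f)$, $-\deg(f)/e_v$, and $a_i/e_v$. Because $d \mid \deg(f)$, the $D_\infty$ contribution is already a multiple of $d/e_v$ and drops out of the gcd, yielding $\#\nu^{-1}(y) = \gcd(d/e_v,\,v(f),\,a_1/e_v,\,\ldots,\,a_s/e_v)$.

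Combining with the formula $\text{geom.\ ram.\ above } y = de_v/(\tilde{N}\cdot\#\nu^{-1}(y))$ (as derived in the proof of Proposition~\ref{Pbranchpointramindex}), I obtain after simplification
\[
  \text{geom.\ ram.\ above } y \;=\; e_v \cdot \frac{\gcd(d/e_v,\,v(f))}{\gcd(d/e_v,\,v(f),\,a_1/e_v,\,\ldots,\,a_s/e_v)},
\]
which is always an integer multiple of $e_v$ (the denominator divides the numerator), proving the divisibility. When $s = 0$ (case~(ii) of Proposition~\ref{PinftySNC}) the numerator and denominator coincide and the ratio is $1$, so divisibility is not strict. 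When $s = 1$, strict divisibility is equivalent to $\gcd(d/e_v,\,v(f)) \nmid a_1/e_v$, which I would verify in each of cases~(iii) and (iv). In case~(iii), a prime-by-prime analysis using the coprimality of $d/\gcd(d,a_1)$ and $d/\gcd(d,e_vv(f))$ together with $1 \le a_1 < d$ (forcing $d \nmid a_1$, so some prime $p$ has $v_p(d/e_v) > v_p(a_1/e_v)$, whence coprimality gives $v_p(d/e_v) \le v_p(v(f))$) produces such a prime $p$. In case~(iv), the constraints $e_v = 1$, $d = 2$, $a_1 = 1$, and $v(f)$ even directly give $\gcd(2,v(f))/\gcd(2,v(f),1) = 2/1 = 2$.

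The hard part will be the Kummer-theoretic count of $\#\nu^{-1}(y)$: one must carefully identify the divisor of $f^{1/e_v}$ in $\Div(\Spec A)$, in particular tracking which prime divisors pass through $y_A$ and verifying that the polar contribution along $D_\infty$ is a multiple of $d/e_v$ so that it does not obstruct further splitting of the Kummer cover.
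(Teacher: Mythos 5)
Your plan is sound and yields the same conclusion as the paper, but by a genuinely more computational route. Both arguments pass through the intermediate cover $\Spec A = \Spec \hat{\mc{O}}_{\mc{Y},y}[\sqrt[e_v]{\pi_K}]$, and both use Lemma~\ref{Lrootofpi} to see that $\Spec A$ is smooth over $\mc{O}_L$. The difference is what happens next. The paper observes that $\Spec A \to \Spec \hat{\mc{O}}_{\mc{Y},y}$ is unramified along the special fiber with one point over $y$, so its geometric ramification index is $e_v$, and then simply applies Corollary~\ref{Cregularbranchindex} to $\Spec \hat{\mc{O}}_{\mc{X},x} \to \Spec A$ to see that the second index is $>1$ exactly when there is horizontal ramification, i.e.\ when $s=1$. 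You instead push through the explicit formula $e_v\cdot\gcd(d/e_v, v(f))/\gcd(d/e_v, v(f), a_1/e_v,\ldots,a_s/e_v)$ by mimicking the computation in the proof of Proposition~\ref{Pbranchpointramindex}: compute $\widetilde{N}$, compute $\#\nu^{-1}(y)$ via the gcd of the divisor coefficients of $f^{1/e_v}$ in $\Div(\Spec A)$, and divide. Your bookkeeping checks out --- the $D_\infty$ coefficient is killed by $d\mid\deg f$, the $D_{\phi_1}$ contribution is irrelevant because $D_{\phi_1}$ does not pass through $y_A$, and your prime-by-prime analysis for condition (iii) and your direct check for (iv) both give the right verdict. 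What your approach buys is the exact geometric ramification index rather than just the ``$>1$ iff $s=1$'' dichotomy, which is genuinely more information; what it costs is the Kummer-theoretic count that you rightly flag as the delicate step, plus a case split over (ii)/(iii)/(iv) that Corollary~\ref{Cregularbranchindex} lets the paper avoid entirely.

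Two small points to tighten. First, the formula $\text{geom.\ ram} = de_v/(\widetilde{N}\cdot\#\nu^{-1}(y))$ is borrowed from the proof of Proposition~\ref{Pbranchpointramindex}, where it relies implicitly on the fact that each $x$ above $y$ lies on a unique irreducible component of the reduced special fiber of $\mc{X}$ (so the reduced cover splits evenly into $\#\nu^{-1}(y)$ pieces of equal degree). In the finite-cusp setting that is supplied by Corollary~\ref{Csmoothoncomponent}; in the $\infty$-specialization setting you would need to argue it from scratch (it does hold because $\hat{\mc{O}}_{\mc{X},x}$ is local and there is a unique prime of $\Spec A$ over the special fiber, but it should be said). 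Second, to get $\pi_L\in\hat{\mc{O}}_{\mc{X},x}$ from mere regularity of $x$ you should cite Lemma~\ref{Ltotallyarithmetic} directly rather than going through Proposition~\ref{PinftySNC}(i), since the latter is stated as an equivalence with ``regular \emph{with normal crossings}'' and the corollary's hypothesis is only that $x$ is regular. Neither issue is fatal, but both are gaps in the write-up as proposed.
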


\begin{proof}
An $e_v$-th root of $\pi_K$ in $\hat{\mc{O}}_{\mc{X},x}$ is
  guaranteed by Lemma~\ref{Ltotallyarithmetic}, so
  let $A = \hat{\mc{O}}_{\mc{Y},y}[\sqrt[e_v]{\pi_K}] \subseteq
  \hat{\mc{O}}_{\mc{X},x}$.  Since $\Spec A \to \Spec
  \hat{\mc{O}}_{\mc{Y},y}$ is a Kummer cover given by extracting an
  $e_v$-th root of $\pi_K$, and $e_v \mid e_vv(\pi_K)$, the cover is
  unramified along the special fiber.  On the other hand, $A$ is a
  local ring, so $\Spec A$ contains only one point above $y$.  So the
  geometric ramification index of $\Spec A \to \Spec
  \hat{\mc{O}}_{\mc{Y},y}$ above $y$ is $e_v$.

 Now consider $\Spec \hat{\mc{O}}_{\mc{X},x} \to \Spec A$.  Then $s =
 1$ if and only if this morphism has non-trivial horizontal ramification
 divisor (because $\Spec A \to \Spec \hat{\mc{O}}_{\mc{Y},y}$ clearly
 does not have horizontal ramification).  By Lemma~\ref{Lrootofpi}, $\Spec A$ is regular and has
 smooth special fiber as an $\mc{O}_{K[\sqrt[e_v]{\pi_K}]}$-scheme.
 By Corollary~\ref{Cregularbranchindex}, the geometric ramification
 index $c$ of the point above $y$ in $\Spec \hat{\mc{O}}_{\mc{X},x} \to
 \Spec A$ is greater than $1$ if and only if $s = 1$.  Thus the geometric ramification index
 above $y$ in $\mc{X} \to \mc{Y}$ is $ce_v$, proving the corollary.
\end{proof}

\subsection{$\infty$-crossings}\label{Sinftycrossing}
Let $V$ be a finite set of Mac Lane valuations with exactly \emph{two}
minimal valuations $v$ and $v'$. Let $\mc{Y}$ be the $V$-model of $\P^1_K$ and let $y \in \mc{Y}$ be the intersection of the $v$ and $v'$ components in $\mc{Y}$.  Assume further that $v = [v_0,\, v_1(t - c)
= \mu]$ and $v' = [v'_0 := v_0, v'_1(t - c') = \mu']$, for some $c, c'
\in \mc{O}_K$ with $v_K(c - c') = 0$ and $\mu, \mu' > 0$.  
  We call $y$ the \emph{$\infty$-crossing} on $\mc{Y}$, since $D_{\infty}$ meets the special fiber of the
$V$-model $\mc{Y}$ of $\proj^1_K$ at the intersection point $y$ of the $v$- and $v'$-components by Corollary~\ref{CAllspecializations}\ref{Cinftyspectwominimal}.

%Let $D_v$, $D_{v'}$ be the vertical prime divisors on $\mc{Y}$
%corresponding to $v$ and $v'$, respectively.
Assume that we can write
$f = \pi_K^a jj'$ for monic $j$ and $j'$ in $\mc{O}_K[t]$ (here $j'$
does \emph{not} mean the derivative of $j$), with every irreducible factor $\psi$ of $j$ satisfying $v \prec
v_{\psi}^{\infty}$ and every irreducible factor $\psi'$ of $j'$
satisfying $v' \prec v_{\psi'}^{\infty}$.
%This assumption implies that the only horizontal part
%of $\divi(f)$ that passes through $y$ is supported on $D_{\infty}$.
Assume further that $d \mid \deg(f)$, and write $\delta$ (resp.\ $\delta'$) for $\deg(j)$ (resp.\ $\deg(j')$).  

\begin{lemma}\label{Linftytostandardcrossing}
  Let $\alpha \in \ints$, and consider the change of variables $u = \pi^{\alpha}(t-c')/(t-c)$.
  Then, up to multiplying by $d$th powers in $K(u)$, we can write $f(t)$ as a product of polynomials $g(u)h(u)$ in $\mc{O}_K[u]$ where
  \begin{itemize}
    \item The leading coefficient of $g(u)$ is in $\mc{O}_K^{\times}$, every zero $\theta$ of $g(u)$ satisfies $v_K(\theta) \geq
      \alpha + \mu'$, and $\deg(g(u)) = \delta'$.
    \item Every zero $\theta$ of $h(u)$ satisfies $v_K(\theta) \leq
      \alpha - \mu$, and $\deg(h(u)) = \delta$.
    \item The constant term of $h(u)$ has valuation $a + \delta\alpha$.
  \end{itemize}
  \end{lemma}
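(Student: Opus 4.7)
The plan is to substitute $t = (uc - \pi_K^\alpha c')/(u - \pi_K^\alpha)$ directly into $f(t)$ and to bookkeep how the hypothesis that the roots of $j$ (resp.\ $j'$) cluster near $c$ (resp.\ $c'$) forces the roots of the transformed polynomials to have the desired valuations. First I would record the key identity that for any $\theta \in \ol{K}$,
$$(u - \pi_K^\alpha)(t - \theta) = (c - \theta)(u - u_\theta), \qquad u_\theta := \pi_K^\alpha(\theta - c')/(\theta - c),$$
which follows immediately from the formulas $t - c = \pi_K^\alpha(c - c')/(u - \pi_K^\alpha)$ and $t - c' = u(c - c')/(u - \pi_K^\alpha)$. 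Taking products over the roots $\theta_i$ of $j$ and $\theta_j'$ of $j'$ then yields
$$(u - \pi_K^\alpha)^{\delta+\delta'} f(t) = \pi_K^a j(c) j'(c) \widetilde{j}(u) \widetilde{j'}(u),$$
where $\widetilde{j}(u) := \prod_i(u - u_{\theta_i})$ and $\widetilde{j'}(u) := \prod_j(u - u_{\theta_j'})$ are monic. Since $d \mid \delta + \delta'$ by assumption, the factor $(u - \pi_K^\alpha)^{\delta+\delta'}$ is a $d$-th power in $K(u)$, so modulo $d$-th powers $f(t) \equiv \pi_K^a j(c) j'(c) \widetilde{j}(u) \widetilde{j'}(u)$.

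The natural candidates are then $g(u) := j'(c) \widetilde{j'}(u)$ and $h(u) := \pi_K^a j(c) \widetilde{j}(u)$, and the three bulleted properties can be verified in turn. By Lemma~\ref{Lgfollows} applied to the hypotheses $v \prec v_\psi^\infty$ and $v' \prec v_{\psi'}^\infty$ (with $\phi_1 = t-c$ and $t-c'$ respectively), every root $\theta_i$ of $j$ satisfies $v_K(\theta_i - c) \geq \mu$ and every root $\theta_j'$ of $j'$ satisfies $v_K(\theta_j' - c') \geq \mu'$. Combined with $v_K(c - c') = 0$ and the ultrametric inequality, this forces $v_K(\theta_i - c') = 0$ and $v_K(\theta_j' - c) = 0$. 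Consequently, $v_K(u_{\theta_j'}) = \alpha + v_K(\theta_j' - c') - v_K(\theta_j' - c) \geq \alpha + \mu'$ and $v_K(u_{\theta_i}) \leq \alpha - \mu$, which are the root-valuation assertions. The leading coefficient of $g$ is $j'(c)$, with $v_K(j'(c)) = \sum_j v_K(c - \theta_j') = 0$, so it is a unit. For the constant term of $h$, the identity $\prod_i u_{\theta_i} = \pi_K^{\delta\alpha} j(c')/j(c)$ (up to sign) gives $h(0) = (-1)^\delta \pi_K^{a+\delta\alpha} j(c')$, which has valuation $a + \delta\alpha$ since $v_K(j(c')) = 0$ by the symmetric argument.

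The only step requiring genuine attention is that $g(u)$ and $h(u)$ actually lie in $\mc{O}_K[u]$. Since $\widetilde{j'}(u)$ is monic with roots in $\mc{O}_{\ol{K}}$ it lies in $\mc{O}_K[u]$, hence so does $g(u)$. For $h(u)$, I would rewrite it as the Galois-invariant product
$$h(u) = \pi_K^a \prod_{i=1}^{\delta} \bigl((c - \theta_i) u - \pi_K^\alpha(c' - \theta_i)\bigr),$$
expand, and check that the coefficient of $u^k$ has valuation at least $a + (\delta - k)\alpha + k\mu$, which is nonnegative under the implicit assumption (compatible with the usage in Proposition~\ref{Pchangeofvariable}, where $\alpha > \mu$) that $\alpha$ is sufficiently large; in particular $\alpha \geq 0$ suffices. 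The main obstacle is notational bookkeeping rather than any genuine mathematical difficulty; once the change-of-variables identity is in place, the remaining verifications are a direct computation.
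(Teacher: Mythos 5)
Your proof is correct and takes essentially the same route as the paper: both produce the same Galois-invariant product formulas
$g(u) = \prod_{i=1}^{\delta'}\bigl((c-\gamma_i)u + \pi_K^{\alpha}(\gamma_i - c')\bigr)$ and $h(u) = \pi_K^a\prod_{i=1}^{\delta}\bigl((c-\epsilon_i)u + \pi_K^{\alpha}(\epsilon_i - c')\bigr)$, and then read off the three bulleted properties from them. The one place you go beyond the paper is your observation that $\alpha \geq 0$ is needed to guarantee $g,h \in \mc{O}_K[u]$ (the lemma as stated allows $\alpha \in \ints$); this is a legitimate minor point, though it is harmless for the paper's application, since Proposition~\ref{Pinftycrossingregular} invokes the lemma only with $\alpha \in \nats$, $\alpha > \mu$.
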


  \begin{proof}
    By Lemma~\ref{Lgfollows}, each zero $\gamma$ (resp.\ $\gamma'$) of
    $j(t)$ (resp.\ $j'(t)$) satisfies
    $v_K(\gamma - c) \geq \mu$ (resp.\ $v_K(\gamma - c') \geq
    \mu'$).
Let $\tilde{g} \in K(u)$ be such that $\tilde{g}(u) = j'(t)$.
    Then each zero $\theta$ of $\tilde{g}(u)$ is
    $\pi_K^{\alpha}(\gamma-c')/(\gamma-c)$ for some zero $\gamma$
    of $j'(t)$, and thus satisfies $v_K(\theta) \geq \alpha +
    \mu'$.  Furthermore, since $j'$ has a single pole of order $\delta'$ at $t=\infty$, it follows that $\tilde{g}(u)$ has a single pole of order
    $\delta'$ at $u = \pi_K^{\alpha}$.  Likewise, letting $\tilde{h} \in K(u)$
    be such that $\tilde{h}(u) = \pi_K^aj(t)$, we have that each
    zero $\theta$ of $\tilde{h}(u)$ satisfies $v_K(\theta) \leq \alpha
    - \mu$, and that $\tilde{h}(u)$ has a single pole of order $\delta$
    at $u = \pi_K^{\alpha}$.  Let $g(u) \colonequals
    \tilde{g}(u)(u-\pi_K^{\alpha})^{\delta'}$ and $h(u) \colonequals \tilde{h}(u)(u -
    \pi_K^{\alpha})^{\delta}$.  Then $g(u)$ and $h(u)$ are polynomials of the same degrees as
    $j'(t)$ and $j(t)$ respectively, and the zeroes of
    $g(u)$ and $h(u)$ are as required in the lemma. Since $d \mid \delta + \delta'$ and $f(t) = \tilde{g}(u) \tilde{h}(u)$ by assumption, we have that
    $g(u)h(u)$ equals $f(t)$ up to multiplication by $d$th powers.

    It remains to show that the leading coefficient of $g(u)$ and the
    constant term of $h(u)$ are as in the lemma. If $\gamma_1, \ldots,
    \gamma_{\delta'}$ are the roots of $j'(t)$ (with multiplicity) in $\ol{K}$, then
    one calculates that
    $$g(u) = \prod_{i=1}^{\delta'} ((c - \gamma_i)u +
    \pi_K^{\alpha}(\gamma_i - c')).$$  Since all $\gamma_i$ satisfy
    $v_K(\gamma_i - c') \geq \mu' > 0$, we have $v_K(\gamma_i - c)
    = 0$, which proves that the leading coefficient of $g(u)$ is a
    unit.  Similarly, if $\epsilon_1, \ldots, \epsilon_{\delta}$ are the
    roots of $j(t)$ (with multiplicity) in $\ol{K}$, then
    $$h(u) = \pi_K^a \prod_{i=1}^{\delta} ((c - \epsilon_i)u +
    \pi_K^{\alpha}(\epsilon_i - c')),$$ and $v_K(\epsilon_i - c') = 0$
    for all $i$, so the constant coefficient of $h(u)$ has valuation
    $a + \delta\alpha$.
  \end{proof}

\begin{prop}\label{Pinftycrossingregular}
Let $\nu \colon \mc{X} \to \mc{Y}$ be the normalization of $\mc{Y}$ in $K(X)$. 
Let $r$ be an integer such that $r\delta'/\gcd(d, \delta') \equiv 1 \pmod{d/\gcd(d,
  \delta')}$. 
If $x \in \mc{X}$ is a point above $y \in \mc{Y}$, then $x$ is
  regular if and only if
$$\frac{\gcd(d, \delta')}{d}\mu' + \frac{ra}{d} >
-\frac{\gcd(d,\delta')}{d}\mu + \frac{ra}{d}$$ is a $\widetilde{N}$-path, where $\widetilde{N} =
\gcd(d, \delta') / \gcd(d, a, \delta')$.   Furthermore, in this case, the special fiber of
    $\mc{X}$ has normal crossings at $x$.
  \end{prop}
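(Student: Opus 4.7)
The plan is to reduce this situation to the standard crossings case of Proposition~\ref{Pstandardcrossingregular} via the linear fractional change of variable provided by Proposition~\ref{Pchangeofvariable}. Specifically, I would fix an integer $\alpha > \mu$ and set $u = \pi_K^{\alpha}(t-c')/(t-c)$. Proposition~\ref{Pchangeofvariable} then shows that in the $u$-coordinate the valuations $v$ and $v'$ become
$[v_0,\, v_1(u) = \alpha - \mu]$ and $[v_0,\, v_1(u) = \alpha + \mu']$
respectively. Since $\mu, \mu' > 0$, these two length-$1$ valuations share the same predecessor $v_0$ and differ only in $\lambda_1$, so $y$ is a standard crossing in the $u$-coordinate, with $\phi_n = u$ and $N = e_{v_0} = 1$.

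Next I would apply Lemma~\ref{Linftytostandardcrossing} to write $f(t) = g(u) h(u)$ modulo $d$-th powers, with the valuation-theoretic properties listed there. By those properties (roots of $g$ have $v_K$-valuation $\geq \alpha + \mu'$ and roots of $h$ have $v_K$-valuation $\leq \alpha - \mu$), Proposition~\ref{Pparameterize}\ref{Pannulus} shows that no horizontal part of $\divi(f)$ passes through $y$, and moreover $g$ collects precisely those factors with $v_{f_i}^\infty \succ v'$. Hence Proposition~\ref{Pstandardcrossingregular} applies. In its notation I compute $e = \deg(g)/\deg(\phi_n) = \delta'$ and $s = v(h) = a + \delta\alpha$: the last equality follows from Lemma~\ref{Ldominantterm}(ii), since the constant term of the $u$-adic expansion of $h$ has valuation $a + \delta\alpha$ by Lemma~\ref{Linftytostandardcrossing}.

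The remaining task is to translate the conclusion of Proposition~\ref{Pstandardcrossingregular} into the clean form asserted. Using $d \mid \delta + \delta'$, so $\delta \equiv -\delta' \pmod{\gcd(d,\delta')}$, one gets $\delta\alpha \equiv 0 \pmod{\gcd(d,\delta')}$, whence $\gcd(d, \delta', s) = \gcd(d, \delta', a)$ and therefore $\widetilde{N} = \gcd(d, \delta')/\gcd(d, a, \delta')$ as claimed. For the path endpoints, I would substitute $\lambda_n = \alpha - \mu$, $\lambda_n' = \alpha + \mu'$, $e = \delta'$, $s = a + \delta\alpha$ into the formulas for $\widetilde{\lambda}_n, \widetilde{\lambda}_n'$ in Proposition~\ref{Pstandardcrossingregular}. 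The defining congruence $r\delta' \equiv \gcd(d,\delta') \pmod d$, combined with $\delta \equiv -\delta' \pmod d$, yields $\gcd(d,\delta') + r\delta \equiv 0 \pmod d$, so $k := \alpha(\gcd(d,\delta') + r\delta)/d$ is an integer. One then finds
\[
\widetilde{\lambda}_n = -\tfrac{\gcd(d,\delta')}{d}\mu + \tfrac{ra}{d} + k,\qquad \widetilde{\lambda}_n' = \tfrac{\gcd(d,\delta')}{d}\mu' + \tfrac{ra}{d} + k.
\]
Adding the same integer $k$ to both endpoints does not change either denominator in lowest terms, so it preserves the $\widetilde{N}$-path condition, and the equivalence with the statement follows. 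The normal crossings assertion is inherited from Proposition~\ref{Pstandardcrossingregular}.

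The main obstacle I anticipate is the elementary but error-prone bookkeeping in the final simplification—in particular, verifying integrality of $k$ and that the gcd collapses correctly. Once the change of variable is made, everything else is a direct translation, and the choice of $\alpha$ is irrelevant to the final statement precisely because of this integer translation invariance.
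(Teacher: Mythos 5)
Your proposal follows essentially the same route as the paper's own proof: the change of variable from Proposition~\ref{Pchangeofvariable} to convert the $\infty$-crossing to a standard crossing, the factorization $f = g(u)h(u)$ from Lemma~\ref{Linftytostandardcrossing}, reading off $N = 1$, $e = \delta'$, $s = a + \delta\alpha$, and then the congruence bookkeeping ($\gcd(d,\delta') \mid \delta$ collapsing $\gcd(d,\delta',s)$ to $\gcd(d,\delta',a)$, and the integer translate $k = \alpha(\gcd(d,\delta')+r\delta)/d$) to reduce Proposition~\ref{Pstandardcrossingregular}'s criterion to the stated one. The argument is correct and is the same as the paper's.
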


\begin{proof}
Pick $\alpha \in \nats$ such that $\alpha > \mu$, and make the change
of variable $u = \pi_K^{\alpha}(t-c')/(t-c)$.  By
Proposition~\ref{Pchangeofvariable}, when written in terms of $u$, we have
$v = [v_0, v_1(u) = \alpha - \mu]$ and $v' = [v_0, v'_1(u) = \alpha +
\mu']$, so the point $y$ becomes a standard crossing.  Write $f
= g(u)h(u)$ as in Lemma~\ref{Linftytostandardcrossing}.  Note that all
roots $\theta$ of $g(u)$ satisfy $v_K(\theta) \geq \alpha + \mu'$,
whereas all roots $\theta$ of $h(u)$ satisfy $v_K(\theta) \leq \alpha
- \mu$, so $g$ and $h$ play the same roles as in
\S\ref{Sstandardcrossings} (see the discussion before
Lemma~\ref{Ldivisormultiplicities}).  Furthermore, no
horizontal part of $\divi(f)$ passes through $y$ by
Proposition~\ref{Pparameterize}\ref{Pannulus}.  In the
language of Proposition~\ref{Pstandardcrossingregular}, we have $d =
d$, $N = 1$, $e = \deg(g(u)) = \delta'$, $\lambda' = \alpha + \mu'$, and
$\lambda = \alpha - \mu$.  Also, we have $s = v(h(u))$,
which by Lemma~\ref{Ldominantterm}(ii) equals $v_K(a_0)$, where $a_0$ is the constant coefficient of
$h(u)$.  So $s = a + \delta \alpha$ by
Lemma~\ref{Linftytostandardcrossing}.  So the criterion for $x$ being
regular with normal crossings in
Proposition~\ref{Pstandardcrossingregular} becomes
\begin{equation}\label{Epath}
  \frac{\gcd(d,\delta')}{d} (\alpha + \mu') + \frac{r(a + \delta \alpha)}{d}
  > \frac{\gcd(d, \delta')}{d} (\alpha - \mu) + \frac{r(a + \delta \alpha)}{d}
\end{equation}
being a $\gcd(d, \delta')/\gcd(d, \delta', a + \delta\alpha)$-path, where $r \delta'
\equiv \gcd(d, \delta') \pmod{d}$ as in the proposition.  But since $d \mid (\delta' +
\delta)$, we have $\gcd(d, \delta') \mid \delta$, so $\gcd(d, \delta', a + \delta
\alpha) = \gcd(d, \delta', a)$, so $x$ is regular with normal crossings if
and only if \eqref{Epath} is an $\widetilde{N}$-path.  Since $\delta
\equiv -\delta' \pmod{d}$, we have $r \delta \equiv -\gcd(d, \delta') \pmod{d}$, so
\eqref{Epath} simplifies to
$$  \frac{\gcd(d, \delta')}{d}\mu' + \frac{ra}{d} + n
> -\frac{\gcd(d, \delta')}{d}\mu + \frac{ra}{d} + n,$$ where $n =
\alpha(\gcd(d, \delta') + r \delta)/d \in \ints$.  But it is clear from
Definition~\ref{DNpath} that adding the same integer to each entry in a
decreasing sequence does not affect whether or not it is an
$\widetilde{N}$-path, so we can ignore the $n$, which gives the
criterion from the proposition.
\end{proof}

\begin{remark}\label{Rneverwithmonic}
Note that if $f$ is \emph{monic}, then $a = 0$ and the criterion in
Proposition~\ref{Pinftycrossingregular} never holds, since $m > n$
can never be an $\widetilde{N}$-path if $m$ is positive and $n$ is negative.
\end{remark}

%\begin{corollary}\label{Cinftycrossingregular}
%  The model $\mc{X}$ is regular with normal crossings at $x$ if and
%  only if the condition of Proposition~\ref{Pinftycrossingregular} holds.
%\end{corollary}

%\begin{proof}
%This follows immediately from Lemma~\ref{Lbothprincipal} and Proposition~\ref{Pinftycrossingregular}.
%\end{proof}

\section{Construction of regular normal crossings models of cyclic covers}\label{Sconstruction}

%\begin{itemize}
%\item First build a regular SNC-model
%  \begin{itemize}
%    \item Start with the (pseudo-valuation) $v_{f_i}^{\infty}$'s, as well as their predecessors.\andrew{Done.}
%    \item Pass to inf-closure. \andrew{Done.}
%    \item Put in the correct paths between the $v_{f_i}$'s and their
%      predecessors. \andrew{Done.  These are the ``links''.}
%    \item Put in the correct ``dead-end'' paths from the $v_{f_i}$'s
%      \andrew{Done.  These are the ``tails'' and ``branch point tails''.}
%    \item Show that all standard crossings, standard endpoints, and
%      all other points are regular. \andrew{Done.}
%    \end{itemize}
%  \item Now contract to make the model minimal
%    \begin{itemize}
%    \item Show nothing can be contracted in the middle. \andrew{Done.}
%    \item Show when $v_{f_i}$'s can be contracted. \andrew{Done.}
%    \item Show when $v_0$ can be contracted. \andrew{Done.}
%    \end{itemize}
%  \end{itemize}

Let $\nu \colon X \to Y \cong \proj^1_K$ be a $\ints/d$-cover, and
assume $\chara k
\nmid d$.  In this section, we will construct a normal model $\mc{Y}_{\reg}$ of $Y$ such
that the normalization $\mc{X}_{\reg}$ of $\mc{Y}_{\reg}$ in $K(X)$ is the minimal regular
normal crossings model of $X$.
%After some preliminary reductions in
%\S\ref{Sreductions}, our construction proceeds in two steps.  In \S\ref{Sfirstmodel}, we construct a
%normal model $\mc{Y}_{\reg}$ of $Y$ whose normalization in $K(X)$ is a
%regular model $\mc{X}_{\reg}$ of $X$ with normal crossings.
The model $\mc{X}_{\reg}$
often \emph{is} the minimal regular model with normal crossings,
%---
%for instance, if $\nu$ splits completely above $\infty$\footnote{e.g.,
%  if $\nu$ is given by a Kummer equation $z^d = f$ with $\deg d \mid
%  f$ and $f \in \mc{O}_K[t]$ monic.} and $d$ is odd this is always
%  the case ---
but
sometimes $\mc{X}_{\reg}$ has components on the special fiber that can be
contracted.  
%\padma{Move this to Section~9. One of the reductions isn't necessary here.}  In \S\ref{Ssecondmodel}, we explicitly calculate those
%components of the special fiber of $\mc{Y}_{\reg}$ such that contracting
%them yields a model $\mc{Y}_{\min}$ whose normalization in
%$K(X)$ is the minimal regular model $\mc{X}_{\min}$ of $X$ with normal crossings.
%
Before we begin the construction we introduce some terminology that will be
useful throughout \S\ref{Sconstruction} and \S\ref{Ssecondmodel}.

\begin{defn}\label{Dregularbase}\hfill
%Fix a coordinate on $\proj^1_K$ so that finite sets of Mac Lane valuations correspond
%to normal models of $\proj^1_K$ as in \S\ref{Smaclanemodels}.
  \begin{enumerate}[\upshape (i)]
\item A nonempty finite set $V$ of geometric valuations is a \emph{regular normal
crossings base} for $X \to \proj^1_K$ if the normalization of the $V$-model in $K(X)$ is a regular model of $X$ with normal
crossings.
\item Suppose $V$ is a regular normal crossings base.  A valuation $v
\in V$ is \emph{removable} from $V$ if $V \setminus \{v\}$ remains
a regular normal crossings base.
\end{enumerate}
\end{defn}

%Recall from Algorithm~\ref{AY0} that we have sets of valuations $V_1
%\subseteq V_2 \subseteq V_3 \subseteq V_4 \subseteq V_{\reg}$, where
%the model $\mc{Y}_{\reg}$ of $\proj^1_K$ corresponding to $V_{\reg}$
%has the property that the normalization $\mc{X}_{\reg}$ of $\mc{Y}_{\reg}$ in $K(X)$
%is regular with normal crossings.  That is, $V_{\reg}$ is a regular
%normal crossings base.

\subsection{A preliminary reduction}\label{Sreductions}

Recall that $\nu \colon X \to Y \cong \proj^1_K$ is a $\ints/d$-cover
with $\chara k
\nmid d$.  Since $t$ is a fixed coordinate on $\proj^1_K$, Kummer
theory shows that $\nu$ is given birationally by the equation $z^d =
f(t)$.  By changing $t$-coordinates on $\P^1_K$ using an element of
$\GL_2(K)$, we may assume that no branch point of $\nu$ specializes to
$\infty$ on the special fiber of the standard model
$\proj^1_{\mc{O}_K}$ of $\proj^1_K$.  That is, after possibly
multiplying $f$ by a power of $\pi_K^d$, we may assume that $f
\in \mc{O}_K[t]$ with all roots of $f$ integral over $\mc{O}_K$, and
(since there is no branch point at $\infty$), that $d \mid
\deg(f)$.  Also, if $\deg(f) \leq 2$, then $X$ has genus $0$,
and it is trivial to find a regular model of $X$, so assume $\deg(f)
\geq 3$.

\subsection{A regular model for $X$}\label{Sfirstmodel}
Let $Y = \proj^1_K$ with coordinate $t$, and
let $X
\to Y = \proj^1_K$ to be the morphism of smooth projective $K$-curves
corresponding to the inclusion $K(t) \hookrightarrow K(t)[z]/(z^d -
f)$ with $\chara k \nmid d$, where, as in \S\ref{Sreductions}, we may assume that $f \in \mc{O}_K[t]$ is a polynomial of degree $\geq 3$ such that all roots of $f$ are integral
over $\mc{O}_K$, that $d
\mid \deg f$, and such that there does not exist $a \in \mc{O}_K$
with $v_K(\theta - a) \geq 1$ for all roots $\theta$ of $f$.  In this subsection, we will construct a normal model $\mc{Y}$ of $Y$ such
that the normalization of $\mc{Y}$ in $K(X)$ is the minimal regular
normal crossings model of $X$.

Write the irreducible factorization of $f$ as $f =
\pi_K^af_1^{a_1}\cdots f_q^{a_q}$.  We will define the model
$\mc{Y}_{\reg}$ by giving the corresponding finite set $V_{\reg}$ of Mac Lane
valuations.  Before we build $V_{\reg}$, we define certain chains of Mac Lane valuations called ``links'', ``branch point tails'', and ``tails''.

\begin{defn}\label{Dlink}
Suppose $v = [v_0,\, v_1(\phi_1) = \lambda_1,\, \ldots,\,
v_{n-1}(\phi_{n-1}) = \lambda_{n-1},\, v_n(\phi_n) = \lambda_n]$ and $v'
= [v_0,\, v_1(\phi_1) = \lambda_1,\, \ldots,\,
v_n(\phi_{n-1}) = \lambda_{n-1},\, v_n'(\phi_n) = \lambda_n']$ are two
Mac Lane valuations with $\lambda_n' > \lambda_n$.  Let $N =
e_{v_{n-1}}$.  Here $v'$ is minimally presented, but we allow the
possibility that $v = v_{n-1}$, that is, $\lambda_n =
v_{n-1}(\phi_n)$.  Assume no $D_{f_i}$ meets the intersection of the $v$-
and $v'$-components on the $\{v, v'\}$-model of $\proj^1_K$.  We
define the \emph{link} $L_{v, v'}$ as follows:

%Assume, for each $i$, that $v_{f_i}^{\infty} \succ v'$ if and only if
%$v_{f_i}^{\infty} \succ v$.
Write $f = gh$, where $g$ is the product of the $f_i^{a_i}$ such that
$v_{f_i}^{\infty} \succ v'$.  Let $e = \deg(g)/\deg(\phi_n)$ and let $s$ be
such that $v(h) = s/N$ (both $e$ and $s$ are integers by the
discussion immediately preceeding Lemma~\ref{Ldivisormultiplicities}). Let
$$\widetilde{N} = N\frac{\gcd(d,e)}{\gcd(d, e, s)}.$$  Lastly, note that the residue of $e/\gcd(d,e)$ modulo $d/\gcd(d,e)$ is a
unit, so let $r$ be any integer such that $re/\gcd(d,e) \equiv 1 \pmod{d/\gcd(d,e)}$.
Write
$$\widetilde{\lambda}_n = \frac{\gcd(d,e)}{d}\lambda_n + \frac{rs}{Nd}, \qquad
\widetilde{\lambda}'_n = \frac{\gcd(d,e)}{d}\lambda_n' +
\frac{rs}{Nd}.$$
 A \emph{link} $L_{v, v'}$ is the set of Mac Lane valuations $[v_0,\, v_1(\phi_1) = \lambda_1,\, \ldots,\,
v_n(\phi_{n-1}) = \lambda_{n-1},\, v_n(\phi_n) = \lambda]$, as
$\lambda$ ranges over the set of values such that
$$\frac{\gcd(d,e)}{d}\lambda + \frac{rs}{Nd}$$ forms the shortest
$\widetilde{N}$-path from $\widetilde{\lambda}'_n$ to
$\widetilde{\lambda}_n$, including the endpoints.
\end{defn}

  \begin{defn}\label{Dtail}\hfill
    \begin{enumerate}[\upshape (i)]
\item If $v = [v_0,\, \ldots,\, v_n(\phi_n) = \lambda_n]$, then the \emph{tail} $T_{v}$
  is the link $L_{v, v'}$, where $v'
  = [v_0,\, \ldots,\, v_n(\phi_{n-1}) = \lambda_{n-1},\, v_n(\phi_n) =
  \lambda'_n]$ with $\lambda_n' \geq \lambda_n$ minimal such that
  $\lambda_n' \in (1/\widetilde{N})\ints$ (here, if $v' = v$, we
  simply take $T_v = \{v\}$).
\item Suppose $V$ is a set of Mac Lane valuations including $v_{f_i}$
  for each irreducible non-constant factor $f_i$ of $f$.  The \emph{branch
  point tail} $B_{V, f_i}$ is the link $L_{v, v'}$, where $v
\in V$ is maximal such that $v \prec v_{f_i}^{\infty}$,
written as $$v = [v_0,\,
\ldots,\, v_{n-1}(\phi_{n-1}) = \lambda_{n-1},\, v_{n}(f_i)  =
\lambda_{n}]\footnote{Note that $v \succeq v_{f_i}$, and if $v = v_{n-1} = v_{f_i}$, then
  $\lambda_n = (\deg(f_i)/\deg(\phi_{n-1}))\lambda_{n-1}$.},$$ and $$v' = [v_0,\,
\ldots,\, v_n(\phi_{n-1}) = \lambda_{n-1},\, v_{n}(f_i) =
\lambda_{n}'],$$
where $\lambda_n' \geq \lambda_n$ is minimal such that $\lambda_n' \in
(1/\widetilde{N})\ints$ and $v'(f) = s/N + a_i \lambda_n' \in
(d/\widetilde{N})\ints$. Again, if $v' = v$, we set $B_{V,f_i} = \{v\}$.
\end{enumerate}
\end{defn}

\begin{remark}\label{Rlinkhasends}
Note that $L_{v, v'}$ includes $v$ and $v'$, and that $T_{v}$
includes $v$.
\end{remark}

\begin{remark}
Both $d$ and $f$ are implicit in the definition of links, tails, and
branch point tails, but we suppress them to lighten the notation.
\end{remark}

The algorithm below builds a regular normal crossings base for $X \to
\proj^1_K$.  The idea is to start with a tree of sorts, where the
leaves are exactly the $v_{f_i}$ (this is the content of Steps 1 and
2).  The normalization of the corresponding model of $\proj^1_K$ in
$K(X)$ may have singularities located at standard crossings,
finite cusps, and specializations of branch points from the
generic fiber.  The next steps append totally ordered sequences of
valuations (the ``links'', ``tails'', and ``branch point tails'' mentioned above) to resolve
these singularities.

\begin{algo}[cf.\ {\cite[Algorithm~3.12]{KW}}]\label{AY0}
  \hfill
  \begin{enumerate}[(1)] 
   \item Begin with the set $V_1$ of all $v_{f_i}^{\infty}$ and all of
     their predecessors (note that this includes all the $v_{f_i}$).
    \item Let $V_2$ be the \emph{inf-closure} of the set $V_1$. 
    \item (\textbf{Resolve singularities above standard crossings}) Let $S \subseteq V_2^2$ be the set of pairs $(v, w)$ of adjacent
      valuations $v \prec w$ in $V_2$.  By
      Lemma~\ref{Ladjacentstandard}, the $v$- and $w$- components form
      a standard crossing in the $V_2$-model of $\proj^1_K$.   
      Then $V_3$ is obtained from $V_2$ by replacing each subset $\{v,
      w\} \subseteq V_2$ for $(v, w)\in S$ by the link
      $L_{v, w}$.  
%      Note that we obviously have $v_{f_i}^{\infty} \succ w$ if and only if $v_{f_i}^{\infty} \succ v$.
    \item (\textbf{Resolve singularities above finite cusps})
      Let $T \subseteq V_3$ be the set of all valuations $v \in V_3$ such that
      the $v$-component of the $V_3$-model of $\proj^1_K$ has a finite
      cusp.  Then $V_4$ obtained from $V_3$ by replacing each $v \in T$
      with the tail $T_{v}$. 
    \item (\textbf{Resolve singularities above branch point specializations}) For each $i$, let $w_i$ be the
      maximal valuation in $V_4$ bounded above by $v_{f_i}^{\infty}$.  Then $V_5$ is obtained from $V_4$ by
      replacing each $w_i$ with the branch point tail $B_{V_4,
        f_i}$.
    \item Lastly, we let $V_{\reg} \subseteq V_5$ be the set of
      valuations in $V_5$ (that is, we remove all of the infinite pseudovaluations).
    \end{enumerate}
\end{algo}

\begin{example}\label{Ebasic}
  Consider the cover given by $z^5 = (t-1)^2(t^3 - \pi_K^2)$.
Write $f_1 = t-1$ and $f_2 = t^3 - \pi_K^2$.
Then $v_{f_1}^{\infty} = [v_0,\, v_1(t-1) = \infty]$ and
$v_{f_2}^{\infty} = [v_0,\, v_1(t)
= 2/3,\, v_2(f) = \infty]$.   So $V_1$ consists of $v_{f_1}^{\infty}$,
$v_{f_2}^{\infty}$, and its predecessors $v_0$ and
$v_{2/3} := [v_0,\, v_1(t) = 2/3]$.  This set is already inf-closed, so $$V_1 =
V_2 = \{v_{f_1}^{\infty}, v_{f_2}^{\infty}, v_0, v_{2/3}\}.$$

The only adjacent pair of valuations in $V_2$ is $(v_0, v_{2/3})$, so to form $V_3$, we replace this pair with the link
$L_{v_0,v_{2/3}}$.  We have $g = f$ and $h = 1$, so $N = 1$, $e = 3$, $s =
0$, $d = 5$, $\widetilde{N} = 1$, and $r = 2$.  Thus we adjoin $v_\lambda
:= [v_0,\,
v_1(t) = \lambda]$, where $\lambda$ ranges over those numbers such
that $\lambda/5$ forms the shortest $1$-path between $0$ and
$2/15$. This $1$-path is $2/15 > 1/8 > 0$, so $V_3 = V_2 \cup
\{v_{5/8}\}$. That is, $$V_3 = \{v_{f_1}^{\infty}, v_{f_2}^{\infty}, v_0, v_{5/8}, v_{2/3}\}.$$

To form $V_4$, observe that by Corollary~\ref{Cmaximalcusp}, the only valuation in $V_3$ with a finite cusp is $v_{2/3}$.  So we replace this valuation with the
tail $T_{v_{2/3}}$.  For this tail, we have $h = f$ and $g = 1$, so $N
= 1$, $e = 0$, $s = 2$, $d = 5$, $\widetilde{N} = 5$, and $r = 0$.
By definition, $T_{v_{2/3}} = L_{v_{2/3}, v_{4/5}}$, where $v_{4/5} := [v_0,\,
v_1(t)  = 4/5]$.  Thus we adjoin $v_{\lambda} := [v_0,\, v_1(t) =
\lambda]$, where $\lambda$ ranges over the shortest $5$-path from
$4/5$ to $2/3$.  This $5$-path is $4/5 > 7/10 > 2/3$, so $$V_4 =
\{v_{f_1}^{\infty}, v_{f_2}^{\infty}, v_0, v_{5/8},  v_{2/3}, v_{7/10}, v_{4/5}\}.$$

To form $V_5$, we append branch point tails $B_{V_4, f_i}$ for $i \in \{1, 2\}$.  For $i=1$, we have (in
the language of Definition~\ref{Dtail}(ii)) that $g = (t-1)^2$ and $h =
(t^3-\pi_K^2)$, so $N = 1$, $e = 2$, $d = 5$, and thus $\tilde{N} =
1$.  So $B_{V_4, f_1} = L_{v_0, v_0} = \{v_0\}$. 
For $i = 2$, observe that the valuation in $V_4$ that is maximal
among those bounded above by $v_{f_2}^{\infty}$ is $v_{2/3}$.  So we
replace this valuation with the branch point tail $B_{V_4, {f_2}}$.  For
this tail, we have $N = 3$ (since we think of $v_{2/3}$ as $[v_0,\,
v_1(t) = 2/3,\, v_2(f_2) = 2]$), and $g = f_2$ and $h = (t-1)^2$.  So $e = 1$,
$s = 0$, $d = 5$, $\widetilde{N} = 3$, and $r = 1$.  Then $B_{V_4, f_2}
= L_{v_{2/3} = w_2, w_{10/3}}$, where for $\lambda \in \rats$, we define
$w_{\lambda} := [v_0 =: w_0,\, w_1(t) = 2/3,\, w_2(f_2) =
\lambda]$. Thus we adjoin $w_{\lambda}$ where $\lambda$ ranges over
those numbers such that $\lambda/5$ forms the shortest $3$-path from
$(10/3)/5 = 2/3$ to $2/5$.  This $3$-path is $2/3 > 1/2 > 4/9 > 5/12 >
2/5$, so
$$V_5 = \{v_{f_1}^{\infty}, v_{f_2}^{\infty}, v_0, v_{5/8}, v_{2/3} = w_2, v_{7/10}, v_{4/5},
w_{10/3}, w_{5/2}, w_{20/9}, w_{25/12}\},$$
and $V_{\reg} = V_5 \setminus \{v_{f_1}^{\infty}, v_{f_2}^{\infty}\}$.

By calculating $v(f)$ for each $v \in V_{\reg}$, we see that only
for $v = v_{2/3}$ is $v(f)$ not divisible by $5$ in the value group.
So if $\mc{X}_{\reg}$ is the normalization of $\mc{Y}_{\reg}$ in
$K(X)$, then $v_{2/3}$ is the only generically ramified component in $\mc{X}_{\reg} \to
\mc{Y}_{\reg}$, and thus there is a unique component lying above $v_{2/3}$ in $\mc{X}_{\mathrm{reg}}$. There is a unique component lying above
$v_0$ and $w_{10/3}$, as they contain specializations of branch
points. By an inductive argument using the fact that a tame ramified branched cover of $\P^1_k$ has at least two distinct branch points, one can also show that the intersection of any two irreducible components is also part of the branch locus. (Note that this does not violate purity of the branch locus since $\mc{Y}_{\mathrm{reg}}$ is not regular!) 
%the cover of special fibers to the preimages of each of the irreducible components of $V_{\mathrm{reg}}$ (which are each isomorphic to $\mathbb{P}^1_k)$) and using the fact that a tame ramified cover of $\mathbb{P}^1_k$ cannot be branched at exactly one point, we see that the intersection point of any two of the components downstairs is also part of the branch locus.  
It follows that there is exactly one irreducible component
of the special fiber 
$\ol{X}_{\reg}$ of $\mc{X}_{\reg}$ above each irreducible component of the special
fiber of $\mc{Y}_{\reg}$.  The dual graph of 
$\ol{X}_{\reg}$ is depicted in Figure~\ref{FE8}.  The
self intersection number of each irreducible component of
$\ol{X}_{\reg}$ is $-2$ (other than the one corresponding to $v_0$, which is $-8$).
So $\mc{X}_{\reg}$ is actually the \emph{minimal} regular model of
$X$.  This will be reconfirmed in Example~\ref{E2}.  
\end{example}

\begin{figure}
  \begin{center}
    \setlength{\unitlength}{1mm}
\begin{picture}(100,40)

\put(17,20){\circle{2}}
\put(29,20){\circle*{2}}
\put(41,20){\circle*{2}}
\put(51,10){\circle*{2}}
\put(63,10){\circle*{2}}
\put(51,30){\circle*{2}}
\put(63,30){\circle*{2}}
\put(75,30){\circle*{2}}
\put(87,30){\circle*{2}}

\put(18,20){\line(1,0){10}}
\put(30,20){\line(1,0){10}}
\put(41,20){\line(1,1){10}}
\put(41,20){\line(1,-1){10}}
\put(52,10){\line(1,0){10}}
\put(52,30){\line(1,0){10}}
\put(64,30){\line(1,0){10}}
\put(76,30){\line(1,0){10}}

\put(15,23){$v_0$}
\put(26,23){$v_{5/8}$}
\put(36,23){$v_{2/3}$}
\put(46,33){$w_{25/12}$}
\put(58,33){$w_{20/9}$}
\put(71,33){$w_{5/2}$}
\put(82,33){$w_{10/3}$}
\put(49,13){$v_{7/10}$}
\put(61,13){$v_{4/5}$}

\put(16,14){$1$}
\put(28,14){$8$}
\put(38,14){$15$}
\put(49,24){$12$}
\put(62,24){$9$}
\put(74,24){$6$}
\put(86,24){$3$}
\put(49,4){$10$}
\put(62,4){$5$}

\end{picture}

\end{center}
\caption{The dual graph of $\mc{X}_{\reg}$ in Example~\ref{Ebasic}.
  The label below each vertex is the corresponding multiplicity in the special
  fiber and the label above each vertex is the valuation
  corresponding to the image of the component in $\mc{Y}_{\reg}$.} \label{FE8}
\end{figure}
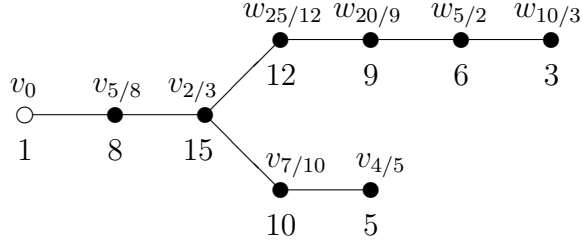

\begin{remark}
In Example~\ref{Ebasic}, suppose $K = k((s))$ and $\pi_K = s$.  If one
takes the normalization $\mc{X}$ of $\proj^1_{O_K}$ in $K(X)$, then
$\mc{O}_{X, x} \cong k[[z, t, s]]/(z^5 - t^3 + s^2)$, where $x$ is the point
above the specialization of $t = 0$ in $\proj^1_{\mc{O}_K}$ (here we
replace $z$ with $z(t-1)^{-2/5}$).  This is
the famous Du Val $E_8$-singularity, and one verifies that the
(non-$v_0$-part of the) diagram in Figure~\ref{FE8} is exactly the
Dynkin diagram for $E_8$, with the correct Cartan matrix (all
self-intersections are $-2$).
\end{remark}

\begin{lemma}\label{Linfclosed2}
  The sets $V_2$, $V_3$, $V_4$, $V_5$, and $V_{\reg}$ from
  Algorithm~\ref{AY0} above are all inf-closed.
\end{lemma}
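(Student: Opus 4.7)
The plan is to prove inf-closedness of each set in turn. By construction, $V_2$ is the inf-closure of $V_1$, hence inf-closed. For $V_3$, $V_4$, and $V_5$, I would iteratively apply Lemma~\ref{Linfclosed}, adding one link, tail, or branch point tail at a time. In each case, the object $W$ being added is a chain (hence inf-closed) containing a minimum element $u$ that already lies in the current inf-closed set $V$. The verification reduces to checking that for every $x \in V$ with $u \preceq x$ and every $w' \in W$, the infimum $\inf(x, w')$ lies in $V \cup W$; this is slightly weaker than the hypothesis $\inf(x, w') = u$ in Lemma~\ref{Linfclosed}, but the proof of that lemma goes through unchanged.

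For the step $V_2 \to V_3$, each link $L_{v, w}$ has endpoints $v, w \in V_2$ that are adjacent in $V_2$. Taking $u = v$, the cases for $x \in V$ with $v \preceq x$ are: $x = v$ (trivial); $x = w$, where $\inf(w, w') = w' \in W$ since $w' \preceq w$; and all other $x \in V_2$, where adjacency of $v, w$ in $V_2$ forces $x \succ w$ or $x$ incomparable to $w$. In both subcases one computes $\inf(x, w) = v$ (via inf-closedness and adjacency), and hence $v \preceq \inf(x, w') \preceq \inf(x, w) = v$, giving $\inf(x, w') = v$. To iterate this over multiple links, I would verify that adjacency of each endpoint pair is preserved as further links are added: interior elements of a link $L_{a, b}$ are comparable within $V_2$ only to $a$ itself or to elements $\succeq b$, which follows from the totally ordered structure of $\{x : x \preceq b\}$ (\cite[Proposition~2.25]{KW}) combined with the adjacency of $a, b$ in $V_2$.

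For the steps $V_3 \to V_4$ and $V_4 \to V_5$, a tail $T_v$ (resp.\ branch point tail $B_{V_4, f_i}$) is a chain attached above a base valuation $u = v$ (resp.\ $u = w_i$) that is maximal in the current set with respect to being bounded above by the infinite pseudovaluation $v_{\phi_n}^{\infty}$ (resp.\ $v_{f_i}^{\infty}$); for tails this maximality follows from Lemma~\ref{Lstandardendpointunique} and Corollary~\ref{Cmaximalcusp} (the finite cusp condition forces no element of the current set strictly above $v$ in the $\phi_n$-direction), and for branch point tails it holds by the defining maximality of $w_i$. To verify the key hypothesis, suppose $\inf(x, w') = v'$ with $u \prec v' \preceq w'$ for some $x$ in the current set; then $v' \preceq \inf(x, v_{\phi_n}^{\infty})$ (resp.\ $\inf(x, v_{f_i}^{\infty})$), and this infimum is in fact a predecessor of $x$ lying strictly between $u$ and the infinite pseudovaluation. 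By Lemma~\ref{Lallpredincl}, $V_2$ is closed under predecessors, and this property is preserved in subsequent steps since interior valuations of links, tails, and branch point tails share the relevant predecessors with valuations already present. Hence $v' \in V$, contradicting the maximality of $u$.

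Finally, $V_{\reg} = V_5 \setminus \{v_{f_i}^{\infty}\}_i$ is inf-closed because the infimum of any two Mac Lane valuations is again a valuation (never an infinite pseudovaluation): pseudovaluations are leaves of the tree of Mac Lane pseudovaluations, so removing them from an inf-closed set does not affect the infima of the remaining valuations. The main technical obstacle across all cases is the predecessor analysis: verifying that $\inf(x, v_\psi^{\infty})$ is a predecessor of $x$ (rather than some unrelated valuation), which relies on the tree structure of Mac Lane valuations and careful bookkeeping of inductive presentations.
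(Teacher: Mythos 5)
Your proof follows the same high-level strategy as the paper's: establish that $V_2$ is inf-closed by definition, then iterate Lemma~\ref{Linfclosed}, attaching one chain (link, tail, or branch point tail) at a time, and finish by noting that deleting maximal elements preserves inf-closure. The argument is correct.

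One point where you go beyond the paper and in fact \emph{repair} it is your observation that the hypothesis of Lemma~\ref{Linfclosed} should be weakened from ``$\inf(v,w)=u$'' to ``$\inf(v,w)\in V\cup W$'' for all $v\in V$ with $u\preceq v$. As you note, the proof of Lemma~\ref{Linfclosed} only uses the hypothesis in the case $u\preceq v$, where the conclusion $\inf(v,w)\in V\cup W$ is exactly what is needed. This matters because the literal hypothesis \emph{fails} in the paper's own application to Step~(5): the pseudovaluation $v_{f_i}^{\infty}$ lies in $V_4$ and satisfies $w_i\prec v_{f_i}^{\infty}$, but for $v'\in B_{V_4,f_i}$ with $v'\succ w_i$ one has $\inf(v_{f_i}^{\infty},v')=v'\neq w_i$, since $v'\prec v_{f_i}^{\infty}$. (The same phenomenon can arise in Step~(4) when the tail's basepoint $v$ satisfies $v\prec v_{f_j}^{\infty}$.) The weaker hypothesis is satisfied --- the infimum $v'$ is in $W$ --- and your version of Lemma~\ref{Linfclosed} covers it.

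For the individual verifications you take a slightly different route than the paper in a couple of places. For Step~(4), the paper argues via inductive length: if $w\succ v$ with $v$ of inductive length $n$, then the length-$n$ predecessor $w_n$ of $w$ lies in $V_3$ and must equal $v$, from which $\inf(v',w)=v$. You instead argue via the maximality of $v$ in $V_3$ with respect to being bounded above by $v_{\phi_n}^{\infty}$, which you extract from Lemma~\ref{Lstandardendpointunique}, Corollary~\ref{Cmaximalcusp}, and Proposition~\ref{Pbranchspecialization}. Both lead to the same conclusion; the paper's predecessor/inductive-length route is arguably more direct, while yours hews closer to the geometric characterization of finite cusps. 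For Step~(3) the paper simply asserts inf-closure ``since links are totally ordered,'' and your more detailed case analysis (via adjacency of the endpoints in the inf-closed set $V_2$) is a reasonable filling-in. Your final step for $V_{\reg}$ matches the paper's (removing maximal elements preserves inf-closure). In short: correct proof, same skeleton, with a genuine and worthwhile correction to the way Lemma~\ref{Linfclosed} is invoked.
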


\begin{proof}
First, $V_2$ is inf-closed by definition, and it is easy to see
from the construction that $V_3$ is as well, since links are
totally ordered.
%Furthermore, any two adjacent valuations in $V_3$ are contained in some $L_{v,w,d}$.

In Step (4), if $v \in T$ has inductive length $n$ and $w \in V_3$
satisfies $w \succ v$, then the
$n$th predecessor $w_n$ of $w$ (which is
contained in $V_3$ and satisfies $w_n \succeq v$), must be $v$.
Since any $v' \in T_v$ has inductive length $n$ as well, $\inf(v', w) = \inf(v',
w_n) = \inf(v', v) = v$.  Since
$T_{v}$ is totally ordered, and thus inf-closed, 
Lemma~\ref{Linfclosed} shows that $V_3 \cup T_{v}$ is
inf-closed, and repeating this process shows that $V_4$ is inf-closed.
%Furthermore, any two adjacent
%valuations in $V_4$ are contained in some $L_{v, w}$ or $T_{v}$.

In Step (5), for each $w_i$, if $w \in V_4$ satisfies $w \succ w_i$, then
$\inf(v', w) = w_i$ for all $v' \in B_{V_4, f_i}$ by the maximality
of $w_i$ with respect to boundedness by $v_{f_i}^{\infty}$.  Since
$B_{V_4, f_i}$ is totally ordered and thus inf-closed,
successive applications of Lemma~\ref{Linfclosed} show that $V_{\reg}
= V_4 \cup \left(\bigcup_i B_{V_4, f_i}\right)$ is inf-closed.
%All new
%pairs of adjacent valuations added in Step (5) lie in a $B_{V_4, f_i,
%  d}$, so any two adjacent valuations in $V_{\reg}$ are contained in some
%$L_{v, w, d}$, $T_{v, d}$, or $B_{V_4, f_i, d}$.

Lastly, $V_{\reg}$ is inf-closed because it is obtained from $V_5$ by
eliminating maximal elements.
\end{proof}

\begin{lemma}\label{Lpredclosed}
The set $V_{\reg}$ has the property that if $v \in V_{\reg}$, then all
predecessors of $V$ are also in $V_{\reg}$.
\end{lemma}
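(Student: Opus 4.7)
The plan is to proceed by induction through the six steps of Algorithm~\ref{AY0}, verifying at each stage that the predecessor-closure property is preserved.

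The base case is handled by Steps 1 and 2: by construction, $V_1$ contains $v_{f_i}^{\infty}$ together with all of their predecessors, so Lemma~\ref{Lallpredincl} applies directly (with the finite set of pseudovaluations being $\{v_{f_i}^{\infty}\}$) to conclude that every predecessor of an element of the inf-closure $V_2$ is again in $V_2$.

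The inductive steps 3, 4, and 5 will all be treated by the same observation: when a valuation (or pair of valuations) in $V_i$ is replaced by a link $L_{v,w}$, a tail $T_v$, or a branch point tail $B_{V_4,f_i}$, every newly added valuation has a presentation of the form $[v_0,\, \ldots,\, v_{n-1}(\phi_{n-1}) = \lambda_{n-1},\, v_n(\phi_n) = \lambda]$ that shares the data $(v_0,\phi_1,\lambda_1,\ldots,\phi_{n-1},\lambda_{n-1},\phi_n)$ with one of the endpoint valuations already present in $V_i$. Hence the predecessors of the new valuation (which are uniquely determined by its minimal presentation, see \S\ref{Sbasicmaclane}) coincide with, or are a subset of, the predecessors of that endpoint; by the inductive hypothesis these lie in $V_i$, and hence in $V_{i+1}$.

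The step requiring the most care is Step 5, where the key polynomial $f_i$ appears at the top level of the added valuations. Here $w_i$ may be presented non-minimally (the case $w_i = v_{n-1}$ in the definition), or minimally (the case $w_i = v_n$); in the former, the predecessors of the added valuations are $w_i$ itself together with its predecessors, and in the latter they are exactly the predecessors of $w_i$. Either way, $w_i \in V_4$ and its predecessors are in $V_4$ by the inductive hypothesis, so the property is preserved in $V_5$.

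Finally, the passage from $V_5$ to $V_{\reg}$ only removes the infinite pseudovaluations $v_{f_i}^{\infty}$, and these can never be predecessors of any element of $V_{\reg}$, since predecessors in our setup are finite Mac Lane valuations by construction (the truncations of a minimal presentation). Thus no predecessors are lost in this final step, completing the proof. No significant obstacle is anticipated; the argument is a straightforward bookkeeping exercise once one notes that each construction in Algorithm~\ref{AY0} only augments the existing truncation tower rather than introducing genuinely new predecessors.
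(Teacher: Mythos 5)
Your proof is correct and takes essentially the same approach as the paper's: establish predecessor-closure for $V_2$ via Lemma~\ref{Lallpredincl}, observe that appending links, tails, and branch point tails only introduces valuations whose predecessor towers are already present in the endpoints (and hence in the set by induction), and note that the discarded infinite pseudovaluations cannot be predecessors of any finite valuation. You have simply spelled out what the paper summarizes as ``not hard to verify from the definitions.''
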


\begin{proof}
By Lemma~\ref{Lallpredincl}, the property in the lemma holds for
$V_2$.  It is not hard to verify from the definitions that adjoining
links, tails, and branch point tails does
not affect the property, thus it holds for $V_5$ as well.  Obviously,
removing infinite pseudovaluations does not affect the property, since they
cannot be predecessors of any other pseudovaluation, so the property holds for $V_{\reg}$.
\end{proof}

\begin{lemma}\label{LY0structure}
Let $\mc{Y}_{\reg}$ be the normal model of $\proj^1_K$ corresponding to the
set $V_{\reg}$ of Mac Lane valuations constructed in
Algorithm~\ref{AY0}, and let $\ol{Y}_{\reg}$ be its special fiber.
\begin{enumerate}[\upshape (i)]
\item The poset $V_{\reg}$ is a rooted tree with root $v_0$.
\item Every closed point of $\mc{Y}_{\reg}$ that lies on more than one component
of the special fiber $\ol{Y}_{\reg}$ lies on exactly two components, and is a standard
crossing (Definition~\ref{Dstandardcrossing}(i)).  Furthermore, the
valuations corresponding to the two components are both contained in a
single $L_{v, w}$, $T_{v}$, or $B_{V_4 , f_i}$ as in Steps (3), (4), or (5) of Algorithm~\ref{AY0}.
\item Every non-regular closed point of $\mc{Y}_{\reg}$ that lies on
  exactly one component of $\ol{Y}_{\reg}$ and is not the specialization of
  a branch point of $X \to Y$ is a finite cusp
  (Definition~\ref{Dstandardcrossing}(ii)) and the component
  corresponds to the maximal valuation of some $T_{v}$.
\item The horizontal divisor $D_{f_i}$ on 
  $\mc{Y}_{\reg}$ intersects $\ol{Y}_{\reg}$ on a single irreducible component corresponding to the maximal valuation
  of $B_{V_4, f_i}$.
  %(this is a standard endpoint so long as
  %$B_{V_4, f_i} \neq \{v_{f_i}\}$).
  %If $B_{V_4, f_i, d} =
  %v_{f_i}$, then $D_{f_i}$ specializes to a regular point of
  %$v_{f_i}$.
\item If $ i \neq j$, the horizontal divisors $D_{f_i}$ and $D_{f_j}$
  do not meet on $\mc{Y}_{\reg}$.
  \end{enumerate}
\end{lemma}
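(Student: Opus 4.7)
The plan is to prove the five parts in order, using the inf-closedness of $V_{\reg}$ (Lemma~\ref{Linfclosed2}), its closure under taking predecessors (Lemma~\ref{Lpredclosed}), and a careful inspection of Algorithm~\ref{AY0}.

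For part (i), $v_0 \in V_{\reg}$ since $v_0$ is a predecessor of every $v_{f_i}^{\infty} \in V_1$, and $v_0$ is the unique minimum of $V_{\reg}$. For any $v \in V_{\reg}$, the set of Mac Lane pseudovaluations $\preceq v$ is totally ordered by \cite[Proposition~2.25]{KW}, so ancestors in $V_{\reg}$ form a chain, yielding the rooted-tree structure. For part (ii), Proposition~\ref{P35} applied to the inf-closed set $V_{\reg}$ shows two components meet iff the corresponding valuations are adjacent in $V_{\reg}$, and Lemma~\ref{Ladjacentstandard} (applicable because $V_{\reg}$ is predecessor-closed) promotes adjacency to a standard crossing. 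Since a tree admits no three pairwise-adjacent elements and since distinct sibling children of a common parent meet the parent component at distinct points (a consequence of the diskoid picture underlying Proposition~\ref{Pparameterize}(i)), at most two components meet at any given point. Each adjacent pair in $V_{\reg}$ lies in a single $L_{v,w}$, $T_v$, or $B_{V_4, f_i}$ by construction: Steps~(3)--(5) adjoin totally ordered chains sharing with the rest of $V_{\reg}$ only at their endpoints, so every new adjacency sits inside one such chain.

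For part (iii), a non-regular closed point lying on a single component is either the specialization of $D_{\infty}$ or a finite cusp. I would first argue the $\infty$-specialization is regular: the preliminary reduction of \S\ref{Sreductions} forces all roots of $f$ to be integral, so every $v \in V_{\reg} \setminus \{v_0\}$ has linear first key polynomial with root in $\mc{O}_K$; consequently the contraction $\mc{Y}_{\reg} \to \proj^1_{\mc{O}_K}$ is an isomorphism on a neighborhood of $t = \infty$. Hence every remaining non-regular one-component point which is not a branch specialization is a finite cusp. By Corollary~\ref{Cmaximalcusp}, such a cusp occurs on $v = [v_0, \ldots, v_n(\phi_n) = \lambda_n]$ with $e_v > e_{v_{n-1}}$ precisely when every $w \succ v$ in $V_{\reg}$ satisfies $w(\phi_n) = \lambda_n$. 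The criterion fails for valuations interior to any chain, since the immediate successor augments $\phi_n$ to a larger value; tops of branch tails produce branch specializations, excluded by hypothesis; and any valuation of $V_3$ meeting the cusp criterion belongs by definition to the set $T$ of Step~(4), so after adjoining $T_v$ the cusp migrates to the top of the tail.

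For parts (iv) and (v), Proposition~\ref{Pbranchspecialization} says $D_{f_i}$ meets $\ol{Y}_{\reg}$ on the component of the $\preceq$-maximal $u \in V_{\reg}$ with $u \prec v_{f_i}^{\infty}$. The top $v'_i$ of $B_{V_4, f_i}$ is such a $u$ and is a leaf of $V_{\reg}$; by part~(i) the set $\{u \in V_{\reg} : u \preceq v_{f_i}^{\infty}\}$ is totally ordered, and valuations in $B_{V_4, f_j}$ with $j \neq i$ (whose top key polynomial is $f_j$) fail $u \preceq v_{f_i}^{\infty}$ by Lemma~\ref{Lgfollows}, confirming (iv). For (v), if $v'_i \neq v'_j$ then $D_{f_i}$ and $D_{f_j}$ meet disjoint components, and by (iv) each meets only its own component, so they share no point; if $v'_i = v'_j$ they specialize to distinct closed points of the common $\proj^1_k$-component, corresponding to the disjoint diskoids of $f_i$ and $f_j$ above $v'_i$, whose separation is guaranteed by the construction. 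The main obstacle will be part~(iii): one must track through Steps~(3)--(5) to confirm that the tails adjoined in Step~(4) absorb every cusp from $V_3$ without creating uncontrolled new ones, and separately verify that the $\infty$-specialization remains regular on $\mc{Y}_{\reg}$; the other parts then follow from the algorithmic construction and the structural results of Sections~\ref{Smaclane} and~\ref{Smaclanemodels}.
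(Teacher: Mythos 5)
Your decomposition follows the paper's, and parts (i), (ii), (iv), (v) are essentially right. Part (i) is a small variant: the paper cites \cite[Corollary~2.28]{KW} (inf-closed sets form rooted trees) where you argue via total ordering of ancestors; both are fine. For (v), you split into cases ($v_i'=v_j'$ or not), whereas the paper goes directly: $v_{f_i}^\infty$ and $v_{f_j}^\infty$ are non-comparable hence not neighbors, so Proposition~\ref{P35} (with $V^*=V_{\reg}\cup\{v_{f_i}^\infty,v_{f_j}^\infty\}$) gives disjointness in one step---your second case ("disjoint diskoids... whose separation is guaranteed by the construction") is vague and is exactly what the proposition makes precise.

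The genuine gap is part~(iii), which you yourself flag as "the main obstacle." Two things are missing. First, for the regularity of the $\infty$-specialization: you assert the contraction $\mc{Y}_{\reg}\to\proj^1_{\mc{O}_K}$ is an isomorphism near $t=\infty$, but nothing you cite pins down where the exceptional components actually land; Proposition~\ref{Pcontractionisomorphism} only controls components $w$ with $w\not\succeq$ any element of $S$, and here every $w\succeq v_0$, so it gives no information about this contraction. (The paper avoids this by citing \cite[Lemma~7.3(ii)]{ObusWewers} directly with $\lambda_1=0$.) Second, and more importantly, you need to show that the cusp component is maximal in some $T_v$ rather than only in some $B_{V_4,f_i}$. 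Your phrase "tops of branch tails produce branch specializations, excluded by hypothesis" is the right idea, but the precise argument requires Lemma~\ref{Lstandardendpointunique}: if $w$ is maximal in $B_{V_4,f_i}$ with $B_{V_4,f_i}\neq\{v_{f_i}\}$, then $w=[v_0,\ldots,v_n(f_i)=\lambda]$ with $e_w>e_{w_{n-1}}$, so the unique finite cusp on the $w$-component is exactly where $D_{\phi_n}=D_{f_i}$ meets it---a branch specialization, contradiction. Without this, nothing rules out the cusp sitting at the end of a branch-point tail away from $D_{f_i}$. You also implicitly assume that a finite cusp forces its valuation to be a leaf of $V_{\reg}$; that step too needs to be made explicit (it follows from Corollary~\ref{Cmaximalcusp}, since any $w'\succ w$ in the same $\phi_n$-direction would absorb $D_{\phi_n}$).
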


\begin{proof}
  By Lemma~\ref{Linfclosed2}, $V_{\reg}$ is inf-closed, so \cite[Corollary~2.28]{KW} shows that $V_{\reg}$, when thought of
as (the graph of) a partially ordered set, is a rooted tree. This proves (i).

By Proposition~\ref{P35}, the dual graph of $\ol{Y}_{\reg}$ is in fact the rooted
tree corresponding to $V_{\reg}$, and in particular a pair of intersecting
components of $\ol{Y}_{\reg}$ corresponds to a pair of adjacent valuations in $V_{\reg}$. To prove part (ii), we first note that
  %$V_2$ is inf-closed by definition, and it is easy to see
  %from the construction that $V_3$ is as well.  Furthermore,
  any two adjacent valuations in $V_3$ are contained in some
  $L_{v,w}$, any new pair of adjacent valuations in $V_4$ is
  contained in some $T_{v}$, and any new pair of adjacent valuations
  in $V_5$ (and thus $V_{\reg}$) is contained in some $B_{V_4, f_i}$.
Since all pairs of adjacent valuations in $V_{\reg}$ are contained in an
$L_{v,w}$, $T_{v}$, or $B_{V_4, f_i}$, and since all pairs of
adjacent valuations in an $L_{v,w}$, $T_{v}$, or $B_{V_4, f_i}$
form standard crossings by construction, part (ii) follows.  

To prove part (iv), we note that the maximal valuation $w$ in $B_{V_4,
  f_i}$ is exactly the maximal one among all valuations in $V_{\reg}$ bounded above by
$v_{f_i}^{\infty}$.  In particular, we have $v_{f_i} \preceq w \prec
v_{f_i}^{\infty}$.  By Proposition~\ref{Pbranchspecialization}, $D_{f_i}$ specializes only to the
$w$-component of $\mc{Y}_{\reg}$.

%Then $w$ can be written as
%$[v_{f_i}, w(f_i) = \lambda]$,
%
%
%
%If $B_{V_4, f_i, d} \neq
%\{v_{f_i}\}$, then $w \neq v_{f_i}$, and by definition the point where
%$D_{f_i}$ intersects the special fiber is a standard endpoint of $w$.
%If $B_{V_4, f_i,d} = \{v_{f_i}\}$, then $w = v_{f_i}$ and the proof of
%(iii) is done.

We now prove part (iii).  By \cite[Lemma~7.3]{ObusWewers}, a
non-regular closed point $y$ of $\mc{Y}_{\reg}$ that lies on one
irreducible component of $\ol{Y}_{\reg}$ is either a finite cusp or the specialization of $t = \infty$.  Since $v_0$ is the
unique minimal valuation in $V_{\reg}$, the point $t = \infty$ specializes
to the component of $\ol{Y}_{\reg}$ corresponding to $v_0$, and the
specialization is thus
regular by \cite[Lemma~7.3(ii)]{ObusWewers}, taking $\lambda_1 = 0$ in
that lemma.  So $y$ is a finite cusp.

Suppose $y$ lies on the $w$-component of $\mc{Y}_{\reg}$ for some valuation $w$.
%By Lemma~\ref{Lendpointmaximal}, $w$ is maximal in $V_{\reg}$ for its
%inductive length.
The construction of
Algorithm~\ref{AY0} starting from step (4) shows that $w$ is maximal either in a $T_{v}$
or a $B_{V_4, f_i}$, and that the only way $w$ is not maximal in a
$T_{v}$ is if $w$ is maximal in some $B_{V_4, f_i}$ with $B_{V_4,
  f_i} \neq \{v_{f_i}\}$.  But in this case, $y$ meets $D_{f_i}$ by
Lemma~\ref{Lstandardendpointunique}, so $y$ is the
specialization of a branch point, contradicting the assumption in part
(iii).  This proves (iii).

Lastly, since $v_{f_i}^{\infty}$ and $v_{f_j}^{\infty}$ are
non-comparable in the partial order, they are not neighbors, so
Proposition~\ref{P35} shows they do not meet on $\mc{Y}_{\reg}$
(recall that $V_{\reg}$ is inf-closed).  This proves part (v).
\end{proof}

\begin{theorem}\label{TX0regular}
Let $\mc{Y}_{\reg}$ be the normal model of $\proj^1_K$ corresponding to the
set $V_{\reg}$ of Mac Lane valuations constructed in Algorithm~\ref{AY0},
and let $\nu \colon \mc{X}_{\reg} \to \mc{Y}_{\reg}$ be the normalization of $\mc{Y}_{\reg}$ in $K(X)$.  Then
$\mc{X}_{\reg}$ is a regular model of $X$ with normal crossings.  In
other words, $V_{\reg}$ is a regular normal crossings base. In fact,
$\mc{X}_{\reg}$ even has \emph{strict} normal crossings (that is, all the
irreducible components of the reduced special fiber are smooth). 
\end{theorem}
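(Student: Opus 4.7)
The plan is to verify regularity, normal crossings, and smoothness of each irreducible component of the reduced special fiber pointwise on $\mc{X}_{\reg}$. For a closed point $x \in \mc{X}_{\reg}$, set $y = \nu(x)$. I would begin by invoking Lemma~\ref{LY0structure} to sort the closed points of $\mc{Y}_{\reg}$ into five mutually exclusive types: \textbf{(a)} standard crossings occurring within some link $L_{v,v'}$, tail $T_v$, or branch point tail $B_{V_4,f_i}$; \textbf{(b)} finite cusps (by Corollary~\ref{Cmaximalcusp} these sit on the maximal component of a $T_v$ with $e_v > e_{v_{n-1}}$); \textbf{(c)} specializations of a horizontal divisor $D_{f_i}$, landing on the maximal component of $B_{V_4,f_i}$; \textbf{(d)} the standard $\infty$-specialization on the $v_0$-component; and \textbf{(e)} any other closed point of $\mc{Y}_{\reg}$, which is necessarily regular and away from any horizontal part of the branch divisor.

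In case \textbf{(e)}, the branch divisor at $y$ is either empty or the single smooth vertical component through $y$, so Proposition~\ref{P:RegandNormalize}(i) immediately yields regularity, normal crossings, and non-nodality (so $x$ is smooth on the unique component through it). In case \textbf{(a)}, Lemma~\ref{LY0structure}(iv)--(v) ensure no $D_{f_i}$ meets $y$, so Proposition~\ref{Pstandardcrossingregular} applies, and I would verify that Definitions~\ref{Dlink} and \ref{Dtail} are arranged so that each adjacent pair of valuations within a link, tail, or branch point tail satisfies the shortest-$\widetilde{N}$-path criterion; smoothness on each component containing $x$ is then Corollary~\ref{Csmoothoncomponent}(a). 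In case \textbf{(b)}, I would apply Proposition~\ref{Pstandardendpointregular} with $a = 0$: by Remark~\ref{Rnobranchpoint} the criterion reduces to $\lambda_n \in (1/\widetilde{N})\ints$, which is exactly the minimality condition defining the outer endpoint of $T_v$ in Definition~\ref{Dtail}(i); smoothness comes from Corollary~\ref{Csmoothoncomponent}(b). In case \textbf{(c)}, I would apply Proposition~\ref{Pstandardendpointregular} with $\phi_n = f_i$ and $a = a_i$: both divisibility conditions $\lambda_n \in (1/\widetilde{N})\ints$ and $v(f) \in (d/\widetilde{N})\ints$ are precisely those imposed on the endpoint of $B_{V_4,f_i}$ in Definition~\ref{Dtail}(ii), with smoothness again from Corollary~\ref{Csmoothoncomponent}(b). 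In case \textbf{(d)}, the reduction of \S\ref{Sreductions} ensures that all roots of every $f_i$ are integral, whence $v_0 \prec v_{f_i}^{\infty}$ for each $i$, i.e., $s = 0$ in Proposition~\ref{PinftySNC}; condition (ii) of that proposition then holds, delivering regularity, normal crossings, and smoothness of the special fiber at $x$ simultaneously.

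I expect the main obstacle to be the \emph{internal} part of case \textbf{(a)} when the chain in question is a branch point tail $B_{V_4,f_i}$: the internal standard crossings of such a tail must satisfy the shortest-$\widetilde{N}$-path condition of Proposition~\ref{Pstandardcrossingregular} despite the fact that for those internal crossings $f_i^{a_i}$ is absorbed into the $g$-factor of the factorization $f = gh$ (whereas when the branch point tail was built, $f_i$ played a role analogous to $\phi_n$). I would need to carefully track how the invariants $e$, $s$, $r$, and the resulting $\widetilde{N}$ transform between the ``branch-tail viewpoint'' of Definition~\ref{Dtail}(ii) and the ``link viewpoint'' of Definition~\ref{Dlink}, and confirm that the common refinement of shortest-$\widetilde{N}$-paths they produce matches the valuations actually inserted by Algorithm~\ref{AY0}. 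Once this bookkeeping is completed, the remaining task of confirming that (a)--(e) exhaust all closed points of $\mc{Y}_{\reg}$ is immediate from Lemma~\ref{LY0structure}.
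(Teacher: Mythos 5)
Your proposal takes essentially the same route as the paper: classify the closed points of $\mc{Y}_{\reg}$ via Lemma~\ref{LY0structure} and apply the pointwise criteria of \S\ref{Sdetecting} together with Corollary~\ref{Csmoothoncomponent} to each class. The one structural difference is that the paper folds your case~(d) into your case~(e): since $e_{v_0}=1$, the $\infty$-specialization is a regular point of $\mc{Y}_{\reg}$ away from the horizontal branch locus, so Proposition~\ref{P:RegandNormalize}(i) handles it uniformly with all the other such points. Your detour through Proposition~\ref{PinftySNC} is also correct (the reduction of \S\ref{Sreductions} gives $s=0$, so conditions (i) and (ii) of that proposition hold automatically), just heavier machinery than needed.

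Your anticipated obstacle is a fair point to flag but the ``two viewpoints'' framing is slightly off, and the resolution is cleaner than you expect. Definition~\ref{Dtail}(ii) \emph{defines} $B_{V_4,f_i}$ as a link $L_{v,v'}$, so $e$, $s$, $N$, $r$, $\widetilde{N}$ are fixed once, in Definition~\ref{Dlink}, for the endpoint pair $(v,v')$. What you must verify is only that for each interior adjacent pair $(u,u')$ of that link, the quantities Proposition~\ref{Pstandardcrossingregular} would assign to $(u,u')$ coincide with those fixed for $(v,v')$. This is automatic from the precondition of Definition~\ref{Dlink} that no $D_{f_j}$ passes through the $\{v,v'\}$-crossing: by Proposition~\ref{Pparameterize}(ii), every root $\alpha$ of every $f_j$ then satisfies either $v_K(\phi_n(\alpha)) \geq \lambda_n'$ or $v_K(\phi_n(\alpha)) \leq \lambda_n$, never strictly in between, so the splitting $f=gh$ is the same for every interior crossing. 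Hence $e$ and $N=e_{v_{n-1}}$ agree, $s$ agrees because $v(h)=v_{n-1}(a_0)$ by Lemma~\ref{Ldominantterm}(ii) depends only on the shared predecessor $v_{n-1}$, and therefore $r$ and $\widetilde{N}$ agree too. Regularity at interior crossings then drops out of the fact that consecutive entries of a shortest $\widetilde{N}$-path form a two-term $\widetilde{N}$-path. If you additionally worry that the $e$ in the link might exceed the exponent $a_i$ fed to Proposition~\ref{Pstandardendpointregular} at the outer end of $B_{V_4,f_i}$: when $B_{V_4,f_i}$ is nontrivial, $f_i$ is the \emph{unique} factor $f_j$ with $v_{f_j}^{\infty}\succ v'$, because otherwise $\inf(v_{f_i}^{\infty},v_{f_j}^{\infty})$ would be a valuation of $V_4$ lying strictly between $v$ and $v_{f_i}^{\infty}$, contradicting the maximality of $v$; so $e=a_i$ after all, and the two uses of $\widetilde{N}$ agree.
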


\begin{proof}
We go systematically through all closed points $y \in \mc{Y}_{\reg}$ and
show that each point $x \in \nu^{-1}(y)$ is regular in $\mc{X}_{\reg}$ with
normal crossings, and furthermore that if $y$ lies on only one irreducible
component of the special fiber of $\mc{Y}_{\reg}$, then $x$ is a smooth point of the reduced special fiber.

If $y$ is the intersection of some $D_{f_i}$ with $\ol{Y}_{\reg}$, then by
Lemma~\ref{LY0structure}(iv), $y$ specializes only to the
$v$-component of $\mc{Y}_{\reg}$, where $v = [v_{f_i}, v(f_i) =
\lambda]$ is the maximal valuation in the
branch point tail $B_{V_4,
  f_i}$ (we allow the possibility that $\lambda =
v_{f_i}(f_i)$, thus making the presentation of $v$ non-minimal).  In
particular, $\lambda \in (1/\widetilde{N})\ints$ and $s/N + a_i
\lambda \in (d/\widetilde{N})\ints$, where $s$, $N$, and
$\widetilde{N}$ are defined as in the link corresponding to $B_{V_4,
  f_i}$ as in Definition~\ref{Dtail}(ii).  Then all points $x \in \nu^{-1}(y)$ are regular
with normal crossings by
Proposition~\ref{Pstandardendpointregular}(iii) ($f_i$ and $\lambda$ here play the
roles of $\phi_n$ and $\lambda_n$ in that proposition).  Since the
horizontal part of $\divi_0(f)$ is $\sum a_i D_{f_i}$, condition (b) of
Corollary~\ref{Csmoothoncomponent} applies to $x$, and hence any such $x$ is a smooth point of the reduced special fiber. 

For the remainder of
the proof, assume that $y$ is not the specialization of a branch point
of $X \to Y$.
If $y$ lies on more than one irreducible component of $\ol{Y}_{\reg}$, then
by Lemma~\ref{LY0structure}(ii), $y$ is a standard crossing
corresponding to two adjacent valuations in
some $L_{v, w}$, $B_{V_4, f_i}$, or $T_{v}$.  By
Proposition~\ref{Pstandardcrossingregular}, any $x \in
\nu^{-1}(y)$ is regular in $\mc{X}_{\reg}$ with normal crossings.
%Proposition~\ref{Pstandardcrossingregular} also shows in the third case that any
%$x \in \nu^{-1}(y)$ is also regular in $\mc{X}_{\reg}$ with normal
%crossings, once one notes that one can take $e = r = 0$ and $s =
%Nv(f)$ in Proposition~\ref{Pstandardcrossingregular}.

If $y$ is a non-regular point lying on one irreducible
component of $\ol{Y}_{\reg}$, then by Lemma~\ref{LY0structure}(iii), $y$ is a
finite cusp on the $w$-component of $\ol{Y}_{\reg}$, where $w$ is
maximal in some $T_v$.  Specifically, $w = [v_0,\, \ldots,\,
w_n(\phi_n) = \lambda_n]$ such that $\lambda_n \in
(1/\widetilde{N})\ints$, where $\widetilde{N}$ is defined as for the
link corresponding to $T_v$ in
Definition~\ref{Dtail}(i).  By
Proposition~\ref{Pstandardendpointregular}(iii) (with $a = 0$ in that
proposition) combined with Remark~\ref{Rnobranchpoint}, all $x \in
\nu^{-1}(y)$ are regular in $\mc{X}_{\reg}$ with normal crossings.  Condition (b) of
Corollary~\ref{Csmoothoncomponent} applies to $x$, and hence any such $x$ is a smooth point of the reduced special fiber.
%By
%Corollary~\ref{Csmoothoncomponent}, $\mc{X}_{\reg}$ has normal crossings at
%each $x$ as well. \andrew{Why does this corollary apply here?}

Lastly, suppose $y$ lies on only one irreducible
component $\ol{Z}$ of the special fiber $\ol{Y}_{\reg}$, is regular in $\mc{Y}_{\reg}$, and is not the
specialization of a branch point of $X \to Y$.  The reduced induced
subscheme of $\ol{Z}$ is isomorphic to $\proj^1_k$ by \cite[Lemma
7.1]{ObusWewers}, so in particular, $\mc{Y}_{\reg}$ has normal
crossings at $y$.  Since regularity can be checked after completion by \cite[Proposition~11.24]{AM}, all $x \in \nu^{-1}(y)$ are
regular in $\mc{X}_{\reg}$ with normal crossings by
Proposition~\ref{P:RegandNormalize}(i). Condition (b) of
Corollary~\ref{Csmoothoncomponent} applies to $x$, and hence any such $x$ is a smooth point of the reduced special fiber.
\end{proof}

\section{The minimal regular model with normal crossings}\label{Ssecondmodel}
Throughout \S\ref{Ssecondmodel}, we let $\mc{Y}_{\reg}$ be the
$V_{\reg}$-model of $\proj^1_K$, where $V_{\reg}$ is constructed in
Algorithm~\ref{AY0}, and we let $\nu \colon \mc{X}_{\reg} \to \mc{Y}_{\reg}$ be its
normalization in $K(X)$.  By
Theorem~\ref{TX0regular}, $\mc{X}_{\reg}$ is a regular normal crossings model of
$\mc{X}$.  In the language of
Definition~\ref{Dregularbase}, 
$V_{\reg}$ is a regular normal crossings base.  In this section, we will describe which irreducible
components of $\mc{X}_{\reg}$ need to be contracted to obtain the
minimal regular normal crossings model.  
 Equivalently, we will show
which valuations in $V_{\reg}$ are removable
(Definition~\ref{Dregularbase}).  After an important
preliminary lemma in \S\ref{Sgeneralities}, we will show that
all such removable valuations are either maximal valuations in $V_{\reg}$ (\S\ref{Smaximal})
or minimal valuations in $V_{\reg}$ (\S\ref{Sminimal}).  The main
result is Theorem~\ref{Tminnormalcrossingsbase}.

\subsubsection{A weak minimality condition on $f$}
As in \S\ref{Sconstruction}, we assume $f = \pi_k^af_1^{a_1} \cdots f_r^{a_r}$ is an irreducible factorization of
$f$ with all $f_i$ monic in $\mc{O}_K[t]$, and $X \to Y = \proj^1_K$
is a $\ints/d$-cover of smooth projective curves given birationally by $z^d = f$.
Recall in \S\ref{Sreductions}, we showed we may assume that $d \mid \deg (f)$ and $\deg(f) \geq 3$. We now add another assumption in \S\ref{Ssecondmodel} without loss of generality.
Namely, suppose there exists $a \in \mc{O}_K$ such that each root
$\theta$ of $f$ satisfies $v_K(\theta - a) \geq 1$.  Then, letting $b
= \lfloor \min_{\theta} v_K(\theta - a) \rfloor$ and replacing $t$
with $a + \pi_K^bt$ guarantees that there no longer exists $a \in \mc{O}_K$
as above,
%\footnote{This should be thought of as a weak ``minimality'' condition on $f$.} 
while still preserving the fact that all roots of $f$ are
integral over $\mc{O}_K$.  So we assume no $a$ exists as above.

\subsection{Generalities}\label{Sgeneralities}

We begin with a discussion of regular normal crossings bases
associated to \emph{minimal} regular models of $X$.

%\andrew{Moved this here from \S\ref{Sconstruction}}
\begin{prop}\label{Pcontractdownstairs}\hfill
  \begin{enumerate}[\upshape (i)]
    \item\label{P:CanQt} There exists a regular normal crossings base $V_{\min}$ for $X \to
      \proj^1$ such that the corresponding model $\mc{X}_{\min}$ of
      $X$ is the minimal regular model with normal crossings.
    \item\label{P:ContractGalOrbit}  If $V_{\reg}$ is a regular normal crossings base, then there is a chain $V_{\reg} =: V_0 \supsetneq V_1 \supsetneq \cdots
      \supsetneq V_n := V_{\min}$ where, for $0 \leq i < n$, there
      exists $v_i \in V_i$ such that $v_i$ is removable from $V_i$ and
      $V_{i+1} = V_i \setminus \{v_i\}$.
   \end{enumerate}
\end{prop}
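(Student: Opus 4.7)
Proof Proposal.

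The plan is as follows. For part~\ref{P:CanQt}, we apply Proposition~\ref{P:ExtendingCoversToCanonicalModels} to the $\ints/d$-cover $X \to \proj^1_K$: the Galois action extends to the minimal regular normal crossings model $\mc{X}_{\min}$, and the scheme-theoretic quotient $\mc{Y}_{\min} := \mc{X}_{\min}/(\ints/d)$ is a normal model of $\proj^1_K$ whose normalization in $K(X)$ recovers $\mc{X}_{\min}$. The finite set $V_{\min}$ of Mac Lane valuations corresponding to the irreducible components of the special fiber of $\mc{Y}_{\min}$ is then a regular normal crossings base in the sense of Definition~\ref{Dregularbase}.

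For part~\ref{P:ContractGalOrbit}, first I would use the universal property of the minimal normal crossings resolution to obtain a unique (hence Galois-equivariant, by uniqueness applied to $g^{-1} \circ (-) \circ g$) morphism $\mc{X}_{\reg} \to \mc{X}_{\min}$. This descends to a proper birational morphism $\mc{Y}_{\reg} \to \mc{Y}_{\min}$, and the bijection between normal models of $\proj^1_K$ and finite sets of geometric valuations forces $V_{\min} \subseteq V_{\reg}$. The chain is then constructed by induction on $n := |V_{\reg}| - |V_{\min}|$, with $n = 0$ trivial.

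For the inductive step, suppose $V_{\reg} \supsetneq V_{\min}$. By Remark~\ref{R:SNCRealm}, the morphism $\mc{X}_{\reg} \to \mc{X}_{\min}$ decomposes into a sequence of $-1$-curve contractions each preserving normal crossings. Pick such a contractible $-1$-curve $E \subseteq \ol{X}_{\reg}$, and let $\ol{Z}$ be the component of $\ol{Y}_{\reg}$ containing $\nu(E)$, with corresponding Mac Lane valuation $v$. Since the exceptional locus of $\mc{X}_{\reg} \to \mc{X}_{\min}$ is Galois-invariant, it contains the full orbit $GE = \nu^{-1}(\ol{Z})^{\red}$; moreover $v \in V_{\reg} \setminus V_{\min}$ because $\ol{Z}$ must be contracted in $\mc{Y}_{\min}$. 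The claim is that simultaneously contracting all components in $GE$ yields a Galois-equivariant regular normal crossings model isomorphic (via uniqueness of normalizations) to the normalization of the $(V_{\reg} \setminus \{v\})$-model of $\proj^1_K$ in $K(X)$, which will show $v$ is removable from $V_{\reg}$. The inductive hypothesis applied to $V_{\reg} \setminus \{v\}$ will then finish the proof.

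The hard part will be justifying that the entire Galois orbit $GE$ of $-1$-curves can be contracted simultaneously while preserving regularity and normal crossings. A naive sequential contraction of one conjugate at a time may fail: if two distinct conjugates $E, gE$ meet transversely on $\ol{X}_{\reg}$, then contracting $E$ raises the self-intersection of $gE$ to $0$ and blocks further $-1$-contractions. Overcoming this will require showing that distinct components of $\ol{X}_{\reg}$ lying above $\ol{Z}$ are in fact disjoint on $\mc{X}_{\reg}$, which should follow from the local analysis of the normalization at standard crossings and finite cusps carried out via Corollary~\ref{Csmoothoncomponent} and Lemma~\ref{L:LiuLorAdapt} in \S\ref{Sstandardcrossings}--\S\ref{Sstandardendpoints}.
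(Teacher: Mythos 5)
Your overall strategy matches the paper's: part~(i) by Proposition~\ref{P:ExtendingCoversToCanonicalModels}, and part~(ii) by contracting Galois orbits of $-1$-curves. You also correctly identified the crux: distinct components of $\ol{X}_{\reg}$ above the same $\ol{Z}$ must be disjoint, otherwise the orbit contraction could fail. However, this is precisely the step you do not prove, and the suggested route does not work.

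You appeal to Corollary~\ref{Csmoothoncomponent} and Lemma~\ref{L:LiuLorAdapt}. But Corollary~\ref{Csmoothoncomponent} only applies at points $\nu(x)$ that are standard crossings or finite cusps satisfying specific hypotheses about where $\divi_0(f)$ passes, and those hypotheses are verified only for the particular model constructed by Algorithm~\ref{AY0} (Theorem~\ref{TX0regular}). Proposition~\ref{Pcontractdownstairs}(ii) is stated for an \emph{arbitrary} regular normal crossings base $V_{\reg}$, so that local analysis is not available. Moreover, disjointness of Galois conjugates is false in general for components of a cyclic cover above a fixed downstairs component; you need to use the assumption that they are \emph{$-1$-curves} to get a contradiction, and your proposal doesn't use that. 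The paper's proof closes this gap with a short intersection-theoretic argument that does use it: if $E_1, E_2$ are conjugate special $-1$-curves meeting at a point, they have the same multiplicity $m$; picking a component $\Gamma'$ of $\ol{Y}$ adjacent to $\Gamma$ and a component $F'$ of $\nu^{-1}(\Gamma')$ meeting $E_1$, the $-1$-condition (via \cite[Proposition~9.1.21]{LiuBook} as in Lemma~\ref{LCastelnuovo}) forces $m$ to equal the sum of multiplicities of all neighbors of $E_1$, which already exceeds $m_{E_2} = m$ because $F'$ also contributes. This is a cleaner and more robust route than the local analysis, and it works for arbitrary $V_{\reg}$. You should replace the handwave with this numerical argument.
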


\begin{proof}
  %\padma{New} In what follows, we will freely use the identification between normal models of $\mathbb{P}^1_K$ and collections of MacLane valuations provided by \cite[Corollary 3.18]{Ruth}, and in particular we will call a normal model of $\mathbb{P}^1_K$ a regular normal crossings base if the corresponding set of MacLane valuations is.
Part~\ref{P:CanQt} follows from
Proposition~\ref{P:ExtendingCoversToCanonicalModels}. We now prove
part~\ref{P:ContractGalOrbit}. If $\mc{X}$ is the normalization of a
normal model $\mc{Y}$ of $\P^1_K$, then the action of the Galois group
$G$ of the cover $X \rightarrow \P^1_K$ extends to $\mc{X}$, and
$\mc{Y} = \mc{X}/G$. 
%Observe that if $\mc{X}$ is the normalization of a normal model $\mc{Y}$ of $\P^1_K$, and if $\mc{X}'$ is a model obtained by contracting the entire Galois orbit of a curve $E$ on $\mc{X}$, then $\mc{X}'$ is the normalization of a new normal model $\mc{Y}'$ of $\P^1_K$, which is obtained from $\mc{Y}$ by contracting the image of $E$. 
Say that a $-1$ curve on $\mc{X}$ is special if contracting $E$ on $\mc{X}$ produces a new regular normal crossings model of $X$. 
%Therefore, for proving part~\ref{P:ContractGalOrbit}, by Remark~\ref{R:SNCRealm}, it suffices to 
We first show that if $\mc{X}$ is a regular normal crossings model
obtained from a regular normal crossings base and if $E$ is a special $-1$ curve on $\mc{X}$, then contracting the entire $G$-orbit of $E$ produces a new regular normal crossings model $\mc{X}'$ of $X$.

Since the $G$-action preserves intersection numbers, it follows
that if $E$ is a special $-1$ curve, so is every curve in its $G$-orbit. If the curves in the $G$-orbit of $E$ are pairwise disjoint,
then since being normal crossings is a local property, it follows that
the entire $G$-orbit of $E$ can be contracted to produce a normal
crossings regular model of $X$. We now argue that two curves in the
$G$-orbit of $E$ cannot intersect. Assume that there are two
intersecting special $-1$ curves $E_1,E_2$ in the $G$-orbit of
$E$. Let the common image of $E_1,E_2$ in $\mc{Y}$ be the
component $\Gamma$. We only need to consider the case where the
special fiber of $\mc{Y}$ has at least $2$ components, since $\mc{Y}$
is already minimal otherwise. Let
$\Gamma'$ be a component of the special fiber of $\mc{Y}$ that
intersects $\Gamma$, and let $F'$ be an irreudicble component of the preimage of $\Gamma'$ in
$\mc{X}$ that intersects $E_1$. Since $E_2$ and $F'$ are both
neighbors of $E_1$, the sum of the multiplicities of the components
intersecting $E_1$ is strictly larger than the multiplicity of
$E_2$. But since $E_1$ is a $-1$ curve, this sum is also supposed to
equal the multiplicity of $E_1$.
This contradicts the fact that the multiplicities of $E_1$ and $E_2$ in the special fiber are equal (by virtue of being in the same $G$-orbit).

% We need to show that both $E_1$ and $E_2$ can be contracted to produce a regular normal crossings model of $X$. If $\Gamma_1$ is a terminal component, then contracting $E_1$ keeps $E_2$ a special $-1$ curve    \padma{I'm confused again about how we finished the argument in this case. Sorry for poor notetaking... I also seem to remember we said we can't have both $\Gamma_1$ and $\Gamma_2$ both be non-terminal, because otherwise the image of $z$ becomes singular or something but I don't see why that's a problem anymore... what was a $-1$ curve before contraction need not be a $-1$ curve anymore after we contract, right? or can that not happen?} 

Finally, let $\mc{Y}_0$ be the model of $\proj^1_K$ corresponding to the
regular normal crossings base $V_{\reg} := V_0$, and let $\mc{X}_0$ be its
normalization in $K(X)$.  Suppose $E$ is a special $-1$
curve on $\mc{X}_0$ and $\Gamma$ is its image in $\mc{Y}_0$.
Let $\mc{X}_0 \to \mc{X}_1$ be the contraction of the entire $G$-orbit
of $E$ in $\mc{X}_0$, and let $\mc{Y}_0 \to \mc{Y}_1$ be the
contraction of $\Gamma$ in $\mc{Y}_0$.  Then $\mc{X}_1$ is the normalization of
$\mc{Y}_1$ in $K(X)$, and as we have seen, $\mc{X}_1$ is regular with
normal crossings.  If $v_0 \in V_0$ is the valuation corresponding to
$\Gamma$, then this shows that $v_0$ is removable from $V_0$.
We now iterate this
procedure and use Remark~\ref{R:SNCRealm}
to finish the proof.
\end{proof}

\begin{remark}\label{Rspecialcontraction}
In particular, the proof above shows that there exists a special
$-1$-curve $E$ on $\mc{X}$ lying above the $v$-component of $V$ if and
only if $v$
is removable from $V$.
\end{remark}

We say that $V$ is
a \emph{minimal} regular normal crossings base if $V$ is a regular
normal crossings base with no removable valuations.  In light of
Proposition~\ref{Pcontractdownstairs}, there is a unique minimal
regular normal crossings base $V_{\min}$ and the normalization of a model of
$\proj^1_K$ corresponding to a minimal regular normal crossings base
is the minimal regular model of $X$ with normal crossings.

The following lemma is useful for  
showing that certain valuations in a regular normal crossings base are not removable. This will allow us to show that after possibly removing certain maximal valuations and certain minimal valuations in $V_{\mathrm{reg}}$, there are no further removable valuations. 
%\begin{lemma}\label{Lthreecomponents}
%Suppose that $V$ is a regular normal crossings base and there exist four
%distinct valuations $v, w_1, w_2, w_3 \in V$ such that $w_3 \prec v = \inf
%(w_1, w_2)$.  Then $v$ is not removable from $V$.
%\end{lemma}
%
%\begin{proof}%
%We see that $v$ has three distinct adjacent valuations $r_1,
%r_2, r_3$ such that $v \prec r_i \prec w$ $(i \in \{1, 2\}$ and $v
%\succ r_3 \succ w_3$.
%So contracting the
%irreducible component corresponding to $v$ in the model $\mc{Y}_0$ of
%$\proj^1_K$ corresponding
%to $V_0$ results a model $\mc{Y}_0'$ where at least three irreducible components of the
%special fiber (those corresponding to $r_1$, $r_2$, and $r_3$) meet at one point, which means the same is true on
%the normalization $\mc{X}_0'$ of $\mc{Y}_0$ in $K(X)$.  In other
%words, $V_0 \setminus \{v\}$ is not a regular normal crossings base.
%\end{proof}

\begin{lemma}\label{Lramificationnotremovable}
  Suppose $V$ is a regular normal crossings base, and let $v \in V$.
  Let $\mc{Y}$ be the $V$-model of $\proj^1$, and let $\mc{X} \to
  \mc{Y}$ be the normalization of $\mc{Y}$ in $K(X)$.  Let $\ol{Y}$ be
  the special fiber of $\mc{Y}$.
  Suppose that any one of the following is true:
  \begin{enumerate}[\upshape (i)]
  \item The $v$-component of $\ol{Y}$ intersects at least three other
    irreducible components of $\ol{Y}$.
  \item The $v$-component of $\ol{Y}$ intersects two other irreducible components
    of $\ol{Y}$ and
  there exists a point lying only on the $v$-component of $\ol{Y}$
  that is geometrically ramified in $\mc{X} \to \mc{Y}$.
 \item The $v$-component of $\ol{Y}$ intersects one other irreducible
   component of $\ol{Y}$ and
  there exist two points lying only on the $v$-component of $\mc{Y}$ that are geometrically ramified in $\mc{X} \to \mc{Y}$,
  with at least one of the geometric ramification indices
  strictly greater than $2$.
  \item There exist three points lying on the $v$-component of $\mc{Y}$
  that are geometrically ramified in $\mc{X} \to \mc{Y}$.
  \end{enumerate}
Then $v$ is not removable from $V$.
\end{lemma}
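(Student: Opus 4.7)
The plan is to assume $v$ is removable and derive a contradiction from the numerical constraints on a contractible $-1$-curve above the $v$-component. By Remark~\ref{Rspecialcontraction}, such a $v$ admits a special $-1$-curve $E \subseteq \mc{X}$ above the $v$-component $\ol{Z}$. Castelnuovo's criterion forces $E \cong \proj^1_k$, and normal-crossings preservation upon contracting $E$ (see Remark~\ref{R:SNCRealm} and \cite[\S9.3.4, Lemma~3.35]{LiuBook}) forces $E$ to meet at most two other irreducible components of the special fiber of $\mc{X}$, at distinct transverse points.

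I would next examine the restricted cover $E \to \ol{Z}^{\red} \cong \proj^1_k$, which is a tame cyclic Galois cover (being a sub-cover of $\mc{X} \to \mc{Y}$, itself cyclic of degree $d$ with $\chara k \nmid d$) of some degree $m$. Applying Riemann--Hurwitz, combined with the fact that a tame cyclic cover of $\proj^1_k$ by $\proj^1_k$ of degree $m > 1$ is necessarily Kummer and totally ramified at exactly two points, forces either $m = 1$ (with no ramification) or $m > 1$ with $E \to \ol{Z}^{\red}$ totally ramified at exactly two points of index $m$. Being Galois, the cover has constant ramification over each $y \in \ol{Z}^{\red}$, equal by Definition~\ref{Dramified} to the geometric ramification index $e_y$ of $y$ in $\mc{X} \to \mc{Y}$.

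With this in hand I would count intersections. Let $I(\ol{Z})$ denote the set of intersection points of $\ol{Z}$ with other irreducible components of the special fiber of $\mc{Y}$. For each $y \in I(\ol{Z})$, the $m/e_y$ preimages of $y$ on $E$ lie on distinct components of the special fiber of $\mc{X}$ above the neighbor of $\ol{Z}$ meeting it at $y$ (otherwise $E$ would meet a single component at two points, which is ruled out by contractibility). Summing over $y \in I(\ol{Z})$, the total number of intersection points of $E$ with other components of the special fiber of $\mc{X}$ is at least $\sum_{y \in I(\ol{Z})} m/e_y$, which must therefore be at most $2$.

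Case (i) is then immediate: three distinct neighboring components of $\ol{Z}$ force $|I(\ol{Z})| \geq 3$ and hence $\sum m/e_y \geq 3$. Case (iv) is also immediate: three geometrically ramified points on $\ol{Z}$ produce at least three ramification points of $E \to \ol{Z}^{\red}$, which is impossible for $E \cong \proj^1_k$. In cases (ii) and (iii), the geometrically ramified points lying only on $\ol{Z}$ already account for (and in (iii) exhaust) the two allowed ramification points of $E \to \ol{Z}^{\red}$, so the intersection points in $I(\ol{Z})$ must be unramified in $\mc{X} \to \mc{Y}$; a short arithmetic check using $m > 1$ in case (ii) and $m > 2$ in case (iii) (the latter via the hypothesis that some $e_y > 2$) then contradicts $\sum m/e_y \leq 2$. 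The main subtlety I anticipate is cleanly identifying the geometric ramification index of $\mc{X} \to \mc{Y}$ at a point of $\ol{Z}$ with the ramification of the restricted cover $E \to \ol{Z}^{\red}$, together with verifying that distinct preimages on $E$ genuinely lie on distinct components of the special fiber of $\mc{X}$.
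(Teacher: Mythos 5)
Your proof is correct and takes essentially the same route as the paper's: translate removability into the existence of a special $-1$-curve $E$ above $\ol{Z}_v$, analyze the induced tame cyclic cover $E \to \ol{Z}_v^{\red}$, use genus/Riemann--Hurwitz to dispatch case (iv), and count intersection points of $E$ with the rest of the special fiber to contradict the constraint of at most two that contractibility imposes. Two imprecisions in your write-up are worth flagging, neither fatal. First, the parenthetical justifying that preimages of $y$ on $E$ lie on distinct components ("otherwise $E$ would meet a single component at two points, which is ruled out by contractibility") is not correct: contracting a $-1$-curve that meets a single other component transversally at two points gives that component a node, which is still a normal crossing; but this does not matter, since what you actually use is the total count of intersection points of $E$, which is $\sum_{y \in I(\ol{Z})} m/e_y$ regardless of which components carry them. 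Second, in case (ii) your claim that "the intersection points in $I(\ol{Z})$ must be unramified" does not follow: only one of the two ramification points of $E \to \ol{Z}_v^{\red}$ is pinned at $y''$, and the other could sit at $y$ or $y'$. The conclusion is still forced, but by a short subcase check: if both $y,y'$ are unramified then $\sum m/e_y = 2m \geq 4$; if exactly one is ramified then $\sum m/e_y = 1+m \geq 3$; and if both are ramified then $y, y', y''$ would give three branch points of a genus-zero tame cyclic cover, which is impossible. You should make that explicit rather than asserting unramifiedness.
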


\begin{proof}
Let $\ol{Z}_v$ be the $v$-component of the $V$-model $\mc{Y}$ of $\proj^1_K$. Let
$\ol{W}$ be an irreducible component of the special fiber of $\mc{X}$
above $\ol{Z}_v$.

In case (i), contracting $\ol{Z}_v$ results in
a model where at least three irreducible components of the special
fiber meet at one point, which means the same is true when contracting
$\ol{W}$, which violates normal crossings.  

Now, note that if the cyclic cover $\ol{W}^{\red} \to \ol{Z}_v^{\red}$ is ramified above
at least three points, then the arithmetic genus of
$\ol{W}^{\red}$ is positive.  This means that contracting
$\ol{W}$ results in $\mc{X}$ no longer being regular, so $v$ is
not removable from $V$.  This takes care of case (iv), and allows us
to assume in cases (ii) and (iii) that at least one of the points of
$\ol{Z}_v$ intersecting another component of the special fiber of $\mc{Y}$ is not geometrically ramified.

So in case (ii), let $y$ and $y'$ be the points where $\ol{Z}_v$
intersects the rest of $\ol{Y}$, and let $y''$ be a geometrically ramified point lying only on $\ol{Z}_v$.  Assume, say, that $y$ is not geometrically
ramified.  This means that $$\# (\nu^{-1}(y) \cap \ol{W}) > \#
(\nu^{-1}(y'') \cap \ol{W}).$$ In particular, $\# (\nu^{-1}(y) \cap
\ol{W}) \geq 2$, which means that contracting $\ol{W}$ results in at least three local irreducible components of the special fiber of
$\mc{X}$ (at least two intersecting $\ol{W}$ above $y$ and one
intersecting $\ol{W}$ above $y'$) meeting at a point.  Thus the resulting
model does not have normal crossings, which means that $v$ is not
removable from $V$.

In case (iii), let $y$ be the point of $\ol{Z}_v$ intersecting the rest
of $\ol{Y}$, and assume $y$ is not
geometrically ramified.  By assumption, the degree of $\ol{W}^{\red}
\to \ol{Z}_v^{\red}$ is at least 3.  So $\# (\nu^{-1}(y) \cap
\ol{W}) \geq 3$, which means that contracting $\ol{W}$ results in at least three local irreducible components of the special fiber of
$\mc{X}$ meeting at a point.  As in the previous paragraph, 
$v$ is not removable from $V$.  
\end{proof}

%\begin{lemma}\label{Lregularbranchpointspecialization}
%  Let $V$ be a set of Mac Lane valuations including $v_{f_i}$ for some
%  $i$ such that $V$ contains no valuation $w$ satisfying $v_{f_i}
%  \prec w \prec v_{f_i}^{\infty}$, and let $\mc{Y}$
%  be the $V$-model of $\proj^1$.  Assume further that the horizontal
%  part of $\divi_0(f_i)$ does not meet the horizontal part of
%  $\divi_0(f_j)$ on $\mc{Y}$ for any $j \neq i$.  Furthermore, suppose
%  the normalization $\mc{X}$ of $\mc{Y}$ in $K(X)$ is regular.
%
%  The horizontal part of $\divi_0(f_i)$ intersects the special
%      fiber of $\mc{Y}$ at a regular point of $\mc{Y}$ lying on the $v$-component.
%  \end{lemma}

%\begin{proof}
%  Let $y \in \mc{Y}$ be the point on the special fiber of $\mc{Y}$
%  which the horizontal part $D_{f_i}$ of $\divi_0(f_i)$ intersects.
%  Since $\mc{Y}$ is regular at $y$ and $\mc{X}$ is regular above $y$,
%  \andrew{Lemma from section 3} shows that the branch locus of $\mc{X}
%  \to \mc{Y}$ is regular at $y$.  Since this branch locus contains
%  $D_{f_i}$ which intersects the $v$-component of $\mc{Y}$, the branch
%  locus cannot also contain the $v$-component.  So $v$ is unramified
%  in $\mc{X} \to \mc{Y}$, which means $\tilde{e}_v = e_v$.   
%\end{proof}

We also state a partial converse to
Lemma~\ref{Lramificationnotremovable}(iii) after recalling Castelnuovo's contractibility criterion.

\begin{lemma}\label{LCastelnuovo}
 Let $\mc{X}$ is a regular normal crossings arithmetic surface. Let
 $\Gamma$ be a multiplicity $m$ component of the special fiber, and
 let $\mc{X} \rightarrow \mc{X}'$ be the contraction of $\Gamma$. If
 $\Gamma$ is not isomorphic to $\P^1_k$, then $\mc{X}'$ is not
 regular. 
 %\padma{Are we allowing nodal $\P^1_k$ in our special fiber?}
 %\andrew{Yes.  That's why we don't say \emph{strict} normal crossings} 
 Furthermore, if $\Gamma$ intersects exactly two (resp.\ one) other
 components of the special fiber having multiplicities $m_1,m_2$
 (resp.\ $m'$), then $\mc{X}'$ is also regular normal crossings if and
 only if $\Gamma \cong \P^1_k$ and $m = m_1+m_2$ (resp.\ $m = m'$).
\end{lemma}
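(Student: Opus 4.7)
The plan is to combine Castelnuovo's classical contractibility criterion with an intersection-theoretic computation using multiplicities. First, if $\Gamma \not\cong \P^1_k$, then by the adjunction formula on a regular arithmetic surface (\cite[\S9.1, Theorem~1.37]{LiuBook}), the arithmetic genus $p_a(\Gamma)$ is positive. Contracting a vertical curve of positive arithmetic genus on a regular arithmetic surface cannot yield a regular point by Castelnuovo's criterion (\cite[\S9.3, Theorem~3.8]{LiuBook}), so $\mc{X}'$ is not regular.

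Assuming henceforth that $\Gamma \cong \P^1_k$, Castelnuovo's criterion states that $\mc{X}'$ is regular if and only if $\Gamma^2 = -1$. To translate this into the multiplicity condition of the lemma, I would use the fact that the special fiber $\mc{X}_k$ is linearly equivalent to the zero divisor (being the pullback of the closed point of $\Spec \mc{O}_K$), so $(\mc{X}_k, \Gamma) = 0$. Writing $\mc{X}_k = m\Gamma + \sum_i m_i \Gamma_i + E$ where the $\Gamma_i$ are the components meeting $\Gamma$ and $E$ collects components disjoint from $\Gamma$, intersecting with $\Gamma$ yields
\[ 0 \;=\; m\,\Gamma^2 \;+\; \sum_i m_i (\Gamma,\Gamma_i). \]
Since $\mc{X}$ has normal crossings and $\Gamma$ is smooth, each neighbor $\Gamma_i$ meets $\Gamma$ transversally at a single closed point, so $(\Gamma, \Gamma_i) = 1$. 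Therefore $\Gamma^2 = -1$ is equivalent to $m = m_1 + m_2$ in the two-neighbor case and to $m = m'$ in the one-neighbor case.

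Finally, I would invoke \cite[\S9.3.4, Lemma~3.35]{LiuBook} to conclude that when $\Gamma$ is a $-1$-curve in a regular normal crossings model, the contraction $\mc{X} \to \mc{X}'$ preserves the normal crossings property: in the one-neighbor case the image of $\Gamma$ is a smooth point of the image $\Gamma'$ of the unique neighbor; in the two-neighbor case the images of $\Gamma_1$ and $\Gamma_2$ meet transversally at a single point, which is a normal crossings point. Since contraction is an isomorphism away from $\Gamma$, no other point of $\mc{X}'$ is affected. I expect the main (mild) obstacle to be phrasing the one-neighbor case carefully, since $\Gamma$ meets $\Gamma'$ at a single standard crossing point with multiplicity one, and the local picture after contraction must be identified explicitly as a smooth (hence normal crossings) point; both of these are handled by Liu's local criterion.
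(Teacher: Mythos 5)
Your argument is correct and mirrors the paper's proof: the paper cites \cite[Proposition~9.1.21]{LiuBook} for the self-intersection $\Gamma^2 = -(m_1+m_2)/m$ (resp.\ $-m'/m$), which you rederive directly from $(\mc{X}_k,\Gamma)=0$ together with transversality of the intersections, and then both arguments invoke Castelnuovo's criterion and \cite[Lemma~9.3.35]{LiuBook} for preservation of normal crossings. One small misattribution: the adjunction formula by itself does not give $p_a(\Gamma)>0$ when $\Gamma\not\cong\P^1_k$ (you also need the fact that an integral projective curve of arithmetic genus zero over the algebraically closed field $k$ is $\P^1_k$), but this does not affect the argument, since Castelnuovo's criterion as stated in Liu already handles the non-$\P^1$ case outright.
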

\begin{proof}
By \cite[Proposition~9.1.21]{LiuBook}, the self-intersection number of
$\Gamma$ is $-(m_1 + m_2)/m$ (resp.\ $-m'/m$).  By Castelnuovo's criterion, $\mc{X}'$
is regular if and only if $\Gamma \cong \proj^1_k$ and $m = m_1 +
m_2$ (resp.\ $m = m'$).  In this case $\mc{X}'$ is normal crossings as well by \cite[Lemma~9.3.35]{LiuBook}.
\end{proof}

\begin{lemma}\label{Lremovable}
Maintain the notation of Lemma~\ref{Lramificationnotremovable}.
Suppose the $v$-component of $\mc{Y}$ intersects exactly one other
irreducible component (say the $w$-component) of $\mc{Y}$. Suppose further that there are exactly two points lying only on the $v$-component of $\mc{Y}$ that are geometrically
ramified, that these geometric ramification indices both equal $2$, 
and that the points above the geometrically ramified points are smooth points of the reduced special fiber.
%\padma{adjust to include the case where the upstairs component is contractible and has just one neighbor? }\andrew{I think the new version is what we want}
Then
\begin{enumerate}[\upshape (i)]
\item If $\ol{X}_v$ is an irreducible component of $\ol{X}$ above the
  $v$-component, then $\ol{X}_v$ meets the rest of $\ol{X}$ at exactly two points. 
\item The $v$-component is removable from $V$ if and only if
  $\tilde{e}_v = 2\tilde{e}_w$, where
  \begin{equation}\label{Etildes}
  \tilde{e}_v \colonequals
\frac{e_vd}{\gcd(d, e_vv(f))} \quad \text{ and } \quad \tilde{e}_w \colonequals
\frac{e_wd}{\gcd(d, e_ww(f))}.
\end{equation}
\item If the $v$-component is removable from $V$, then the
  $w$-component is not removable from $V \setminus \{v\}$.
\end{enumerate}
 \end{lemma}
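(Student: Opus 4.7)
The plan is to handle the three parts sequentially, exploiting the structure of the cyclic cover $\ol{X}_v^{\red}\to\ol{Z}_v^{\red}$ established by Riemann--Hurwitz, then converting the conclusions into self-intersection computations on $\mc{X}$ and $\mc{X}'$, and finally appealing to Artin's contractibility criterion to block the removability of $w$.

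For part (i), I would observe that $\ol{Z}_v\cong\P^1_k$ (as for any Mac Lane valuation), and that the Galois theory of the cover $X\to Y$ induces on each $\ol{X}_v$ a cyclic cover $\ol{X}_v^{\red}\to\ol{Z}_v^{\red}$ of some degree $m$. Its branch points are contained in the set of three distinguished points on $\ol{Z}_v$: the two geometrically ramified points of index $2$ and the intersection with $\ol{Z}_w$ (of some index $e$). In a cyclic group of order $m$ the only element of order $2$ is $m/2$ (which requires $m$ even), and the two index-$2$ monodromy generators therefore sum to $0$, which forces the third branch-index to be $1$. Riemann--Hurwitz then reads $2g-2=-m$, so $g\ge 0$ forces $m=2$ and the third branch point is unramified. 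In particular $\ol{X}_v\cong\P^1_k$, the intersection with $\ol{Z}_w$ has two distinct preimages, and since these are the only points where $\ol{X}_v$ can meet another component of $\ol{X}$, part (i) follows.

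For part (ii), I would compute the self-intersection $(\ol{X}_v)^2$ from the vanishing of intersection with the entire special fiber $\sum_W m_W(\ol{X}_v\cdot W)=0$. By (i) the only contributing $W$'s are components above $\ol{Z}_w$, each with multiplicity $\widetilde{e}_w$, and normal crossings regularity makes each local intersection equal to $1$, so that $(\ol{X}_v)^2=-2\widetilde{e}_w/\widetilde{e}_v$. Castelnuovo's criterion then yields $(\ol{X}_v)^2=-1\iff \widetilde{e}_v=2\widetilde{e}_w$, establishing necessity. For sufficiency, Lemma~\ref{LCastelnuovo} provides the extra multiplicity condition for contractibility to preserve normal crossings: if the two intersection points of $\ol{X}_v$ lay on a single component, the condition would be $\widetilde{e}_v=\widetilde{e}_w$, incompatible with $\widetilde{e}_v=2\widetilde{e}_w$; thus the two points must lie on two distinct components and the condition becomes $\widetilde{e}_v=\widetilde{e}_w+\widetilde{e}_w=2\widetilde{e}_w$, which is our hypothesis. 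Combined with $\ol{X}_v\cong\P^1_k$ from (i), this makes $\ol{X}_v$ a special $-1$-curve and yields removability.

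For part (iii), the key observation is that contracting $\ol{X}_v$ fuses the two intersection points into a single point of $\mc{X}'$ at which the two distinct components $\ol{X}_w^{j_1}$ and $\ol{X}_w^{j_2}$ (from the analysis in (ii)) now meet with local intersection $1$, while being in the same Galois orbit (since $G=\ints/d$ acts transitively on components above $\ol{Z}_w$). If $w$ were removable from $V\setminus\{v\}$, then by Proposition~\ref{Pcontractdownstairs} the entire $G$-orbit of $\ol{X}_w$'s would need to be simultaneously contracted, and Artin's contractibility criterion requires the corresponding intersection matrix on $\mc{X}'$ to be negative definite. Using the blow-up formula $\bar{C}^2=C^2+n^2$, each $\ol{X}_w^j$ (which, as one computes from the vertical-divisor relation and the forced absence of other neighbors, has self-intersection $-2$ on $\mc{X}$) becomes a $-1$-curve on $\mc{X}'$; the principal $2\times 2$ submatrix for the pair $\{\ol{X}_w^{j_1},\ol{X}_w^{j_2}\}$ is $\bigl(\begin{smallmatrix}-1&1\\1&-1\end{smallmatrix}\bigr)$, whose determinant is $0$ and which is therefore not negative definite. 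This contradiction shows $w$ is not removable.

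The main obstacle is the sufficiency direction in part (ii), specifically verifying that the $\widetilde{e}_v=2\widetilde{e}_w$ hypothesis genuinely forces the two intersection points of $\ol{X}_v$ to lie on two distinct components of $\mc{X}$ above $\ol{Z}_w$ rather than on a single one; the cleanest route is the multiplicity compatibility in Lemma~\ref{LCastelnuovo}, which rules out the ``single component'' configuration as incompatible with removability, combined with the fact that $\widetilde{e}_v=2\widetilde{e}_w$ already forces the self-intersection to equal $-1$, so that only the ``two distinct'' configuration can actually realize removability.
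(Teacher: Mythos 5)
Your part (i) is essentially sound: you establish that $\ol{X}_v^{\red} \to \ol{Z}_v^{\red}$ has degree $2$ via a direct monodromy computation (the two order-$2$ local monodromies force the third branch point to be unramified, and then Riemann--Hurwitz forces $m=2$), whereas the paper first quotients by the $\ints/2$-subgroup to get an everywhere-unramified cover of $\proj^1_k$ and argues from there. Both work, but note that you must first justify that $\ol{X}_v^{\red}$ is smooth (so that the genus and Riemann--Hurwitz make sense); the paper does this explicitly by combining the normal-crossings hypothesis at the crossing with the assumed smoothness at the ramified points and the fact that unramified covers of smooth curves are smooth.

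In part (ii), your dichotomy between the ``single component'' and ``two distinct components'' configurations is based on a misreading of Lemma~\ref{LCastelnuovo}. You claim that if the two intersection points of $\ol{X}_v$ lie on a single component, the condition would be $\widetilde{e}_v = \widetilde{e}_w$; but the ``one other component'' clause in Lemma~\ref{LCastelnuovo} is the case of a \emph{single} intersection point (its proof computes $\Gamma^2 = -m'/m$). When $\ol{X}_v$ meets one component twice, the self-intersection is $-2\widetilde{e}_w/\widetilde{e}_v$, exactly as in the two-component case, and Castelnuovo's criterion still reads $\widetilde{e}_v = 2\widetilde{e}_w$. So your attempt to rule out the single-component configuration by ``incompatibility'' does not work; fortunately it is also unnecessary, since the criterion is the same in either case. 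You should simply drop the case distinction.

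Part (iii) has the real gaps. First, you take for granted that the two points of $\ol{X}_v$ lie on two distinct components $\ol{X}_w^{j_1}, \ol{X}_w^{j_2}$, which was never established (and cannot be: the ``bigon'' configuration where both points lie on a single component above $\ol{Z}_w$ is possible, and then the image of that component after contraction has a node, so it fails the $\P^1_k$ hypothesis of Lemma~\ref{LCastelnuovo}; this case must be handled separately, as the paper does). Second, the parenthetical claim that $(\ol{X}_w^j)^2 = -2$ on $\mc{X}$ because of the ``forced absence of other neighbors'' is false in general: the hypotheses constrain the $v$-component to have a single neighbor, but the $w$-component of $\mc{Y}$ may intersect arbitrarily many other components, and $\ol{X}_w^j$ inherits those neighbors. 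Consequently the $2\times 2$ intersection matrix you write down has no justification. The cleanest fix (essentially what the proof of Proposition~\ref{Pcontractdownstairs} already gives you) is simply: $\ol{X}_w^{j_1}$ and $\ol{X}_w^{j_2}$ are in the same $G$-orbit and intersect on $\mc{X}'$, but in the proof of Proposition~\ref{Pcontractdownstairs} it is shown that the $G$-orbit of a special $-1$-curve consists of pairwise disjoint components; hence $\ol{X}_w^{j_1}$ is not a special $-1$-curve, and by Remark~\ref{Rspecialcontraction}, $w$ is not removable. No Artin criterion or self-intersection computation is needed.
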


\begin{proof}
The curve $\ol{X}_v^{\red}$ is smooth at the point where it meets the components
 above the $w$-component, since non-smoothness of $\ol{X}_v^{\red}$
 here would 
 %mean there would be at least $3$ local irreducible components on the special fiber of $\mc{X}$ at this point,
 contradict the assumption that $\mc{X}$ has normal
 crossings.  An unramified cover of a (local) smooth curve
 is smooth, so the only places where $\ol{X}_v^{\red}$ could be
 non-smooth are above the
 geometrically ramified points lying only on the $v$-component.  By assumption $\ol{X}_v^{\red}$ is
 smooth at these points, so
 $\ol{X}_v^{\red}$ is smooth.

 Let $\ol{Y}_v$ be the $v$-component of $\mc{Y}$. We first argue that
 the point where $\overline{Y}_v$ meets the rest of $\overline{\mc{Y}}
 \equalscolon \overline{Y}$ is not geometrically ramified.
By assumption, $\ol{X}_v^{\red} \to
 \ol{Y}_v^{\red} \cong \proj^1_k$ is a cyclic cover with at most $3$
 branch points, two of which have geometric ramification index
 $2$.  
 %\sout{two branch points of index  $2$, and is unbranched outside of these two points except possibly  above the point where $\ol{Y}_v$ meets the rest of $\ol{Y}$.}  
Since $\ol{X}_v^{\red}$ is smooth, the quotient cover $\ol{X}_v^{\red}/(\ints/2) \to \proj^1_k$
 is a tame cover of smooth
 projective curves branched at at most one point, which implies, e.g.,
 by the Riemann--Hurwitz formula, that
 it is an isomorphism.  So
 $\ol{X}_v^{\red} \to \ol{Y}_v^{\red}$ is a $\ints/2$-cover, and again by the
 Riemann--Hurwitz formula and the fact that the genus of
 $\ol{X}_v^{\red}$ is an integer, such a cover cannot be branched at 3 points.
Thus $\ol{X}_v^{\red} \to \ol{Y}_v^{\red}$ is a $\ints/2$-cover of genus
 zero curves, which means that $\ol{X}_v$ meets
 the rest of $\ol{X}$ at two points, proving part (i).

 Now, the multiplicities of the irreducible
components of the special fiber $\ol{X}$ of $\mc{X}$ above the $v$- and
$w$-components are $\tilde{e}_v$ and $\tilde{e}_w$ respectively. By
Lemma~\ref{LCastelnuovo}, $\ol{X}_v$ can be contracted while preserving regularity
 with normal crossings if $\tilde{e}_v = 2\tilde{e}_w$.  By
 Remark~\ref{Rspecialcontraction}, this is equivalent to $v$ being 
 removable from $V$, proving part (ii).

Let $\ol{X}_w$ be an irreducible component of the special fiber of
 $\mc{X}$ above the $w$-component meeting $\ol{X}_v$.  By part (i),
 $\ol{X}_v$ intersects the rest of $\ol{X}$ at two points. So after contracting all the components above the $v$-component, either the image of
 $\ol{X}_w$ either intersects itself, in which case it is not
 contractible by Lemma~\ref{LCastelnuovo}, or it intersects another component lying above the
 $w$-component.  Such a component has the same multiplicity as
 $\ol{X}_w$ in the special fiber, and $\ol{X}_w$ also intersects some other
 component \emph{not} lying above the $w$-component.  By
 Lemma~\ref{LCastelnuovo}, contracting $\ol{X}_w$ does not give a
 regular normal crossings model. By Remark~\ref{Rspecialcontraction},
 $w$ is not removable from $V \setminus \{v\}$, proving part (iii).
\end{proof}

\subsection{Contractions of maximal components}\label{Smaximal}

Let $V_1 \subseteq V_2 \subseteq V_3 \subseteq V_4 \subseteq V_5
\supseteq V_{\reg}$ be as in Algorithm~\ref{AY0}.  The main result of
\S\ref{Smaximal} is Proposition~\ref{Pv1max}, which describes exactly
which valuations are removable from $V_{\reg} \setminus \{v_0\}$.

Recall from Definitions~\ref{Dlink}, \ref{Dtail} that the set of valuations in a link/tail/branch-point tail is totally ordered.
\begin{lemma}\label{Lmidofchain}
 If $v \in V_{\mathrm{reg}}$ is a non-maximal and non-minimal component of a link, or tail, or a branch point tail, then $v$ is not removable.
\end{lemma}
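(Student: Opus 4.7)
The plan is to show that removing $v$ from $V_{\reg}$ violates the $\widetilde{N}$-path regularity criterion of Proposition~\ref{Pstandardcrossingregular} at a newly created standard crossing. Let $\mc{L}$ denote the chain (link, tail, or branch point tail) containing $v$. By Definitions~\ref{Dlink} and~\ref{Dtail}, the valuations of $\mc{L}$ share a common initial Mac Lane presentation $(v_0, \phi_1, \lambda_1, \ldots, \phi_n)$ and are parameterized by a single $\lambda_n$ whose reparametrizations $\widetilde{\lambda} = (\gcd(d,e)/d)\lambda + rs/(Nd)$ form a shortest $\widetilde{N}$-path. Let $v^-, v^+ \in \mc{L}$ be the immediate neighbors of $v$ in $\mc{L}$, and write $\widetilde{\lambda}, \widetilde{\lambda}^-, \widetilde{\lambda}^+$ for their reparametrized values.

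Set $V' := V_{\reg} \setminus \{v\}$. First I would verify that $V'$ inherits the inf-closedness (Lemma~\ref{Linfclosed2}) and predecessor-closedness (Lemma~\ref{Lpredclosed}) of $V_{\reg}$. Leveraging the rooted-tree structure of $V_{\reg}$ (Lemma~\ref{LY0structure}\textup{(i)}), the key observations are: the only immediate successor of $v$ in the tree is $v^+$, which shares the inductive length of $v$, so $v$ is not a predecessor of any element of $V_{\reg}$; and every $w \succ v$ in $V_{\reg}$ satisfies $w \succeq v^+$, from which a short tree-theoretic argument gives $\inf_{V_{\reg}}(w_1,w_2) \neq v$ for all $w_1, w_2 \in V'$. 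Lemma~\ref{Ladjacentstandard} then yields that $v^-$ and $v^+$ are adjacent in $V'$ and form a standard crossing $y'$ in the $V'$-model $\mc{Y}'$ of $\proj^1_K$.

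Next I would verify that the numerical data $(e, s, N, \widetilde{N}, r)$ attached to the crossing $y'$ by Proposition~\ref{Pstandardcrossingregular} agree with those attached to the chain $\mc{L}$ in Definitions~\ref{Dlink} and~\ref{Dtail}. The shared Mac Lane structure of $\mc{L}$ makes $N$ and $\phi_n$ intrinsic; the partition $f = gh$ is controlled by $\{i : v_{f_i}^{\infty} \succ v^+\}$, which equals the analogous set for the top endpoint of $\mc{L}$, because any $v_{f_i}^{\infty}$ falling strictly between $v^+$ and that top endpoint in the partial order would contribute a new intermediate valuation via inf-closure, contradicting the adjacency used to build $\mc{L}$; and $s$, defined by $v^-(h) = s/N$, is independent of $v^- \in \mc{L}$ by Lemma~\ref{Ldominantterm}\textup{(ii)}, since every root $\theta$ of $h$ satisfies $v_K(\phi_n(\theta))$ strictly below the $\lambda$-range of $\mc{L}$. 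For branch point tails, Proposition~\ref{Pbranchspecialization} additionally shows that $D_{f_i}$ meets $\ol{Y}_{\reg}$ only at the maximal component of the tail, so no horizontal part of $\divi(f)$ passes through $y'$ and Proposition~\ref{Pstandardcrossingregular} applies.

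Proposition~\ref{Pstandardcrossingregular} then demands, for regularity of the normalization of $\mc{Y}'$ above $y'$, that $\widetilde{\lambda}^+ > \widetilde{\lambda}^-$ be a single $\widetilde{N}$-path step. This fails: were it so, deleting $\widetilde{\lambda}$ from the shortest $\widetilde{N}$-path defining $\mc{L}$ would yield a proper subsequence containing both endpoints that is still an $\widetilde{N}$-path, contradicting the minimality clause in Definition~\ref{DNpath}. Hence the normalization of $\mc{Y}'$ is not regular above $y'$, so $V'$ is not a regular normal crossings base and $v$ is not removable. The main obstacle I anticipate is the third paragraph: verifying uniformly across the three chain types that $(e, s, \widetilde{N}, r)$ are preserved when passing from $\mc{L}$ to $y'$, and that no horizontal part of $\divi(f)$ intrudes at $y'$ in the branch-point-tail case, requires a slightly delicate case-by-case tracking.
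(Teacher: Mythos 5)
Your proof is correct and takes essentially the same approach as the paper's, which simply observes that removing an interior chain element leaves a standard crossing whose $\widetilde{N}$-path test fails by the minimality clause in Definition~\ref{DNpath}, then cites Proposition~\ref{Pstandardcrossingregular}. The extra verifications you add (inf-closedness and predecessor-closedness of $V_{\reg}\setminus\{v\}$, invariance of $(e,s,N,\widetilde{N},r)$ along the chain) are the details the paper treats as implicit in Definition~\ref{Dlink}; note that the equality of $\{i : v_{f_i}^{\infty} \succ v^+\}$ with the analogous set for the chain's top endpoint is most directly a consequence of the standing hypothesis in Definition~\ref{Dlink} that no $D_{f_i}$ meets the crossing of the endpoint components, via Proposition~\ref{Pparameterize}(ii), rather than of an inf-closure argument.
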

\begin{proof} Let $C$ be the totally-ordered set of valuations corresponding to a link/tail/branch-point tail containing $v$. Since $v$ is non-maximal and non-minimal, by Definition~\ref{Dlink}, $v$ has exactly two neighbors $v_1,v_2$ which are also in $C$. 
%Note that $v_1,v_2$ are not neighbors in the $\widetilde{N}$-path corresponding to $L$. 
Furthermore, if $\mc{Y}'$ is the $V_{\mathrm{reg}} \setminus \{v\}$-model, then the irreducible components corresponding to $v_1,v_2$ intersect at a point $y$ in $\mc{Y}'$, and $y$ is non-regular on $\mc{Y}'$ by the $\widetilde{N}$-path criterion of 
Proposition~\ref{Pstandardcrossingregular} and Definition~\ref{Dlink}. In other words, $v$ is not removable. 
\end{proof}

\begin{prop}\label{Ponlyspecialcontract}
If $v \in V_{\reg} \setminus V_2$, then $v$ is not removable from $V_{\reg}$.
\end{prop}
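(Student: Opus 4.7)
The plan is to show that for every $v \in V_{\reg} \setminus V_2$, removing $v$ breaks the regular normal crossings property of the normalization in $K(X)$. I will classify $v$ by the step of Algorithm~\ref{AY0} that introduces it. By Lemma~\ref{Lmidofchain}, any $v$ that is a non-maximal and non-minimal element of some link, tail, or branch point tail in $V_{\reg}$ is not removable. Since the minimal endpoint of each link lies in $V_2$, the minimal endpoint of each tail lies in $V_3$, and the minimal endpoint of each branch point tail lies in $V_4$, the only remaining valuations in $V_{\reg} \setminus V_2$ are new maximal endpoints of tails $T_u$ from Step 4 or of branch point tails $B_{V_4, f_i}$ from Step 5.

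For such a maximal endpoint $v$, the key observation is that the chain containing $v$ was constructed as the shortest $\widetilde{N}$-path for an appropriate $\widetilde{N}$ determined by the algorithm. If we remove $v$ from $V_{\reg}$, the penultimate valuation $v''$ of the chain becomes exposed, and by Proposition~\ref{Pparameterize} the horizontal divisor $D_{\phi_n}$ (in the tail case) or $D_{f_i}$ (in the branch point tail case) now specializes to the point on the $v''$-component where it previously met $v$. Because interior entries of a shortest $\widetilde{N}$-path have $\lambda$ values outside $(1/\widetilde{N})\mathbb{Z}$ (otherwise one could produce a strictly shorter path), the endpoint conditions of Proposition~\ref{Pstandardendpointregular}(iii) fail at this specialization on the $v''$-component, using Remark~\ref{Rnobranchpoint} to dispatch the condition on $v''(f)$ when $a = 0$ for tails. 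Hence the normalization in $K(X)$ is not regular above $v''$, so $v$ is not removable.

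The main obstacle is the subcase where $v$ is simultaneously the maximal endpoint of a tail from Step 4 and the minimal endpoint of a branch point tail from Step 5 — here $v$ has two neighbors in $V_{\reg}$ arising from chains with different associated $\widetilde{N}$ values. I would address this by showing that, after removing $v$, either the two neighbors $v''$ and $v'''$ are no longer adjacent in $V_{\reg} \setminus \{v\}$ (in which case an intermediate valuation inherits the role and the previous argument applies), or they become adjacent but fail to satisfy the $\widetilde{N}$-path criterion of Proposition~\ref{Pstandardcrossingregular} for the resulting standard crossing. The incompatibility is enforced because the tail side has $a = 0$ in the relevant $\widetilde{N}$ computation while the branch point tail side has $a_i > 0$, so the two shortest $\widetilde{N}$-paths on either side of $v$ cannot be concatenated into a single shortest path for either of the two relevant values of $\widetilde{N}$. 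The delicate bookkeeping of the Mac Lane data on either side of $v$ is the main technical hurdle.
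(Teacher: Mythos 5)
Your proposal follows the paper's structure closely: both use Lemma~\ref{Lmidofchain} to dispose of non-extremal chain elements, and both handle the remaining case --- a maximal endpoint of a tail or branch point tail --- by arguing that its removal exposes a closed point failing Proposition~\ref{Pstandardendpointregular}(iii). However, your parenthetical ``(otherwise one could produce a strictly shorter path)'' is not a correct general fact about shortest $\widetilde{N}$-paths: the shortest $2$-path from $1$ to $0$ is $1 > 1/2 > 0$, whose interior term $1/2$ lies in $(1/2)\mathbb{Z}$, so interior entries of shortest $N$-paths can perfectly well lie in $(1/N)\mathbb{Z}$. What actually does the work is narrower and specific to Definition~\ref{Dtail}: the upper endpoint's $\lambda$-value is chosen \emph{minimal} in $(1/\widetilde{N})\mathbb{Z}$ above the starting $\lambda$, so by that minimality (not by shortest-path considerations) no strictly intermediate $\lambda$ on the chain is in $(1/\widetilde{N})\mathbb{Z}$. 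This is the reason the paper invokes Definition~\ref{Dtail} directly.

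Your ``main obstacle'' is not a phantom: the overlap case does arise (take $f_1 = t$ and $f_2 = t^2-\pi_K$ among the factors of $f$; then $\phi_1 = t = f_1$ is the last key polynomial of $v_{f_2}$, so the maximal valuation $v$ of the tail $T_{v_{f_2}}$ is also $w_1$, the minimal valuation of $B_{V_4,f_1}$). In that situation $\phi_n = f_i$, so $a = a_i > 0$ rather than $a = 0$, and after removing $v$ the divisor $D_{\phi_n} = D_{f_i}$ specializes to the top of $B_{V_4,f_i}$, \emph{not} to the penultimate tail valuation $v'$; the finite-cusp criterion of Proposition~\ref{Pstandardendpointregular} is not the right tool there. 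The correct argument is indeed via Proposition~\ref{Pstandardcrossingregular} applied to the new crossing between $v$'s two neighbors, and the key arithmetic point is that $v$'s $\lambda$-value lying in $(1/\widetilde{N})\mathbb{Z}$ forces the concatenated step to be too large to be an $\widetilde{N}$-path (since $\lcm(\widetilde{N}, c') + \lcm(\widetilde{N}, c'') \geq 2\widetilde{N} > \widetilde{N}$). Your plan gestures at this but doesn't carry it out, and you will also need to address whether $\widetilde{N}$ for the tail side and the branch-point-tail side agree (the sets $g$ and $h$ are defined relative to different upper endpoints). So the ``delicate bookkeeping'' you mention is genuine remaining work, not a formality; the paper's own two short paragraphs on the maximal-endpoint cases do not spell this out either.
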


\begin{proof}
%Throughout, let $\mc{Y}_{\reg}$ be the $V_{\reg}$-model of $\proj^1_K$, and let
%$\mc{X}_{\reg}$ be the normalization of $\mc{Y}_{\reg}$ in $K(X)$.

%First observe that $V_{\reg} \setminus
%\{v\}$ is not a regular normal crossings base for any valuation $v$
%inserted in step (2) of Algorithm~\ref{AY0}, since such a valuation is
%the inf of two other valuations in $V_0$ and thus
%Lemma~\ref{Lthreecomponents} applies.

We must show that $V_{\reg} \setminus \{v\}$ is not a regular normal
crossings base for any valuation $v$ in $V_3 \setminus V_2$, $V_4
\setminus V_3$, or $V_5 \setminus V_4$.   
If $v$ in $V_3 \setminus V_2$, then by Remark~\ref{Rlinkhasends}, $v$ is a non-maximal and non-minimal element of a link and Lemma~\ref{Lmidofchain} shows that $v$ is not removable. If $v$ is in $V_4
\setminus V_3$, or $V_5 \setminus V_4$, then $v$ is a non-minimal element of a tail or a branch point tail respectively, and once again Lemma~\ref{Lmidofchain} shows $v$ is not removable if $v$ is also non-maximal. So without loss of generality, assume that $v$ is a maximal element of a branch point tail $B$ or tail $T_w$.

\begin{comment}
$e_v \mid e_w$ by Lemma~\ref{Lv2onlymax}, and $e_w \mid \tilde{e}_w$
%Then by Definition~\ref{Dlink}, $L$ is a totally-ordered chain of valuations in which $v$ has exactly two neighbors $v_1,v_2$ which are also in $L$. 
%Note that $v_1,v_2$ are not neighbors in the $\widetilde{N}$-path corresponding to $L$. 
%Furthermore, if $\mc{Y}' \colonequals V_{\mathrm{reg}} \setminus \{v\}$-model, then the irreducible components corresponding to $v_1,v_2$ intersect at a point $y$ in $\mc{Y}'$, and $y$ is non-regular on $\mc{Y}'$ by the $\widetilde{N}$-path criterion of Proposition~\ref{Pstandardcrossingregular} and the definition of a link. In other words, $v$ is not removable. 

%\padma{needs remembering what the key thing about being a link is, so
%spelt out instead}
\sout{
,then $L \setminus \{v\}$ is no longer a
link, as follows directly from the ``shortest path''
criterion of Definition~\ref{Dlink}.  This means that the two
valuations in $L$ adjacent to $v$ no longer form an
$\widetilde{N}$-path, which by
Proposition~\ref{Pstandardcrossingregular} shows that a singularity
appears above the point in the model corresponding to $V_{\reg} \setminus
\{v\}$ where the components corresponding to these two valuations
cross.  That is, $V_{\reg} \setminus \{v\}$ is not a regular normal
crossings base.}

\end{comment}

%Next suppose $v \in V_5 \setminus V_4$ is a maximal element of some branch point tail $B$ as in step (5) of Algorithm~\ref{AY0}. 
%If $v$ is non-maximal in $B$, the same argument as in the above paragraph shows that $v$ is not removable. 
First suppose $v$ is maximal in a branch point tail $B$ as in step (5) of Algorithm~\ref{AY0}. Then the roots of some $f_i$
  specialize to the $v$-component of the special fiber of
  $\mc{Y}_{\reg}$, and $v$ satisfies the condition of
  Proposition~\ref{Pstandardendpointregular}(iii) (here $f_i$ plays the
  role of $\phi_n$ in Proposition~\ref{Pstandardendpointregular}).  If we replace $\mc{Y}_{\reg}$ with the model
  $\mc{Y}'$ of $\proj^1_K$ corresponding to $V_{\reg} \setminus \{v\}$, then the roots of
  $f_i$ specialize to the $v'$-component where $v'$ is the
  adjacent valuation $v$, which by Definition~\ref{Dtail}(ii) of a branch
  point tail no longer
  satisfies the criterion of Proposition~\ref{Pstandardendpointregular}(iii).
  So the points above the specialization of the roots of $f_i$ to
  the special fiber of $\mc{Y} '$ are not regular, and thus $V_{\reg} \setminus
  \{v\}$ is not a regular normal crossings base.

Now suppose $v$ is a maximal element of some tail $T_w$ as in step (4) of Algorithm~\ref{AY0}, with $w = [w_0 = v_0,\,
\ldots,\, w_n(\phi_n) = \lambda_n]$. 
%The same argument as before shows that $v$ is non-removable if $v$ is non-maximal in $T$.  
Since $v$ is maximal, we
again consider the adjacent valuation $v'$ as in the previous
paragraph, which by Definition~\ref{Dtail}(i) of a tail no longer
satifies the criterion of 
Proposition~\ref{Pstandardendpointregular}(iii) (with $a = 0$ in that
proposition).  So by Proposition~\ref{Pstandardendpointregular}(iii), the points above the
intersection of $D_{\phi_n}$ with the $v'$-component are not regular.
\end{proof}

\begin{lemma}\label{Lfincuspram}
 If the $v$-component of $\mc{Y}_{\reg}$ has a finite cusp, then the finite cusp is geometrically ramified in $X \to \proj^1_K$. Furthermore, if some $D_{f_j}$ specializes to the finite cusp, then the geometric ramification index is strictly bigger than $2$. 
\end{lemma}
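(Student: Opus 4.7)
The first claim follows from applying Proposition~\ref{Pbranchpointramindex}. Since the $v$-component has a finite cusp $y$, by Lemma~\ref{Lstandardendpointunique} we have $e_v > N := e_{v_{n-1}}$ with $v = [v_0, \ldots, v_n(\phi_n) = \lambda_n]$ minimally presented, and $y = D_{\phi_n} \cap (v\text{-component})$. To apply Proposition~\ref{Pbranchpointramindex}, I need to write $f = \phi_n^a h$ with no horizontal component of $\divi(h)$ passing through $y$. Note that $y$ does not meet $D_\infty$ since it is a finite cusp, and $\mc{X}_{\reg}$ is regular above $y$ by Theorem~\ref{TX0regular}, so the hypotheses of Proposition~\ref{Pbranchpointramindex} are in place once the decomposition is chosen.

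If no $D_{f_j}$ specializes to $y$, I take $a = 0$ and $h = f$, so the horizontal part of $\divi(h)$ (which consists of the $D_{f_i}$ and a multiple of $D_\infty$) does not meet $y$. Then Proposition~\ref{Pbranchpointramindex} gives geometric ramification index $\frac{de_v}{N\gcd(d,0)} = e_v/N \geq 2$, since $e_v > N$ and $N \mid e_v$. If instead $D_{f_j}$ specializes to $y$, then by Lemma~\ref{LY0structure}(iv), (v), the point $y$ lies on the unique component corresponding to the maximal valuation of the branch point tail $B_{V_4, f_j}$, and this component is the $v$-component; by Definition~\ref{Dtail}(ii) this forces $\phi_n = f_j$. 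Taking $a = a_j$ and $h = f/f_j^{a_j}$, Lemma~\ref{LY0structure}(v) ensures no remaining $D_{f_i}$ meets $y$, so the conditions of Proposition~\ref{Pbranchpointramindex} are again satisfied, giving geometric ramification index $\frac{de_v}{N \gcd(d, a_j)} \geq 2$. In both cases the index exceeds $1$, so $y$ is geometrically ramified.

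For the second statement, I use that $d \nmid a_j$ (otherwise $f_j^{a_j}$ is a $d$-th power and, after replacing $z$ by $z/f_j^{a_j/d}$, the factor $f_j$ would be absent from the branch divisor, contradicting $D_{f_j}$ being a branch component). Then $\gcd(d, a_j)$ is a proper divisor of $d$, so $\gcd(d, a_j) \leq d/p \leq d/2$ where $p$ is the smallest prime dividing $d$. Combined with $e_v \geq 2N$, this yields
$$\frac{de_v}{N\gcd(d,a_j)} \geq \frac{d \cdot 2N}{N \cdot (d/2)} = 4 > 2,$$
completing the proof. The main subtlety is identifying $\phi_n$ with $f_j$ in the second case; this identification is forced by the construction of $V_{\reg}$ via branch point tails in Algorithm~\ref{AY0}, together with the uniqueness statements in Lemma~\ref{LY0structure}.
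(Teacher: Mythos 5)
Your proof is correct and takes essentially the same route as the paper's: apply Lemma~\ref{Lstandardendpointunique} to get $e_v/N \geq 2$, then Proposition~\ref{Pbranchpointramindex} (with $f_j$ playing the role of $\phi_n$) to read off the geometric ramification index $\frac{de_v}{N\gcd(d,a)}$. The paper's own proof is terser, simply noting that the inequality $\text{index} \geq e_v/N$ becomes strict when a branch point specializes to the cusp; you spell out the two cases ($a=0$ vs.\ $a=a_j$) and give the explicit identification $\phi_n = f_j$ via the structure of the branch point tail, which matches what the paper leaves implicit. Your explicit discussion of why $d \nmid a_j$ is a genuine value-add: the paper's argument silently requires $\gcd(d,a_j) < d$ for the strict inequality, and this holds under the standing (and implicit) normalization that each nontrivial irreducible factor of $f$ appears with exponent not divisible by $d$ (else one absorbs it into a $d$th power, as you note). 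The one place where your phrasing is slightly imprecise is the justification of $d \nmid a_j$: the lemma hypothesis only says $D_{f_j}$ specializes to the cusp, not that $D_{f_j}$ is a branch component, so the contradiction you derive relies on the reduced-factorization convention rather than the hypothesis itself; still, this is the right convention to invoke and the argument goes through.
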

\begin{proof}
 By
Lemma~\ref{Lstandardendpointunique}, we have $e_v > e_{v_{n-1}} = N$ where $n$ is the
inductive length of $v$, and since $N \mid e_v$, we have $e_v/N \geq 2$.
%(here we use that $v$ has a neighbor in the
%direction of $\infty$ and $y$ does not lie on the intersection of
%$\ol{Z}_v$ and the component corresponding to this neighbor to rule out
%the case of \cite[Lemma 7.3(ii)]{ObusWewers}).
By
Proposition~\ref{Pbranchpointramindex} (with $f_i$ playing the role of
$\phi_n$ in that proposition), the geometric ramification index at $y$ is $\geq e_v/N$, and the inequality is strict if some $D_{f_j}$ specializes to the cusp, as desired. 
%By Proposition~\ref{Pbranchpointramindex} (with $f_j$ playing the role of $\phi_n$ in that proposition), the geometric ramification at $y$ is strictly greater than $e_v/N$, which in turn is at least   $2$ since $y$ is a finite cusp by Lemma~\ref{Lstandardendpointunique}.
\end{proof}

\begin{lemma}\label{Lspfigeoram}
 Assume that $D_{f_i}$ intersects the $v$-component $\ol{Z}_v$ of $\mc{Y}_{\reg}$ at a closed point $y$. Then $y$ lies
only on $\ol{Z}_v$, and if $v \neq v_0$, then $y$ is geometrically ramified in $X \to \proj^1_K$.
\end{lemma}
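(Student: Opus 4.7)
The plan is to handle the uniqueness claim via Proposition~\ref{Pbranchspecialization} and then analyze $y$'s local nature in two subcases, depending on whether $v$ equals $v_{f_i}$ or properly augments it by $f_i$.

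For the first claim, I would note that by Algorithm~\ref{AY0}(5), the valuation $v$ is the unique maximal element of $V_{\reg}$ satisfying $v \preceq v_{f_i}^\infty$. Proposition~\ref{Pbranchspecialization} then tells us that $D_{f_i}$ meets the special fiber of $\mc{Y}_{\reg}$ only on the $v$-component. Consequently, if $y$ lay on a second component $\ol{Z}_{v'}$, then $D_{f_i}$ would meet $\ol{Z}_{v'}$ at $y$, contradicting this uniqueness; so $y$ lies only on $\ol{Z}_v$.

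For the geometric ramification assertion, assume $v \neq v_0$. By Definition~\ref{Dtail}(ii), $v$ admits a (possibly non-minimal) presentation $[v_{f_i}, v(f_i) = \mu]$ with $\mu \geq v_{f_i}(f_i)$. If $\mu > v_{f_i}(f_i)$, then $v$ is a proper augmentation of $v_{f_i}$ by the key polynomial $f_i$, so Lemma~\ref{Lvnvprime} gives $e_v > e_{v_{f_i}}$. Maximality of $v$ below $v_{f_i}^\infty$ forces every $w \succeq v$ in $V_{\reg}$ to satisfy $w(f_i) = \mu$: a non-proper augmentation of $v$ by $f_i$ with a larger value would still lie below $v_{f_i}^\infty$ and contradict maximality, while proper augmentations use key polynomials of degree strictly greater than $\deg f_i$ (by Lemma~\ref{Lvnvprime} and Lemma~\ref{Lfdegree}(iii)) and thus preserve the value on $f_i$. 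Corollary~\ref{Cmaximalcusp}, applied with $\phi_n = f_i$, then identifies $y = D_{f_i} \cap \ol{Z}_v$ as the unique finite cusp of the $v$-component, whereupon Lemma~\ref{Lfincuspram} delivers the desired geometric ramification.

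In the remaining case $v = v_{f_i}$, I would write $v = [v_0, \ldots, v_m(\phi_m) = \lambda_m]$ minimally with $m \geq 1$; then \cite[Corollary~2.8]{OShoriz} gives $v_K(\phi_m(\theta)) = \lambda_m$ for every root $\theta$ of $f_i$. Proposition~\ref{Pparameterize}\ref{Psmallerdiskoid} thus places $y$ at neither the finite cusp $D_{\phi_m} \cap \ol{Z}_v$ nor the $\infty$-specialization on the $v$-model, so by \cite[Lemma~7.3]{ObusWewers} the point $y$ is regular on the $v$-model; since $y$ lies only on $\ol{Z}_v$ in $\mc{Y}_{\reg}$ by Part~1, the local ring of $\mc{Y}_{\reg}$ at $y$ coincides with that of the $v$-model, making $\mc{Y}_{\reg}$ regular with normal crossings at $y$. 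Combined with Lemma~\ref{LY0structure}(v) (no other branch divisor passes through $y$) and Theorem~\ref{TX0regular} (any $x$ above $y$ is regular in $\mc{X}_{\reg}$), this verifies the hypotheses of Corollary~\ref{Cregularbranchindex}, yielding geometric ramification index $d/\gcd(d, a_i)$ at $y$; absorbing any $d$-th power factors of $f$ into $z$ we may assume $d \nmid a_i$, so this index exceeds $1$. The principal obstacle lies precisely in this last subcase: identifying $y$ as a regular point on $\mc{Y}_{\reg}$ notwithstanding the fact that the $v$-component does carry a finite cusp, at the \emph{different} location $D_{\phi_m} \cap \ol{Z}_v$.
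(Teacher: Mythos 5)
The first part of your argument (uniqueness of the component via Proposition~\ref{Pbranchspecialization}) and your entire case~(A) are fine, if more elaborate than the paper's approach, which simply dichotomizes on whether $y$ is regular in $\mc{Y}_{\reg}$: if regular, Corollary~\ref{Cregularbranchindex} applies; if not, then since $y$ lies on one component and not on $D_{\infty}$ (because $v \neq v_0$), $y$ is a finite cusp by Definition~\ref{Dstandardcrossing}(ii), and Lemma~\ref{Lfincuspram} finishes. Your dichotomy on $v = v_{f_i}$ versus $v \succ v_{f_i}$ is genuinely different, and it forces you to decide in case~(B) whether $y$ is a finite cusp; that is where the gap lies.

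The gap: in case~(B) you assert that $v$ being a proper augmentation of $v_{f_i}$ by $f_i$ implies $e_v > e_{v_{f_i}}$ ``by Lemma~\ref{Lvnvprime}.'' But Lemma~\ref{Lvnvprime} requires the existence of a proper key polynomial \emph{over $v$} as a hypothesis, and nothing in the construction of $B_{V_4,f_i}$ supplies one. Indeed the conclusion can fail: Definition~\ref{Dtail}(ii) picks $\lambda_n'$ minimal with $\lambda_n' \in (1/\widetilde{N})\ints$ and $v'(f) \in (d/\widetilde{N})\ints$, and since $N \mid \widetilde{N}$, it is entirely possible for $\lambda_n'$ to lie in $(1/N)\ints$ while still being strictly larger than $\lambda_n$ (for instance when $\widetilde{N} = N$ but the $(d/N)\ints$-condition on $v(f)$ fails at $\lambda_n$). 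In that situation $e_v = e_{v_{f_i}}$, so by Lemma~\ref{Lstandardendpointunique} there is \emph{no} finite cusp on the $v$-component, Corollary~\ref{Cmaximalcusp} does not apply, and your case~(B) argument collapses. The statement is still true in this missing subcase — $y$ is then a regular point of $\mc{Y}_{\reg}$ and Corollary~\ref{Cregularbranchindex} gives geometric ramification, exactly as in your case~(A) — but as written you have silently excluded it. Either split case~(B) further on whether $e_v > e_{v_{f_i}}$, or better, adopt the paper's dichotomy on regularity of $y$, which renders the $e_v$-comparison irrelevant.
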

\begin{proof}
% First, suppose that some $D_{f_i}$ intersects the special fiber of $\mc{Y}_{\reg}$ at a point $y$ on the $v$-component $\ol{Z}_v$.  
By Lemma~\ref{LY0structure}(iv), $y$ lies
only on $\ol{Z}_v$, and $v$ is in some $B_{V_4, f_i}$.  Furthermore,
by Lemma~\ref{LY0structure}(v), $D_{f_i}$ does not intersect any other
$D_{f_j}$ on $\mc{Y}_{\reg}$.  If $y$ is regular on $\mc{Y}_{\reg}$, then
Corollary~\ref{Cregularbranchindex} shows that $y$ is geometrically
ramified in the cover $X \to \proj^1_K$ as desired. By
Corollary~\ref{CAllspecializations}\ref{Cinftyspecialization}, $y$ does not meet $D_{\infty}$
since {$v_0$} is the unique minimal valuation in $V_{\reg}$ and $v_0 \neq v$. Therefore, if $y$ is not
regular on $\mc{Y}_{\reg}$, then $y$ is a finite cusp and we can apply Lemma~\ref{Lfincuspram} and we are done. \qedhere
%(here we use that $v$ has a neighbor in the
%direction of $\infty$ and $y$ does not lie on the intersection of
%$\ol{Z}_v$ and the component corresponding to this neighbor to rule out
%the case of \cite[Lemma 7.3(ii)]{ObusWewers}).
%and by Lemma~\ref{Lstandardendpointunique}, we have $e_v > e_{v_{n-1}} = N$ where $n$ is the inductive length of $v$. By Proposition~\ref{Pbranchpointramindex} (with $f_i$ playing the role of $\phi_n$ in that proposition), $y$ is geometrically ramified in $X \to \proj^1_K$ as desired. 
\end{proof}

\begin{lemma}\label{Lv2nonmax}
If $v_0 \neq v \in V_2$ has at most
two neighbors, then the
$v$-component $\ol{Z}_v$ of $\mc{Y}_{\reg}$ contains a geometrically ramified
point in $X \to \proj^1_K$ that lies on no other irreducible component
of the special fiber of $\mc{Y}_{\reg}$.
\end{lemma}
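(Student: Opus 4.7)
The approach is to locate a geometrically ramified point on $\ol{Z}_v$ either as the specialization of some branch divisor $D_{f_i}$ or as a finite cusp. Writing $v = [v_0, v_1(\phi_1) = \lambda_1, \ldots, v_n(\phi_n) = \lambda_n]$ in minimal presentation, since $v \neq v_0$ we have $n \geq 1$, and Lemma~\ref{Lvnvprime} yields $e_v > e_{v_{n-1}}$, so condition~(a) of Corollary~\ref{Cmaximalcusp} holds automatically. If some $D_{f_i}$ specializes to $\ol{Z}_v$, then Lemma~\ref{LY0structure}(iv) shows the specialization lies only on $\ol{Z}_v$, and Lemma~\ref{Lspfigeoram} shows it is geometrically ramified in $X \to \proj^1_K$, finishing the proof.

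Otherwise, I would argue $\ol{Z}_v$ has a finite cusp in $\mc{Y}_{\reg}$; by Lemma~\ref{Lfincuspram} this is geometrically ramified, and by Definition~\ref{Dstandardcrossing}(ii) it lies only on $\ol{Z}_v$. It suffices to verify condition~(b) of Corollary~\ref{Cmaximalcusp}: no $w \in V_{\reg}$ with $w \succeq v$ satisfies $w(\phi_n) > \lambda_n$. Valuations in $V_{\reg}$ above $v$ arise from three sources: the non-pseudo successors of $v$ in $V_2$ (together with the intervening links from step~3 and their further successors), the tail $T_v$ added (if nontrivial) in step~4, and the branch point tails $B_{V_4,f_i}$ for each pseudo successor $v_{f_i}^\infty$ of $v$ in $V_2$. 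Using Lemma~\ref{Lgfollows}, branch point tail valuations augment $v$ by $f_i$ and hence satisfy $w(\phi_n) = v(\phi_n) = \lambda_n$ since $\deg \phi_n < \deg f_i$. Thus the threats to condition~(b) come only from nontrivial tails or from non-pseudo successors that augment $v$ by $\phi_n$ (rather than by a proper key polynomial over $v$).

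Let $k$, $t \in \{0,1\}$, and $m'$ respectively count $v$'s non-pseudo successors in $V_2$, the tail contribution, and the nontrivial branch point tails; then $v$ has $1 + k + t + m' \leq 2$ neighbors in $V_{\reg}$. If $v$ has $m$ pseudo successors in $V_2$, then in the current case (no $D_{f_i}$ on $\ol{Z}_v$) any trivial BPT would deposit $D_{f_i}$ on $\ol{Z}_v$ via Proposition~\ref{Pbranchspecialization}, forcing $m = m' \leq 1$. If $m = 1$ then $k = t = 0$ and condition~(b) holds immediately. If $m = 0$, then since $v \in V_2 \setminus \{v_0\}$ lies strictly below some $v_{f_j}^\infty$, non-adjacency forces a non-pseudo valuation of $V_2$ strictly between $v$ and $v_{f_j}^\infty$, giving $k = 1$ and $t = 0$. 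The main obstacle is to show that the unique non-pseudo successor $u$ cannot augment $v$ by $\phi_n$: using the tree structure of $V_2$, if $u$ did so, $v$ would fail to be the level-$n$ Mac Lane truncation of any $v_{f_j}^\infty$ with $v \prec v_{f_j}^\infty$, so either (if $v \ne v_{f_j}$) the next Mac Lane predecessor of that $v_{f_j}^\infty$ (which augments $v$ by a proper key polynomial) would yield a second non-pseudo successor, or (if $v = v_{f_j}$) $v_{f_j}^\infty$ itself would be a pseudo successor adjacent to $v$; either alternative contradicts our hypotheses. Therefore $u$ augments $v$ by a proper key polynomial, all successors and link valuations above $v$ satisfy $w(\phi_n) = \lambda_n$, condition~(b) holds, and the proof is complete.
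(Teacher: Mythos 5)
Your strategy (realize a geometrically ramified point as either a branch-point specialization, via Lemma~\ref{Lspfigeoram}, or as a finite cusp) matches the paper in outline, but it diverges from the paper's proof in the finite-cusp case: where the paper splits into $v \in V_2 \setminus V_1$ (contradiction via Proposition~\ref{Pbranchspecialization}) and $v \in V_1 \setminus \{v_0\}$ (use Proposition~\ref{Pmaximalforlength} together with Corollary~\ref{Cstandardendpointguaranteed}), you instead try to verify both hypotheses of Corollary~\ref{Cmaximalcusp} directly by a neighbor-counting argument. The difficulty is that the very first step of your finite-cusp argument is wrong as stated.

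You claim ``since $v \neq v_0$ we have $n \geq 1$, and Lemma~\ref{Lvnvprime} yields $e_v > e_{v_{n-1}}$, so condition~(a) of Corollary~\ref{Cmaximalcusp} holds automatically.'' Lemma~\ref{Lvnvprime} carries the hypothesis that a \emph{proper} key polynomial exists over $v$, which you do not verify, and in fact it can fail for $v \in V_2 \setminus V_1$: Lemma~\ref{Lv2onlymax}\ref{Lv2evew} shows that for $v = \inf(v_{f_i}^\infty, v_{f_j}^\infty)$ one has $e_v = e_{v_{f_i}} = e_{v_{n-1}}$, so condition~(a) of Corollary~\ref{Cmaximalcusp} is \emph{not} automatic on $V_2 \setminus \{v_0\}$. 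Your argument can be repaired, but only by first ruling out $v \in V_2 \setminus V_1$ in the finite-cusp case (which is exactly what the paper's case split accomplishes via Proposition~\ref{Pbranchspecialization}), and the repair is not something you can defer: condition~(a) must be established before Corollary~\ref{Cmaximalcusp} can be invoked at all. Relatedly, in the $m = 0$ subcase, your discussion of whether ``$v$ is the level-$n$ Mac Lane truncation of $v_{f_j}^\infty$'' tacitly presumes $v \in V_1$; if $v \in V_2 \setminus V_1$, that case analysis does not close, and you have not shown (though it is true, by a further counting argument) that $v \in V_2 \setminus V_1$ cannot occur here. The paper avoids all this bookkeeping: once the contradiction eliminates $V_2 \setminus V_1$, Proposition~\ref{Pmaximalforlength} supplies the higher inductive length of the unique successor, and Corollary~\ref{Cstandardendpointguaranteed} handles the proper-key-polynomial issue internally.
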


\begin{proof}
By Lemma~\ref{Lspfigeoram}, it suffices to prove the lemma assuming that no branch point of $X \to \proj^1_K$
specializes to $\ol{Z}_v$.  Since $v_0 \neq v$ and $V_{\mathrm{reg}}$ is a rooted tree with root $v_0$, it follows that $v$ has a unique neighbor $w \prec v$. First suppose $v \in V_2 \setminus V_1$. Then $v =
\inf(v', v'')$ for $v', v'' \in V_1$. Since $v \prec v', v \prec v''$ and $w \prec v$ and $v$ is assumed to have
at most two neighbors in $V_{\mathrm{reg}}$,
at least one of $v'$ or $v''$ must equal $v_{f_i}^{\infty}$  for some
$i$, and furthermore, no valuation in $V_{\reg}$ can lie between $v$
and $v_{f_i}^{\infty}$.  By Proposition~\ref{Pbranchspecialization},
$D_{f_i}$ intersects $\ol{Z}_v$, a
contradiction. So we may assume $v \in V_1 \setminus \{v_0\}$, that is, $v$ is a predecessor of
some $v_{f_i}^{\infty}$.  
%We claim that $v$ has a finite cusp.  
%To see this, since $V_{\mathm{reg}}$ is a rooted tree, and $v \neq v_0$, observe that $v$ has a unique neighbor $w$ with $w \prec v$. 
%Since $v$ is not maximal, there is a neighbor $v \prec w'$. 
Since $v \prec v_{f_i}^{\infty}$ and no branch point of $X \to \proj^1_K$
specializes to $\ol{Z}_v$, by Proposition~\ref{Pbranchspecialization}, there is a valuation $w'$ such that  $v \prec w' \preceq
v_{f_i}$. Since $v$ has at most two neighbors, and $w \prec v$, it follows that such a  $w'$ is unique.  
%(note that if there is no such neighbor, then $v = v_{f_i}$ and $D_{f_i}$ specializes to $v$ \padma{justify? (since $v$ is non-maximal)?}).  
By Proposition~\ref{Pmaximalforlength}, the
inductive length of $w'$ is greater than that of $v$.  So by
Corollary~\ref{Cstandardendpointguaranteed}, $\mc{Y}_{\reg}$ has a finite cusp on the $v$-component. Now apply Lemma~\ref{Lfincuspram}. \qedhere
%Since $v \neq v_0$, by Lemma~\ref{Lvnvprime} applied to $v$, we have $e_v > e_{v_{n-1}}$ where $n$ is the inductive length of $v$. Since $e_v > e_{v_{n-1}}$, Proposition~\ref{Pbranchpointramindex} (with $a = 0$ in that proposition) shows that the finite cusp on the $v$-component is geometrically ramified in $X \to \proj^1_K$ as desired. 
\end{proof}

\begin{corollary}\label{Cv2nonmax}
If $v_0 \neq v \in V_2$ is non-maximal in $V_{\reg}$, then $v$ is not
removable from $V_{\reg}$.
\end{corollary}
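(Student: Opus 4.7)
The plan is to argue by casework on the number of neighbors of $v$ in $V_{\reg}$, combining the preparatory Lemma~\ref{Lv2nonmax} with the general non-removability criterion Lemma~\ref{Lramificationnotremovable}.

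First I would establish that $v$ has at least two neighbors in $V_{\reg}$. Since $v \neq v_0$, Lemma~\ref{Lpredclosed} guarantees that the immediate predecessor of $v$ lies in $V_{\reg}$, providing one neighbor strictly below $v$. Since $v$ is non-maximal in $V_{\reg}$, there exists some $w \in V_{\reg}$ with $v \prec w$, and taking such a $w$ minimal yields a neighbor strictly above $v$. So $v$ has $\geq 2$ neighbors in $V_{\reg}$.

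Next, I would split into two cases. If $v$ has at least three neighbors in $V_{\reg}$, then the $v$-component of $\mc{Y}_{\reg}$ meets at least three other irreducible components of the special fiber (using Proposition~\ref{P35} and the fact that $V_{\reg}$ is inf-closed by Lemma~\ref{Linfclosed2}), so Lemma~\ref{Lramificationnotremovable}(i) shows $v$ is not removable. If instead $v$ has exactly two neighbors in $V_{\reg}$, then Lemma~\ref{Lv2nonmax} (which applies because $v \in V_2$, $v \neq v_0$, and $v$ has at most two neighbors) produces a geometrically ramified point on the $v$-component that lies on no other irreducible component of the special fiber of $\mc{Y}_{\reg}$. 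Applying Lemma~\ref{Lramificationnotremovable}(ii) then shows $v$ is not removable.

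There is no real obstacle here: the work was done in Lemma~\ref{Lv2nonmax}, which handled the delicate analysis of how specializations of the $f_i$'s or finite cusps force ramification on the $v$-component. The corollary amounts to packaging that lemma together with the structural non-removability criterion in Lemma~\ref{Lramificationnotremovable}.
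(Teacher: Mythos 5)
Your proposal is correct and matches the paper's proof: both establish that $v$ has at least two neighbors (the paper via the fact that $v_0$ is the unique minimal valuation of the rooted tree $V_{\reg}$, you via Lemma~\ref{Lpredclosed} plus non-maximality), dispose of the $\geq 3$-neighbor case by Lemma~\ref{Lramificationnotremovable}(i), and finish the $2$-neighbor case with Lemma~\ref{Lv2nonmax} and Lemma~\ref{Lramificationnotremovable}(ii). One small caveat: the immediate predecessor $v_{n-1}$ of $v$ supplied by Lemma~\ref{Lpredclosed} need not itself be a \emph{neighbor} of $v$ in $V_{\reg}$ (intermediate link valuations may lie between them), but its mere existence shows there is \emph{some} neighbor of $v$ strictly below $v$, so your conclusion stands.
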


\begin{proof}
Since $v_0$ is the unique minimal valuation in $V_{\reg}$, the
valuation $v$ is neither maximal nor minimal, so it has at least two
neighbors.  By Lemma~\ref{Lramificationnotremovable}(i), we may assume
that $v$ has exactly two neighbors.  By Lemma~\ref{Lv2nonmax}, the $v$-component $\ol{Z}$ of $\mc{Y}_{\reg}$ contains a geometrically
ramified point in $X \to \proj^1_K$ that lies on no other irreducible
component of the special fiber of $\mc{Y}_{\reg}$.
Applying Lemma~\ref{Lramificationnotremovable}(ii) proves the corollary.
\end{proof}

\begin{lemma}\label{Lv2onlymax}
Suppose $v \in V_2 \setminus V_1$ is maximal in $V_{\reg}$.
\begin{enumerate}[\upshape (i)]
 \item Then $v = \inf(v_{f_i}^{\infty},
  v_{f_j}^{\infty})$ for $f_i \neq f_j$ monic irreducible factors of
  $f$ and the horizontal branch components $D_{f_i}$ and $D_{f_j}$ specialize to distinct regular points of the $v$-component of $\mc{Y}_{\mathrm{reg}}$. 
  \item\label{Lv2evew} Furthermore, $v_{f_i} = v_{f_j}$, and if $w \prec v$  is $v$'s
    neighbor in the rooted tree $V_{\mathrm{reg}}$, then $v_{f_i} \prec w$  and $e_v = e_{v_{f_i}} = e_{v_{f_j}} \mid e_w$.
  \item\label{Lv2georam} The specializations of $D_{f_i}$ and $D_{f_j}$ are geometrically ramified points of $\mc{X}_{\reg} \rightarrow \mc{Y}_{\reg}$.
\end{enumerate}
\end{lemma}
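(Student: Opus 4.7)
The plan is to handle the three parts in order, leveraging the inf-closure structure and the dictionary between Mac Lane valuations and diskoids established via Lemma~\ref{Lgfollows}.

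For part (i), since $v \in V_2$ but $v \notin V_1$, write $v = \inf(u_1, u_2)$ with $u_1, u_2 \in V_1$ strictly above $v$. Each $u_k$ lies below some $v_{f_{i_k}}^{\infty}$ by the construction of $V_1$. Using \cite[Proposition~2.25]{KW} that the set of pseudovaluations bounded above by $v_{f_{i_k}}^{\infty}$ is totally ordered, I will argue that replacing $u_k$ by $v_{f_{i_k}}^{\infty}$ preserves the inf (as in the proof of Lemma~\ref{Linfclosed}, comparing $\inf(v_{f_i}^{\infty}, u_2)$ with $\inf(u_1, u_2)$). Thus $v = \inf(v_{f_i}^{\infty}, v_{f_j}^{\infty})$, and $f_i \neq f_j$ since otherwise $v$ would equal $v_{f_i}^{\infty} \in V_1$. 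For the specialization claims: $v$ is the maximal element of $V_{\reg}$ below $v_{f_i}^{\infty}$ by maximality of $v$ in $V_{\reg}$ combined with the fact that any intermediate valuation in $V_4$ would already have been in $V_{\reg}$; hence by Proposition~\ref{Pbranchspecialization}, $D_{f_i}$ specializes to $\ol{Z}_v$, and similarly for $D_{f_j}$. Distinctness of the two meeting points follows from Lemma~\ref{LY0structure}(v). For regularity on $\mc{Y}_{\reg}$, write $v = [v_0, \ldots, v_n(\phi_n) = \lambda_n]$ minimally; the minimality of the inf forces $v_K(\phi_n(\theta_i)) = \lambda_n = v_K(\phi_n(\theta_j))$ for roots $\theta_i, \theta_j$ of $f_i, f_j$ (else one could shrink the diskoid and still contain both roots, contradicting maximality of $v$). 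By Proposition~\ref{Pparameterize}(i) this places the specializations away from $D_{\phi_n} \cap \ol{Z}_v$, which by Lemma~\ref{Lstandardendpointunique} is the only candidate for a finite cusp on $\ol{Z}_v$; combined with Lemma~\ref{LY0structure}(iii), the specialization points are regular.

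For part (ii), I will first show $v_{f_i} = v_{f_j}$ by contradiction. If $v_{f_i}$ and $v_{f_j}$ are either non-comparable or strictly ordered, the diskoid interpretation (via Lemma~\ref{Lgfollows}) shows that the inf of their infinite pseudovaluations is either $v_0$ (non-comparable case, since the smallest diskoid containing separated root clusters has trivial radius) or the lower of $v_{f_i}, v_{f_j}$ (comparable case, since the larger diskoid already contains all roots of both polynomials). In either case the inf lies in $V_1$, contradicting $v \notin V_1$. Next, $v_{f_i}$ is a predecessor of $v$ in its inductive chain (since $v \succeq v_{f_i}$ and both are augmentations of the same common prefix), so $v_{f_i} \in V_{\reg}$ by Lemma~\ref{Lpredclosed}; as $v_{f_i} \in V_1$ and $v \notin V_1$, the inequality $v_{f_i} \prec v$ is strict, forcing $v_{f_i} \preceq w$ where $w$ is $v$'s unique parent in the rooted tree. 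For the value-group claim, I will write $v = [v_{f_i}, v_{m+1}(f_i) = \lambda]$ (possibly non-minimally) with $\lambda$ the radius of the smallest diskoid containing both root sets of $f_i$ and $f_j$; concretely $\lambda = v_{f_i}(f_i - f_j')$ for a suitable equivalent $f_j'$ of degree $\deg(f_i)$, and since $f_i - f_j'$ has degree strictly less than $\deg(f_i) = e_{v_{f_i}}$, Lemma~\ref{Lfdegree} gives $\lambda \in \Gamma_{v_{f_i}}$, whence $e_v = e_{v_{f_i}}$. Finally $e_{v_{f_i}} \mid e_w$ follows from the inclusion of value groups along the chain $v_{f_i} \preceq w$ in the Mac Lane poset.

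For part (iii), I will apply Proposition~\ref{Pbranchpointramindex} at each specialization point $y_i := D_{f_i} \cap \ol{Z}_v$: since Theorem~\ref{TX0regular} guarantees regularity of $\mc{X}_{\reg}$ above $y_i$, the geometric ramification index equals $de_v/(N\gcd(d, a_i))$ where $N = e_{v_{n-1}}$ and $n$ is the inductive length of $v$ written with $f_i$ as the final key polynomial. Using part (ii) to identify $N$ with $e_{v_{f_i}} = e_v$, this simplifies to $d/\gcd(d, a_i)$, which exceeds $1$ as $d \nmid a_i$ (after reducing by common $d$-th power factors we may assume this; otherwise $D_{f_i}$ would not contribute to the cover at all). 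The same argument applies to $f_j$.

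The main obstacle will be verifying the regularity assertion in part (i) and the equality $v_{f_i} = v_{f_j}$ in part (ii), both of which require a careful analysis of the infimum operation on Mac Lane pseudovaluations; the diskoid picture makes the key geometric content transparent, but care is needed to convert this into statements about the minimal inductive presentation of $v$ and about which component-meeting points on $\mc{Y}_{\reg}$ are finite cusps versus regular points.
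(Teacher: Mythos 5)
There are two genuine gaps. First, your argument that $v_{f_i} = v_{f_j}$ rests on the claim that $\inf(v_{f_i}^{\infty}, v_{f_j}^{\infty})$ is always $v_0$ (when $v_{f_i}, v_{f_j}$ are non-comparable) or the lower of $v_{f_i}, v_{f_j}$ (when comparable). This is false: with $f_i = t - \pi_K$ and $f_j = t^2 - \pi_K^3$, one has $v_{f_i} = v_0$ and $v_{f_j} = [v_0,\, v_1(t) = 3/2]$, yet the smallest diskoid containing $\pi_K$ and $\pm\pi_K^{3/2}$ has radius $|\pi_K|$, so $\inf(v_{f_i}^{\infty}, v_{f_j}^{\infty}) = [v_0,\, v_1(t) = 1]$, which is neither of your candidates and does not even lie in $V_1$. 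The infimum is governed by the distance between the root clusters, not by $v_{f_i}$ and $v_{f_j}$ alone. The hypothesis that $v$ is \emph{maximal} in $V_{\reg}$ is essential and you never invoke it here. The argument in the text is: $v$ and $v_{f_i}$ are both bounded above by $v_{f_i}^{\infty}$, hence comparable by \cite[Proposition~2.25]{KW}; since $v_{f_i} \in V_1 \subseteq V_{\reg}$, maximality of $v$ in $V_{\reg}$ rules out $v \prec v_{f_i}$, and $v \notin V_1$ rules out $v = v_{f_i}$, so $v_{f_i} \prec v$, and likewise $v_{f_j} \prec v$. Then $v = [v_{f_i},\, v(f_i) = \lambda']$ and $v = [v_{f_j},\, v(f_j) = \lambda]$ are both minimal presentations (because $f_i$, $f_j$ are \emph{proper} key polynomials over $v_{f_i}$, $v_{f_j}$), and uniqueness of the inductive-length-$(n{-}1)$ predecessor forces $v_{f_i} = v_{f_j}$.

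Second, the regularity step in your part (i) is incorrect: a legitimate (indeed forced, for the presentation you need) choice in a minimal presentation of $v$ is $\phi_n = f_i$, and then $v_K(\phi_n(\theta_i)) = v_K(f_i(\theta_i)) = \infty$, not $\lambda_n$; correspondingly $D_{f_i} = D_{\phi_n}$, so $D_{f_i}$ \emph{does} specialize to $D_{\phi_n}\cap \ol{Z}_v$ and your conclusion that the specializations avoid that point fails. The logically correct route is to first prove $e_v = e_{v_{f_i}}$: if instead $e_v > e_{v_{f_i}}$, applying Corollary~\ref{Cmaximalcusp} to the two minimal presentations $[v_{f_i},\, v(f_i) = \lambda']$ and $[v_{f_j},\, v(f_j) = \lambda]$ forces the unique finite cusp of the $v$-component on $\mc{Y}_{\reg}$ to be simultaneously $D_{f_i}\cap\ol{Z}_v$ and $D_{f_j}\cap\ol{Z}_v$, contradicting Lemma~\ref{LY0structure}(v); then Lemma~\ref{Lstandardendpointunique} shows there is no finite cusp at all, so both specializations are regular. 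On the positive side, your observation that once $v_{f_i} = v_{f_j}$ is known one has $\lambda = v_{f_i}(f_i - f_j) \in \Gamma_{v_{f_i}}$ (since $f_i - f_j$ is a polynomial of degree strictly less than $\deg f_i$), hence $e_v = e_{v_{f_i}}$, is a genuinely different and cleaner derivation than the finite-cusp contradiction used in the text, and it would be a worthwhile replacement for that step once the ordering $v_{f_i} = v_{f_j} \prec v$ has been established by the maximality argument above.
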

\begin{proof}
 Since $\inf(v_1,v_1') \prec v_1$ for any pair of elements $v_1,v_1'$ in $V_1$, if $v \in V_2 \setminus V_1$ is maximal in $V_{\reg}$, the only possibility is that $v = \inf(v_{f_i}^{\infty},
  v_{f_j}^{\infty})$ for $f_i \neq f_j$ monic irreducible factors of
  $f$. Since the set of valuations bounded above by $ v_{f_i}^{\infty}$ is totally ordered and both $v,v_{f_i}$ belong to this set, either $v \prec v_{f_i}$ or $v_{f_i} \prec v$ (likewise with $i$ replaced by $j$). Since $v$ is maximal, we conclude that $v_{f_i}, v_{f_j} \prec v$. 
  %By \cite[Proposition 2.25]{KW}, $v_{f_i}$ and $v_{f_j}$ are
  %comparable, so assume without loss of generality that $v_{f_i}
  %\preceq v_{f_j}$.
  Now, $v_{f_j} \prec v \prec v_{f_j}^{\infty}$,
  so
  \begin{equation}\label{Efj}
    v = [v_{f_j}, v(f_j) = \lambda]
  \end{equation}
  for some $\lambda$.   By
  symmetry, we can also write
  \begin{equation}\label{Efi}
   v = [v_{f_i}, v(f_i) = \lambda'].
  \end{equation}
 Since $v_{f_i}$ and $v_{f_j}$ are both the immediate
 predecessor of $v$, we have $v_{f_i} = v_{f_j}$.  
By Proposition~\ref{Pbranchspecialization},
$D_{f_i}$ and $D_{f_j}$ both meet the $v$-component
$\ol{Z}_v$ of $\mc{Y}_{\reg}$.  By Lemma~\ref{LY0structure}(v),
the divisors $D_{f_i}$ and
$D_{f_j}$ meet the $v$-component at distinct points. We now prove  $e_v = e_{v_{f_i}} = e_{v_{f_j}}$. 
If not, then $e_v > e_{v_{f_i}} = e_{v_{f_j}}$ and
Corollary~\ref{Cmaximalcusp} shows that both $D_{f_i}$ and $D_{f_j}$
meet the unique finite cusp on the $v$-component of $\mc{Y}_{\reg}$, which is a contradiction. 

 %Since $V_{\mathrm{reg}}$ is a rooted tree with root $v_0$ and $v \neq v_0$, there is a unique $w \in V_{\mathrm{reg}}$ such that $w \prec v$. 
 Let $\ol{Z}_w$
  be the $w$-component of $\mc{Y}_{\reg}$.  Then $v_{f_i} = 
  v_{f_j}$ is a predecessor of $w$, so $e_v = e_{v_{f_i}} =
  e_{v_{f_j}} \mid e_w$. By Lemma~\ref{Lstandardendpointunique}, it
  follows that the specializations of $D_{f_i}$ and $D_{f_j}$ are
  regular points of the $v$-component. This proves (i) and
  (ii). Part~(i) and Corollary~\ref{Cregularbranchindex} prove (iii).
  %Finally, Corollary~\ref{Cregularbranchindex} shows that the points where $D_{f_i}$ and $D_{f_j}$ meet the special fiber have nontrivial geometric ramification in $\mc{X}_{\reg} \to   \mc{Y}_{\reg}$.
\end{proof}

\begin{lemma}\label{Lv2max}
If $v \in V_2 \setminus V_1$ is maximal in $V_{\reg}$, then $v$ is not removable
from $V_{\reg}$.  
\end{lemma}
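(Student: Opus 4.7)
The plan is to combine Lemma~\ref{Lv2onlymax} (which describes the local geometry at the $v$-component) with the contractibility criteria of Lemma~\ref{Lramificationnotremovable} and Lemma~\ref{Lremovable}. By Lemma~\ref{Lv2onlymax}, we may write $v = \inf(v_{f_i}^{\infty}, v_{f_j}^{\infty})$ for distinct monic irreducible factors $f_i \neq f_j$ of $f$, with $v_{f_i} = v_{f_j}$, and the horizontal divisors $D_{f_i}$ and $D_{f_j}$ specialize to two distinct regular points $y_i, y_j$ of the $v$-component, each geometrically ramified in $\mc{X}_{\reg} \to \mc{Y}_{\reg}$. Since $v \neq v_0$ and $V_{\reg}$ is a rooted tree with root $v_0$ (Lemma~\ref{LY0structure}(i)) and $v$ is maximal, the $v$-component of $\ol{Y}_{\reg}$ has exactly one neighbor, namely the $w$-component for the unique $w \prec v$ adjacent to $v$. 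Additionally, $v_{f_i} \prec w$ and $e_v = e_{v_{f_i}} \mid e_w$.

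By Corollary~\ref{Cregularbranchindex} (applied at the regular points $y_i, y_j$ lying only on the $v$-component), the geometric ramification indices at $y_i$ and $y_j$ equal the ramification indices $d/\gcd(d, a_i)$ and $d/\gcd(d, a_j)$ of the Kummer cover $z^d = f$ at roots of $f_i, f_j$ respectively. If $\max\bigl(d/\gcd(d,a_i), d/\gcd(d,a_j)\bigr) > 2$, then the $v$-component has a single neighbor and two geometrically ramified points lying only on itself with at least one ramification index exceeding $2$, so Lemma~\ref{Lramificationnotremovable}(iii) immediately gives that $v$ is not removable.

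Otherwise, both geometric ramification indices equal $2$, forcing $d$ to be even and both $a_i, a_j$ to be odd multiples of $d/2$. By Corollary~\ref{Csmoothoncomponent}(b) (using that $f_i$, resp.~$f_j$, is a proper key polynomial or the last key polynomial of the appropriate Mac Lane presentation, and that by Lemma~\ref{LY0structure}(v) no other $D_{f_k}$ passes through $y_i$ or $y_j$), the points of $\mc{X}_{\reg}$ above $y_i, y_j$ are smooth on the reduced special fiber. Thus the hypotheses of Lemma~\ref{Lremovable} are satisfied, and removability of $v$ is equivalent to $\tilde{e}_v = 2\tilde{e}_w$ (in the notation of~\eqref{Etildes}). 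We will show this equation fails, completing the proof.

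The main obstacle is this final step. Using the formulas $\tilde{e}_v = de_v/\gcd(d, e_v v(f))$ and $\tilde{e}_w = de_w/\gcd(d, e_w w(f))$, the desired inequality reduces to
\[ e_v \cdot \gcd(d, e_w w(f)) \ne 2 e_w \cdot \gcd(d, e_v v(f)). \]
To analyze both sides, I would expand $v(f) = a + \sum_k a_k v(f_k)$ and similarly for $w$, using that $v, w \prec v_{f_i}^{\infty}$ and $v, w \prec v_{f_j}^{\infty}$ with $v_{f_i} = v_{f_j} \preceq w \prec v$, so that by Corollary~\ref{Cdominantterm} the values $v(f_i), v(f_j)$ (resp.\ $w(f_i), w(f_j)$) are determined by a single Mac Lane parameter $\lambda$ (resp.\ $\mu$), while for $k \notin\{i,j\}$ the values $v(f_k), w(f_k)$ are computed via the common truncations $w_{n-1} = v_{n-1}$ at the standard crossing formed by $v$ and $w$ (Lemma~\ref{Ladjacentstandard}). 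The parity constraints $a_i, a_j \equiv d/2 \pmod d$ together with the divisibility $e_v \mid e_w$ then force a contradiction with $\tilde{e}_v = 2\tilde{e}_w$ through a $2$-adic computation, with subcases depending on whether $4 \mid d$ or $d \equiv 2 \pmod 4$ and on whether $e_v$ equals $e_w$ or strictly divides it.
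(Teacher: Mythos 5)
Your structure up to the final step matches the paper's: invoke Lemma~\ref{Lv2onlymax} to locate two geometrically ramified points on the $v$-component, dispatch the case of ramification index $>2$ via Lemma~\ref{Lramificationnotremovable}(iii), and reduce removability (when both indices are $2$) to the numerical condition $\tilde{e}_v = 2\tilde{e}_w$ via Lemma~\ref{Lremovable}. The gap is that you never actually close this last step. You announce a ``$2$-adic computation'' with several subcases depending on $d \bmod 4$ and whether $e_v = e_w$, but you do not carry it out, and it is not clear it goes through cleanly: the quantities $e_v v(f)$ and $e_w w(f)$ involve sums $a + \sum_k a_k v(f_k)$ over all irreducible factors, including those with $k\neq i,j$, whose contributions you would also need to control modulo powers of $2$.

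The idea you are missing is to use the \emph{full} statement of Corollary~\ref{Cregularbranchindex}. You invoke it only to identify the geometric ramification index at $y_i$ as $d/\gcd(d,a_i)$, but the corollary also asserts that the multiplicity of the component above $\ol{Z}_v$ is $\mu\, e_v$ with $\gcd(\mu, e) = 1$, where $e$ is that geometric ramification index. Since $e = 2$ in the relevant case, this says $\tilde{e}_v/e_v$ is \emph{odd}. On the other hand, Lemma~\ref{Lv2onlymax}\ref{Lv2evew} gives $e_v \mid e_w$, and $e_w \mid \tilde{e}_w$, so $e_v \mid \tilde{e}_w$; then $\tilde{e}_v = 2\tilde{e}_w$ would make $\tilde{e}_v/e_v = 2(\tilde{e}_w/e_v)$ an even integer, a contradiction. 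This avoids all case analysis on $d$ and the factorization of $f$.
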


\begin{proof}
Assume that $v$ is removable from $V_{\reg}$. Let $w$ be the unique
predecessor of $v$ in the rooted tree $V_{\reg}$ and let $\ol{Z}_w$ be
the corresponding irreducible component.   The points where $D_{f_i}$
and $D_{f_j}$ meet the special fiber are geometrically ramified by
Lemma~\ref{Lv2onlymax}(iii).  By
  Lemma~\ref{Lramificationnotremovable}(iii), the geometric
  ramification indices at these points are both $2$.  
 Let $\tilde{e}_v$ (resp.\ $\tilde{e}_w$) be the multiplicity of the
  irreducible components of the special fiber of the normalization
  $\mc{X}$ of
  $\mc{Y}_{\reg}$ in $K(X)$ above $\ol{Z}_v$ (resp.\ $\ol{Z}_w$).  By
  Lemma~\ref{Lremovable}, $v$ is removable from $V_{\reg}$ only if
  $\tilde{e}_v = 2\tilde{e}_w$. Now, $e_v \mid e_w$ by
  Lemma~\ref{Lv2onlymax}\ref{Lv2evew}, and $e_w \mid \tilde{e}_w$, so $e_v \mid \tilde{e}_w$.  
  On the other hand, Corollary~\ref{Cregularbranchindex} shows that
  $\tilde{e}_v/e_v$ is odd, which contradicts $\tilde{e}_v  = 2\tilde{e}_w$.  Thus $v$ is not removable from $V_{\reg}$. \qedhere  
%{\sout{By Lemma~\ref{Lv2onlymax} and Corollary~\ref{Cregularbranchindex}, the points where $D_{f_i}$ and $D_{f_j}$ meet the special fiber have nontrivial geometric ramification in $\mc{X} \to   \mc{Y}_{\reg}$.}} 
% \andrew{\sout{
%   By the removability of $v$, Lemma~\ref{Lv2onlymax}\ref{Lv2georam} and Lemma~\ref{Lramificationnotremovable}(iv), the intersection point $\ol{Z}_v \cap \ol{Z}_w$ is \emph{not} geometrically ramified in
%   $\mc{X} \to \mc{Y}_{\reg}$.  In particular, each component above
%   $\ol{Z}_v$ has at least two, and thus exactly
%    two intersection points with the preimage of $\ol{Z}_w$ in
%   $\mc{X}$ (more intersection points would result in not having normal
%   crossings when $\ol{Z}_v$ is contracted).  At the same time,
%   Lemma~\ref{Lramificationnotremovable}(iii) shows that the geometric
%   ramification indices at the points where $D_{f_i}$ and $D_{f_j}$
%   meet the special fiber are both $2$.  
%  
%   Let $\tilde{e}_v$ (resp.\ $\tilde{e}_w$) be the multiplicity of the
%   irreducible components of the special fiber of the normalization
%   $\mc{X}$ of
%   $\mc{Y}_{\reg}$ in $K(X)$ above $\ol{Z}_v$ (resp.\ $\ol{Z}_w$).
%   In order for an irreducible component above $\ol{Z}_v$ to have self-intersection number $-1$, we would need
%   $\tilde{e}_v = 2\tilde{e}_w$ \padma{by Castelnuovo -- add ref}.  By
%   Corollary~\ref{Cregularbranchindex}, $\tilde{e}_v/e_v$ is odd.
%   Since $e_v \mid e_w$ by Lemma~\ref{Lv2onlymax}(ii), and $e_w \mid \tilde{e}_w$, this contradicts
%   $\tilde{e}_v  = 2\tilde{e}_w$ .  Thus $v$ is not removable
%   from $V_{\reg}$.}}
  \end{proof}

  \begin{lemma}\label{Lv1max} 
   Suppose $v \in V_1 \setminus \{v_0\}$ is maximal in $V_{\mathrm{reg}}$. 
   \begin{enumerate}[\upshape (i)]
    \item\label{Lvmaxvfi} Then $v=v_{f_i}$ for some $f_i$ dividing $f$, the $v$-component has a finite cusp on $\mc{Y}_{\reg}$, and $D_{f_i}$ meets $\mc{Y}_{\reg}$ at a regular closed point of the $v$-component. 
    \item\label{Lv1maxgeoram} The specialization of $D_{f_i}$ and the finite cusp of the $v$-component are two distinct geometrically ramified points in $X \rightarrow \P^1_K$. 
    \item\label{Lv1maxnonrem} If $D_{f_j}$ intersects the $v$-component for some $j \neq i$, then $v$ is not removable.    
   \end{enumerate}
\end{lemma}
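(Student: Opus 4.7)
The plan is to first pin down $v$ explicitly and then treat each part in turn. Since $V_1$ consists of the pseudovaluations $v_{f_j}^{\infty}$ together with all their predecessors, and $V_{\reg}$ contains no infinite pseudovaluations, any $v \in V_1 \setminus \{v_0\}$ that belongs to $V_{\reg}$ must be a finite predecessor of some $v_{f_i}^{\infty}$, hence $v \preceq v_{f_i}$. Because $v_{f_i} \in V_1 \subseteq V_{\reg}$, the maximality of $v$ in $V_{\reg}$ forces $v = v_{f_i}$. Writing $v = [v_0, v_1(\phi_1) = \lambda_1, \ldots, v_n(\phi_n) = \lambda_n]$ in minimal form, the hypothesis $v \neq v_0$ gives $n \geq 1$, and Lemma~\ref{Lvnvprime} applied to the proper key polynomial $f_i$ over $v$ yields $e_v > e_{v_{n-1}}$. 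By Lemma~\ref{Lstandardendpointunique}, the $v$-model of $\proj^1_K$ has a unique finite cusp at the specialization of $D_{\phi_n}$; since $v$ is maximal in $V_{\reg}$, no valuation above $v$ is introduced to resolve it, so this cusp persists on $\mc{Y}_{\reg}$. By Proposition~\ref{Pbranchspecialization}, $D_{f_i}$ specializes to the $v$-component of $\mc{Y}_{\reg}$, and by Lemma~\ref{Lnonspecialize}(iii) applied to $f_i$ as a proper key polynomial over $v$, $D_{f_i}$ does not meet $D_{\phi_n}$ and thus avoids the finite cusp. Since $v \neq v_0$ and $D_{\infty}$ specializes only to the $v_0$-component of $\mc{Y}_{\reg}$ (by Corollary~\ref{CAllspecializations}), $D_{f_i}$ also avoids the $\infty$-specialization; this completes part (i).

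Part (ii) is a direct consequence of part (i): Lemma~\ref{Lfincuspram} shows that the finite cusp is geometrically ramified in $X \to \proj^1_K$, and Lemma~\ref{Lspfigeoram} (applicable because $v \neq v_0$) shows that the specialization of $D_{f_i}$ is geometrically ramified; the two points are distinct because $D_{f_i}$ avoids the cusp.

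For part (iii), suppose that $D_{f_j}$ meets the $v$-component for some $j \neq i$. Then Proposition~\ref{Pbranchspecialization} implies that the maximal valuation in $V_{\reg}$ bounded above by $v_{f_j}^{\infty}$ equals $v$, and repeating the argument of the first paragraph with $f_j$ in place of $f_i$ (again using $v_{f_j} \in V_1 \subseteq V_{\reg}$) gives $v = v_{f_j}$. Hence $f_j$ is also a proper key polynomial over $v$, so a second application of Lemma~\ref{Lnonspecialize}(iii) shows that $D_{f_j}$ avoids $D_{\phi_n}$, and Lemma~\ref{LY0structure}(v) gives that $D_{f_i}$ and $D_{f_j}$ specialize to distinct points on the $v$-component. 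Thus the finite cusp together with the specializations of $D_{f_i}$ and $D_{f_j}$ are three distinct geometrically ramified points (by Lemmas~\ref{Lfincuspram} and \ref{Lspfigeoram}) on the $v$-component of $\mc{Y}_{\reg}$, and Lemma~\ref{Lramificationnotremovable}(iv) then concludes that $v$ is not removable from $V_{\reg}$. The main obstacle is verifying pairwise distinctness of these three ramified points, which is handled by the two applications of Lemma~\ref{Lnonspecialize}(iii) (for $f_i$ and $f_j$) combined with Lemma~\ref{LY0structure}(v).
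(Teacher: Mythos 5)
Your proofs of parts (i) and (ii) follow essentially the paper's argument. One minor inaccuracy: you write $V_1 \subseteq V_{\reg}$, which is false since $V_1$ contains the infinite pseudovaluations $v_{f_i}^{\infty}$; what you need (and what is true) is that the \emph{finite} elements of $V_1$ persist into $V_{\reg}$, and in particular $v_{f_i} \in V_{\reg}$. Your handling of the $\infty$-specialization in part (i) is, if anything, slightly more explicit than the paper.

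Part (iii) has a genuine gap. You assert that if $D_{f_j}$ specializes to the $v$-component, then ``repeating the argument of the first paragraph'' yields $v = v_{f_j}$. But the first-paragraph argument required $v \preceq v_{f_i}$, which you obtained from the structural fact that $v$ is a \emph{predecessor} of $v_{f_i}^{\infty}$. Knowing that $D_{f_j}$ meets the $v$-component only gives you, via Proposition~\ref{Pbranchspecialization}, that $v \prec v_{f_j}^{\infty}$; it does \emph{not} make $v$ a predecessor of $v_{f_j}^{\infty}$, and indeed the relation you actually get from the total order below $v_{f_j}^{\infty}$ is $v_{f_j} \preceq v$, which is the wrong direction. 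It is entirely possible that $v_{f_j} \prec v = v_{f_i}$, in which case the minimal presentation of $v$ has $\phi_n = f_j$, and so $D_{f_j} = D_{\phi_n}$ lands \emph{at} the finite cusp, not away from it. Then your application of Lemma~\ref{Lnonspecialize}(iii) (which requires $f_j$ to be a proper key polynomial over $v$, not equal to $\phi_n$) fails, and there are only two distinct geometrically ramified points on the $v$-component rather than three, so Lemma~\ref{Lramificationnotremovable}(iv) does not apply. The paper handles this by splitting into cases: if $D_{f_j}$ specializes to a regular point, three distinct geometrically ramified points arise and Lemma~\ref{Lramificationnotremovable}(iv) applies; if $D_{f_j}$ specializes to the finite cusp, then by Lemma~\ref{Lfincuspram} the geometric ramification index there exceeds $2$, and Lemma~\ref{Lramificationnotremovable}(iii) applies instead. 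You need this case division to complete part (iii).
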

\begin{proof}
Since every element of $V_1$ is a predecessor of $v_{f_i}$ for some $i$, the maximality of $v$ implies that $v = v_{f_i}$ for some monic irreducible $f_i \mid f$.
Since $v \neq v_0$, Lemma~\ref{Lvnvprime} implies that
$e_v/e_{v_{n-1}} > 1$, where $n$ is the inductive length of $v$. Since
$v$ is maximal and $e_v > e_{v_{n-1}}$, the $v$-component $\ol{Z}_v$
of $\mc{Y}_{\reg}$ has a finite cusp by
Corollary~\ref{Cmaximalcusp}. Since $v = v_{f_i}$ is maximal in
$V_{\reg}$, the divisor $D_{f_i}$ meets the $v$-component by Proposition~\ref{Pbranchspecialization}. Furthermore, by Lemma~\ref{Lnonspecialize}(iii) applied to $f_i$ (which is a key polynomial over $v$) and Lemma~\ref{Lstandardendpointunique}, $D_{f_i}$ does
  not meet the finite cusp on $\ol{Z}_v$. This proves (i). Combining (i) with Lemma~\ref{Lfincuspram} and Lemma~\ref{Lspfigeoram} proves (ii).
  
  %Combining (i) with Lemma~\ref{Lfincuspram} and Lemma~\ref{Lspfigeoram}, we get that the specialization of $f_i$ and the finite cusp on the $v$-component are two distinct geometrically ramified points of the $v$-component, proving (ii).
  %in $X \rightarrow \proj^1_K$.

It remains to show that if $D_{f_j}$ for $j \neq i$ meets the $v$-component $\ol{Z}_v$, then $v$ is not removable. By
  Proposition~\ref{P35} applied to the non-comparable (and thus
  non-adjacent) valuations $v_{f_i}^{\infty}$ and
  $v_{f_j}^{\infty}$, the divisors $D_{f_i}$ and $D_{f_j}$ do not meet
  on $\mc{Y}_{\reg}$. If $D_{f_j}$ specializes to a regular point of
  the $v$-component (necessarily distinct from the unique finite cusp
  and the specialization of $f_i$), then the $v$-component has at
  least $3$ distint geometrically ramified points by (ii), and hence
  $v$ is not removable by Lemma~\ref{Lramificationnotremovable}(iv).
  If $D_{f_j}$ specializes to a non-regular point, then since $v \neq
  v_0$, this
  non-regular point is the finite cusp of the $v$-component by Corollary~\ref{CAllspecializations}\ref{Cinftyspecialization}. Lemma~\ref{Lfincuspram} shows that the geometric ramification index at the finite cusp is strictly larger than $2$, and thus $v$ is not removable by Lemma~\ref{Lramificationnotremovable}(iii). 
\end{proof}

We are finally ready to characterize the removable valuations in
$V_{\reg}$ (other than $v_0$) in Definition~\ref{Dremovability} and prove Proposition~\ref{Pv1max}. 

\begin{defn}\label{Dremovability}
Let $f = \pi_k^af_1^{a_1} \cdots f_r^{a_r}$ be an irreducible factorization of
$f$ as in this section.   Let $d \in \nats$ with $\chara k \nmid d$.  Let $v = [v_0,\, \ldots,\,
v_n(\phi_n) = \lambda_n]$, and write $N$ for
$e_{v_{n-1}}$.  We say that $v$ satisfies the
\emph{removability criterion} with respect to $f$ and $d$ if $v \neq v_0$, it is
maximal in $V_{\reg}$ and the following all hold:
\begin{enumerate}
\item $v = v_{f_i}$ for a unique $1 \leq i \leq r$, 
\item for this $i$, we have $a_i \equiv d/2 \pmod{d}$,
\item $e_v/N = 2$,
\item $e_w/N = \gcd(d, e_ww(f))/\gcd(d, e_vv(f))$, where $w \prec v$
  is the unique neighbor of $v$ in the rooted tree
  $V_{\reg}$. 
\end{enumerate}

\end{defn}

\begin{prop}\label{Pneccondrem}
Suppose valuation $v \neq v_0$ is removable from $V_{\reg}$. Then 
$v$ satisfies
the removability criterion of Definition~\ref{Dremovability} with
respect to $f$ and $d$.  
\end{prop}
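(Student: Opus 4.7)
The plan is to progressively restrict the location and shape of $v$ using the lemmas already set up in \S\ref{Smaximal}, and then read off the four conditions of Definition~\ref{Dremovability} from the two geometric ramification indices on the $v$-component of $\mc{Y}_{\reg}$.

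First, I would combine Proposition~\ref{Ponlyspecialcontract}, Corollary~\ref{Cv2nonmax}, and Lemma~\ref{Lv2max} to conclude that $v \in V_1 \setminus \{v_0\}$ and $v$ is maximal in $V_{\reg}$.  Lemma~\ref{Lv1max}\ref{Lvmaxvfi} then gives $v = v_{f_i}$ for some $i$, and exhibits two distinct points on the $v$-component of $\mc{Y}_{\reg}$: the finite cusp (where $D_{\phi_n}$ meets, $\phi_n$ being the last key polynomial of the minimal presentation of $v$) and the regular specialization of $D_{f_i}$; both are geometrically ramified in $\mc{X}_{\reg} \to \mc{Y}_{\reg}$ by Lemma~\ref{Lv1max}\ref{Lv1maxgeoram}.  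Removability of $v$ together with Lemma~\ref{Lv1max}\ref{Lv1maxnonrem} rules out any $D_{f_j}$ with $j \neq i$ meeting the $v$-component, and by Proposition~\ref{Pbranchspecialization} this also forces $v \neq v_{f_j}$ for $j \neq i$, establishing condition~(1).

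Since $v$ is a leaf of the rooted tree $V_{\reg}$, it has a unique neighbor $w \prec v$; thus the $v$-component intersects exactly one other component, and the two geometrically ramified points above are distinct from that intersection.  Lemma~\ref{Lramificationnotremovable}(iii) then forces both geometric ramification indices to equal exactly $2$ (they are at least $2$ since the points are ramified).  I would compute each index with Proposition~\ref{Pbranchpointramindex}.  For the specialization of $D_{f_i}$, I use the non-minimal presentation $v = [v, v'(f_i) = v(f_i)]$ as allowed in \S\ref{Sstandardendpoints}: there ``$N$'' becomes $e_v$ and the formula collapses to $d/\gcd(d, a_i) = 2$, i.e.\ $\gcd(d, a_i) = d/2$, which is equivalent to $a_i \equiv d/2 \pmod{d}$ and gives condition~(2).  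For the finite cusp, I claim the $\phi_n$ of the minimal presentation of $v$ is not an irreducible factor of $f$: it cannot equal $f_i$, for then $f_i$ would be a proper key polynomial over $v_{n-1}$ and the uniqueness in Proposition/Definition~\ref{Pbestlowerapprox} would give $v_{n-1} = v_{f_i} = v$, contradicting minimality; and it cannot equal $f_j$ for $j \neq i$, since then $D_{\phi_n} = D_{f_j}$ would specialize to the $v$-component, contradicting Lemma~\ref{Lv1max}\ref{Lv1maxnonrem}.  Hence the exponent $a$ in Proposition~\ref{Pbranchpointramindex} is $0$, the formula reduces to $e_v/N = 2$, and we obtain condition~(3).

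For condition~(4), Theorem~\ref{TX0regular} guarantees that $\mc{X}_{\reg}$ has \emph{strict} normal crossings, so the points of $\mc{X}_{\reg}$ above the two geometrically ramified points lie on a single irreducible component of the reduced special fiber and are smooth on it, placing us in the hypotheses of Lemma~\ref{Lremovable}.  Part~(ii) of that lemma, applied in the removable case, yields $\tilde{e}_v = 2\tilde{e}_w$ for the quantities in \eqref{Etildes}; substituting $e_v = 2N$ from condition~(3) and cancelling the common factor of $d$ rearranges this identity to $e_w/N = \gcd(d, e_w w(f))/\gcd(d, e_v v(f))$, which is condition~(4).  The main technical obstacle is the verification that $\phi_n$ is not an irreducible factor of $f$; once this is in hand the rest is a direct assembly of the earlier numerical formulas.
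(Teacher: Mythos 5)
Your proof is correct and follows the same outline as the paper's: reduce to $v$ a maximal element of $V_1$ via Proposition~\ref{Ponlyspecialcontract}, Corollary~\ref{Cv2nonmax}, and Lemma~\ref{Lv2max}, invoke Lemma~\ref{Lv1max} and Lemma~\ref{Lramificationnotremovable}(iii) to force both geometric ramification indices on the $v$-component to equal $2$, extract conditions (b) and (c) from the ramification-index formulas, and obtain condition (d) from $e_v = 2N$ together with Lemma~\ref{Lremovable}(ii). The only cosmetic divergence is that for the ramification index at the specialization of $D_{f_i}$ you re-apply Proposition~\ref{Pbranchpointramindex} with the non-minimal presentation of $v$ (so that $N = e_v$), whereas the paper cites Corollary~\ref{Cregularbranchindex} directly; both routes yield $d/\gcd(d, a_i)$, and your extra verification that the minimal key polynomial $\phi_n$ is not an irreducible factor of $f$ is a correct elaboration of the paper's terse justification that $a = 0$ at the finite cusp.
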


\begin{proof}
%First, assume that $v$ is removable from $V_{\reg}$.
 %{\sout{Let $\mc{Y}_{\reg}$ be the $V_{\reg}$-model of $\proj^1_K$.  Recall that $a_i$ is the exact power of $f_i$ dividing $f$.}}
By Proposition~\ref{Ponlyspecialcontract}, Corollary~\ref{Cv2nonmax},
Lemma~\ref{Lv2max} and Lemma~\ref{Lv1max}\ref{Lvmaxvfi}, $v$ being
removable implies that $v=v_{f_i}$ for some $f_i$ dividing $f$
and that $v$ is maximal in $V_{\reg}$. If $v=v_{f_j}$ for some $j \neq i$, since $v = v_{f_j}$ is maximal in $V_{\reg}$, it follows that $D_{f_j}$ also specializes to the $v$-component by Proposition~\ref{Pbranchspecialization}.  Part~(a) now follows from Lemma~\ref{Lv1max}\ref{Lv1maxnonrem}.

Lemma~\ref{Lv1max}\ref{Lv1maxgeoram} and Lemma~\ref{Lramificationnotremovable}(iii) show that
the geometric ramification indices at the specialization of $f_i$ and the finite cusp are both $2$, and there are no
other geometrically ramified points of $\ol{Z}_v$. By
Corollary~\ref{Cregularbranchindex} and
Proposition~\ref{Pbranchpointramindex} (with $a = 0$ in that
proposition since no horizontal branch divisor meets the finite cusp), this is only possible if
$\gcd(d, a_i) = d/2$ and $e_v/N =
2$.  This verifies parts (b) and (c) of the
removability criterion. Part (d) follows from $e_v = 2N$ and Lemma~\ref{Lremovable}(ii). \qedhere

% Lastly, we observe that if $\tilde{e}_v$ (resp.\ $\tilde{e}_w$) is the multiplicity of the
% irreducible components of the special fiber of $\mc{X}$ above the $v$-
% (resp.\ $w$-) component of $\mc{Y}_{\reg}$, then
% \begin{equation}\label{Etildes}
%   \tilde{e}_v =
% \frac{e_vd}{\gcd(d, e_vv(f))} = \frac{2Nd}{\gcd(d, e_vv(f))} \text{ and } \tilde{e}_w =
% \frac{e_wd}{\gcd(d, e_ww(f))}.
% \end{equation}

%, $v$ being removable implies $\tilde{e}_v = 2\tilde{e}_w$, which is equivalent to part (d).
\end{proof}

\begin{prop}\label{Psatisfiesrem}
 If a valuation $v$ satisfies the removability criterion of Definition~\ref{Dremovability} with
respect to $f$ and $d$, then it is removable from
$V_{\mathrm{\reg}}$ and the unique neighbor of $v$ in the rooted tree
$V_{\mathrm{\reg}}$ is not removable from $V_{\reg} \setminus \{v\}$.
%\padma{Need the formulas for multiplicities of upstairs components outside the proof of the previous.}
\end{prop}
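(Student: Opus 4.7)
The plan is to verify the hypotheses of Lemma~\ref{Lremovable} at $v$ and then invoke parts (ii) and (iii) of that lemma directly.

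First, since $v$ is maximal in the rooted tree $V_{\reg}$ (Lemma~\ref{LY0structure}(i)) and $v \neq v_0$, it has a unique neighbor $w \prec v$, so the $v$-component $\ol{Z}_v$ of $\ol{Y}_{\reg}$ intersects exactly one other irreducible component, the $w$-component. By criterion (c), $e_v/N = 2 > 1$, so Corollary~\ref{Cmaximalcusp} provides a unique finite cusp $y_0$ on $\ol{Z}_v$ where $D_{\phi_n}$ meets; by Lemma~\ref{Lv1max}(i)--(ii), $D_{f_i}$ specializes to a regular point $y_i \ne y_0$ on $\ol{Z}_v$, and both $y_0$ and $y_i$ are geometrically ramified in $\mc{X}_{\reg} \to \mc{Y}_{\reg}$.

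Next, I will argue that $y_0$ and $y_i$ are the only geometrically ramified points of $\ol{Z}_v$. This amounts to ruling out that $D_{f_j}$ meets $\ol{Z}_v$ for any $j \neq i$: by Proposition~\ref{Pbranchspecialization}, this would force $v$ to be the maximal element of $V_{\reg}$ below $v_{f_j}^\infty$, hence (tracing through Algorithm~\ref{AY0}, Step (5)) either $v=v_{f_j}$ (violating uniqueness in (a)) or $v$ is the endpoint of a non-trivial branch-point tail $B_{V_4, f_j}$ with $\phi_n = f_j$, which would place $D_{f_j}$ at the cusp $y_0$ and boost the geometric ramification there beyond $2$ (Lemma~\ref{Lfincuspram}), contradicting the structure forced below. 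With this established, $\phi_n$ is not a factor of $f$, so applying Proposition~\ref{Pbranchpointramindex} at the cusp with $a=0$ yields geometric ramification index $de_v/(N \gcd(d,0)) = e_v/N = 2$ by (c); at $y_i$, Corollary~\ref{Cregularbranchindex} gives index $d/\gcd(d, a_i) = d/(d/2) = 2$ by (b). Smoothness of the points above $y_0$ and $y_i$ on the reduced special fiber follows from strict normal crossings in Theorem~\ref{TX0regular}.

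All the hypotheses of Lemma~\ref{Lremovable} are now in place. Using \eqref{Etildes} together with criterion (d), we compute
\[
\frac{\tilde{e}_v}{\tilde{e}_w} \;=\; \frac{e_v \gcd(d, e_w w(f))}{e_w \gcd(d, e_v v(f))} \;=\; \frac{e_v}{e_w}\cdot \frac{e_w}{N} \;=\; \frac{e_v}{N} \;=\; 2,
\]
where the last equality uses (c). Hence $\tilde{e}_v = 2\tilde{e}_w$ and Lemma~\ref{Lremovable}(ii) shows $v$ is removable from $V_{\reg}$. The second assertion, that $w$ is not removable from $V_{\reg}\setminus\{v\}$, then follows immediately from Lemma~\ref{Lremovable}(iii).

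The main obstacle is the exclusion, in the second paragraph above, of a third geometrically ramified point coming from some $D_{f_j}$ with $j\neq i$. Criterion (a) only directly prohibits $v=v_{f_j}$; the subtle case is $v_{f_j}\prec v$ with $B_{V_4,f_j} = \{v\}$, where the minimal Mac Lane presentation of $v$ at the tail's endpoint must be analyzed carefully to show that this forces $f_i$ and $f_j$ to induce the same augmentation of $v_{f_j}$, and hence $v_{f_i} = v_{f_j}$, contradicting the uniqueness in (a).
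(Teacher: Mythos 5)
Your overall strategy — verifying the hypotheses of Lemma~\ref{Lremovable} at $v$ and then applying parts (ii) and (iii) — is exactly the paper's approach, and most of the details you check are correct: Lemma~\ref{Lv1max}(i)--(ii) gives the finite cusp and the $D_{f_i}$-specialization as two distinct geometrically ramified points, the ramification-index computations from criteria (b) and (c) via Corollary~\ref{Cregularbranchindex} and Proposition~\ref{Pbranchpointramindex} are right, smoothness via Theorem~\ref{TX0regular} is right, and the algebraic manipulation using \eqref{Etildes} together with criterion (d) to get $\tilde{e}_v = 2\tilde{e}_w$ is correct.

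The genuine gap is in the step that the cusp and $D_{f_i}$-specialization are the \emph{only} geometrically ramified points of $\ol{Z}_v$, and there are two separate problems. First, you assert that this ``amounts to ruling out that $D_{f_j}$ meets $\ol{Z}_v$ for any $j \neq i$'' without justifying why no other point can be geometrically ramified; the paper supplies this reduction via purity of the branch locus applied to the quotient cover $\mc{X}/(\ints/e) \to \mc{Y}_{\reg}$, and you should do the same. Second, your attempt to exclude $D_{f_j}$ is logically circular: you argue that $D_{f_j}$ specializing to the cusp would ``boost the geometric ramification there beyond $2$~\ldots, contradicting the structure forced below,'' but the computation ``below'' that the ramification index is $2$ presupposes $a = 0$ in Proposition~\ref{Pbranchpointramindex}, i.e.\ presupposes that no $D_{f_j}$ meets the cusp. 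Nothing in the removability criterion directly forces the geometric ramification to be $2$, so this is not a contradiction with anything you have established; a genuine contradiction must be extracted from (a)--(d) themselves. You correctly identify the hard case as $v_{f_j} \prec v$ with $B_{V_4, f_j} = \{v\}$, but the resolution you sketch (``forces $f_i$ and $f_j$ to induce the same augmentation of $v_{f_j}$, hence $v_{f_i} = v_{f_j}$'') is unjustified and is in fact false on its face: in this scenario $v_{f_i} = v$ is a strict augmentation of $v_{f_j}$, so $v_{f_i} \neq v_{f_j}$. The paper itself dispatches this step with the terse sentence ``By part (a) and the maximality of $v$, no $D_{f_j}$ other than $D_{f_i}$ intersects $\ol{Z}_v$ either,'' but you cannot invoke that sentence blindly, and the sketch you offer in its place does not close the gap.
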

\begin{proof}
 %Now, we show that if $v$ verifies the removability criterion, then $v$ is removable.  
Let $v=v_{f_i}$ as in part~(a) of the removability criterion. By Lemma~\ref{Lnonspecialize}(iii) applied to $f_i$ (which is a key polynomial over
  $v$) and Lemma~\ref{Lstandardendpointunique}, $D_{f_i}$ does
  not meet the finite cusp and in particular specializes to a regular point on $\ol{Z}_v$.  
By part (b) of the removability criterion and Corollary~\ref{Cregularbranchindex}, the geometric
ramification index of $\mc{X} \to \mc{Y}_{\reg}$ at the specialization of $D_{f_i}$ is $2$. 
By part (a) and the
maximality of $v$, no $D_{f_j}$ other than $D_{f_i}$ intersects
$\ol{Z}_v$ either. By part (c) of the removability criterion and
Proposition~\ref{Pbranchpointramindex}, the geometric ramification
index at the finite cusp is $2$ as well (note that $a = 0$ in
Proposition~\ref{Pbranchpointramindex}  since the zeroes of $f$ do not
specialize to the finite cusp on $\ol{Z}_v$).

 We now claim that no other closed point on the $v$-component besides these two points is geometrically ramified. Indeed, since any such closed point does not lie on a horizontal component of the branch divisor, the claim follows from purity of the branch locus applied to
$\mc{X}/(\ints/e) \to \mc{Y}_{\reg}$ where $e = \tilde{e}_v/e_v$ is
the ramification index of $\ol{Z}_v$ in $\mc{X} \to \mc{Y}_{\reg}$.  Since all irreducible components of the reduced special fiber of $\mc{X}$ are smooth by Theorem~\ref{TX0regular}, the geometrically ramified points are smooth points of the components that they are on, and by
combining parts (c) and (d) of the removability criterion with
Lemma~\ref{Lremovable}(ii) we get that $v$ is
removable from $V_{\reg}$.
Lemma~\ref{Lremovable}(iii) shows that the unique neighbor of $v$ in $V_{\reg}$ is
not removable from $V_{\reg} \setminus \{v\}$.
\end{proof}

Let $S$ be the set of valuations satisfying the removability criterion
of Definition~\ref{Dremovability}. From now on, let $V_{\reg}' \colonequals V_{\reg} \setminus S$, and let $\nu' \colon \mc{X}'_{\reg} \to \mc{Y}_{\reg}'$ be the cover coming from contracting all the $v$-components $\ol{Z}_v$ for $v \in S$ and all the irreducible components lying above them in $\mc{X}_{\reg}$.

\begin{remark}\label{Rnooddcontract}
If $d$ is odd, then part (b) of the removability criterion of
Definition~\ref{Dremovability} does not hold, so $V_{\reg}' = V_{\reg}$.
\end{remark}

\begin{prop}\label{Pv1max}
 $V_{\reg}'$ is a regular normal crossings base, or equivalently, $\mc{X}'_{\reg}$ is regular. If $v_0 \neq v \in V'_{\reg}$, then $v$ is not removable from $V_{\reg}'$.
\end{prop}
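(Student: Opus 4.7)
The plan is to prove the two assertions separately. For Part 1 I would inductively remove the valuations of $S$ from $V_{\reg}$, applying Proposition~\ref{Psatisfiesrem} at each step. For Part 2 I would perform a case analysis on the position of $v \in V'_{\reg} \setminus \{v_0\}$ in the structure of Algorithm~\ref{AY0}, extending the non-removability arguments of Proposition~\ref{Pneccondrem} from $V_{\reg}$ to $V'_{\reg}$.

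First, for Part 1, enumerate $S = \{v^{(1)}, \ldots, v^{(k)}\}$ in any order, and set $V_j \colonequals V_{\reg} \setminus \{v^{(1)}, \ldots, v^{(j)}\}$. By induction on $j$, I would prove that each $V_j$ is a regular normal crossings base, with base case $j=0$ given by Theorem~\ref{TX0regular} and the conclusion $V_k = V'_{\reg}$ at $j=k$. For the inductive step, set $v \colonequals v^{(j+1)}$. Since $v$ is maximal in $V_{\reg}$, it remains maximal in $V_j \subseteq V_{\reg}$, and its unique neighbor $w$ in $V_{\reg}$ satisfies $w \prec v$, so $w$ cannot be maximal in $V_{\reg}$ and hence $w \notin S$, so $w \in V_j$ is still the unique neighbor of $v$ in $V_j$. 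Every step of the proof of Proposition~\ref{Psatisfiesrem}---the non-intersection of $D_{f_i}$ with the finite cusp, the geometric ramification indices equal to $2$ at the two branch points of the cover on $\ol{Z}_v$, the smoothness of the components above them (from Theorem~\ref{TX0regular} and Corollary~\ref{Csmoothoncomponent}), and the multiplicity identity $\widetilde{e}_v = 2\widetilde{e}_w$ via Lemma~\ref{Lremovable}(ii)---concerns only local data at the $v$-component and its intersection with the $w$-component. These data are unchanged in the $V_j$-model compared to $\mc{Y}_{\reg}$, because all other elements of $S$ are maximal in $V_{\reg}$ and thus incomparable and non-adjacent to $v$ and $w$. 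Hence the proof of Proposition~\ref{Psatisfiesrem} applies with $V_j$ in place of $V_{\reg}$, showing $v$ is removable from $V_j$, which yields that $V_{j+1}$ is a regular normal crossings base.

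For Part 2, let $v \in V'_{\reg} \setminus \{v_0\}$; I would show $v$ is not removable from $V'_{\reg}$ by case analysis. If $v \in V_{\reg} \setminus V_2$, then $v$ lies strictly inside a link, tail, or branch point tail of $V_{\reg}$, and both of its $V_{\reg}$-neighbors are also strictly inside the same chain, so in particular none of them is in $S \subseteq V_1$; hence these neighbors remain adjacent to $v$ in $V'_{\reg}$, and the shortest $\widetilde{N}$-path argument of Lemma~\ref{Lmidofchain} rules out removability. If $v \in V_2 \setminus \{v_0\}$, I would split further. When $v$ is maximal in $V_{\reg}$ (hence also in $V'_{\reg}$, with the same unique neighbor $w \notin S$), the non-removability arguments of Lemmas~\ref{Lv2max} and~\ref{Lv1max}\ref{Lv1maxnonrem}---or the direct verification that some part (b), (c), or (d) of the removability criterion fails, producing a geometric ramification index different from $2$ via Corollary~\ref{Cregularbranchindex} and Proposition~\ref{Pbranchpointramindex}, or a multiplicity mismatch excluded by Lemma~\ref{Lremovable}(ii)---are purely local at the $v$-component and extend verbatim to $V'_{\reg}$. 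When $v$ is non-maximal in $V_{\reg}$, the proof of Lemma~\ref{Lv2nonmax} still supplies a geometrically ramified point on the $v$-component, and counting neighbors in $V'_{\reg}$ and combining with Lemma~\ref{Lramificationnotremovable} rules out removability; in the sub-case where enough $V_{\reg}$-successor-neighbors of $v$ lie in $S$ to make $v$ newly maximal in $V'_{\reg}$, each removed successor $u = v_{f_{j(u)}} \in S$ contributes its branch divisor $D_{f_{j(u)}}$ (which now specializes to the $v$-component in $\mc{Y}'_{\reg}$) as an additional geometrically ramified point, again preserving non-removability.

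The main obstacle is the last sub-case above, namely controlling those $v \in V'_{\reg}$ which are the unique neighbor $w(u)$ of one or several $u \in S$. Here the passage from $V_{\reg}$ to $V'_{\reg}$ strips $v$ of successor adjacencies, relocates the specializations of the corresponding $D_{f_{j(u)}}$ onto the $v$-component, and alters the multiplicity data on $\mc{X}'_{\reg}$ above $\ol{Z}_v$. Lemma~\ref{Lremovable}(iii) handles the contraction of a single $u$, and the task is to iterate or combine this with the new branch specializations to ensure that, after all relevant contractions, contracting a component $\ol{X}_v$ above $\ol{Z}_v$ as well would violate Castelnuovo's criterion (Lemma~\ref{LCastelnuovo}), either through self-intersection of the image of $\ol{X}_v$ or through a mismatch of multiplicities with neighboring components as computed via Lemma~\ref{Ldivisormultiplicities}.
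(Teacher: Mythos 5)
Your Part 1 is essentially the paper's argument in a different wrapper: you contract the components of $S$ one at a time and re-verify Proposition~\ref{Psatisfiesrem} at each step using disjointness, whereas the paper notes directly that maximal valuations cannot be adjacent, so the $S$-components are pairwise disjoint by Proposition~\ref{P35} and may be contracted simultaneously; the content is the same, and your observation that the removed $v^{(j)}$'s are maximal while $w$ is not (hence $w \notin S$) is exactly the disjointness fact being used.

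Part 2 is where the approaches diverge, and where your proposal has a genuine gap. The paper does not re-trace the case analysis of Proposition~\ref{Pneccondrem} at all; instead it splits according to whether the valuation $w \in V_{\reg}'\setminus\{v_0\}$ is or is not adjacent to a member of $S$. If it is not, the neighbors of $w$ (and hence the local picture on both $\mc{Y}$ and $\mc{X}$ near the $w$-component) are literally unchanged when passing from $V_{\reg}$ to $V_{\reg}'$, so non-removability transfers from Proposition~\ref{Pneccondrem} with no further work. If it is, Proposition~\ref{Psatisfiesrem} already proved, via Lemma~\ref{Lremovable}(iii), that $w$ is not removable from $V_{\reg}\setminus\{v\}$ — the exact statement needed — and the disjointness of $S$-components lets this transfer to $V_{\reg}'$. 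Your proposal instead tries to re-run the ramification-counting arguments (Lemma~\ref{Lv2nonmax}, Lemma~\ref{Lramificationnotremovable}) in $V_{\reg}'$. In the sub-case you flag as the obstacle — $v$ is the unique neighbor of one or more $u \in S$ — this is the wrong tool: the newly relocated branch specializations $D_{f_{j(u)}}$ each have geometric ramification index exactly $2$ (forced by part (b) of the removability criterion and Corollary~\ref{Cregularbranchindex}), so Lemma~\ref{Lramificationnotremovable}(iii) (which needs an index $>2$) never fires, and whether Lemma~\ref{Lramificationnotremovable}(iv) applies depends on a careful recount of finite cusps, other branch specializations, and the residual neighbors of $v$ in $V_{\reg}'$. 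You eventually turn to Lemma~\ref{Lremovable}(iii) and Castelnuovo, which is the correct idea, but you leave it as a task rather than an argument; the missing observation is that this task is already completed by Proposition~\ref{Psatisfiesrem} — the second assertion there is precisely the non-removability of the neighbor in the reduced base. A secondary, minor imprecision: in your first case ($v \in V_{\reg}\setminus V_2$), Lemma~\ref{Lmidofchain} only covers non-maximal, non-minimal chain members; when $v$ is the \emph{maximal} element of a tail or branch-point tail you need the argument of Proposition~\ref{Ponlyspecialcontract} about $D_{f_i}$ or $D_{\phi_n}$ specializing to a non-regular point, not the $\widetilde{N}$-path criterion.
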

\begin{proof}
The valuations in $S$ are maximal valuations in $V_{\mathrm{reg}}$ by
Definition~\ref{Dremovability}.
%Proposition~\ref{Ponlyspecialcontract}, Corollary~\ref{Cv2nonmax},
%and Lemma~\ref{Lv2max}.
No two maximal valuations in $V_{\mathrm{reg}}$ can be adjacent, so
the irreducible components corresponding to the valuations in $S$ are
pairwise disjoint by Proposition~\ref{P35}. Combining this with
Proposition~\ref{Psatisfiesrem} and Lemma~\ref{LCastelnuovo}, we get that the irreducible components corresponding to valuations in $S$ can be simultaneously contracted from $\mc{Y}_{\reg}$, or equivalently, that $V_{\reg}'$ is a regular normal crossings base.

%, where $S$ is the collection of valuations satisfying the removability criterion .
%(not including $v_0$) from $V_{\reg}$.  

%We now prove there are no further removable valuations other than $v_0$ after we remove all the valuations in $S$. 
If $v_0 \neq w \in V_{\mathrm{reg}}'$
and $w$ is adjacent to a valuation $v \in S$, then $w$ is not
removable from $V_{\reg}'$ by Proposition~\ref{Psatisfiesrem}. If $v_0 \neq w \in V_{\mathrm{reg}}'$ is not adjacent to a valuation in
$S$, then Lemma~\ref{LCastelnuovo} shows that it is not removable from
$V_{\mathrm{reg}}'$, because it is not removable from $V_{\mathrm{reg}}$ by
Proposition~\ref{Pneccondrem}, and the
neighboring valuations are unchanged from those in $V_{\mathrm{reg}}$. 
This completes the proof.
%$\ol{Z}_v$ and all components above it.
%, and let $z \in \mc{Y}_{\reg}'$
%be the image of $\ol{Z}_v$ in the contraction $\mc{Y}_{\reg} \to
%\mc{Y}_{\reg}'$.
\end{proof}

\begin{lemma}\label{Lhorizspec} 
%Let $\mc{Y}_{\reg}'$ be the $V_{\reg}'$-model of $\proj^1_K$.  Then 
The poset $V_{\reg}'$ is a rooted tree with root $v_0$ and 
each $D_{f_i}$ meets a single component of the special fiber of
$\mc{Y}_{\reg}'$.  
\end{lemma}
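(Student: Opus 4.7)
The plan is to exploit the fact that $V_{\reg}'$ is obtained from $V_{\reg}$ by removing only maximal, non-root valuations, so both assertions reduce to tracking what happens to known data on $\mc{Y}_{\reg}$ under this very restricted contraction.

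For the first assertion, I would invoke Lemma~\ref{LY0structure}(i) to see that $V_{\reg}$ itself is a rooted tree with root $v_0$. By Definition~\ref{Dremovability}, every $v \in S$ satisfies $v \neq v_0$ and is maximal in $V_{\reg}$, i.e., each element of $S$ is a leaf of this rooted tree and none is the root. Removing a set of leaves from a rooted tree yields a rooted tree with the same root, so $V_{\reg}' = V_{\reg} \setminus S$ is a rooted tree with root $v_0$.

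For the second assertion, the plan is to fix $i$ and let $w_i$ denote the maximal valuation in the branch point tail $B_{V_4, f_i}$. By Lemma~\ref{LY0structure}(iv), $D_{f_i}$ meets the special fiber of $\mc{Y}_{\reg}$ only on the $w_i$-component. I would then split into two cases. If $w_i \notin S$, the $w_i$-component is not contracted in passing from $\mc{Y}_{\reg}$ to $\mc{Y}_{\reg}'$, and there is nothing to prove. Otherwise $w_i \in S$; by part~(a) of the removability criterion in Definition~\ref{Dremovability}, $w_i = v_{f_i}$ and in particular $v_{f_j} \neq w_i$ for $j \neq i$, so no other $D_{f_j}$ approaches the $w_i$-component (by Lemma~\ref{LY0structure}(iv) again).

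Let $w \prec w_i$ be the unique neighbor of $w_i$ in the rooted tree $V_{\reg}$. Since two maximal elements of $V_{\reg}$ cannot be adjacent, $w \notin S$, so its component survives the contraction. As noted in the proof of Proposition~\ref{Pv1max}, the components for valuations in $S$ are pairwise disjoint and each is contracted to a single point lying on the unique neighboring component; hence the entire $w_i$-component of $\mc{Y}_{\reg}$ collapses to one point $\bar w_i$ on the $w$-component of $\mc{Y}_{\reg}'$. Therefore the specialization of $D_{f_i}$ in $\mc{Y}_{\reg}'$ is $\bar w_i$, which lies on the single irreducible component corresponding to $w$. I do not anticipate a substantive obstacle; the lemma is really a bookkeeping statement verifying that the contractions performed in Proposition~\ref{Pv1max} do not disturb the structural properties of $\mc{Y}_{\reg}$ established in Lemma~\ref{LY0structure}.
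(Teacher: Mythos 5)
Your proof is correct and follows essentially the same reasoning as the paper: both reduce the claim to Lemma~\ref{LY0structure}(i), (iv) for $V_{\reg}$ and then observe that passing to $V_{\reg}'$ only contracts maximal, non-root components, which cannot disturb either the rooted-tree structure or the fact that the specialization of $D_{f_i}$ lies on a single component. The paper phrases the second point as a uniform statement about points lying on exactly one component, whereas you split into cases according to whether $w_i \in S$, but the underlying argument is the same.
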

\begin{proof}
The analogous statement is true for $V_{\reg}$ and the $V_{\reg}$-model $\mc{Y}_{\reg}$ by
Lemma~\ref{LY0structure}(i), (iv). It remains true for
$V_{\reg}'$ and $\mc{Y}_{\reg}'$ since $\mc{Y}_{\reg}'$ comes from $\mc{Y}_{\reg}$ by
contracting maximal components not equal to the $v_0$ component, and thus every point of the special fiber
of $\mc{Y}_{\reg}$ lying on exactly one irreducible component still
does after applying the contraction map $\mc{Y}_{\reg} \to \mc{Y}_{\reg}'$.
\end{proof}

\begin{lemma}\label{Lminlength}
If $v$ is adjacent to $v_0$ in $V_{\reg}'$, then the inductive
length of $v$ is $1$.
\end{lemma}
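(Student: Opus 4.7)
The plan is to reduce to Corollary~\ref{Cminlength1} by showing that the passage from $V_{\reg}$ to $V_{\reg}'$ does not destroy the property that all predecessors of a valuation remain in the set.

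First, I would recall that by Lemma~\ref{Lpredclosed}, for each $v \in V_{\reg}$, all predecessors of $v$ lie in $V_{\reg}$. Next, I would observe that the set $S = V_{\reg} \setminus V_{\reg}'$ consists entirely of valuations that are \emph{maximal} in $V_{\reg}$: this is built into the removability criterion of Definition~\ref{Dremovability}. Consequently, if $v \in V_{\reg}'$ and $u$ is a predecessor of $v$ with $u \neq v$, then $u \prec v$ in $V_{\reg}$, so $u$ is not maximal in $V_{\reg}$, hence $u \notin S$, hence $u \in V_{\reg}'$. Thus $V_{\reg}'$ also has the property that every valuation has all its predecessors in the set.

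Now suppose $v$ is adjacent to $v_0$ in $V_{\reg}'$ and, for contradiction, that the inductive length of $v$ is $n \geq 2$. Write $v = [v_0,\, v_1(\phi_1) = \lambda_1,\, \ldots,\, v_n(\phi_n) = \lambda_n]$ minimally, and let $v_1 = [v_0,\, v_1(\phi_1) = \lambda_1]$ be its length-$1$ predecessor. By the preceding paragraph $v_1 \in V_{\reg}'$, and $v_1$ is distinct from both $v_0$ (which has inductive length $0$) and $v$ (which has inductive length $n \geq 2$). Since $v_0 \prec v_1 \prec v$, this contradicts the adjacency of $v_0$ and $v$ in $V_{\reg}'$. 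Therefore $v$ has inductive length $\leq 1$, and since $v \neq v_0$, the inductive length of $v$ is exactly $1$.

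No real obstacle arises here; the argument is essentially a direct transplant of Corollary~\ref{Cminlength1} to $V_{\reg}'$, with the only thing to check being that the predecessor-closure property passes from $V_{\reg}$ to $V_{\reg}'$, which follows immediately from the maximality clause in the definition of removability.
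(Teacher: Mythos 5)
Your proposal is correct and takes essentially the same approach as the paper. The paper's proof is a two-sentence citation of Lemma~\ref{Lpredclosed} and Corollary~\ref{Cminlength1}, combined with the observation that $V_{\reg}'$ is obtained from $V_{\reg}$ by removing maximal elements; your argument fills in the details of why removing maximal elements preserves predecessor-closure, and then re-derives the adjacency-to-length conclusion directly rather than citing Corollary~\ref{Cminlength1}, which is a harmless unpacking of the same idea.
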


\begin{proof}
  This is true for $V_{\reg}$ by Lemma~\ref{Lpredclosed} and
  Corollary~\ref{Cminlength1}.  Since $V_{\reg}'$ is constructed from
  $V_{\reg}$ by removing maximal elements, the lemma is true for
  $V_{\reg}'$ as well.
\end{proof}

\subsection{Contraction of minimal components}\label{Sminimal}

By Proposition~\ref{Pv1max}, the only valuation that is possibly removable from $V_{\reg}'$
is $v_0$.  In \S\ref{Sminimal}, we determine when $v_0$ is removable from
$V_{\reg}'$, as well as if, after removing $v_0$, more valuations
become removable.

\begin{lemma}\label{Lminimalv0}
Suppose $V$ is a regular normal crossings base for $X \to \proj^1_K$,
and that $V$ has a unique minimal valuation $v$ with inductive length
$\leq 1$.  Suppose further that
$v$ has at least two neighbors in $V$.  If $v$ is removable
from $V$, then $v$ has exactly two neighbors and $e_v = 1$. 
\end{lemma}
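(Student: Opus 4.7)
The plan is to use the two restrictions of Lemma~\ref{Lramificationnotremovable} (in cases (i) and (ii)) together with the ramification bound supplied by Corollary~\ref{CinftySNC} at the standard $\infty$-specialization.

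First I would knock off the count of neighbors. If the $v$-component of the $V$-model $\mc{Y}$ intersected at least three other components of its special fiber, then Lemma~\ref{Lramificationnotremovable}(i) would immediately rule out removability. Since by hypothesis $v$ has at least two neighbors, removability forces exactly two.

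Next, I would locate a forced geometrically ramified point. Because $v$ is the unique minimal valuation of $V$, Corollary~\ref{CAllspecializations}\ref{Cinftyspecialization} says the specialization $y_\infty$ of $D_\infty$ lies only on the $v$-component of $\mc{Y}$. Moreover, because $v$ has inductive length $\leq 1$, Corollary~\ref{CinftySNC} applies at $y_\infty$ and guarantees that the geometric ramification index of $\mc{X}\to\mc{Y}$ above $y_\infty$ is divisible by $e_v$.

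Finally, I would combine these two observations. Having established that the $v$-component meets exactly two other components, Lemma~\ref{Lramificationnotremovable}(ii) says that if there is any geometrically ramified point lying only on the $v$-component, then $v$ is not removable from $V$. Applying this to the point $y_\infty$, removability of $v$ forces the geometric ramification index above $y_\infty$ to equal $1$. Since $e_v$ divides this geometric ramification index, we conclude $e_v = 1$, as required. The main (very modest) obstacle is simply checking that Corollary~\ref{CinftySNC}, which was stated under the running inductive-length-$\leq 1$ hypothesis of \S\ref{Sinfty}, indeed applies here; this is immediate from the hypothesis on $v$ in the statement of the lemma.
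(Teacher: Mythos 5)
Your proof is correct and follows exactly the paper's argument: it rules out three or more neighbors via Lemma~\ref{Lramificationnotremovable}(i), locates the standard $\infty$-specialization on the $v$-component via Corollary~\ref{CAllspecializations}\ref{Cinftyspecialization}, invokes Corollary~\ref{CinftySNC} to bound the geometric ramification index there from below by $e_v$, and then applies Lemma~\ref{Lramificationnotremovable}(ii). The only cosmetic difference is phrasing (you pass to the contrapositive ``removability forces the index to be $1$'' rather than ``if $e_v>1$ then not removable''), but the substance is identical.
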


\begin{proof}
If $v$ has at least three neighbors, then by
Lemma~\ref{Lramificationnotremovable}(i) it is not removable.  So
assume $v$ has two neighbors.  If $\mc{Y}$ is the $V$-model of
$\proj^1_K$, then $D_{\infty}$ specializes only to the $v$-component
of $\mc{Y}$ by Corollary~\ref{CAllspecializations}\ref{Cinftyspecialization}.  Let $\mc{X}$ be the normalization of $\mc{Y}$ in
$K(X)$.  By Corollary~\ref{CinftySNC}, the standard
$\infty$-specialization to the $v$-component of $\mc{Y}$ is geometrically ramified
in $\mc{X} \to \mc{Y}$ of index divisible by $e_v$.  If $e_v > 1$, then
Lemma~\ref{Lramificationnotremovable}(ii) shows that $v$ is not removable.
\end{proof}

\begin{corollary}\label{Cthreenotremovable}
  If $v_0$ has at least three neighbors in $V_{\reg}'$, then
  $V_{\reg}'$ is the minimal normal crossings base for $X \to \proj^1_K$.
\end{corollary}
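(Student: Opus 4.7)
The plan is to combine Proposition~\ref{Pv1max} with Lemma~\ref{Lramificationnotremovable}(i). By Proposition~\ref{Pv1max}, every valuation $v \in V_{\reg}' \setminus \{v_0\}$ is non-removable from $V_{\reg}'$, so to conclude that $V_{\reg}'$ is minimal it suffices to show that $v_0$ itself is not removable from $V_{\reg}'$ under the given hypothesis.

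First I would verify that $V_{\reg}'$ is inf-closed, which is needed to invoke the converse direction of Proposition~\ref{P35}. Since $V_{\reg}$ is inf-closed by Lemma~\ref{Linfclosed2}, and $V_{\reg}'$ is obtained from $V_{\reg}$ by removing elements of the set $S$ of valuations satisfying the removability criterion of Definition~\ref{Dremovability}, and every element of $S$ is maximal in $V_{\reg}$, the set $V_{\reg}'$ remains inf-closed: for $v,w \in V_{\reg}'$, either $\inf(v,w) \in \{v,w\} \subseteq V_{\reg}'$, or $\inf(v,w) \prec v$, in which case $\inf(v,w)$ is non-maximal in $V_{\reg}$ and hence cannot lie in $S$.

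Next, by hypothesis $v_0$ has at least three neighbors in $V_{\reg}'$. Applying Proposition~\ref{P35} with $V^* = V_{\reg}'$ (which coincides with its subset of valuations, since $V_{\reg}'$ contains no infinite pseudovaluations) and using the inf-closedness just established, the $v_0$-component of the special fiber of the $V_{\reg}'$-model $\mc{Y}_{\reg}'$ intersects at least three other irreducible components of that special fiber. Lemma~\ref{Lramificationnotremovable}(i) then directly implies that $v_0$ is not removable from $V_{\reg}'$, completing the proof.

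There is essentially no obstacle here; the content of the corollary is a short assembly of previously established facts. The only minor point requiring care is the verification of inf-closedness of $V_{\reg}'$, which is needed for the converse direction of Proposition~\ref{P35} (the hypothesis ``at least three neighbors'' is an internal combinatorial statement about the poset $V_{\reg}'$, and must be translated into the geometric statement about three components of $\ol{Y}_{\reg}'$ meeting the $v_0$-component before Lemma~\ref{Lramificationnotremovable}(i) applies).
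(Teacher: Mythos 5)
Your proof is correct and takes essentially the same approach as the paper. The paper's own proof simply cites Proposition~\ref{Pv1max} and Lemma~\ref{Lminimalv0}; the latter lemma internally reduces the ``at least three neighbors'' case to Lemma~\ref{Lramificationnotremovable}(i), which is exactly what you do after unfolding it. One small remark: your verification of inf-closedness of $V_{\reg}'$ is unnecessary here, because you are only using the ``if'' direction of Proposition~\ref{P35} (neighbors $\Rightarrow$ intersecting components), which by the footnote to that proposition holds without the inf-closedness hypothesis. Inf-closedness is only needed for the converse direction.
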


\begin{proof}
By Proposition~\ref{Pv1max}, the only valuation that is possibly
removable from $V_{\reg}'$ is the unique minimal valuation $v_0$.  By Lemma~\ref{Lminimalv0}, $v_0$ is in fact not removable.
\end{proof}
 
For the remainder of \S\ref{Sminimal}, it will be helpful to
   define a subset $S$ of $V_{\reg}'$ as follows:

\begin{defn}\label{DS}
 The set $S \subseteq V_{\reg}'$ consists of those
 valuations $v$ with inductive length $\leq 1$ satisfying condition (i), and either condition (ii),
 (iii), or (iv) of Proposition~\ref{PinftySNC}.
\end{defn}

\begin{remark}
 Note that $v_0$ satisfies condition
 (i) of Proposition~\ref{PinftySNC}, and our
 preliminary assumptions in \S\ref{Sreductions} and
 Lemma~\ref{Linftyfactorization} show that $v_0$ satisfies condition
 (ii) as well.  So $v_0 \in S$, and our main dichotomy will be between
 the cases $S = \{v_0\}$
 (Lemma~\ref{L01}, Proposition~\ref{Pcontractiontoinftycrossing}) and $S \supsetneq \{v_0\}$
 (Proposition~\ref{Pminimalcontraction}).
\end{remark}

\begin{lemma}\label{L01}
  Suppose $S = \{v_0\}$ as in Definition~\ref{DS}.  If $v_0$ has at
  most $1$ neighbor in $V_{\reg}'$, then $V_{\reg}'$ is the minimal normal crossings base for $X \to \proj^1_K$.
\end{lemma}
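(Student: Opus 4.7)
The plan is to show that the unique valuation in $V_{\reg}'$ which Proposition~\ref{Pv1max} leaves as a candidate for removability, namely $v_0$, is in fact not removable under the hypothesis that $v_0$ has at most one neighbor and $S = \{v_0\}$. I would split into two subcases.

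First, suppose $v_0$ has no neighbors. Since $V_{\reg}'$ is a rooted tree with root $v_0$ by Lemma~\ref{Lhorizspec}, this forces $V_{\reg}' = \{v_0\}$. Then $V_{\reg}' \setminus \{v_0\} = \emptyset$ fails the nonemptiness requirement of Definition~\ref{Dregularbase}(i), so $v_0$ is not removable. This case is immediate.

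The substantive case is when $v_0$ has a unique neighbor $w$. First I would observe that, because $V_{\reg}'$ is rooted at $v_0$ (Lemma~\ref{Lhorizspec}), $w$ is then the unique minimal valuation of $V_{\reg}' \setminus \{v_0\}$, and by Lemma~\ref{Lminlength} its inductive length is $1$. Passing to the model $\mc{Y}''$ of $\proj^1_K$ corresponding to $V_{\reg}' \setminus \{v_0\}$, Corollary~\ref{CAllspecializations}\ref{Cinftyspecialization} tells us that $D_{\infty}$ specializes only to the $w$-component of $\mc{Y}''$, at a single point $y$ which is the standard $\infty$-specialization on $\mc{Y}''$. Moreover, $\mc{Y}''$ agrees with $\mc{Y}_{\reg}'$ away from $y$, so the only new location at which regularity with normal crossings could fail is above $y$.

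Now I would apply Proposition~\ref{PinftySNC} locally at $y$: since $w$ has inductive length $\leq 1$, that proposition says the normalization of $\mc{Y}''$ in $K(X)$ is regular with normal crossings at any point above $y$ if and only if $w$ satisfies condition (i) together with one of (ii), (iii), (iv) of Proposition~\ref{PinftySNC}, i.e., if and only if $w \in S$. But by assumption $S = \{v_0\}$ and $w \neq v_0$, so $w \notin S$. Thus $V_{\reg}' \setminus \{v_0\}$ fails to be a regular normal crossings base, so $v_0$ is not removable. Combining with Proposition~\ref{Pv1max}, no valuation of $V_{\reg}'$ is removable, which gives minimality.

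The only minor subtlety is justifying that Proposition~\ref{PinftySNC} applies in the second subcase even though its statement is phrased for the $w$-model rather than for $\mc{Y}''$; this is resolved by the local reduction at the beginning of \S\ref{Sinfty}, which notes that every statement there is local at the standard $\infty$-specialization. Beyond that, the argument is a direct bookkeeping exercise over the results already assembled.
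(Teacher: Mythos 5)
Your proof is correct and takes essentially the same approach as the paper's: both hinge on Lemma~\ref{Lminlength}, Corollary~\ref{CAllspecializations}(i), and Proposition~\ref{PinftySNC}. The only cosmetic difference is that the paper argues by contradiction (``if $v_0$ were removable, then $w$ would lie in $S$'') whereas you argue the contrapositive directly (``since $w \notin S$, the putative base fails at the $\infty$-specialization''); these are logically identical.
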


\begin{proof}
  If $v_0$ has no neighbors, then $V_{\reg}' = \{v_0\}$ and
  $v_0$ is not removable.  So suppose $v_0$ has $1$ neighbor in $V_{\reg}'$, say
  $w$.  Then $w$ has inductive length $1$ by
  Lemma~\ref{Lminlength}.  If $v_0$ is removable, then $w$ is the unique minimal
  valuation of $V := V_{\reg}' \setminus \{v_0\}$, and $V$ is a
  regular normal crossings base.  By
  Corollary~\ref{CAllspecializations}(i), the $V$-model has a
  standard $\infty$-specialization on the $w$-component.  In particular, all points above the
  standard $\infty$-specialization are
  regular with normal crossings.  By Proposition~\ref{PinftySNC}, $w$
  satisfies condition (i), as well as one of conditions (ii), (iii),
  or (iv) of that proposition.  So $w \in S$, which contradicts $S = \{v_0\}$.
  Thus $\{v_0\}$ is not removable from $V_{\reg}'$.  By
  Proposition~\ref{Pv1max}, $V_{\reg}'$ is the minimal regular normal
  crossings base.
\end{proof}

Taking into account Lemma~\ref{L01} and Corollary~\ref{Cthreenotremovable}, if $S =
\{v_0\}$, then the only case in which $v_0$ can be removable from
$V_{\reg}'$ is when $v_0$ has exactly $2$ neighbors.

\begin{lemma}\label{Lifvoremovable} Suppose $v_0$ is removable from $V'_{\reg}$ and has exactly two neighbors $w,w'$. Let $y,y'$ be the closed points where the $v_0$-component intersects the two neighboring components.
\begin{enumerate}[\upshape (i)]
%\item The points $y,y'$  are geometrically ramified in $\mc{X}'_{\reg} \rightarrow \mc{Y}'_{\reg}$.
%have exactly one preimage in the normalization $\mc{X}'_{\reg}$, and in particular 
\item None of the $D_{f_i}$ specialize only to the $v_0$-component in $\mc{Y}_{\reg}'$. In particular, every $D_{f_i}$ specializes to a $v$-component, where either $w \preceq v$ or $w' \preceq v$, or equivalently either $w \preceq v_{f_i}^{\infty}$ or $w' \preceq v_{f_i}^{\infty}$.
%if $f_i$ is an irreducible factor of $f$, then either $w \prec v_{f_i}^{\infty}$ or $w' \prec v_{f_i}^{\infty}$. In particular,
 \item $w$ and $w'$ are of the form $w = [v_0, v_1(t - c) = \mu]
$ and $w' = [v_0, v_1'(t - c') = \mu']$, where $v_K(c - c') = 0$ and $\mu$ and $\mu'$ satisfy the condition of
  Proposition~\ref{Pinftycrossingregular} (where $a$ and $\delta'$ from
  Proposition~\ref{Pinftycrossingregular} are defined at the beginning
  of \S\ref{Sinftycrossing}).
 %The valuations $w$ and $w'$ have length $1$, and  $f$ admits a factorization $f = \pi^a j j'$ satisfying the criterion as in Proposition~\ref{Pinftycrossingregular}.
\end{enumerate}
%Furthermore, there are polynomials $j,j'$ such that $f = \pi^a j j'$ where ...
%Retain the notation of Proposition~\ref{Pcontractiontoinftycrossing}. Suppose a $(v,v')$ does not exist. Then
 \end{lemma}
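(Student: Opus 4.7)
I plan to prove (i) by contradiction via Lemma~\ref{Lramificationnotremovable}(ii), and then to derive (ii) from (i) using the inf-closedness of $V_{\reg}'$ together with Proposition~\ref{Pinftycrossingregular}.

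For (i), suppose some $D_{f_i}$ specializes only to the $v_0$-component of $\mc{Y}_{\reg}'$, meeting it at a point $p$. Absorbing a $d$-th power into $f$ we may assume $d\nmid a_i$, so $D_{f_i}$ is a horizontal branch divisor of ramification index $d/\gcd(d,a_i)>1$. By Lemma~\ref{Lhorizspec}, $p$ lies only on the $v_0$-component. Distinct $D_{f_j}$'s are disjoint on $\mc{Y}_{\reg}$ by Lemma~\ref{LY0structure}(v), and the only components contracted when passing from $\mc{Y}_{\reg}$ to $\mc{Y}_{\reg}'$ are maximal ones (by Proposition~\ref{Pneccondrem}); distinct children of $v_0$ in $V_{\reg}$ contract to distinct points on the $v_0$-component, so no other branch point of $X\to\proj^1_K$ specializes to $p$. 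Theorem~\ref{TX0regular} gives regularity of $\mc{X}_{\reg}'$ above $p$, so Corollary~\ref{Cregularbranchindex} shows that $p$ is geometrically ramified of index $d/\gcd(d,a_i)>1$ in $\mc{X}_{\reg}'\to\mc{Y}_{\reg}'$. Since the $v_0$-component meets exactly the two other components corresponding to $w$ and $w'$, Lemma~\ref{Lramificationnotremovable}(ii) contradicts the removability of $v_0$. The ``in particular'' clause follows since by Proposition~\ref{Pbranchspecialization} the $v$-component where $D_{f_i}$ specializes is characterized as the unique maximal $v\in V_{\reg}'$ bounded above by $v_{f_i}^{\infty}$, and the rooted-tree structure of $V_{\reg}'$ forces $w\preceq v$ or $w'\preceq v$ whenever $v\neq v_0$; the equivalence with $w\preceq v_{f_i}^{\infty}$ (resp.\ $w'$) is immediate because the set of elements $\preceq v_{f_i}^{\infty}$ is totally ordered.

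For (ii), Lemma~\ref{Lminlength} gives that $w$ and $w'$ each have inductive length $1$, so $w=[v_0,v_1(t-c)=\mu]$ and $w'=[v_0,v_1(t-c')=\mu']$ for some $c,c'\in\mc{O}_K$ and $\mu,\mu'>0$. To show $v_K(c-c')=0$, suppose instead $v_K(c-c')\geq 1$, and set $\lambda\colonequals\min(\mu,\mu',v_K(c-c'))>0$. Then $u\colonequals[v_0,v_1(t-c)=\lambda]$ equals $[v_0,v_1(t-c')=\lambda]$ (since $v_K(c-c')\geq\lambda$), satisfies $u\preceq w$ and $u\preceq w'$ (since $\lambda\leq\mu,\mu'$), and $u\succ v_0$ (since $\lambda>0$). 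Inf-closedness of $V_{\reg}$ (Lemma~\ref{Linfclosed2}) transfers to $V_{\reg}'$: the infimum of two incomparable elements of $V_{\reg}'$ lies strictly below each in $V_{\reg}$, hence is not maximal in $V_{\reg}$ and so cannot lie in $S$. Thus $\inf(w,w')\in V_{\reg}'$ with $\inf(w,w')\succeq u\succ v_0$, contradicting that $w,w'$ are neighbors of $v_0$. Having $v_K(c-c')=0$, part (i) lets us write $f=\pi_K^a jj'$ in the form required by the standing setup of \S\ref{Sinftycrossing}. Finally, since $v_0$ is removable, the normalization of the $V_{\reg}'\setminus\{v_0\}$-model in $K(X)$ is regular with normal crossings above the $\infty$-crossing, and Proposition~\ref{Pinftycrossingregular} then yields the $\widetilde{N}$-path condition on $\mu,\mu'$.

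The most delicate step will be verifying the hypotheses of Corollary~\ref{Cregularbranchindex} in (i) — specifically, tracing through how the contractions defining $\mc{Y}_{\reg}'$ from $\mc{Y}_{\reg}$ might in principle glue two specializations onto $p$. The rooted-tree structure of $V_{\reg}$, combined with the fact that only maximal-in-$V_{\reg}$ valuations are removed (Proposition~\ref{Pneccondrem}), ensures the required distinctness of branch specializations on the $v_0$-component of $\mc{Y}_{\reg}'$.
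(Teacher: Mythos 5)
Your outline follows the same route as the paper: part (i) via Corollary~\ref{Cregularbranchindex} and Lemma~\ref{Lramificationnotremovable}(ii), part (ii) via Lemma~\ref{Lminlength}, inf-closedness, and Proposition~\ref{Pinftycrossingregular}. One genuine gap remains in (i): Corollary~\ref{Cregularbranchindex} is stated ``in the situation of Proposition~\ref{P:RegandNormalize}'', which requires $\mc{Z}$ --- here $\Spec\hat{\mc{O}}_{\mc{Y}_{\reg}',p}$ --- to be regular with normal crossings. You supply regularity of $\mc{X}_{\reg}'$ above $p$ (the precise reference is Proposition~\ref{Pv1max}; Theorem~\ref{TX0regular} covers only $\mc{X}_{\reg}$) and identify the ``no other branch point specializes to $p$'' hypothesis as the delicate one, but you never justify that $p$ is a regular point of $\mc{Y}_{\reg}'$. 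That is exactly the step the paper singles out, citing \cite[Lemma~7.3(iii)]{ObusWewers}: every point of the $v_0$-component lying on no other component and distinct from the $\infty$-specialization is regular. Equivalently, by Corollary~\ref{CAllspecializations}\ref{Cinftyspecialization} the contraction $\mc{Y}_{\reg}'\to\proj^1_{\mc{O}_K}$ is an isomorphism away from the preimage of the $\infty$-specialization, and $p$ lies in that locus since $D_{f_i}$ does not specialize to $\infty$ on $\proj^1_{\mc{O}_K}$ by the normalization in \S\ref{Sreductions}. Once that is added, your (i) is complete. Your treatment of (ii) is essentially the paper's proof arranged as a contradiction; the parenthetical that removing maximal elements preserves inf-closedness is what makes $V_{\reg}'$ inf-closed and is indeed the same mechanism used at the end of the proof of Lemma~\ref{Linfclosed2}.
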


 \begin{proof} Since any point on the $v_0$-component that is not $y$ or $y'$ is automatically regular by \cite[Lemma 7.3(iii)]{ObusWewers}, if $D_{f_i}$ specializes only to the $v_0$ component, then this point is regular, and hence geometrically ramified by Corollary~\ref{Cregularbranchindex}. Therefore $v_0$ is not removable by Lemma~\ref{Lramificationnotremovable}(ii).
Since $v_0$ is the unique minimal valuation of $V'_{\reg}$, it follows that $w$ and $w'$ are the minimal valuations of $V'_{\reg} \setminus \{v_0\}$, and every valuation $v$ in $V'_{\reg}$ satisfies either $w \preceq v$ or $w' \preceq v$.  This proves (i).

By Lemma~\ref{Lminlength}, $w$ and $w'$ have inductive length
$1$.  By Lemma~\ref{Lfdegree}(i), they are of the form $w = [v_0, v_1(t - c) = \mu]$ and $w' = [v_0,
v_1'(t - c') = \mu']$ with $\mu, \mu' > 0$.
Since $V_{\reg}'$ is inf-closed, $\inf(w, w') = v_0$.  In particular,
$w$ and $w'$ are non-comparable.  Since $w(t - c') = w((t - c) + c -
c') = \min(\mu, v_K(c - c'))$ and similarly $w'(t - c) = \min(\mu',
v_K(c-c'))$, one computes $\inf(w, w') = [v_0,\, v_1(t - c) =
\min(\mu, \mu', v_K(c - c'))]$.  The fact that $\inf(w, w') = v_0$ implies that $v_K(c - c')
= 0$.  Combined with part~(i), we get that $f$ admits a factorization $f = \pi^a j j'$ as in the beginning of \S\ref{Sinftycrossing}.

Since $v_0$ is removable, the preimages in the normalization of the intersection of the $w$- and $w'$-components in the $V'_{\reg} \setminus \{v_0\}$-model are regular, which implies that $\mu$ and $\mu'$ satisfy the condition of
  Proposition~\ref{Pinftycrossingregular}.
\end{proof}

For the proposition below, we define a partial ordering on
\emph{ordered pairs} of Mac Lane pseudovaluations by $(v, v') \preceq
(w,w')$ if and only if $v \preceq w$ and $v' \preceq w'$.

\begin{prop}\label{Pcontractiontoinftycrossing}
Suppose that $S = \{v_0\}$ as in Definition~\ref{DS}.  Suppose further that $v_0$ has exactly two neighbors $w$ and $w'$ in $V_{\reg}'$.  Let $(v, v')$
be a maximal ordered pair in $V_{\reg}'$ such that
\begin{enumerate}[\upshape (i)]
\item $(w,w') \preceq (v, v')$,
\item $v$ and $v'$ are of the form $v = [v_0, v_1(t - c) = \mu]
$ and $v' = [v_0, v_1'(t - c') = \mu']$, where $v_K(c - c') = 0$, and
for all $i$, 
either $v \prec v_{f_i}^{\infty}$ or $v' \prec v_{f_i}^{\infty}$.
\item $\mu$ and $\mu'$ satisfy the condition of
  Proposition~\ref{Pinftycrossingregular} (where $a$ and $\delta'$ from
  Proposition~\ref{Pinftycrossingregular} are defined at the beginning
  of \S\ref{Sinftycrossing}).
  
  \end{enumerate}
If $V_{\min}$ is the set of all valuations $\nu$ in $V_{\reg}'$ such
that $\nu \succeq v$ or $\nu \succeq v'$, then $V_{\min}$ is
the minimal regular normal crossings base.

If no ordered pair $(v, v')$ as above exists, then $V_{\min} := V_{\reg}'$ is the
minimal normal crossings base.
\end{prop}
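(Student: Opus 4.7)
The plan is to separately establish (a) $V_{\min}$ is a regular normal crossings base, and (b) no valuation in $V_{\min}$ is removable; the degenerate case where no pair $(v,v')$ exists is handled at the end. For (a), I observe that $V_{\min}$ is obtained from $V_{\reg}'$ by discarding $v_0$ together with every valuation $u \in V_{\reg}'$ satisfying both $u \not\succeq v$ and $u \not\succeq v'$, which corresponds geometrically to contracting the associated irreducible components. In the resulting model, $v$ and $v'$ are the only minimal valuations, so Corollary~\ref{CAllspecializations}\ref{Cinftyspectwominimal} implies that $D_\infty$ specializes to the $\infty$-crossing of their components. Condition (ii) supplies the required factorization $f = \pi_K^a j j'$ of \S\ref{Sinftycrossing}, and combined with condition (iii), Proposition~\ref{Pinftycrossingregular} shows that the normalization is regular with normal crossings above this $\infty$-crossing. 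All other closed points of the $V_{\min}$-model coincide with closed points of $\mc{Y}_{\reg}'$, so regularity is inherited from Proposition~\ref{Pv1max}.

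For (b), a straightforward extension of Proposition~\ref{Pv1max} shows that any non-minimal valuation of $V_{\min}$ has the same local structure (neighbors, multiplicities, ramification) in $V_{\min}$ as in $V_{\reg}'$, hence is not removable from $V_{\min}$. Thus the only candidates for removal are the minimal valuations $v$ and $v'$; by symmetry, suppose we try to remove $v$. Let $v^{(1)}, \ldots, v^{(k)}$ be the immediate successors of $v$ in $V_{\min}$. Contracting the $v$-component would cause the $v'$-component and all $v^{(j)}$-components to meet at a single image point, so if $k \geq 2$ we get $\geq 3$ components through one point, violating normal crossings. If $k = 0$, then $V_{\min} \setminus \{v\}$ has unique minimum $v'$, and by Corollary~\ref{CAllspecializations}\ref{Cinftyspecialization} $D_\infty$ specializes to the standard $\infty$-specialization on the $v'$-component; regularity then forces $v' \in S$ by Proposition~\ref{PinftySNC}, contradicting $S = \{v_0\}$ together with $v' \neq v_0$. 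In the remaining case $k = 1$, set $v^* := v^{(1)}$: the contracted model has two minimal valuations $v^*$ and $v'$. I would show $v^*$ has inductive length $1$ of the form $[v_0, v_1(t - c) = \mu^*]$ with $\mu^* > \mu$; then Proposition~\ref{Pinftycrossingregular} applied to $(v^*, v')$ yields condition (iii), while the $f_i$ clause of (ii) transfers from $(v, v')$ to $(v^*, v')$ because the unique path in $V_{\reg}'$ from $v$ to any $v_{f_i}^\infty$ with $v \prec v_{f_i}^\infty$ must pass through $v^*$. Hence $(v^*, v') \succ (v, v')$ satisfies (i)-(iii), contradicting maximality.

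The main obstacle is justifying the inductive-length-$1$ claim for $v^*$ in the case $k = 1$. A priori, $v^*$ could be a length-$2$ augmentation of $v$ via a key polynomial of degree $> 1$---for instance, if $v^*$ lies on a branch-point tail to some $v_{f_i}$ with $\deg(f_i) > 1$. I expect this case to be ruled out by a direct local calculation: after applying the change of variables of Proposition~\ref{Pchangeofvariable} to convert the $\infty$-crossing into a standard crossing in a new coordinate, the length-$2$ structure of $v^*$ would force a proper key polynomial of degree $> 1$ to specialize to the crossing, creating a horizontal branch divisor incompatible with the regularity and normal crossings criteria of Propositions~\ref{Pstandardcrossingregular} and \ref{Pstandardendpointregular}. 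Finally, if no pair $(v, v')$ satisfying (i)-(iii) exists, I would show $V_{\reg}'$ is itself minimal: by Corollary~\ref{Cthreenotremovable} and Lemma~\ref{L01} the only nontrivial case is $v_0$ having exactly two neighbors $w, w'$, and if $v_0$ were removable, Lemma~\ref{Lifvoremovable} would supply precisely a pair $(w, w')$ satisfying (i)-(iii) (with $(w, w') \preceq (w, w')$ trivially), contradicting the non-existence assumption.
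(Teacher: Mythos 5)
Your overall architecture is sound and parallels the paper's proof closely: part (a) matches the paper's argument via Corollary~\ref{CAllspecializations}\ref{Cinftyspectwominimal} and Proposition~\ref{Pinftycrossingregular}, your observation that non-minimal valuations of $V_{\min}$ inherit non-removability from Proposition~\ref{Pv1max} is exactly what the paper does, and your case split on the number $k$ of successors of $v$ is essentially the content of the paper's Lemma~\ref{Lvvstructure} ($k \geq 2$ via Lemma~\ref{Lramificationnotremovable}(i), $k = 0$ via the $S = \{v_0\}$ contradiction). Your handling of the ``no $(v,v')$ exists'' case via Lemma~\ref{Lifvoremovable} is also correct.

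The genuine gap is in the $k=1$ case, and the fix you sketch will not close it. You want to \emph{rule out} the possibility that $v^*$ has inductive length $2$ by a change-of-variables computation showing that a degree-$>1$ key polynomial ``specializes to the crossing'' and causes a regularity failure. This cannot work as stated: when $v^*$ has inductive length $2$, the degree-$>1$ key polynomial $\phi_2$ over $v$ is generically \emph{not} an irreducible factor of $f$, so it produces no horizontal branch divisor, and the change of variables in Proposition~\ref{Pchangeofvariable} is designed for an $\infty$-crossing between two \emph{inductive length $1$} valuations and does not directly apply when one of the two minimal valuations has length $2$. In fact $v^*$ having inductive length $2$ is a genuinely occurring configuration; what has to be shown is that \emph{under that configuration, $v$ is not removable}. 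The correct mechanism (which is what the paper uses) is: if $v^*$ has inductive length $2$, then Corollary~\ref{Cstandardendpointguaranteed} gives a finite cusp on the $v$-component of $\mc{Y}_{\min}$; Lemma~\ref{Lstandardendpointunique} forces $e_v > 1$; Proposition~\ref{Pbranchpointramindex} (with $a = 0$) then shows the finite cusp is geometrically ramified in $\mc{X}_{\min} \to \mc{Y}_{\min}$; and since the $v$-component already meets two other components (the $v'$-component at the $\infty$-crossing and the $v^*$-component at a standard crossing), Lemma~\ref{Lramificationnotremovable}(ii) shows $v$ is not removable, contradicting your assumption. Only once this is done may you assume $v^*$ has inductive length $1$.

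There is also a subtle secondary issue in your $f_i$-transfer argument: you claim the path from $v$ to $v_{f_i}^\infty$ must pass through $v^*$, but it is possible that $v$ itself is the maximal element of $V_{\min}$ bounded above by $v_{f_i}^\infty$, in which case $D_{f_i}$ specializes to a point lying only on the $v$-component and $v^* \not\prec v_{f_i}^\infty$. In this subcase the transfer fails, but the situation is again salvaged by the same mechanism: by Lemma~\ref{Lnonspecialize} the point is not a crossing, so by Corollary~\ref{Cregularbranchindex} it is geometrically ramified, and Lemma~\ref{Lramificationnotremovable}(ii) forces $v$ to be non-removable, a contradiction. You need to incorporate both of these ``contradiction with removability'' arguments before the contradiction with maximality of $(v,v')$ can be invoked.
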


Before we prove this proposition, we prove a lemma about the structure of $V_{\min}$.
\begin{lemma}\label{Lvvstructure}
  Retain the notation of Proposition~\ref{Pcontractiontoinftycrossing}
  and the assumption that $S = \{v_0\}$. Suppose a $(v,v')$ as in the proposition exists. Let $\mc{Y}_{\min}$ be the $V_{\min}$-model.
If $v$ (resp.\ $v'$) is removable from $V_{\min}$, then $v$ (resp.\ $v'$) has a unique
   neighbor in $V_{\min}$.
 \end{lemma}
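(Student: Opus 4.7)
The plan is to bound the number of neighbors of $v$ in $V_{\min}$ both above by Lemma~\ref{Lramificationnotremovable}(i) and below by Proposition~\ref{PinftySNC}; the argument for $v'$ is symmetric. I will first enumerate the components of the special fiber of $\mc{Y}_{\min}$ meeting the $v$-component. Since $v$ and $v'$ are the two minimal valuations of $V_{\min}$, Corollary~\ref{CAllspecializations}\ref{Cinftyspectwominimal} guarantees that the $v$- and $v'$-components meet at the $\infty$-specialization of $\mc{Y}_{\min}$; note that $v$ and $v'$ are not adjacent in $V_{\min}$, because they are incomparable. To argue that every other component meeting the $v$-component corresponds to a neighbor of $v$ in $V_{\min}$ (necessarily $\succ v$, since $v$ is minimal), I would augment $V_{\min}$ to the inf-closed set $V_{\min} \cup \{v_0\}$, apply Proposition~\ref{P35} there to enumerate all intersections of the $v$-component in the $(V_{\min} \cup \{v_0\})$-model, and then observe that contracting the $v_0$-component introduces only the single new intersection of the $v$- and $v'$-components. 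Thus the $v$-component of $\mc{Y}_{\min}$ meets exactly $1 + n$ other components, where $n$ is the number of neighbors of $v$ in $V_{\min}$.

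If $n \geq 2$, then the $v$-component meets at least three other components of $\mc{Y}_{\min}$, and Lemma~\ref{Lramificationnotremovable}(i) forces $v$ to be non-removable, contradicting the hypothesis. To rule out the remaining case $n = 0$, I would argue as follows. Since $V_{\reg}'$ inherits predecessor-closure from $V_{\reg}$ (Lemma~\ref{Lpredclosed} is preserved under the removal of maximal elements), the assumption $n = 0$ forces $v$ to have no $\succ$-descendant in $V_{\reg}'$ at all, because any minimal such descendant would necessarily be adjacent to $v$ in $V_{\min}$. Hence $V_{\min} \setminus \{v\} = \{\mu \in V_{\reg}' : \mu \succeq v'\}$, which has $v'$ as its unique minimal valuation. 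By Corollary~\ref{CAllspecializations}\ref{Cinftyspecialization}, $D_{\infty}$ then specializes to a standard $\infty$-specialization lying only on the $v'$-component of the $(V_{\min} \setminus \{v\})$-model. If $v$ were removable, $V_{\min} \setminus \{v\}$ would be a regular normal crossings base, so Proposition~\ref{PinftySNC} would force $v' \in S$; but $v'$ has inductive length $1$ and is distinct from $v_0$, whereas $S = \{v_0\}$, a contradiction. Hence $n = 1$.

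I expect the main subtlety to lie in the opening count: one must confirm carefully that the failure of $V_{\min}$ to be inf-closed (namely, the absence of $v_0$) does not create additional intersections of the $v$-component beyond those arising from $\succ$-neighbors in $V_{\min}$ together with the $v'$-component at the $\infty$-specialization.
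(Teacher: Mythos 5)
Your proposal is correct and takes essentially the same route as the paper: the lower bound comes from observing that if the removable minimal valuation has no $\succ$-neighbor, then deleting it leaves a regular normal crossings base whose unique minimal valuation would be forced into $S$ by Proposition~\ref{PinftySNC}, contradicting $S = \{v_0\}$; the upper bound comes from Lemma~\ref{Lramificationnotremovable}(i) once one notes (via Corollary~\ref{CAllspecializations}\ref{Cinftyspectwominimal} and Proposition~\ref{P35}) that the $v$-component already meets the $v'$-component in addition to the components of any $\succ$-neighbors. Your extra detour through the inf-closed set $V_{\min} \cup \{v_0\}$ to pin down the exact intersection count is not needed (a lower bound on the number of intersecting components suffices for Lemma~\ref{Lramificationnotremovable}(i)), but it is harmless; incidentally, the paper's printed proof has a $v$/$v'$ mix-up in its opening sentences that your write-up avoids.
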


 \begin{proof}
Clearly it suffices to prove the lemma for $v$. If $v$ is removable
form $V_{\min}$, 
then $v'$ must have a neighbor in $V_{\min}$, because if not,
$V_{\min} \setminus \{v'\}$ would be a regular normal crossings base
with unique minimal valuation $v$.  By
Corollary~\ref{CAllspecializations}(i), the $v$-component of the
corresponding model would contain the standard
$\infty$-specialization, and Proposition~\ref{PinftySNC} would show
that $v$ satisfies condition (i) and one of conditions (ii), (iii), or
(iv) of that Proposition.  Thus we would have $v \in S$, which
contradicts the assumption that $S = \{v_0\}$.  So $v'$ has a neighbor
$v''$ in $V_{\min}$.  Furthermore, $v''$ is the unique such neighbor
of $v'$ by Lemma~\ref{Lramificationnotremovable}(i)).
\end{proof}

\begin{proof}[Proof of Proposition~\ref{Pcontractiontoinftycrossing}]
Let $\mc{Y}_{\min}$ be the
$V_{\min}$-model.  Suppose an ordered pair $(v,v')$ as in the
proposition exists.  Since $v(t-c') = v(t-c+c-c') = v_K(c-c') = 0$
(and similarly $v'(t-c) = 0$), we have $\inf(v, v') = v_0$, so $v$ and $v'$ are not comparable, and hence by construction are the two
minimal elements of $V_{\min}$. In particular, $v_0 \notin V_{\min}$. Furthermore, the $v$-component and
$v'$-component of $\mc{Y}_{\min}$ meet at the $\infty$-crossing $z$ in
$\mc{Y}_{\min}$ by
Corollary~\ref{CAllspecializations}\ref{Cinftyspectwominimal}, and by
the same corollary, the contraction morphism $V_{\reg}' \to V_{\min}$ is an isomorphism away from the preimage of $z$.

If $\mc{X}_{\min}$ is the normalization of
$\mc{Y}_{\min}$ in $K(X)$, then all points of $\mc{X}_{\min}$ above
$z$ are regular with normal crossings by
Proposition~\ref{Pinftycrossingregular}.  All points of
$\mc{Y}_{\min} \setminus \{z\}$ have
neighborhoods isomorphic to neighborhoods of
$\mc{Y}_{\reg}'$, and thus all points of $\mc{X}_{\min}$ lying above
$\mc{Y}_{\min} \setminus \{z\}$ are regular, and the special fiber has
normal crossings.  So $\mc{X}_{\min}$ is a regular normal crossings
model.  This is clearly also true when no $(v, v')$ exists.

It remains to show that $\mc{X}_{\min}$ is the \emph{minimal} regular
model with normal crossings.  Proposition~\ref{Pv1max} shows that no valuation in $V_{\reg}'$ is removable
other than possibly $v_0$. Suppose no $(v, v')$ exists.  Then $v_0$ is not removable by Lemma~\ref{Lifvoremovable}, and thus $V_{\reg}'$ has no removable valuations, proving $V_{\min} = V_{\reg}'$. So assume that $v_0$ is removable and let $(v,v')$ be as in the proposition (whose existence is guaranteed by Lemma~\ref{Lifvoremovable}).
%Then $v_0$ cannot be removable, since if it were, then Proposition~\ref{Pinftycrossingregular} would show that $(w, w')$ satisfies the conditions of the proposition, a contradiction. \padma{Need to show that $(w,w')$ satisfy the assumptions in the setup as in \S\ref{Sinftycrossing} -- need to spell out that $w$ and $w'$ have length $1$, that $f$ has the right type of factorization and that the invariants satsify the necessary path criterion.}

Now, if $w \in V_{\min} \setminus \{v, v'\}$, then $z$ is not in the
$w$-component of $\mc{Y}_{\min}$, which means that the contraction
$\mc{Y}_{\reg}' \to \mc{Y}_{\min}$ is an isomorphism on the preimage
of the
$w$-component.  Since the $w$-component is not removable from
$V_{\reg}'$ by Proposition~\ref{Pv1max}, it is thus not removable from $V_{\min}$.  So the only valuations that can
possibly be removable from $V_{\min}$ are $v$ and $v'$.

Suppose without loss of generality that $v'$ is removable from
$V_{\min}$.  By Lemma~\ref{Lvvstructure}, $v'$ has a unique neighbor
$v''$ in $V_{\min}$.  By definition of $v'$, the ordered pair $(v, v'')$
does not satisfy the criteria of the proposition.  By construction,
$(v, v'')$ satisfies (i).  If
$(v, v'')$ does not satify (ii), $v''$ has inductive length $2$, so
$\mc{Y}_{\min}$ 
has a finite cusp on the $v'$-component by Corollary~\ref{Cstandardendpointguaranteed}.
 By Lemma~\ref{Lstandardendpointunique}, $e_{v'} > 1$, so by 
Proposition~\ref{Pbranchpointramindex} (with $a = 0$ in that
proposition), the finite cusp on the $v'$-component is geometrically ramified in $X \to \proj^1_K$.  By Lemma~\ref{Lramificationnotremovable}(ii), $v'$ is not
removable from $V_{\min}$, which is a contradiction.  So $(v, v'')$
satisfies (ii).  Lastly, if $(v, v'')$
does not satisfy (iii), then
Proposition~\ref{Pinftycrossingregular} shows that after contracting all
components of $\mc{X}_{\min}$ above the $v'$-component of
$\mc{Y}_{\min}$, the resulting model is no longer regular with normal
crossings above the intersection of the $v$ and $v''$-components.  We conclude that $v'$ is not removable, proving the proposition.
%\andrew{Complete the proof by
%  showing that $v'$ in fact has another neighbor --- the idea is that
%  $v \in V_3$ so there is a $d$-tail coming out from it.  Need to show
% that the $d$-tail is non-trivial.  Probably this part should be a
% lemma when $V_{\reg}$ is constructed.}
\end{proof}

Now we turn to the case where $S \supsetneq \{v_0\}$.  
%First we prove a lemma.

\begin{lemma}\label{Lplaceholder}
  Take $v \in V_{\reg}'$ with inductive length $1$, and let  
  $V$ be the set of all $w \in V_{\reg}'$ such that $w \succeq v$.
  Suppose that $v$ has a unique neighbor $w
  \succ v$ in $V$, that the inductive length of $w$ is $2$, and that
  $V$ is a regular normal crossings base for $X \to \proj^1_K$.  Then $v$ is removable from $V$ if and only if all the
  following hold:
  \begin{enumerate}[\upshape (i)]
  \item $e_v = 2$,
  \item $w \prec v_{f_i}^{\infty}$ for all $i$,
 \item $\gcd(d, e_ww(f)) = 2e_w\gcd(d,a)$.
 \end{enumerate}

Furthermore, if $v$ is removable from $V$, then $w$ is \emph{not}
removable from $V \setminus \{v\}$.
\end{lemma}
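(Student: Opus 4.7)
The plan is to reduce the question to an application of Lemma~\ref{Lremovable}. Since $v$ has a unique neighbor $w \succ v$ in $V$, the $v$-component of $\mc{Y}_V$ meets exactly one other irreducible component (the $w$-component), so the first hypothesis of Lemma~\ref{Lremovable} is satisfied automatically. Since $v$ is the unique minimal valuation in $V$ and has inductive length $1$, write $v = [v_0,\, v_1(\phi_1) = \lambda_1]$ with $\phi_1$ linear, and observe that $D_{\infty}$ specializes to the standard $\infty$-specialization $y_\infty$ on the $v$-component by Corollary~\ref{CAllspecializations}\ref{Cinftyspecialization}. Moreover, because $w$ is a length-$2$ augmentation of $v$, Corollary~\ref{CvalueofN} and Lemma~\ref{Lvnvprime} force $e_v \geq 2$, so by Lemma~\ref{Lstandardendpointunique} there is always a finite cusp $y_{\phi_1}$ on the $v$-component. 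The strategy is to show that $y_\infty$ and $y_{\phi_1}$ account for all geometrically ramified points on the $v$-component (off the intersection with the $w$-component) precisely when conditions (i) and (ii) hold, and in that case to use Lemma~\ref{Lremovable}(ii) to convert the removability criterion $\tilde e_v = 2\tilde e_w$ into condition (iii).

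For the necessity of (i) and (ii), apply Lemma~\ref{Lramificationnotremovable}: since the $v$-component has a unique neighbor, parts (iii) and (iv) force at most two geometrically ramified points on the $v$-component, each of index at most $2$. By Corollary~\ref{CinftySNC}, $y_\infty$ is geometrically ramified with index divisible by $e_v$, so $e_v \leq 2$ and hence $e_v = 2$, yielding (i). By Proposition~\ref{Pbranchspecialization} and Lemma~\ref{Lspfigeoram}, the specialization of any $D_{f_i}$ lying on the $v$-component is geometrically ramified, and would produce a third ramified point unless $w \prec v_{f_i}^\infty$ for every $i$; this forces (ii). An additional consequence of (ii) and the index-$\leq 2$ constraint at $y_{\phi_1}$ (via Proposition~\ref{Pbranchpointramindex}) is that $\phi_1 \neq f_i$ for any $i$, once we note that the factorization of $f$ may be chosen so that $d \nmid a_i$ for each $i$.

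For the sufficiency, assume (i), (ii), (iii). Under (ii), Corollary~\ref{CinftySNC} with $s = 0$ gives geometric ramification index exactly $e_v = 2$ at $y_\infty$, and Proposition~\ref{Pbranchpointramindex} with $N = 1$ and trivial $a$ gives index exactly $2$ at the finite cusp $y_{\phi_1}$; since (ii) also ensures no horizontal $D_{f_i}$ passes through $y_{\phi_1}$ or $y_\infty$, Corollary~\ref{Csmoothoncomponent}(b) and Proposition~\ref{PinftySNC} provide the required smoothness of the points of $\mc{X}_V$ above these two ramified points on the reduced special fiber. Thus the hypotheses of Lemma~\ref{Lremovable} are met, and part (ii) of that lemma reduces removability to the equation $\tilde e_v = 2\tilde e_w$. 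Using Corollary~\ref{Cdominantterm} under condition (ii) one computes $v(f) = a + \lambda_1 \deg f$ and $w(f) = a + (\lambda_2/e_v)\deg f$; combined with $d \mid \deg f$, $e_v\lambda_1 \in \ints$, and $e_w\lambda_2 \in \ints$, this shows $\gcd(d, e_v v(f)) = \gcd(d, e_v a) = \gcd(d, 2a)$ and converts the identity $\tilde e_v = 2\tilde e_w$ into condition (iii). Finally, the ``furthermore'' clause is immediate from Lemma~\ref{Lremovable}(iii) once $v$ is known to be removable.

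The main technical obstacle will be the bookkeeping in the last step: carefully tracking $e_w w(f) \pmod d$ under condition (ii) (where one needs $2 \mid \deg f_i$ for each $i$, hence $2 \mid \deg f$, and where condition (i) forces $d$ even in order for $e_v \mid d$ to be consistent with Proposition~\ref{PinftySNC}(i)) and verifying that the translation between $\tilde e_v = 2\tilde e_w$ and condition (iii) produces exactly the formula $\gcd(d, e_w w(f)) = 2e_w \gcd(d,a)$ stated in the lemma.
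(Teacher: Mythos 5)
Your plan follows the same overall route as the paper: reduce everything to an application of Lemma~\ref{Lremovable} by locating exactly two geometrically ramified points on the $v$-component (the standard $\infty$-specialization and the finite cusp), use Lemma~\ref{Lramificationnotremovable} to force conditions (i) and (ii), then compute $\widetilde{e}_v$ and $\widetilde{e}_w$ and translate $\widetilde{e}_v = 2\widetilde{e}_w$ into condition (iii). That is exactly the paper's strategy; the paper bundles the ramification facts into a preparatory sublemma (Lemma~\ref{Lramificationlocations}), but the content is the same.

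Two places where you diverge from the paper, both benign. First, for the geometric ramification index at the finite cusp you invoke Proposition~\ref{Pbranchpointramindex} directly with $a = 0$, $N = 1$. The paper explicitly declines to do this for the general statement of Lemma~\ref{Lramificationlocations}(ii) (footnote there: one doesn't a priori know that $f = \phi_1^a h$ with no horizontal part of $h$ passing through the cusp), and instead uses the change of variables $u = \pi_K^{\lceil \lambda_1\rceil}/\phi_1$ to map the cusp to a standard $\infty$-specialization. Your shortcut is fine in the sufficiency direction, because under (ii) no $D_{f_i}$ passes through the cusp, so the hypothesis of Proposition~\ref{Pbranchpointramindex} is met; but for the \emph{general} fact that the cusp is ramified with index divisible by $e_v$ (which you implicitly rely on in the necessity direction to justify "each of index at most $2$" via Lemma~\ref{Lramificationnotremovable}(iii)), you should instead cite Lemma~\ref{Lfincuspram}, which gives precisely this. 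As written, you establish that the cusp exists but never say that it is geometrically ramified; without that, Lemma~\ref{Lramificationnotremovable}(iii) does not yet apply, since it requires two ramified points. This is a genuine (if small and easily filled) gap in the necessity argument. Second, where the paper obtains "$d/\gcd(d,a)$ is even" via Lemma~\ref{Linftyequivalence} (the complete local ring above the $\infty$-specialization contains $\sqrt{\pi_K}$), you instead deduce it from condition (i) of Proposition~\ref{PinftySNC}; these are equivalent and both ultimately come from the regularity of $\mc{X}$ above the $\infty$-specialization. Your final arithmetic $\gcd(d, e_vv(f)) = \gcd(d,2a)$ and the translation to $\gcd(d, e_ww(f)) = 2e_w\gcd(d,a)$ is correct once $d/\gcd(d,a)$ is known to be even; this matches the paper's \eqref{Eev}--\eqref{Eew}. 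The side remark that $\phi_1 \neq f_i$ is not wrong but is redundant: it is already a consequence of (ii), since $\phi_1 = f_i$ would give $w(\phi_2) > v_{f_i}^{\infty}(\phi_2)$ and so $w \not\prec v_{f_i}^{\infty}$.
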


First, we prove a sublemma. 

\begin{lemma}\label{Lramificationlocations}
Let $v$, $w$, and $V$ be as in Lemma~\ref{Lplaceholder}.   Let $\mc{Y}$ be the
$V$-model of $\proj^1_K$, and let $\mc{X}$ be its normalization in
$K(X)$.
\begin{enumerate}[\upshape (i)]
  \item The $v$-component of $\mc{Y}$ contains
    both a finite cusp and the standard $\infty$-specialization.
  \item If $y$ is one of these two points, then the
geometric ramification index of $y$ in $\mc{X} \to \mc{Y}$ is divisible by
$e_v$, with the divisibility being strict if and only if some
$D_{f_i}$ meets $y$.
  \item If $y$ is as in part (ii) and no $D_{f_i}$ meets $y$, then the
    reduced special fiber of $\mc{X}$ is smooth above $y$.
    \end{enumerate}
\end{lemma}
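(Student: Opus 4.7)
The plan is to prove part (i) by locating both special points via earlier results on Mac Lane models, and then to handle parts (ii) and (iii) uniformly for $y$ equal to either the finite cusp or the standard $\infty$-specialization by adapting the factorization argument used in the proof of Corollary~\ref{CinftySNC}.

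For part (i), since $v$ is the unique minimal element of $V$, Corollary~\ref{CAllspecializations}\ref{Cinftyspecialization} places the standard $\infty$-specialization on the $v$-component.  To produce the finite cusp, write $w = [v_0,\, v_1(\phi_1) = \lambda_1,\, v_2(\phi_2) = \lambda_2]$ in minimal form.  By Lemma~\ref{Lpredclosed} the inductive-length-$1$ predecessor of $w$ lies in $V_{\reg}'$, hence in $V$, and the tree structure together with the uniqueness of $w$ as a neighbor of $v$ forces this predecessor to equal $v$.  Thus $e_v = \deg(\phi_2) > \deg(\phi_1) = 1$ by Corollary~\ref{CvalueofN} combined with the strictly increasing degrees in a minimal presentation.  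Any $u \in V$ with $u \succ v$ satisfies $u \succeq w$, hence has $v$ as a predecessor, so $u(\phi_1) = \lambda_1$ for every $u \in V$.  Corollary~\ref{Cmaximalcusp} then yields the unique finite cusp at $D_{\phi_1} \cap \ol{Y}$, and its distinctness from the standard $\infty$-specialization follows from Proposition~\ref{PAllspecializations}\ref{Cphiizeroes}.

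For part (ii), the first step I would establish is a point-by-point generalization of Lemma~\ref{Lrootofpi}: for $y$ any closed point of the $v$-component lying on no other component, the completion $\hat{\mc{O}}_{\mc{Y},y} \otimes_{\mc{O}_K} \mc{O}_L$ (with $L = K[\sqrt[e_v]{\pi_K}]$) is smooth over $\mc{O}_L$.  For the standard $\infty$-specialization this is Lemma~\ref{Lrootofpi} verbatim; for the finite cusp, the same change of variables $u = \phi_1 / \pi_L^{e_v \lambda_1}$ identifies the base-changed $v$-model with $\proj^1_{\mc{O}_L}$ in a neighborhood, sending the finite cusp to the smooth point $u = 0$ on the special fiber.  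I then mirror the two-step argument of Corollary~\ref{CinftySNC}: by Lemma~\ref{Ltotallyarithmetic}, $\pi_L = \sqrt[e_v]{\pi_K}$ lies in $\hat{\mc{O}}_{\mc{X},x}$, and setting $A = \hat{\mc{O}}_{\mc{Y},y}[\pi_L]$, the cover $\Spec A \to \Spec \hat{\mc{O}}_{\mc{Y},y}$ is a degree-$e_v$ Kummer cover, unramified along the special fiber of $\hat{\mc{O}}_{\mc{Y},y}$ and with a single point above $y$, whose geometric ramification index at $y$ is therefore $e_v$.  Factoring through $A$, the geometric ramification index of $\mc{X} \to \mc{Y}$ at $y$ equals $e_v \cdot c$, where $c$ is the geometric ramification index of $\Spec \hat{\mc{O}}_{\mc{X},x} \to \Spec A$ at the unique point above $y$.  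Since $\Spec A$ is regular with smooth irreducible special fiber and $\mc{X}$ is regular with normal crossings at $x$, Proposition~\ref{P:RegandNormalize}(ii) forces the horizontal branch locus of $\mc{X} \to \Spec A$ through that point to have at most one irreducible component, and Corollary~\ref{Cregularbranchindex} then gives $c > 1$ if and only if this horizontal branch is nonempty, which in turn happens if and only if some $D_{f_i}$ meets $y$.

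Part (iii) follows once part (ii) is in hand.  For the standard $\infty$-specialization, the hypothesis that no $D_{f_i}$ meets $y$ is the case $s = 0$ of Proposition~\ref{PinftySNC}, so conditions (i) and (ii) of that proposition both hold, and its last sentence gives smoothness of the reduced special fiber of $\mc{X}$ at $x$.  For the finite cusp, condition (b) of Corollary~\ref{Csmoothoncomponent} is satisfied vacuously since no horizontal part of $\divi_0(f)$ passes through $y$, and the corollary yields smoothness at $x$.  The main obstacle I anticipate is in part (ii): at the finite cusp, one must verify carefully that the change-of-variables identification with $\proj^1_{\mc{O}_L}$ really extends to a formal neighborhood of $y$, and that the $e_v$ factor in the ramification is correctly accounted for despite $\mc{Y}$ itself being singular at $y$.
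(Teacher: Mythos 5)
Your proof is correct and follows the same broad strategy as the paper — reduce to a smooth situation over a ramified base extension, then track geometric ramification via the factorization $\Spec\hat{\mc{O}}_{\mc{X},x} \to \Spec A \to \Spec\hat{\mc{O}}_{\mc{Y},y}$ — but it diverges from the paper at the finite cusp in a way that creates extra work and a small gap.

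For part (i), your argument via Corollary~\ref{Cmaximalcusp} is fine, though the paper just invokes Corollary~\ref{Cstandardendpointguaranteed} directly, which packages exactly the bookkeeping (minimal presentation of $w$, predecessor in $V$, degree inequality) that you spell out.

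For parts (ii) and (iii), the paper handles the finite cusp by applying the coordinate change $u = \pi_K^{\lceil \lambda_1\rceil}/\phi_1$. This is a M\"obius transformation of $\proj^1_K$ under which $v = [v_0,\, v_1(\phi_1) = \lambda_1]$ becomes $[v_0,\, v_1(u) = \lceil\lambda_1\rceil - \lambda_1]$, $e_v$ is unchanged, and the finite cusp literally becomes the standard $\infty$-specialization in the $u$-coordinate. After that, all of the \S\ref{Sinfty} results (Lemmas~\ref{Ltotallyarithmetic}, \ref{Lrootofpi}, Corollary~\ref{CinftySNC}, Proposition~\ref{PinftySNC}) apply verbatim at the finite cusp, and no re-derivation is needed. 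You instead keep the $t$-coordinate fixed and extend Lemma~\ref{Lrootofpi} pointwise to the finite cusp via $u = \phi_1/\pi_L^{e_v\lambda_1}$, which is correct, but you also invoke Lemma~\ref{Ltotallyarithmetic} at the finite cusp without noting that, as stated, that lemma lives in the \S\ref{Sinfty} context (where $y$ is the $\infty$-specialization). Its proof depends on the fact that $e_v$ is the order of the vertical divisor $D$ in the divisor class group of $\Spec\hat{\mc{O}}_{\mc{Y},y}$, and this has to be re-verified at the finite cusp. It does hold there (the finite cusp is a toric cyclic quotient singularity of order $e_v$), but you should say so; your closing remark correctly flags this as the place needing care, and the paper's change-of-variables trick is precisely what removes the need to fill this in.

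The rest of your argument — Kummer subcover $\Spec A \to \Spec\hat{\mc{O}}_{\mc{Y},y}$ of degree $e_v$ unramified along the special fiber with a unique point above $y$, then Corollary~\ref{Cregularbranchindex} applied to $\Spec\hat{\mc{O}}_{\mc{X},x} \to \Spec A$ to detect whether the extra geometric ramification factor $c$ exceeds $1$ — mirrors the proof of Corollary~\ref{CinftySNC} faithfully and is sound. Your part (iii) is correct; the paper uses Proposition~\ref{PinftySNC} uniformly at both points (via the coordinate change), whereas you use Proposition~\ref{PinftySNC} at the $\infty$-specialization and Corollary~\ref{Csmoothoncomponent}(b) at the finite cusp, which is equally valid.
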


\begin{proof}
  By Corollary~\ref{Cstandardendpointguaranteed}, 
   $\mc{Y}$ has a finite cusp on the $v$-component, which implies by
   Lemma~\ref{Lstandardendpointunique} that $e_v \geq 2$.   Since $v$
   is minimal in $V$, there is a standard $\infty$-specialization on
   the $v$-component by Corollary~\ref{CAllspecializations}(i).  This proves (i).
   
   By Corollary~\ref{CinftySNC}, the standard $\infty$-specialization is geometrically ramified in $\mc{X} \to \mc{Y}$ with index
  divisible by $e_v \geq 2$, and this divisibility is strict if and
  only if there exists
  any $i$ with $v \not \preceq v_{f_i}^{\infty}$.  For such an $i$,
  Proposition~\ref{PAllspecializations}\ref{Clowvalspecialization} implies that $D_{f_i}$ meets
  the standard $\infty$-specialization.  Furthermore, if there does
  not exist such an $i$, then conditions (i) and (ii) of
  Proposition~\ref{PinftySNC} hold, so
  the points above the standard $\infty$-specialization are not
  nodes.
  
%  Furthermore, since $w$ has inductive length $\geq 2$, the
%  $w$-component of $\mc{Y}$ does not meet the $v$-component at the
%  finite cusp of the $v$-component, see
%  Corollary~\ref{Cstandardendpointguaranteed}.
  Suppose
  $v = [v_0,\, v_1(\phi_1) = \lambda_1]$.  Consider the invertible change of
  variables $u = \pi_K^{\lceil \lambda_1 \rceil} / \phi_1$.  Under
  this change of variables, it is easy to check that $v$ becomes
  $[v_0, v_1(u) = \lceil \lambda_1 \rceil - \lambda_1]$, the
  finite cusp in terms of $t$ becomes the standard
  $\infty$-specialization in terms of $u$, and $e_v$ remains
  unchanged.  So just as in the previous paragraph, the geometric
  ramification index at the finite cusp (in terms of $t$) is
  divisible by $e_v \geq 2$, and that divisibility is strict if and
  only if some
  $D_{f_i}$ meets the finite cusp.\footnote{Morally, this
    should follow from Proposition~\ref{Pbranchpointramindex}, but
    we are not exactly in a situation where that proposition is valid,
    since we don't know that we can write $f = \phi_1^a h$ as in that
    proposition.}  Also as in the previous paragraph, there are no nodes above the finite
  cusp if no $D_{f_i}$ meets it. This proves (ii) and (iii).
\end{proof}

\begin{proof}[Proof of Lemma~\ref{Lplaceholder}]
Let $\mc{Y}$ be the $V$-model of $\proj^1_K$, and let $\mc{X}$ be
its normalization in $K(X)$.  By
Corollary~\ref{CAllspecializations}(i), the contraction morphism
$\mc{Y}_{\reg}' \to \mc{Y}$ is an isomorphism outside the preimage of
the standard $\infty$-specialization, which lies on the
$v$-component. In particular, no $D_{f_i}$ meets an intersection of
two components by Lemma~\ref{Lhorizspec}, and, outside of possibly the $\infty$-specialization, no
two $D_{f_i}$ meet each other by Lemma~\ref{LY0structure}(v).

By Lemma~\ref{Lramificationnotremovable}(iii) and Lemma~\ref{Lramificationlocations}, if $v$ is removable
from $V$, then $e_v = 2$ (so (i) holds) and no $D_{f_i}$ meets either the standard
$\infty$-specialization or the finite cusp.  Also, since by
\cite[Lemma 7.3(iii)]{ObusWewers}, all
other points of the $v$-component are regular in $\mc{Y}$, except possibly
where the $v$- and $w$-components meet,
Corollary~\ref{Cregularbranchindex} shows that if any $D_{f_i}$ meets
any of these points lying only on the $v$-component, then it is geometrically ramified in $\mc{X} \to
\mc{Y}$.  By Lemma~\ref{Lramificationnotremovable}(iv), this implies
that $v$ is not removable from $V$.  We have seen that no
$D_{f_i}$ specializes to the intersection point of the $v$- and
$w$-components of $\mc{Y}$, so if $v$ is removable from $V$, then no
$D_{f_i}$ specializes to the $v$-component at all, and this means that
$w \prec v_{f_i}^{\infty}$ for all $i$, that is, (ii) holds.

Now, assuming (i) and (ii) hold, we will show that $v$ being removable
from $V$
is equivalent to (iii) holding, and that furthermore, $w$ is not
removable from $V \setminus \{v\}$ in this case.  This will complete the
proof.  Let $\ol{Z}_v$ be the $v$-component of $\mc{Y}$ and let $\ol{W}_v$
be an irreducible component of the special fiber of $\mc{X}$ lying
above $\ol{Z}_v$. Now, $e_v$ is the multiplicity of $\ol{Z}_v$, and write
$\tilde{e}_v$ for the multiplicity of $\ol{W}_v$.  Similarly, let
$\tilde{e}_w$ be the multiplicity of any irreducible component of the special fiber of $\mc{X}$
above the $w$-component.  Since (i) and (ii) hold, combined with Lemma~\ref{Lramificationlocations}, we see that $\ol{W}_v \to \ol{Z}_v$ is
geometrically ramified above two points, each with geometric
ramification index $2$,
and not above any other point, except possibly
where $\ol{Z}_v$ meets the $w$-component.
%\padma{Did we rule out geometric ramification at these intersection
%points? $\ol{W}_{v}$ may not be a smooth curve, so Riemann-Hurwitz
%may not apply.}
Furthermore, since (ii)
holds, no $D_{f_i}$ meets the $v$-component by
Proposition~\ref{Pbranchspecialization} and hence by Lemma~\ref{Lramificationlocations}(iii),  the ramified points in $\ol{W}_v$ are smooth points of $\ol{W}_v$.
%\andrew{This is a consequence of the opening paragraphs, but will get a reference once we split it off.}. 

We claim that (iii) is equivalent to $\tilde{e}_v =
2\tilde{e}_w$.  Admitting the claim,
Lemma~\ref{Lramificationlocations}(ii), (iii) and Lemma~\ref{Lremovable}(ii) shows
that $v$ is removable from $V$ if and only (iii) holds.  Furthermore,
Lemma~\ref{Lremovable}(iii) shows that $w$ is not removable from $V
\setminus \{v\}$ in this case.  This completes the proof, so we need
only prove the claim.

%But $\ol{W}_v^{\red} \to
%\ol{Z}_v^{\red}$ is a cyclic cover, and a cyclic cover cannot be ramified at two
%points with index $2$ and a third point (since the monodromy
%generators would not multiply to the identity) \padma{should we worry
%  about nodal components again?}. \andrew{If we fix it in
%Lemma~\ref{Lremovable}, then we can cite that here and shorten the proof.} So $\ol{W}_v^{\red}$
%has genus $0$, and $\ol{W}_v$
%meets the rest of the special fiber of $\mc{X}$ at exactly $2$
%points.  Since these meetings are transverse by assumption, we have
%that $\ol{W}_v^{\red}$ is a $-1$-curve, and thus contractible, if and
%only if $\tilde{e}_v = 2 \tilde{e}_w$.  So we are reduced to proving
%that (iii) is equivalent to $\tilde{e}_v = 2\tilde{e}_w$.

Let us calculate $\tilde{e}_v$ and $\tilde{e}_w$.  Since no $D_{f_i}$ meets the standard $\infty$-specialization on
$\mc{Y}$, condition (ii) of Proposition~\ref{PinftySNC} holds.  So
locally near the $\infty$-specialization, the cover is given birationally by the
equation $z^d = \pi_K^a \phi_1^e$.  Since $\phi_1$ is linear and $d \mid
\deg (f)$, we have $d \mid e$, which means the cover is equivalently given
birationally by the equation $z^d = \pi_K^a$.  By Lemma~\ref{Linftyequivalence}, the complete local ring above the
$\infty$-specialization contains $\sqrt{\pi_K}$, so $d / \gcd(d, a)$
is even.  Since the generators of the value group of an extension of
$v$ to $K(X)$ can be taken to be $1/e_v$ and $v(z) = a/d = (a/\gcd(d, a))/(d/\gcd(d,a))$, one computes
\begin{equation}\label{Eev}
      \tilde{e}_v = \lcm\left(e_v, \frac{d}{\gcd(d, a)}\right) =
      \lcm\left(2, \frac{d}{\gcd(d, a)}\right) = \frac{d}{\gcd(d,a)}.
\end{equation}    
On the other hand, the ramification index of $\mc{X} \to \mc{Y}$ above
the valuation $w$ is $d/\gcd(d, e_ww(f))$, so
\begin{equation}\label{Eew}
\tilde{e}_w = e_w d/\gcd(d,
e_w w(f)).
\end{equation}  Equating (\ref{Eev}) to twice (\ref{Eew})
shows that (iii) is equivalent to $\tilde{e}_v =
2\tilde{e}_w$, completing the proof.
\end{proof}

\begin{prop}\label{Pminimalcontraction}
Suppose $S \supsetneq \{v_0\}$ as in Definition~\ref{DS}.  Let $v$ be a maximal
element of $S$ (by assumption, $v \neq v_0$).  Let $V_{\min}'$ be the set of all valuations $w \in
V_{\reg}'$ with $w \succeq v$.  If $v$ satisfies the hypotheses and conditions of
Lemma~\ref{Lplaceholder} relative to $V = V_{\min}'$, let $V_{\min} =
V_{\min}' \setminus \{v\}$.  If not, let $V_{\min} = V_{\min}'$.  Then
$V_{\min}$ is the minimal regular normal crossings base for $X \to \proj^1_K$.
\end{prop}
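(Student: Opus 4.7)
My plan is to prove the proposition in three stages: first show that $V_{\min}'$ is itself a regular normal crossings base, then show that no valuation in $V_{\min}' \setminus \{v\}$ is removable from $V_{\min}'$, and finally analyze removability of $v$ itself based on whether the hypotheses and conditions of Lemma~\ref{Lplaceholder} are met. Throughout, I use that $V_{\min}'$ is the set of valuations $\succeq v$, so it forms the subtree of the rooted tree $V_{\reg}'$ rooted at $v$, with $v$ as its unique minimal element.

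For the first stage, let $\mc{Y}_{\min}'$ be the $V_{\min}'$-model and $\mc{X}_{\min}'$ its normalization in $K(X)$. By Corollary~\ref{CAllspecializations}\ref{Cinftyspecialization}, the contraction $\mc{Y}_{\reg}' \to \mc{Y}_{\min}'$ is an isomorphism outside the preimage of the standard $\infty$-specialization $z$ of $\mc{Y}_{\min}'$, which sits on the $v$-component. Since $v \in S$ by hypothesis, $v$ has inductive length $\leq 1$ and satisfies condition (i) along with one of (ii), (iii), (iv) of Proposition~\ref{PinftySNC}. That proposition then guarantees every point of $\mc{X}_{\min}'$ above $z$ is regular with normal crossings. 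Away from $z$, the normalization agrees locally with $\mc{X}_{\reg}'$, which is regular with normal crossings by Theorem~\ref{TX0regular} and Proposition~\ref{Pv1max}. Hence $V_{\min}'$ is a regular normal crossings base.

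For the second stage, let $w \in V_{\min}' \setminus \{v\}$. Since $w \succeq v$ and $w \neq v$, the parent of $w$ in $V_{\reg}'$ is also $\succeq v$, hence in $V_{\min}'$; thus $w$ has the same neighbors in $V_{\min}'$ as in $V_{\reg}'$. The components of $\mc{X}_{\min}'$ above the $w$-component do not meet the preimage of $z$ (which lies on components above $v$), so the local structure of $\mc{X}_{\min}'$ around them is unchanged from $\mc{X}_{\reg}'$. Since $w \neq v_0$, Proposition~\ref{Pv1max} shows $w$ is not removable from $V_{\reg}'$, and the identical local structure gives the same conclusion for $V_{\min}'$.

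For the third and most delicate stage, I split into two cases. In Case A, where Lemma~\ref{Lplaceholder} applies to $V = V_{\min}'$, that lemma directly yields removability of $v$, and after passing to $V_{\min}' \setminus \{v\}$ it further asserts that the unique neighbor $w$ of $v$ is no longer removable; combined with Part 2 (adapted since only $w$'s local neighborhood changes), this gives that $V_{\min} = V_{\min}' \setminus \{v\}$ is a minimal regular normal crossings base. In Case B, where the hypotheses or conclusion of Lemma~\ref{Lplaceholder} fail, I must show $v$ is not removable. If $v$ has $\geq 3$ neighbors in $V_{\min}'$, this follows from Lemma~\ref{Lramificationnotremovable}(i). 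If $v$ has a unique neighbor $w$ of inductive length $2$ but conditions (i)--(iii) of Lemma~\ref{Lplaceholder} fail, the lemma itself gives non-removability. The main obstacle is the residual subcases: (a) $v$ has a unique neighbor of inductive length $\neq 2$, and (b) $v$ has exactly two neighbors $w_1, w_2$ in $V_{\min}'$. In (a) I plan to use that an inductively length-$1$ or length-$\geq 3$ neighbor forces either a finite cusp on the $v$-component (via Corollary~\ref{Cstandardendpointguaranteed}) geometrically ramified of sufficiently large index (Proposition~\ref{Pbranchpointramindex}, Corollary~\ref{CinftySNC}) to violate Lemma~\ref{Lramificationnotremovable}(iii), or else no compatible Castelnuovo contraction (Lemma~\ref{LCastelnuovo}). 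In (b), observation that in a minimally presented rooted tree $v$ of inductive length $1$ cannot have two length-$1$ children, together with the maximality of $v$ in $S$ (so $w_1, w_2 \notin S$), should force the $\widetilde{N}$-path criterion of Proposition~\ref{Pinftycrossingregular} at the putative $\infty$-crossing formed by $w_1, w_2$ after removing $v$ to fail. Verifying this last incompatibility is the technical heart of the argument.
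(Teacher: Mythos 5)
Your stages 1 and 2 agree in substance with the paper's proof (the paper uses the separately-stated Lemma~\ref{Lminneighborhood} for your stage 2, but the content is the same), and your identification of Lemma~\ref{Lplaceholder} for the case of a unique length-$2$ neighbor matches the paper. The problems are concentrated in the two "residual subcases" of your third stage, and they are genuine gaps rather than details to fill in.

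In subcase (a), where $v$ has a unique neighbor $w$ of inductive length $1$, your plan to produce a geometrically ramified finite cusp on the $v$-component fails at the first step: Corollary~\ref{Cstandardendpointguaranteed} requires the unique neighbor $w \succ v$ to have inductive length strictly greater than that of $v$, which is false when both have length $1$. There simply is no finite cusp on the $v$-component in this case. The argument the paper uses is different in kind: assuming $v$ removable, contraction moves the standard $\infty$-specialization onto the $w$-component, and Proposition~\ref{PinftySNC} then forces $w$ to satisfy condition (i) and one of (ii)--(iv), hence $w \in S$ with $w \succ v$, contradicting the maximality of $v$ in $S$. (The "length $\geq 3$" branch of your subcase (a) is vacuous: by Proposition~\ref{Pmaximalforlength} and predecessor-closure of $V_{\reg}'$, any valuation adjacent to and above a length-$\leq 1$ valuation has inductive length at most $2$; you should verify this rather than treat it as a live case.)

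In subcase (b), where $v$ has exactly two neighbors, invoking Proposition~\ref{Pinftycrossingregular} at "the putative $\infty$-crossing formed by $w_1, w_2$" does not apply. That proposition requires the two minimal valuations to have the form $[v_0, v_1(t-c)=\mu]$, $[v_0, v_1(t-c')=\mu']$ with $v_K(c-c') = 0$. But here $v \neq v_0$, say $v = [v_0, v_1(t-c) = \lambda_1]$ with $\lambda_1 > 0$, and any length-$1$ child $w_i = [v_0, v_1(t-c_i) = \mu_i]$ of $v$ has $v_K(c_i - c) \geq \lambda_1 > 0$, whence $v_K(c_1 - c_2) \geq \lambda_1 > 0$. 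So the $\infty$-crossing hypothesis $v_K(c_1 - c_2) = 0$ can never hold, and there is nothing to "force to fail." Your preliminary claim that a length-$1$ valuation cannot have two length-$1$ children is also false (consider $v = [v_0,\, v_1(t)=1]$ with children $[v_0,\, v_1(t)=2]$ and $[v_0,\, v_1(t-\pi_K)=2]$). The argument the paper actually uses is: Lemma~\ref{Lminimalv0} (applicable since $v$ has inductive length $\leq 1$) shows removability of $v$ forces $e_v = 1$; then Lemma~\ref{Lcontradictminimality} produces an $f_i$ with $v \not\prec v_{f_i}^\infty$, so by Proposition~\ref{PAllspecializations}\ref{Clowvalspecialization} the horizontal divisor $D_{f_i}$ meets the standard $\infty$-specialization, Corollary~\ref{Cregularbranchindex} shows this point is geometrically ramified, and Lemma~\ref{Lramificationnotremovable}(ii) rules out removability. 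Neither the $\infty$-crossing framework nor your stated "observation" about the tree enters the correct argument.
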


\begin{remark}
  The proposition shows, a posteriori, that $v$ is \emph{the}
  maximal element of $S$.
\end{remark}

We begin with two preparatory lemmas. 

\begin{lemma}\label{Lminneighborhood}
 In the context of Proposition~\ref{Pminimalcontraction}, let $\mc{Y}'_{\min}$ be the $V'_{\min}$ model. 
 % \begin{enumerate}[\upshape (i)]
%   The $v$-component of   $V_{\mathrm{min}}'$ contains the standard $\infty$-specialization; call it $y$. Then the canonical contraction $\mc{Y}_{\reg}' \to \mc{Y}_{\min}'$ is an isomorphism
%outside the preimage of $y$.
 %$\mc{Y}'_{\min} \setminus \{y\}$ is isomorphic to an open subscheme of $\mc{Y}'_{\reg}$.
If $V_{\min}'$ is a regular normal crossings base, the only removable valuation from $V_{\min}'$, if any, is $v$.                                                                           
\end{lemma}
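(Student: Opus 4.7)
The plan is to reduce the claim to Proposition~\ref{Pv1max}, which already says that in $V'_{\reg}$ only $v_0$ can be removable. Given $w \in V'_{\min}$ with $w \neq v$, since $V'_{\min} = \{u \in V'_{\reg} : u \succeq v\}$ has $v$ as its unique minimal element, we must have $w \succ v$ and in particular $w \neq v_0$. My goal is to show that removability of $w$ is determined by local data around the $w$-component that is identical in the $V'_{\reg}$- and $V'_{\min}$-models; the non-removability of $w$ in $V'_{\reg}$ will then transfer to $V'_{\min}$.

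The first step is to observe that $V'_{\reg}$ is a rooted tree with root $v_0$ by Lemma~\ref{Lhorizspec}, and then to verify that the neighbors of $w$ in $V'_{\reg}$ coincide with those in $V'_{\min}$. The tree-parent of $w$ lies on the unique chain from $v_0$ to $w$, which passes through $v$ because $w \succ v$, so the parent is $\succeq v$ and lies in $V'_{\min}$; the tree-children of $w$ are $\succ w \succ v$ and also lie in $V'_{\min}$. Conversely every neighbor in $V'_{\min}$ is visibly a neighbor in $V'_{\reg}$. Combined with the fact that the multiplicities $e_u$ of the neighboring components, the multiplicities $e_u d/\gcd(d, e_u u(f))$ upstairs in $\mc{X}$ (computed in \S\ref{Sstandardcrossings} and \S\ref{Sstandardendpoints}), and the specialization loci of the horizontal branch divisors $D_{f_i}$ (governed by Lemma~\ref{Lhorizspec}) are all intrinsic to the valuations and to $f$ rather than to the ambient set, this shows that the formal neighborhood of $\ol{Z}_w$ together with its preimage in the normalization is the same in both models.

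Finally I would invoke Lemma~\ref{LCastelnuovo}, Lemma~\ref{Lremovable}, and Remark~\ref{Rspecialcontraction} --- the same tools used in the proof of Proposition~\ref{Pv1max} --- to conclude that removability of $w$ depends only on this common local data. Therefore $w$ is removable from $V'_{\min}$ if and only if it is removable from $V'_{\reg}$, and since the latter is ruled out by Proposition~\ref{Pv1max}, the former fails as well. The only real obstacle is the bookkeeping needed to confirm that each criterion appearing in the non-removability proof of Proposition~\ref{Pv1max} truly factors through the local neighborhood of $\ol{Z}_w$; having already identified the relevant invariants, this reduces to reading off what was established earlier and introduces no new technical work.
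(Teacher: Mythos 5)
Your high-level strategy matches the paper's: reduce to Proposition~\ref{Pv1max} by arguing that removability of any $w \succ v$ transfers between $V'_{\reg}$ and $V'_{\min}$. But the mechanism you use to establish that transfer has a gap, and the paper sidesteps it with a much cleaner tool.

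You argue that the tree-neighbors of $w$, the multiplicities $e_u$ downstairs and $e_u d/\gcd(d, e_u u(f))$ upstairs, and the specialization loci of the $D_{f_i}$ are intrinsic to the valuations and to $f$, and then assert that ``this shows the formal neighborhood of $\ol{Z}_w$ together with its preimage in the normalization is the same in both models.'' That conclusion does not follow from the invariants you list: agreement of a collection of numerical invariants (multiplicities, number of neighbors, branch-divisor specializations) is not by itself an isomorphism of formal neighborhoods. You would then need to re-verify that the removability criteria (Lemma~\ref{LCastelnuovo}, Lemma~\ref{Lremovable}, the cases of Lemma~\ref{Lramificationnotremovable}) depend \emph{only} on those enumerated invariants, which would add real bookkeeping and is not ``reading off what was established earlier'' as you claim.

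The paper instead invokes Corollary~\ref{CAllspecializations}(i): since $v$ is the unique minimal element of $V'_{\min} \subseteq V'_{\reg}$, the contraction $\mc{Y}'_{\reg} \to \mc{Y}'_{\min}$ is an isomorphism outside the preimage of the standard $\infty$-specialization $y$, which lies only on the $v$-component. This gives, immediately and rigorously, an isomorphism of formal neighborhoods of $\ol{Z}_w$ (and of its preimage in the normalization) for every $w \neq v$, so removability of $w$ is literally the same condition in both models. Combined with $v_0 = v$ or $v_0 \notin V'_{\min}$ and Proposition~\ref{Pv1max}, the lemma follows in a few lines. If you replace your invariant-matching step with a citation of Corollary~\ref{CAllspecializations}(i) (or the more general Proposition~\ref{Pcontractionisomorphism}), your proof becomes essentially the paper's.
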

\begin{proof}
By construction, $v$ is the minimal element of
$V_{\min}'$, so
Corollary~\ref{CAllspecializations}\ref{Cinftyspecialization} shows that the standard $\infty$-specialization $y$ lies only on the
$v$-component and that the canonical contraction $\mc{Y}_{\reg}' \to \mc{Y}_{\min}'$ is an isomorphism
outside the preimage of $y$. Thus a valuation (other than $v$) is
removable from $V_{\min}'$ if and only if it is removable from
$V_{\reg}'$. Since either $v_0 =
v$ or $v_0 \notin V_{\min}'$, and Proposition~\ref{Pv1max} shows that no valuation in
$V_{\reg}'$ is removable other than possibly $v_0$, we conclude the
only valuation that can possibly be removed from $V_{\min}'$ is $v$.
%, the contraction morphism $\mc{Y}_{\reg}' \to \mc{Y}_{\min}'$ is an isomorphism outside the preimage of $y$.
\end{proof}

\begin{lemma}\label{Lcontradictminimality}  
Let $v$ be as in Proposition~\ref{Pminimalcontraction}. If $e_v = 1$, then there exists some $f_i$ with $v \not \prec v_{f_i}^{\infty}$.
\end{lemma}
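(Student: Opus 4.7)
The plan is to argue by contradiction: suppose $v \prec v_{f_i}^\infty$ for every $i$ (that is, $s = 0$ in the notation of Lemma~\ref{Linftyfactorization}) and derive a contradiction with the standing normalization on $f$ imposed at the start of \S\ref{Ssecondmodel}.

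First I would pin down which of the cases (ii), (iii), (iv) of Proposition~\ref{PinftySNC} the valuation $v$ can satisfy. By Definition~\ref{DS}, $v \in S$ satisfies condition (i) and one of (ii), (iii), (iv). But (iii) and (iv) each force $s = 1$, so under the contradiction hypothesis $v$ must satisfy (ii), which is exactly $s = 0$. Next I would determine the shape of $v$: since $v$ is a maximal element of $S$ and $v_0$ is the unique minimal element of $V_{\reg}'$ (so $v_0$ is strictly below every other element of $S$, hence not maximal in $S \supsetneq \{v_0\}$), we have $v \neq v_0$. Thus $v$ has inductive length exactly $1$ and admits a minimal presentation $v = [v_0,\, v_1(\phi_1) = \lambda_1]$ with $\phi_1 \in \mc{O}_K[t]$ monic and linear, say $\phi_1 = t - c$ with $c \in \mc{O}_K$, and with $\lambda_1 > v_0(\phi_1) = 0$. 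The assumption $e_v = 1$ forces $\Gamma_v = \ints$, so $\lambda_1 \in \ints$; combined with $\lambda_1 > 0$ we obtain $\lambda_1 \geq 1$.

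Finally, I would apply Lemma~\ref{Lgfollows} to each $f_i$: the hypothesis $v \prec v_{f_i}^\infty$ implies $v_K(\phi_1(\theta)) \geq \lambda_1$ for every root $\theta$ of $f_i$. Since this holds for every irreducible factor, every root $\theta$ of $f$ satisfies $v_K(\theta - c) \geq \lambda_1 \geq 1$ with $c \in \mc{O}_K$. This directly contradicts the weak minimality assumption recorded at the start of \S\ref{Ssecondmodel} that no $a \in \mc{O}_K$ exists with $v_K(\theta - a) \geq 1$ for all roots $\theta$ of $f$.

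There is no real obstacle: the argument is a direct combination of the enumeration of cases in Proposition~\ref{PinftySNC}, the integrality of $\lambda_1$ forced by $e_v = 1$ on a length-$1$ Mac Lane valuation, and the diskoid interpretation of $v \prec v_{f_i}^\infty$ supplied by Lemma~\ref{Lgfollows}. The only subtle point to verify carefully is that $v \neq v_0$, which uses maximality of $v$ in $S$ together with the assumption $S \supsetneq \{v_0\}$ and the fact that $v_0$ lies strictly below every other valuation in $V_{\reg}'$.
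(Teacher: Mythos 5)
Your proof is correct and follows essentially the same route as the paper's: pin down the form of $v$ as $[v_0,\, v_1(t-c) = \lambda_1]$ with $\lambda_1 \in \ints_{\geq 1}$ (using $v \neq v_0$, inductive length $\leq 1$ from membership in $S$, and $e_v = 1$), then apply Lemma~\ref{Lgfollows} to conclude every root $\theta$ of $f$ satisfies $v_K(\theta - c) \geq 1$, contradicting the normalization on $f$. Two small remarks: the detour through cases (ii)--(iv) of Proposition~\ref{PinftySNC} is unnecessary, since all you use from $v \in S$ is that $v$ has inductive length $\leq 1$, not which case of the proposition holds; and $v \neq v_0$ is already explicitly part of the hypothesis of Proposition~\ref{Pminimalcontraction} (``by assumption, $v \neq v_0$''), so your derivation of it, while not wrong, duplicates a given fact.
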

\begin{proof}
For a contradiction, assume $v \prec v_{f_i}^{\infty}$ for all $i$.
In this case $v = [v_0, v_1(x - a) =
\lambda]$ with $a \in \mc{O}_K$ and $\lambda \in \ints_{\geq 1}$,
since $v \neq v_0$.
If $v \prec v_{f_i}^{\infty}$ for all $f_i$, 
Lemma~\ref{Lgfollows} shows that all roots $\theta$ of $f(x)$ satisfy $v_K(\theta - a) \geq \lambda
\geq 1$.  But this contradicts the assumption on $f$ from \S\ref{Sreductions}.  
\end{proof}

\begin{proof}[Proof of Proposition \ref{Pminimalcontraction}] 
%   
%   
% {\sout{We first show that $V_{\min}'$ is a regular normal crossings base
%   for $X \to \proj^1_K$.  The proof is similar to that of the corresponding   assertion in Proposition~\ref{Pcontractiontoinftycrossing}, but we give it here   for completeness.  Let $\mc{Y}_{\min}'$ be the $V_{\min}'$-model.  
% By construction, $v$ is the minimal element of
% $V_{\min}'$, so the standard $\infty$-specialization $y$ lies only on the
% $v$-component by
% Proposition~\ref{PAllspecializations}\ref{Cinftyspecialization}.}}

Let $\mc{X}_{\min}'$ be the normalization of the $V_{\min}'$-model
$\mc{Y}_{\min}'$ in $K(X)$, and let $y$ be the standard
$\infty$-specialization, which lies only on the $v$-component by
Corollary~\ref{CAllspecializations}(i).  We first show that
$\mc{X}_{\min}'$ is a  regular normal crossings model. By
Corollary~\ref{CAllspecializations}(i), all points of $\mc{X}_{\min}'$ not above $y$ are regular and normal crossings since $\mc{Y}'_{\reg}$ is a regular normal crossings base by Proposition~\ref{Pv1max}. Futhermore, by Definition~\ref{DS} and Proposition~\ref{PinftySNC}, all points of $\mc{X}_{\min}'$ above $y$ are also regular with normal crossings.  This proves $V_{\min}'$ is a regular normal crossings base. 

% {\sout{Furthermore, by the ``only if'' direction of \cite[Lemma~3.7(ii)]{KW} applied to $v$, taking $y = \infty_v$ in that lemma, the contraction morphism $\mc{Y}_{\reg}' \to \mc{Y}_{\min}'$ is an isomorphism outside the preimage of $y$. So all points of $\mc{X}_{\min}'$ lying above $\mc{Y}_{\min}' \setminus \{y\}$ are regular with normal crossings.
% It follows that $\mc{X}_{\min}'$ is a regular normal crossings model. }}
% 
% {\sout{We now show that $V_{\min}'$ is the \emph{minimal} regular normal
% crossings base, except when $v$ satisfies the hypotheses and
% conditions of Lemma~\ref{Lplaceholder} for $V_{\min}'$, in which case
% $V_{\min}' \setminus \{v\}$ is.  Again, since $\mc{Y}_{\reg}' \to
% \mc{Y}_{\min'}$ is an isomorphism outside of the preimage of $y$ by Lemma~\ref{Lminneighborhood}, and
% since Proposition~\ref{Pv1max} shows that no valuation in $V_{\reg}'$ is removable
% other than possibly $v_0$, the only valuation that can
% possibly be removed from $V_{\min}'$ is the one whose corresponding
% component contains $y$, namely, $v$. }} 

By Lemma~\ref{Lminneighborhood}, $V_{\min}'$ has no removable valuations if $v$ is not removable from $V_{\min}'$. To prove $V_{\min}$ is the \emph{minimal} regular normal crossings base, it suffices to show that $v$ is removable from $V_{\min}'$ precisely when it satisfies the hypotheses and conditions of Lemma~\ref{Lplaceholder}, and in this case, $V_{\min}' \setminus \{v\}$ has no further removable valuations. If $v$ has three or more
neighbors in $V_{\min}'$, it is not removable from $V_{\min}'$ by
Lemma~\ref{Lramificationnotremovable}(i).  Suppose $v$ has two neighbors in
$V_{\min}'$. Lemma~\ref{Lminimalv0} shows
that $v$ can be removed from $V_{\min}'$ only if
$e_v = 1$.   In this case, by Lemma~\ref{Lcontradictminimality},
%$v = [v_0, v_1(x - a) =
%\lambda]$ with $a \in \mc{O}_K$ and $\lambda \in \ints_{\geq 1}$,
%since $v \neq v_0$.
%If $v \prec v_{f_i}^{\infty}$ for all $f_i$, 
%Lemma~\ref{Lgfollows} shows that all roots $\theta$ of $f(x)$ satisfy $v_K(\theta - a_i) \geq \lambda
%\geq 1$.  But this contradicts our running assumption on $f$ from
%\S\ref{Sreductions}.  So
there is some $f_i$ such that $v \not \prec v_{f_i}^{\infty}$.  
Proposition~\ref{PAllspecializations}\ref{Clowvalspecialization} shows that $D_{f_i}$ meets $y$,
and Corollary~\ref{Cregularbranchindex} in turn shows that $y$ is geometrically ramified in $\mc{X}_{\min}'
\to \mc{Y}_{\min}'$.   By Lemma~\ref{Lramificationnotremovable}(ii),
$v$ is not removable from $V_{\min}'$.

So assume $v$ has a single neighbor $w \succ v$.  Suppose first that
$w$ has inductive length $1$ and $v$ is removable
from $V_{\min}'$.  Then, after contracting the $v$-component of
$\mc{Y}_{\min}'$, the $\infty$-specialization lies on $w$.  By
Proposition~\ref{PinftySNC}, condition (i) and either condition (ii),
(iii), or (iv) of Proposition~\ref{PinftySNC} hold for
$w$.  But this contradicts the maximality of $v$ in $S$.   If, on the
other hand, $w$ has
inductive length $2$, then Lemma~\ref{Lplaceholder} shows that $v$ is removable from $V_{\min}'$ if and
only if the conditions of that lemma hold.  Furthermore, in this case
Lemma~\ref{Lplaceholder} shows that $w$ is not removable from
$V_{\min}' \setminus \{v\}$, so $V_{\min} = V_{\min}' \setminus \{v\}$
is the minimal regular normal crossings base.  This completes the proof.
\end{proof}

Combining Theorem~\ref{TX0regular}, Proposition~\ref{Pv1max},
Corollary~\ref{Cthreenotremovable}, Lemma~\ref{L01}, and
Propositions~\ref{Pcontractiontoinftycrossing} and
\ref{Pminimalcontraction}, we get the following theorem, which is
the main result of the paper.

\begin{theorem}\label{Tminnormalcrossingsbase}
Let $f \in \mc{O}_K[t]$ satisfy the assumptions from
\S\ref{Sreductions}.  The (unique) minimal normal crossings base $V_{\min}$ for the cover $X \to
\proj^1_K$ given by $z^d = f$ is constructed as follows:
\begin{enumerate}[\upshape (1)]
\item Construct $V_{\reg}$ as in Algorithm~\ref{AY0} (see Theorem~\ref{TX0regular}).
\item Construct $V_{\reg}' \subseteq V_{\reg}$ by removing all
 vertices satisfying the removability criterion of
  Definition~\ref{Dremovability} (see Proposition~\ref{Pv1max}).
\item Let $S \subseteq V_{\reg}'$ be the set constructed in
  Definition~\ref{DS}.  Let $n$ be the number of neighbors of $v_0$ in
  $V_{\reg}'$. 
  \begin{enumerate}[\upshape (i)]
    \item If $S = \{v_0\}$ and  $n \neq 2$, 
   then set
  $V_{\min} = V_{\reg}'$ (see Corollary~\ref{Cthreenotremovable} and Lemma~\ref{L01}).
\item If $S = \{v_0\}$ and $n=2$, 
then
  construct $V_{\min} \subseteq V_{\reg}'$ as in
  Proposition~\ref{Pcontractiontoinftycrossing}.
\item If $S \supsetneq \{v_0\}$,
then construct
  $V_{\min} \subseteq V_{\reg}'$ as in Proposition~\ref{Pminimalcontraction}.
\end{enumerate}
\end{enumerate}
  
\end{theorem}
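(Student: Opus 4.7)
The plan is to assemble the theorem as a coordinated synthesis of the results proved in \S\ref{Sconstruction} and \S\ref{Ssecondmodel}, organizing the argument around a systematic ``top-down then bottom-up'' contraction strategy. First I would invoke Theorem~\ref{TX0regular} to record that $V_{\reg}$ is a regular normal crossings base, so that the issue reduces to identifying precisely which components of $\mc{X}_{\reg}$ may be contracted while preserving the regular normal crossings property. By Proposition~\ref{Pcontractdownstairs}, such contractions may be performed one Galois orbit at a time, which allows me to work entirely at the level of sets of Mac Lane valuations and removability in the sense of Definition~\ref{Dregularbase}.

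Next, I would carry out the ``top-down'' step by passing from $V_{\reg}$ to $V_{\reg}'$. Proposition~\ref{Pv1max} identifies the removable maximal valuations as exactly those satisfying the removability criterion of Definition~\ref{Dremovability}, so removing them in one stroke (they are pairwise non-adjacent, being maximal elements of a tree) yields a regular normal crossings base in which no non-minimal valuation is removable. At this stage the only candidate left for removal is the unique minimal element $v_0$. Here the dichotomy according to the set $S$ from Definition~\ref{DS} takes over: the question of whether $v_0$ (and further valuations above $v_0$) can be contracted is controlled by behavior near the standard $\infty$-specialization, via Proposition~\ref{PinftySNC}, and near the $\infty$-crossing that would arise after contraction, via Proposition~\ref{Pinftycrossingregular}.

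For the case analysis on the neighbors of $v_0$, I would argue as follows. If $v_0$ has at least three neighbors in $V_{\reg}'$, Corollary~\ref{Cthreenotremovable} shows $v_0$ is not removable, giving case (3i) with $n\geq 3$. If $S=\{v_0\}$ and $n\leq 1$, Lemma~\ref{L01} gives the same conclusion. If $S=\{v_0\}$ and $n=2$, then Lemma~\ref{Lifvoremovable} forces the minimal pair $(w,w')$ to have the shape required by Proposition~\ref{Pcontractiontoinftycrossing}, and that proposition produces $V_{\min}$ and verifies minimality by showing that any further attempted contraction destroys either regularity (via \S\ref{Sstandardendpoints}) or the $\widetilde{N}$-path condition at the resulting $\infty$-crossing (via \S\ref{Sinftycrossing}). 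Finally, if $S\supsetneq\{v_0\}$, choose a maximal $v\in S$; Proposition~\ref{Pminimalcontraction}, combined with Lemma~\ref{Lplaceholder} applied to $V=V_{\min}'$, determines whether $v$ itself can be removed and certifies that the resulting $V_{\min}$ has no further removable valuations.

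The main obstacle will be the bookkeeping needed to confirm that these subcases are mutually exclusive and jointly exhaustive, and that the candidate $V_{\min}$ produced in cases (3ii) and (3iii) is genuinely the unique minimal one promised by Proposition~\ref{Pcontractdownstairs}\ref{P:CanQt}. The uniqueness follows from the fact that each of Propositions~\ref{Pv1max}, \ref{Pcontractiontoinftycrossing}, and \ref{Pminimalcontraction} not only identifies removable valuations but also shows that the neighbor of any contracted valuation becomes non-removable afterward (this is the content of Lemma~\ref{Lremovable}(iii) and the final assertions of Propositions~\ref{Psatisfiesrem} and \ref{Pminimalcontraction}). Thus no matter in what order one performs contractions, the process terminates at the same $V_{\min}$, and the recipe of the theorem produces exactly this set.
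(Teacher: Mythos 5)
Your proposal is correct and follows the paper's proof essentially verbatim: the paper proves this theorem by simply combining Theorem~\ref{TX0regular}, Proposition~\ref{Pv1max}, Corollary~\ref{Cthreenotremovable}, Lemma~\ref{L01}, and Propositions~\ref{Pcontractiontoinftycrossing} and \ref{Pminimalcontraction}, which is exactly the synthesis you lay out. Your narrative of ``start with $V_{\reg}$, pass to $V_{\reg}'$ by removing maximal removables, then analyze $v_0$ and its neighbors via $S$'' mirrors the paper's structure, and your justification of uniqueness (each contraction step renders the surviving neighbor non-removable, so the process is confluent) is the same reasoning that underlies Proposition~\ref{Pcontractdownstairs}(ii), Proposition~\ref{Psatisfiesrem}, Lemma~\ref{Lremovable}(iii), and the last paragraph of Proposition~\ref{Pminimalcontraction}.

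One small remark on organization: your four subcases ``$n\geq 3$'', ``$S=\{v_0\}$ and $n\leq 1$'', ``$S=\{v_0\}$ and $n=2$'', and ``$S\supsetneq\{v_0\}$'' are not manifestly disjoint (the first and last could \emph{a priori} overlap), whereas the theorem's three cases partition cleanly by $S$ and $n$. The reconciliation is that $S\supsetneq\{v_0\}$ forces $n\leq 2$ (indeed, if $v\in S\setminus\{v_0\}$ is maximal, Proposition~\ref{Pminimalcontraction} shows $V_{\min}'\subsetneq V_{\reg}'$ is a regular normal crossings base, so $v_0$ must be removable from $V_{\reg}'$, and Lemma~\ref{Lminimalv0} then gives $n\leq 2$). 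This consistency check is implicit in the paper as well and is not a gap in your argument, but you might want to make it explicit if you flesh out the ``bookkeeping'' you flag as the main obstacle.
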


\begin{comment}
\begin{corollary}\label{Cminnormalcrossingsbase}
Maintain the hypotheses of Theorem~\ref{Tminnormalcrossingsbase}. 
  \begin{enumerate}[\upshape (i)]
    \item If $f$ is monic (that is, if $a = 0$), then $V_{\min} = V_{\reg}'$.
    \item If, in addition to (i), $d$ is odd, then $V_{\min} =
      V_{\reg}$.
    \end{enumerate}    
\end{corollary}

\begin{proof}
To prove part (i), we may assume that we are in case (3)(ii) or
(3)(iii) of Theorem~\ref{Tminnormalcrossingsbase}, that is, that $v_0$
has $1$ or $2$ neighbors in $V'_{\reg}$.  First assume $v_0$ has $1$
neighbor.  Since $a = 0$, part (i) of Proposition~\ref{PinftySNC} is
satisfied for a valuation $v$ if and only if $e_v = 1$.

Now suppose $v_0$ has $2$ neighbors.  By
Theorem~\ref{Tminnormalcrossingsbase}(3)(ii), it suffices to show
that there do not exist valuations $v$, $v'$ in $V_{\reg}'$ satisfying
the criterion of Proposition~\ref{Pcontractiontoinftycrossing}.  Given
that two valuations $v$, $v'$ satisfy part (ii) of the criterion of
Proposition~\ref{Pcontractiontoinftycrossing},
Remark~\ref{Rneverwithmonic} shows that they do not satisfy part (iii)
of the criterion.  This shows that $V_{\min} = V_{\reg}'$, completing
the proof of part (i).

Part (ii) follows immediately from Remark~\ref{Rnooddcontract}.
\end{proof}
\end{comment}

\section{Examples}\label{Sexamples}
\begin{example}\label{E2}
  For the $\ints/5$-cover of $\proj^1_K$ given birationally by $z^5 = (t-1)^2(t^3 - \pi_K^2)$ in
  Example~\ref{Ebasic} with $\chara k \neq 5$, we verify that this
  paper's algorithm shows that $V_{\min} =
  V_{\reg}$ (note that $V_{\min} = V_{\reg}$ was already shown by
  other methods in Example~\ref{Ebasic}).  Recall
  that
  $$V_{\reg} = \{v_0, v_{5/8}, v_{2/3}, v_{7/10}, v_{4/5},
w_{10/3}, w_{5/2}, w_{20/9}, w_{25/12}\},$$ where $v_{\lambda} =
[v_0,\, v_1(t) = \lambda]$, and $w_{\lambda} = [v_0,\, v_1(t) = 2/3,\,
v_2(t^3 - \pi_K^2) = \lambda]$.  No maximal valuation in $V_{\reg}$
satisfies part (a) of the removability
criterion in Definition~\ref{Dremovability}, so by part (2) of Theorem~\ref{Tminnormalcrossingsbase},
$V_{\reg} = V_{\reg}'$. A valuation $v$ satisfies
condition (i) of Proposition~\ref{PinftySNC} if and only if
$e_v = 1$ (this is because $a = 0$ in that proposition).  The only
such valuation in $V_{\reg}'$ is $v_0$, and $v_0$ also satisfies condition
(ii) of Proposition~\ref{PinftySNC}, so $S = \{v_0\}$ in
Definition~\ref{DS}.  Since the only neighbor of $v_0$ in $V_{\reg}'$
is $v_{5/8}$, we are in case (3)(i) of
Theorem~\ref{Tminnormalcrossingsbase}.  In particular, $V_{\reg} = V_{\min}$.
\end{example}

\begin{example}\label{E1}
Consider the cover given by $z^2 = (t-1)(t-2)(t^2-\pi_K)$ with $\chara
k \neq
2$.  The normalization of the standard model
$\proj^1_{\mc{O}_K}$ of $\proj^1_K$ in the function field
corresponding to the cover gives a regular normal crossings model.
Indeed, the affine equation for such a model inside
$\aff^2_{\mc{O}_K}$ is simply $z^2 =
(t-1)(t-2)(t^2 - \pi_K)$, and it is easy to check that this gives a regular
scheme with normal crossings (the cover is \'{e}tale above $t =
\infty$, so there are no issues there).  In other words, $\{v_0\}$ is a
minimal regular normal crossings base.

We show how this results from our algorithm.  Write $f_1 = t-1$, $f_2 =
t-2$, and $f_3 = t^2 - \pi_K$.  Then
$$v_{f_1}^{\infty} = [v_0,\, v_1(t-1) = \infty],\, v_{f_2}^{\infty} =
[v_0,\, v_1(t-2) = \infty],\, v_{f_3}^{\infty} = [v_0,\, v_1(t) =
1/2,\, v_2(t^2 - \pi_K) = \infty].$$

So $V_1$ consists of the $v_{f_i}^{\infty}$ as well as their
predecessors $v_0$ and
$w := [v_0,\ v_1(t) = 1/2]$.  This set is already inf-closed, so $V_1
= V_2$.

The only adjacent pair of valuations in $V_2$ is $(v_0, w)$, so to form $V_2$, we replace this pair with the link
$L_{v_0,w}$.  In the language of Definition~\ref{Dlink}(i), we have $g = t^2 - \pi_K$ and $h = (t-1)(t-2)$, so $N = 1$, $e = 2$, $s =
0$, $d = 2$, $\widetilde{N} = 1$, and $r = 0$.  Thus we adjoin $v_\lambda
:= [v_0,\,
v_1(t) = \lambda]$, where $\lambda$ ranges over the shortest $1$-path between $0$ and
$1/2$.  Since $1/2 > 0$ is already a $1$-path, we see that $V_3 = V_2 = V_1$. 

To form $V_4$, observe that the only valuation in $V_3$ with a finite cusp is $v_{1/2}$.  So we replace this valuation with the
tail $T_w$.  In the language of Definition~\ref{Dtail}(i), for
this tail we have $g = 1$ and $h = f$, so $N
= 1$, $e = 0$, $s = 1$, $d = 2$, and $\widetilde{N} = 2$. 
By definition, $T_w = L_{w, w}$, which equals $\{w\}$.  So $V_4 = V_3
= V_2 = V_1$.

To form $V_5$, we append branch point tails $B_{V_4, f_i}$ for $i \in \{1, 2, 3\}$.  For $i=1$, we have (in
the language of Definition~\ref{Dtail}(ii)) that $g = t-1$ and $h =
(t-2)(t^2-\pi_K)$, so $N = 1$, $e = 1$, $d = 2$, and thus $\tilde{N} =
1$.  So $B_{V_4, f_1} = L_{v_0, v_0} = \{v_0\}$.  Similarly, $B_{V_4, f_2} =
\{v_0\}$.  For $B_{V_4, f_3}$, we have $g = t^2 - \pi_K$ and $h =
(t-1)(t-2)$, so $N = 2$, $e = 1$, $s = 0$, $d = 2$, $\widetilde{N} =
2$, and $r = 1$.  So $B_{V_4, f_3} = L_{w, w} = \{w\}$ (here we
interpret $w$ as $[v_0,\, v_1(t) = 1/2,\, v_2(t^2 - \pi_K) = 1]$).  So
$V_5 = V_4 = V_3 = V_2 = V_1$, and $V_{\reg} = \{v_0, w\}$.

Now, $w$ satisfies all the criteria of Definition~\ref{Dremovability}
(in the language of criterion (d), both sides equal $1$),
so by part (2) of Theorem~\ref{Tminnormalcrossingsbase}, $V_{\reg}' =
\{v_0\}$.  Thus we are in case (3)(i) of
Theorem~\ref{Tminnormalcrossingsbase}, and 
the same theorem shows that $V_{\min} = \{v_0\}$, as expected.
\end{example}

\begin{example}\label{E3}
We exhibit an example where $V_{\reg}'  \neq V_{\min}$.
Consider the $\ints/8$-cover $X \to \proj^1_K$ given birationally by $z^8 = f := \pi_K(t^2
- \pi_K)^4$, where $\chara k \neq 2$.  In this case, $f_1 = (t^2 -
\pi_K)$, and $v_{f_1}^{\infty} = [v_0,\, v_1(t)
= 1/2,\, v_2(t^2 - \pi_K) = \infty]$.   So $V_1$ consists of $v_{f_1}^{\infty}$ and its predecessors $v_0$ and
$v_{1/2} := [v_0,\, v_1(t) = 1/2]$.  This set is already inf-closed, so $$V_1 =
V_2 = \{v_{f_1}^{\infty}, v_0, v_{1/2}\}.$$

The only adjacent pair of valuations in $V_2$ is $(v_0, v_{1/2})$, so to form $V_3$, we replace this pair with the link
$L_{v_0,v_{1/2}}$, defined in Definition~\ref{Dlink}.  We have $g = (t^2 - \pi_K)^4$ and $h = \pi_K$, so $N = 1$, $e = 8$, $s =
1$, $d = 8$, $\widetilde{N} = 8$, and $r = 0$.  Thus we adjoin $v_\lambda
:= [v_0,\, v_1(t) = \lambda]$, where $\lambda$ ranges over the
shortest $8$-path from $1/2$ to $0$.  This $8$-path is $1/2 > 3/8 > 1/4
> 1/8 > 1/2$ so $V_3 = V_2 \cup
\{v_{1/8}, v_{1/4}, v_{3/8}\}$. That is, $$V_3 = \{v_{f_1}^{\infty}, v_0,
v_{1/8}, v_{1/4}, v_{3/8}, v_{1/2}\}.$$

To form $V_4$, observe that the only valuation in $V_3$ with a finite cusp is $v_{1/2}$.  So we replace this valuation with the
tail $T_{v_{1/2}}$ from Definition~\ref{Dtail}(i).  For this tail, we have $h = f$ and $g = 1$, so $N
= 1$, $e = 0$, $s = 5$, $d = 8$, and $\widetilde{N} = 8$.
By definition, $T_{v_{1/2}} = L_{v_{1/2}, v_{1/2}} = \{v_{1/2}\}$, so
$V_4 = V_3$.

To form $V_5$, observe that the valuation in $V_4$ that is maximal
among those bounded above by $v_{f_1}^{\infty}$ is $v_{1/2}$.  So we
replace this valuation with the branch point tail $B_{V_4, f_1}$ as in
Definition~\ref{Dtail}(ii).  For
this tail, we have $N = 2$ (since we think of $v_{1/2}$ as $[v_0,\,
v_1(t) = 1/2,\, v_2(t^2 - \pi_K) = 1]$), and $g = (t^2 - \pi_K)^4$ and $h = \pi_K$.  So $e = 4$,
$s = 2$, $d = 8$, $\widetilde{N} = 4$, and $r = 1$.  Then $B_{V_4, f_1}
= L_{v_{1/2} =: w_1,  w_{5/4}}$, where for $\lambda \in \rats$, we define
$w_{\lambda} := [v_0 =: w_0,\, w_1(t) = 1/2,\, w_2(t^2 - \pi_K) =
\lambda]$. Thus we adjoin $w_{\lambda}$ where $\lambda$ ranges over
those numbers such that $\lambda/2 + 1/8$ forms the shortest $4$-path from
$(5/4)/2 + 1/8 = 3/4$ to $1/2 + 1/8 = 5/8$.  This $4$-path is simply $3/4 > 5/8$, so
$$V_5 = \{v_{f_1}^{\infty}, v_0, v_{1/8}, v_{1/4}, v_{3/8}, v_{1/2} =
w_1, w_{5/4}\},$$ and $V_{\reg} = V_5 \setminus \{v_f^{\infty}\}$.

Now, the valuations in $V_{\reg}$ are totally ordered, and $w_{5/4}$
does not satisfy the removability criterion of
Definition~\ref{Dremovability}(a), so $V_{\reg}' = V_{\reg}$.  Since
$v_0$ has exactly $1$ neighbor in  $V_{\reg}'$, we are in case
(3)(iii) of Theorem~\ref{Tminnormalcrossingsbase}.   The set $S$
of Definition~\ref{DS} contains $v_{1/2}$, which
satisfies properties (i) and (ii) of Proposition~\ref{PinftySNC}.  So
we are in case (3)(iii) of Theorem~\ref{Tminnormalcrossingsbase}, and
Proposition~\ref{Pminimalcontraction} applies.  Now,
$V_{\min}'$ in Proposition~\ref{Pminimalcontraction} is $\{v_{1/2},
w_{5/4}\}$.  Furthermore, $v_{1/2}$ satisfies parts (i), (ii), and (iii) of
Lemma~\ref{Lplaceholder} (in the notation there, $d = 8$, $e_w = 4$,
$a = 1$, and $w(f) = 6$).  So by
Proposition~\ref{Pminimalcontraction}, $V_{\min} = \{w_{5/4}\}$.

In fact, one can calculate that the normalization of the $V_{\min}$-model of
$\proj^1_K$ in $K(X)$ is generically unramified above the special
fiber (since $w_{5/4}(f) = 6 \in 8\Gamma_{w_{5/4}}$), and its special
fiber consists of two irreducible components, meeting transversely
above the standard $\infty$-specialization.
\end{example}

\begin{example}\label{Ev0contraction}
Consider the $\ints/6$-cover of $\proj^1_K$ given birationally by $y^6 = \pi_K(t^3
- \pi_K)((t-1)^3 - \pi_K)$, where $6 \nmid \chara k$.  As in the
previous examples, one can show that $V_{\reg} = V_{\reg}' = \{v_0, v,
v'\}$, where $v = [v_0, v_1(t) = 1/3]$ and $v' = [v_0, v'_1(t-1) =
1/3]$.  Now, $v_0$ is the only valuation in $V_{\reg}'$ satisfying
condition (i) of Proposition~\ref{PinftySNC}, so
we are in case (3)(ii) of Theorem~\ref{Tminnormalcrossingsbase} and Proposition~\ref{Pcontractiontoinftycrossing} applies.  So we
check the condition of Proposition~\ref{Pinftycrossingregular} for $v$
and $v'$.  We have $d = 6$, $\delta = \delta' = 3$, $a = 1$, $r = 1$,
and $\mu = \mu' = 1/3$.  The condition of
Proposition~\ref{Pinftycrossingregular} is equivalent to $1/3 > 0$
being a $3$-path, which it is.  So by
Proposition~\ref{Pcontractiontoinftycrossing}, $v_0$ is removable from
$V_{\reg}'$, and $V_{\min} = \{v, v'\}$.
\end{example}

% Let's switch to bibtex starting with this paper!

\begin{bibdiv}
\begin{biblist}
% \bib{BW_Glasgow}{article}{
%   author={Bouw, Irene I.},
%   author={Wewers, Stefan},
%   title={Computing $L$-functions and semistable reduction of superelliptic
%   curves},
%   journal={Glasg. Math. J.},
%   volume={59},
%   date={2017},
%   number={1},
%   pages={77--108},
%   issn={0017-0895},
% %  review={\MR{3576328}},
% % doi={10.1017/S0017089516000057},
% }

\bib{AM}{book}{
   author={Atiyah, M. F.},
   author={Macdonald, I. G.},
   title={Introduction to commutative algebra},
   series={Addison-Wesley Series in Mathematics},
   edition={Student economy edition},
   note={For the 1969 original see [ MR0242802]},
   publisher={Westview Press, Boulder, CO},
   date={2016},
   pages={ix+128},
   isbn={978-0-8133-5018-9},
   isbn={0-201-00361-9},
   isbn={0-201-40751-5},
   review={\MR{3525784}},
}

\bib{Betts}{article}{
   author={Betts, L. Alexander},
   title={Weight filtrations on Selmer schemes and the effective
   Chabauty-Kim method},
   journal={Compos. Math.},
   volume={159},
   date={2023},
   number={7},
   pages={1531--1605},
   issn={0010-437X},
   review={\MR{4604872}},
   doi={10.1112/S0010437X2300725X},
}

\bib{BLR}{book}{
   author={Bosch, Siegfried},
   author={L\"{u}tkebohmert, Werner},
   author={Raynaud, Michel},
   title={N\'{e}ron models},
   series={Ergebnisse der Mathematik und ihrer Grenzgebiete (3) [Results in
   Mathematics and Related Areas (3)]},
   volume={21},
   publisher={Springer-Verlag, Berlin},
   date={1990},
   pages={x+325},
   isbn={3-540-50587-3},
   review={\MR{1045822}},
   doi={10.1007/978-3-642-51438-8},
}

\bib{BW}{article}{
   author={Bouw, Irene I.},
   author={Wewers, Stefan},
   title={Computing $L$-functions and semistable reduction of superelliptic
   curves},
   journal={Glasg. Math. J.},
   volume={59},
   date={2017},
   number={1},
   pages={77--108},
   issn={0017-0895},
   review={\MR{3576328}},
   doi={10.1017/S0017089516000057},
 }

 \bib{CES}{article}{
  AUTHOR = {Conrad, Brian},
  author = {Edixhoven, Bas},
  author = {Stein, William},
     TITLE = {{$J_1(p)$} has connected fibers},
   JOURNAL = {Doc. Math.},
  FJOURNAL = {Documenta Mathematica},
    VOLUME = {8},
      YEAR = {2003},
     PAGES = {331--408},
      ISSN = {1431-0635},
   MRCLASS = {11G18 (11F11 14H40)},
%  MRNUMBER = {2029169},
MRREVIEWER = {Alessandra Bertapelle},
}

\bib{DM}{article}{
   author={Deligne, P.},
   author={Mumford, D.},
   title={The irreducibility of the space of curves of given genus},
   journal={Inst. Hautes \'Etudes Sci. Publ. Math.},
   number={36},
   date={1969},
   pages={75--109},
   issn={0073-8301},
   review={\MR{0262240}},
}

\bib{Dokchitser}{article}{
   author={Dokchitser, Tim},
   title={Models of curves over discrete valuation rings},
   journal={Duke Math. J.},
   volume={170},
   date={2021},
   number={11},
   pages={2519--2574},
   issn={0012-7094},
   review={\MR{4302549}},
   doi={10.1215/00127094-2020-0079},
 }

\bib{DDMM}{article}{
   author={Dokchitser, Tim},
   author={Dokchitser, Vladimir},
   author={Maistret, C\'eline},
   author={Morgan, Adam},
   title={Arithmetic of hyperelliptic curves over local fields},
   journal={Math. Ann.},
   volume={385},
   date={2023},
   number={3-4},
   pages={1213--1322},
   issn={0025-5831},
   review={\MR{4566695}},
   doi={10.1007/s00208-021-02319-y},
 }
 
\bib{FGMN}{article}{
  AUTHOR = {Fern\'{a}ndez, Julio},
  AUTHOR = {Gu\`ardia, Jordi},
  AUTHOR = {Montes, Jes\'{u}s},
  AUTHOR = {Nart, Enric},
     TITLE = {Residual ideals of {M}ac{L}ane valuations},
   JOURNAL = {J. Algebra},
    VOLUME = {427},
      YEAR = {2015},
     PAGES = {30--75},
      ISSN = {0021-8693},
   MRCLASS = {13A18 (11S05 12J10)},
  MRNUMBER = {3312294},
}

\bib{FN}{article}{
   author={Faraggi, Omri},
   author={Nowell, Sarah},
   title={Models of hyperelliptic curves with tame potentially semistable
   reduction},
   journal={Trans. London Math. Soc.},
   volume={7},
   date={2020},
   number={1},
   pages={49--95},
   review={\MR{4201122}},
   doi={10.1112/tlm3.12023},
 }

 \bib{GMP}{article}{
   author={Green, B.},
   author={Matignon, M.},
   author={Pop, F.},
   title={On valued function fields. III. Reductions of algebraic curves},
   note={With an appendix by E. Kani},
   journal={J. Reine Angew. Math.},
   volume={432},
   date={1992},
   pages={117--133},
   issn={0075-4102},
   review={\MR{1184762}},
   doi={10.1515/crll.1992.432.117},
 }
 
\bib{SGA1}{collection}{
   author={Grothendieck, Alexander},
   title={Rev\^etements \'etales et groupe fondamental. Fasc. I: Expos\'es 1
   \`a{} 5},
   note={Troisi\`eme \'edition, corrig\'ee;
   S\'eminaire de G\'eom\'etrie Alg\'ebrique, 1960/61},
   publisher={Institut des Hautes \'Etudes Scientifiques, Paris},
   date={1963},
   pages={iv+143 pp. (not consecutively paged) (loose errata)},
   review={\MR{0217087}},
}
% \bib{Kollar}{book}{
%    author={Koll\'{a}r, J\'{a}nos},
%    title={Lectures on resolution of singularities},
%    series={Annals of Mathematics Studies},
%    volume={166},
%    publisher={Princeton University Press, Princeton, NJ},
%    date={2007},
%    pages={vi+208},
%    isbn={978-0-691-12923-5},
%    isbn={0-691-12923-1},
%    review={\MR{2289519}},
% }

\bib{HM}{article}{
   author={Harris, Joe},
   author={Mumford, David},
   title={On the Kodaira dimension of the moduli space of curves},
   note={With an appendix by William Fulton},
   journal={Invent. Math.},
   volume={67},
   date={1982},
   number={1},
   pages={23--88},
   issn={0020-9910},
   review={\MR{0664324}},
   doi={10.1007/BF01393371},
}
\bib{KW}{article}{
  author={Kunzweiler, Sabrina},
   author = {Wewers, Stefan},
   title={Integral differential forms for superelliptic curves},
   
  eprint = {arxiv:2003.12357}, 
  year =         {2020}, 
}

% 	
% \bib{Li:cd}{article}{
%     AUTHOR = {Liu, Qing},
%      TITLE = {Conducteur et discriminant minimal de courbes de genre {$2$}},
%    JOURNAL = {Compositio Math.},
%   FJOURNAL = {Compositio Mathematica},
%     VOLUME = {94},
%       YEAR = {1994},
%     NUMBER = {1},
%      PAGES = {51--79},
%       ISSN = {0010-437X},
%    MRCLASS = {14H45 (11G20 14H25)},
% %  MRNUMBER = {1302311},
% MRREVIEWER = {Zhi Jie Chen},
% %       URL = {http://www.numdam.org.proxy01.its.virginia.edu/item?id=CM_1994__94_1_51_0},
% }

\bib{LiuBook}{book}{
    AUTHOR = {Liu, Qing},
     TITLE = {Algebraic geometry and arithmetic curves},
    SERIES = {Oxford Graduate Texts in Mathematics},
    VOLUME = {6},
      NOTE = {Translated from the French by Reinie Ern\'{e},
              Oxford Science Publications},
 PUBLISHER = {Oxford University Press, Oxford},
      YEAR = {2002},
     PAGES = {xvi+576},
      ISBN = {0-19-850284-2},
   MRCLASS = {14-01 (11G30 14A05 14A15 14Gxx 14Hxx)},
%  MRNUMBER = {1917232},
MRREVIEWER = {C\'{\i}cero Carvalho},
}

\bib{LL}{article}{
  AUTHOR = {Liu, Qing},
  AUTHOR = {Lorenzini, Dino},
     TITLE = {Models of curves and finite covers},
   JOURNAL = {Compositio Math.},
  FJOURNAL = {Compositio Mathematica},
    VOLUME = {118},
      YEAR = {1999},
    NUMBER = {1},
     PAGES = {61--102},
      ISSN = {0010-437X},
   MRCLASS = {14G20 (11G20 14H25 14H30)},
  MRNUMBER = {1705977},
MRREVIEWER = {Carlo Gasbarri},
       DOI = {10.1023/A:1001141725199},
       URL = {https://doi-org.proxy01.its.virginia.edu/10.1023/A:1001141725199},
}

\bib{MacLane}{article}{
    AUTHOR = {MacLane, Saunders},
     TITLE = {A construction for absolute values in polynomial rings},
   JOURNAL = {Trans. Amer. Math. Soc.},
  FJOURNAL = {Transactions of the American Mathematical Society},
    VOLUME = {40},
      YEAR = {1936},
    NUMBER = {3},
     PAGES = {363--395},
      ISSN = {0002-9947},
   MRCLASS = {13A18 (13F20)},
 % MRNUMBER = {1501879},
   %    DOI = {10.2307/1989629},
       URL = {https://doi-org.proxy01.its.virginia.edu/10.2307/1989629},
}

\bib{Muselli1}{article}{
   author={Muselli, Simone},
   title={Models and integral differentials of hyperelliptic curves},
   journal={Glasg. Math. J.},
   volume={66},
   date={2024},
   number={2},
   pages={382--439},
   issn={0017-0895},
   review={\MR{4818175}},
   doi={10.1017/S001708952400003X},
 }

 \bib{Muselli2}{article}{
   author={Muselli, Simone},
   title={Regular models of hyperelliptic curves},
   journal={Indag. Math. (N.S.)},
   volume={35},
   date={2024},
   number={4},
   pages={646--697},
   issn={0019-3577},
   review={\MR{4770973}},
   doi={10.1016/j.indag.2023.12.001},
}
\bib{OS1old}{article}{
   author={Obus, Andrew},
   author = {Srinivasan, Padmavathi},
   title={Conductor-discriminant inequality for hyperelliptic curves
     in odd residue characteristic},
   date={2019},
   eprint={arxiv:1910:02589v1},
}

\bib{OS1}{article}{
   author={Obus, Andrew},
   author={Srinivasan, Padmavathi},
   title={Conductor-discriminant inequality for hyperelliptic curves in odd
   residue characteristic},
   journal={Int. Math. Res. Not. IMRN},
   date={2024},
   number={9},
   pages={7343--7359},
   issn={1073-7928},
   review={\MR{4742824}},
   doi={10.1093/imrn/rnad173},
}

\bib{OShoriz}{article}{
   author={Obus, Andrew},
   author={Srinivasan, Padmavathi},
   title={Explicit minimal embedded resolutions of divisors on models of the
   projective line},
   journal={Res. Number Theory},
   volume={8},
   date={2022},
   number={2},
   pages={Paper No. 27, 27},
   issn={2522-0160},
   review={\MR{4409862}},
   doi={10.1007/s40993-022-00323-y},
 }
 
\bib{ObusWewers}{article}{
   author={Obus, Andrew},
   author = {Wewers, Stefan},
   title={Explicit resolution of weak wild arithmetic surface singularities},
   date={2018},
   eprint={arxiv:1805.09709v3},
}

\bib{Ruth}{article}{
  author = 	 {R{\"u}th, Julian},
  title = 	 {Models of curves and valuations},
  date =         {2014}, 
  note = 	 {Ph.D. Thesis, Universit\"{a}t Ulm}, 
  eprint = {https://oparu.uni-ulm.de/xmlui/handle/123456789/3302},
 doi = {10.18725/OPARU-3275}
}

% \bib{Sage}{manual}{
%       author={Developers, The~Sage},
%        title={{S}agemath, the {S}age {M}athematics {S}oftware {S}ystem
%   ({V}ersion 8.0)},
%         date={YYYY},
%         note={{\tt https://doc.sagemath.org/html/en/reference/valuations/index.html}},
% }

\bib{RuthSage}{article}{
  author = 	 {R{\"u}th, Julian},
  title = 	 {A framework for discrete valuations in Sage},
  %date =         {2014}, 
  %note = 	 {Ph.D. Thesis, Universit\"{a}t Ulm}, 
  eprint = {https://trac.sagemath.org/ticket/21869}
}

% 
% 
% \bib{saito2}{article}{
%    author={Saito, Takeshi},
%    title={Conductor, discriminant, and the Noether formula of arithmetic
%    surfaces},
%    journal={Duke Math. J.},
%    volume={57},
%    date={1988},
%    number={1},
%    pages={151--173},
%    issn={0012-7094},
% %   review={\MR{952229 (89f:14024)}},
% %   doi={10.1215/S0012-7094-88-05706-7},
% }
% 
% \bib{PadmaRational}{article}{
%   author = 	 {Srinivasan, Padmavathi},
%   title = 	 {Conductors and minimal discriminants of
%     hyperelliptic curves with rational Weierstrass points},
%   eprint = {arxiv:1508.05172v1}, 
%   year =         {2015}, 
% }

\bib{PadmaTame}{article}{
  author = 	 {Srinivasan, Padmavathi},
  title = 	 {Conductors and minimal discriminants of
    hyperelliptic curves: a comparison in the tame case},
  eprint = {arxiv:1910.08228v1}, 
  year =         {2019}, 
}

\bib{StacksProject}{article}{
author = {The Stacks Project Authors},
title = {The Stacks Project},
eprint = {https://stacks.math.columbia.edu}
}

@misc{stacks-project,
  author       = {The {Stacks project authors}},
  title        = {The Stacks project},
  howpublished = {\url{https://stacks.math.columbia.edu}},
  year         = {2023},
}

\end{biblist}
\end{bibdiv}

\end{document}